\newcommand{\gO}{{O$_2$}}
\newcommand{\rO}{{O$_1$}}
\newcommand*\circleit[1]{\tikz[baseline=(char.base)]{
            \node[shape=circle,draw,inner sep=2pt] (char) {#1};}}
\definecolor{CSUGreen}{cmyk}{.92, .18, .94, .61}
\definecolor{CSUGold}{cmyk}{.11, .06, .64, .13}
\newdimen\nodeDist
\newtcolorbox{videobox}{colback=CSUGold!30!white,colframe=CSUGreen!90!black}
\newtheoremstyle{myremark} 
    {7pt}                    
    {7pt}                    
    {}  	                 
    {}                           
    {\it}       	         
    {.}                          
    {.5em}                       
    {}  
\theoremstyle{plain}
\newtheorem{lemma}{Lemma}[section]
\newtheorem{theorem}[lemma]{Theorem}
\newtheorem{corollary}[lemma]{Corollary}
\newtheorem{proposition}[lemma]{Proposition}
\theoremstyle{definition}
\newtheorem{definition}[lemma]{Definition}
\newtheorem{example}[lemma]{Example}
\newtheorem{question}[lemma]{Question}
\newtheorem*{exercise}{Exercise} 
\newtheorem{notation}[lemma]{Notation}
\newtheorem{remark}[lemma]{Remark}
\newtheorem*{pigeonhole-generalized}{Generalized Pigeonhole Principle}
\theoremstyle{myremark}
\newtheorem*{aside}{Aside}
\newtheorem*{answer}{Answer}
\newtheorem*{brainstorm}{Brainstorm}
\newtheorem*{claim}{Claim}
\newtheorem*{etymology}{Etymology}
\newcommand{\N}{\mathbb{N}}
\newcommand{\Q}{\mathbb{Q}}
\newcommand{\R}{\mathbb{R}}
\newcommand{\Z}{\mathbb{Z}}
\newcommand{\defn}{\textbf}
\newcommand{\cost}{\mathrm{cost}}
\title{{\Huge Counting Rocks!} \\ {\Large An Introduction to Combinatorics} \\ {\Large Colorado State University}}
\author{Henry Adams, Kelly Emmrich, Maria Gillespie, Shannon Golden, and Rachel Pries}
\date{\today}
\begin{document}

\maketitle

\chapter*{Preface}

This textbook, \emph{Counting Rocks!}, is the written component of an 
interactive introduction to combinatorics at the undergraduate level.
Throughout the text, we link to videos where we describe the material and provide examples; see the \href{https://www.youtube.com/playlist?list=PL5J6K3znOvOmzBUoxlk-W0N4j7L1Y9yfW}{Youtube playlist} on the Colorado State University (CSU) Mathematics Youtube channel.

The major topics in this text are counting problems 
(Chapters 1-4), proof techniques (Chapter 5), recurrence relations and generating functions (Chapters 6-7), and an introduction to graph theory (Chapters 8-12).
The material and the problems we include are standard for an undergraduate combinatorics course.  In this text, one of our goals was to describe the mathematical structures underlying 
problems in combinatorics.  For example, we separate the description of sequences, permutations, sets and multisets in Chapter 3. 

In addition to the videos, we would like to highlight some
other features of this book.  Most chapters contain an investigation section, where students are led through a series of deeper problems on a topic.  In several sections, we show students how to use the free on-line computing software SAGE in order to solve problems; this is especially useful for the problems on recurrence relations.
We have included many helpful figures throughout the text, and we end each chapter (and many of the sections) with a list of exercises of varying difficulty. 

\section*{Development of this textbook}

For several years, faculty taught Math 301, Introduction to Combinatorics, at Colorado State University using the textbook 
\textit{Discrete Mathematics: Elementary and Beyond} by László Lovász, József Pelikán, and Katalin Vesztergombi.
We found this book very inspiring, but we needed to supplement it with 
notes on certain topics.  In 2019, Adams wrote an extensive set of lecture notes for the course, based on the aforementioned book.  
In 2020, Gillespie and Pries decided that their students needed an interactive textbook on the material covered in the CSU 301 combinatorics course in order to learn 
remotely during the COVID-19 pandemic.

This inspired the creation of an accessible, free online textbook along with short accompanying lecture videos.  Adams, Gillespie, and Pries 
applied for an Open Educational Resources (OER) grant at CSU to help with the creation of the materials.  The math department at CSU also provided some funding.  These resources also funded graduate student co-authors Kelly Emmrich and Shannon Golden.
Our title, \emph{Counting Rocks: An Introduction to Combinatorics}, is in part an homage to the Rocky Mountains, near which we love to work.
It is possible that we may update this textbook or create more videos, as future versions of the course are taught.

There are many textbooks written about undergraduate combinatorics, each with its own positive attributes.  This book and videos helped our students and we would like to share them with anyone else who finds them useful.

\section*{Acknowledgments}

The creation of these materials was supported by the CSU math department and the Open Educational Resources grant provided by the CSU Libraries.
We also thank the Fall 2020 Math 301 students at CSU for their helpful feedback and suggestions.

\tableofcontents

\chapter{What is combinatorics?} \label{chap:intro}

Combinatorics is the mathematical study of \textit{discrete}
mathematical objects and their combinations and arrangements.
Other kinds of mathematics (like calculus and analysis) use \textit{continuous} mathematical objects, which involve infinitesimally small increments.
By contrast, \textit{discrete} mathematics is the study of objects that come in larger chunks, such as whole numbers, polygons, and networks.

If you have ever asked ``I wonder how many different ways I could \ldots ?", then you have thought about combinatorics.
For example, if there are 10 classes you are interested in taking this semester, and you want to enroll in either 3 or 4 of them, then there might be over 300 different schedules you could choose from, depending on how many of the classes meet at conflicting times.
How do you count the number of possibilities, without listing them all by hand?
Alternatively, how do you write a computer program to list all of the possible schedules?
Both of these approaches --- either finding a clever way to count or constructing a computer algorithm to do the work for you --- rely heavily on the ideas and language of combinatorics and discrete mathematics.

The history of combinatorics began in ancient times, in places such as Egypt, India, China, and Iran.  The work of scholars in ancient times led to the development of counting techinques during 1300-1700 by individuals such as Fibonacci, Pascal, Leibniz, and De Moivre. In the 1700s and 1800s, mathematicians like Euler and Cayley developed the study of graph theory.

The computer revolution greatly amplified the importance of combinatorics and graph theory.  The most obvious connections between this class and computer science involve networks and organization of
data structures.
There are also applications of this class to other fields, like linguistics, physics, chemistry, and biology.
In general, from this class, you will gain experience solving problems about combinatorial structures and algorithms; we hope you will continue to use the experience and ideas you gain from this class for the benefit of many other people.

Here are some of the ideas we will study this semester.

\section{Introducing combinatorics with a handshake}
\label{S1.1}

Suppose five students walk into their combinatorics class and each shakes hands with all the others.
How many handshakes took place?

There are several ways to approach this problem.
One way is to reason that, because 5 students each shake 4 hands, and each handshake involves 2 students, we counted $5\cdot 4 = 20$ handshakes but counted each handshake twice.
So exactly $10$ handshakes took place.  

Another way to see this is to call the students $a$, $b$, $c$, $d$, $e$, and directly count the handshakes as follows.
Student $a$ shakes hands with $4$ others, then student $b$ shakes hands with $3$ others besides $a$, then
student $c$ shakes $2$ more hands besides $a$ and $b$, then student $d$ shakes 1 more hand (with $e$), and finally student $e$ does not need to shake any more hands. So the total number of handshakes is $1+2+3+4 = 10$.

Finally, we can visually model this problem by drawing a dot to represent each student, and a line segment connecting each pair of dots to represent the handshakes:

\begin{center}
\includegraphics{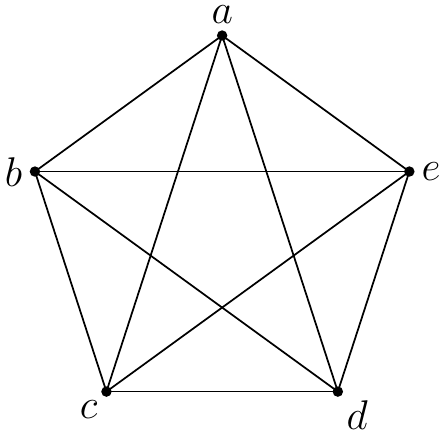}
\end{center}

With this visual tool, called a \textit{graph}, we can count that there are 10 line segments (the handshakes) between the labeled dots (the students).
The graph above is called $K_5$, the ``complete graph on $5$ vertices.''
The dots representing the students are called \textit{vertices} of the graph, and line segments between the vertices are called \textit{edges}.
The intersection points of the line segments which are not labeled should be ignored; they are not vertices.
We say the graph is \textit{complete} because every two vertices are connected by an edge.

In this class, Introduction to Combinatorics, we will learn about counting problems, mathematical proofs, generating functions, and graph theory.
Here are some examples of questions that we will answer.
These questions will become more complicated when we later increase the number of students beyond 5.

\subsubsection*{Chapters 2--4:
Basic counting, sets, and binomial coefficients.} 

\begin{enumerate}
\item How many ways can the 5 students in the class line up? 
\item How many ways can 3 students be chosen for a project from the class of 5 students?
\item If each student $a$, $b$, $c$, $d$, and $e$ receives either $0$ or $1$ point for the quiz, how many sequences of five numbers are possible as the list of their grades?
\end{enumerate}

\subsubsection*{Chapters 5--7:
Counting techniques using more theory, including bijections, recurrence relations, and generating functions.}

\begin{enumerate}
\item How many ways can 10 pieces of halloween candy be distributed among the students? 
\item How many problems on the group worksheet will be solved if the first student does one problem, each other student does twice as many as the previous, and no problems are solved twice? 

\item How many ways can a student make change for one dollar? 
\end{enumerate}

\subsubsection*{Chapters 8--10:
Graph theory and optimization.}

\begin{enumerate}
\item If student $a$ has important news about the class, how many phone calls are needed to share it with the other students? 
\item How many cables are needed to connect the 40 buildings on campus with fiber optic internet?
\item Without lifting your pencil from the page, can you trace your pencil along the edges of the graph $K_5$ so that each edge is drawn exactly once?

\end{enumerate}

\subsubsection*{Chapters 11--12:
Combinatorial geometry, including planar graphs and coloring problems.}

\begin{enumerate}
\item Is it possible to redraw the edges of the graph $K_5$ on the page so that they do not intersect? 

\item On a piece of paper, $4$ of the students draw a line. What is the largest number of regions the paper can be divided into?  What about for $5$ students?

\item Is it possible to color the regions in the previous problem with $3$ colors so that any two regions sharing a boundary edge have different colors?  What about $2$ colors?
\end{enumerate}

\subsection*{Exercises}

Before getting started, let's discuss the questions above. At this stage, it is more important to develop good communication patterns with your classmates than to answer the problems.\footnote{ 
If you can already answer all these problems, for an arbitrary number of students, you will receive an automatic transfer to the class on graduate combinatorics Math 501.}
A group works best if everyone speaks for about the same amount of time and if questions are valued as much as answers.
Make sure that everyone in your group feels comfortable sharing their ideas and understands the material being discussed.

For each problem that your group can solve, write down an explanation of your answer and your thought process. 
For each problem that your group cannot solve: 
decide if the question is clear or ambiguous; decide if all the information needed to solve the problem is provided; and provide some ideas 
or a strategy for solving the problem.  

\begin{enumerate}

\item 
Introduce yourself to your group, including something about why you are taking this class, and something about your life outside of this class.

\item Try to solve the questions from Chapters 2-4.

\item Develop a strategy to approach the problems from Chapters 5-7.

\item Try to solve the problems from Chapters 8-10.

\item Generate ideas about the problems in Chapters 11-12.

\end{enumerate}

\section{Three classical counting formulas}

As an introduction to the techniques in this class, we now discuss three classical problems in mathematics that can be solved easily with combinatorial methods.

\subsection{Triangular numbers}

\begin{videobox}
\begin{minipage}{0.1\textwidth}
\href{https://www.youtube.com/watch?v=MJ3HzLuY9TY}{\includegraphics[width=1cm]{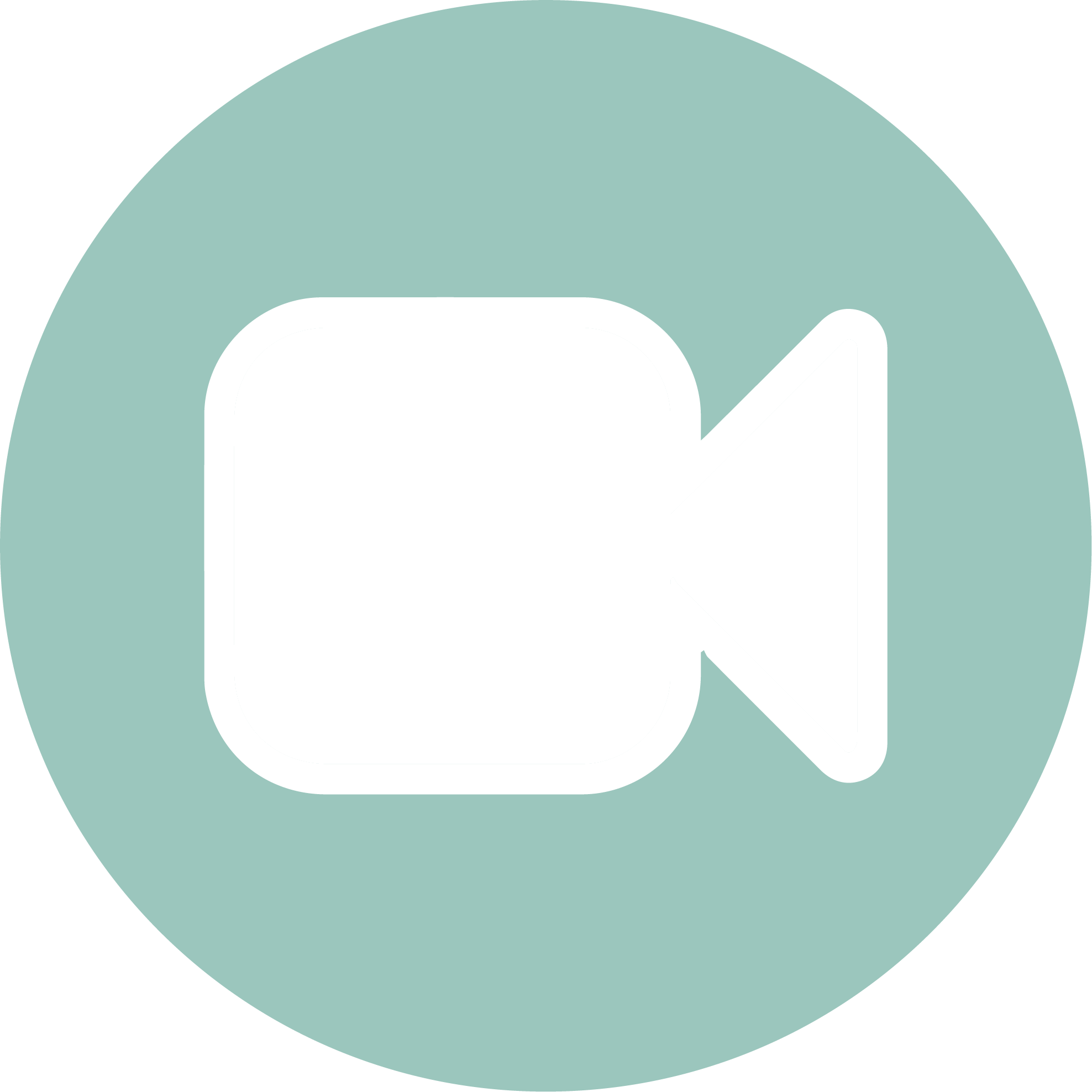}}
\end{minipage}
\begin{minipage}{0.8\textwidth}
Click on the icon at left or the URL below for this section's short video lecture. \\\vspace{-0.2cm} \\ \href{https://www.youtube.com/watch?v=MJ3HzLuY9TY}{https://www.youtube.com/watch?v=MJ3HzLuY9TY}
\end{minipage}
\end{videobox}

We saw in the previous section that we could count the handshakes among $5$ students by adding the numbers $1+2+3+4$.
For $101$ students shaking hands, we can count the number of handshakes by adding 
the numbers from $1$ to $100$.
One of the first signs of Gauss' strength in mathematics was in elementary school when he quickly computed that
\[1 + 2 + 3 + \cdots + 99 + 100 = 5050.\]
More generally, 

\begin{lemma}\label{lem:Tn}
The sum of the first $n$ integers is
\begin{equation} \label{Egauss}
1+2+\ldots+n=\frac{n(n+1)}{2}.
\end{equation}
\end{lemma}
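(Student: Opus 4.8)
The plan is to use Gauss's own pairing trick, which fits naturally with the handshake discussion preceding this lemma. First I would give the target sum a name, setting $S = 1 + 2 + \cdots + n$. The crucial move is to write this same sum a second time in reverse order, as $S = n + (n-1) + \cdots + 2 + 1$, and then to stack the two expressions so that the $k$-th term of the first lines up with the $k$-th term of the second.

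Next I would add the two copies of $S$ column by column. The $k$-th column pairs the term $k$ from the forward sum with the term $n+1-k$ from the reversed sum, giving $k + (n+1-k) = n+1$, a value that does not depend on $k$. Since there are exactly $n$ columns, summing them all yields $2S = n(n+1)$, and dividing by $2$ produces the claimed formula $S = \frac{n(n+1)}{2}$.

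The step I would be most careful about is justifying that every column sums to the same constant $n+1$; this is what makes the whole argument collapse to a single multiplication rather than a genuine summation. The identity $k + (n+1-k) = n+1$ handles this uniformly for all $n$, so no separate base cases or range restrictions are needed. One could instead prove the formula by induction on $n$ --- checking $n=1$ and then adding $n+1$ to both sides of the inductive hypothesis --- but I prefer the pairing argument because it explains the shape of the answer and mirrors the geometric picture of two triangular arrays of dots combining into an $n \times (n+1)$ rectangle.
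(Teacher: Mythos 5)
Your proof is correct, but it takes a different route than the paper. The paper gives two arguments: its Method 1 pairs terms \emph{within a single copy} of the sum, $(1+n) + (2+(n-1)) + \cdots$, which forces a case split on the parity of $n$ (when $n$ is odd the middle term $\frac{n+1}{2}$ is left unpaired and must be handled separately); its Method 2 is a combinatorial dot-count, coloring half of an $n \times (n+1)$ rectangle of dots. Your argument --- writing the sum twice, once reversed, and adding column by column to get $2S = n(n+1)$ --- is the doubling version of the pairing trick, and its chief advantage over the paper's Method 1 is exactly what you identified: every column sums to $n+1$ with no unpaired leftovers, so no parity cases are needed. It is also, as you note, precisely the algebraic shadow of the paper's Method 2: the two copies of $S$ are the two triangular halves of the $n \times (n+1)$ rectangle. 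So your single argument cleanly interpolates between the paper's two proofs, trading the paper's case analysis for one extra copy of the sum; the paper's Method 1, in exchange, stays closer to the historical Gauss anecdote of folding one list in half.
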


One way to show this is by using a method of proof called \textit{induction}, which will be covered in Section~\ref{sec:induction}. Here, we prove this identity directly in two more straight forward ways.

\begin{proof}
\textbf{Method 1:} This is an example of a proof by rearrangement. We can add the numbers in pairs, starting with the first and last, then the second and second-to-last, and so on:
\[(1+n) + (2 + (n-1)) + (3 + (n-2)) + \cdots.\]
Each pair sums to $n+1$.
If $n$ is even, then there are $\frac{n}{2}$ pairs, with the last pair being $\frac{n}{2}$ and $\frac{n}{2}+1$. So the total is the product of $n+1$ (the sum of each pair)
and $\frac{n}{2}$ (the number of pairs) which equals $(n+1)\frac{n}{2}$.
If $n$ is odd, then there are $\frac{n-1}{2}$ pairs, and the entry $\frac{n+1}{2}$ is not included in a pair.
So the total is
\[(n+1)\frac{n-1}{2} + \frac{n+1}{2} = \frac{n^2-1}{2}+\frac{n+1}{2} = \frac{n^2+n}{2}.\]
In either case, the total is $\frac{n(n+1)}{2}$.

\smallskip

\textbf{Method 2:} This is an example of a \textit{combinatorial proof},
in which we count something in two different ways.
In this example, the first way will yield a count of $1+2+\dots+n$, and the second way will yield a count of $\frac{n(n+1)}{2}$.
Since both are valid ways of counting the same object, we will have proven that $1+2+\dots+n=\frac{n(n+1)}{2}$.
How might we do this?

Draw a rectangle of dots, with $n$ rows and $n+1$ columns.
In the lower left half of the rectangle, color the dots green, with $1$ filled dot in the first row, $2$ in the second row, $3$ in the third row, up to $n$ dots in the $n$th row.
The number of green dots is 
$1 + 2 + \cdots + n$, which is the left hand side of Equation~\ref{Egauss}.
The rectangle has $n(n+1)$ dots total and exactly half of them are green.
So the number of green dots is $n(n+1)/2$, which is the right hand side of Equation~\ref{Egauss}.
Hence $1+2+\dots+n=\frac{n(n+1)}{2}$.

\begin{center}
\includegraphics{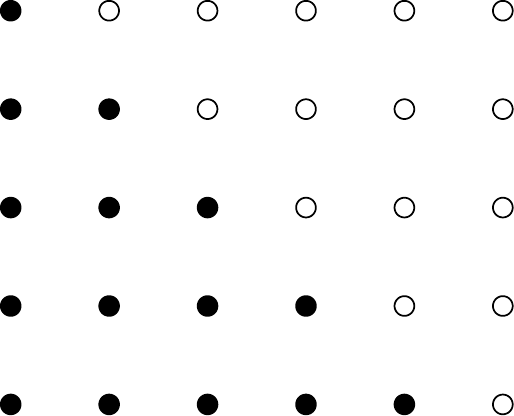}
\end{center}
\end{proof}

Due to the triangular nature of the picture above, the numbers $$T_n=\frac{n(n+1)}{2}$$ are called \textit{triangular numbers.} The first several triangular numbers are $1,3,6,10,15,\ldots$. 
The number of handshakes among $n+1$ people equals $T_n$.

\subsection{Factorials and orderings}\label{sec:factorials-intro}

\begin{videobox}
\begin{minipage}{0.1\textwidth}
\href{https://www.youtube.com/watch?v=sG4jx_p9PbA}{\includegraphics[width=1cm]{video-clipart-2.png}}
\end{minipage}
\begin{minipage}{0.8\textwidth}
Click on the icon at left or the URL below for this section's short video lecture. \\\vspace{-0.2cm} \\ \href{https://www.youtube.com/watch?v=sG4jx_p9PbA}{https://www.youtube.com/watch?v=sG4jx\_p9PbA}
\end{minipage}
\end{videobox}

Suppose the five students who walked into the class need to line up at the front desk to hand in their homework. In how many different orders can they line up?

One way to solve this is to reason that there are $5$ possibilities for the first student in line.
Regardless of which student is first, there are $4$ possibilities for the next student.
Then there are $3$ choices for the third student, then $2$ choices for the fourth, and finally $1$ choice for the last in line.
So all together there are $5\cdot 4 \cdot 3 \cdot 2 \cdot 1 = 120$ possibilities.

This leads us to the important definition of a \textit{factorial}. 

\begin{definition}
Define $1!=1$, and if $n >1$ is an integer, 
then define $n!=n\cdot (n-1)!$.
We pronounce the symbol $n!$ as ``$n$ factorial.''
\end{definition}

For instance, $2!=2$, $3!=6$, $4! = 24$ and $5!=120$.
For $n \geq 6$, we can write 
\[n!=n\cdot(n-1)\cdot(n-2) \cdots 3 \cdot 2 \cdot 1.\]

Factorials are a helpful starting point for many similar problems about 
arranging things in order.
For instance, if only three of the five students line up at the front desk, then the number of different possible line-ups is $5\cdot 4 \cdot 3$, which can be expressed as $5!/2!$. 

\begin{example}
Suppose there are $47$ students in M301. If there are $47$ labeled chairs, the number of ways they can be seated is $47!$.
This is because there are $47$ choices for which student is in chair 1, then 46 choices for which student is in chair 2, 
etc, until there are 2 choices for which student is in chair 46, and only 1 choice for which student is in chair 47.
\end{example}

\subsection{Binomial coefficients and counting subsets}
\label{S.1.2.3}

\begin{videobox}
\begin{minipage}{0.1\textwidth}
\href{https://youtu.be/iXtxzRhhEjI}{\includegraphics[width=1cm]{video-clipart-2.png}}
\end{minipage}
\begin{minipage}{0.8\textwidth}
Click on the icon at left or the URL below for this section's short video lecture. \\\vspace{-0.2cm} \\ \url{https://youtu.be/iXtxzRhhEjI}
\end{minipage}
\end{videobox}

\textit{Binomial coefficients} will play an important role throughout this class.  They provide another example of the importance of factorials in counting.

\begin{definition}
Let $n$ be a positive integer.
Let $\{1, \ldots, n\}$
be the set of 
the smallest $n$ positive integers.
The binomial coefficient $\binom{n}{k}$, pronounced ``$n$ choose $k$'', is the number of ways to choose $k$ elements 
from $\{1,2,\ldots,n\}$. 
\end{definition}

\begin{example}
The binomial coefficient 
$\binom{n}{2}$ is the number of ways to 
choose two numbers from the 
set $\{1, \ldots, n\}$.
Then $\binom{n}{2} = \frac{n(n-1)}{2}$, because
there are $n$ choices for the first element and $n-1$ choices for the second, and then we divide by $2$ since the order of the two elements does not matter.
\end{example}

\begin{lemma} \label{Lformulabinom}
The binomial coefficient has this formula:
\[\binom{n}{k}= \frac{n!}{k!(n-k)!}.\]
\end{lemma}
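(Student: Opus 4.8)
The plan is to give a combinatorial (double-counting) proof, in the spirit of Method~2 in the proof of Lemma~\ref{lem:Tn}: I will count the number of \emph{ordered} selections of $k$ distinct elements from $\{1,2,\ldots,n\}$ in two different ways, and then equate the two expressions. The binomial coefficient will appear naturally in one of the two counts.

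First I would count such ordered selections directly. Reasoning exactly as in the line-up problem of Section~\ref{sec:factorials-intro}, there are $n$ choices for the first element, then $n-1$ choices for the second (since it must differ from the first), and so on down to $n-k+1$ choices for the $k$-th element. Hence the number of ordered selections is
\begin{equation*}
n(n-1)(n-2)\cdots(n-k+1) = \frac{n!}{(n-k)!},
\end{equation*}
where the final equality comes from cancelling the common factors $1\cdot 2\cdots(n-k)$ from $n!$.

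Next I would count the same ordered selections by first ignoring order and then restoring it. By definition, there are $\binom{n}{k}$ ways to choose the underlying $k$-element subset. Once such a subset is fixed, the number of ways to arrange its $k$ elements into an ordered list is $k!$, again by the factorial count of Section~\ref{sec:factorials-intro}. Since every ordered selection arises from exactly one subset together with exactly one ordering of that subset, the number of ordered selections is $\binom{n}{k}\cdot k!$.

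Equating the two counts gives $\binom{n}{k}\cdot k! = \frac{n!}{(n-k)!}$, and dividing by $k!$ yields the claimed formula. The only real subtlety — and the main thing to justify carefully — is that pairing a subset with an ordering of it sets up a genuine bijection with the ordered selections: distinct (subset, ordering) pairs produce distinct lists, and every list is obtained in this way exactly once. Once that correspondence is pinned down, the remaining arithmetic is routine.
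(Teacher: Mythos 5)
Your proposal is correct and is essentially the paper's Method~1: you count ordered selections of $k$ distinct elements as $n(n-1)\cdots(n-k+1)=\frac{n!}{(n-k)!}$ and then account for the $k!$ orderings of each underlying subset, exactly as the paper does when it divides by $k!$ ``since the order of choosing the numbers does not matter.'' Your phrasing as an explicit count-in-two-ways with the bijection between (subset, ordering) pairs and ordered lists is just a slightly more formal presentation of the same argument.
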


\begin{proof}
\textbf{Method 1:} There are $n$ choices for the first element, then $n-1$ choices for the second, 
and so on, until there are $n-k+1$ choices for the $k$th element. These choices are independent, so the number of ways to select these $k$ numbers is
the product
\[n (n-1) \cdots (n-k+1) = \frac{n!}{(n-k)!}.\]
The same set of $k$ numbers can be chosen in $k!$ different ways.
We divide by $k!$ since the order of choosing the numbers does not matter.

\smallskip

\textbf{Method 2:} There are $n!$ ways to line up the numbers in $\{1,2,\ldots,n\}$.
We then choose the first $k$ elements in line.
We divide by $k!$ because the order does not matter among the chosen numbers, and divide by $(n-k)!$ because the order does not matter among the numbers that are not chosen.
\end{proof}

\begin{example} \label{E47chairs}
Suppose there are $47$ students in Math 301. 
If there are $50$ labeled chairs, what is the number of ways the 47 students can be seated?

\smallskip

\textbf{Answer 1:} $\binom{50}{47} \cdot 47!$.

\smallskip

The reason is that there are $\binom{50}{47}$ ways to choose $47$ chairs to use and, once we have chosen 47 chairs, there are 47! ways to assign the students to chairs. 

\smallskip

\textbf{Answer 2:} $\frac{50!}{3!}$. 

\smallskip

To see this, add three ghost students to the class. There are $50!$ ways to assign the students and ghosts to chairs.
There are $3!$ ways to rearrange the $3$ ghosts without changing the arrangement of the students. We divide by $3!$ since we counted each arrangement of students $3!$ times.

\smallskip

Luckily, the two answers are the same because
\[ \binom{50}{47}\cdot 47! = \frac{50!}{47!\ 3!}\cdot 47! = \frac{50!}{3!}. \]
\end{example}

\subsection*{Exercises}
\label{ss:counting-problems}

\begin{enumerate}

\item What is the sum of the numbers from 10 to 100? That is, what is 10+11+12+...+100?																				
\item Ten people walk into a room and everyone shakes hands with everyone else. How many handshakes occurred?																				
\item Compute the sum 3+6+9+12+...+99.																				
\item How many ways can you rearrange the letters in the word EIGHT?																				
\item In a betting game, several players keep out-bidding each other. The first player puts one chip into the empty pot. The second puts two chips in, then the third puts three chips in, and so on. How many players are needed to continue bidding in this fashion for the pot to contain at least 50 chips?																				

\item How many ways are there to choose $5$ states to visit out of $50$ states?

\item How many ways are there to rank the top $5$ states out of $50$ states that you would be most excited to visit, in order from one through five?

\item How many ways are there to give 4 identical teddy bears to 7 kids, so that no kid receives more than one teddy bear?

\item Verify that $\binom{n}{2}=\frac{n!}{2!(n-2)!}$ is indeed equal to $\frac{n(n-1)}{2}$, and that 
$\binom{n+1}{2}$ equals $T_n$.

\item Let $s_n$ be the sum of the first $n$ odd integers, so $s_1=1$ and $s_2 = 1+3 = 4$. 
\begin{enumerate}
    \item Find $s_3$ and $s_4$. \item Make a conjecture about the value of $s_n$. \item Try to prove your conjecture.
\end{enumerate}

\end{enumerate}

\section{Introduction to graphs}

\begin{videobox}
\begin{minipage}{0.1\textwidth}
\href{https://youtu.be/yHK59HOoaP0}{\includegraphics[width=1cm]{video-clipart-2.png}}
\end{minipage}
\begin{minipage}{0.8\textwidth}
Click on the icon at left or the URL below for this section's short video lecture. \\\vspace{-0.2cm} \\ \href{https://youtu.be/yHK59HOoaP0}{https://youtu.be/yHK59HOoaP0}
\end{minipage}
\end{videobox}

In Section~\ref{S1.1}, the graph of $K_5$ helped us visualize the problem of $5$ students shaking hands.
More generally, in Chapter~\ref{chap:graphtheory}, we will define $K_n$ to be the graph with $n$ vertices, such that every pair of vertices is connected by an edge.  The number of edges in $K_n$ is $\binom{n}{2}$.

Graphs are useful for other problems too.  For example, suppose the $5$ students meet $3$ employers at a job fair.  
Each employer shakes the hand of each student.
How many handshakes are there?
Draw a graph to help you
visualize these handshakes, using a vertex to represent each person and an edge to represent each handshake.
This is an example of a \textit{bipartite graph}; 
this kind of graph is useful when studying connections between two different types of things, like students and employers, or students and classes.

\subsection{Walks on graphs}

Here is the famous
K\"onigsberg bridge problem (now Kalingrad, Russia) --- the drawing below has two islands in the middle of the Pregel river, along with seven bridges connecting the islands and the river banks.\footnote{The last time Professor Pries taught this course, there were two students from Kalingrad in the class.  They were very excited to hear about this example and described how the bridges look to the class.}

\begin{center}
\includegraphics[width=.4\textwidth]{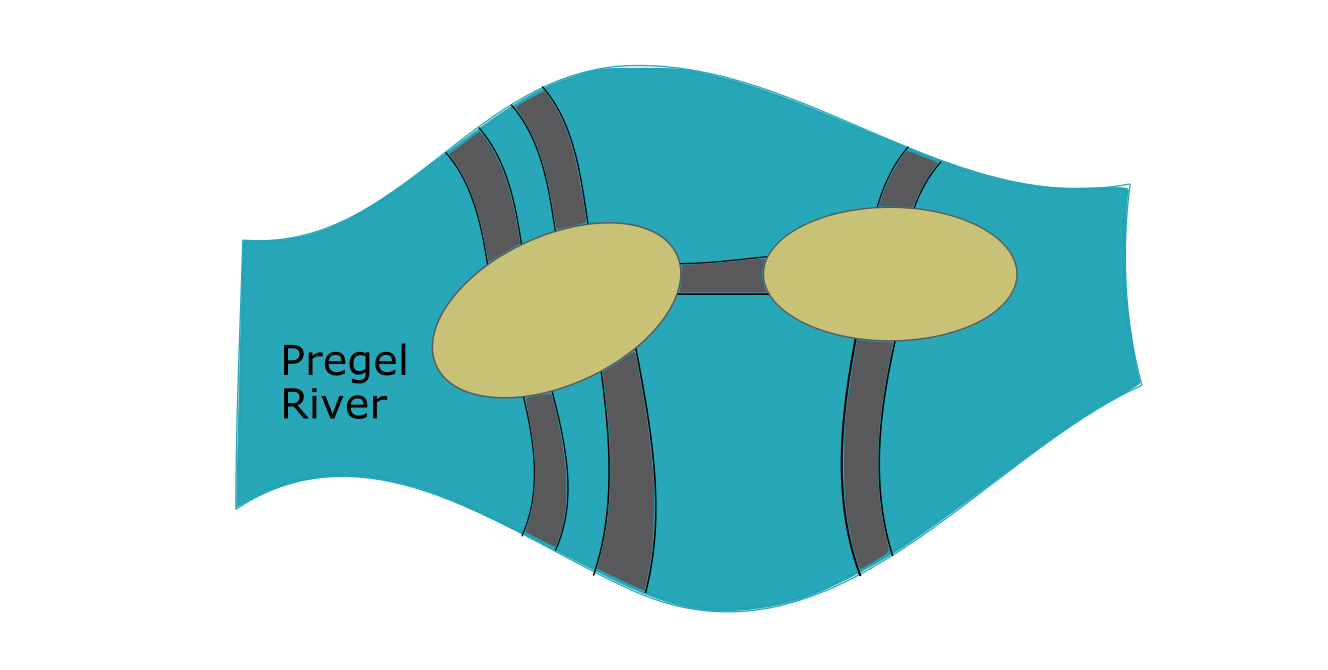}
\end{center}

\begin{question}
\label{ques:Konigsberg}
Can you plan a walk that crosses each bridge exactly once?
\end{question}

\begin{answer}
In 1736, Euler proved it is impossible.
Try to explain to your group why it is impossible.
\end{answer}

\begin{center}
\includegraphics[width=1in]{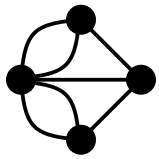}
\end{center}

\begin{question} \label{QKdual}
The graph above is called the graph of K\"onigsberg.
Without lifting your pencil from the page, can you trace your pencil along the edges of the graph above so that each edge is drawn exactly once?
\end{question}

\subsection{When are two graphs the same?}

When drawing a graph, the position of the vertices and the edges does not matter. The only information that is relevant in a graph is which pairs of vertices are connected.  In fact, the following two pictures illustrate the same graph.

\begin{center}
\includegraphics[width=2.5in]{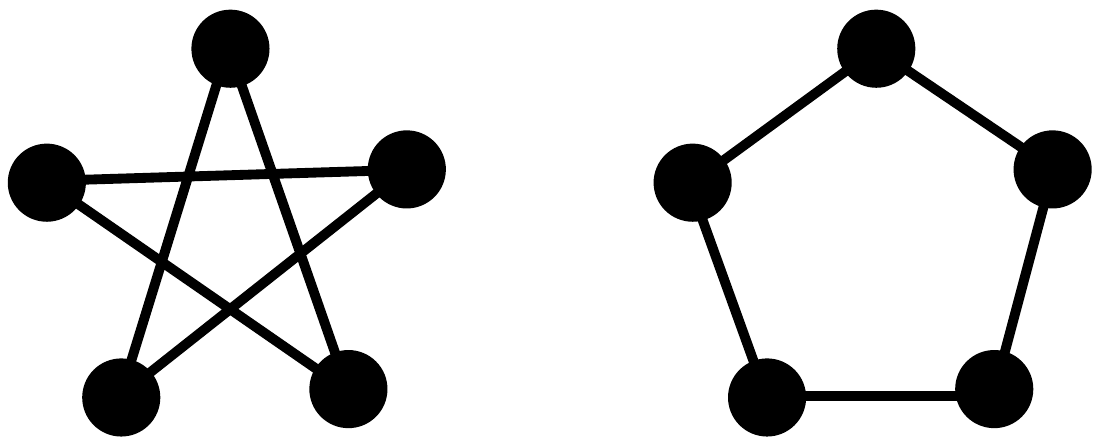}
\end{center}

Here are some ways to tell if two graphs are different:

\begin{enumerate}
  \item The number of vertices for the two graphs is different. 
  \item The number of edges for the two graphs is different.
\end{enumerate} 

\begin{question}\hfill \label{Qgraphdifferent}
\begin{enumerate}
  \item Find two graphs with the same number of vertices and the same number of edges which you are convinced are still different.
  \item Find another way of telling if two graphs are different.
  (There are many possible answers to this question).
\end{enumerate}
\end{question}

\begin{question} \label{Qgraphdifferent2}
Three of the following four graphs are the same, meaning that one of the following four graphs is different from all of the others.
Can you identify any two graphs which are the same?
Can you identify any two graphs that are different?

\begin{center}
    \includegraphics[width=.75\textwidth]{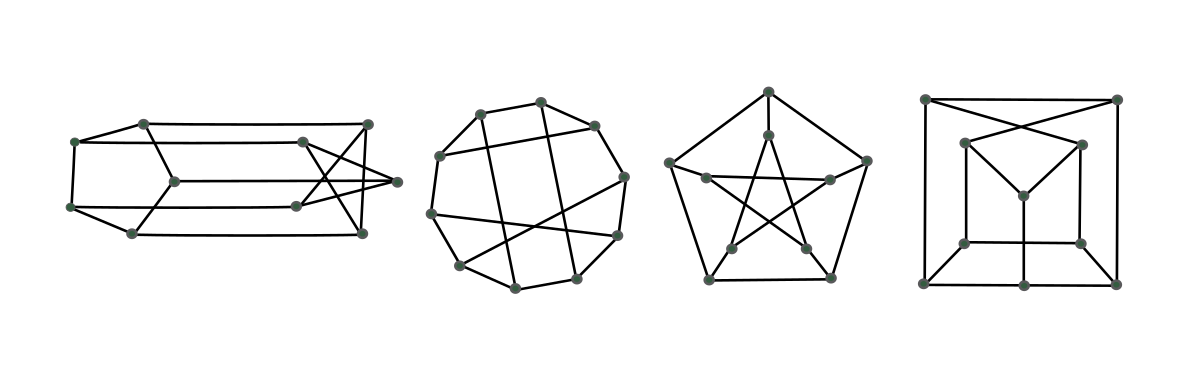}
\end{center}


\end{question}

\subsubsection*{Exercises}

\begin{enumerate}
\item How many vertices does the cube graph have?										
\item How many edges does the cube graph have?										
\item True or False: The graph of the cube is PLANAR, meaning it can be drawn on a piece of paper with no edges crossing.										
\item The graph of the cube is BIPARTITE, meaning its vertices can be colored green and gold so that vertices of the same color are not connected by an edge.										
\item What is the degree of each vertex in the cube graph?										

\item Imagine a graph where the vertices represent apartment buildings and the edges represent streets. 
A snow plow operator needs to plow each street, return home, and does not want to travel 
any more distance than necessary.
\begin{enumerate}
\item Find a way of removing edges
from the graph of K\"onigsberg to make the snow plow operator happy. 

\item Find a way of adding edges to the graph of K\"onigsberg to make the snow plow operator happy. 

\item Does the location of the 
plow operator's house make a difference?

\item In mathematical terms, describe what needs to be true about a graph for the
snow plow operator to be happy.
\end{enumerate}

\item Again, let the vertices of a graph represent apartment buildings and the edges represent streets. The mail carrier needs to deliver mail to each apartment building, return home, and does not want to travel along any more roads than necessary.
\begin{enumerate}
  \item Draw several graphs where the mail carrier is happy and several where the mail carrier is unhappy.
  \item Does the location of the mail carrier's house make a difference?

\item In mathematical terms, describe what needs to be true about a graph for the
mail carrier to be happy.
\end{enumerate}

\item How is the graph of 
K\"onigsberg related to the picture of the K\"onigsberg bridge problem?
How is Question~\ref{QKdual} related to Question~\ref{ques:Konigsberg}?

\item Answer Question~\ref{Qgraphdifferent}.

\item Answer Question~\ref{Qgraphdifferent2}.
\end{enumerate}\

\section{Introduction to SAGE}

Earlier in this chapter, we introduced 
triangular numbers, factorials, and binomial coefficients.  Later in the book, we will introduce other famous numbers like the Fibonacci numbers and Catalan numbers.  It is often nice to have an exact formula for these numbers.  The process of computing them on a calculator can be slow and leads to small errors.  

A faster and more powerful method is to work with computing software.  In this course, we will work with SAGE, a free on-line open-source program, based on python.

To get started with Sage:
go to the website
\begin{verbatim}
http://sagecell.sagemath.org/
\end{verbatim}

Try typing easy commands like \texttt{2+3} or \texttt{factor(2021)} and then evaluating.

For larger jobs or if you want to save your work, you can start a free CoCalc account at:

\begin{verbatim}
http://www.sagemath.org/
\end{verbatim}

The sage reference manual is at:
\begin{verbatim}
http://doc.sagemath.org/html/en/reference/
\end{verbatim}

A more advanced overview of using sage for combinatorics can be found at
\begin{verbatim}
https://doc.sagemath.org/html/en/reference/combinat/sage/combinat/tutorial.html
\end{verbatim}

\begin{example}
Which is bigger, $3^6$ or $6!$?
One method is to compute:
\begin{verbatim}
sage: 3^6;
output 729
sage: factorial(6);
output 720
\end{verbatim}

Another method is to write:
\begin{verbatim}
sage: 3^6 - factorial(6) > 0;
output: True
\end{verbatim}
\end{example}

\begin{example}
For which values of $k$, is the binomial 
coefficient $\binom{11}{k}$ bigger than $100$?
A slow method is to compute 
$\binom{11}{k}$ for each $0 \leq k \leq 11$, and see if the answer is bigger than $100$.
Another method is to compute them all at once:
\begin{verbatim}
sage: [binomial(11, k) for k in range(12)]
output: [1, 11, 55, 165, 330, 462, 462, 330, 165, 55, 11, 1]
\end{verbatim}
Note that the command \texttt{range(m)} includes all integers $0 \leq m \leq m-1$.
From this we see that $\binom{11}{k} > 100$
when $3 \leq k \leq 8$.
Here is another method that gives the values of $k$ as the output.
\begin{verbatim}
sage: [k for k in range(12) if binomial(11,k) >100]
output: [3, 4, 5, 6, 7, 8]
\end{verbatim}
\end{example}

\subsection*{Exercises}

Use Sage to answer the following questions.

\begin{enumerate}
    \item Factor $2022$.
    \item Compute $4^{4^4}$.
    \item Which is bigger, $(4!)!$ or $4^{4^4}$?
    \item Compute the product of the smallest positive fifty odd numbers (starting with $1$).
    \item What is the smallest value of $k$ for which $\binom{13}{k}$ is bigger than $10$?
    \item What is the smallest possible value of $n+k$ for which $\binom{n}{k}$ is bigger than $100$?
\end{enumerate}

\chapter{Counting principles}\label{chap:countingprinciples}

In this chapter, we establish some of the basic principles of counting.  In Sections~\ref{Saddsubtract} - \ref{Scombine}, we use the basic arithmetic principles of addition, subtraction, multiplication, and division to count things more efficiently.
In Section~\ref{Ssetcourse}, we begin to 
reinterpret counting problems using set theory and count the number of subsets of a finite set.
In Sections~\ref{Ssetcourse2} - \ref{Sdivequiv}, we use set theory to describe the four basic arithmetic principles and the 
inclusion-exclusion principle.

\section{Motivation} \label{Smotivate2}

Students $a$, $b$, $c$, and $d$\footnote{To protect their privacy, students are identified by their first initial in this book.} sat down in the library to start their combinatorics homework.  They read the first problem that they were assigned.

\begin{question}
\label{Qhw1}
How many ways are there to choose 6 numbers from the set $\{1, \ldots, 20\}$ so that at least four of them are odd?
\end{question}

Student $d$ watched as $a$, $b$, $c$ each worked on the problem for several minutes, using paper, calculators, and tablets. 
Finally, student $d$ said `Hey, I don't know how to get started with this.'

\textbf{Student $a$}: My thought is there are 10 odd numbers between $1$ and $20$.  So there are 10 choices for the first odd number, 9 choices for the second, 8 choices for the third, and 7 choices for the fourth.  The last two numbers can be odd or even and there are 16 numbers left.  So that gives $16$ choices for the fifth and $15$ choices for the sixth.  So the answer is
\[10 \cdot 9 \cdot 8 \cdot 7 \cdot 16 \cdot 15 = 1,209,600.\]

\textbf{Student $b$}: You're forgetting that the order doesn't matter.  So first we need to choose four numbers out of the 10 odd numbers, then we need to choose $2$ out of the remaining $16$ numbers.  So the answer is
\[\binom{10}{4} \cdot \binom{16}{2} = \frac{10!}{4!6!} \cdot \frac{16!}{2!14!} = 25,200.\]

\textbf{Student $c$}: 
I have a different way of doing it.  There are $\binom{20}{6}$ ways to choose six integers out of the 20, but the probability that a number is odd is $1/2$.  So the answer is 
\[\binom{20}{6}\cdot\frac{1}{2^4} = \frac{20!}{6!14!}\cdot\frac{1}{2^4} =
2422.5.\]
Wait, that's not an integer.... 

\textbf{Student $d$}: What if there are more than four odd numbers?
Forget this, let's try the next question.

\begin{question}
\label{Qhw2}
In a game of poker, you are dealt five cards from a standard $52$ card deck.
How many ways can you be dealt a full house?
\end{question}

\textbf{Student $c$}: I've never played cards.
What is a full house?\footnote{If you are not familiar with cards or poker, there is a lot of helpful information in  Section~\ref{poker}.}

\textbf{Student $a$}: It's a triple of the same number together with a pair of a different number. Like 6 of hearts, 6 of clubs, and 6 of diamonds, together with 3 of hearts and 3 of spades.  So I'm thinking for the first card, we have $52$ choices; no matter what number it is, to get a triple we
need to choose two more out of the three remaining cards of that number, and there are three ways to do that.
Then there are $48$ choices for the next card; no matter what number it is, to get a pair we need to choose one more out of the three remaining cards of that number, and there are three ways to do that.
So the answer is:
\[52 \cdot 3 \cdot 48 \cdot 3 =
22464.\]

\textbf{Student $b$}:
That's over counting again because it doesn't matter which card is dealt first. Instead, let's choose two numbers out of $13$, then choose three suits for the first number and then choose two suits for the second number.
So the answer is:
\[\binom{13}{2}\cdot \binom{4}{3}
\cdot \binom{4}{2}
=\frac{13 \cdot 12}{2} \cdot 4 \cdot 6 =1872.\]

\textbf{Student $d$}: On the web, it says that the probability of a full house is $0.001441$ and there are $\binom{52}{5} =2,598,960$ possible hands.  If we multiply those together, then we get $3745.10136000000$, so the answer is probably $3745$.

\textbf{Student $c$}: Maybe we need to read more of the book.

\subsection*{Exercises}

\begin{enumerate}
    \item Figure out something that student $a$ said that was correct and something that was wrong.
    \item Figure out something that student $b$ said that was correct and something that was wrong.
    \item Figure out something that student $c$ said that was correct and something that was wrong.
    \item Figure out something that student $d$ said that was correct and something that was wrong.
    \item Do you think the students are close to solving Question~\ref{Qhw1}?  What advice would you give them?
    \item Do you think the students are close to solving Question~\ref{Qhw2}?  What advice would you give them?
\end{enumerate}

\section{The addition and subtraction principles} \label{Saddsubtract}

\begin{videobox}
\begin{minipage}{0.1\textwidth}
\href{https://www.youtube.com/watch?v=apKGi7NJeGM}{\includegraphics[width=1cm]{video-clipart-2.png}}
\end{minipage}
\begin{minipage}{0.8\textwidth}
Click on the icon at left or the URL below for this section's short video lecture. \\\vspace{-0.2cm} \\ \href{https://www.youtube.com/watch?v=apKGi7NJeGM}{https://www.youtube.com/watch?v=apKGi7NJeGM}
\end{minipage}
\end{videobox}

The first way we learn to count is ``by $1$'s,'' that is, add $1$ to a running total for every item in a list. 
If the number of things we are trying to count is finite, it is always possible to make a list of them and count them one by one.
However, this can take a very long time and it is easy to forget some things that should be on the list, or to write some things twice.
It speeds things up to use a computer to generate the list, but even that can be slow if the list is very long.

One strategy is to split up the items into shorter lists. We can count the number on each shorter list and then add those numbers together.
For example, student E has climbed $5$ tall mountains and $8$ short mountains so has climbed $5+8=13$ mountains in total.

The more general principle is this:
\begin{quote}
	\textbf{Addition Principle:}  The sum $a+b$ counts the total number of things in a collection formed by adding a collection of $b$ things to a collection of $a$ things.
\end{quote}

\begin{example}
Suppose we wish to count the number of two-digit positive integers that end in either $0$ or $1$.  We can first count those that end in $0$: there are $9$ of them, one for each possible tens digit.  There are also, similarly, $9$ of them that end in $1$.  So there are $9+9=18$ such numbers in total.
\end{example}

Suppose now that we have 90 pieces of fruit (specifically apples, bananas, and oranges) in your fruit-storing refrigerator.  If we wanted to count how many apples and bananas we had, we could add up the number of apples and the number of bananas.  Let's say there were a total of 87 apples and bananas - this is a lot of fruit to count (and carry!).  Is there an easier way to do this?
\begin{quote}
	\textbf{Subtraction Principle:}  If $b$ things are removed from a collection of $a$ things, then there are $a-b$ things left.
\end{quote}
We can easily spot that there are 3 oranges in our refrigerator while the rest of the fruit are apples and bananas.  Thus, we have $90-3=87$ apples and bananas.

\begin{example}
 Suppose we wish to count the two-digit positive integers that end in either $0,1,2,3,4,5,6,7,$ or $8$.  We could do it by adding $9$ to itself $9$ times, but we could alternatively use the subtraction principle by counting a larger set and subtracting the ones we don't want!

In particular, there are $90$ two-digit positive integers, namely the numbers from $10$ up through $99$.  Exactly $9$ of them end in the digit $9$, and so we subtract these from the total of $90$ to get $90-9=81$ of them that don't end in a $9$.
\end{example}

\subsection*{Exercises}

\begin{enumerate}
    \item Identify when students $a$, $b$, $c$, and $d$ used (or should have used) the addition and subtraction principles in Section~\ref{Smotivate2}.
    \item Identify a place in Chapter~\ref{chap:intro} where the addition (resp.\ subtraction) principle was used.
    \item Suppose you need to choose one more class to complete your schedule for next semester at CSU. You need to choose exactly one course from the following list: math, science, and music theory. The math class is offered at six different times, the science course is offered at two different times and the music theory class is offered at three different times throughout the week. If none of the class times create a conflict with your current schedule, how many options do you have for choosing a class? 
    \item How many two-digit numbers are not divisible by 5?
    
    \item How many ways are there to choose four numbers from 
    $1, \ldots, 100$, such that 
    at most one is even?
    
    \item How many numbers from $1$ to $3^6$ are not divisible by $3$?
    
    \item Suppose you want to make a bouquet of flowers native to Colorado.  You have 53 flowers and 3 types: poppymallow, harebell, and sunflowers.  If you have 22 poppymallows and 17 harebells, how many total flowers would be in a bouquet of all the sunflowers and poppymallows?
\end{enumerate}

\section{The multiplication and division principles}
\label{Smultdivide}

There are two other principles, called the multiplication and division principles.  
  Section 2.2's video includes the multiplication principle, but the division principle is covered in the following video.

\begin{videobox}
\begin{minipage}{0.1\textwidth}
\href{https://www.youtube.com/watch?v=Q0gk4yBuVj0}{\includegraphics[width=1cm]{video-clipart-2.png}}
\end{minipage}
\begin{minipage}{0.8\textwidth}
Click on the icon at left or the URL below for this section's short video lecture. \\\vspace{-0.2cm} \\ \href{https://www.youtube.com/watch?v=Q0gk4yBuVj0}{https://www.youtube.com/watch?v=Q0gk4yBuVj0}
\end{minipage}
\end{videobox}

Here is an example of the multiplication principle at work.

\begin{example} \label{Koutfit}
Student $k$ owns $15$ shirts and $6$ pairs of pants.
How many possible outfits (consisting of one shirt and one pair of pants) are possible?
Well, to form an outfit, student $k$ first chooses a shirt (in $15$ ways) and then a pair of pants (in $6$ ways).
So there are $15\cdot 6=90$ possible outfits.
\end{example}

The more general principle is this:

\begin{quote}
	\textbf{Multiplication Principle:}  The product $a\cdot b$ counts the number of ways to choose one thing in $a$ ways and then another in $b$ ways.
\end{quote}

\begin{example}
Suppose we wish to count the number of two-digit positive integers that end in either $0, 4,$ or $8$.  There are $9$ ways to choose the first digit, and $3$ possibilities for the second digit, for a total of $9\cdot 3 =27$ possibilities.
\end{example}

Suppose now we want to sell our 90 pieces of fruit at the local farmer's market.  We can fit 10 pieces of fruit in each fruit basket. Then we need $90/10=9$ baskets to carry all the fruit.
  
\begin{quote}
    \textbf{Division Principle 1:}  If $a$ things are grouped evenly into collections of size $b$, there are $a/b$ collections.
\end{quote}

Let's consider another scenario. 

\begin{example}
Suppose 36 CSU students are traveling to Denver for a combinatorics conference. If each university van seats 4 people, $36/4=9$ vans are needed to transport the students. 
\end{example}

\begin{example}
How many lotto tickets are there, if a lotto ticket is a collection of $5$ different numbers from $\{1, \ldots, 90\}$, where order does not matter. 

We already know one way to solve this problem!  The number of lotto tickets is the number of ways to choose $5$ numbers out of $90$.  This equals
\[\binom{90}{5}=\frac{90!}{5!\cdot85!}=\frac{90\cdot 89\cdot 88\cdot 87\cdot 86}{5!} =43,949,268.\] 

We can do this problem a different way using the division principle to gain a better understanding why we divide by $5!$.
There are $90\cdot 89\cdot 88\cdot 87\cdot 86$ ways to pick $5$ different numbers from $1$ to $90$ one at a time. 
We put two lotto tickets in the same pile if the numbers on them are the same, just written in a different order.  
Then each pile has 
$5!=120$ lotto tickets in it.
Since the order of the numbers on a lotto ticket does not matter, we have counted each ticket $5!$ times.
Hence the total number of lotto tickets is $\frac{90\cdot 89\cdot 88\cdot 87\cdot 86}{5!}$.
\end{example}

An alternative formulation of the division principle is as follows:
\begin{quote}
    \textbf{Division Principle 2:} If $a$ things are grouped evenly into $c$ collections of equal size, the number of things in each collection is $a/c$.
\end{quote}

For example, if we divide $90$ pieces of fruit evenly into $9$ baskets, then there are $10$ pieces of fruit in each basket.  Similarly, if $36$ CSU students driving to Denver divide themselves evenly into $9$ cars, then there are $4$ people in each car. 
We end this section with a more challenging example that can be solved with either the division or multiplication principle.

\begin{example}
How many different ways can $10$ people be grouped into $5$ pairs of two?

One approach uses the multiplication principle: Find a predetermined way of ordering the people, such as age). 
The youngest person has $9$ possible partners, then the youngest remaining has $7$ possible partners, then the youngest remaining has $5$ possible partners, then the youngest remaining has $3$ possible partners, and then the last two people will be partners.
Hence the total number of ways is \[9 \cdot 7\cdot 5\cdot 3 \cdot 1=945.\]

Another solution uses the division principle: There are $10!$ ways to organize the people in a line.  We pair the first and second people, the third and fourth people, etc.
There are $5!$ ways to rearrange the ordering of the pairs, which does not affect who is paired with whom.
Also, there are $2$ ways to switch the order 
of the people in each of the five pairs, which also does not affect who is paired with whom.
So the total number of pairs is 
\[\frac{10!}{5! \cdot 2^5} =945.\]
\end{example}

\subsection*{Exercises}
\begin{enumerate}

\item How many three-digit positive integers are even?

\item You have 6 distinct marbles and 2 identical buckets.  How many different ways can you put 3 marbles in each bucket?																									
\item In how many ways can you attach the letters of MATH in some order to a piece of string? (If you turn the string over so that the order of the letters is reversed, it is not considered a different possibility. So for instance, MATH and HTAM are considered the same ordering.)																									
\item There are 10 people in a dance class.  How many ways can they pair off into 5 pairs of dance partners?  (Gender does not matter in making the pairings.)																									
\item There are 10 people in a dance class. In how many ways can the instructor choose 4 people and put them into two pairs to demonstrate a dance move?																									
\item Explain why the sum of all the degrees of the vertices of any graph is even. (Hint: How many times is each edge counted?)																									

\item There are 30 granola bars.  Decide which sentence uses division principle 1 and which uses division principle 2:
\begin{itemize}
    \item If there are 5 granola bars in each box, then the number of boxes is 6. 
\item If there are 6 boxes of granola bars, then each box contains 5 granola bars.
\end{itemize}

\item How many ways are there to group 18 students into $9$ teams of $2$?

\item How many ways are there to group $18$ students into $6$ teams of $3$?

\item Identify when students $a$, $b$, $c$, and $d$ used (or should have used) the multiplication and division principles in Section~\ref{Smotivate2}.
    \item Identify a place in Chapter~\ref{chap:intro} when the multiplication (resp.\ division) principle was used.

\end{enumerate}

\section{Combining the principles}
\label{Scombine}

One difficulty in solving counting problems is that there are often several different ways to approach the problems.  Small errors in the set-up can make an answer completely wrong. 
In this section, we review the four principles and explain how to distinguish between them and combine them.

We summarize the four basic principles in the following table.
Take care that you're using the right principle at each step!

\begin{center}
\begin{tabular}{@{}>{\raggedright}p{0.15\textwidth}>{\raggedright}p{0.8\textwidth}@{}}
	
	\toprule
	
	\textbf{Expression} & \textbf{Combinatorial meaning}
	\tabularnewline[\doublerulesep]
	
	\midrule
	$a+b$ & The total size of a collection formed by adding $b$ things to $a$ things. 
	\tabularnewline
	$a-b$ & The number of elements left after removing $b$ things from  $a$ things. 
	\tabularnewline
	$a\cdot b$ & The number of ways to choose one thing from $a$ things and then one from $b$ things.
	\tabularnewline
	$a/b$ & If $a$ things can be sorted into collections of size $b$, then $a/b$ is the number of collections.
	\tabularnewline

	\bottomrule
\end{tabular}
\end{center}

\begin{remark}
Here is another helpful tip for distinguishing addition and multiplication when counting.  Addition is used when making one choice \emph{or} another. Multiplication is used when making one choice \emph{then} another (when the choices are taken in succession and do not depend on the earlier choices).
\end{remark}

\begin{example} Suppose there are 3 car models and 4 bike models. In how many ways can you buy one vehicle? The answer is $3+4=7$. In how many ways can you buy one car and one bike? The answer is $3\cdot4=12$.
\end{example}

Here is an example of a problem that requires combining the principles together.

\begin{example}
  Bob has 5 brown shirts and 4 blue shirts, and 3 brown pants and 4 blue pants.  He wants to pick out an outfit in which his shirt and pants have the same color.  How many ways can he do this?
  
  We can split the problem into two cases: either he'll wear a brown outfit \textbf{or} a blue outfit, and we will \textbf{add} the results together by the addition principle.  
  
  For the brown case, he needs to pick one of $5$ brown shirts \textbf{and then} one of $3$ brown pants, so we \textbf{multiply} these to get $5\cdot 3=15$ possibilities.
  
  For the blue case, he needs to pick one of $4$ blue shirts \textbf{and then} one of $4$ blue pants, so we \textbf{multiply} these to get $4\cdot 4=16$ possibilities.
  
  Adding these together, we get $15+16=31$ outfits in all.
\end{example}

\subsection*{Exercises}
For the following exercises, make sure you identify each time you use one of the four principles.

\begin{enumerate}

\item Five people and five dogs meet in a park. The people all shake hands with each other and the people all shake hands with the dogs, but the dogs do not shake each other's hands. How many handshakes took place?															
\item The five people and five dogs meet in a park. How many ways can the people pair off with the dogs to go for a walk?															
\item How many three-digit numbers have two even digits and one odd digit?															
\item How many rearrangements of the word MATH start with a consonant (M, T, or H)?

\item How many ways are there to group 100 students into $50$ pairs of $2$?


\item How many odd three-digit numbers are there that don't end with 9?

\item A drama teacher has 30 student actors and actresses that make up a cast for the school play.  Each student has one role, where 10 students are in the chorus, 15 students are in supporting roles, and the rest of the students are lead characters. There are 450 total costumes for the cast.  If each cast member wears the same number of costumes and all costumes are worn, how many total costumes are worn by the lead characters?

\item A donut shop has 100 total donuts, where there are 38 vanilla frosted, 21 blueberry, 26 chocolate, and the rest are jelly-filled.  A very hungry customer wants all of your jelly-filled and blueberry donuts.  How many boxes do you need if each box holds six donuts? Note: it does not matter what donuts go in what box.  

\item You order dinner for a friend at a concession stand during a CSU soccer game.  You notice both burgers and burritos are being sold. To order a burger you must choose one option from each category:
\begin{itemize}
    \item Bun: wheat or sesame seed
    \item Patty: beef, veggie, or turkey
    \item Sauce: ketchup, mustard, relish, mayonnaise, none
\end{itemize}
To order a burrito, you must choose one option from each category:
\begin{itemize}
    \item Tortilla: white, wheat, or corn
    \item Filling: shrimp, carnitas, carne asada, or fajita veggies
    \item Cheese: cheddar, pepper jack, none
    \item Rice: brown, jasmine
\end{itemize}
How many possible dinners could you order for your friend if you buy them either a burger or a burrito (but not both)?
  
\end{enumerate}

\section{Sets, subsets, and the number of subsets} \label{Ssetcourse}
\begin{videobox}
\begin{minipage}{0.1\textwidth}
\href{https://www.youtube.com/watch?v=vwSn9UnHa1I}{\includegraphics[width=1cm]{video-clipart-2.png}}
\end{minipage}
\begin{minipage}{0.8\textwidth}
Click on the icon at left or the URL below for this section's short video lecture. \\\vspace{-0.2cm} \\ \href{https://www.youtube.com/watch?v=vwSn9UnHa1I}{https://www.youtube.com/watch?v=vwSn9UnHa1I}
\end{minipage}
\end{videobox}

\textit{Sets} are unordered collections, like a handful of marbles or a bag of fruit.  Many counting problems can best be phrased in terms of sets and subsets.  In Sections~\ref{Ssetcourse2} and \ref{Scoursemult}, we will 
rephrase the four principles in terms of sets.

\subsection{Basic definition of sets}

\begin{definition}[Informal]

A \defn{set} is a collection of distinct objects (in no particular order).  The objects in a set are called \defn{elements}.  We write the objects in a set inside squiggly brackets: $\{\}$.\footnote{Later in the book, we will consider \emph{multi-sets}, where the objects in the set can be the same, for 
example $\{a,a,b\}$.}
\end{definition}

For example, $\{a,b,c\}$ and $\{1,2,3,4\}$ are both sets.  The set $\{a,b,c\}$ is the same set as $\{b,c,a\}$ because the order of the elements does not matter.

\begin{notation}
We write $b\in A$ to mean that $b$ \emph{is an element} of the set $A$.  Similarly, $b\not\in A$ if $b$ is not an element of $A$.
\end{notation}

\begin{example}
For example, 
$4\in \{2,4,5\}$ but
$3\notin \{2,4,5\}$.
\end{example}

It is sometimes helpful to draw a set in ``blob notation'', consisting of a circle with points inside labeled by the elements of the set.  For instance, the figure below depicts the set $\{2,4,5\}$.

\begin{center}
     \includegraphics[width=.15\textwidth]{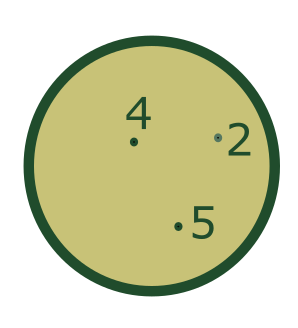}
\end{center}

\begin{definition}
If $A$ is a finite set, then
its \defn{size} (or cardinality)
(written $|A|$) is the number of elements in $A$.
\end{definition}

For example, $|\{2,4,5\}|=3$.  
The size of a set is found by counting the number of elements in the set.  For this reason, counting problems can often be phrased in terms of computing the cardinality of a set.

There is also a notion of cardinality for infinite sets, but we will not cover this topic in this document.  There are even different sizes of infinite sets!

\subsection{Set builder notation}

Writing down sets of large or infinite size can be hard.  One possible notation for the infinite set of positive integers is $\{1,2,3,4,\ldots\}$, but this is a bit ambiguous.
How do we know whether a number like $7$ is included? 
To describe larger or more complicated sets, without laboriously writing out every element, we use 
\textit{set builder notation}, as in the definition below.

\begin{definition}
 The notation $$\{x\in A \mid x\text{ has property }P\}$$ represents the set of all elements $x$ in the set $A$ that satisfy property $P$.  Sometimes we simply write $\{x \mid x\text{ has property } P\}$ if the set $A$ containing $x$ is clear.
\end{definition}

\begin{example}
 $\{x\in \{2,4,5\} \mid  x\mbox{ is even}\}=\{2,4\}$.
\end{example}

\begin{example}
Let $V$ be the set of all integers whose last digit is zero.  
Then $30 \in V$ but $31 \not \in V$.
Another description of $V$ is
\[V=\{10 \cdot k \mid k \mbox{ is an integer} \}.\]
\end{example}

\subsection{Famous sets}

It is helpful to fix notation for a few important infinite sets that are used frequently in mathematics:

\begin{definition}
$\emptyset=\{\ \}$ is the empty set, the unique set containing no elements.

$\N=\{0,1,2,3,4,\ldots\}$ is the set of nonnegative integers (or natural numbers).\footnote{In this book, we include $0 \in \N$, but many people exclude $0$ from being a natural number.}

$\Z=\{\ldots,-2,-1,0,1,2,\ldots\}$ is the set of integers.

$\Q=\{a/b \mid a\in\Z\mbox{ and }0\neq b\in\Z\}$ is the set of
fractions (rational numbers).

$\R$ is the set of real numbers.
\end{definition}

\begin{example}
The number $1.73 \in \Q$ is a good approximation for $\sqrt{3}$.
In Section~\ref{Sproofcont}, we will prove that $\sqrt{3}\notin\Q$. 
\end{example}

\begin{example}
In the 1760s, Lambert 
proved that the real number $\pi$ is not in $\Q$.
\end{example}

\begin{example}
There are several ways to describe the set of all even natural numbers: 
\[\{x\in \mathbb{N} \mid x \text{ is even}\}=\{0,2,4,6,8,\ldots\} = \{2 \cdot k \mid k \in \N\}.\]
\end{example}

\subsection{Subsets}

Informally, a \textit{subset} is a set contained in a bigger set.

\begin{definition}
A set $A$ is a \defn{subset} of a set $B$ (written $A\subseteq B$) if every element of $A$ is also an element of $B$.
\end{definition}

For instance, $\{2,4\}\subseteq \{2,3,4\}$, but $\{2,4,5\}\not\subseteq \{2,3,4\}$.

\begin{example}
Let $A$ be any set.
Then $A \subseteq A$ because every element of $A$ is an element of $A$.
Also $\emptyset \subseteq A$  because there are no elements of $\emptyset$.
\end{example}

\begin{example} For instance, all natural numbers are integers and all integers are rational numbers. All rational numbers are real numbers. We can write this as 
a \emph{nested sequence of inclusions}  \[\emptyset\subseteq\N\subseteq\Z\subseteq\Q\subseteq\R.\]
\end{example}

\subsection{The number of subsets}

We now address a famous counting question involving sets, which is to count the number of subsets of a set.  Here is an example of a concrete application of this problem.

\begin{example} \label{E4studentsub}
Four students say that they might go to office hours on Monday.  How many different groups might show up in office hours? 
\end{example}
The above problem can be modeled by counting the possible subsets of the set $\{a,b,c,d\}$ of four students.

To answer this question in general, we need to count the number of subsets in a set $A$ of size $n$.  Before we dive into the theorem, let's look at a few cases:  
\begin{itemize}
\setlength\itemsep{0pt}
    \item $n=1$: The subsets of $A=\{a\}$ are $\emptyset$ and $\{a\}$, so a set with $1$ element has $2$ subsets.
    \item $n=2$: The subsets of $A=\{a,b\}$ are $\emptyset$, $\{a\}$, $\{b\}$, and $\{a,b\}$, so a set with $2$ element has $4$ subsets.
    \item $n=3$: The subsets of $A=\{a,b,c\}$ are $\emptyset$, $\{a\}$, $\{b\}$, $\{c\}$, $\{a,b\}$, $\{a,c\}$, $\{b,c\}$, and $\{a,b,c\}$, so a set with $3$ element has $8$ subsets.
\end{itemize}

In each case, our answer was a power of $2$, with $2^1=2$, $2^2=4$, and $2^3=8$.
For a set $\{a,b,c,d\}$ with four elements, like in Example~\ref{E4studentsub}, we might guess that the number of subsets is $2^4=16$.

The following theorem proves that this pattern holds for any set of finite size.

\begin{theorem}\label{Tsubset}
If $S$ is a set with $n$ elements,
then $S$ has $2^n$ subsets.
\end{theorem}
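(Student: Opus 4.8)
The plan is to prove this by the multiplication principle, which fits naturally here since the theorem sits in the chapter developing exactly that tool. First I would name the elements of $S$, writing $S = \{a_1, a_2, \ldots, a_n\}$. The key observation is that specifying a subset $A \subseteq S$ amounts to deciding, for each element $a_i$ in turn, whether $a_i$ belongs to $A$ or not. Each such decision has exactly $2$ outcomes (``in'' or ``out''), and there are $n$ of these decisions to make, one per element.

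Next I would invoke the multiplication principle: since we choose the fate of $a_1$ in $2$ ways, then the fate of $a_2$ in $2$ ways, and so on through $a_n$, the total number of ways to make all $n$ decisions is the product $2 \cdot 2 \cdots 2 = 2^n$, with $n$ factors. A small secondary point to address is that the multiplication principle was stated earlier for choosing one thing and then another (two stages), whereas here we have $n$ stages; I would note that it extends to any finite number of successive independent choices by applying it repeatedly.

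The part that needs care — and what I expect to be the main obstacle — is arguing that this count of decision-sequences is exactly the count of subsets, i.e., that the correspondence between sequences of in/out choices and subsets of $S$ is a genuine bijection. I would spell out both directions: every sequence of choices determines a unique subset (namely the set of elements marked ``in''), and conversely every subset $A$ arises from exactly one sequence of choices (mark $a_i$ ``in'' precisely when $a_i \in A$). This guarantees that no subset is counted twice and none is omitted. As a sanity check, the all-``out'' sequence yields $\emptyset$ and the all-``in'' sequence yields $S$ itself, which matches the small cases $n = 1, 2, 3$ computed just above the statement.

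As an optional remark, I could mention the alternative proof by induction on $n$: a base case of $n = 0$ (the empty set has the single subset $\emptyset$, and $2^0 = 1$) or $n = 1$, together with an inductive step showing that adjoining one new element doubles the number of subsets, since each old subset either excludes or includes the new element. Since induction is deferred to a later section, however, I would present the multiplication-principle argument as the main proof and leave induction as a forward reference.
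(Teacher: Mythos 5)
Your proposal is correct and follows essentially the same argument as the paper's own proof: label the elements $a_1,\ldots,a_n$, decide for each whether it is in the subset (2 choices each), and apply the multiplication principle to get $2^n$. The extra care you take in verifying the bijection between decision sequences and subsets is a more explicit justification of a step the paper treats as immediate, not a different approach.
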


\begin{proof}[Proof of Theorem~\ref{Tsubset}]
Let's label the elements of $S$ as $a_1,a_2,a_3,\ldots,a_n$, so that $S=\{a_1,a_2,a_3,\ldots,a_n\}$. Picking a subset of $S$ is the same as
\begin{itemize}

\setlength\itemsep{0pt}
\item First, choosing if $a_1$ is in the subset or not (2 choices),
\item Next, choosing if $a_2$ is in the subset or not (2 choices),

\hspace{5mm}\vdots

\item Finally, choosing if $a_n$ is in the subset or not (2 choices).
\end{itemize}
At each step $i$, there are 2 choices (include $a_i$ or not). Since we make a decision $n$ times, the total number of subsets is $\underbrace{2\cdot 2\cdot\ldots\cdot2}_{n\mbox{ times}}=2^n$ by the multiplication principle.
\end{proof}

\begin{remark}
Here is a way to visualize this proof when $n=3$.
Let's say $S=\{a_1,a_2,a_3\}$. 
\begin{center}
\begin{tikzpicture}[
    node/.style={%
      draw,
      rectangle,
    },
  ]
    \node [node] (A) {$a_1$ in subset?};
    \path (A) ++(-170:\nodeDist) node [node] (B) {$a_2$ in subset?};
    \path (A) ++(-10:\nodeDist) node [node] (C) {$a_2$ in subset?};
    \path (B) ++(-120:\nodeDist) node [node] (D) {$a_3$ in subset?};
    \path (B) ++(-60:\nodeDist) node [node] (E) {$a_3$ in subset?};
    \path (C) ++(-120:\nodeDist) node [node] (F) {$a_3$ in subset?};
    \path (C) ++(-60:\nodeDist) node [node] (G) {$a_3$ in subset?};
    \path (D) ++(-105:\nodeDist) node [node] (H) {$\emptyset$};
    \path (D) ++(-75:\nodeDist) node [node] (I) {$\{a_3\}$};
    \path (E) ++(-105:\nodeDist) node [node] (J) {$\{a_2\}$};
    \path (E) ++(-75:\nodeDist) node [node] (K) {$\{a_2,a_3\}$};
    \path (F) ++(-105:\nodeDist) node [node] (L) {$\{a_1\}$};
    \path (F) ++(-75:\nodeDist) node [node] (M) {$\{a_1,a_3\}$};
    \path (G) ++(-105:\nodeDist) node [node] (N) {$\{a_1,a_2\}$};
    \path (G) ++(-75:\nodeDist) node [node] (O) {$\{a_1,a_2,a_3\}$};

    \draw (A) -- (B) node [pos=0.5,above] {no}(A);
    \draw (A) -- (C) node [above,pos=0.5] {yes}(A);
    \draw (B) -- (D) node [left,pos=0.25] {no}(A);
    \draw (B) -- (E) node [right,pos=0.25] {yes}(A);
    \draw (C) -- (F) node [left,pos=0.25] {no}(A);
    \draw (C) -- (G) node [right,pos=0.25] {yes}(A);
    \draw (D) -- (H) node [left,pos=0.25] {no}(A);
    \draw (D) -- (I) node [right,pos=0.25] {yes}(A);
    \draw (E) -- (J) node [left,pos=0.25] {no}(A);
    \draw (E) -- (K) node [right,pos=0.25] {yes}(A);
    \draw (F) -- (L) node [left,pos=0.25] {no}(A);
    \draw (F) -- (M) node [right,pos=0.25] {yes}(A);
    \draw (G) -- (N) node [left,pos=0.25] {no}(A);
    \draw (G) -- (O) node [right,pos=0.25] {yes}(A);
\end{tikzpicture}
\end{center}
\end{remark}

\begin{remark}
Theorem~\ref{Tsubset} even works when $n=0$, since the empty set $\emptyset$ is the unique set with $0$ elements, and there is $1=2^0$ subset of $\emptyset$, namely $\emptyset$.
\end{remark}

\begin{remark}
There is a more abstract way to think about Theorem~\ref{Tsubset}.
We can make another set $T$, whose elements are the subsets of $S$, which are sets themselves.
Then $2^n$ is the cardinality of $T$.
\end{remark}

\begin{example}
\label{Qevensub3}
Students $a,b,c$ say that they might show up to office hours.
Among the $8$ possible outcomes, 
how many of them have an even number of students coming to office hours.
The answer is $4$, namely
\[\emptyset, \{a,b\}, \{a,c\}, 
\{b,c\}.\]
\end{example}

\subsection*{Exercises}

\begin{enumerate}
\item Given the following sets, describe in words what the set contains. (There will be more than one correct description)
\begin{enumerate}
    \item $A=\{2n\mid n\in \mathbb{Z}\}$
    \item $B=\{x \mid 0\leq x<2\}$
    \item $C=\{(x,y) \mid y=x^2\}$
    \item $D=\{ y=mx+b \mid m=3 \}$
\end{enumerate}
\item Given the following mathematical descriptions, write a set in set builder notation that matches the description. (There will be more than one correct answer)
\begin{enumerate}
    \item The set containing all natural numbers greater than or equal to ten.
    \item The set containing all integers divisible by 3.
    \item The set containing nothing.
    \item The set of all perfect squares.
\end{enumerate}

\item Let $S=\{1, 
\ldots, 40\}$.
List the elements in
$\{x \in S \mid x \ \text{is a power of a prime}\}.$

\item You have 20 different books and you want to donate a subset of them to the library. The only requirements are that you want to donate at least 1 book and you do not want to donate all 20 books. How many different donations can you make?

\item This problem is about the subsets of $\{a,b,c,d\}$.  First, list all the subsets.
\begin{enumerate}
    \item How many of them have odd order?
    \item How many of them contain $\{a\}$?
    \item How many have size $0$, size $1$, size $2$, size $3$, and size $4$?
    \end{enumerate}
    
    \item 
    This problem is about the subsets of $\{a,b,c,d,e\}$.
    Without listing these subsets, make conjectures about the following problems.
    \begin{enumerate}
        \item What proportion of them have even order?
        \item What proportion of them contain $a$?
        \item How many have size $0$, $1$, $2$, $3$, $4$, and $5$?
    \end{enumerate}

    \item What is the output in SAGE when you type S=Subsets([1,2,3,4,5,6]) in line 1 and S.cardinality() in line 2?																
    \item If A is a set of size 4 and B is a set of size 7, what is the smallest possible size of the union of A and B?																
    \item You have 6 different books and you want to donate a subset of them to the library. The only requirements are that you want to donate at least 1 book and you do not want to donate all 6 books. How many different donations can you make?																
    \item How many subsets of $\{1,2,3,4,5\}$ contain the number 1?																
  \item If each student a, b, c, d, and e receives either 0 or 1 point for the quiz, how many sequences of five numbers are possible as the list of their grades?																
\end{enumerate}

\section{Addition and subtraction from the perspective of set theory} \label{Ssetcourse2}

\subsection{Union, intersection, and the addition principle}

We now can define the set operations that allow us to phrase the addition principle in terms of sets.

\begin{definition}
The \defn{union} of sets $A$ and $B$ is 
\[ A\cup B=\{x~:~x\in A\mbox{ or }x\in B\}. \]
\end{definition}

Here ``or" means $x\in A$ or $x\in B$ or both. (This is a different meaning of the word ``or" than in ``soup or salad" or ``paper or plastic".)

\begin{example}
We see that $\{2,4,5\}\cup\{2,5,7,9\}=\{2,4,5,7,9\}$.  The union consists of all the elements in the region of the diagram covered by the overlapping circles.
\end{example}

\begin{center}
    \includegraphics[width=.26\textwidth]{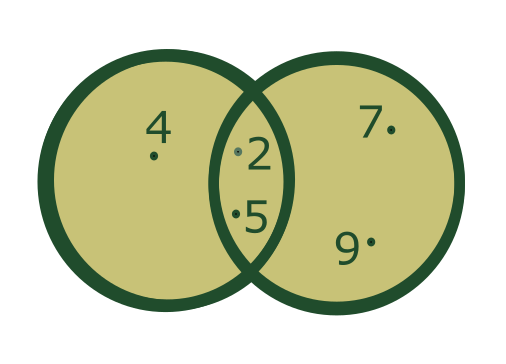}
\end{center}

\begin{definition}
The \defn{intersection} of sets $A$ and $B$ is 
\[ A\cap B=\{x~:~x\in A\mbox{ and }x\in B\}. \]
\end{definition}

\begin{example}
We have $\{2,4,5\}\cap\{2,5,7,9\}=\{2,5\}$.  In the diagram, it is the common region shared by the overlapping circles.
\end{example}

\begin{center}
    \includegraphics[width=.3\textwidth]{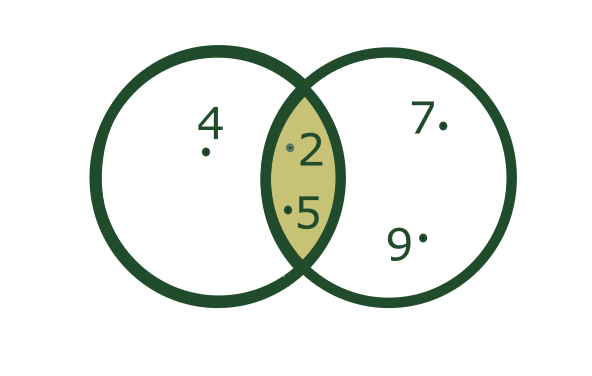}
\end{center}

Here is the set analogue of the addition principle.

\begin{lemma} [The addition principle for sets]
If $A$ and $B$ are finite sets with no elements in common (that is, $A \cap B=\emptyset$), then the union $A \cup B$ has size $|A|+|B|$.
\end{lemma}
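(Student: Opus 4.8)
The plan is to reduce this to the ordinary addition principle from Section~\ref{Saddsubtract} by exhibiting a single list that enumerates $A \cup B$ with no repetitions. First I would name the elements: write $A = \{a_1, \ldots, a_m\}$ where $m = |A|$, and $B = \{b_1, \ldots, b_n\}$ where $n = |B|$. Because $A$ and $B$ are sets of sizes $m$ and $n$, the $a_i$ are distinct among themselves and the $b_j$ are distinct among themselves.

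Next I would form the combined list $a_1, \ldots, a_m, b_1, \ldots, b_n$ and argue that it enumerates $A \cup B$ exactly once each. By the definition of union, every element of $A \cup B$ lies in $A$ or in $B$, so it occurs somewhere in this list. Conversely, to see that nothing is counted twice, I would use the hypothesis $A \cap B = \emptyset$: no $a_i$ can equal any $b_j$, since such an element would lie in both $A$ and $B$ and hence in their empty intersection. Thus the two blocks of the list share no entries, and the list has exactly $m + n$ distinct entries.

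Having established that $A \cup B$ consists of $m + n$ distinct elements, I would invoke the addition principle, which states that adjoining a collection of $n$ things to a collection of $m$ things produces $m + n$ things, to conclude $|A \cup B| = m + n = |A| + |B|$.

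The hard part here is not any calculation but pinning down exactly where the disjointness hypothesis does its work: it is precisely $A \cap B = \emptyset$ that prevents an element from being tallied once as a member of $A$ and a second time as a member of $B$. I would make this explicit and remark that, without disjointness, the naive count $|A| + |B|$ overstates $|A \cup B|$ by the number of shared elements, which motivates the inclusion-exclusion correction treated later in the chapter.
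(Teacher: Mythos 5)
Your proof is correct. Note that the paper itself gives no proof of this lemma at all: it states the lemma as the set-theoretic formalization of the informal Addition Principle from Section~\ref{Saddsubtract} and immediately follows it with a worked example (the mountains climbed by a student), treating the statement as essentially self-evident at the book's level of rigor. Your argument supplies exactly the missing justification, and it does so in the natural way: enumerate $A$ as $a_1,\ldots,a_m$ and $B$ as $b_1,\ldots,b_n$, concatenate the two lists, observe that the union hypothesis guarantees every element of $A \cup B$ appears and that $A \cap B = \emptyset$ guarantees no element appears twice, and then invoke the informal addition principle to count the combined list. Your closing remark --- that dropping disjointness overcounts by exactly the number of shared elements --- is also a good observation, and it is precisely the content of Theorem~\ref{thm:pie} (inclusion-exclusion for two sets) later in the chapter, so it ties your proof to the paper's subsequent development.
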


\begin{example}
A student has climbed the set of big mountains $A=\{\mbox{Longs peak}, \mbox{ Pikes peak}, \mbox {Mt.\ Evans}\}$ of size $|A|=3$, and the set of small mountains 
$B=\{\mbox{Arthur's rock}, \mbox{ Horsetooth}\}$ of size $|B|=2$.
Since there is no overlap between the sets of large and small mountains, E has climbed $|A|+|B|=5$ mountains.
\end{example}

\subsection{Subsets, complements, and the subtraction principle}

We now introduce the set operations needed to understand the subtraction principle in terms of sets. 
One example of subsets can be constructed with the \textit{set difference} operator as follows.

\begin{definition}
The \defn{difference} of sets $A$ and $B$ is 
\[A - B = \{x \in A \mid x \not \in B\}.\]
\end{definition}

\begin{example}
We see that $\{2,4,5\} - \{2,5,7,9\}=\{4\}$, and $\{2,5,7,9\} - \{2,4,5\}=\{7,9\}$.
\end{example}

\begin{definition}
\label{def:complement}
If $B\subseteq A$, then the \defn{complement} of $B$ in $A$ is the
difference $A - B$. 
If the set $A$ is clear from context, then this is written as 
$B^c$, where the $c$ stands for complement.
\end{definition}

\begin{center}
    \includegraphics[width=.2\textwidth]{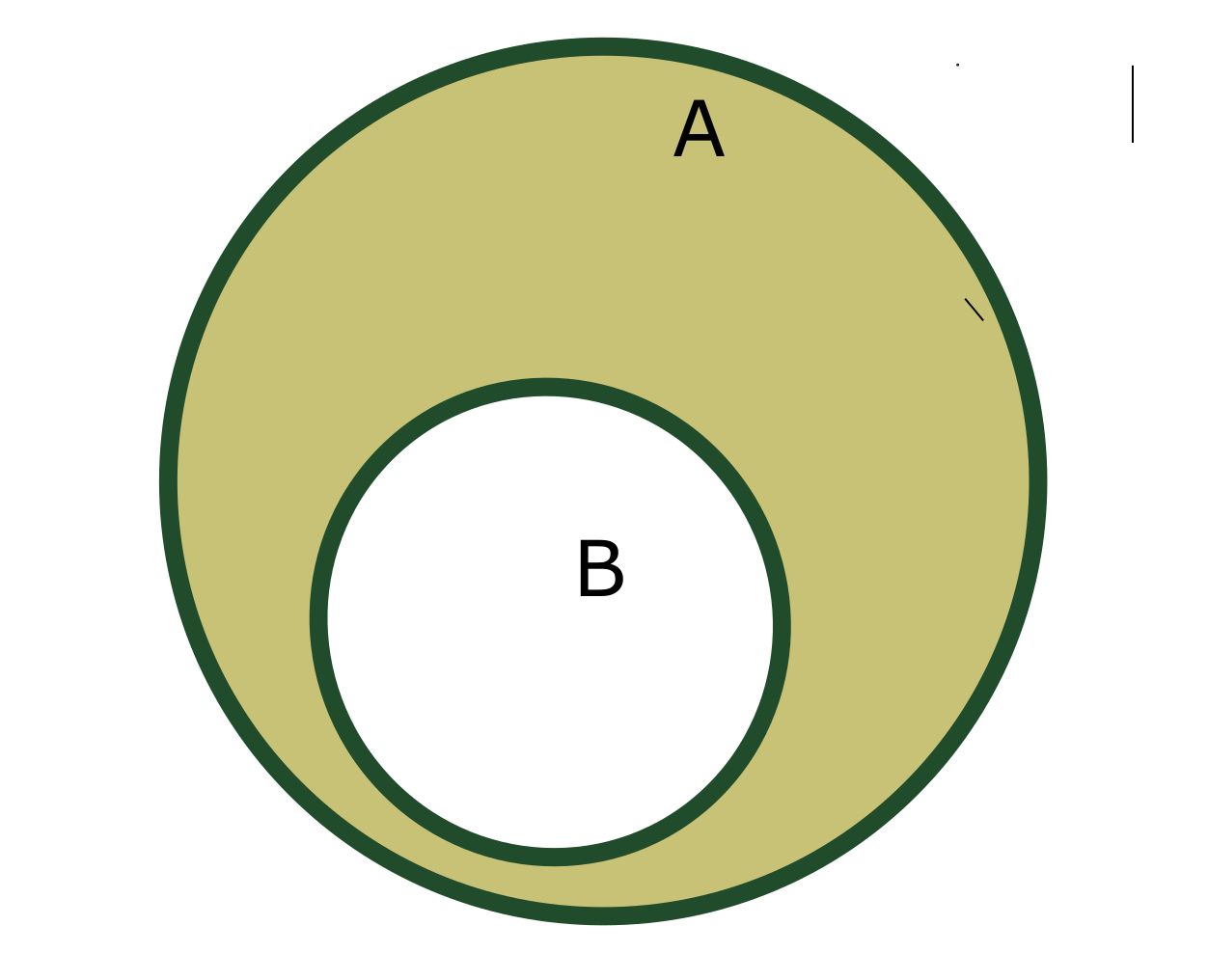}
\end{center}

We can now state the set analogue of the subtraction principle.

\begin{lemma}
[The subtraction principle for sets]
If $A$ and $B$ are finite sets and  $B \subseteq A$, then 
$|A - B| = |A|-|B|$.
\end{lemma}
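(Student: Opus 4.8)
The plan is to prove the subtraction principle for sets by reducing it to the addition principle for sets, which has already been established just above. The key observation is that when $B \subseteq A$, the set $A$ splits into two disjoint pieces: the part inside $B$, and the part outside $B$, which is exactly the difference $A - B$. So first I would argue that $A$ is the disjoint union of $B$ and $A - B$.

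The main structural step is to verify two things: that $B \cup (A - B) = A$, and that $B \cap (A - B) = \emptyset$. For the first, every element of $A$ either lies in $B$ or does not; if it lies in $B$ it is in the first piece, and if it does not lie in $B$ then (since it is in $A$) it lies in $A - B$ by the definition of set difference, so $A \subseteq B \cup (A-B)$. Conversely both $B$ (using $B \subseteq A$) and $A - B$ are subsets of $A$, giving the reverse inclusion. For the disjointness, any element of $A - B$ is by definition not in $B$, so nothing can lie in both $B$ and $A - B$.

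Having shown $A = B \cup (A - B)$ with $B$ and $A - B$ disjoint, I would apply the addition principle for sets to conclude that $|A| = |B| + |A - B|$. Rearranging this equation gives $|A - B| = |A| - |B|$, which is exactly the claim. This rearrangement is valid because all three quantities are finite (both $A$ and $B$ are assumed finite, so $A - B$, being a subset of $A$, is finite as well).

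I expect the only real subtlety to be making the decomposition $A = B \sqcup (A-B)$ precise, since this is the one place where the hypothesis $B \subseteq A$ is genuinely used: without it, $B$ would not be a subset of $A$ and $A - B$ together with $B$ need not reconstruct $A$. The rest is a routine application of the already-proven addition principle, so this is really a short derivation rather than a proof requiring a new idea. I would also note that an optional alternative is the more elementary ``removal'' phrasing matching the original Subtraction Principle statement, but the cleanest argument is the disjoint-union reduction described here.
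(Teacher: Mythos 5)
Your proof is correct. Note that the paper itself states this lemma without any proof (both the addition and subtraction principles for sets are presented there as basic principles, illustrated only by examples), so there is no argument in the text to compare against; your write-up supplies the missing justification. Your route---verifying that $A = B \cup (A-B)$ with $B \cap (A-B) = \emptyset$, invoking the addition principle for sets to get $|A| = |B| + |A-B|$, and rearranging---is exactly the natural reduction, and you correctly identify where the hypothesis $B \subseteq A$ is used (to ensure the two pieces reassemble to all of $A$) and why the rearrangement is legitimate (finiteness of all three sets). This argument would fit seamlessly into the paper as written.
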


\begin{example}
How many numbers in $A=\{1, \ldots, 25\}$ are not a multiple of $5$?

To answer this, let $B=\{5,10, 15, 20, 25\}$.  Then $B \subseteq A$ and $B$ is exactly the subset of elements of $A$ that are a multiple of $5$.
So $|A - B|=25-5=20$.
\end{example}

\subsection*{Exercises}
\begin{enumerate}
\item Let $A$ and $B$ be sets with sizes $|A|=4$ and $|B|=7$.
\begin{itemize}
\item[(a)] What are all the possible values of $|A\cap B|$?
\item[(b)] What are all the possible values of $|A\cup B|$?
\item[(c)] What are all the possible values of $|B -  A|$?
\end{itemize}

 \item See Question \ref{Qhw1}.  Let $A$ be the set of subsets of size $6$ in $\{1, \ldots, 20\}$ which contain at least $4$ odd numbers. 
    \begin{enumerate}
        \item Write down several elements of $A$. 
    \item Describe $A$ as the union of three smaller subsets.  
    \item Use the addition principle for sets to find $|A|$.
    \end{enumerate}

\item What is $A - B$ if $A\cap B=\emptyset$?
\item Find particular sets $A$, $B$, and $C$ so that
$A-(B-C) \not = (A-B) -C$.

\item Let $A=\{1,2,3,4,5\}$ and $B=\{4,5,6,7\}$ be subsets of $W=\{x \mid 1\leq x \leq 10,~x\in \mathbb{Z}\}$.  Compute the following:
\begin{enumerate}
    \item $B^c$
    \item $A^c$
    \item $A\cup B$
    \item $(A\cup B)^c$
    \item $A^c\cup B^c$
    \item $A \cap B$
    \item $(A \cap B)^c$
    \item $A^c \cap B^c$
\end{enumerate}
Are any of the sets equal? If so, identify them.
\end{enumerate}

\section{Venn diagrams and the Principle of Inclusion-Exclusion}\label{sec:pie}

\begin{videobox}
\begin{minipage}{0.1\textwidth}
\href{https://www.youtube.com/watch?v=yhTBE8L3pro}{\includegraphics[width=1cm]{video-clipart-2.png}}
\end{minipage}
\begin{minipage}{0.8\textwidth}
Click on the icon at left or the URL below for this section's short video lecture. \\\vspace{-0.2cm} \\ \href{https://www.youtube.com/watch?v=yhTBE8L3pro}{https://www.youtube.com/watch?v=yhTBE8L3pro}
\end{minipage}
\end{videobox}

In this section, we cover 
the \textit{Principle of Inclusion-Exclusion}, which is a way to count the number of elements in a union of sets when the sets intersect non-trivially.
Drawing sets as overlapping circles to form a \textit{Venn diagram} is a helpful way to visualize the intersection of several sets.

\begin{example}
\label{Evenn1}
Students $a$, $b$, $c$, $d$, $e$, $f$, and $g$ decide to walk to the local ice cream shop in Old Town after their combinatorics class.  As they walk, student $d$ takes a quick poll on who likes mint ice cream and who likes caramel ice cream. Students $a$ and $f$ love both mint and caramel. It would be a shame to choose just one! So they vote for both.
Tallying the votes, there are 5 votes for mint ice cream and 4 votes for caramel, for a total of 9 votes.  Student $d$ quickly realizes that something is wrong and figures out the following Venn diagram.
\end{example}
\begin{center}
    \includegraphics[width=.3\textwidth]{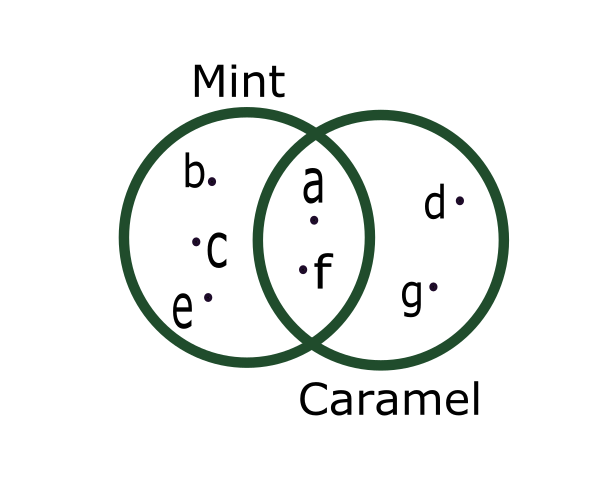}
\end{center}

Here is another example of a problem that can be solved with a Venn diagram.

\begin{example}\label{ex:210}
How many numbers in the set $S=\{1, \ldots, 210\}$ have no factor in common with 21?

A number has a factor in common with $21 =3 \cdot 7$ if it is a multiple of $3$ or a multiple of $7$ or both.
Let $A$ be the set of multiples of $3$ in $S$ and let $B$ be the set of multiples of $7$ in $S$. 
  The intersection $A \cap B$ is the set of multiples of $21$.

We want to count the elements that are not in $A$ or $B$.
This is the complement of $A \cup B$ in $S$.  
The number of these is 
\[|S - (A \cup B)| = 210 - |A \cup B|.\]

There are $70$ multiples of $3$ in $S$ so $|A|=70$.
There are $30$ multiples of $7$ in $S$ so $|B|=30$.  But if we subtract both $70$ and $30$ from $210$, we've removed each multiple of $21$ twice, since it is divisible by both $3$ and $7$.  There are $10$ multiples of $21$ that we removed twice, so we ad $10$ to the count. 
So the answer is
$210-70-30+10=120$.
\end{example}

We state the Principle of Inclusion-Exclusion for two sets and then for three sets.

\begin{theorem}[Principle of Inclusion-Exclusion for two sets]\label{thm:pie}
Suppose $A$ and $B$ are two finite sets.
Then 
\[|A \cup B| = |A| + |B| - |A \cap B|.\]
\end{theorem}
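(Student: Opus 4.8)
The plan is to reduce the claim to the two principles already proved for sets: the addition principle for sets (the size of a disjoint union is the sum of the sizes) and the subtraction principle for sets ($|A - B| = |A| - |B|$ whenever $B \subseteq A$). The obstruction to applying the addition principle directly to $A \cup B$ is that $A$ and $B$ need not be disjoint, so the overlap $A \cap B$ would get counted once in $|A|$ and once in $|B|$, that is, twice. The strategy is therefore to carve $A \cup B$ into pieces that are genuinely disjoint, total their sizes with the addition principle, and then translate the sizes of those pieces back into $|A|$, $|B|$, and $|A \cap B|$ using the subtraction principle.

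First I would write $A \cup B = A \cup (B - A)$ and observe that $A$ and $B - A$ share no elements, since every element of $B - A$ is by definition not in $A$. The addition principle for sets then gives $|A \cup B| = |A| + |B - A|$. Next I would identify $B - A$ with $B - (A \cap B)$: removing from $B$ the elements that also lie in $A$ is the same as removing the elements of $B$ that lie in $A \cap B$. Since $A \cap B \subseteq B$, the subtraction principle applies and yields $|B - A| = |B| - |A \cap B|$. Substituting this into the previous equation gives $|A \cup B| = |A| + |B| - |A \cap B|$, as desired.

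An alternative, and perhaps more visual, route matches the Venn diagram directly: partition $A \cup B$ into the three pairwise-disjoint regions $A - B$, $A \cap B$, and $B - A$, so that $|A \cup B| = |A - B| + |A \cap B| + |B - A|$, and then rewrite $|A - B| = |A| - |A \cap B|$ and $|B - A| = |B| - |A \cap B|$ before combining. I expect the only real obstacle to be bookkeeping rather than ideas: one must check carefully that the chosen pieces are pairwise disjoint and together exhaust $A \cup B$, and --- since the addition principle was stated for only two sets --- one must apply it twice (or first extend it to three pairwise-disjoint sets) when using the three-region partition. The identity $B - A = B - (A \cap B)$ likewise deserves an explicit word, since it is exactly what licenses the use of the subtraction principle, whose hypothesis requires the removed set to be contained in the larger one.
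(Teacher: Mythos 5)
Your proof is correct, but it is genuinely different from what the paper does: the paper offers no written argument for Theorem~\ref{thm:pie} at all --- it states the result and then displays a Venn-diagram figure, leaving the verification to the reader's eye (just as it does for the three-set version, where the reader is told to ``stare at the pictures''). Your first route supplies the missing formal argument, and does so using exactly the two tools the chapter has already established: you split $A \cup B$ into the disjoint pieces $A$ and $B - A$, apply the addition principle for sets, and then convert $|B - A|$ into $|B| - |A \cap B|$ via the identity $B - A = B - (A \cap B)$ together with the subtraction principle for sets, whose hypothesis $A \cap B \subseteq B$ you correctly verify. Your alternative three-region decomposition of $A \cup B$ into $A - B$, $A \cap B$, and $B - A$ is essentially the formalization of the paper's Venn picture; as you yourself note, it costs slightly more bookkeeping, since the addition principle was stated only for two sets and must be applied twice, which is why your first route is the leaner of the two. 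What your proof buys over the paper's presentation is rigor and explicit reliance on previously proved lemmas; what the paper's picture buys is immediate intuition for why the overlap is subtracted exactly once.
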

\begin{center}
    \includegraphics[width=\textwidth]{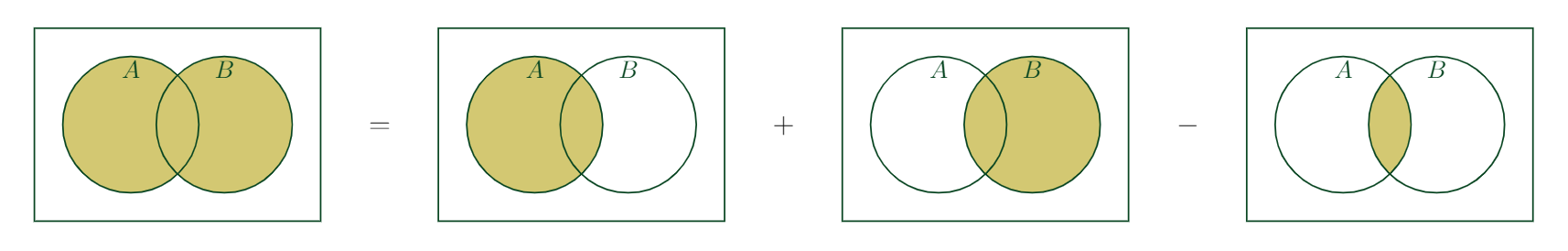}
\end{center}

\begin{remark}
In Example \ref{ex:210}, $A$ and $B$ are subsets of $S=\{1, \ldots, 210\}$; 
$A$ is the set of multiples of $3$; $B$ is the set of multiples of $7$.
We calculated 
\[
|A \cup B| = |A| + |B| - |A\cap B| = 70 + 30 -10 = 90.
\]
The set of all numbers in $S$ that are not divisible by $3$ or $7$ is the complement of $A \cup B$ in $S$, so it has size $|S - (A \cup B)|=210-90 = 120$.
\end{remark}

There is also a version of the inclusion-exclusion principle for three sets.

\begin{theorem} \label{Tvenn3}
Suppose $A,B,C$ are three finite sets.
Then
\[|A \cup B \cup C|=|A|+|B|+|C|-|A\cap B| - |B \cap C| - |A \cap C| + |A \cap B \cap C|.\]
\end{theorem}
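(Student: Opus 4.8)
The plan is to bootstrap from the two-set version already established in Theorem~\ref{thm:pie}, applying it twice together with the distributive law for sets. The key idea is to group $A \cup B \cup C$ as $(A \cup B) \cup C$, treating $A \cup B$ as a single set so that the two-set principle applies directly.

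First I would write
\[
|A \cup B \cup C| = |(A \cup B) \cup C| = |A \cup B| + |C| - |(A \cup B) \cap C|
\]
by Theorem~\ref{thm:pie} applied to the two sets $A \cup B$ and $C$. Next I would expand $|A \cup B| = |A| + |B| - |A \cap B|$ using Theorem~\ref{thm:pie} a second time. This handles two of the three terms on the right-hand side; the remaining work is to unpack the subtracted term $|(A \cup B) \cap C|$.

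The heart of the argument is rewriting $(A \cup B) \cap C$ using the \emph{distributive law} for sets, namely $(A \cup B) \cap C = (A \cap C) \cup (B \cap C)$. Applying Theorem~\ref{thm:pie} once more to the two sets $A \cap C$ and $B \cap C$ gives
\[
|(A \cup B) \cap C| = |A \cap C| + |B \cap C| - |(A \cap C) \cap (B \cap C)|,
\]
and I would then observe that $(A \cap C) \cap (B \cap C) = A \cap B \cap C$, since an element lies in both $A \cap C$ and $B \cap C$ exactly when it lies in all three of $A$, $B$, and $C$. Substituting these expressions back into the first display and collecting terms yields the desired formula, with the sign of $|A \cap B \cap C|$ coming out positive because we subtract a quantity that itself contains $-|A \cap B \cap C|$.

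The main obstacle is not any hard counting but rather justifying the two set identities $(A \cup B) \cap C = (A \cap C) \cup (B \cap C)$ and $(A \cap C) \cap (B \cap C) = A \cap B \cap C$, each of which should be verified by a short element-chasing argument (an element is on the left if and only if it is on the right). A cleaner alternative, which avoids the distributive law entirely, is to prove the identity by \textbf{casework on a Venn diagram}: partition $A \cup B \cup C$ into its seven disjoint regions (in exactly one, exactly two, or all three sets), check that each region is counted a net total of one time on the right-hand side, and conclude equality by the addition principle for disjoint sets. I would likely present the two-set-principle approach as the primary proof for its brevity and mention the region-counting argument as the conceptual reason the alternating signs appear.
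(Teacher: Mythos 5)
Your proof is correct, but it takes a genuinely different route from the paper: the paper offers no formal argument at all for Theorem~\ref{Tvenn3}, instead asking the reader to ``stare at the pictures'' of a Venn diagram until the identity becomes believable. Your primary argument --- writing $A \cup B \cup C = (A\cup B)\cup C$, applying Theorem~\ref{thm:pie} to the pair $(A\cup B, C)$, then to $(A,B)$, then to $(A\cap C, B\cap C)$ after invoking the distributive law $(A\cup B)\cap C = (A\cap C)\cup(B\cap C)$ and the identity $(A\cap C)\cap(B\cap C)=A\cap B\cap C$ --- is a rigorous derivation that the paper does not supply, and the algebra checks out: the subtracted term $|(A\cup B)\cap C|$ carries a $-|A\cap B\cap C|$ inside it, which is what produces the final plus sign. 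This approach has the added virtue of being exactly the inductive step one would use to prove inclusion-exclusion for $n$ sets, so it generalizes where a picture cannot. Your secondary suggestion, partitioning $A\cup B\cup C$ into its seven disjoint regions and checking that each is counted net once on the right-hand side, is essentially a rigorous version of what the paper's figure gestures at; it buys concreteness and explains \emph{why} the signs alternate, at the cost of a case check. Either write-up would strengthen the text; the two set identities you flag as needing element-chasing arguments are routine and pose no obstacle.
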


Stare at the following pictures for a while to understand why Theorem~\ref{Tvenn3} is true!

\begin{center}
    \includegraphics[width=\textwidth]{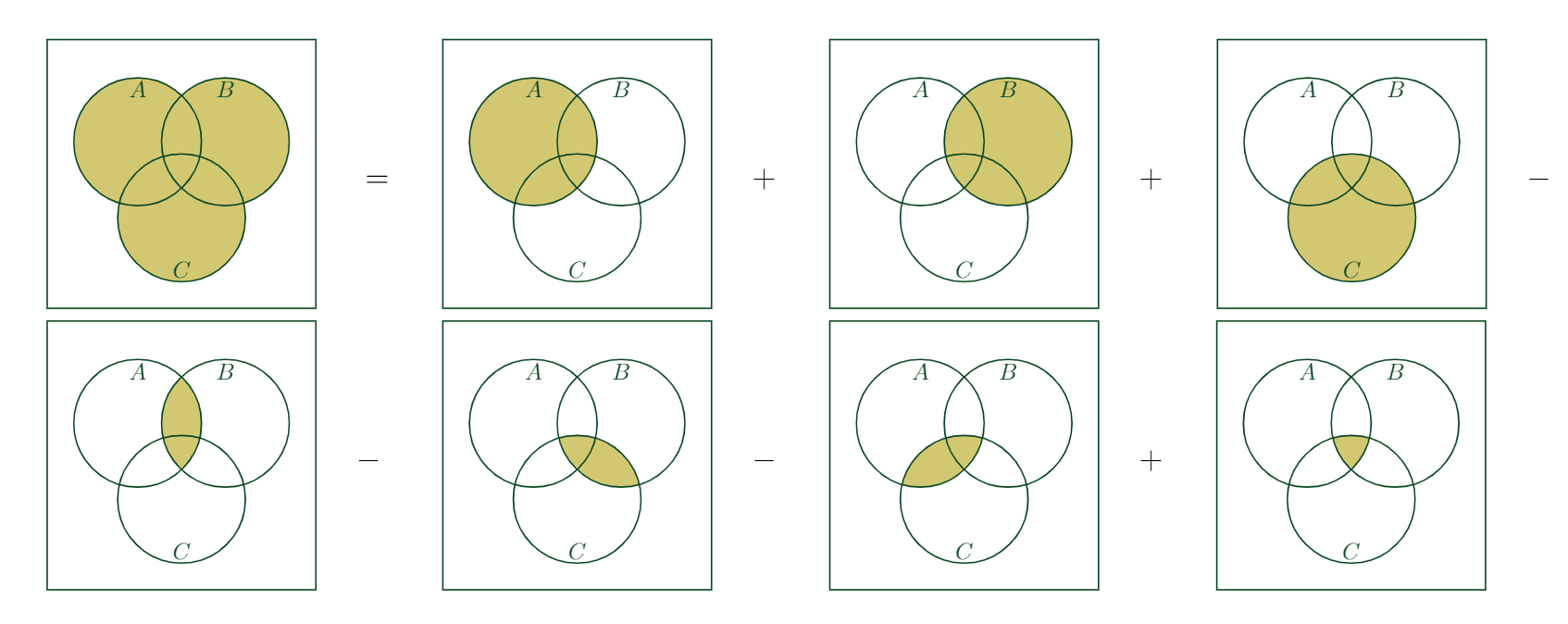}
\end{center}

\begin{example}
Continuing Example~\ref{Evenn1}, two of the students $a$, $b$, $c$, $d$, $e$, $f$, and $g$ also like strawberry ice-cream.
There is $1$ student who likes mint and strawberry but not caramel.
There are no students who like strawberry and caramel but not mint.
Show that there is exactly $1$ student who likes all three flavors.

To solve this, we fill in the numbers in each part of the Venn diagram.  We have the additional information that there is no student who likes only strawberry, because all $7$ students like either mint or caramel or both.
Among the $2$ students who like strawberry, $1$ also likes mint but not caramel and $0$ like caramel but not mint.  This shows there is only one student who likes all three flavors.
\end{example}

\begin{example}\label{ex:210new}
How many numbers in the set $S=\{1, \ldots, 210\}$ have no factor in common with 30?

A number has a factor in common with $30 =2 \cdot 3 \cdot 5$ if it is a multiple of $2$ or $3$ or $5$ or any combination of these.
Let $A$ be the set of multiples of $2$ in $S$.
Let $B$ be the set of multiples of $3$ in $S$. 
Let $C$ be the set of multiples of $5$ in $S$.
Then 
\[|A| = 105,\  |B|=70,\ {\rm and} \  |C|=42.\]
It is useful that the numbers $2$ and $3$ and $5$ are all relatively prime.
So $A \cap B$ is the set of multiples of $6$ in $S$.
Also $A \cap C$ is the set of multiples of $10$ in $S$.
Also $B \cap C$ is the set of multiples of $15$ in $S$.
So 
\[|A \cap B| = 35, \ |A \cap C| = 21, \ {\rm and} \ |B \cap C|= 14.\]
Finally, $A \cap B \cap C$ is the set of multiples of $30$ in $S$ so it has size $7$.
Using Theorem~\ref{Tvenn3}, we see that
\[|A \cup B \cup C| = 105+70+42 - 35 - 14 - 21 + 7 =154.\]
We want to count the size of the complement of $A \cup B \cup C$ in $S$ since the complement contains the numbers that have no factor in common with $30$.  
It has size $210-154=56$.
\end{example}

\subsection*{Exercises}
\begin{enumerate}
\item What is $|A\cup B|$ if $|A|=10$, $|B|=7$, and $|A\cap B|=3$?

\item Suppose $|A|=10$ and $|B|=7$.  What is the largest and smallest possible size for $|A \cup B|$?

\item Suppose $A$ and $B$ are sets such that $|A - B|=17$, $|A\cap B|=5$, and $|A\cup B|=40$. What is $|B|$?

\item Suppose $A$, $B$, and $C$ are sets such that $|A|=10$, $|B|=17$, $|A\cap B|=5$, and $|A\cup B\cup C|=30$. What is the largest and smallest possible size of $|C|$?

\item Students $a$, $b$, and $c$ compare their CSU Fall class schedules.
Student $a$ is registered for combinatorics, art history, cooking, and physics.
Student $b$ is registered for combinatorics, health, astronomy, band, and dance.
Student $c$ is registered for combinatorics, art history, dance, English literature, and African American studies.
Use the principle of inclusion-exclusion to compute how many classes they are taking in total.

\item How many numbers in $\{1, \ldots, 35\}$ are relatively prime to $35$ (i.e., have no factors in common with $35$)?
\item How many integers in $\{1, 2, ...., 55\}$ are not divisible by either 11 or 5?																			
\item How many integers in $\{1, 2, ...., 105\}$ are not divisible by 3 or 5 or 7?																									
\item Suppose 50 socks lie in a drawer. Each one is either white or black, ankle-high or knee-high, and either has a hole or doesn’t. 22 socks are white, four of these have a hole, and one of these four is knee-high. Ten white socks are knee-high, ten black socks are knee-high, and five knee-high socks have a hole. Exactly three ankle-high socks have a hole. Find the number of black, ankle-high socks, with no holes.																									
\end{enumerate}

\section{The multiplication principle from the perspective of set theory}
\label{Scoursemult}

We now introduce the set theory notation needed to describe the multiplication principle in terms of sets.

\subsection{Cartesian products and the multiplication principle}

If $A$ and $B$ are sets, then their \defn{Cartesian product}, denoted $A \times B$, is the set consisting of all ordered pairs of elements from $A$ and $B$:
\[ A\times B = \{(a,b)~|~a\in A \text{ and } b\in B\}. \]

\begin{example}
The product $\{2,3,4\}\times \{x,y\}$ is the set of ordered pairs
\[\{(2,x),(2,y),(3,x),(3,y),(4,x),(4,y)\}.\]
\end{example}

\begin{example}
The product $\{2,3,4\}\times \{2,5\}$ is the set of ordered pairs
\[\{(2,2),(2,5),(3,2),(3,5),(4,2),(4,5)\}.\]
\end{example}

\begin{example}
  The product $\mathbb{R}\times \mathbb{R}$ can be thought of as all the points $(x,y)$ in the usual coordinate plane $\mathbb R^2$ that we use when graphing functions in calculus.  This plane is often called the Cartesian plane, which is where the term ``Cartesian product'' comes from.  
\end{example}

We can now state the set analogue of the multiplication principle.

\begin{lemma}
[Multiplication principle for sets]
If set $A$ has $|A|$ elements and set $B$ has $|B|$ elements then set $A \times B$ has $|A| \cdot |B|$ elements.
\end{lemma}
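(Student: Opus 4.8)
The plan is to reduce this statement to the general Multiplication Principle stated earlier in the chapter. The key observation is that choosing an element of $A \times B$ amounts to choosing an ordered pair $(a,b)$, and such a pair is completely determined by first selecting its first coordinate $a \in A$ and then selecting its second coordinate $b \in B$.

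First I would name the elements explicitly: write $A = \{a_1, \ldots, a_m\}$ with $m = |A|$ and $B = \{b_1, \ldots, b_n\}$ with $n = |B|$. Then every element of $A \times B$ has the form $(a_i, b_j)$ for a unique index pair $(i,j)$, since two ordered pairs are equal exactly when their corresponding coordinates agree and the $a_i$ (respectively the $b_j$) are distinct. To form such a pair I make one choice from $A$ (in $m$ ways) and then one choice from $B$ (in $n$ ways), where the second choice does not depend on the first. By the Multiplication Principle, the number of pairs is therefore $m \cdot n = |A| \cdot |B|$.

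A more self-contained alternative, matching the rectangular-array style used for Lemma~\ref{lem:Tn}, is to arrange the pairs in a grid with $m$ rows indexed by the elements of $A$ and $n$ columns indexed by the elements of $B$, placing $(a_i, b_j)$ in row $i$ and column $j$. Each cell is filled by exactly one element of $A \times B$ and each element lands in exactly one cell, so $|A \times B|$ equals the number of cells, which is $m \cdot n$. Equivalently, one can partition $A \times B$ into the $m$ disjoint ``row'' sets $\{a_i\} \times B$, each of size $n$, and apply the addition principle for sets repeatedly.

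I do not expect a genuine obstacle here, but the two points that require a little care are confirming that the listed pairs are genuinely distinct (so that there is no overcounting), and checking the degenerate cases $m = 0$ or $n = 0$: if either $A$ or $B$ is empty, then $A \times B$ is empty and the formula correctly gives $0$. Handling these edge cases, together with the distinctness check, is the only thing standing between the intuitive picture and a complete argument.
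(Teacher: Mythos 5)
Your proposal is correct, but there is nothing in the paper to compare it against: the paper states this lemma with no proof at all, presenting it simply as the set-theoretic rephrasing of the informal Multiplication Principle from Section~\ref{Smultdivide} and following it only with worked examples (granola bars, odd numbers not divisible by $5$). Given that, your two arguments play different roles. The first argument --- choose the first coordinate in $|A|$ ways, then the second in $|B|$ ways, and invoke the Multiplication Principle --- is essentially a translation rather than a proof, since the principle being invoked is the same statement in non-set language; this matches exactly how the paper intends the lemma to be read, so it is faithful to the chapter's framework but adds little content beyond the distinctness check (which is worth making explicit, and you do). The second argument is the one with genuine mathematical substance: partitioning $A \times B$ into the $|A|$ disjoint blocks $\{a_i\} \times B$, each of size $|B|$, and applying the addition principle for sets repeatedly derives the count from more primitive principles, and it closely mirrors the paper's own granola-bar example (boxes as rows, bars as cells). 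Your handling of the empty cases $|A|=0$ or $|B|=0$ is a small but real point that the paper's informal treatment glosses over. No gaps.
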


\begin{example}
We arrange three boxes of granola bars in a line.  In each box, five granola bars are stacked up.  We index each granola bar by a pair 
$(i,j)$ where $i \in A=\{1,2,3\}$ indexes the box and $j \in B=\{1, \ldots, 5\}$
indexes the spot in the box.
Then the number of granola bars is $|A \times B| = 3 \cdot 5 =15$.
\end{example}

\begin{example}
Let $A$ be the set of numbers in $\{1, \ldots, 40\}$ which are odd and not a multiple of $5$.  Suppose we wish to find the size of $A$.

The numbers in $A$ are exactly those whose last digit is in $C=\{1,3,7,9\}$ and whose first digit is in 
$B = \{0,1,2,3\}$. So $A$ can be identified with $B \times C$.\footnote{More precisely, there is a bijection from $A$ to $B \times C$ where the number $10c + d$ maps to the pair $(c,d)$.  The topic of bijections is covered in Section~\ref{sec:bij}.}
So $|A|=|B| \cdot |C| = 4 \cdot 4 = 16$.
\end{example}

\subsubsection*{Exercises}

\begin{enumerate}
\item If Student $a$ owns 5 shirts and 2 pairs of pants, how many outfits consisting of one shirt and one pair of pants can they wear?  Justify your answer using sets.

  \item See Question~\ref{Qhw2}.
    Let $A$ be the set of poker hands which are a full house (five cards including a triple of one number and a pair of another number).
    \begin{enumerate}
    \item Write down one example of a full house.
    \item With a classmate (or friend), play 20 questions until they guess which full house you wrote down.  For example, one question could be: do the cards in your triple have the number $7$ on them?
    \item Describe $A$ as the Cartesian product of several smaller sets.
    Hint: each smaller set tells you some information, but not all information about the full house.  For example, set $B$ could index the number on the cards in the triple.
    \item Use the multiplication principle for sets to find $|A|$.
    \end{enumerate}
\item Let $W=\{(1,1), (2,1), (2,2), (3,1), (1,2), (3,2)\}$.  Find sets $A$ and $B$ such that $A\times B=W$.  verify the multiplication principle with your sets.
\item Express $\mathbb{R}\times\mathbb{R}$ as a set.  What does this represent?
\item Express $[0,1]\times[0,1]$ as a set.  What does this represent?
\end{enumerate}

\section{Set partitions, the division principle, and equivalence relations}
\label{Sdivequiv}

\begin{videobox}
\begin{minipage}{0.1\textwidth}
\href{https://www.youtube.com/watch?v=iqP5fyUL1MU}{\includegraphics[width=1cm]{video-clipart-2.png}}
\end{minipage}
\begin{minipage}{0.8\textwidth}
Click on the icon at left or the URL below for this section's short video lecture. \\\vspace{-0.2cm} \\ \href{https://www.youtube.com/watch?v=iqP5fyUL1MU}{https://www.youtube.com/watch?v=iqP5fyUL1MU}
\end{minipage}
\end{videobox}

\subsection{Set partitions and the division principle}
\label{Ssetdivide}

The division principle is the most difficult to state in terms of set theory.
To do this, we define 
a \textit{set partition}, which is a a helpful way to
organize sets.

\begin{definition}
\label{Dsetpartition}
Let $A$ be a finite set.
A non-empty subset $B$ of $A$ is called a \textbf{block}.
A \textbf{set partition} of a finite set $A$ is a set of blocks $\{B_1,B_2,\ldots,B_k\}$ of $A$ such that:
\begin{enumerate}
    \item the blocks are disjoint, that is, $B_i\cap B_j=\emptyset$ for all $i\neq j$; and
    \item the union of the blocks is $A$, that is $B_1\cup B_2 \cup \cdots \cup B_k=A$.
    \end{enumerate}
\end{definition}

\begin{example}
The following is a set partition of $\{1,2,3,4,5,6,7\}$:  $$\{\{1,3,4\},\{2,6\},\{5,7\}\}.$$
\end{example}

The division principle arises when we split a set into blocks of equal sizes.  It can be stated as follows.

\begin{lemma}
[Division principle for sets]
\label{Ldivsetprin}
Suppose that $\{B_1,\ldots,B_k\}$ is a set partition of $A$ such that each of the blocks $B_1,\ldots,B_k$ has the same cardinality $m$.
Then the number of blocks and the size of the blocks satisfy these relationships:
$|A|/m=k$, and $|A|/k=m$.
\end{lemma}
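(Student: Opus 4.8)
The plan is to reduce both claimed identities to the single equation $|A| = mk$, from which they follow immediately by dividing. Once $|A| = mk$ is established, we obtain $|A|/m = mk/m = k$ and $|A|/k = mk/k = m$, which are exactly the two asserted relationships. So the entire content of the lemma lies in computing $|A|$.

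To prove $|A| = mk$, I would use the two defining properties of a set partition together with the addition principle for sets. By property (2) of Definition~\ref{Dsetpartition}, we have $A = B_1 \cup B_2 \cup \cdots \cup B_k$, and by property (1) the blocks are pairwise disjoint. The addition principle for sets, as stated earlier, gives $|X \cup Y| = |X| + |Y|$ whenever $X \cap Y = \emptyset$, but only for \emph{two} sets. First I would therefore extend it to $k$ pairwise disjoint sets by induction on $k$: the case $k=1$ is immediate, and for the inductive step one writes $B_1 \cup \cdots \cup B_k = (B_1 \cup \cdots \cup B_{k-1}) \cup B_k$, checks that $B_k$ is disjoint from the union of the earlier blocks, and applies the two-set principle. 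This yields $|A| = |B_1| + |B_2| + \cdots + |B_k|$.

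Finally, I would substitute $|B_i| = m$ for every $i$, so that the right-hand side becomes a sum of $k$ copies of $m$, namely $mk$. This gives $|A| = mk$, and combining with the division step above completes the proof.

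The only genuine subtlety is the extension of the addition principle from two sets to $k$ sets; everything else is routine substitution and division. In the inductive step I would take care to justify that $B_k \cap (B_1 \cup \cdots \cup B_{k-1}) = \emptyset$, which holds because $B_k \cap B_i = \emptyset$ for each $i < k$ by property (1). I expect this disjointness bookkeeping to be the main (and only) obstacle worth stating explicitly.
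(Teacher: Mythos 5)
Your proof is correct. The paper itself offers no proof of Lemma~\ref{Ldivsetprin} --- it is stated bare and followed immediately by examples --- so there is no argument of the paper's to compare against; but your route is precisely the one the surrounding text sets up: property (2) of Definition~\ref{Dsetpartition} gives $A = B_1 \cup \cdots \cup B_k$, property (1) gives pairwise disjointness, and the addition principle for sets (extended from two sets to $k$ sets by the induction you describe, with the disjointness check $B_k \cap (B_1 \cup \cdots \cup B_{k-1}) = \emptyset$ correctly reduced to pairwise disjointness) yields $|A| = |B_1| + \cdots + |B_k| = mk$, from which both identities follow by division. You have rightly identified the only point needing care --- the two-set-to-$k$-set extension --- and handled it properly, so your write-up supplies exactly the details the paper leaves implicit.
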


\begin{example}
How many ways can the letters in the word ``COOL'' be rearranged? If we pretend the two O's are different from each other, 
calling them $O_1$ and $O_2$, then there are $4!=4\cdot 3\cdot 2 \cdot 1=24$ ways to order them, as discussed in section \ref{sec:factorials-intro}.  We can partition these arrangements into blocks of size $2$ that have the letters in the same position except for the two O's switched with each other.
\begin{center}
$\{\{$CL{\rO}{\gO},\,\,CL{\gO}{\rO}$\},\,\, \{$C\rO L\gO,\,\,C\gO L\rO$\},\,\,\{$C\rO\gO L,\,\,C\gO\rO L$\},\,\,\{$LC\rO \gO,\,\,LC\gO \rO$\},$ \\
\phantom{\{}$\{$L\rO C\gO,\,\,L\gO C\rO $\},\,\,\{$L\rO \gO C,\,\,L\gO\rO C$\},\,\,\{$\rO CL\gO,\,\,\gO CL\rO$\},\,\,\{$\rO C\gO L,\,\,\gO C\rO L$\},$ \\
\phantom{\{}$\{${\rO}LC{\gO},\,\,\gO LC{\rO}$\},\,\,\{$\rO L\gO C,\,\,\gO L \rO C$\},\,\,\{$\rO\gO CL,\,\,\gO\rO CL$\},\,\,\{$\rO \gO LC,\,\,\gO\rO LC$\}\}$
\end{center}
Thus there are $24/2=12$ blocks, or $12$ distinct rearrangements of the letters in COOL.  See Section \ref{sec:anagrams} for more examples of counting anagrams.
\end{example}

\subsection{Equivalence relations and modular arithmetic}\label{sec:mod}

Equivalence relations are often used to 
divide a set into blocks.
In fact, every equivalence relation on a set makes a set partition.

\begin{definition}
A \emph{equivalence relation} on a set $S$ is a way of identifying elements $x \sim y$
with the three following rules:
\begin{enumerate}
    \item (reflexive) $x \sim x$;
    \item (symmetric) if $x \sim y$, then $y \sim x$;
    \item (transitive) 
    if $x \sim y$ and $y \sim z$, then 
    $x \sim z$.
\end{enumerate}
\end{definition}

\begin{example}
How many ways can students $A,B,C,D$ be seated at a rotating round table?
There are $24=4!$ ways for the students to sit down at the table.  We say that two seating arrangements are equivalent if they look the same after a rotation.
For example, 
\[ABCD \sim DABC \sim CDAB \sim BCDA.\]
Each seating arrangement is equivalent to three others.  So we can divide the set of seating arrangements into blocks, each of which has size $4$.  By the division principle, there are $6$ blocks.
\end{example}

One important equivalence relation is called congruence modulo $m$.

\begin{definition}
Let $m$ be a positive integer.
Let $x,by$ be integers.
We say that $x$ is \defn{congruent to $y$ modulo $m$} when $m$ divides $y-x$.
This is written $x \equiv y \bmod m$.
\end{definition}

\begin{lemma}
If $r$ is the remainder when $x$ is divided by $m$, then $x \equiv r \bmod m$.  In particular, if $m$ divides $x$, then $x \equiv 0 \bmod m$.
\end{lemma}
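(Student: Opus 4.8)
The plan is to prove the statement directly from the definition of congruence modulo $m$ together with the division algorithm. Recall the definition just given: $x \equiv y \bmod m$ means that $m$ divides $y - x$. So to establish $x \equiv r \bmod m$, where $r$ is the remainder when $x$ is divided by $m$, I would show that $m$ divides $r - x$.

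First I would invoke the division algorithm: dividing $x$ by $m$ produces a unique quotient $q$ and remainder $r$ with $x = qm + r$ and $0 \le r < m$. Rearranging this equation gives $x - r = qm$, so $m$ divides $x - r$. Since $m$ divides $x - r$ if and only if $m$ divides $r - x = -(x-r)$, this is exactly the condition $x \equiv r \bmod m$. For the second sentence, I would note that ``$m$ divides $x$'' is the special case where the remainder is $r = 0$; then $x = qm + 0$, so $x - 0 = qm$ is divisible by $m$, giving $x \equiv 0 \bmod m$.

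The only genuine subtlety is what to assume: the proof leans on the existence of the quotient and remainder from the division algorithm, which the text has been using informally (remainders appear in the statement itself). I would therefore treat the expression $x = qm + r$ with $0 \le r < m$ as the defining property of the remainder $r$ and build the one-line divisibility argument on top of it. There is no real obstacle here — the ``hard part,'' such as it is, is simply being careful that the definition of congruence is phrased in terms of $m \mid (y - x)$ and confirming that divisibility is unaffected by the sign, so that $m \mid (x - r)$ yields $x \equiv r \bmod m$ with the variables in the order the definition requires.
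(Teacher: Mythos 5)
Your proof is correct and takes essentially the same approach as the paper: write $x = qm + r$ via the division algorithm, deduce $m \mid (x - r)$, and conclude $x \equiv r \bmod m$ from the definition, with the divisibility-of-$x$ case handled as the $r=0$ instance. You are in fact slightly more careful than the paper's own proof, which silently passes from $m \mid (x-r)$ to the definition's requirement $m \mid (r-x)$ without remarking that divisibility is insensitive to sign.
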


\begin{proof}
If $r$ is the remainder when $a$ is divided by $m$, then $a = km +r$ for some integer $k$.
This means that $a-r = km$, which is divisible by $m$.  So $a \equiv r \bmod m$ by definition.
If $m$ divides $a=a-0$, then $a \equiv 0 \bmod m$ by definition.
\end{proof}

\begin{example}
The hours of a day are described with congruence modulo $12$.  For example, if it is 11 o'clock and you need to wait 2 hours before eating lunch, then you will eat lunch at 1 o'clock.  We write $11 + 2 = 13 \equiv 1 \bmod 12$.
\end{example}

\begin{example}
Every even integer is congruent to $0$ modulo $2$.
Every odd integer is congruent to $1$ modulo $2$.
\end{example}

\begin{example}
The last digit of a number indicates its congruence modulo $10$.
\end{example}

We will now give some examples of the division principle using blocks that are constructed using congruence modulo $m$.

\begin{example}
How many numbers in the set $S=\{1, \ldots, 40\}$ are congruent to $1$ modulo $5$?

We list the numbers between $1$ and $40$ that are congruent to $1$ modulo $5$: 
\[\{1,6,11,16,21,26,31,36\}.\]
So the answer is $8$.
In this example,
there is a set partition of $S$ into $5$ blocks of size $8$, where two numbers are in the same block exactly when they are 
congruent modulo $5$.
\end{example}

This example works more generally.

\begin{lemma} \label{Lblockcong}
Suppose $N$ is a multiple of $m$.
Let $S$ be the set $S=\{1, \ldots, N\}$.
Then $S$ has a set partition into 
$m$ blocks, each of size $N/m$, where
two numbers are in the same block exactly when they are 
congruent modulo $m$.
\end{lemma}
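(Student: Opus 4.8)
The plan is to build the partition directly from congruence modulo $m$ and then pin down the two numerical claims — that there are $m$ blocks and that each has size $N/m$ — by an explicit count, keeping the division principle (Lemma~\ref{Ldivsetprin}) on hand to double-check the arithmetic. First I would note that congruence modulo $m$ is an equivalence relation on $S$, so, by the fact recorded just above that every equivalence relation yields a set partition, its equivalence classes form a set partition of $S$ whose blocks group together exactly the numbers that are congruent modulo $m$. To make this concrete I would appeal to the preceding lemma on remainders: each $x \in S$ has a unique remainder $r \in \{0,1,\ldots,m-1\}$ upon division by $m$, with $x \equiv r \bmod m$, and two numbers are congruent modulo $m$ precisely when they share this remainder. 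This simultaneously shows the blocks are pairwise disjoint and that their union is all of $S$, since every element lands in exactly one remainder class.

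Next I would count the blocks. The only possible remainders are $0, 1, \ldots, m-1$, so there are at most $m$ nonempty classes. Here I must check that each class is actually nonempty, which is where the hypothesis is used: because $N$ is a positive multiple of $m$ we have $N \geq m$, so every remainder occurs among the numbers $1, \ldots, N$. Hence each of the $m$ classes contains at least one element, and there are exactly $m$ blocks.

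Finally I would compute the common block size. Writing $q = N/m$ and fixing a remainder $r$ with $1 \leq r \leq m$, the elements of $S$ congruent to $r$ modulo $m$ are exactly $r, r+m, r+2m, \ldots, r+(q-1)m$; I would verify the endpoints lie in $S$ by noting the smallest is $r \geq 1$ and the largest satisfies $r+(q-1)m \leq m + (q-1)m = qm = N$. This list is therefore precisely the $r$-block and has exactly $q = N/m$ terms, so every block has size $N/m$. As a consistency check, since $|S| = N$ and all blocks have size $N/m$, Lemma~\ref{Ldivsetprin} recovers $N/(N/m) = m$ blocks, matching the count above. The only genuinely delicate points — and thus the main things to get right — are the nonemptiness argument (which forces the use of $N \geq m$) and confirming that the arithmetic progression defining each block has exactly $N/m$ terms with both endpoints inside $\{1, \ldots, N\}$; everything else is bookkeeping.
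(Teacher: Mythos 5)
Your proof is correct. For comparison: the paper does not actually prove Lemma~\ref{Lblockcong}; it states the lemma right after the example with $m=5$ and $S=\{1,\ldots,40\}$ and defers the justification to an exercise, which asks the reader to explain the lemma using the division principle for sets (Lemma~\ref{Ldivsetprin}). Your argument supplies exactly the kind of proof the text has in mind: the blocks are the residue classes (legitimate because congruence is an equivalence relation and every equivalence relation induces a set partition), and the explicit enumeration of the class of $r$ as $r, r+m, \ldots, r+(q-1)m$ with $q=N/m$, for $1\le r\le m$, gives the block sizes. Two small remarks. First, your separate nonemptiness step is redundant: the same enumeration already shows the class of $r$ contains $r$ itself (indeed it has $q\ge 1$ elements), so the count of exactly $m$ blocks falls out of that computation without a separate appeal to $N\ge m$. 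Second, where the paper's exercise envisions Lemma~\ref{Ldivsetprin} doing the real work --- once all blocks are shown to have the common size $N/m$, the division principle forces the number of blocks to be $N/(N/m)=m$ --- you instead count both quantities directly and use the division principle only as a consistency check; both routes are sound, and yours has the virtue of being self-contained.
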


\subsection*{Exercises}

\begin{enumerate}
\item Let $A=\{a,b,c,d\}$.  List all possible set partitions.

\item How many ways can the letters in ``COLORADO'' be rearranged.

\item How many ways can five students be seated at a rotating round table?

\item How many numbers in $S=\{1, \ldots, 40\}$ are congruent to $3$ modulo $8$? What are they?

\item Let $S=35$ and $m=5$.  Find the set partition of $S = \{1, \ldots, 35\}$ from Lemma~\ref{Lblockcong}.
Repeat this question with $m=7$.

\item Show that congruence is an equivalence relation.

\item
If $n$ is an integer, show $n^2 \equiv 0 \bmod 4$ if $n$ is even and $n^2 \equiv 1 \bmod 4$ if $n$ is odd.

\item Explain why Lemma~\ref{Lblockcong} is true using Lemma~\ref{Ldivsetprin}.
\end{enumerate}

\section{Additional Problems for Chapter 2}
\label{Sadditional2}

\begin{enumerate}

\item Use the multiplication and division principles to find how many handshakes there are between seven people, if every pair of person shakes hands. 

\item Generalize your reasoning in the previous problem to give a proof using the multiplication and division principles that the number of handshakes between $n$ people, if everyone shakes hands, is $n(n-1)/2$.

\item Bob has $4$ pairs of socks, $2$ pairs of shoes, $5$ pairs of pants, $10$ shirts, and one hat.  How many ways can he get dressed for the day (wearing one of each type of item of clothing)?  

\item How many three-digit positive integers have only even digits?

\item Suppose $A$ and $B$ are sets such that $|A|=21$, $|B|=15$, and $|A\cup B|=30$. What is $|A\cap B|$?

\item Using a Venn diagram, for any three sets $A,B,C$, explain why it is always true that 
$$A\cap(B\cup C)=(A\cap B)\cup(A\cap C).$$

\item Using a Venn diagram, for any three sets $A,B,C$, explain why it is always true that $$A\cup(B\cap C)=(A\cup B)\cap(A\cup C).$$

\item How many numbers in $\{1, \ldots, 105\}$ are relatively prime to $105$?

\item Let $n=pq$ where $p$ and $q$ are distinct primes.  Show that there are 
$pq-p-q+1$ numbers in $\{1, \ldots, n\}$
which are relatively prime to $n$.

\item 
\begin{enumerate}
\item How many binary strings of length $m$ are there?
\item If $s$ is a string of length $m$, let $s^c$ be the complement string where each $0$ is replaced by a $1$ and each $1$ is replaced by a zero.  Write out the binary strings of length $m=3$ and match each with its complement. 
\item For each 
binary string $s=s_1s_2s_3$, plot the point $(s_1,s_2,s_3)$ in $3$-dimensional space.  What 3-dimensional shape has these points as its corners and what is the relationship between the corners plotted for $s$ and its complement $s^c$?
\end{enumerate}

\item \label{Exercisedefcomplement}
  Let $S$ be the set of binary strings of length $m$.
  \begin{enumerate}
      \item When $m=2$, here are all the subsets of $S$ of size $k=2$: 
     \begin{equation}
 \label{Estringsub} \{00,01\},\{00,10\},\{00,11\},\{01,10\},\{01,11\},\{10,11\}.
      \end{equation}
      Write out all the subsets of $S$ of size $k=3$. 
 \item  If $T$ is a subset of $S$ containing $k$ strings, let $T^c$ be the subset of $S$ containing their $k$ complements.
 When $m=2$ and $k=2$, 
 match up each set in \eqref{Estringsub} with its complement.
 When $m=2$ and $k=3$, 
 match up each set for your answer to part (1) with its complement.
 
 \item Suppose that $k$ is odd.  If $T$ is a subset of $S$ of size $k$, prove that $T$ and $T^c$ are not the same.
 \item
 Suppose $k$ is even.  If $R$ is a subset of $S$ of size $k/2$ and if $T=R \cup R^c$,  show that $T$ is a subset of $S$ of size $k$ and that $T = T^c$.

 \item How many sequences of 5 letters can be made with the letters A,E,I,O,U,M,S, allowing repeats of letters, for example, MMSSU?										
\item How many sequences of 5 letters can be made with the letters A,E,I,O,U,M,S, and have exactly 2 vowels, for example, MMSOU?										
\item How many ways are there to assign 5 jobs to 4 people, so that each person gets at least one job (and every job is assigned to some person)?										
\item How many ways can the letters of PUPPY be rearranged (including the original spelling)?										
\item In the set $\{1, 2, ....., 24\}$, there are 3 integers which are 2 mod 8. What is their sum?										
 \end{enumerate}

\end{enumerate}

\newpage 
\section{Investigation: Divisors of a positive integer}

In this section, we study \emph{divisor functions}, which were investigated by the famous mathematician Ramanujan (1887-1920).
Let $N$ be a positive integer.
A divisor function is a function defined on positive integers whose value at $N$ depends on the divisors of $N$.

\subsection{The number of divisors} \label{Ssigma0}

\begin{definition}
Let $\sigma_0(N)$ be the number of positive divisors of $N$, including $1$ and $N$ itself.
\end{definition}

\begin{example}
\begin{itemize}
    \item 
$\sigma_0(22) = 4$ because the positive divisors of $22$ are $1,2,11,22$; 
\item $\sigma_0(23) = 2$ because the positive divisors of $23$ are $1$ and $23$; and \item $\sigma_0(24)=8$ because the positive divisors of
$24$ are $1,2,3,4,6,8,12,24$.
\end{itemize}
\end{example}

In the following questions, we will develop a formula for $\sigma_0(N)$ by working out cases of increasing difficulty.

\begin{question}
\begin{enumerate}
\item
Compute $\sigma_0(N)$ for $N=1, 
\ldots, 20$.  Look for some patterns.
    \item 
When is $\sigma_0(N)=1$?  Explain why.
\item When is $\sigma_0(N)=2$?  Explain why.
\item When is $\sigma_0(N)$ odd?  Explain why.
\end{enumerate}
\end{question}

\begin{question} 
\begin{enumerate}
    \item Compute $\sigma_0(N)$
    for $N=3,9,27,81$.
    \item If $N=p^e$ for some prime $p$, find a formula 
    for $\sigma_0(N)$.
    \item Explain why the formula is true.
    \item Give an example to show that the formula does not work when $p$ is not prime.
\end{enumerate}
\end{question}

\begin{question}
\label{Qtwoprimes}
\begin{enumerate}
    \item Compute $\sigma_0(N)$ for $N=21, 26, 33, 35$.
    \item If $N=p \cdot q$ where $p$ and $q$ are distinct primes, find a formula for $\sigma_0(N)$.
    \item Explain why the formula is true.
    \item 
    Let $A$ be the set of multiples of $p$ dividing $N$ and $B$ be the set of multiples of $q$ dividing $N$.
    Draw a Venn diagram for the divisors of $N$.
\end{enumerate}
\end{question}

\begin{question}
\begin{enumerate}
    \item Compute $\sigma_0(N)$ for $N=30, 42, 70$.
    \item If $N=p\cdot q \cdot r$ where $p,q,r$ are distinct primes, find a formula for $\sigma_0(N)$.
    \item Explain why the formula is true.
    \item 
    Let $A$ and $B$ be as in Question~\ref{Qtwoprimes}.
    Let $C$ be the set of multiples of $r$ dividing $N$. 
    Draw a Venn diagram for the divisors of $N$.
\end{enumerate}
\end{question}

\begin{question}
Suppose $N=p_1 \cdot p_2 \cdots p_n$ where $p_1, \ldots, p_n$ are distinct primes.  Find a formula for $\sigma_0(N)$.
Explain why the formula is true using Theorem~\ref{Tsubset}.
\end{question}

\begin{question}
Suppose $N=p_1^{e_1} \cdot p_2^{e_2} \cdots p_n^{e_n}$ where $p_1, \ldots, p_n$ are distinct primes. Find a formula for $\sigma_0(N)$.
Explain why the formula is true.
\end{question}

\subsection{The sum of the divisors}

\begin{definition}
Let $\sigma_1(N)$ be the sum of the positive divisors of $N$, including $1$ and $N$ itself.
\end{definition}

\begin{example}
\begin{itemize}
    \item 
$\sigma_1(22) = 36$ because  $1+2+11+22=36$; 
\item $\sigma_1(23) = 24$ because $1+23=24$; and 
\item $\sigma_1(24)=60$ because $1+2+3+4+6+8+12+24=60$.
\end{itemize}
\end{example}

\begin{question}
Let $p$ be a prime.
\begin{enumerate}
    \item Find a formula for $\sigma_1(p)$. 
    \item Find a formula for 
    $\sigma_1(p^e)$.  Explain why it is true.
\end{enumerate}
\end{question}

\begin{question}
Let $p_1, \ldots, p_n$ be distinct primes.
\begin{enumerate}
    \item Find a formula for 
    $\sigma_1(p_1 \cdot p_2)$.
    \item Find a formula for 
    $\sigma_1(p_1 \cdot p_2 \cdots p_n)$. Explain why the formula is true.
\end{enumerate}
\end{question}

\begin{question}
Let $p_1, \ldots, p_n$ be distinct primes.
Find a formula for 
$\sigma_1(p_1^{e_1} \cdot p_2^{e_2} \cdots p_n^{e_n})$.
Explain why the formula is true.
\end{question}

\subsection{The M\"obius function}
\label{Smobius}

The M\"obius function was defined by M\"obius in 1832, but had been studied 30 years earlier by Gauss.
It plays a key role in the M\"obius inversion formula and the Riemann hypothesis, which is a famous unsolved problem in mathematics.

\begin{definition}
The M\"obius function $\mu(N)$ is defined as follows:
\begin{itemize}
    \item 
If there is a prime $p$ such that $p^2$ divides $N$, let $\mu(N)=0$; 
\item $\mu(1)=1$; and
\item If $N=p_1 \cdot p_2 \cdots p_n$ where $p_1, \ldots, p_n$ are distinct primes, let $\mu(N)=(-1)^n$.
\end{itemize}
\end{definition}

\begin{example}
Let $N=22$.  The divisors $d$ of $N$ are $1,2,11,22$.  We compute 
$\mu(1)=1$, $\mu(2)=-1$, $\mu(11)=-1$, and $\mu(22)=1$.  
We compute that
$\mu(1)+\mu(2)+\mu(11)+\mu(22)=0$.
\end{example}

\begin{question}
For the other values of $N$ between $20$ and $30$, compute the value of 
$\sum_{d \mid N} \mu(N)$, where the sum is over all divisors of $N$ including $1$ and $N$.  Make a conjecture about this.
\end{question}

\subsection{Multiplicative functions}

Let $N$ and $M$ be positive integers.  We say that 
$N$ and $M$ are \emph{relatively prime} if ${\rm gcd}(N,M)=1$, meaning that $N$ and $M$ have no prime factors in common.

Let $f$ be a function on the positive integers.
We say that $f$ is \emph{multiplicative} if $f(N \cdot M) = f(N) \cdot f(M)$ whenever $N$ and $M$ are relatively prime.

\begin{question}
\begin{enumerate}
    \item Show that $\sigma_0$ is a multiplicative function.
    \item Show that $\sigma_1$ is a multiplicative function.
    \item Show that $\mu$ is a multiplicative function.
    \item For each of the functions $\sigma_0$, $\sigma_1$, $\mu$, give an example that shows the multiplicative property fails when $M$ and $N$ are not relatively prime.
\end{enumerate}
\end{question}

\chapter{Counting combinations}\label{chap:combinations}

We now arrive at the heart of combinatorics, and the origin of its name, which comes from counting \textit{combinations}.

\section{The types of combinations} \label{sec:table-intro}

\begin{videobox}
\begin{minipage}{0.1\textwidth}
\href{https://www.youtube.com/watch?v=4Aw9vYMbDL8}{\includegraphics[width=1cm]{video-clipart-2.png}}
\end{minipage}
\begin{minipage}{0.8\textwidth}
Click on the icon at left or the URL below for this section's short video lecture. \\\vspace{-0.2cm} \\ \href{https://www.youtube.com/watch?v=4Aw9vYMbDL8}{https://www.youtube.com/watch?v=4Aw9vYMbDL8}
\end{minipage}
\end{videobox}

How many ways can you choose a collection of $k$ objects from a collection of $n$ objects?  

What we mean by this question might vary depending on the situation; in particular, whether \textit{order matters} and whether or not \textit{repeats are allowed}.  Consider the following examples of each setting.

\begin{example} \label{Efruit1} (Order does not matter, repeats not allowed.)
Suppose you want to pick out two pieces of fruit to bring to work from the refrigerator, which contains an apple, a banana, an orange, a pear, and a mango.   In this case, \textit{order does not matter}, so the number of ways is $\binom{5}{2}=\frac{5!}{2!\cdot 3!}=10$.  
\end{example}
\begin{example} \label{Efruit2} (Order matters, repeats not allowed.)
On the other hand, suppose you want to eat one fruit on each of Monday and Tuesday next week.  In this case, \textit{order matters}. Now there are $5$ choices for which fruit to eat on Monday, but then when you get to Tuesday there are only $4$ choices left, so there are a total of $5\cdot 4=20$ choices.   
\end{example}
\begin{example} \label{Efruit3}(Order matters, repeats allowed.)
Suppose instead that your refrigerator is packed with plenty of fruits of each of the five types!  So for instance, you can eat an apple on both Monday and Tuesday if you like - indeed, \textit{repeats are allowed}.  Now there are $5$ choices for which type of fruit to eat each day, for a total of $5\cdot 5=25$ possibilities. 
\end{example}
\begin{example} \label{Efruit4}(Order does not matter, repeats allowed)
Finally, suppose \textit{repeats are allowed} (there are plenty of each type of fruit in the fridge) but \textit{order doesn't matter} (you are just grabbing two pieces of fruit to put in your lunch bag for Monday).  Now there are $5$ ways to pick the two pieces of fruit to be the same kind as each other, and if they're different, there are $10$ possibilities as we saw in Example~\ref{Efruit1}.  In total, there are $10+5=15$ possibilities. 
\end{example}

The examples above show that the answer of ``how many ways can I choose two fruits from five'' can either be $10$, $20$, $25$, or $15$ depending on whether order matters and whether repeats are allowed!  We can summarize our findings in the following table:

\begin{center}
\textbf{Number of ways to choose 2 pieces of fruit from 5 fruits} 

\begin{tabular}{c|cc}
     & Order does not matter & Order matters\\\hline
Repeats not allowed & 10 & 20 \\
Repeats allowed     & 15 & 25 \\
\end{tabular}
\end{center}

In this chapter, we will study each of the four cases above in depth.
The goal is to find a general formulas for each entry of the table, by computing the number of ways to choose $k$ objects from $n$ objects in each case. Throughout this chapter, we will assume $k$ and $n$ are natural numbers.

\subsection*{Exercises}
\begin{enumerate} 
\item Find the four entries in the table if another piece of fruit, a strawberry, is added to the refrigerator.

\item Find the four entries in the table if a third piece of fruit will be chosen (for Wednesday in Examples~\ref{Efruit2} and \ref{Efruit3}).

\item The Student Government of Combinations College consists of a president, vice president, secretary, and treasurer. Seven people are running for student government. How many possible sets of four people can make up the student government? (Ignore for now the choice of which of these four people is president, vice president, etc.)																									
\item Using the setup of the previous problem, how many different possible choices of president, vice president, secretary, and treasurer can there be, out of the seven people running?																									
\item How many ways can you pick two marshmallows from a large bag of colored marshmallows, where there are 6 possibilities for the color? (For instance, you could pick two red marshmallow, or one red and one blue, or any other combination of the 6 available colors. Assume the order in which you pick them does not matter).																									
\item There are 6 different colors of marshmallows in a large bag of marshmallows. How many ways can you stick 4 marshmallows on a toothpick that's sticking out of the top of a birthday cake?																									
\item In the previous problem, how many possibilities are there if the four marshmallows all have distinct colors?

\end{enumerate}

\section{Sequences}

\begin{videobox}
\begin{minipage}{0.1\textwidth}
\href{https://www.youtube.com/watch?v=-FP3UgU9ZNA}{\includegraphics[width=1cm]{video-clipart-2.png}}
\end{minipage}
\begin{minipage}{0.8\textwidth}
Click on the icon at left or the URL below for this section's short video lecture. \\\vspace{-0.2cm} \\ \href{https://www.youtube.com/watch?v=-FP3UgU9ZNA}{https://www.youtube.com/watch?v=-FP3UgU9ZNA}
\end{minipage}
\end{videobox}

Sequences allow us to order or list the elements in sets.

\begin{definition}
A \defn{sequence} is a list of numbers or other symbols written in order.
\end{definition}

A sequence can be either \textit{finite} or \textit{infinite}.  We write a finite sequence as a list separated by commas and enclosed by parentheses, as in: $$(5,2,3,6)$$ or $$(2,4,6,8,10,\ldots).$$

The former is a finite sequence, of \textit{length} $4$ since there are $4$ numbers in the sequence.  The latter is an infinite sequence.  This notation can be generalized using subscripts as follows.

\begin{notation}
We write $$(a_1,a_2,\ldots,a_n)$$ to denote a finite sequence of length $n$,
whose first entry is $a_1$, second entry is $a_2$, etc, and ending with the $n$th entry being $a_n$.  
  We write $$(a_1,a_2,a_3,a_4,\ldots)$$ to denote the infinite sequence whose $i$th entry is $a_i$ for each integer $i \geq 1$.
\end{notation}

The set of numbers or symbols we use in a sequence is called the \textit{alphabet}.
Commonly used alphabets are the binary alphabet $\{0,1\}$, the letters in the English alphabet, ASCII, or the natural numbers.

The next remark explains some 
other variations on sequences.

\begin{remark} \label{rem:seqquence-string}
\begin{enumerate}
    \item It is sometimes convenient to ``zero-index'', by starting a sequence with $a_0$ rather than $a_1$, see  
Chapter \ref{chap:recurrence}.

\item
It is sometimes convenient to write a sequence as a \textit{string} or \textit{word}, in which we drop the parentheses and commas. For instance, the sequence $(a,p,p,l,e)$ can be more easily written as the string of letters $apple$, and the string of digits $(1,0,1,1,0)$ can be written as the binary string $10110$.

\textit{Warning:} We do not want to write the sequence $(1,0, 10,100,1000)$ as the string $10101001000$, because
we would lose information about the length of each entry.
This topic is important in coding theory where it is called the theory of uniquely decipherable codes.

\item 
A sequence of length $n$ can also be thought of as a function that 
assigns a
number (or symbol) to every positive integer from $1$ to $n$.
An infinite sequence can also be thought of as a function that 
assigns a number (or symbol) to every positive integer. 
\end{enumerate}
\end{remark}

\subsection{Counting where order matters and repeats are allowed}

The type of counting  seen in Example~\ref{Efruit3}, where \textbf{order matters} and \textbf{repeats are allowed}, can be restated in terms of counting sequences.  Indeed, if we choose $k$ objects from $n$ objects in order, with repeats allowed, we can write them as a sequence from an alphabet with $n$ symbols.

We can count such sequences as follows.

\begin{theorem} \label{Tstring}
The number of sequences of length $k$ composed from an alphabet with $n$ symbols is $n^k$.\\ More generally, there are $n^k$ combinations of $k$ objects chosen from $n$ objects, where order matters and repeats are allowed.
\end{theorem}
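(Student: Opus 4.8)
The plan is to build an arbitrary sequence one entry at a time and apply the Multiplication Principle, exactly as in the proof of Theorem~\ref{Tsubset}. First I would observe that specifying a sequence $(a_1, a_2, \ldots, a_k)$ of length $k$ over an alphabet of $n$ symbols amounts to making $k$ successive choices: choose the symbol in the first position, then the symbol in the second, and so on up through the $k$th. The key point is that repeats are allowed, so the choice at each position is completely unconstrained by the earlier positions: there are $n$ options for the first entry, and still $n$ options for the second regardless of what the first was, and likewise $n$ options for every subsequent entry. Each of the $k$ positions therefore contributes a factor of $n$.

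Next I would invoke the Multiplication Principle (in its iterated form, applied $k-1$ times) to conclude that the total number of sequences is the product $\underbrace{n \cdot n \cdots n}_{k \text{ times}} = n^k$. This is structurally the same argument as the $n=2$ case treated in Theorem~\ref{Tsubset}, where each of the elements was either in or out of a subset, yielding $2$ choices per element and $2^n$ subsets total; here each of the $k$ positions can instead be filled in $n$ ways, yielding $n^k$.

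For the ``more generally'' clause, I would note that there is essentially nothing further to prove: choosing $k$ objects from $n$ objects where order matters and repeats are allowed is, by definition, the same as recording the outcome as a sequence whose $i$th entry names the $i$th object chosen, drawn from the $n$-symbol alphabet of available objects. Since this identification is a one-to-one correspondence between such selections and length-$k$ sequences, the two counts coincide, and the count is again $n^k$.

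There is no genuinely hard step here; the only thing worth emphasizing is \emph{why} allowing repeats is exactly what keeps every factor equal to $n$, in contrast with Example~\ref{Efruit2}, where forbidding repeats forced the factors to decrease as $n, n-1, \ldots$ as the available objects were used up. I would also remark that the formula behaves correctly at the boundary, giving $n^0 = 1$ when $k = 0$, which reflects the single empty sequence.
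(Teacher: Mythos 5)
Your proposal is correct and follows essentially the same argument as the paper's proof: choose each of the $k$ entries in succession, note that allowing repeats keeps every factor equal to $n$, and apply the Multiplication Principle to get $n^k$. The extra remarks (the identification of ordered selections with sequences, and the $k=0$ boundary case) are fine but add nothing beyond what the paper's one-paragraph proof already establishes.
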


\begin{proof}
  There are $n$ ways to choose the first symbol in the sequence, then $n$ ways to choose the second, and so on up to the $k$th.  Therefore there are $n\cdot n\cdot n\cdot \cdots \cdot n=n^k$ possibilities.  
\end{proof}

Using Theorem~\ref{Tstring}, we can complete the general formula for one entry of the table.

\begin{center}
\textbf{Number of ways to choose $k$ objects from $n$ objects} \\\vspace{0.4cm}
\renewcommand{\arraystretch}{2}
\begin{tabular}{c|cc}
     & Order does not matter & Order matters\\\hline
Repeats not allowed &  &  \\
Repeats allowed     &  & $n^k$ \\
\end{tabular}
\end{center}

Let's look at a few examples;
for convenience we write these examples as strings.

\begin{example}
\label{Ebinarystring}
The number of binary strings (consisting of $0$'s and $1$'s) of length $k$ is $2^k$ (here $n=2$).  We can verify this for $k=3$ by listing all $8=2^3$ possible binary strings of length three: $$000,001,010,011,100,101,110,111.$$
\end{example}

\begin{example}
The number of 3-digit strings of letters from the English alphabet is $(26)^3=17,576$.  Some examples are $pxy$ or $csu$.
\end{example}

\begin{example}
The number of 3-digit strings of letters or numbers, such as $p17$, $2xy$, or $pqm$, is $(26+10)^3=36^3=46,656$.
\end{example}

We can use the formula $n^k$ for any situation in which order matters and repeats are allowed, even if we are not explicitly counting sequences or strings.

\begin{example}
At a buffet table, there are many samosas, chicken skewers, and carrots.  The number of ways to eat 10 snacks in a row is $3^{10}$.
\end{example}

 The formula $n^k$ was derived using the multiplication principle, by considering how many choices there are for each element of the sequence.  In Theorem~\ref{Tstring}, there is one alphabet of size $n$ that is used for all $k$ positions in the string.  We can generalize this by using a different alphabet in each position.

\begin{theorem}\label{thm:strings}
There are $n_1\cdot n_2\cdot \ldots \cdot n_k$ possible sequences of length $k$ made by
\begin{itemize}
\item choosing one of $n_1$ symbols as the 1-st entry,
\item choosing one of $n_2$ symbols as the 2-nd entry,

\hspace{5mm}\vdots
\item choosing one of $n_k$ symbols as the $k$-th entry.
\end{itemize}
\end{theorem}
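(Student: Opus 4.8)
The plan is to mirror the proof of Theorem~\ref{Tstring} and build each sequence one entry at a time, invoking the multiplication principle at every position. The essential observation is that the alphabet available at position $i$ has a fixed size $n_i$ that does not depend on which symbols were placed in positions $1$ through $i-1$; this is exactly the independence condition that the multiplication principle requires.

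Concretely, I would argue as follows. To specify a sequence of length $k$, I first choose its first entry, for which there are $n_1$ options. Having done so, I choose the second entry in $n_2$ ways, then the third in $n_3$ ways, and so on, until I choose the $k$-th entry in $n_k$ ways. Since each choice is made from its own alphabet regardless of the previous selections, the multiplication principle gives that the total number of sequences is the product $n_1 \cdot n_2 \cdots n_k$.

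The one subtlety worth flagging is that the multiplication principle, as stated earlier in the chapter, combines only two collections of choices at a time, whereas here I am combining $k$ of them. I would resolve this by iterating the two-factor principle: group the first $k-1$ choices into a single ``choose the first $k-1$ entries'' step (which by the same reasoning can be performed in $n_1 \cdots n_{k-1}$ ways), and then append the final choice among $n_k$ options, yielding $(n_1 \cdots n_{k-1})\cdot n_k$. Repeating this collapse is precisely an induction on $k$ with base case $k=1$, where there are trivially $n_1$ length-one sequences; once induction becomes available in Section~\ref{sec:induction}, this step can be made fully rigorous.

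I do not expect any genuine obstacle here, since the result is a direct generalization of Theorem~\ref{Tstring}. The only point requiring care is the bookkeeping involved in extending the two-factor multiplication principle to $k$ factors, which the informal ``and so on'' phrasing handles at this stage and which a formal induction would handle completely.
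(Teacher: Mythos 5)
Your proposal is correct and follows exactly the approach the paper intends: the paper states Theorem~\ref{thm:strings} without a separate proof, presenting it as the direct generalization of Theorem~\ref{Tstring}, whose proof (choose each entry in turn and multiply the number of options) is precisely the argument you give. Your additional remark about iterating the two-factor multiplication principle (or formalizing via induction on $k$) is a careful refinement of the same argument, not a different route.
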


\begin{remark}
Theorem~\ref{Tstring} is the special case of Theorem~\ref{thm:strings} where $n_1=n_2=\ldots=n_n=n$.
\end{remark}

\begin{example}
How many 4-digit numbers are there that have no leading zeros? For example, 2347 is one such number but 0023 is not.
\end{example}

\begin{answer}
There are 9 choices (1-9) for the first digit, and 10 choices (0-9) for the remaining three digits. Hence the total number is $9\cdot 10\cdot 10\cdot 10=9000$.
%
\end{answer}

\begin{example}
How many 3-digit sequences of letters from the English alphabet in which the first and last letters are consonants and the middle is a vowel (A, E, I, O, or U)?
Since there are $5$ vowels and $21$ consonants, there are $21\cdot 5 \cdot 21 = 2205$ possibilities.
\end{example}

\subsection*{Exercises}
\begin{enumerate} 
\item
\begin{itemize}
\item[(a)] Find the number of all 4-digit strings of letters. String $aabb$ is different from $abab$.
\item[(b)]  Find the number of all 4-digit strings of letters in which no two consecutive letters are the same.  For example, strings $xdwa$ and $xdxd$ count but strings $xdww$ and $xddx$ do not.
\item[(c)]  Find the number of all 4-digit strings of letters in which letters which differ by one slot are not the same, and also letters which differ by two slots are not the same. For example, string $xwzz$ does not count since there are $z$'s which differ by one slot, string $xzwz$ does not count since there are $z$'s which differ by two slots, but string $zxwz$ is okay.
\end{itemize}

\item How many ways are there to send one postcard to each of 10 different friends, from a large supply of each of 4 different kinds of postcards (a ram, buffalo, fox, and goat print postcard). If Student A receives a ram postcard and Student B receives a buffalo postcard, that is different than if Student A receives a buffalo postcard and Student B receives a ram postcard.

\item How many 5-digit strings of digits 0-9 are there such that no two consecutive digits are the same? For example, strings $01323$ and $93572$ are allowed, but $00232$ and $93552$ are not.

\item Student $d$ starts a job at a license plate manufacturer.
Each license plate is of the form XXX~\#\#\#, where each entry marked X can be any capital letter from the English alphabet and each digit marked \# can be any digit 0-9.  
If Student $d$ can make 33,000 license plates a day, how many days of work will this give Student $d$?

\item Your neighbor works for your school's accounting division, which receives scholarship donations from all over the world.
Each donor is assigned an account fund, labeled by a $5$ digit string where the first digit is `C' (standing for Code), starting at CA000 through CA999, then CB000 through CB999, etc.
The available codes assigned via this system will run out after codes CZ000 through CZ999 are assigned.
About 700 new codes are used every year.
At the beginning of the school year, the next code to be assigned is CW182.
Your neighbor plans to retire 5 years from now.
Will the available codes run out before she retires? 
\end{enumerate}

\section{Permutations and other sequences with distinct entries}

The seven students in the set $P=\{a,b,c,d,e,f,g\}$ go out for ice cream. When they arrive at the ice cream shop, they need to wait in line.
Student $a$ argues they should wait in alphabetical order, but student $g$ has a strong opposition to this ordering.
This prompts the question: How many different ways are there for them to wait in an ordered line?  

To solve this kind of problem, we need the following definition.

\begin{definition}
A \defn{permutation} of $n$ objects is an ordering or arrangement of all $n$ of them. 
\end{definition}

A permutation can be expressed as a sequence or string.

\begin{example}
For example, the set $\{a,b,c\}$ has $6$ permutations:
\[ abc\quad acb\quad bac\quad bca\quad cab\quad cba \]
\end{example}

\begin{example}
To write down a permutation of the set $\{1,2,3,4,5,6,7,8,9,10\}$, we use sequence notation rather than string notation, since $10$ has two digits.  One possible permutation is: $$(3,6,7,2,1,10,4,5,9,8).$$ 
\end{example}

Recall that $n!=n\cdot(n-1)\cdot(n-2)\cdot\ldots\cdot3\cdot2\cdot1$.

\begin{theorem}\label{thm:factorial}
The number of permutations of $n$ objects is $n!$.
\end{theorem}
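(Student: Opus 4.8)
The plan is to prove Theorem~\ref{thm:factorial} by the multiplication principle, exactly mirroring the reasoning used in the introductory example where five students lined up in $5!$ ways, and in the seating example with $47$ chairs. The claim is that the number of permutations of $n$ objects is $n!$, so I would build a permutation one position at a time and count the choices available at each step.

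First I would set up the count by imagining that we fill the $n$ positions of the ordering from left to right. For the first position, any of the $n$ objects may be placed there, giving $n$ choices. Once that object is used, it is no longer available, so for the second position there are only $n-1$ remaining objects to choose from, hence $n-1$ choices. Continuing in this way, the third position admits $n-2$ choices, and in general the $i$-th position admits $n-i+1$ choices, until the final position, where only a single object remains and there is exactly $1$ choice.

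Next I would invoke the multiplication principle (in the generalized form of Theorem~\ref{thm:strings}, where a different number of symbols is allowed in each slot) to conclude that the total number of orderings is the product
\[
n \cdot (n-1) \cdot (n-2) \cdots 2 \cdot 1 = n!.
\]
This matches the definition of $n!$ given earlier in Section~\ref{sec:factorials-intro}, so the proof is complete.

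The only subtle point — and the step I would be most careful about — is the justification that the choices are genuinely independent in the sense required by the multiplication principle: the \emph{number} of options at each stage ($n-i+1$) does not depend on \emph{which} particular objects were selected earlier, only on how many slots have already been filled. I would emphasize this explicitly, since it is the exact issue that tripped up ``student $a$'' in Section~\ref{Smotivate2} when order was mistakenly allowed to matter. As a sanity check I would note that the formula gives the correct small values, recovering the $6$ permutations of $\{a,b,c\}$ and the $120$ orderings of the five students from the opening section.
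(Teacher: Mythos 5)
Your proposal is correct and follows the same argument as the paper's proof: fill the positions one at a time, observe that there are $n, n-1, n-2, \ldots, 1$ choices, and invoke Theorem~\ref{thm:strings} (the multiplication principle) to conclude the count is $n!$. Your added remark that the number of choices at each stage is independent of which objects were chosen earlier is a nice explicit touch, but the underlying reasoning is identical to the paper's.
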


\begin{proof}
By Theorem \ref{thm:strings}, there are $n$ choices for the first object, then $n-1$ choices for the second object, then $n-2$ for the third, and so on until there is only one $1$ choice remaining for the $n$th object.
So there are $n\cdot(n-1)\cdot(n-2)\cdot\ldots\cdot3\cdot2\cdot1=n!$ permutations in total.
\end{proof}

\begin{remark}\label{rmk:zero-factorial}
 If there are $0$ objects, then there is only one way to arrange them (do nothing).  This can be thought of as the ``empty sequence'' or ``empty string''.  Therefore, we set $0!=1$. 
\end{remark}

\begin{example}
  When seven students go out for ice cream, there are $7!=5040$ ways for them to line up, since each way corresponds to a permutation of elements from $P=\{a,b,c,d,e,f,g\}$.
\end{example}

\begin{example}
 Suppose there are $5$ different fruits in your refrigerator: an apple, a banana, an orange, a pear, and a mango.  How many ways can you bring one fruit on each day of the work week?
 Let's call the fruits $A,B,O,P$, and $M$ for short.  Then a possible assignment of each of the fruits to a day of the work week is a permutation of $\{A,B,O,P,M\}$.  
 Therefore, the number of possibilities is $5!=5\cdot 4 \cdot 3 \cdot 2 \cdot 1=120$.
\end{example}

The problems in this section can be rephrased as counting problems where order matters and repeats are not allowed, in which we arrange all of the objects.
In the next section, we consider the more general problem of choosing $k$ objects from $n$ objects where order matters but repeats are not allowed.  These can be thought of as \textit{ordered subsets}.

\subsection{Ordered subsets}

The ice cream adventurers
$P=\{a,b,c,d,e,f,g\}$
decide to wait in line for ice cream yet again.  However, to their collective horror, the ice cream shop is closing and can only serve $4$ more customers.  How many ordered lines are there that contain $4$ people from the set $P$?
To make such a line, we choose a subset $S$ of $P$ of size $4$ and then order the elements in $S$.  

This problem is similar to the one seen in Example~\ref{Efruit2}, where {\bf order matters} and {\bf repeats are not allowed}.
More generally, consider the following definition.

\begin{definition}
Suppose $0 \leq k \leq n$. 
  An \defn{ordered subset} of size $k$ from a set $A$ is a string $a_1,\ldots,a_k$ of distinct elements of $A$.
\end{definition}

\begin{theorem}\label{thm:ordered-subsets}
Suppose $0 \leq k \leq n$.
The number of ordered subsets of size $k$ from a set with $n$ elements is
\begin{equation}
    \label{Eordsubsizek}
n\cdot(n-1)\cdot(n-2)\cdot\ldots\cdot(n-k+1)=\frac{n!}{(n-k)!}.
\end{equation}
More generally, $\frac{n!}{(n-k)!}$ is the number of ways to choose $k$ objects from $n$ objects, where order matters and repeats are not allowed.
\end{theorem}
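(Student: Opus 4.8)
The plan is to count ordered subsets directly with the multiplication principle and then to recognize the resulting product as $\frac{n!}{(n-k)!}$. An ordered subset of size $k$ is, by definition, a string $a_1, \ldots, a_k$ of \emph{distinct} elements of a set $A$ with $n$ elements, so I would construct such a string one entry at a time. First I would observe that there are $n$ choices for $a_1$; then, whatever $a_1$ is, there are $n-1$ elements still available for $a_2$; and in general, once $a_1, \ldots, a_{i-1}$ have been chosen as distinct elements, exactly $i-1$ elements of $A$ have been used up, leaving $n-(i-1)$ elements from which to pick $a_i$.

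The key point, and the step that needs the most care, is that at stage $i$ the \emph{number} of available choices is $n-i+1$ \emph{regardless of which} particular earlier elements were selected. This is exactly the hypothesis needed to apply the generalized multiplication principle, Theorem~\ref{thm:strings}, with $n_i = n-i+1$ for $i = 1, \ldots, k$: although the \emph{set} of legal choices at each stage depends on the earlier choices, its \emph{size} does not, so the product formula still applies. This yields the count
\[
n_1 n_2 \cdots n_k = n(n-1)(n-2)\cdots(n-k+1).
\]

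Next I would identify this falling product with $\frac{n!}{(n-k)!}$. Multiplying and dividing by the missing tail $(n-k)(n-k-1)\cdots 2\cdot 1 = (n-k)!$ gives
\[
n(n-1)\cdots(n-k+1) = \frac{n(n-1)\cdots(n-k+1)\cdot(n-k)!}{(n-k)!} = \frac{n!}{(n-k)!},
\]
since the numerator is exactly $n!$. I would also verify the boundary cases: when $k=0$ the empty string is the only ordered subset and $\frac{n!}{n!}=1$ agrees, while when $k=n$ the formula gives $\frac{n!}{0!}=n!$, consistent with Theorem~\ref{thm:factorial}. The ``more generally'' clause then requires only an interpretation and no new work: choosing $k$ objects from $n$ where order matters and repeats are not allowed produces precisely a string of $k$ distinct elements, that is, an ordered subset of size $k$, so it is counted by the same quantity.

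An alternative I would keep in reserve mirrors the book's ``Method 2'' style and uses the division principle. Every permutation of all $n$ elements (there are $n!$ of them by Theorem~\ref{thm:factorial}) restricts to an ordered subset by taking its first $k$ entries, and two permutations give the same ordered subset exactly when they agree on those first $k$ entries and differ only by a rearrangement of the last $n-k$ entries. Thus the $n!$ permutations partition into blocks of equal size $(n-k)!$, one block per ordered subset, and the division principle gives $\frac{n!}{(n-k)!}$ blocks, matching the count above.
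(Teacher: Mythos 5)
Your main argument is exactly the paper's proof: build the string one entry at a time, note there are $n, n-1, \ldots, n-k+1$ choices, apply the multiplication principle (Theorem~\ref{thm:strings}), and rewrite the falling product as $\frac{n!}{(n-k)!}$. The additional care you take (observing that the \emph{number} of choices at each stage is independent of the earlier selections, checking $k=0$ and $k=n$) and your reserve division-principle argument (which echoes Method~2 of Lemma~\ref{Lformulabinom}) are correct but not needed beyond what the paper does.
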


\begin{proof}
   An ordered subset of size $k$ is a sequence of length $k$ chosen from an alphabet of size $n$ in which no two entries are equal.  There are $n$ possibilities for the first entry. Once that entry is chosen, there are only $n-1$ remaining possibilities for the second one, then $n-2$ for the third, and so on.
   For the final $k$th entry, there are $n-k+1$ possibilities.
   The decreasing number of possibilities for each entry guarantees that there are no repeated entries.
   The formula in \eqref{Eordsubsizek} follows from the multiplication principle.
\end{proof}

\begin{remark}
Since $n-k+1=n-(k-1)$,
there are $k$ entries in the product $n\cdot(n-1)\cdot(n-2)\cdot\ldots\cdot(n-k+1)$.  It is helpful to think of these entries as being labeled from 0 to $k-1$.
\end{remark}

\begin{example}
There are $7\cdot 6\cdot 5 \cdot 4=840$ possible ways to form a line of four ice cream adventurers.
\end{example}

Using Theorem~\ref{thm:ordered-subsets}, we can complete the general formula for the upper right entry of the table.

\begin{center}
\textbf{Number of ways to choose $k$ objects from $n$ objects} \\\vspace{0.4cm}
\renewcommand{\arraystretch}{2}
\begin{tabular}{c|cc}
     & Order doesn't matter & Order matters\\\hline
Repeats not allowed &  & $\frac{n!}{(n-k)!}$ \\
Repeats allowed     &  & $n^k$ \\
\end{tabular}
\end{center}

\begin{example} 
  As in Example~\ref{Efruit2}, the number of ways to choose one out of five fruits to eat on Monday and then another to eat on Tuesday is $$\frac{5!}{3!}=\frac{5\cdot 4 \cdot 3 \cdot 2 \cdot 1}{3\cdot 2 \cdot 1} =5\cdot 4 = 20.$$
\end{example}

\noindent\textbf{Tip:} When dividing one factorial by another, expand both out as products and cancel as many terms as possible before multiplying.  For example, the computation above was easier than computing $\frac{5!}{3!}=\frac{120}{6}=20$.

\begin{example}
How many different arrangements are there for 47 students sitting in a classroom with 50 seats?
\end{example}

\begin{answer}
This is the number of ordered subsets of size $47$ from a set of size 50.  Hence the answer is $50\cdot 49\cdot\ldots\cdot 4=\frac{50!}{3!}$.
\end{answer}

\begin{remark}
In Section~\ref{ss:counting-problems}, we answered this question by adding 3 ``ghosts" to the class. These answers are really the same!
\end{remark}

\subsection*{Exercises}
\begin{enumerate}
\item How many different ways can you permute the 26 letters of the English alphabet?

\item How many ways are there to scramble the letters in ABSEINO, in the following situations? 
\begin{enumerate}
\item If each letter is used exactly once in a seven letter word.

\item If each letter is used at most once in a five letter word.

\item How many actual five letter English words, such as NOISE, can you make from these letters?
\end{enumerate}

    \item 
    In how many ways can you visit 5 of the 50 state capitals? The trip 
    \[\mbox{Sacramento} \to \mbox{Dover} \to \mbox{Baton Rouge} \to \mbox{Bismarck} \to \mbox{Denver}\]
    is different than the trip 
    \[\mbox{Dover} \to \mbox{Baton Rouge} \to \mbox{Denver} \to \mbox{Bismarck} \to \mbox{Sacramento}.\]

    \item In how many ways can you rank your favorite 10 restaurants (from \#1 to \#10) out of a collection of 50 restaurants?
    
    \item There are 128 NCAA DI football teams. How many possible top-25 ranked lists are there?  One such list might be:
\begin{enumerate}
\item[1.] CSU
\item[2.] Oklahoma
\item[3.] Washington

\hspace{5mm}\vdots
\item[25.] Alabama
\end{enumerate}

\end{enumerate}

\section{Sets: When order doesn't matter}
After receiving numerous complaints about the length of its lines, the ice cream shop decided to reward their customers with an \emph{Ice Cream for a Year} contest. This contest allows 4 people to win free ice cream for a year!  The seven students in the set $P=\{a,b,c,d,e,f,g\}$ eagerly enter, along with 100 other people.
Out of the 107 entrants, how many ways are there to choose 4 winners?  All winners receive the same prize.

This problem is similar to the one seen in Example~\ref{Efruit1}, where {\bf order does not matter} and 
{\bf repeats are not allowed}.
 We can solve problems of this type with the binomial coefficient formula from Section~\ref{S.1.2.3}.

\begin{theorem}
\label{Tupperleft}
Suppose $0 \leq k \leq n$.
The number of subsets of size $k$ in a set of size $n$ is $$\binom{n}{k}=\frac{n!}{k!(n-k)!}.$$ More generally, $\binom{n}{k}$ is the number of ways to choose $k$ objects from $n$ objects, where order does not matter and repeats are not allowed.
\end{theorem}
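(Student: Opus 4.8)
The plan is to reduce the problem to the count of \emph{ordered} subsets, which was already settled in Theorem~\ref{thm:ordered-subsets}, and then strip away the ordering using the division principle. The key observation is that every subset of size $k$ can be ordered in exactly $k!$ ways, so the ordered subsets fall naturally into groups of size $k!$, one group for each unordered subset.

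First I would recall from Theorem~\ref{thm:ordered-subsets} that the number of ordered subsets of size $k$ from a set $A$ with $n$ elements is $\frac{n!}{(n-k)!}$. Let $T$ denote the set of all such ordered subsets. Next I would define a set partition of $T$: two ordered subsets are placed in the same block when they use exactly the same $k$ elements of $A$, differing only in the order in which those elements are listed. Each block is therefore indexed by an (unordered) subset $S \subseteq A$ of size $k$, and the number of blocks is precisely the quantity we wish to count.

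The heart of the argument is to show that every block has the same size $k!$. Since repeats are not allowed, the $k$ elements of a given subset $S$ are distinct, so by Theorem~\ref{thm:factorial} there are exactly $k!$ ways to arrange them into an ordered subset; these $k!$ orderings are exactly the elements of the block associated to $S$. With all blocks of equal size $k!$, the division principle (Lemma~\ref{Ldivsetprin}) gives that the number of blocks equals
\[
\frac{|T|}{k!} = \frac{n!/(n-k)!}{k!} = \frac{n!}{k!\,(n-k)!} = \binom{n}{k},
\]
where the final equality is Lemma~\ref{Lformulabinom}. This settles both claims at once, since counting subsets of size $k$ and counting ways to choose $k$ objects with order irrelevant and no repeats are the same problem.

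I do not expect a serious obstacle; the only point requiring care is verifying that the blocks genuinely partition $T$ into pieces of equal size $k!$ — that is, that distinctness of the chosen elements forces exactly $k!$ orderings and that no ordered subset lies in two blocks. One should also check that the boundary cases $k=0$ and $k=n$ behave correctly (a single empty subset, respectively a single full subset), and note that the argument uses only the cardinality of $A$, so it applies verbatim to an arbitrary $n$-element set rather than only to $\{1,\ldots,n\}$.
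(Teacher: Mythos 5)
Your proposal is correct and matches the paper's own proof: both count the ordered subsets via Theorem~\ref{thm:ordered-subsets} and then divide by $k!$ (justified by Theorem~\ref{thm:factorial} and the division principle) to obtain $\binom{n}{k}=\frac{n!}{k!(n-k)!}$. Your version merely makes the set partition into blocks of size $k!$ explicit, which the paper leaves implicit in the phrase ``counts each unordered subset $k!$ times.''
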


\begin{proof}
We already proved this in Lemma~\ref{Lformulabinom}.  Here we repeat the proof from Method 1, using the vocabulary of sets.
By Theorem \ref{thm:ordered-subsets}, the number of \emph{ordered} subsets of size $k$ is $\frac{n!}{(n-k)!}$. However, that counts each unordered subset $k!$ times, since $k!$ is the number of permutations of $k$ objects (Theorem \ref{thm:factorial}). Hence, by the division principle, the number of unordered subsets of size $k$ is $$\frac{\frac{n!}{(n-k)!}}{k!}=\frac{n!}{k!(n-k)!}=\binom{n}{k}.$$
\end{proof}

Using Theorem~\ref{Tupperleft}, we can complete the upper-left entry of the table.

\begin{center}
\textbf{Number of ways to choose $k$ objects from $n$ objects} \\\vspace{0.4cm}
\renewcommand{\arraystretch}{2}
\begin{tabular}{c|cc}
     & Order does not matter & Order matters\\\hline
Repeats not allowed & $\binom{n}{k}$ & $\frac{n!}{(n-k)!}$ \\
Repeats allowed     &  & $n^k$ \\
\end{tabular}
\end{center}

\begin{example}
  The number of ways to choose $2$ fruits from $5$ to bring to work on the same day is $$\binom{5}{2}=\frac{5!}{2!\cdot 3!}=\frac{5\cdot 4 \cdot 3 \cdot 2 \cdot 1}{2\cdot 1 \cdot 3 \cdot 2 \cdot 1}=5\cdot 2=10.$$
\end{example}

\begin{example}
How many ways are there to select the top 25 NCAA DI football teams (of which there are 128) without ranking them?

This is the number of (unordered) subsets of size 25, giving $\binom{128}{25}=\frac{128!}{25!(128-25)!}= \frac{128!}{25!103!}$
\end{example}

\begin{example}
\label{Ewinice}
The number of ways to pick 4 winners out of 107 entrants for the \emph{Ice Cream for a Year} contest is $\binom{107}{4}=\frac{107!}{4! \cdot 103!}=5,160,610$.
\end{example}

Choosing the four winners
in Example~\ref{Ewinice} is the same as eliminating the 103 people who did not win.
So the number of ways to choose $103$ people out of $107$ gives the same answer.
  This leads us to the following corollary, which is often used to simplify problems.

\begin{corollary}\label{cor:symmetry}
  These two binomial coefficients are equal: $\binom{n}{k}=\binom{n}{n-k}$.
\end{corollary}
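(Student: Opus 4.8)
The plan is to give two proofs, echoing the two-method style used throughout this chapter.

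\textbf{Algebraic approach.} The quickest route is to substitute directly into the closed formula $\binom{n}{k}=\frac{n!}{k!(n-k)!}$ established in Theorem~\ref{Tupperleft}. Replacing $k$ by $n-k$ gives $\binom{n}{n-k}=\frac{n!}{(n-k)!\,(n-(n-k))!}$, and the only step requiring any care is simplifying the second factor in the denominator using $n-(n-k)=k$. After this simplification both expressions equal $\frac{n!}{k!(n-k)!}$, so they agree. I would carry this out in one or two lines.

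\textbf{Combinatorial approach.} I expect the more illuminating proof to be the bijective one, which also matches the motivating discussion just before the statement (picking $4$ winners is the same as discarding the $103$ non-winners). Fix a set $S$ of size $n$. The idea is that choosing a subset $T\subseteq S$ of size $k$ is equivalent to choosing the subset $S-T$ of size $n-k$ that is left behind, using the set difference from Section~\ref{Ssetcourse2}. First I would define the map sending each size-$k$ subset $T$ to its complement $S-T$, and check that $|S-T|=n-k$ by the subtraction principle for sets. The key point to verify is that this map is a genuine one-to-one correspondence: I would exhibit its inverse, namely the map sending a size-$(n-k)$ subset $U$ to $S-U$, since taking the complement twice returns the original subset. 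Because the size-$k$ subsets and the size-$(n-k)$ subsets are thereby matched up perfectly, they are equal in number, and by Theorem~\ref{Tupperleft} this common count is exactly the claimed identity $\binom{n}{k}=\binom{n}{n-k}$.

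There is essentially no hard obstacle here; the statement is elementary. The only things to watch are the index arithmetic $n-(n-k)=k$ in the algebraic proof and, in the combinatorial proof, stating clearly that complementation is its own inverse so that it qualifies as a genuine matching. Since the formal notion of bijection is deferred to Section~\ref{sec:bij}, I would phrase the correspondence in the informal ``is the same as'' language used elsewhere in the text, so that the argument remains self-contained at this point in the book.
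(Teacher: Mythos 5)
Your proposal is correct, and your combinatorial argument is essentially the paper's own justification: the text introduces Corollary~\ref{cor:symmetry} precisely by observing that choosing the $k$ winners is the same as discarding the $n-k$ non-winners, i.e.\ by the complementation correspondence you describe (the paper later formalizes this same bijection in Section~\ref{sec:bij}). Your additional algebraic verification via $\binom{n}{k}=\frac{n!}{k!(n-k)!}$ is a correct bonus, but nothing more is needed.
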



Surprisingly, the formula $\binom{n}{k}$ can also be useful in counting certain sequences or strings.

\begin{example}
How many binary strings (from the alphabet $\{0,1\}$) have exactly $k$ zeroes and $n-k$ ones?  To form such a binary string, we simply have to choose $k$ positions for the $0$'s, out of $n$ possible positions.  This is the same as choosing an \textit{unordered} subset of $k$ of the positions in the string (even though the string itself is ordered).  Therefore, there are $\binom{n}{k}$ such strings.
\end{example}

\subsection*{Exercises}
\begin{enumerate}
\item You have 7 different rolls of wrapping paper and 4 identical teddy bears. How many different ways can you wrap the teddy bears such that no two teddy bears are wrapped in the same wrapping paper?

\item Student $a$ has six wristbands of different colors and student $b$ has seven necklaces of different colors.
In how many ways can $a$ trade two wristbands for three of $b$'s necklaces?

\item Suppose student $d$ has 9 pencils (all different) and student $e$ has 6 erasers (all different).
In how many different ways could student $d$ trade 4 of their pencils for 2 of student $e$'s erasers?

\item Give an algebraic proof that $\binom{n}{2}+\binom{n+1}{2}=n^2.$ (Do not use induction, even if you know what that is.)

\item 
How many ways are there to send postcards to 10 different friends, from a large supply of each of 4 different kinds of postcards (a ram, buffalo, fox, and goat print postcard), in the following scenario?
You want to send either 1, 2, 3, or 4 postcards, such that no person gets two postcards of the same animal? For example, student $a$ might receive the ram, buffalo, and fox postcards, while student $b$ might receive only the ram postcard, and students $a$ and $b$ can receive the same set of postcards.
\end{enumerate}

\section{Multisets: sets with repeats allowed}\label{sec:sticks-and-stones}

\begin{videobox}
\begin{minipage}{0.1\textwidth}
\href{https://www.youtube.com/watch?v=8OSEZyL_O7E}{\includegraphics[width=1cm]{video-clipart-2.png}}
\end{minipage}
\begin{minipage}{0.8\textwidth}
Click on the icon at left or the URL below for this section's short video lecture. \\\vspace{-0.2cm} \\ \href{https://www.youtube.com/watch?v=8OSEZyL\_O7E}{https://www.youtube.com/watch?v=8OSEZyL\_O7E}
\end{minipage}
\end{videobox}

Student $c$ was one of four lucky winners of the ice cream contest!
To celebrate, the seven students decide to get a large sundae in a bowl, filled with many different scoops of ice cream.
There are 10 flavors of ice cream and a large sundae has $15$ scoops.
Since it is a hot day in Colorado, all of the scoops will melt together, so the order of the scoops does not matter.
Each flavor can be used for as many of the $15$ scoops as they want.
How many different sundaes can student $c$ order?  

This problem is similar to the one seen in Example~\ref{Efruit4}.
To solve problems of this type, we define a \textit{multiset}; intuitively, this is a set whose elements do not need to be different. 

\subsection{Definition of multiset}

\begin{example}
  The multiset $\{\text{apple},\text{apple},\text{orange},\text{pear},\text{pear}\}$ denotes a collection of two identical apples, one orange, and two identical pears.
\end{example}

\begin{definition}
  A \defn{multiset} is
      a set, together with a positive integer multiplicity assigned to each element. 
      Equivalently, it is a set $S$, together with a function 
      $f:S \to {\mathbb N} - \{0\}$.
\end{definition}

A multiset can be written down in several ways:
\begin{itemize}
    \item 
by listing an element of the set $m$ times if it has multiplicity $m$;
 \item by 
 writing the multiplicity as a superscript on the element; (This method can sometimes lead to confusion.)
 \item by writing down the function $f$.
 \end{itemize}

\begin{example}
Here are several ways to write the multi-set with three $4$'s, one $5$, and two $6$'s:
\begin{itemize}
    \item $\{4,4,4,5,6,6\}$ or
    $\{4,5,4,6,4,6\}$ (the order does not matter).
    \item $\{4^{x3},5^{x1},6^{x2}\}$
    Warning! The $4^{x3}$ written here does not mean $4$ raised to the power $x3$.
    \item the function
    $f:\{4,5,6\} \to \N$
    with $f(4)=3$, $f(5)=1$, and $f(6)=2$.
\end{itemize}
\end{example}

\subsection{Example of sticks and stones method}

To count the number of multi-sets of a certain kind,
 we use what we call the ``sticks and stones'' method.
(In other sources, this is sometimes called the ``stars and bars'' method.) We illustrate this method first with an example.

\begin{example}
\label{Estickstone}
Suppose we have an unlimited supply of $5$ kinds of fruit: apples, bananas, mangoes, oranges, and pears.  Let's write them as A, B, M, O, and P for short.  How many ways can we make a bag of $6$ fruits?

Such a bag can be described as a multiset, for instance, $\{A,B,B,M,M,P\}$.
An alternative way to write this multiset is with a string of sticks (written $|$\,) and stones (written $\bullet$).  The stones represent fruit and the sticks are dividers between different types of fruit (say in alphabetical order A, B, M, O, P).  So the multiset $\{A,B,B,M,M,P\}$ is written $${\bullet} \mid \bullet\,\, \bullet \mid \bullet\,\, \bullet \mid \,\,\mid \bullet.$$  The first stone indicates that there is one apple, then the next two stones indicates that there are two bananas, and so on.
There are no stones between the last two sticks because there are no oranges in the bag.

Each bag can be written using six stones and four sticks.  There are $4$ sticks because there are $5$ types of fruit to separate. 
On the other hand, every string of six stones and four sticks describes a bag of fruit. 
For example, the string 
$$\mid \, \, \mid {\bullet} \mid \bullet\,\, \bullet \, \, \bullet\,\, \bullet \mid  \bullet$$
describes the bag 
$\{M, O, O, O, O, P\}$.

In this way, there is a 1-to-1 matching between the bags and the strings.
So to count the number of bags of six fruits (with five kinds of fruit), we can count the number of strings of $6$ stones and $4$ sticks  This is equal to the number of ways to choose $6$ positions out of $10$ possible positions for the location of the stones.  The sticks are then placed in the remaining $4$ positions.  
Thus the answer is $\binom{10}{6}=210$.
\end{example}

Example~\ref{Estickstone} illustrates several facts.  First, the number of multi-sets of size $k$ whose elements come from a fixed set of size $n$ equals the number of combinations of $k$ elements from a set of $n$ elements where order does not matter but repeats are allowed. 
Second, this number can be re-expressed as the number of ways to write a string of $k$ stones and $n-1$ sticks.
Third, this number equals the number of ways to choose $k$ locations for stones out of $k + (n-1)$ locations for sticks and stones.  We state this more formally in the next theorem.

\subsection{Main result on multisets and examples}

\begin{theorem}\label{thm:choose-with-repeats}
The number of multisets of size $k$ in which every element is chosen from an alphabet of size $n$ is $\binom{n+k-1}{k}$.\\
More generally, $\binom{n+k-1}{k}$ is the number of ways to choose $k$ objects from $n$ objects, where order does not matter and repeats are allowed.
\end{theorem}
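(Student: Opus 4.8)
The plan is to prove this by setting up an explicit bijection between the multisets we wish to count and a collection of sticks-and-stones strings, generalizing the method illustrated in Example~\ref{Estickstone}, and then to count the strings using Theorem~\ref{Tupperleft}. First I would fix an ordering of the alphabet, writing its elements as $x_1, x_2, \ldots, x_n$. Any multiset of size $k$ drawn from this alphabet is completely determined by its list of multiplicities $(c_1, c_2, \ldots, c_n)$, where $c_i \geq 0$ records how many times $x_i$ appears and $c_1 + c_2 + \cdots + c_n = k$. I would emphasize that $c_i = 0$ is allowed, corresponding to an alphabet element that does not appear in the multiset. Thus counting multisets of size $k$ is the same as counting nonnegative integer solutions to this equation.

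Next I would describe the encoding: to each multiset associate the string consisting of $c_1$ stones, then one stick, then $c_2$ stones, then one stick, and so on, ending with $c_n$ stones. Since there are $n$ groups of stones separated by sticks, every such string uses exactly $n-1$ sticks, and since the multiplicities sum to $k$, it uses exactly $k$ stones. So the map sends our multisets into the set of all strings of $k$ stones and $n-1$ sticks.

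The key step is to verify that this map is a bijection onto that set of strings. I would establish injectivity and surjectivity simultaneously by exhibiting the inverse: given any string of $k$ stones and $n-1$ sticks, the $n-1$ sticks cut it into $n$ (possibly empty) blocks of stones, and reading off the block lengths recovers a unique multiplicity list $(c_1, \ldots, c_n)$ summing to $k$, hence a unique multiset. This is where I would be most careful, since the correspondence must correctly handle empty blocks (zero multiplicities), including those at the two ends and those produced by two adjacent sticks. This bookkeeping is the main obstacle, though it is conceptual rather than computational.

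Finally I would count the strings. A string of $k$ stones and $n-1$ sticks occupies $k + (n-1) = n+k-1$ positions, and such a string is determined by choosing which $k$ of these positions hold stones, the remaining $n-1$ positions being forced to hold sticks. By Theorem~\ref{Tupperleft}, the number of ways to choose these $k$ positions out of $n+k-1$ is $\binom{n+k-1}{k}$, so by the bijection this is also the number of multisets. The ``more generally'' claim then follows immediately, since choosing $k$ objects from $n$ objects with order irrelevant and repeats allowed is precisely the act of forming such a multiset.
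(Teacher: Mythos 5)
Your proposal is correct and follows essentially the same route as the paper's proof: encode each multiset as a string of $k$ stones and $n-1$ sticks (with sticks separating the $n$ alphabet types in a fixed order) and count the strings by choosing which $k$ of the $n+k-1$ positions hold stones. The only difference is that you spell out the inverse map and the handling of empty blocks explicitly, which the paper leaves implicit here (and only makes fully precise later, in its bijective ``M\&M's and toothpicks'' reprise of this theorem in Section~\ref{sec:bij}).
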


\begin{proof}
  We can write the multiset as a string of $k$ stones and $n-1$ sticks, where the $n-1$ sticks separate the $n$ types of possible elements of the alphabet in some chosen ``alphabetical order''.  Then there are $n-1+k=n+k-1$ total sticks and stones in the string.  We count the number of ways to choose which $k$ of the positions in the string contain a stone (as opposed to a stick).  There are therefore $\binom{n+k-1}{k}$ possibilities.  
\end{proof}

\begin{remark}
By the symmetry of binominal coefficients,  $\binom{n+k-1}{k}=\binom{n+k-1}{n-1}$; this corresponds to choosing the location of the sticks instead.
\end{remark}

Using Theorem~\ref{thm:choose-with-repeats}, we can include the formula in the lower left portion of the table and thus complete the table for counting combinations. 

\begin{center}
\textbf{Number of ways to choose $k$ objects from an $n$ objects} \\\vspace{0.4cm}
\renewcommand{\arraystretch}{2}
\begin{tabular}{c|cc}
     & Order doesn't matter & Order matters\\\hline
Repeats not allowed & $\binom{n}{k}$ & $\frac{n!}{(n-k)!}$ \\
Repeats allowed     & $\binom{n+k-1}{k}$ & $n^k$ \\
\end{tabular}
\end{center}
\

\begin{example}
A store sells 6 colors of balloons. How many different ways can you buy 10 balloons?
\end{example} 

\begin{answer}
We want to choose a collection of $k=10$ balloons, out of an unlimited supply of $n=6$ colors of balloons, so the answer is
$\binom{n+k-1}{n-1}=\binom{6+10-1}{6-1}=\binom{15}{5}$.
\end{answer}

\begin{example} \label{Etoomanysundaes}
When Student C orders the large sundae, there are $k=15$ scoops and $n=10$ flavors, so the number of possibilities is $\binom{10+15-1}{10-1}=\binom{24}{9}=1,307,504$ different sundaes they can order --- they will have to order about 3,593 sundaes a day to make sure they try every one in a year!
\end{example}

\subsection{A variation on multiset problems}

\begin{example}
In Example~\ref{Etoomanysundaes}, that is far too many sundaes for the students to eat every day! What if instead they wanted to order their sundae so that there was at least one scoop of each flavor?

The first $10$ scoops are determined (one of each flavor).
So student $c$ now needs to choose $k'=5$ more scoops from among the $n'=10$ flavors.
So the number of choices is 
 $\binom{14}{5}=2,002$ sundaes to order - about 6 sundaes a day!
\end{example}

\begin{corollary}
Let $m\ge n$.  The number of ways to choose $m$ elements from an $n$-element set where order does not matter, repeats are allowed, and at least one of each of the $n$ elements is chosen is $\binom{m-1}{n-1}$.
\end{corollary}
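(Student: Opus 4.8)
The plan is to reduce the problem to Theorem~\ref{thm:choose-with-repeats}, which already counts multisets of a prescribed size drawn from an alphabet of size $n$ with no lower bound on the multiplicities. The key observation is that requiring at least one of each of the $n$ elements is the same as pre-committing one copy of every element before any free choices are made.

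First I would set aside one mandatory copy of each of the $n$ elements. Since $m \ge n$, this uses up $n$ of the $m$ slots and leaves exactly $m - n$ elements still to be chosen, with order irrelevant and repeats allowed, now subject to no further constraint. Formally, this is a bijection between the multisets we wish to count (size $m$, every multiplicity at least $1$) and arbitrary multisets of size $m - n$ from the same $n$-element set: subtract one from each multiplicity to pass from the former to the latter, and add one back to invert. Verifying that this correspondence is well defined in both directions — that subtracting one keeps every multiplicity nonnegative, and that the sizes match up — is the only real content, and it is exactly where the hypothesis $m \ge n$ is used.

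Second I would apply Theorem~\ref{thm:choose-with-repeats} with $k = m - n$, which gives $\binom{n + (m-n) - 1}{m-n} = \binom{m-1}{m-n}$ free multisets. Finally I would invoke the symmetry of binomial coefficients from Corollary~\ref{cor:symmetry}: since $(m-1) - (m-n) = n-1$, we obtain $\binom{m-1}{m-n} = \binom{m-1}{n-1}$, which is the claimed formula.

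The main obstacle is conceptual rather than computational: one must be convinced that ``forcing one of each element first'' neither overcounts nor undercounts, i.e.\ that the subtract-one map really is a bijection. Once that reduction is granted, the rest is a direct citation of the multiset count followed by a one-line index manipulation. As a sanity check, the worked example immediately preceding the statement (with $m = 15$ and $n = 10$) yields $\binom{14}{5} = \binom{14}{9} = 2002$, which matches $\binom{m-1}{n-1}$.
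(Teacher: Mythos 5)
Your proposal is correct and follows essentially the same route as the paper's own proof: pre-commit one copy of each of the $n$ elements, count the remaining $m-n$ free choices via Theorem~\ref{thm:choose-with-repeats} to get $\binom{m-1}{m-n}$, and finish with the symmetry $\binom{m-1}{m-n}=\binom{m-1}{n-1}$. The only difference is that you spell out the subtract-one-from-each-multiplicity bijection explicitly, which the paper leaves implicit; that is a welcome clarification but not a different argument.
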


\begin{proof}
 We start by picking one of each element.
 Then we must pick $m-n$ additional elements from the $n$ possibilities, where repeats are allowed and order does not matter.  By Theorem \ref{thm:choose-with-repeats}, the number of ways to do this is $$\binom{n+(m-n)-1}{m-n}=\binom{m-1}{m-n}.$$ By the symmetry of binomial coefficients,  $\binom{m-1}{m-n}=\binom{m-1}{m-1-(m-n)}=\binom{m-1}{n-1}$.
\end{proof}

\begin{example}
A store sells 6 colors of balloons. How many different different bouquets of 10 balloons are there that contain at least one balloon of each color?  In this case, $m=10$ and $n=6$.  Hence the number of ways is $\binom{10-1}{6-1}=\binom{9}{5}$.
\end{example}

\subsection{Using multisets to solve distribution problems}

\begin{corollary}\label{Cdistribution}
The number of ways to distribute $k$ identical pennies to $n$ people is
$\binom{n+k-1}{k}$.  
\end{corollary}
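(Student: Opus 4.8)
The plan is to reduce this statement directly to Theorem~\ref{thm:choose-with-repeats} by exhibiting a one-to-one correspondence between distributions of the pennies and multisets. First I would observe that, because the pennies are identical, a distribution is completely determined by how many pennies each of the $n$ people receives; which specific pennies are handed out carries no information. Thus a distribution is nothing more than an assignment of a nonnegative integer to each person, where these integers sum to $k$.

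Next I would match each such distribution with a multiset of size $k$ drawn from the $n$-element set of people: person $i$ appears in the multiset with multiplicity equal to the number of pennies person $i$ receives. This map is a bijection --- two distributions give the same multiset exactly when every person gets the same number of pennies in both, which is precisely the condition for the distributions to coincide --- and it sends a distribution of $k$ pennies to a multiset of size $k$. Applying Theorem~\ref{thm:choose-with-repeats} with alphabet size $n$ and multiset size $k$ then gives the count $\binom{n+k-1}{k}$.

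Alternatively, one could argue via the sticks and stones picture directly, bypassing the language of multisets: represent the $k$ pennies as $k$ stones in a row and use $n-1$ sticks to separate them into $n$ ordered groups, where the $j$th group records the pennies given to person $j$ (an empty group meaning that person receives nothing). Every distribution yields exactly one such string of $k$ stones and $n-1$ sticks, and conversely every such string determines a distribution, so the number of distributions equals the number of ways to choose the $k$ stone-positions among the $n+k-1$ total positions, namely $\binom{n+k-1}{k}$.

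I expect no serious obstacle here; the one point that deserves a sentence of care is the justification that ``identical pennies'' forces a distribution to be recorded by the per-person counts alone, since this is exactly what licenses the passage from labeled pennies to an unordered multiset, and hence the appeal to Theorem~\ref{thm:choose-with-repeats}.
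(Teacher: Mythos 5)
Your proposal is correct and takes essentially the same approach as the paper: the paper's proof also establishes the bijection between distributions and multisets of size $k$ from the $n$ people (person $A_i$ appears with multiplicity equal to the number of pennies received) and then cites Theorem~\ref{thm:choose-with-repeats}. Your alternative sticks-and-stones argument is just that theorem's own proof inlined, so it is not a genuinely different route.
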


\begin{proof}
Label the people by the letters $A_1, \ldots, A_n$.
Suppose $S$ is a multiset of size $k$, whose elements are the letters $A_1, \ldots, A_n$.
Then we can distribute the pennies in this way: for $1 \leq i \leq n$, the number of pennies that person $A_i$ receives equals the number of times that $A_i$ is included in the multiset.
Conversely, given a way of distributing $k$ pennies to the $n$ people, we can create a multiset $S$ of size $k$ from $\{A_1, \ldots, A_n\}$ so that the number of times $A_i$ is included in $S$ equals the number of pennies $A_i$ received.
So the number of ways to distribute $k$ pennies to $n$ people equals the number of multisets of size $k$ from a set of size $n$; this equals $\binom{n+k-1}{k}$ by Theorem~\ref{thm:choose-with-repeats}.  
\end{proof}

\begin{example}
When $k=9$ and $n=4$, for the multiset
$\{A_1, A_1, A_3, A_4, A_4\}$, person $A_1$ receives $2$ pennies, person $A_2$ receives $0$ pennies, person $A_3$ receives $1$ penny, and person $A_4$ receives $2$ pennies.
\end{example}

\begin{example}
When $k=9$ and $n=4$,
if person $A_2$ receives all the pennies, then the 
multiset is 
$\{A_2, A_2, A_2, A_2, A_2\}$.
\end{example}

\begin{example} \label{Edistpen}
The number of ways to distribute $k=9$ pennies to $n=4$ people is
$\binom{n+k-1}{k}=\binom{4+9-1}{9}=\binom{12}{9}=220$ ways.
\end{example}

\begin{example}
How many different ordered lists of 4 nonnegative integers have sum equal to 9? Examples include $(4,0,2,3)$, $(0,4,2,3)$, and $(0,0,0,9)$. 
\end{example}

\begin{answer}
The answer is the same as for Example~\ref{Edistpen}, namely $220$, because the number of these lists is the same as the number of ways to distribute $n=9$ identical pennies to $k=4$ people:\\
1st number on list = number of pennies to person 1,\\
2nd number on list = number of pennies to person 2,\\
3rd number on list = number of pennies to person 3,\\
4th number on list = number of pennies to person 4.
\end{answer}

\begin{corollary} \label{Cdistribute2}
If $m \geq n$, 
the number of ways to distribute $m$ identical pennies to $n$ people, so that each person must receive at least one penny, is $\binom{m-1}{n-1}$.
\end{corollary}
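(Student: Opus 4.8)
The plan is to reduce this to the unrestricted distribution count from Corollary~\ref{Cdistribution} by the standard ``give everyone one penny first'' trick. Since each of the $n$ people must receive at least one penny, I would begin by handing a single penny to each person. This happens in every valid distribution, and it consumes exactly $n$ of the $m$ pennies. Because $m \geq n$, there remain $m - n \geq 0$ pennies still to be handed out, and these may now be distributed among the $n$ people with no further restriction.

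The key step is to argue that this sets up a bijection between the distributions we want to count and the unrestricted distributions of $m - n$ pennies to $n$ people. Given any distribution of $m$ pennies in which person $A_i$ receives $c_i \geq 1$ pennies, subtracting one from each count yields nonnegative numbers $c_i - 1 \geq 0$ summing to $m - n$, that is, an unrestricted distribution of $m - n$ pennies. Conversely, given any unrestricted distribution of $m - n$ pennies, adding one penny back to each person produces a distribution of $m$ pennies in which everyone receives at least one. These two operations are mutual inverses, so the correspondence is a bijection and the two counts agree.

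Finally I would invoke Corollary~\ref{Cdistribution} to count the unrestricted distributions of $m - n$ pennies to $n$ people, obtaining $\binom{n + (m-n) - 1}{m - n} = \binom{m-1}{m-n}$. The symmetry of binomial coefficients (Corollary~\ref{cor:symmetry}) then rewrites this as $\binom{m-1}{(m-1)-(m-n)} = \binom{m-1}{n-1}$, which is the claimed formula.

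I do not expect any serious obstacle here; the only point requiring care is checking that the ``subtract one from each'' map really does land in the nonnegative integers---which is exactly where the hypothesis $m \geq n$, together with each $c_i \geq 1$, is used---and that it is genuinely invertible. This argument is the penny-distribution analogue of the earlier corollary on multisets with at least one of each element, so an alternative route would be to cite that result directly through the multiset/distribution dictionary of Corollary~\ref{Cdistribution}.
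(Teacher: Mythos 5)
Your proof is correct, and it uses exactly the approach the book intends: the paper itself leaves Corollary~\ref{Cdistribute2} as an exercise, but its proof of the parallel multiset corollary (``at least one of each kind'') is precisely your argument---reserve one for each person, count the unrestricted choices of the remaining $m-n$ via $\binom{n+(m-n)-1}{m-n}=\binom{m-1}{m-n}$, then apply the symmetry $\binom{m-1}{m-n}=\binom{m-1}{n-1}$. Your explicit subtract-one/add-one bijection and the appeal to Corollary~\ref{Cdistribution} make the reduction airtight, so nothing is missing.
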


\begin{example}
The number of ways to distribute $m=9$ identical pennies to $n=4$ people so that each person gets at least one is $\binom{9-1}{4-1}=\binom{8}{3}=56$.
\end{example}

\begin{example} \label{Edistpen2}
How many different ordered lists of 4 positive integers have sum equal to 9? Examples include $(3,2,1,3)$, $(1,2,3,3)$, $(2,2,2,3)$, but not $(4,0,2,3)$ as 0 is not positive.
\end{example}

\begin{answer}
The answer is the same as for Example~\ref{Edistpen2},
namely 56.  The reason is that the number of these ordered lists is the same 
as the number of ways to distribute 9 identical pennies to 4 people where each person gets at least one.
\end{answer}

\subsection*{Exercises}
\begin{enumerate}

\item How many ways can you spend $\$3$ on sushi?						
\item How many ways can you spend $\$10$ on sushi?						
\item How many ways can you spend $\$10$ on sushi if you want at least one piece of each type?						
\item How many ways can you distribute 14 pennies to 6 kids?						
\item How many ways can you distribute 14 pennies to 6 kids if each kid gets at least 2?

\item You want to buy a bag of 15 lollipops and there are 5 different flavors to choose from. How many different bags can you make?  How many different bags can you make if you must have at least one of each flavor?


\item You want to order a tray of 20 tacos and there are 4 different types to choose from.  How many different trays can you order?  How many if the tray must contain at least one of each type of taco?

\item You want to make a bouquet of 50 balloons, and there are 10 different colors to choose from. All balloons of the same color are the same, and there are an unlimited number of balloons of each color. How many different bouquets can you make?
How many if the bouquet must include at least one of each color?

\item 
A store sells bouquets 
of 5 kinds of flowers.
How many ways are there to make a 
bouquet of 12 flowers?
How many ways if the bouquet must include at least one of each kind of flower?

\item
 How many different ways can we place 10 identical balls into 4 labeled bins, if each bin must contain at least one ball?

\item How many ways are there to distribute 10 identical pieces of paper to students $a$, $b$, and $c$?
The only requirement is that all 10 pieces of paper need to be handed out.

\item How many ways are there to paint 6 identical picture frames with $3$ colors if each color must be used at least once.
Let $G$ be green, $Y$ be yellow and $B$ be blue; then YYGGBB is the same as YGYGBB since the frames are identical.

\item Explain why 
Corollary~\ref{Cdistribute2} is true.

\end{enumerate}

\section{Summary}

The major results in this chapter are the counting formulas in this table.
\begin{center}
\textbf{Number of ways to choose $k$ objects from $n$ objects} \\\vspace{0.4cm}
\renewcommand{\arraystretch}{2}
\begin{tabular}{c|cc}
     & Order does not matter & Order matters\\\hline
Repeats not allowed & $\binom{n}{k}$ & $\frac{n!}{(n-k)!}$ \\
Repeats allowed     & $\binom{n+k-1}{k}$ & $n^k$ \\
\end{tabular}
\end{center}

\section{Additional problems for Chapter 3}
\begin{enumerate}

\item How many different strings are permutations of the letters in PILLOW?
For example, LOPILW is one such permutation.

\item Baseball team A has 7 pitchers, and baseball team B has 5 catchers. The two teams have agreed to make a trade of the following form: 3 of team A's pitchers will be traded for 2 of team B's catchers. The local newspaper decides to make a ranked list of their top 4 favorite possible trades of this kind. How many different ranked lists can the newspaper make?

\item How many ordered strings with 10 symbols are there if exactly 7 symbols are numbers (0--9) and exactly 3 symbols are letters ($a$--$z$)? The string $abc1222233$ is different from $12c2a3b322$.

\item
Find the number of ways to distribute 20 candies to 5 students $a,b,c,d,e$ if:
\begin{enumerate}
\item the candies are identical.
\item the candies are identical and every person receives at least one.
\end{enumerate}

\item Find the number of ways to give 20 pillows to 12 students $a$ through $l$ if:
\begin{enumerate}
\item the pillows are identical.
\item the pillows are identical and every person receives at least one.
\item the pillows are all different and there are no restrictions.  For example, one student can receive all 20 pillows.

\item How many different DNA sequences (whose alphabet consists of the letters A,C,G, and T) of length 5 are there?							
\item How many different strings of four distinct letters from the English alphabet (which has 26 letters) are there?							
\item How many binary strings of length 8 have the same number of 0's as 1's?							
\item How many strings of length 7 from the alphabet {0,1,2} have exactly three 0's?							
\item How many strings of length 7 from the alphabet {0,1,2} have less than three 0's?							
\end{enumerate}

\item How many ways can you roll a sum of 13 with three six-sided dice?  A die is a cube with 6 faces where each face contains one of the numbers $1$ through $6$. 
\end{enumerate}

\newpage
\section{Investigation: Counting problems in the game of poker}\label{poker}

One day after Math 301, several students decide to play a friendly game of poker. During the game, one of them asks ``How many ways are there to get a full house?" Stumped by this question, the students turn to their detailed notes from class to solve this problem. 

Before we solve this problem, we first will introduce some vocabulary about playing cards. 
If you already are familiar with the game of poker, you can skip the next subsection.

\subsection*{Facts about cards and poker}

 Poker is a card game played with a standard deck of 52 cards. The cards can be sorted into four \textbf{suits}, labeled by the symbols $\heartsuit, \spadesuit, \diamondsuit, \textnormal{and } \clubsuit$, called hearts, spades, diamonds, and clubs, respectively. There are 13 cards in each suit (quick check: $13\cdot4=52$). 
 The cards in each suit are labeled with values, from least to greatest, 
 2,3,4,5,6,7,8,9,10, J=Jack, Q=Queen, K=King, A=Ace=1.
 The set of cards held by a player is called a \textbf{hand}.  Typically a hand contains five cards.
 
\begin{example}

\begin{center}
    \includegraphics[width=.5\textwidth]{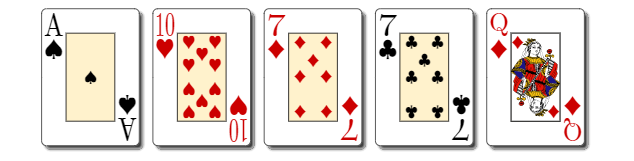}
\end{center}

This hand contains the Ace of spades, the 10 of hearts, the 7 of diamonds, the 7 of clubs, and the Queen of diamonds.
\end{example}

Another term used in many card games is called a run.  A \textbf{run} is a set of three or more cards with consecutive values where the suit does not matter: for example, the collection $3\heartsuit, 4\spadesuit, 5\spadesuit$ is a run of three cards, and $9\heartsuit, 10\spadesuit, 
{\rm J}\clubsuit, 
{\rm Q}\clubsuit, 
{\rm K}\diamondsuit$ is a run of five cards.  An important note is that an Ace can be either the lowest or highest card in a run (but can not be in the middle of a run). 
An example where the Ace is  ``high" is ($\textnormal{Q}\clubsuit, \textnormal{K}\diamondsuit, \textnormal{A}\diamondsuit$) and an example where the Ace is ``low" is ($\textnormal{A}\clubsuit, 2\heartsuit, 3\heartsuit$).  This set of three cards is not a run: $\textnormal{K}\heartsuit, \textnormal{A}\diamondsuit, 2\clubsuit$. \\

In this class, you do not need to know how to play poker.  We just want to count the number of ways to make different hands of five cards. 
In the following manual, we will describe each hand of poker, give an example, and explain the method of how to count that hand.

\section*{Poker Hands}
\subsection*{Royal flush} 
\textit{Definition of hand:} A run consisting of an Ace, King, Queen, Jack, and 10, all of the same suit.  \\
\textit{Example hand:} 
\begin{center}
    \includegraphics[width=.52\textwidth]{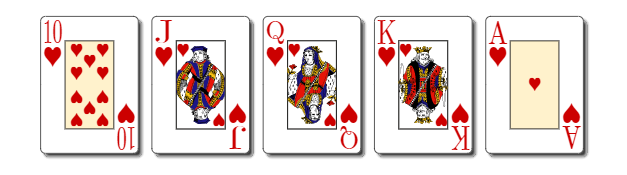}
\end{center}
\textit{Number of hands:} $\displaystyle \binom{4}{1}=4.$\\
\textit{Combinatorial proof:} The values on the cards are determined.  Therefore, we just need to choose one of four suits.

\subsection*{Straight flush} 
\textit{Definition of hand:} A straight is a run of five cards.  A flush means all of the cards are of the same suit.\\
\textit{Example hand:} 
\begin{center}
    \includegraphics[width=.47\textwidth]{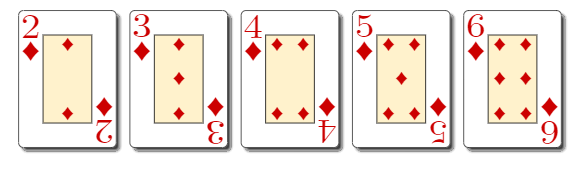}
\end{center}
\textit{Number of hands:} $\displaystyle \binom{10}{1}\cdot\binom{4}{1}-4=36.$\\
\textit{Combinatorial proof:} We need to choose the value of the starting card and the suit of the run (the other four values and suits are then determined).  There are 10 ways to begin the run (Ace through 10) and four suits to choose.  However, if we start the run with a 10, we would have a royal flush, so we subtract the number of royal flushes to get the answer. Note that if we tried to start a run with a Jack, we would not have enough cards, since Aces end runs.

\subsection*{Four of a kind}
\textit{Definition of hand:} A four of a kind consists of four same-valued cards and any other card. \\
\textit{Example hand:} 
\begin{center}
    \includegraphics[width=.5\textwidth]{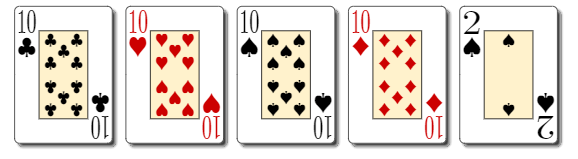}
\end{center}
\textit{Number of hands:} $\displaystyle \binom{13}{1}\binom{48}{1}=624.$\\
\textit{Combinatorial proof:} We first pick a value for the four cards.  Then, we choose the other card in the hand; since we chose 4 cards already, there are $52-4=48$ cards left in the deck.

\subsection*{Full House} 
\textit{Definition of hand:} This hand contains three cards of one value and two cards of a second value. \\
\textit{Example hand:} 
\begin{center}
    \includegraphics[width=.5\textwidth]{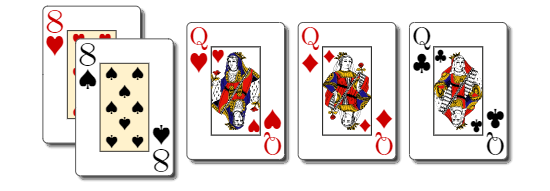}
\end{center}
\textit{Number of hands:} $\displaystyle \binom{13}{1}\binom{4}{3}\binom{12}{1}\binom{4}{2}=3,744.$\\
\textit{Combinatorial proof:} First, choose the value of the card in the triple and choose 3 suits for those 3 cards.  Then, from the remaining 12 values left, choose the value for the pair, and choose 2 of the four suits for those two cards.

\subsection*{Flush} 
\textit{Definition of hand:} A hand is a flush if all five cards have the same suit. The only restriction  is that the hand cannot be a run of five cards (or else this would be a straight or royal flush).\\
\textit{Example hand:} 
\begin{center}
    \includegraphics[width=.5\textwidth]{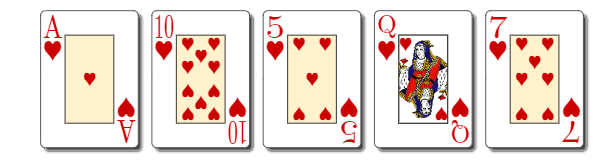}
\end{center}
\textit{Number of hands:} $\displaystyle \binom{4}{1}\binom{13}{5}-40=5,108.$\\
\textit{Combinatorial proof:} We first choose a suit for the hand.  Then, we choose 5 values from the 13 possible values.  However, some of these hand combinations could be straight or royal flushes, so we must subtract the total number of these two hands to get the answer.

\subsection*{Straight} 
\textit{Definition of hand:} A straight is a run of five cards consisting of at least two suits.  If they were all the same suit this would be a straight or royal flush.\\
\textit{Example hand:} 
\begin{center}
    \includegraphics[width=.5\textwidth]{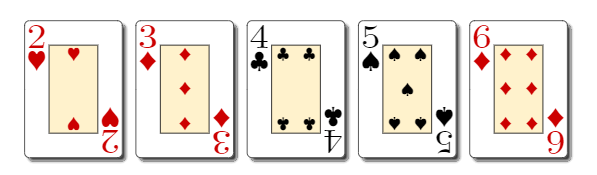}
\end{center}
\textit{Number of hands:} $\displaystyle \binom{10}{1}\binom{4}{1}^5-40=10,200.$\\
\textit{Combinatorial proof:} We need to pick the value of the starting card and a suit for each card. There are 10 ways to begin the run and four suits to choose from for each card. However, some of these hands are straight and royal flushes, so we subtract off the total number of those. 

\subsection*{Three of a kind} 
\textit{Definition of hand:} A three of a kind consists of exactly three same-valued cards and two other cards with distinct values.  If the two other cards were the same value, this would be a full house.\\
\textit{Example hand:} 
\begin{center}
    \includegraphics[width=.5\textwidth]{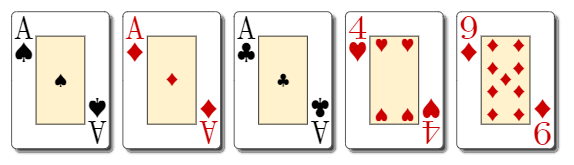}
\end{center}
\textit{Number of hands:} $\displaystyle \binom{13}{1}\binom{4}{3}\frac{\binom{48}{1}\binom{44}{1}}{2!}=54,912.$\\
\textit{Combinatorial proof:} First, we choose the value for three of the cards and their three suits.  We then choose the remaining two cards from the deck: the first is chosen from the remaining 48 cards in the deck (since we need to remove the possibility of a four of a kind); the second is chosen from the remaining 44 cards in the deck (since we need to remove the possibility of a full house). However, notice that the order in which we choose the last two cards does not affect the overall hand (for example, choosing $4\heartsuit$ then $9\heartsuit$ gives the same hand as $9\heartsuit$ then $4\heartsuit$).  So, we divide by $2!$. 

\subsection*{Two pairs} 
\textit{Definition of hand:} This hand consists of three values: two pairs and a third card of a distinct value.\\ 
\textit{Example hand:} 
\begin{center}
    \includegraphics[width=.4\textwidth]{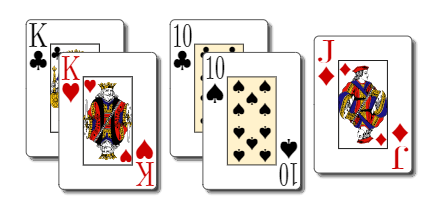}
\end{center}
\textit{Number of hands:} $\displaystyle\binom{13}{2}\binom{4}{2}^2\binom{11}{1}\binom{4}{1}=123,552.$\\
\textit{Combinatorial proof:} First choose two values for the numbers that will be in pairs.  Once these two values are chosen (for example king and 10), one of them is larger. Choose two suits for the pair with the larger number.  Choose two suits for the pair with the smaller number.  In order to avoid having three of a kind, we choose the number for the last card from the remaining 11 values. Finally, we choose the suit of the last card. 

\subsection*{Pair} 
\textit{Definition of hand:} Exactly two values are identical (different suits), all other cards are different values.  For example, consider having $4\heartsuit, 4\spadesuit, 6\diamondsuit, \text{K}\clubsuit, \text{J}\diamondsuit$.  If the other three cards were the same, this would be a full house.  If they contained an additional pair, it would be a two pair.  If they contained the same value(s) as the first pair, it would be a three or four of a kind. \\ \textit{Example hand:} 
\begin{center}
    \includegraphics[width=.45\textwidth]{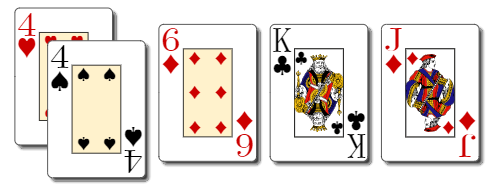}
\end{center}
\textit{Number of hands:} $\displaystyle\binom{13}{1}\binom{4}{2}\frac{\binom{48}{1}\binom{44}{1}\binom{40}{1}}{3!}=1,098,240$\\
\textit{Combinatorial proof:} First, we choose the value and suits in our pair.  Then, we choose the remaining 3 cards --- we start with 48 remaining cards in our deck (or else we would risk having three or four of a kind), then 44 cards (or else we would risk having two pairs), then 40 (again we would risk having two pairs).  However, notice the order in which we draw the cards does not matter, so we divide by $3!$ to get the final answer.

\subsection*{High Card} 
\textit{Definition of hand:} This hand is none of the above hands.  \\
\textit{Example hand:} 
\begin{center}
\includegraphics[width=.52\textwidth]{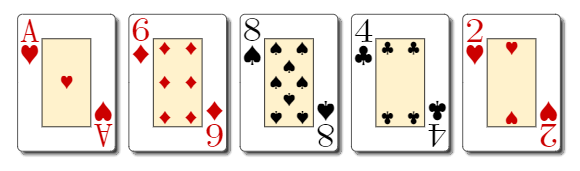}
\end{center}
\textit{Number of hands:} $1,302,540$\\
\textit{Combinatorial proof:} Left as an exercise.

\section*{Exercises}
\begin{enumerate}

\item
\begin{enumerate}
\item In your own words, explain how to count how many 4-of-a-kind poker hands there are.
(Recall that we consider the hands $K\heartsuit, K\diamondsuit, K\spadesuit, K\clubsuit,2\diamondsuit$ and $2\diamondsuit, K\spadesuit, K\clubsuit, K\diamondsuit, K\heartsuit$ to be the same.)
\item What fraction of all possible poker hands are 4-of-a-kinds?
\end{enumerate}

\item
\begin{enumerate}
\item In your own words, explain how to count how many different pair poker hands there are.
\item What fraction of all possible poker hands are pair poker hands?
\end{enumerate}

\item
\begin{enumerate}
\item In your own words, explain how to count how many different two pair poker hands there are.
A two pair consists of a pair of cards of one value, another pair of cards of a different value, and then a fifth card of a distinct value.
\item What fraction of all possible poker hands are two pair poker hands?
\end{enumerate}

\item 
\begin{enumerate}
\item In your own words, explain how to count how many different full house poker hands there are.
A full house consists of triple of cards of the same value, plus a pair of cards of the same value.
\item What fraction of all possible poker hands are full houses?
\end{enumerate}

    \item Find another way to count the number of ways a player could obtain the following hands:
    \begin{enumerate}
        \item Three of a kind
        \item Two pairs 
        \item Pair
    \end{enumerate}
    \noindent Check your answers numerically. 
    \item Use the Inclusion-Exclusion Principle to compute the number of ways to get a High Card, as follows.  Define the set $A$ to be the set of all hands having at least two cards of the same number.  Let $B$ be the set of all hands for which all cards are the same suit.  Let $C$ be the set of all straights.  Then the set of high cards is the complement of $A\cup B\cup C$.    
    
    Calculate the number of high card hands and check your answer numerically.
    \item The students decide to play Poker with some revised rules. They now play with hands containing 4 cards instead of 5. Compute the number of ways a player can now have the following hands and provide a combinatorial proof for each one:
        \begin{enumerate}
            \item Four of a kind  
            \item Three of a kind 
            \item Two pairs
            \item Pair
        \end{enumerate}

        \item For the following questions, compute the number of ways a player can now have (i) a straight flush, (ii) a flush, and (iii) a straight, and provide a combinatorial proof for each one when:
        \begin{enumerate}
            \item Aces only count as a high card (this eliminates $\textnormal{A}\heartsuit,2\clubsuit,3\diamondsuit,4\clubsuit,5\spadesuit$ from counting as a run).
            \item Aces only count as a low card (this eliminates $10\clubsuit,\textnormal{J}\diamondsuit,\textnormal{Q}\clubsuit,\textnormal{K}\spadesuit,\textnormal{A}\diamondsuit$ from counting as a run).
            \item Aces can now be used in the middle of runs (this allows $\textnormal{Q}\clubsuit,\textnormal{K}\spadesuit,\textnormal{A}\diamondsuit,2\clubsuit,3\diamondsuit$ to count as a run).
        \end{enumerate}

    \item For the following questions, we will use a modified deck. 
    \begin{enumerate}
        \item Suppose all of the cards with value 8 have been removed. How many ways can you obtain a straight? Keep in mind $5\clubsuit, 6\heartsuit, 7\diamondsuit, 9\heartsuit, 10\clubsuit$ is not a valid straight.
        \item Suppose one of the Joker cards has been added to the deck as a free card (a card that can take the place of any card). Compute the number of the following hands and provide a combinatorial proof:
        \begin{enumerate}
            \item Four of a kind 
            \item Flush 
            \item Full House
        \end{enumerate}
    \end{enumerate}

\end{enumerate}

\chapter{Pascal's triangle and the Binomial Theorem}\label{chap:binomcoefficients} \label{Cpascal}

The binomial coefficient $\binom{n}{k}$ counts the number of ways to choose $k$ objects from a set of $n$ objects (where order does not matter and repeats are not allowed).
In Theorem~\ref{Tupperleft} (and Lemma~\ref{Lformulabinom}), 
we proved that 
\[\binom{n}{k}=\frac{n!}{k!(n-k)!}.\] Binomial coefficients are extremely useful in combinatorics and arise in many different areas of mathematics and applications.   
In Section~\ref{Spascal}, we study Pascal's triangle and Pascal's recurrence for binomial coefficients, and describe applications to counting lattice paths.
In Section~\ref{sec:binomial-theorem}, we describe their role as coefficients in the \textit{Binomial Theorem}. 
Sections~\ref{sec:pascal} and \ref{Sidentity2} contain identities involving binomial coefficients.
In Sections~\ref{sec:anagrams} and \ref{Smultinomial}, we generalize to study multinomial coefficients and their applications to anagrams.

\section{Pascal's triangle} \label{Spascal}

Pascal's triangle, named after the French mathematician/philosopher Blaise Pascal (1623--1662),
 provides a way of quickly computing binomial coefficients for small values of $n$ and $k$.  The method is to construct an infinite triangular chart in which the entry in the $n$-th row and $k$-th (diagonal) column is  $\binom{n}{k}$, as follows.
 
\begin{center}
\includegraphics{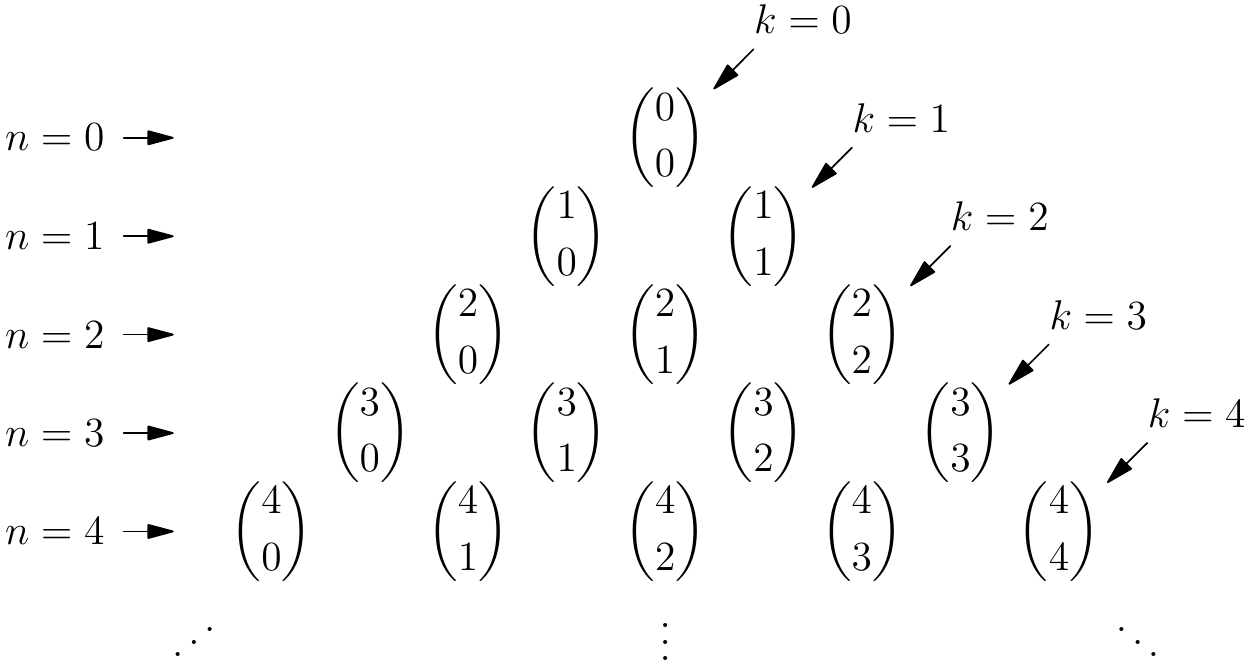}
\end{center}

The values in this triangle can be computed quickly using an identity known as \textbf{Pascal's Recurrence}.

\begin{theorem}[Pascal's Recurrence]\label{thm:Pascal}
Suppose $n$ is a natural number.
\begin{enumerate}
    \item[(a)] Then \[\binom{n}{0}=1 \ {\rm and} \  \binom{n}{n}=1.\] 
    \item[(b)]
    If $k$ is a natural number such that $0 \leq k < n$, then \[\binom{n}{k}+\binom{n}{k+1}=\binom{n+1}{k+1}.\]
\end{enumerate} 
\end{theorem}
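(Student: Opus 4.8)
The plan is to prove the two parts separately, leaning on the combinatorial interpretation of $\binom{n}{k}$ as the number of $k$-element subsets of an $n$-element set, since that matches the style of the book's earlier proofs. For part (a), I would argue directly from the definition: $\binom{n}{0}$ counts the subsets of $\{1,\ldots,n\}$ of size $0$, and the only such subset is $\emptyset$, so $\binom{n}{0}=1$; likewise $\binom{n}{n}$ counts the subsets of size $n$, the only one being the whole set, so $\binom{n}{n}=1$. As a sanity check one can instead substitute $k=0$ and $k=n$ into the formula $\binom{n}{k}=\frac{n!}{k!(n-k)!}$ from Lemma~\ref{Lformulabinom}, using $0!=1$; both give $\frac{n!}{n!}=1$.

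For part (b), I would give a combinatorial double-counting proof. Fix a set $S=\{a_1,\ldots,a_{n+1}\}$ with $n+1$ elements and count its subsets of size $k+1$; by definition this total is $\binom{n+1}{k+1}$, the right-hand side. Then I would single out the element $a_{n+1}$ and split the size-$(k+1)$ subsets into two groups according to whether or not they contain $a_{n+1}$. These two groups are disjoint and together account for every size-$(k+1)$ subset, so the addition principle applies.

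The two separate counts then produce the left-hand side. A size-$(k+1)$ subset that does \emph{not} contain $a_{n+1}$ is exactly a size-$(k+1)$ subset of $\{a_1,\ldots,a_n\}$, and there are $\binom{n}{k+1}$ of these; this is where the hypothesis $k<n$ enters, since it guarantees $k+1\le n$ so that the count makes sense. A size-$(k+1)$ subset that \emph{does} contain $a_{n+1}$ is determined by its remaining $k$ elements, which form a size-$k$ subset of $\{a_1,\ldots,a_n\}$, and there are $\binom{n}{k}$ of these. Adding the two counts gives $\binom{n}{k}+\binom{n}{k+1}=\binom{n+1}{k+1}$.

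The main obstacle is not any hard computation but getting the bookkeeping exactly right: I must check that the two cases are genuinely disjoint and exhaustive, so that invoking the addition principle is legitimate, and that the ``contains $a_{n+1}$'' case really is a clean one-to-one matching with the size-$k$ subsets of the smaller set. If a purely algebraic route were preferred, the alternative is to place $\binom{n}{k}$ and $\binom{n}{k+1}$ over the common denominator $(k+1)!(n-k)!$ and simplify, where the only delicate step is the factoring $n!\,\bigl((k+1)+(n-k)\bigr)=n!\,(n+1)=(n+1)!$; I would keep the combinatorial argument as the primary proof and mention the algebraic computation only as a cross-check.
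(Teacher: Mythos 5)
Your proof is correct, but it takes a genuinely different route from the one the paper gives for Theorem~\ref{thm:Pascal}. The paper's proof at this point is purely algebraic: part (a) comes from substituting into the formula $\binom{n}{k}=\frac{n!}{k!(n-k)!}$ of Lemma~\ref{Lformulabinom} with the convention $0!=1$ (Remark~\ref{rmk:zero-factorial}), and part (b) is exactly the common-denominator computation you relegate to a cross-check: writing both terms over $(k+1)!(n-k)!$ and factoring $n!\,\bigl((k+1)+(n-k)\bigr)=n!\,(n+1)=(n+1)!$. Your argument instead works directly from the definition of $\binom{n}{k}$ as the number of $k$-element subsets: for (a), the unique subsets of size $0$ and of size $n$; for (b), the double count of $(k+1)$-element subsets of an $(n+1)$-element set, split according to whether they contain the distinguished element $a_{n+1}$. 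That decomposition is precisely what the paper itself does later, in Example~\ref{Epascal2} (phrased in terms of $n$ students and one professor), which the text advertises as ``a more elegant combinatorial proof''---so you have essentially anticipated the book's second proof rather than reproduced its first. As for what each approach buys: the algebraic route is mechanical and self-contained once the factorial formula is available, whereas yours needs no formula at all, makes the role of the hypothesis $k<n$ transparent (it guarantees the ``does not contain $a_{n+1}$'' case is counted by a genuine $\binom{n}{k+1}$), and exercises the addition principle---at the cost of the bookkeeping you correctly flag, namely checking that the two cases are disjoint and exhaustive and that subsets containing $a_{n+1}$ correspond bijectively to $k$-element subsets of the remaining $n$ elements.
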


Before proving Theorem~\ref{thm:Pascal}, we provide some more explanation about it.
Part (a) of Pascal's Recurrence says that the numbers on both the left and right sides of the triangle are all $1$, as seen below:  

\begin{center}
\begin{tabular}{rccccccccccccc}
$n=0$:&    &    &    &    &    &    &  1\\\noalign{\smallskip\smallskip}
$n=1$:&    &    &    &    &    &  1 &    &  1\\\noalign{\smallskip\smallskip}
$n=2$:&    &    &    &    &  1 &    &   &    &  1\\\noalign{\smallskip\smallskip}
$n=3$:&    &    &    &  1 &    &   &    &   &    &  1\\\noalign{\smallskip\smallskip}
$n=4$:&    &    &  1 &    &   &    &   &    &   &    &  1\\\noalign{\smallskip\smallskip}
$n=5$:&    &   1 &    &    &    &    &    &    &    &    &   &  1\\\noalign{\smallskip\smallskip}
$n=6$:& 1 &    &   &    &   &    &   &    &   &    &    &   & 1\\\noalign{\smallskip\smallskip}
\end{tabular}
\end{center}

Part (b) of Pascal's Recurrence says that the other numbers in the triangle are the sum of the two numbers just above them (to the left and right).  For instance, to find the middle entry in the $n=2$ row, we compute $2=1+1$. 

\begin{center}
\begin{tabular}{rccccccccccccc}
$n=0$:&    &    &    &    &    &    &  1\\\noalign{\smallskip\smallskip}
$n=1$:&    &    &    &    &    &  1 &    &  1 \\\noalign{\smallskip\smallskip}
$n=2$:&    &    &    &    &  1 &    &  2 &    &  1\\\noalign{\smallskip\smallskip}
$n=3$:&    &    &    &  1 &    &   &    &   &    &  1\\\noalign{\smallskip\smallskip}
$n=4$:&    &    &  1 &    &   &    &   &    &   &    &  1\\\noalign{\smallskip\smallskip}
$n=5$:&    &   1 &    &    &    &    &    &    &    &    &   &  1\\\noalign{\smallskip\smallskip}
$n=6$:& 1 &    &   &    &   &    &   &    &   &    &    &   & 1\\\noalign{\smallskip\smallskip}
\end{tabular}
\end{center}

We can then fill in each entry, for each row in succession, by adding the pair of numbers above it. For example, the $3$rd entry ($k=2$) in the $n=6$ row is $15=5+10$. Here are the first six rows of Pascal's triangle.

\begin{center}
\begin{tabular}{rccccccccccccc}
$n=0$:&    &    &    &    &    &    &  1\\\noalign{\smallskip\smallskip}
$n=1$:&    &    &    &    &    &  1 &    &  1\\\noalign{\smallskip\smallskip}
$n=2$:&    &    &    &    &  1 &    &  2 &    &  1\\\noalign{\smallskip\smallskip}
$n=3$:&    &    &    &  1 &    &  3 &    &  3 &    &  1\\\noalign{\smallskip\smallskip}
$n=4$:&    &    &  1 &    &  4 &    &  6 &    &  4 &    &  1\\\noalign{\smallskip\smallskip}
$n=5$:&    &   1 &    &   5 &    &  10  &    &  10  &    &  5  &   &  1\\\noalign{\smallskip\smallskip}
$n=6$:& 1 &    &  6 &    &  15 &    &  20 &    &  15 &    &  6  &   & 1\\\noalign{\smallskip\smallskip}
\end{tabular}
\end{center}

It is fast to compute the first rows of entries in Pascal's triangle recursively, which provides a useful alternative to the formula $\binom{n}{k}=\frac{n!}{k!(n-k)!}$.  

In the next lemma, we give another way to interpret Pascal's recurrence.
Imagine placing a ball at the top of this grid.  It can fall along the diagonal lines, moving down either to the left or to the right.
It can not fall up.

\begin{lemma} \label{Lpaths}
The entry $\binom{n}{k}$ counts the number of possible ways the ball could have traveled to that location!
\end{lemma}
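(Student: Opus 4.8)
The plan is to show that the number of downward paths the ball can take from the apex of the triangle to the entry in row $n$, column $k$ is exactly $\binom{n}{k}$. First I would fix notation: let $P(n,k)$ denote the number of distinct paths from the top entry $\binom{0}{0}$ down to the entry $\binom{n}{k}$, where at each level the ball takes either a \emph{left-step} (falling down to the left) or a \emph{right-step} (falling down to the right). The goal is then to prove $P(n,k)=\binom{n}{k}$.

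My preferred approach is a direct combinatorial count, which fits the spirit of the chapter. A path reaching a row-$n$ entry consists of exactly $n$ steps, each either a left-step or a right-step, so it is recorded by a string of $n$ symbols from $\{L,R\}$. The key observation is that the column index $k$ of the landing entry equals the number of right-steps taken: the ball starts in column $0$, and comparing with the geometry of Pascal's Recurrence, each right-step sends $\binom{m}{j}$ down to $\binom{m+1}{j+1}$ (increasing the column by one), while each left-step sends $\binom{m}{j}$ down to $\binom{m+1}{j}$ (leaving the column unchanged). Hence a path lands on $\binom{n}{k}$ if and only if exactly $k$ of its $n$ steps are right-steps. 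Choosing which $k$ of the $n$ step-positions are right-steps is the same as choosing a $k$-element subset of the $n$ positions, so by Theorem~\ref{Tupperleft} there are $\binom{n}{k}$ such choices, giving $P(n,k)=\binom{n}{k}$.

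Alternatively, I would give an inductive proof that reuses Pascal's Recurrence directly, mirroring the recursive construction of the triangle. The base case is $P(0,0)=1=\binom{0}{0}$. For the boundary entries, the only way to reach column $0$ (resp.\ column $n$) is to take $n$ consecutive left-steps (resp.\ right-steps), so $P(n,0)=P(n,n)=1$, matching part (a) of Theorem~\ref{thm:Pascal}. For an interior entry with $0<k<n$, the ball arrives at $\binom{n}{k}$ either from $\binom{n-1}{k-1}$ by a right-step or from $\binom{n-1}{k}$ by a left-step, so $P(n,k)=P(n-1,k-1)+P(n-1,k)$; by the inductive hypothesis this equals $\binom{n-1}{k-1}+\binom{n-1}{k}$, which is $\binom{n}{k}$ by part (b) of Theorem~\ref{thm:Pascal}.

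The step I expect to require the most care is the bookkeeping that identifies the landing column with the number of right-steps; this is the crux of both arguments and depends on reading the geometry of the diagram correctly (a right-step increments $k$, a left-step fixes it). Once that identification is pinned down, the subset count $\binom{n}{k}$ for the bijective proof, and the index-matching with Pascal's Recurrence for the inductive proof, are both immediate. To make the correspondence transparent, I would include a small worked example tracing the $\binom{3}{2}=3$ paths that reach the entry $\binom{3}{2}$ in row $n=3$, namely the step-strings $RRL$, $RLR$, and $LRR$.
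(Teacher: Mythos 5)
Your primary argument is correct and is essentially the same as the paper's own proof: the paper also encodes each route as a length-$n$ string of $L$'s and $R$'s, observes that landing on the entry $\binom{n}{k}$ requires exactly $k$ right-steps, and counts the $\binom{n}{k}$ ways to choose which $k$ of the $n$ steps are $R$'s. Your secondary inductive argument via Pascal's recurrence is also valid, but it is an optional alternative rather than a correction of anything.
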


\begin{proof}
To reach the $n$-th row,
the ball needs to fall $n$ steps down, either to the left or right.
To reach the spot with the entry 
$\binom{n}{k}$, the ball needs to fall to the right $k$ times.
The route is given by a sequence of length $n$, containing the letter $R$ (for right) $k$ times and containing the letter $L$ (for left) $n-k$ times.
The number of such sequences is 
$\binom{n}{k}$, since we need to choose which $k$ entries of the sequence are the letter $R$.
\end{proof}

Lemma~\ref{Lpaths} gives another way to interpret Pascal's recurrence.
Part (a) says that there is only one way that the ball can fall to reach a spot along the outer diagonals.
To reach a spot that is not on the outer diagonals, the ball fell from the row above, coming either from the left or from the right. 
Part (b) says that the number of ways the ball could have traveled to this spot is the sum of the ways it could have fallen to those two spots in the row above.

\begin{example} \label{Eroutes}
There is only one way for the ball to reach the entry $\binom{4}{0}$; it is given by the route $LLLL$.

There are four ways for the ball to reach the entry $\binom{4}{1}$;
they are given by the routes
$LLLR$, $LLRL$, $LRLL$, and $RLLL$.
\end{example}

\begin{example}
The blocks north of campus are on a grid system.  Starting at the corner of Howes and Laurel, the students need to walk two blocks north and three blocks west to reach the ice cream store. 
How many ways are there to do this (without traveling south or east)?  After rotating Pascal's triangle, we see the answer is $10$.
We can write out these paths as 
\begin{eqnarray*}
& NNWWW, NWNWW, NWWNW, NWWWN, WNNWW,& \\
&WNWNW, WNWWN, WWNNW, WWNWN, WWWNN.&
\end{eqnarray*}
\end{example}

Finally, we will prove Pascal's Recurrence algebraically.  For a more elegant combinatorial proof, see
Example~\ref{Epascal2}.

\begin{proof}[Proof of Theorem \ref{thm:Pascal}]
To show (a), note that $\binom{n}{0}=\frac{n!}{0!\cdot n!}=\frac{n!}{1\cdot n!}=1$ since $0!=1$ by Remark~\ref{rmk:zero-factorial}.  Similarly, $\binom{n}{n}=\frac{n!}{n!\cdot 0!}=1$.

For (b), we compute: 
\begin{align*}
\binom{n}{k}+\binom{n}{k+1}&=\frac{n!}{k!(n-k)!}+\frac{n!}{(k+1)!(n-k-1)!} \\
&=\frac{n!\cdot(k+1)}{(k+1)!(n-k)!}+\frac{n!\cdot (n-k)}{(k+1)!(n-k)!} \\
&=\frac{n!(k+1+n-k)}{(k+1)!(n-k)!} \\
&=\frac{n!(n+1)}{(k+1)!(n+1-(k+1))!} \\
&=\frac{(n+1)!}{(k+1)!(n-1-(k+1))!} \\
&=\binom{n+1}{k+1}.
\end{align*}
\end{proof}
\subsection*{Exercises}
\begin{enumerate}
\item Compute the rows $n=7$ and $n=8$ in Pascal's triangle.

\item Which entries in Pascal's triangle are odd?

\item Suppose $\binom{n}{k-1}=120$ and $\binom{n+1}{k}=330$. What is $\binom{n}{k}$?\\
\emph{Hint: The best way to solve this problem does not require you to find $n$ or $k$.}

\item Find all the routes the ball could fall to reach the entry $\binom{4}{2}$ in Pascal's triangle.

\item Starting at the corner of Howes and Laurel, how many ways are there to walk three blocks north and four blocks west (without traveling south or east)?

 \item How many paths of length $m+n$ are there from the lower left corner to the upper right corner in an $m\times n$ grid?  (Paths can only trace along edges of the squares of the grid.)
 
 \item \label{Exmodpbinom} If $p$ is a prime number and $1 \leq k \leq p-1$, explain why $\binom{p}{k} \equiv 0 \bmod p$.

\item  Starting at the corner of Howes and Laurel, how many ways are there to walk four blocks north and two blocks west (without traveling south or east)?													
\item You have 7 different hats and you want to loan some of these to a friend, who needs at least 2 hats.	How many ways are there to do this?

\item Remember how a ball can fall through the grid system of Pascal’s triangle. We suppose that the ball has an equal probability of falling down to the left or to the right. Compute the probability that the ball lands at the 3rd spot in the 5th row (with the outer diagonal being the case k = 0). Write your answer as a decimal number between 0 and 1.	

\item A ball falls through the grid system of Pascal's triangle and lands on the n=10th row. Write a 2-3 sentence explanation of why the ball has a $50\%$ probability of landing in a spot where k is even.													
\end{enumerate}
\section{The Binomial Theorem}\label{sec:binomial-theorem}

\begin{videobox}
\begin{minipage}{0.1\textwidth}
\href{https://www.youtube.com/watch?v=wFk-Z25-s7A}{\includegraphics[width=1cm]{video-clipart-2.png}}
\end{minipage}
\begin{minipage}{0.8\textwidth}
Click on the icon at left or the URL below for this section's short video lecture. \\\vspace{-0.2cm} \\ \href{https://www.youtube.com/watch?v=wFk-Z25-s7A}{https://www.youtube.com/watch?v=wFk-Z25-s7A}
\end{minipage}
\end{videobox}

One of the most important applications of Pascal's triangle and binomial coefficients is the expansion of a power of a binomial.  Starting with a binomial $x+y$, let us consider its first few powers:

\begin{align*}
(x+y)^0&=1,\\
(x+y)^1&=x+y,\\
(x+y)^2&=x^2+2 xy+y^2,\\
(x+y)^3&=x^3+3 x^2y+3  xy^2+y^3, \\
(x+y)^4&=x^4+4  x^3y+6  x^2y^2+4 xy^3+y^4. 
\end{align*}

The coefficients of these expansions are the numbers in Pascal's triangle!  The pattern becomes even more clear if we align them this way:

\begin{center}
\begin{tabular}{rccccccccccc}
$(x+y)^0$:&    &    &    &    &    &    &  1\\\noalign{\smallskip\smallskip}
$(x+y)^1$:&    &    &    &    &    &  $x$ &  +  &  y\\\noalign{\smallskip\smallskip}
$(x+y)^2$:&    &    &    &    &  $x^2$ &  +  &  $2xy$ & +   &  $y^2$\\\noalign{\smallskip\smallskip}
$(x+y)^3$:&    &    &    &  $x^3$ & +  &  $3x^2y$ &  +  &  $3xy^2$ &  +  &  $y^3$\\\noalign{\smallskip\smallskip}
$(x+y)^4$:&    &    &  $x^4$ & + &  $4x^3y$ & + &  $6x^2y^2$ & + &  $4xy^3$ & +  &  $y^4$\\\noalign{\smallskip\smallskip}
\end{tabular}
\end{center}
Indeed, if we ignore the $x$'s and $y$'s in the table above, the coefficients are precisely the numbers in Pascal's triangle.  This is why the numbers $\binom{n}{k}$ are called binomial coefficients.

\begin{remark}
Some students are familiar with the `foil' (first, outer, inner, last) method to compute the case $n=2$:
\[(x+y)^2 = x \cdot x + y \cdot x + x \cdot y + y \cdot y.\]
Later in this section, we will generalize this, in a way that could be called a `giant foil method'.
\end{remark}

We now can state the pattern of the coefficients as the \textbf{Binomial Theorem}, one of the oldest named theorems in mathematics, first discovered by the Persian 
mathematician and engineer
Al-Karaji $\sim$953--1029.

\begin{theorem}\label{Tbinomialthm}
(Binomial Theorem) Let $x$ and $y$ be variables.  Then the expansion of $(x+y)^n$ is
\[(x+y)^n=\binom{n}{0}x^n+\binom{n}{1}x^{n-1}y^1+\ldots+\binom{n}{k}x^{n-k}y^k+\ldots+\binom{n}{n-1}x^1y^{n-1}+\binom{n}{n}y^n.\]
\end{theorem}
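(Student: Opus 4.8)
The plan is to give a combinatorial proof in the spirit of the ``giant foil method'' hinted at in the remark above, thereby avoiding induction, which has not yet been introduced. First I would write $(x+y)^n$ as a product of $n$ identical factors,
\[(x+y)^n = \underbrace{(x+y)(x+y)\cdots(x+y)}_{n \text{ times}},\]
and expand using the distributive law. Each term in the fully expanded product is obtained by selecting either the $x$ or the $y$ from each of the $n$ factors and multiplying the $n$ selections together. There are $2^n$ such terms in all, one for each way of making $n$ binary choices, which matches the count of binary strings of length $n$ from Theorem~\ref{Tstring}.

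Next I would observe that any single selection of $x$'s and $y$'s produces a monomial of the form $x^{n-k}y^k$, where $k$ is the number of factors from which we chose $y$ (so $n-k$ is the number from which we chose $x$). Hence the entire expansion is a sum of monomials $x^{n-k}y^k$ with $0 \le k \le n$, and it remains only to determine how many times each such monomial appears.

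The key step is this count. The monomial $x^{n-k}y^k$ arises exactly once for each way of choosing which $k$ of the $n$ factors contribute a $y$; this is the number of subsets of size $k$ from an $n$-element set, namely $\binom{n}{k}$ by Theorem~\ref{Tupperleft}. Equivalently, each selection corresponds to a binary string of length $n$ with exactly $k$ ones, and there are $\binom{n}{k}$ of these, as noted earlier in the book. Collecting like terms by the addition principle, the coefficient of $x^{n-k}y^k$ is therefore $\binom{n}{k}$, and summing over $k = 0, 1, \ldots, n$ yields precisely the stated formula.

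The main obstacle is making the distributive expansion fully rigorous: justifying that ``multiplying out'' $n$ factors produces exactly the sum over all $2^n$ selections of one term from each factor, and that grouping these selections by their number of $y$'s is legitimate. Once this bookkeeping is pinned down, most cleanly via the bijection between selections and binary strings of length $n$, the remainder is just an application of the counting results already in hand. An alternative route would combine Pascal's Recurrence (Theorem~\ref{thm:Pascal}) with induction on $n$, but since induction appears only in the next chapter, I would prefer the direct combinatorial argument above.
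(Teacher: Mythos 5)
Your proposal is correct and follows essentially the same argument as the paper's own proof: expand $(x+y)^n$ as a product of $n$ factors, note that each term in the expansion comes from choosing $x$ or $y$ in each factor, and count the selections yielding $x^{n-k}y^k$ by $\binom{n}{k}$ (the paper's ``giant foil method''). The extra framing via binary strings and the $2^n$ total count is a harmless elaboration of the same idea.
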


\begin{proof}
Note
\[(x+y)^n=\underbrace{(x+y)\cdot(x+y)\cdot(x+y)\cdot\ldots\cdot(x+y)\cdot(x+y)}_{n \mbox{ times}}.\]
In expanding this product, we need to take one entry from each term, multiply these entries together, and then add all those products (`the giant foil method').
For each of the $n$ terms $(x+y)$, we have the option to choose either $x$ or $y$. Since there are $n$ terms, the product of these will be a monomial of the form $x^{n-k} y^k$ for some number $k$.  To produce the monomial $x^{n-k}y^k$ in the product, we must choose the variable $y$ for a total of $k$ times and then choose the variable $x$ for the remaining $n-k$ times.
There are $\binom{n}{k}$ ways to choose $k$ terms out of the $n$ terms.
So the coefficient of $x^{n-k}y^k$ is $\binom{n}{k}$.
\end{proof}

For example, if you expand $(x+y)^{10}$ and combine like terms, then the coefficient in front of $x^8y^2$ will be $\binom{10}{2}=45$. This comes from the fact that out of the 10 terms $x+y$ being multiplied together, we need to choose the variable $y$ from 2 of them (and the variable $x$ from the remaining 8 of them) in order to produce the monomial $x^8y^2$.

\begin{remark}
By combining the Binomial Theorem with Pascal's triangle, we can quickly expand powers of the sum of two things! For instance, by reading row $n=6$ from Pascal's triangle, we can immediately deduce that $$(x+y)^6=x^6+6x^5y+15x^4y^2+20x^3y^3+15x^2y^4+6xy^5+y^6.$$
By reading row $n=3$ from Pascal's triangle, we can deduce that
\begin{eqnarray*}
(a^2+5b)^3 & = & (a^2)^3 + 3 (a^2)^2 (5b) + 3 (a^2)(5b)^2 + (5b)^3 \\
& = & a^6 + 15 a^4  + 75 a^2b^2 + 125b^3.
\end{eqnarray*}
\end{remark}

\subsection*{Exercises}
\begin{enumerate}
			
\item Given a power of $x+y$ and a monomial in its expansion, find the coefficient on that term using the Binomial Theorem:
\begin{enumerate}
    \item $(x+y)^3$ and $x^2y$
    \item $(x+y)^6$ and $x^4y^2$
    \item $(x+y)^8$ and $x^3y^5$
\end{enumerate}


\item What is the coefficient of $x^5y^3$  in $(x+y)^8$?			
\item What is the coefficient of $x^2$ in $(x-1)^5$?			
		
\item What is the coefficient of $x^3$ in $(x-1)(x-2)(x-3)(x-4)$?			
\item What is the constant coefficient in $(x-1)(x-2)(x-3)(x-4)$?
\item Expand $(x+2)^3$ by substituting $y=2$ into the binomial theorem.

\item Expand $(x-1)^4$ by substituting $y=-1$ into the binomial theorem.

\item Expand $(a^2+3b)^4$ using the binomial theorem.
\item What is the coefficient of $a^4b^2$ in $(a^2+3b)^4$?	
\item For a real number $r$, 
find the coefficients of $(x-r)^n$, the polynomial whose only root is $r$.

\item The formulas of Vieta (1540-1603):
\begin{enumerate}
    \item Find formulas for $S_1$ and $S_2$ in terms of $r_1$ and $r_2$ if 
    \[(x-r_1)\cdot(x-r_2)=x^2 - S_1x + S_2.\]
    \item Find formulas for $S_1, S_2, S_3$ in terms of $r_1, r_2, r_3$ if 
    \[(x-r_1)\cdot(x-r_2)\cdot (x-r_3)=x^3 - S_1x^2 + S_2x -S_3.\]
    \item Without expanding the product on the left side of following equation, 
    make a conjecture about the 
    formulas for $S_1, S_2, S_3, S_4$ in terms of $r_1, r_2, r_3, r_4$ if 
    \[(x-r_1)\cdot(x-r_2)\cdot (x-r_3) \cdot(x-r_4) =x^4 - S_1x^3 + S_2x^2 -S_3x+S_4.\]

\end{enumerate}

\item
\begin{enumerate}
\item Show that:
\[(x+y)^2 \equiv x^2 + y^2 \bmod 2.\]
\[(x+y)^3 \equiv x^3 + y^3 \bmod 3.\]
\[(x+y)^5 \equiv x^5 + y^5 \bmod 5.\]

\item \label{Exmodpbinom2} If $p$ is a prime number and $1 \leq k \leq p-1$, then by Exercise~\ref{Exmodpbinom}, $\binom{p}{k} \equiv 0 \bmod p$. 
Use this to show that
        $(x+y)^p \equiv x^p+y^p \bmod p$.

\end{enumerate}

\end{enumerate}

\section{First identities in Pascal's triangle}\label{sec:pascal}

Some surprising and beautiful patterns appear in Pascal's triangle.

\begin{center}
\begin{tabular}{rccccccccccccc}
$n=0$:&    &    &    &    &    &    &   1 \\\noalign{\smallskip\smallskip}
$n=1$:&    &    &    &    &    &  1 &  &  1\\\noalign{\smallskip\smallskip}
$n=2$:&    &    &    &    &  1 &    &  2 &    &  1\\\noalign{\smallskip\smallskip}
$n=3$:&    &    &    &  1 &    &  3 &    &  3 &    &  1\\\noalign{\smallskip\smallskip}
$n=4$:&    &    &  1 &    &  4 &    &  6 &    &  4 &    &  1\\\noalign{\smallskip\smallskip}
$n=5$:&    &   1 &    &   5 &    &  10  &    &  10  &    &  5  &   &  1\\\noalign{\smallskip\smallskip}
$n=6$:& 1 &    &  6 &    &  15 &    &  20 &    &  15 &    &  6  &   & 1\\\noalign{\smallskip\smallskip}
\end{tabular}
\end{center}

The first two patterns are easy to state.

\begin{itemize}
    \item \textbf{Symmetry.}  Recall that $\binom{n}{k}=\binom{n}{n-k}$ by Corollary \ref{cor:symmetry}.  This is illustrated by the reflectional symmetry about the vertical axis (through $1, 2, 6, 20,\ldots $) in Pascal's triangle.
    \item \textbf{Sums of rows are powers of 2.} For instance, the sum of the entries in the $n=4$ row of Pascal's triangle is $1+4+6+4+1=16=2^4$.  In general, the sum of the entries in row $n$ is $2^n$.
\end{itemize}

We can prove the second pattern using the Binomial Theorem.  We first re-state it using binomial coefficient notation.

\begin{proposition} \label{Ppattern2}
(Sums of rows)
If $n$ is a natural number, then \[\binom{n}{0}+\binom{n}{1}+\binom{n}{2}+\ldots+\binom{n}{n}=2^n.\]
\end{proposition}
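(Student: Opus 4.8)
The plan is to derive this identity as an immediate corollary of the Binomial Theorem (Theorem~\ref{Tbinomialthm}) by specializing the variables. The key observation is that the left-hand side of the proposition is exactly the sum of the coefficients appearing in the expansion of $(x+y)^n$, so I would recover it by setting $x$ and $y$ equal to a value that makes every monomial collapse to its coefficient.

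Concretely, I would substitute $x=1$ and $y=1$ into the expansion given by Theorem~\ref{Tbinomialthm}. On the left side this gives $(1+1)^n = 2^n$. On the right side, the general term $\binom{n}{k}x^{n-k}y^k$ becomes $\binom{n}{k}\cdot 1^{n-k}\cdot 1^k = \binom{n}{k}$, since every power of $1$ is $1$. Summing over $k$ from $0$ to $n$ then yields $\binom{n}{0}+\binom{n}{1}+\cdots+\binom{n}{n}$. Equating the two sides produces
\[\binom{n}{0}+\binom{n}{1}+\cdots+\binom{n}{n}=2^n,\]
which is precisely the claimed identity.

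There is essentially no hard step here; the only point requiring a moment's care is verifying that $1^{n-k}1^k=1$ holds for \emph{every} index $k$ in the sum, so that no term carries a leftover factor. Because the Binomial Theorem is already proved, the entire argument is a single clean substitution.

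As an alternative, I would note a combinatorial proof that connects this back to Theorem~\ref{Tsubset}. Since $\binom{n}{k}$ counts the subsets of size $k$ of an $n$-element set, and every subset has exactly one size between $0$ and $n$, summing $\binom{n}{k}$ over all $k$ counts every subset exactly once. By Theorem~\ref{Tsubset} the total number of subsets is $2^n$, giving the same equality. I would present the Binomial Theorem substitution as the main proof, following the hint in the text, and could mention the subset-counting argument as a remark illustrating why the right-hand side is a power of $2$.
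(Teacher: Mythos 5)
Your proof is correct and matches the paper's argument exactly: the paper also proves this by substituting $x=1$ and $y=1$ into the Binomial Theorem to get $2^n=(1+1)^n=\binom{n}{0}+\binom{n}{1}+\cdots+\binom{n}{n}$. The subset-counting remark you add is a valid bonus but not part of the paper's proof.
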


\begin{proof}
By setting $x=1$ and $y=1$ in the binomial theorem, we see that 
\begin{align*}
2^n&=(1+1)^n\\
&=\binom{n}{0}1^n+\binom{n}{1}1^{n-1}1^1+\binom{n}{2}1^{n-2}1^2+\ldots+\binom{n}{n}1^n\\
&=\binom{n}{0}+\binom{n}{1}+\binom{n}{2}+\ldots+\binom{n}{n}.
\end{align*}
\end{proof}

Building on Lemma~\ref{Lpaths} and Example~\ref{Eroutes}, remember how a ball can fall through the grid system of Pascal's triangle.  
Unless stated otherwise, we suppose that the ball has an equal probability of falling down to the left or to the right.
Here is a nice interpretation of the second pattern in terms of the falling ball.

\begin{lemma}
Suppose that the probability that the ball falls to the left is the same as the probability that the ball falls to the right.
Then the probability that the ball passes through the location labeled by the binomial coefficient
$\binom{n}{k}$ is $\binom{n}{k}/2^n$.  
\end{lemma}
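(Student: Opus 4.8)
The plan is to combine the path-counting result of Lemma~\ref{Lpaths} with the independence of the ball's individual left/right decisions. Recall from Lemma~\ref{Lpaths} that each way the ball can reach the location labeled $\binom{n}{k}$ corresponds to a sequence of $n$ steps, each either $L$ (left) or $R$ (right), with exactly $k$ of them equal to $R$; moreover, the number of such routes is exactly $\binom{n}{k}$. This reduces the probability computation to counting routes and weighting each by its probability.

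First I would compute the probability of a single fixed route. Since at each of the $n$ steps the ball falls left or right with probability $1/2$, and since the steps are independent, the probability that the ball follows any one prescribed sequence of $L$'s and $R$'s is the product $\tfrac12 \cdot \tfrac12 \cdots \tfrac12 = \left(\tfrac12\right)^n = 1/2^n$. The key observation is that this value does not depend on how many of the steps are $R$; every route of length $n$ has the same probability $1/2^n$.

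Next I would sum over all routes that reach the spot labeled $\binom{n}{k}$. The events ``the ball follows route $P$'' for distinct routes $P$ are mutually exclusive, so the probability of landing at that spot is the sum of the individual route probabilities. Since there are $\binom{n}{k}$ such routes, each of probability $1/2^n$, this total is $\binom{n}{k}\cdot \tfrac{1}{2^n}=\binom{n}{k}/2^n$, as claimed.

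There is no serious obstacle here: the argument is essentially a direct application of Lemma~\ref{Lpaths} together with the equal-probability and independence assumptions. The only points requiring care are (i) justifying that every route has probability exactly $1/2^n$, which uses independence of the successive steps, and (ii) noting that distinct routes correspond to disjoint events, so their probabilities add (by the addition principle) rather than interact.
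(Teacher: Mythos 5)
Your proposal is correct and follows essentially the same argument as the paper's proof: each of the $n$ steps is left or right with probability $1/2$, so every route has probability $1/2^n$, and by Lemma~\ref{Lpaths} there are $\binom{n}{k}$ routes to that location, giving $\binom{n}{k}/2^n$. Your version is slightly more careful in spelling out the independence of steps and the disjointness of distinct routes, which the paper leaves implicit, but the structure is identical.
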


\begin{proof}
If the probability that the ball falls to the left is the same as the probability that the ball falls to the right, then the probability of falling each way is $1/2$.
After falling $n$ rows, the probability of each route is $1/2^n$.
The probability that the ball passes through the location labeled by the binomial coefficient
$\binom{n}{k}$ is
the number of routes it could have taken to that location divided by $2^n$; this gives a probability of 
$\binom{n}{k}/2^n$.  
\end{proof}

\begin{remark}
If the ball falls $n$ steps, then it will land at one of the spots in the $n$th row.  So the sum of the probabilities of it landing at all the spots is $1$. 
By multiplying each probability by $2^n$, we see that this reproduces the pattern given in Proposition \ref{Ppattern2}.
\end{remark}

\begin{example} \label{Epathprob}
Building on Example~\ref{Eroutes}, 
there is only one path for the ball to reach the entry $\binom{4}{0}$; and the probability that the ball passes through this location is $1/16$; there are four paths for the ball to reach the entry $\binom{4}{1}$ and the  probability that the ball passes through this location is $4/16 =1/4$.
\end{example}

Here is a third pattern in Pascal's triangle.

\begin{itemize}
    \item \textbf{Alternating sums of the rows are $0$.}  For instance, in the $n=4$ row, the alternating sum of the entries is $1-4+6-4+1=0$.  In general, the alternating sum of the entries in row $n$ is zero.
\end{itemize}

The third pattern can also be proven using the Binomial Theorem. We first re-state it using  
 binomial coefficient notation.

\begin{proposition} \label{Ppattern3}
(Alternating sums of rows)
If $n$ is a positive natural number, then \[\binom{n}{0}-\binom{n}{1}+\binom{n}{2}-\binom{n}{3}+\ldots\pm\binom{n}{n}=0.\]
(The sign alternates between $+$ and $-$ in the sum.  The meaning of the coefficient $\pm$ is that 
the last term is positive if $n$ is even and the last term is negative 
if $n$ is odd.)
\end{proposition}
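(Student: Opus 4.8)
The plan is to mirror the proof of Proposition~\ref{Ppattern2} almost exactly, changing only the values substituted into the Binomial Theorem (Theorem~\ref{Tbinomialthm}). Where that proof used $x=1$ and $y=1$, here I would set $x=1$ and $y=-1$. With this choice the left-hand side of the Binomial Theorem becomes $(1+(-1))^n = 0^n$, and the signs $(-1)^k$ on the right-hand side will produce precisely the alternating pattern we want.

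First I would write out the expansion. Substituting $x=1$ and $y=-1$ into Theorem~\ref{Tbinomialthm} gives
\[(1+(-1))^n = \binom{n}{0}1^n + \binom{n}{1}1^{n-1}(-1)^1 + \binom{n}{2}1^{n-2}(-1)^2 + \cdots + \binom{n}{n}(-1)^n.\]
Since every power of $1$ equals $1$, and $(-1)^k$ equals $+1$ when $k$ is even and $-1$ when $k$ is odd, the right-hand side collapses to exactly
\[\binom{n}{0}-\binom{n}{1}+\binom{n}{2}-\binom{n}{3}+\cdots\pm\binom{n}{n},\]
with the final term carrying the sign $(-1)^n$, matching the convention stated in the proposition. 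It then remains only to evaluate the left-hand side: $(1+(-1))^n = 0^n = 0$.

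The one step deserving care — and the reason the hypothesis requires $n$ to be a \emph{positive} natural number — is the evaluation $0^n = 0$, which is valid precisely because $n \geq 1$. I would flag this explicitly at the moment I invoke it. If $n=0$ were allowed, we would instead have $0^0 = 1$, and indeed the $n=0$ row of Pascal's triangle is the single entry $1$, whose ``alternating sum'' is $1 \neq 0$; so the positivity hypothesis is genuinely needed rather than cosmetic. There is no real obstacle beyond keeping this condition in view, since the whole argument is a single substitution.

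As an alternative worth mentioning, I could give a combinatorial proof via a sign-reversing involution. By Theorem~\ref{Tsubset} the identity is equivalent to the statement that a set $S$ with $n\geq 1$ elements has exactly as many even-sized subsets as odd-sized subsets. To see this, I would fix one distinguished element, say $a_1 \in S$, and pair each subset $T$ with the subset obtained by removing $a_1$ from $T$ if $a_1 \in T$ and adjoining $a_1$ to $T$ otherwise. This operation is its own inverse and always changes the size of $T$ by exactly one, hence flips the parity of $|T|$; it therefore gives a bijection between the even-sized and odd-sized subsets of $S$, forcing their counts to be equal. This version again uses $n \geq 1$ in an essential way, since we need at least one element available to toggle. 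I would likely present the algebraic proof as the main argument for brevity and note the involution as the more conceptual explanation.
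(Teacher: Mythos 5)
Your main argument is exactly the paper's proof: substitute $x=1$ and $y=-1$ into the Binomial Theorem, observe that every power of $1$ is $1$ and that $(-1)^k$ produces the alternating signs, and conclude that the sum equals $(1+(-1))^n = 0$. Your explicit remark about why $n \geq 1$ is needed (since $0^0 = 1$) is a nice touch the paper leaves implicit, and your alternative combinatorial argument via the distinguished-element involution is precisely the bijective proof the paper gives later in Proposition~\ref{Psubodd} and its proof (the pairing on subsets of $\Omega$ that toggles membership of the element $1$), so both of your routes appear in the paper.
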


\begin{proof}
By setting $x=1$ and $y=-1$ in the binomial theorem, we see that
\begin{align*}
0&=(1+(-1))^n\\
&=\binom{n}{0}1^n+\binom{n}{1}1^{n-1}(-1)^1+\binom{n}{2}1^{n-2}(-1)^2+\binom{n}{3}1^{n-3}(-1)^3+\ldots+\binom{n}{n}(-1)^n\\
&=\binom{n}{0}-\binom{n}{1}+\binom{n}{2}-\binom{n}{3}+\ldots\pm\binom{n}{n}.
\end{align*}
\end{proof}

We will give another proof of Proposition~\ref{Ppattern3} in Proposition~\ref{Paltv2}.

\subsection*{Exercises}

\begin{enumerate}
\item You have $14$ hats of different colors and you want to loan some of these to a friend.
Use the results in this section to answer the following problems.
\begin{enumerate}
    \item In how many ways can you do this if your friend needs at least two hats?
\item In how many ways can you do this if your friend needs an even number (possibly $0$) of hats?
\end{enumerate}

\item For $n=4,5,6$, find the sum of all the entries in the $n$th row of Pascal's triangle, excluding the two entries on the outer diagonals and the two entries adjacent to those.  Find a formula for this sum which works for all $n \geq 4$.
   
\item  For $0 \leq k \leq 5$, compute the probability that the ball lands at the $k$th spot in the $5$th row (with the outer diagonal being the case $k=0$).

\item Repeat the previous problem when the probability of falling down to the left is $1/3$ and the probability of falling down to the right is $2/3$.

\item Label the entries of the $n$th row of Pascal's triangle by the numbers $k=0$ to $k=n$.
Assume the probability of the ball falling left or right is the same. For $n \geq 1$, in this problem, we will figure out the probability that the ball lands on an entry where $k$ is even.
\begin{enumerate}
    \item Compute this probability when $n = 5$ and when $n=6$.
    \item Make a conjecture for what this probability is in general.
    \item Explain why your conjecture is true using the symmetry pattern and Proposition \ref{Ppattern2}.
\end{enumerate}

\end{enumerate}

\section{Additional identities in Pascal's triangle} \label{Sidentity2}

There are other patterns in Pascal's triangle which are more subtle.  For the next pattern, consider the sum of the squares of the entries in each row:
\begin{center}
\begin{tabular}{rcccccccccl}
$n=0$:&    &    &    &    &  $1^2$ &&&&& $=1$\\\noalign{\smallskip\smallskip}
$n=1$:&    &    &    &  $1^2$ &    &  $+1^2$ &&&& $=2$\\\noalign{\smallskip\smallskip}
$n=2$:&    &    &  $1^2$ &    &  $+2^2$ &    &  $+1^2$ &&& $=6$\\\noalign{\smallskip\smallskip}
$n=3$:&    &  $1^2$ &    &  $+3^2$ &    &  $+3^2$ &    &  $+1^2$ && $=20$\\\noalign{\smallskip\smallskip}
$n=4$:&  $1^2$ &    &  $+4^2$ &    &  $+6^2$ &    &  $+4^2$ &    &  $+1^2$ & $=70$\\\noalign{\smallskip\smallskip}
\end{tabular}
\end{center}
These sums ($1,2,6,20,70,\ldots$) also appear in Pascal's triangle, as the middle entry in every row where $n$ is even.  This gives rise to pattern 4.

\begin{proposition} \label{Ppattern4}
    (Sum of squares) If $n$ is a natural number, 
    then \[\binom{n}{0}^2+\binom{n}{1}^2+\ldots+\binom{n}{n-1}^2+\binom{n}{n}^2=\binom{2n}{n}.\]
\end{proposition}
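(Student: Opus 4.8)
The plan is to give a combinatorial (double-counting) proof, which matches the style of this chapter and avoids algebra entirely. The key observation is that the right-hand side $\binom{2n}{n}$ counts, by Theorem~\ref{Tupperleft}, the number of ways to choose an $n$-element subset from a set of size $2n$. So I would count that same collection of subsets a second way and match the two answers.

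First I would split the ground set into two halves: write the $2n$-element set as a disjoint union $X \cup Y$ with $|X| = |Y| = n$. Every $n$-element subset $S$ of $X \cup Y$ meets $X$ in some number of elements $k$ with $0 \leq k \leq n$, and then, since $|S|=n$, it must meet $Y$ in exactly $n-k$ elements. Sorting the subsets according to the value of $k$ gives a set partition of the collection of all $n$-element subsets into blocks indexed by $k = 0, 1, \ldots, n$. The blocks are disjoint (a subset determines $k$ uniquely) and their union is everything, so the addition principle applies.

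Next I would count block $k$ by the multiplication principle: there are $\binom{n}{k}$ ways to choose which $k$ elements of $X$ lie in $S$, and independently $\binom{n}{n-k}$ ways to choose the remaining $n-k$ elements from $Y$, for a total of $\binom{n}{k}\binom{n}{n-k}$ subsets in block $k$. Summing over all $k$ and equating with the first count yields
\[
\binom{2n}{n} = \sum_{k=0}^{n} \binom{n}{k}\binom{n}{n-k}.
\]
Finally, I would apply the symmetry identity $\binom{n}{n-k} = \binom{n}{k}$ from Corollary~\ref{cor:symmetry} to each term, which turns $\binom{n}{k}\binom{n}{n-k}$ into $\binom{n}{k}^2$ and gives exactly the claimed formula.

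There is no genuine obstacle here; the only thing to be careful about is justifying the case split cleanly as a set partition (disjoint blocks whose union is the whole collection), so that the addition principle is legitimately invoked. As an alternative worth mentioning, one could instead argue via lattice paths using Lemma~\ref{Lpaths}: the entry $\binom{2n}{n}$ counts the ball's routes to the center of row $2n$, and every such route passes through a unique spot $\binom{n}{k}$ in row $n$; decomposing each route into its first $n$ steps and its last $n$ steps, then using symmetry to rewrite the lower half, reproduces the same sum of squares.
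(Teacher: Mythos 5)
Your proof is correct and is essentially the same as the paper's: the paper's argument (Proposition~\ref{Pproofsumsquares}) also counts $n$-element subsets of a $2n$-element set by splitting the ground set into two halves of size $n$ (gold and green jerseys there), choosing $k$ from one half and $n-k$ from the other, summing over $k$, and finishing with the symmetry $\binom{n}{n-k}=\binom{n}{k}$. The only cosmetic difference is your abstract set-partition phrasing versus the paper's soccer-team analogy.
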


We will prove Theorem~\ref{Ppattern4} in Proposition~\ref{Pproofsumsquares}. 

The fifth pattern is known as the \textbf{hockey stick identity}, because the set of binomial coefficients involved in the pattern traces out the shape of a hockey stick in Pascal's triangle.

\begin{proposition} \label{Ppattern5}
(Hockey stick identity)
Suppose $n$ is a positive integer and $m$ is a non-negative integer.

Version 1: \[\binom{n}{0}+\binom{n+1}{1}+\binom{n+2}{2}+\cdots+\binom{n+m}{m}=\binom{n+m+1}{m}.\]
    
Version 2: \[\binom{n}{n}+\binom{n+1}{n}+\binom{n+2}{n}+\cdots+\binom{n+m}{n}=\binom{n+m+1}{n+1}.\]
\end{proposition}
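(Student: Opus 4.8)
The plan is to first collapse the two versions into a single statement, then prove that statement by induction on $m$ using Pascal's Recurrence (Theorem~\ref{thm:Pascal}); I would also record the slicker combinatorial argument as an alternative.

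First I would observe that Versions 1 and 2 are really the same identity read through the symmetry of binomial coefficients (Corollary~\ref{cor:symmetry}). The $j$-th term of Version 1 satisfies $\binom{n+j}{j}=\binom{n+j}{(n+j)-j}=\binom{n+j}{n}$, and the right-hand side satisfies $\binom{n+m+1}{m}=\binom{n+m+1}{n+1}$. Applying this to every term turns Version 1 into Version 2 term by term, so it suffices to prove
$$\sum_{j=0}^{m}\binom{n+j}{n}=\binom{n+m+1}{n+1}.$$

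Next I would prove this by induction on $m$. For the base case $m=0$, the left side is $\binom{n}{n}=1$ and the right side is $\binom{n+1}{n+1}=1$. For the inductive step, I assume the identity holds for $m$, add the next term, and invoke Pascal's Recurrence with its ``$n$'' set to $n+m+1$ and its ``$k$'' set to $n$:
$$\sum_{j=0}^{m+1}\binom{n+j}{n}=\binom{n+m+1}{n+1}+\binom{n+m+1}{n}=\binom{n+m+2}{n+1},$$
which is exactly the claimed formula with $m$ replaced by $m+1$. This telescoping via Pascal's Recurrence is the heart of the argument, and it mirrors the ``ball falling one more row'' picture of Lemma~\ref{Lpaths}.

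Finally, I would note a cleaner combinatorial route that avoids induction entirely: the right side $\binom{n+m+1}{n+1}$ counts the $(n+1)$-element subsets of $\{1,\ldots,n+m+1\}$. Sorting these subsets by their largest element — if the maximum equals $n+1+i$, then the remaining $n$ elements are chosen from $\{1,\ldots,n+i\}$, giving $\binom{n+i}{n}$ possibilities — and summing over $i=0,\ldots,m$ reproduces Version 2 directly. I expect the only real obstacle to be bookkeeping: keeping the index substitutions into Pascal's Recurrence straight and verifying that the symmetry reduction lines up term by term. Once the correct indices are pinned down, there is no conceptual difficulty.
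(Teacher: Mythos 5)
Your proof is correct and takes essentially the same route as the paper's: induction on $m$ with Pascal's recurrence supplying the inductive step, the only cosmetic difference being that you run the induction on Version 2 while the paper runs it on Version 1 (the two being interchangeable by the symmetry $\binom{n}{k}=\binom{n}{n-k}$, exactly as both you and the paper note). Your closing combinatorial argument --- classifying the $(n+1)$-element subsets of $\{1,\ldots,n+m+1\}$ by their largest element --- is a valid bonus; the paper does not include it in its proof but asks for precisely this argument as an exercise in its section on counting in two ways.
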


The second version of the hockey stick identity is obtained from the first by applying the first pattern, symmetry, to each binomial coefficient. We will prove Theorem~\ref{Ppattern5} in Proposition~\ref{Phockey}.

\begin{example}
We illustrate the first hockey stick identity.
\begin{itemize}
\item If $n=2,m=0$, then $1=1$.
\item If $n=2,m=1$, then $1+3=4$.
\item If $n=2,m=2$, then $1+3+6=10$.
\item If $n=2,m=3$, then $1+3+6+10=20$.
\item If $n=2,m=4$, then $\textcolor{blue}{1+3+6+10+15}=\textcolor{red}{35}$.
\end{itemize}
\end{example}

We can see version 1 of the hockey stick identity when $n=2$ and $m=4$ traced out in circles in the following image:
\begin{center}
\begin{tabular}{rccccccccccccccc}
$n=0$:&   &    &    &    &    &    &    &  1\\\noalign{\smallskip\smallskip}
$n=1$:&   &    &    &    &    &    &  1 &    &  1\\\noalign{\smallskip\smallskip}
$n=2$:&   &    &    &    &    &  \textcolor{blue}{\circleit{1}} &    &  2 &    &  1\\\noalign{\smallskip\smallskip}
$n=3$:&   &    &    &    &  1 &    &  \textcolor{blue}{\circleit{3}} &    &  3 &    &  1\\\noalign{\smallskip\smallskip}
$n=4$:&   &    &    &  1 &    &  4 &    &  \textcolor{blue}{\circleit{6}} &    &  4 &    &  1\\\noalign{\smallskip\smallskip}
$n=5$:&   &    &   1 &    &   5 &    &  10  &    &  \textcolor{blue}{\circleit{10}}  &    &  5  &   &  1\\\noalign{\smallskip\smallskip}
$n=6$:&   & 1 &    &  6 &    &  15 &    &  20 &    &  \textcolor{blue}{\circleit{15}} &    &  6  &   & 1\\\noalign{\smallskip\smallskip}
$n=7$:&  1 &   &  7  &    &  21  &   &  35  &   &  \textcolor{red}{\circleit{35}}  &   &  21  &    & 7  &   &  1\\\noalign{\smallskip\smallskip}
\end{tabular}
\end{center}

\section*{Exercises}

\begin{enumerate}
    \item Draw the example of version 2 of the hockey stick identity on Pascal's triangle when $n=2$ and $m=4$.

\item When $n=2$ and $m=4$, 
explain why the hockey stick identity version 1 is true using Lemma~\ref{Lpaths}.  Specifically, separate the $35$ paths which end at the binomial coefficient $\binom{7}{4}$ into sets of paths of sizes $15, 10, 6, 3, 1$ in a natural way.

\item (Application to probability theory) 
The \textit{chi-square distribution} of $x_0, \ldots, x_n$ is $Q_n=x_0^2 + \cdots x_n^2$.
\begin{enumerate}
    \item For $n=1,2,3,4,5$, use
    Proposition~\ref{Ppattern4} to compute the chi-square distribution $Q_n$ of the binomial coefficients in the $n$th row of Pascal's triangle.
    \item As $n$ gets larger, what happens to the chi-square distribution $Q_n$?
    \item What is 
    $\lim\limits_{n \to \infty} Q_n$?  Explain your answer.
    \end{enumerate}

\end{enumerate}

\section{Counting anagrams with multinomial coefficients}\label{sec:anagrams}

The next two sections are about \textit{multinomial coefficients}, a generalization of binomial coefficients.

\begin{videobox}
\begin{minipage}{0.1\textwidth}
\href{https://www.youtube.com/watch?v=YAO-1ugO3TI}{\includegraphics[width=1cm]{video-clipart-2.png}}
\end{minipage}
\begin{minipage}{0.8\textwidth}
Click on the icon at left or the URL below for this section's short video lecture. \\\vspace{-0.2cm} \\ \href{https://www.youtube.com/watch?v=YAO-1ugO3TI}{https://www.youtube.com/watch?v=YAO-1ugO3TI}
\end{minipage}
\end{videobox}

We start with one application of multinomial coefficients, which is to count anagrams.

\begin{definition}
An \textbf{anagram} is a rearrangement of the letters in a word.  
\end{definition}

An anagram is different from a permutation, because some letters can occur in a word multiple times.

\begin{example}
For example, let's consider the permutations of the word ZOO. Typically, the number of permutations of three letters is $3!=6$. 
However, two of the letters in ZOO are the same. After we label the two $O$'s as $O_1$ and $O_2$, the six permutations are:
\begin{center}
\textcolor{red}{ZO$_1$O$_2$} \hspace{5mm} \textcolor{red}{ZO$_2$O$_1$}

\textcolor{blue}{O$_1$ZO$_2$}\hspace{5mm}
\textcolor{blue}{O$_2$ZO$_1$}

\textcolor{brown}{O$_1$O$_2$Z}\hspace{5mm}
\textcolor{brown}{O$_2$O$_1$Z}.
\end{center}
However, the number of anagrams is half of $6$, since each anagram of ZOO appears $2$ times on this list.  Recognizing that each row gives a different anagram, we can list the $6/2=3$ anagrams of the word ZOO. They are: ZOO, OZO, and OOZ.
\end{example}

\begin{example}
The word SASSY has $20$ anagrams.
Typically, the number of permutations of five letters is $5!=120$.  In the list of these permutations, each anagram is written $3!=6$ times, corresponding to the rearrangement of the three S's.  So the number of anagrams is $5!/3! = 20$.
\end{example}

\begin{example} \label{EanagramFC}
How many anagrams are there for FORTCOLLINS?
One approach is to say that there are 11 letters in total, but two of these are L and two of these are O.  
So in the list of $11!$ permutations of the letters, each word is written four times (the three extra times correspond to switching the locations of the two L's, switching the locations of the two O's, and switching both the two L's and the two O's). Hence the total number of anagrams is
$11!/4$.
\end{example}

\begin{example} \label{Ebiandmult}
Let $W$ be a binary string of length $n$, having $k$ zeros and $n-k$ ones.  Then the number of anagrams of $W$ is $\binom{n}{k}$ because the anagram is determined exactly by the location of the $k$ zeros.
\end{example}

The same ideas from the previous examples lead to the following definition and theorem:

\begin{definition}
Let $n$ be a natural number.
Let $k_1, \ldots, k_m$ be $m$ positive natural numbers such that $$k_1 + \cdots + k_m = n.$$
Let $W$ be a word with $n$ letters of $m$ different types, in which the letter of type $i$ appears $k_i$ times, for $1 \leq i \leq m$.
The multinomial coefficient $$\binom{n}{k_1,\ldots,k_m}$$
is defined to be
the number of anagrams of $W$.
\end{definition}

\begin{theorem}\label{thm:anagrams}
The multinomial coefficient has the following formula: 
\begin{equation} \label{Emulti}
\binom{n}{k_1,\ldots,k_m}=\frac{n!}{k_1!\cdot k_2!\cdot\ldots\cdot k_m!}.
\end{equation}
\end{theorem}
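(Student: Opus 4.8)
The plan is to follow the pattern established in the ZOO, SASSY, and FORTCOLLINS examples and to prove the formula using the division principle (Lemma~\ref{Ldivsetprin}). The idea is to start from the $n!$ permutations one would obtain if all $n$ letters were distinct, and then to divide away the overcounting caused by the repeated letters.

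First I would temporarily label the identical letters to make them distinguishable: for each type $i$, attach subscripts $1,\ldots,k_i$ to the $k_i$ copies of that letter, turning $W$ into a word with $n$ genuinely distinct symbols. By Theorem~\ref{thm:factorial}, there are exactly $n!$ permutations of these $n$ distinct symbols. Next I would organize these $n!$ labeled permutations into blocks, declaring two labeled permutations to be in the same block when they produce the same anagram of $W$ after the subscripts are erased. This partitions the labeled permutations into blocks, and by construction the number of blocks equals the number of anagrams, which is $\binom{n}{k_1,\ldots,k_m}$ by definition.

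The key step is to show that every block has the same size, namely $k_1!\cdot k_2!\cdots k_m!$. Fixing one anagram, any labeled permutation that erases to it is obtained by deciding, independently for each type $i$, which of the $k_i$ positions holding a type-$i$ letter receives which subscript. There are $k_i!$ such assignments for type $i$, and since these choices are made independently across the $m$ types, the multiplication principle gives $k_1!\cdots k_m!$ labeled permutations erasing to that anagram; moreover distinct assignments yield distinct labeled permutations, so each block has exactly $k_1!\cdots k_m!$ elements. Because every block has this common size, the division principle yields that the number of blocks is $\frac{n!}{k_1!\cdot k_2!\cdots k_m!}$, which is precisely the claimed formula. I expect this equal-block-size step to be the main obstacle: I must argue carefully both that relabeling the repeated letters within a fixed anagram produces genuinely different labeled permutations (no collisions) and that these account for \emph{all} labeled permutations erasing to that anagram, so that the partition truly consists of blocks of constant size.

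As an alternative I would mention the multiplication-principle route, which builds the formula directly and avoids the bookkeeping above. Choose the $k_1$ positions for the type-$1$ letters in $\binom{n}{k_1}$ ways (Theorem~\ref{Tupperleft}), then the $k_2$ positions for the type-$2$ letters among the remaining $n-k_1$ positions in $\binom{n-k_1}{k_2}$ ways, and so on. The product
\[
\binom{n}{k_1}\binom{n-k_1}{k_2}\cdots\binom{k_m}{k_m}
=\frac{n!}{k_1!(n-k_1)!}\cdot\frac{(n-k_1)!}{k_2!(n-k_1-k_2)!}\cdots
\]
telescopes: each factorial in a denominator cancels the corresponding factorial in the next numerator, leaving $\frac{n!}{k_1!\cdot k_2!\cdots k_m!}$ after using $k_1+\cdots+k_m=n$.
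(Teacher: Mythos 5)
Your proposal is correct, but your primary argument takes a genuinely different route from the paper's. The paper proves Theorem~\ref{thm:anagrams} by first establishing a correspondence between anagrams of $W$ and ordered set partitions of $\{1,\ldots,n\}$ into blocks of sizes $k_1,\ldots,k_m$ (Remark~\ref{Ranblock}), and then counting ordered set partitions by a multiplication argument: choose $B_1$ in $\binom{n}{k_1}$ ways, then $B_2$ in $\binom{n-k_1}{k_2}$ ways, and so on, with the factorials telescoping (Theorem~\ref{thm:osp}). Your main proof instead runs a division-principle argument (Lemma~\ref{Ldivsetprin}): subscript the repeated letters to get $n!$ permutations of distinct symbols, partition these permutations into blocks according to the anagram obtained by erasing subscripts, show every block has size exactly $k_1!\cdot k_2!\cdots k_m!$, and divide. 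This is precisely the generalization, to arbitrary $m$ and $k_i$, of the COOL, ZOO, SASSY, and FORTCOLLINS examples worked informally in the text, and you correctly flag the one step that needs care, namely that the $k_1!\cdots k_m!$ relabelings of a fixed anagram are pairwise distinct and exhaust its block. What your route buys is a self-contained proof that stays entirely within the anagram setting and formalizes the intuition the section's examples develop; what the paper's route buys is Theorem~\ref{thm:osp} as a standalone result about ordered set partitions, which the paper then reuses to prove the Multinomial Theorem. Finally, your alternative argument in the last paragraph, choosing the positions for each letter type and telescoping the product of binomial coefficients, is essentially the paper's proof in disguise: the set of positions assigned to the type-$i$ letters is exactly the block $B_i$ of the ordered set partition, so that computation and the proof of Theorem~\ref{thm:osp} are the same argument in different language.
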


We prove Theorem~\ref{thm:anagrams} in Section~\ref{Smultinomial}.

\begin{example}
How many anagrams are there for the word MISSISSIPPI?
In this case $n=11$, $m=4$, 
$k_1=4$ (the I's), 
$k_2=1$ (the M), $k_3=2$ (the P's) and $k_4=4$ (the S's).
So the answer is
$11!/(4!\cdot 1! \cdot 2! \cdot 4!)$.
\end{example}

\subsection*{Exercises}

\begin{enumerate}
\item 
How many anagrams does the word BANANA have?
For example, NABNAA and NANABA are two anagrams of BANANA.
    
\item How many ways can you rearrange the letters in the word MOON?												
    \item 
    How many anagrams does the word HORSETOOTH have?
    
    \item How many anagrams does the word TENNESSEE have?
   
    \item Consider all anagrams of the word SPEEDED. 
    How many ways are there for you to make a ranked list of your top 5 favorite anagrams, numbered \#1 to \#5, of SPEEDED?

    \item How many paths are there from $(0,0,0)$ to $(3,4,5)$ in three dimensional space, using only unit steps in directions $(1,0,0)$, $(0,1,0)$, and $(0,0,1)$?
    
    \item
Suppose $m=2$.  
Show that $\binom{n}{k_1,k_2}$ is the familiar binomial coefficient $\binom{n}{k_1}$ and show that
the formula in \eqref{Emulti} is the same as the formula 
$\frac{n!}{k_1!(n-k_1)!}$ 
found 
in Lemma~\ref{Lformulabinom}
and Theorem~\ref{Tupperleft}.

\end{enumerate}

\section{Ordered set partitions and the multinomial theorem} \label{Smultinomial}

Recall that the binomial coefficient $\binom{n}{k}$ counts the number of ways to choose a subset of size $k$ from a set $S$ of size $n$.  A different way to think of this is that we are separating the set $S$ into two parts: the $k$ elements that are chosen, and the $n-k$ elements that are not chosen. What happens if we want to separate the set $S$ into more than two parts?  

\begin{example}\label{ex:fruit4.4}
  Suppose there are $7$ different fruits in your refrigerator.  You want to eat two fruits on Monday, three on Tuesday (since that is your workout day), and two on Wednesday.  In how many different ways can you choose which fruits to eat on each day?
  
  One way to solve this problem is to successively count possibilities for each day.  On Monday, there are $\binom{7}{2}$ ways to choose which two fruits you eat.  On Tuesday, there are $5$ fruits left, so there are $\binom{5}{3}$ ways to choose which three fruits to eat.
  On Wednesday, there are $2$ fruits left, so there is only $\binom{2}{2}=1$ way to choose the remaining fruit.  So the total number of possibilities is $$\binom{7}{2}\cdot \binom{5}{3}\cdot \binom{2}{2}=\frac{7!}{2!\cdot 5!}\cdot \frac{5!}{3!\cdot 2!}\cdot\frac{2!}{2!\cdot 0!}.$$  Note that many of these factorials cancel, and we can express the result more simply as $$\frac{7!}{2!3!2!}=210.$$
\end{example}

In the next definition, 
we generalize Definition~\ref{Dsetpartition}, by replacing a set partition by an \emph{ordered set partition}.

\begin{definition}
An \defn{ordered set partition} of a set $S$ is a collection of non-empty subsets $B_1,\ldots,B_m$ of $S$, 
such that the intersection of any two of them is trivial and 
such that the union of all of them is $S$.
\end{definition}  

In other words, the ordered set partition separates the elements of $S$ into subsets $B_1,\ldots,B_m$ with no overlap.  The sets $B_1,\ldots,B_m$ are called the \textbf{blocks} of the ordered set partition.

\begin{theorem}\label{thm:osp}
The number of ordered set partitions of an $n$-element set $S$ into blocks of sizes $k_1,k_2,\ldots,k_m$ is the multinomial coefficient $$\binom{n}{k_1,\ldots,k_m}=\frac{n!}{k_1!\cdot k_2! \cdots k_m!}.$$ 
\end{theorem}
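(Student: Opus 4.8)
The plan is to construct an arbitrary ordered set partition one block at a time and count the choices at each stage with the multiplication principle, exactly as in Example~\ref{ex:fruit4.4}. Since the block sizes $k_1,\ldots,k_m$ are prescribed and sum to $n$, specifying such a partition amounts to deciding which $k_1$ elements of $S$ form $B_1$, then which $k_2$ of the remaining elements form $B_2$, and so on down to $B_m$.

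First I would count the ways to choose $B_1$: this is the number of size-$k_1$ subsets of the $n$-element set $S$, namely $\binom{n}{k_1}$ by Theorem~\ref{Tupperleft}. Having fixed $B_1$, there are $n-k_1$ elements left, so $B_2$ can be chosen in $\binom{n-k_1}{k_2}$ ways. In general, just before choosing $B_i$ the number of unused elements is $N_i := n-(k_1+\cdots+k_{i-1})$, so there are $\binom{N_i}{k_i}$ ways to select $B_i$. Because $k_1+\cdots+k_m=n$, once the first $m-1$ blocks are chosen the last block $B_m$ is forced (exactly $k_m$ elements remain, so $\binom{k_m}{k_m}=1$). By the multiplication principle the total count is
\[
\binom{n}{k_1}\binom{n-k_1}{k_2}\binom{n-k_1-k_2}{k_3}\cdots\binom{k_m}{k_m}=\prod_{i=1}^m\binom{N_i}{k_i}.
\]

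The remaining step, which is the only genuine computation, is to simplify this product to $\frac{n!}{k_1!\cdots k_m!}$. Writing each factor as $\binom{N_i}{k_i}=\frac{N_i!}{k_i!\,(N_i-k_i)!}$ and using the bookkeeping identity $N_i-k_i=N_{i+1}$ (with $N_1=n$ and $N_{m+1}=0$), the factorials $N_2!,N_3!,\ldots,N_m!$ in the numerators cancel against those in the denominators, so the product telescopes to
\[
\frac{N_1!}{N_{m+1}!\,k_1!\cdots k_m!}=\frac{n!}{0!\,k_1!\cdots k_m!}=\frac{n!}{k_1!\cdots k_m!},
\]
where $0!=1$ by Remark~\ref{rmk:zero-factorial}. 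This is exactly $\binom{n}{k_1,\ldots,k_m}$, finishing the proof. I do not expect a real obstacle here; the only things to watch are keeping the telescoping indices straight and handling the boundary factor $0!$ correctly.

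Finally, this result immediately yields Theorem~\ref{thm:anagrams}. Given a word $W$ with $k_i$ copies of the letter of type $i$, an anagram is determined by choosing, for each type $i$, the set $B_i$ of positions it occupies; these sets form an ordered set partition of the $n$ positions into blocks of sizes $k_1,\ldots,k_m$, and conversely every such ordered set partition produces a unique anagram. This bijection shows that the number of anagrams of $W$ equals the number of these ordered set partitions, namely $\frac{n!}{k_1!\cdots k_m!}$.
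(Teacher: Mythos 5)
Your proof is correct and follows essentially the same route as the paper: choose the blocks $B_1,\ldots,B_m$ successively, count each choice with a binomial coefficient via the multiplication principle, and let the factorials telescope to $\frac{n!}{k_1!\cdots k_m!}$ (you simply make the cancellation explicit where the paper states it briefly). Your closing paragraph relating ordered set partitions to anagrams also matches the paper's own treatment in Remark~\ref{Ranblock} and its proof of Theorem~\ref{thm:anagrams}.
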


\begin{proof}
We count the number of ordered set partitions of a set $S$ of size $n$ into blocks of sizes $k_1,k_2,\ldots,k_m$ as follows.  There are $\binom{n}{k_1}=\frac{n!}{k_1!(n-k_1)!}$ ways to choose the first block $B_1$.  Out of the remaining $n-k_1$ elements, there are $\binom{n-k_1}{k_2}=\frac{(n-k_1)!}{k_2!(n-k_1-k_2)}$ ways to choose the next block $B_2$.  If $m\geq 3$, there are $\binom{n-k_1-k_2}{k_3}=\frac{(n-k_1-k_2)!}{k_3!(n-k_1-k_2-k_3)!}$ ways to choose the third block $B_3$, and so on.  Multiplying these together, all the factorials cancel except for $n!$ in the numerator and the terms $k_1!, \ldots, k_m!$ in the denominator.  The result follows.
\end{proof}

\begin{example}
In Example \ref{ex:fruit4.4}, the set $S$ is the set of fruits in your refrigerator, $m=3$, and $B_1,B_2,B_3$ are the sets of fruit you will eat on each day (in order). 
Also $n=7$, $k_1=2$, $k_2=3$, and $k_3=2$.
So the number of ordered set partitions is $\binom{7}{2,3,2}=210$.
\end{example}

\begin{example}
Suppose we have 11 different presents, and we would like to give 4 to person $a$, then 1 to person $b$, then 2 to person $c$, then 4 to person $d$.  The number of ways we can do this is $$\binom{11}{4,1,2,4}=\frac{11!}{4!1!2!4!}=34650.$$
\end{example}

In the next remark, we compare
anagrams with ordered set partitions.  We will use this to prove Theorem~\ref{thm:anagrams}.

\begin{remark} \label{Ranblock}
Suppose $n=k_1 + \cdots + k_m$.
Suppose $W$ is a word of length $n$ having $k_i$ copies of the letter $a_i$, for $1 \leq i \leq m$, where the letters are taken in alphabetical order.
Anagrams of $W$ are 
 directly related to ordered set partitions of the set 
 $S=\{1, \ldots, n\}$.  Here is how that works.
 
 Given an anagram of $W$, we can make an ordered block partition as follows: for $1 \leq i \leq m$, find the positions of the $i$th letter and put those numbers in the block $B_i$.
 For example, consider the anagram IMSSSPIIPSI (of MISSISSIPPI): the I's are in positions 1,7,8, and 11, the M is in position 2; the P's are in positions 5 and 9; and the S's are in positions 3,4,5, and 10.  From this, we make the ordered set partition with blocks $B_1=\{1,7,8,11\}$, 
 $B_2=\{2\}$, $B_3=\{5,9\}$,
 and $B_4=\{3,4,5,10\}$. 
 Conversely, given an ordered set partition with blocks $B_i$ of size $k_i$ for $1 \leq i \leq m$, 
 we can make an anagram of the word $W$, by putting the $i$th letter in the positions numbered by the elements in the block $B_i$.  For example, the blocks $B_1=\{2, 5, 8,11\}$, $B_2=\{1\}$, $B_3=\{9,10\}$ and $B_4=\{3,4,6,7\}$ gives the original spelling of MISSISSIPPI.
\end{remark}

\begin{proof} (Proof of Theorem~\ref{thm:anagrams})
By Remark~\ref{Ranblock}, the number of anagrams of $W$ is the same as the number of ordered set partitions into blocks of sizes $B_1, \ldots, B_m$.  
By Theorem~\ref{thm:osp}, we obtain the desired formula for the number of these.
\end{proof}


The next example illustrates the difference between an ordered set partition and a set partition.

\begin{example}
Suppose there are 12 students in a class.
\begin{enumerate}
\item 
How many ways can they be separated into $4$ groups of $3$ who will be assigned to present homework solutions on Monday, Tuesday, Wednesday, and Thursday?

\item How many ways can they be separated into $4$ groups of $3$?
\end{enumerate}
\end{example}

\begin{answer}
\begin{enumerate}
    \item Each assignment is an ordered set partition of the 12 students into blocks $B_1,\ldots, B_4$ of size three (speaking on Monday, Tuesday, Wednesday, Thursday respectively).  So the answer is 
    $$\binom{12}{3,3,3,3} = 
    \frac{12!}{3! \cdot 3! \cdot 3! \cdot 3!}.$$
    \item 
    This is a set partition since the blocks of students are not assigned to different days of the week.  There are $4!$ ways to order the $4$ blocks.
    So we divide the answer to part (1) by $4!$.
\end{enumerate}

\end{answer}

As a final application, we see what happens if we take a power of a sum of more than two variables.  The answer to this question is why the expression $\binom{n}{k_1,\ldots,k_n}$ is often called a \emph{multinomial coefficient}.

\begin{example} The third power of $x+y+z$ is
$$(x+y+z)^3=x^3+y^3+z^3+3x^2y+3xy^2+3x^2z+3xz^2+3y^2z+3yz^2+6xyz.$$  The coefficients of this expansion are all examples of multinomial coefficients.
\end{example}

Here is the \textbf{Multinomial Theorem}, which directly generalizes the Binomial Theorem from Theorem~\ref{Tbinomialthm}. 

\begin{theorem}
Consider the expansion of $(x_1+x_2+\cdots+x_m)^n$.  The coefficient of $x_1^{k_1}\cdots x_m^{k_m}$ is the multinomial coefficient $\binom{n}{k_1,\ldots,k_m}$.  In other words,  $$(x_1+x_2+\cdots+x_m)^n=\sum_{k_1+\ldots+k_m=n} \binom{n}{k_1,\cdots,k_m}x_1^{k_1}\cdots x_m^{k_m}.$$
\end{theorem}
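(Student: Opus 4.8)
The plan is to follow the same ``giant foil method'' used to prove the Binomial Theorem (Theorem~\ref{Tbinomialthm}), now with $m$ variables instead of two. First I would write
\[(x_1+x_2+\cdots+x_m)^n = \underbrace{(x_1+\cdots+x_m)\cdot(x_1+\cdots+x_m)\cdots(x_1+\cdots+x_m)}_{n\text{ times}},\]
and observe that expanding this product amounts to selecting one of the $m$ variables from each of the $n$ factors, multiplying the selected variables, and summing over all possible selections. Each selection is a sequence of length $n$ over the alphabet $\{x_1,\ldots,x_m\}$, and its product is a monomial $x_1^{k_1}\cdots x_m^{k_m}$, where $k_i$ records how many of the $n$ factors contributed $x_i$; necessarily $k_1+\cdots+k_m=n$.

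Next I would fix a tuple $(k_1,\ldots,k_m)$ with $k_1+\cdots+k_m=n$ and count how many selections produce the monomial $x_1^{k_1}\cdots x_m^{k_m}$. Such a selection is determined by splitting the $n$ factor-positions into blocks $B_1,\ldots,B_m$, where $B_i$ is the set of positions that contribute $x_i$ and $|B_i|=k_i$. By Theorem~\ref{thm:osp}, the number of such ordered set partitions is exactly $\binom{n}{k_1,\ldots,k_m}$. Equivalently, by Remark~\ref{Ranblock}, the selections correspond to the anagrams of a word having $k_i$ copies of the $i$th letter, which is $\binom{n}{k_1,\ldots,k_m}$ by definition. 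Summing this coefficient over all admissible tuples then yields the claimed expansion.

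The main point requiring care is the indexing of the sum: the definition of $\binom{n}{k_1,\ldots,k_m}$ in the text requires each $k_i$ to be positive, whereas the sum ranges over all tuples $(k_1,\ldots,k_m)$ of \emph{nonnegative} integers with $k_1+\cdots+k_m=n$. I would handle this by extending the closed formula $\binom{n}{k_1,\ldots,k_m}=\frac{n!}{k_1!\cdots k_m!}$ to permit $k_i=0$ via the convention $0!=1$ (Remark~\ref{rmk:zero-factorial}); the counting argument is unaffected, since a variable $x_i$ with $k_i=0$ simply corresponds to an empty block, i.e.\ a factor-position set that is never chosen, contributing $x_i^0=1$ to the monomial.

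As an alternative I could argue by induction on $m$, leaning directly on the Binomial Theorem: writing $(x_1+\cdots+x_m)^n = ((x_1+\cdots+x_{m-1})+x_m)^n$, applying Theorem~\ref{Tbinomialthm} to split off $x_m$, and then invoking the induction hypothesis on the $(m-1)$-variable sum. This route requires verifying the factorial identity $\binom{n}{j}\binom{n-j}{k_1,\ldots,k_{m-1}}=\binom{n}{k_1,\ldots,k_{m-1},j}$ whenever $k_1+\cdots+k_{m-1}=n-j$, which is immediate from the closed formula; I expect the re-indexing of the resulting double sum into a single sum over tuples summing to $n$ to be the only mildly delicate bookkeeping step there.
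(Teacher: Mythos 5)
Your main argument is exactly the paper's proof: label the $n$ factors, observe that expanding the product amounts to choosing one variable from each factor, identify the selections producing $x_1^{k_1}\cdots x_m^{k_m}$ with ordered set partitions of the factor set into blocks of sizes $k_1,\ldots,k_m$, and invoke Theorem~\ref{thm:osp}. Your extra care about tuples with some $k_i=0$ (handled via the convention $0!=1$) addresses an edge case the paper's proof silently glosses over, but it does not change the approach, and your alternative inductive route is offered only as a backup.
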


For example, the term $3x^2y$ in the above expansion of $(x+y+z)^3$ can be written as $3x^2y^1z^0$, which does indeed agree with the multinomial coefficient $$\binom{3}{2,1,0}=3.$$

The following proof is quite difficult and it is alright for you to come back to it after gaining more experience with proofs.

\begin{proof}
We can count how many ways to obtain the monomial $x_1^{k_1}\cdots x_m^{k_m}$ by multiplying terms in the expansion as follows.  
We label the factors of $(x_1+x_2+\cdots+x_m)^n$ by $F_1, \ldots, F_n$. (Each factor equals $x_1+x_2+\cdots+x_m$ but we think of them as being different since one of them is first, second, etc).
Let $S=\{F_1, \ldots, F_n\}$ be the set of the $n$ factors. 
To make the monomial $x_1^{k_1}\cdots x_m^{k_m}$ in the product, we need to choose the variable $x_1$ from $k_1$ of the factors, then choose the variable $x_2$ from $k_2$ of the factors, and so on.  This forms an ordered set partition of $S$ with sizes $k_1,k_2,\ldots,k_m$, and so the result follows by Theorem \ref{thm:osp}.  
\end{proof}

\subsection*{Exercises}

\begin{enumerate}

\item Find the number of ways to give 20 pillows to 12 students $a$ through $l$ if the pillows are all different and we give 2 pillows to each of students $a,b,c,d,e,f,g,h$ and 1 pillow to each of students $i,j,k,l$.

\item How many ways are there to assign the 12 months (Jan-Dec) to three nurses $a$, $b$, and $c$ so that each worker gets four months of ``on call" duty?

\item Five siblings decide that one of them needs to call their mother every day of the year.
How many ways are there to distribute the $365=5\cdot 73$ days of the year among themselves so that each of them makes $73$ calls?  Write out the formula, but do not figure out the actual number of ways.

\item Nine friends are going to a math conference!  The friends will share three hotel rooms, where the first hotel room fits 4 people, the second fits 3 people, and the third fits 2 people. They are all staying in the hotel for 5 nights and are allowed to rearrange rooms every night.  (Repeating the same room assignments for different nights is allowed). In how many different ways can the 9 friends create a 5-night schedule of room assignments?
Write out the formula, but do not figure out the actual number of ways.

\item You have eight different decorative Halloween stickers (a witch, a ghost, a pumpkin, a skeleton, a vampire, a mummy, a bat, and a black cat) and four windows to decorate them with. In how many ways can you put two stickers on each window?

\item How many ways can you put 16 differently-colored billiard balls into the six different pockets of a pool table so that each corner pocket gets 3 balls and the two middle pockets get 2 each?																	
\end{enumerate}

\newpage
\section{Investigation: Combinatorial problems on a chessboard}

The problems in this section take place on a chess board.
If you know the basics of chess pieces, you can skip the next remark.

\begin{remark}
A chess board is an $8\times 8$ board tiled in $1\times 1$ squares, colored black and white, so that no black square shares an edge with a white square (and vice-versa).

Chess is a game for two players which takes place on this board.  Each player starts the game with pieces: 1 king, 1 queen, 2 rooks, 2 bishops, 2 knights, and 8 pawns.  One player's pieces are gold and the other's are silver.  Any game piece can be placed on any square of the board, regardless of material and color. 

Each type of piece can move in different ways.
During a turn, a rook can move to any square in the same row or the same column
that it started the turn on.  
During a turn, a queen can move to any square in the same row, column, or two diagonals 
that it started the turn on. 
Two rooks of different materials are \textit{attacking} each other if one of them can move to the square of the other in one move.  Similarly, 
two queens of different colors are \textit{attacking} each other if one of them can move to the square of the other in one move. 
\end{remark}

In some of the problems below,
we are working with rooks made from different materials, with an unlimited supply of rooks made from each material.
To make everything in the following problems precise, it is also important to note that the chess board is glued to the table so it cannot be turned over or rotated.

\begin{enumerate}
\item How many ways are there to place two gold rooks and two silver rocks on the board (with no other game pieces):
\begin{enumerate}
    \item so that no square has more than one rook on it?
\item so that no square has more than one rook on it and no gold rook is attacking a silver rook?
\end{enumerate}

\item 
What is the largest number of rooks (of the same material) that can be placed on the board so that no two are in the same row or in the same column?  How many ways can these rooks be placed on the board?


\item Show that it is possible to put 16 rooks (four gold, four silver, four titanium, and four diamond) on the board, so that no pair of rooks with different materials is attacking each other.
How many ways can this be done?

\item (The peaceful rooks problem) What is the largest number $m$ such that $m$ gold rooks and $m$ silver rooks can be placed on the board so that no gold rook is attacking a silver rook?
How many ways can this be done?

\item Suppose there is one queen on the board and no other pieces.
\begin{enumerate}
    \item Find a position for the queen where the number of squares it can move to is as small as possible.
    Which position is this and 
    how many squares can it move to?
    \item Find a position for the queen where the number of squares it can move to is as big as possible.
    Which position is this and 
    how many squares can it move to?
    \end{enumerate}
    
    \item In this problem, we change the size of the board to $n \times n$ and put $n$ gold queens on the board so that no two (of the same material) are attacking.
    \begin{enumerate}
        \item Show this is not possible if $n=2$ or $n=3$.
        \item Find a way to do this if $n=4$.
        \item Find a way to do this if $n=5$.
        \item Look up the 8 queens puzzle.  Explain what it is and draw a solution.
    \end{enumerate}
    
    \item In this problem, we change the size of the board to $n \times n$.  Let $m$ be the maximum number such that $m$ gold queens and $m$ silver queens can be placed on the $n \times n$ board so that no two of different materials are attacking.
    For example, when $n=3$, then $m=1$, because the gold queen can be placed in the corner and the silver queen can be placed in the middle of one of the opposite sides; however it is not possible to place $2$ gold queens and $2$ silver queens on a $3\times 3$ board without them attacking each other.
    \begin{enumerate}
    \item When $n=4$, show that $m \geq 2$ by finding a way to put $2$ gold queens and $2$ silver queens on the board.
    \item When $n=5$, show that $m\geq4$ by finding a way to put $4$ gold queens and $4$ silver queens on the board.
    \item Look up the peaceable queens problem.  Find out the value of $m$ when $n=8$.
\end{enumerate}
\end{enumerate}

\newpage

\chapter{Proof Techniques in Combinatorics}\label{chap:proofs}

In this chapter, we explain how to prove statements in combinatorics. After explaining why proofs are useful, we focus on proof techniques that are often useful in combinatorics: induction, counting in two ways, bijective proofs, proof by contradiction, and the pigeonhole principle.

\begin{videobox}
\begin{minipage}{0.1\textwidth}
\href{https://www.youtube.com/watch?v=TFNwxluopek}{\includegraphics[width=1cm]{video-clipart-2.png}}
\end{minipage}
\begin{minipage}{0.8\textwidth}
Click on the icon at left or the URL below for this section's short video lecture. \\\vspace{-0.2cm} \\ \href{https://www.youtube.com/watch?v=TFNwxluopek}{https://www.youtube.com/watch?v=TFNwxluopek}
\end{minipage}
\end{videobox}

\section{Why are proofs necessary?}

Writing proofs is not easy.  Sometimes students feel that professors assign proofs the same way that a drill sergeant assigns push-ups:
\emph{after 50 of these, you'll be stronger!}.
At a basic level, proofs are needed to show that a statement is true without a doubt.
Beyond this, proofs are valuable because they explain the reasons why statements are true. 

 Many people find proofs to be beautiful and powerful.  Often mathematicians have favorite proofs, which illustrate core topics in their research areas. Some mathematicians view proofs as the foundation of mathematics, the solid base needed to build fantastic structures.  Other mathematicians view proofs as a living language, which connects the thoughts from ancient civilizations to the  thoughts in modern times.   

In this chapter, we will explain some proof techniques. 
We cannot promise that you will love proofs by the end of the chapter, but hopefully writing a proof will seem more manageable.  Maybe by the end of this class, you will have a favorite proof too.

To get started, we will share some examples that show numerical data is sometimes misleading and share some history about famous proofs in combinatorics and number theory.

\begin{example}
 How many positive divisors does $n!$ have? (This is denoted by $\sigma_0(n!)$, see Section~\ref{Ssigma0}.)
 
 To get started, let's write out the first few cases.  If $n=1$, then $n!$ is just $1$, which only has one divisor ($1$ itself).  If $n=2$, then $2!=2$, which has $2$ divisors ($1$ and $2$).  If $n=3$, then $3!=6$ which has $4$ divisors ($1,2,3$, and $6$).  With a little more effort, we find that $4!=24$ has $8$ divisors and $5!=125$ has $16$ divisors.  So the pattern of answers so far is $1,2,4,8,16$.  How many divisors do you think $6!$ has?
 
 You might have guessed $32$, since the answer seems to double at each step.  But alas, $6!$ only has $30$ divisors!  This illustrates why we need mathematical proofs, to show that a pattern continues indefinitely.  We need to be sure that there is not some counterexample to the pattern that is so large that we simply have not found it yet.
\end{example}

\begin{example}
Here are three numbers that are 
\emph{almost integers}, meaning that they are so close to integers that you might not realize they are transcendental numbers:
\[{\displaystyle e^{\pi {\sqrt {43}}}\approx 884736743.9997};\]
\[{\displaystyle e^{\pi {\sqrt {67}}}\approx 147197952743.999998};\]
\[{\displaystyle e^{\pi {\sqrt {163}}}\approx 262537412640768743.9999999999992}.\]
The last one is called Ramanujan's constant
after the famous mathematician Ramanujan
because, in April 1975, Gardner wrote in Scientific American that Ramanujan conjectured that $e^{\pi \sqrt{163}}$ is an integer; this was an April fool's hoax.
See OEIS A060295.
\end{example}

\begin{example}
\label{ex:four-color}
The Four Color Theorem states that for any map of countries drawn on a 2-dimensional world map, it is always possible to use four colors to color the countries in the map, such that no two countries sharing a border have the same color.  

This was an open problem for a long time, and in 1976 it was finally proven by  
Appel and Haken using a computer.  They first used mathematical proof reasoning techniques to categorize the possible maps into various types, and then used a computer to exhaustively check all of these cases.  Computers can make a proof much faster to complete when there are many cases to check.
\end{example}

\begin{example} \label{Efermatprime}
In the 1600s, Fermat studied numbers of the form $F_a=2^{2^a} +1$, which are now called Fermat numbers.
The first few examples of Fermat numbers are
\[F_0=3, \ F_1 = 5, \ F_2=17, \ F_3 = 257, \ F_4 = 65537.\]
After Fermat noticed that these five numbers are prime, he conjectured that $F_a$ is always prime.  But this was found to be false!
In 1732, Euler showed that
\[F_5 = 4294967297 = 641 \cdot 6700417.\]

Currently no one knows how many Fermat numbers are prime.  Even with modern computing power, no more examples of Fermat primes have been found.  Fermat numbers are still useful because they can have very large prime factors.
In 2020, as part of the PrimeGrid collaboration, Brown, Reynolds, Penn\'{e} \& Fougeron found the megaprime 
$13 \cdot 2^{5523860} + 1$ as a factor of 
$F_{5523858}$.
\end{example}

\begin{example}
Fermat's little theorem states: 
if $p$ is a prime number, then $p$ divides $a^{p-1}-1$ for all $a$ such that ${\rm gcd}(a,p)=1$.
For example, let $p=5$.  For $a=1,2,3,4$, then $a^4 -1$ equals $0, 15, 80, 255$ (respectively) and these numbers are all divisible by $5$.

We could ask if the converse to Fermat's little theorem is true: 
if $n$ divides $a^{n-1}-1$ for all $a$ such that ${\rm gcd}(a,n)=1$, then is $n$ prime?
If you experimented with different choices of $n$, you might start to believe that this is true.  But there is a counterexample!
The number $n=561$ 
divides $a^{560}-1$ for all $a$ such that ${\rm gcd}(a, 561) =1$.  However, $561$
is not prime, because it factors as $561=3 \cdot 11 \cdot 17$.
The number 561 is called a Carmichael number; it is an example of a pseudo-prime (fake prime).  This topic is useful in cryptography.
\end{example}

\begin{example}
A final famous problem named after Fermat is Fermat's Last Theorem.  In 1637, Fermat conjectured that the equation 
\[X^n+Y^n=Z^n\]
has no solutions when $n \geq 3$ is an integer and when $X,Y,Z$ are integers, unless $XYZ=0$.
This problem motivated people to study number theory for hundreds of years and their work led to the development of algebraic number theory and arithmetic geometry.  In 1994, Wiles published a proof of Fermat's Last Theorem, 
building on work of Frey, Ribet, Serre, Taniyama and Shimura.  Wiles received the Abel prize, worth about \$700,000, for the proof.
\end{example}

\subsection*{Exercises}

\begin{enumerate}
\item Show that $\pi^3$ and ${\rm sin}(11)$ are almost integers.

\item The golden ratio is $\phi=(1+\sqrt{5})/2 = 1.618\ldots$.
Show that $\phi^{17}$, $\phi^{18}$, and $\phi^{19}$ are almost integers.

\item Gelfond's constant is $e^\pi$.
Show that $e^\pi- \pi$ is an almost integer.
(Unlike the other examples in this section, there is no known explanation for this, so it is viewed as a coincidence.)

\item Read the history of the proof of the $4$-color theorem.  What was unusual about the proof of Appel and Haken?

\item Read the history of Tait's conjecture in graph theory.  What was important about Tutte's work on this conjecture?

\item Read the history of the proof of the Poincar\'e conjecture.  How do you think Hamilton felt about Perelman's proof of this conjecture?

    \item Read the history of the proof of the ABC-conjecture.  Decide whether you think Mochizuki proved this conjecture or not.
    
    \item There is a conjecture about primes, called 
Vandiver’s Conjecture, which has been verified for all primes less than 163 million.  Yet many mathematicians do not believe it is true for all primes.
What kind of justification is needed to convince someone that the data is conclusive or inconclusive?

    \item Explain the mistake in one of the false proofs at \url{http://mathforum.org/dr.math/faq/faq.false.proof.html} or \url{http://www.math.toronto.edu/mathnet/falseProofs/}
\end{enumerate}

\section{Induction}\label{sec:induction}

\begin{videobox}
\begin{minipage}{0.1\textwidth}
\href{https://www.youtube.com/watch?v=cBff3lTYlBs\&list=PL5J6K3znOvOmzBUoxlk-W0N4j7L1Y9yfW\&index=13}{\includegraphics[width=1cm]{video-clipart-2.png}}
\end{minipage}
\begin{minipage}{0.8\textwidth}
Click on the icon at left or the URL below for this section's short video lecture. \\\vspace{-0.2cm} \\ \href{https://www.youtube.com/watch?v=cBff3lTYlBs}{https://www.youtube.com/watch?v=cBff3lTYlBs}
\end{minipage}
\end{videobox}

The natural numbers 
${\mathbb N} =\{0, 1, 2, 3, \ldots, \}$ are lined up like a row of dominoes.

The goal of this section is to see how to prove facts or formulas that are true for each natural number $n \in {\mathbb N}$.  It is not possible to check one value of $n$ at a time because there are infinity many of them.  A good analogy for an inductive proof is the idea of knocking over all the dominoes: first, you need to knock over one of them (usually the first); then, you need a method to make sure that when each domino falls, it knocks over the next.  Thus, knocking over one domino sets off a chain reaction that causes all of the infinitely many dominoes to fall over. 

To state this mathematically, 
we consider a property or fact or formula called $P_n$, which could be true or false for each $n \in {\mathbb N}$. For example, 
\[P_n: 1+ 2 + \cdots n = n(n+1)/2.\]
We already saw in Lemma~\ref{lem:Tn} that this property $P_n$ is true for all integers $n \geq 1$.
As another example:
\[P_n: \text{The Fermat number $F_n$ is prime.}\]
We already saw in Example~\ref{Efermatprime} that this property $P_n$ is true when $n=1,2,3,4$ but false when $n=5$.  The first is true for all $n$ and we will see how to prove it using induction, but since the second is not true for all $n$, we cannot prove it by any method.

\begin{theorem}(The inductive principle)
Suppose
\begin{itemize}
\item (Base case) the property $P_n$ is true for $n=0$, and
\item (Inductive step) if property $P_k$ is true for an arbitrary integer $k \geq 1$, then so is property $P_{k+1}$.
\end{itemize}
Then $P_n$ is true for all integers $n \in {\mathbb N}$.
\end{theorem}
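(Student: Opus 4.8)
The plan is to deduce the inductive principle from the \textbf{Well-Ordering Principle}: every nonempty subset of $\N$ contains a least element. I would take this as the primitive fact about $\N$ (it is logically equivalent to the inductive principle, so \emph{some} such foundational input is unavoidable) and then argue by contradiction. The strategy is to show that the set of ``bad'' values of $n$ — those for which $P_n$ fails — cannot have a smallest element, hence must be empty.

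First I would suppose, for contradiction, that $P_n$ is false for at least one natural number $n$, and collect all such failures into
\[ S = \{\, n \in \N \mid P_n \text{ is false} \,\}. \]
By the supposition $S$ is nonempty, so the Well-Ordering Principle produces a least element $m \in S$; thus $P_m$ is false while $P_j$ is true for every natural number $j < m$. Next I would use the two hypotheses to force a contradiction. Since the base case says $P_0$ is true, we have $0 \notin S$, so $m \geq 1$, and therefore $m-1$ is itself a natural number with $m-1 < m$. Minimality of $m$ gives $m-1 \notin S$, i.e.\ $P_{m-1}$ is true. Applying the inductive step with $k = m-1$ then yields that $P_{(m-1)+1} = P_m$ is true, contradicting $m \in S$. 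Hence $S$ is empty and $P_n$ holds for all $n \in \N$.

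The main obstacle here is conceptual rather than computational. Because induction and well-ordering are equivalent formulations, the argument is essentially a \emph{translation} of one assumption into another, and the honest work is in being explicit about which fact about $\N$ is taken as given. I would emphasize this in the write-up so the reader does not mistake the proof for a derivation ``from nothing.''

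A second, more technical subtlety is the indexing at the bottom of the ladder. To pass from $P_{m-1}$ to $P_m$ in the borderline case $m = 1$, I must invoke the inductive step at $k = 0$; I would therefore read the inductive hypothesis as holding for all $k \geq 0$, i.e.\ starting at the base-case index. (As literally phrased with ``$k \geq 1$,'' there would be a genuine gap between $P_0$ and $P_1$, and the chain of dominoes would fail to start.) Making the base case and the inductive step meet at the same index is exactly the point that must be checked carefully for the well-ordering argument to close.
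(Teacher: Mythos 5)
Your proof is correct, but it takes a route the paper does not take at all: the paper states the inductive principle as a foundational fact about $\N$, supported only by the informal domino analogy, and never proves it. Your derivation from the Well-Ordering Principle (least counterexample argument) is the standard way to make this rigorous, and your execution is sound: the set $S$ of failures, minimality of $m$, the observation $0\notin S$ forcing $m\ge 1$, and the application of the inductive step at $k=m-1$ all fit together correctly. You are also right to flag the trade-off honestly: since induction and well-ordering are equivalent, your argument is a translation between primitives rather than a proof from nothing. This matters in the context of this particular text, because one of the paper's own exercises (in the additional problems for the proofs chapter) asks the reader to prove the Well-Ordering Principle \emph{from} strong induction; taking your proof together with that exercise would be circular, so whichever of the two principles is adopted as the axiom, the other becomes the theorem. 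The paper's axiomatic treatment sidesteps this; your proof makes the logical structure explicit, which is arguably more illuminating for a reader who wonders why induction is legitimate.

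Separately, your second ``subtlety'' is a genuine catch, not a pedantic one: as literally stated, the paper's inductive step only applies for $k\ge 1$, so nothing ever establishes $P_1$ from $P_0$, and the theorem as written is unprovable (indeed false: take $P_0$ true, $P_n$ false for all $n\ge 1$; the hypotheses hold vacuously for the step restricted to $k\ge1$... except the step then fails at $k\ge1$ only if some $P_k$ with $k\ge 1$ were true, which none is, so both hypotheses hold while the conclusion fails). Reading the inductive step as quantified over $k\ge 0$, as you do, is the correct repair, and your proof then closes without any gap at the borderline case $m=1$.
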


The above theorem is often called the Principle of Induction or just \textit{induction}, and it is often thought of as a method for proving that a statement $P_n$ is true for all integers $n\in \mathbb{N}$, as follows. \\  

\noindent \textbf{Method of Proof by Induction:} To prove $P_n$ is true for all natural numbers $n\ge 0$:
\begin{enumerate}
    \item Prove the \textbf{base case}: that $P_n$ is true for $n=0$.  (This usually is an easy computation.)
    \item Let $k\in \mathbb{N}$ be arbitrary, and assume $P_k$ is true for this specific value $k$.  (The statement $P_k$ is called the \textbf{inductive hypothesis}.)
    \item Using the assumption that $P_k$ is true, prove that $P_{k+1}$ is true.
\end{enumerate}

Notice that steps $2$ and $3$ above together prove the inductive step, since to prove an ``if... then'' statement in mathematics, one assumes that the hypothesis is true and then proves the conclusion holds under that assumption.  

In step 2, choosing $k$ to be arbitrary means that we are showing that no matter what domino falls, it will always knock over the next one.  Combining this fact with the base case that the first domino falls, we can conclude that every domino falls (the statement $P_n$ is true for all $n$).

\begin{remark}
The inductive step can be replaced with:
if property $P_{k-1}$ is true for an arbitrary integer $k \geq 1$, then so is property $P_{k}$.  In other words, in step $2$, we can let $k\ge 1$ (rather than $k\ge 0$) and assume $P_{k-1}$, and then in step $3$, we prove $P_k$.  This is a purely aesthetic choice, but sometimes makes the algebra in a proof simpler to work with.
\end{remark}

\begin{example}
The sum of the first $n$ odd integers is $n^2$.  In other words, the following property is true for all integers $n \geq 1$:
\[P_n: 1+3+5+\ldots+(2n-1)=n^2.\]
\end{example}

\begin{proof}
We proceed by induction.

\emph{(Base case.)} For $n=1$, note that $1=1^2$.

  \emph{(Inductive step.)} Let $k\ge 1$ be an arbitrary natural number, and suppose $P_k$ is true, meaning that
\[1+3+5+\ldots+(2k-1)=k^2.\]
Adding $2k+1$ to both sides gives
\[1+3+5+\ldots+(2k-1)+(2k+1)=k^2+2k+1=(k+1)^2,\]
showing that $P_{k+1}$ is true as desired.
Hence by induction, $P_n$ is true for all $n \geq 1$.
\end{proof}

The next two lemmas can similarly be proven by induction (see the Exercises).

\begin{lemma}\label{LnewproofTn}
(See Lemma~\ref{lem:Tn})
The following property is true for all integers $n \geq 1$:
\[P_n: 1+ 2 + \cdots n = n(n+1)/2.\]
\end{lemma}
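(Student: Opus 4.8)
The plan is to prove $P_n: 1 + 2 + \cdots + n = n(n+1)/2$ by induction on $n$, following exactly the template laid out in this section. This is the natural choice here because the whole point of Lemma~\ref{LnewproofTn} is to re-derive Lemma~\ref{lem:Tn} using the inductive method, rather than by the rearrangement or combinatorial arguments given earlier.

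First I would verify the \textbf{base case}. Taking $n=1$, the left-hand side is just $1$, and the right-hand side is $1(1+1)/2 = 2/2 = 1$, so $P_1$ holds. (One could start at $n=0$ with the empty sum equal to $0 = 0\cdot 1/2$, but since the statement is asserted for $n \geq 1$, starting at $n=1$ is cleaner.)

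Next comes the \textbf{inductive step}. I would let $k \geq 1$ be an arbitrary integer and assume the inductive hypothesis $P_k$, namely
\[
1 + 2 + \cdots + k = \frac{k(k+1)}{2}.
\]
The goal is to deduce $P_{k+1}$. The key move is to add the next term $k+1$ to both sides, which mirrors exactly what was done for the sum of odd integers earlier:
\[
1 + 2 + \cdots + k + (k+1) = \frac{k(k+1)}{2} + (k+1).
\]
Then I would combine the right-hand side over a common denominator, factoring out $(k+1)$:
\[
\frac{k(k+1)}{2} + (k+1) = (k+1)\left(\frac{k}{2} + 1\right) = (k+1)\cdot\frac{k+2}{2} = \frac{(k+1)(k+2)}{2},
\]
which is precisely the statement $P_{k+1}$ with $n$ replaced by $k+1$. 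Having established the base case and the inductive step, I would conclude by the inductive principle that $P_n$ holds for all integers $n \geq 1$.

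I do not expect any genuine obstacle here — the argument is routine, and the only thing to be careful about is the algebra in recognizing that $\frac{k(k+1)}{2} + (k+1)$ simplifies to $\frac{(k+1)(k+2)}{2}$, i.e.\ that $(k+1)(k+2)/2$ is the correct instance of the formula at $n = k+1$. The main pedagogical value is that it illustrates the inductive method on a formula the reader already knows to be true, so clarity of the bookkeeping (adding the next term, then factoring) is more important than any conceptual difficulty.
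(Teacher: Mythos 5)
Your proposal is correct and is exactly the approach the paper intends: the text states this lemma in the induction section and assigns its proof as an exercise in induction, following the same template (base case $n=1$, then add the next term to both sides) used for the sum of the first $n$ odd integers. Your algebra in the inductive step, $\frac{k(k+1)}{2} + (k+1) = \frac{(k+1)(k+2)}{2}$, is right, so there is nothing to fix.
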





\begin{lemma} \label{Lpower2induct}
The following property is true for all integers $n \geq 1$:
\[1+2+4+8+\ldots+2^{n-1}=2^n-1.\]
\end{lemma}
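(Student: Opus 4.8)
The plan is to prove Lemma~\ref{Lpower2induct} by induction on $n$, following exactly the three-step template (\emph{base case}, \emph{inductive hypothesis}, \emph{inductive step}) laid out in the method of proof by induction above. Let $P_n$ denote the statement $1+2+4+\ldots+2^{n-1}=2^n-1$.

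For the base case, I would take $n=1$: the left-hand side is just the single term $2^{1-1}=2^0=1$, and the right-hand side is $2^1-1=1$, so $P_1$ holds. (Note that here it is cleaner to start the induction at $n=1$ rather than $n=0$, since the sum is indexed from $1$ up through $2^{n-1}$; this matches the remark above that we may begin at $n\geq 1$.)

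For the inductive step, I would let $k\geq 1$ be arbitrary and assume the inductive hypothesis $P_k$, namely
\[
1+2+4+\ldots+2^{k-1}=2^k-1.
\]
The goal is to deduce $P_{k+1}$, whose left-hand side is $1+2+4+\ldots+2^{k-1}+2^{k}$. The key move is to add $2^{k}$ to both sides of the inductive hypothesis, giving
\[
1+2+4+\ldots+2^{k-1}+2^{k}=(2^k-1)+2^k=2\cdot 2^k-1=2^{k+1}-1,
\]
which is precisely the statement $P_{k+1}$. The only algebraic fact needed is that $2^k+2^k=2^{k+1}$.

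This proof is almost entirely routine, so I do not expect a serious obstacle; the one point requiring a little care is making sure the ``last term'' of the sum in $P_{k+1}$ is correctly identified as $2^{(k+1)-1}=2^{k}$, so that adding $2^k$ to the hypothesis genuinely produces the sum appearing in $P_{k+1}$ rather than an off-by-one mismatch. Once the indexing is pinned down, the conclusion follows by the inductive principle: since $P_1$ holds and $P_k\Rightarrow P_{k+1}$ for arbitrary $k\geq 1$, the statement $P_n$ is true for all integers $n\geq 1$.
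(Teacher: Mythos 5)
Your proof is correct and is exactly the induction argument the paper intends: the text states this lemma "can similarly be proven by induction" following the template of the preceding example (base case $n=1$, then add the next term $2^k$ to both sides of the inductive hypothesis and simplify using $2^k+2^k=2^{k+1}$), and defers the details to the exercises. Your handling of the indexing of the last term is also careful and correct, so there is nothing to add.
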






\begin{remark}
Sometimes a statement $P_n$ is false for small values of $n$.  If there is a positive integer $c$ for which $P_c$ is true and the inductive step works, then we 
can modify the method of induction to prove that statement $P_n$ is true for all $n\ge c$ (instead of $n\ge 0$).
To do this, we change the base case to prove $P_c$ rather than prove $P_0$, and also change the assumption on the inductive step to choose $k\ge c$ and assume $P_k$.
\end{remark}

\begin{example}
Prove that $n! \ge 2^n$ for all $n\ge 4$.
\end{example}

\begin{proof}
In this case, the statement is false when $n=1,2,3$ but true when $n=4$.
So we proceed by induction on $n$, starting with the case $n=4$.

\emph{(Base case.)}  
The identity is true when $n=4$ because $24=4! \geq 2^4=16$.

\emph{(Inductive step.)}
Let $k\ge 4$ be arbitrary, and suppose $P_k$ is true, meaning that $k! \geq 2^k$.
Then \[(k+1)! = (k+1)k! \geq (k+1)2^k \geq 5 \cdot 2^k \geq 2^{k+1},\]
showing that $P_{k+1}$ is true as desired.  Hence by induction, $P_k$ is true for all $n \geq 4$.
\end{proof}

\begin{remark}
Suppose we want to show that two equations $A$ and $B$ are equal.
One common mistake that people make in writing proofs is to assume $A=B$ and then simplify.
Warning: this does not always work!
Here is an extreme example of a \emph{false} claim and a \emph{false} proof to illustrate this:
\begin{claim}
$3=5$.
\end{claim}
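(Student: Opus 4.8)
The plan is to construct precisely the kind of fallacious argument the Remark is cautioning against, so that the reader can watch the error happen in real time. The idea is to \emph{begin} with the equation we are ostensibly trying to prove, treat it as though it were a granted hypothesis, and then apply algebraic operations to both sides until we arrive at a statement that is unmistakably true; we then pretend that reaching a truth somehow certifies the line we started from.

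Concretely, I would open the ``proof'' by writing $3=5$ on the first line. Next I would perform a step that erases the discrepancy between the two sides. The quickest route is to multiply both sides by $0$, yielding $0=0$, which is certainly correct. A slightly more subtle and more instructive route is to subtract $4$ from each side to get $-1=1$, and then square both sides to obtain $1=1$, again a true statement. Either way the chain of manipulations terminates in an obviously valid equation, and the algebra along the way looks entirely innocent.

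The rhetorical ``conclusion'' is then to assert that, because the final line is true, the original line $3=5$ must be true as well. The main obstacle --- and this is the entire pedagogical payoff --- is that this inference is simply invalid. The operations employed (multiplication by zero, or squaring) are not reversible: a true conclusion does not flow backward to vindicate a false premise, so the implications run in only one direction and never close the loop. A legitimate proof must instead start from statements already known to be true and deduce the claim, whereas here the direction of reasoning has been quietly reversed. I would close by pointing to the exact non-reversible step, so the reader can see where the argument collapses and understands concretely why ``assume $A=B$ and simplify'' is not a valid proof strategy.
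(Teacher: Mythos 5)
Your proposal matches the paper's own (deliberately fallacious) proof essentially verbatim: the paper writes ``Note $3=5$ implies $3\cdot 0 = 5\cdot 0$ implies $0=0$ which is true,'' which is exactly your multiply-by-zero route, used for the same pedagogical purpose of exhibiting why ``assume $A=B$ and simplify'' is not a valid proof strategy. Your additional commentary on non-reversibility and the alternative subtract-and-square variant are fine embellishments but do not change the approach.
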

\begin{proof}
Note $3=5$ implies $3\cdot0=5\cdot0$ implies $0=0$ which is true.
\end{proof}
\end{remark}

Here is an example of a \emph{false} claim and a \emph{false} proof by induction, that emphasizes the need to get both the base case and the inductive hypothesis step correct.

\begin{example} \label{proofhorse}
All horses are the same color.
\end{example}

\begin{proof} 
We will prove by induction that all horses in a herd of size $n$ are the same color for any $n \geq 1$.

\emph{(Base case.)} In any herd of 1 horse, all horses in that herd are the same color.

\emph{(Inductive step.)} Let $k\ge 1$ be arbitrary, and suppose all horses in a herd of size $k$ are the same color. Consider a herd of $k+1$ horses.

\begin{center}
\includegraphics[width=5in]{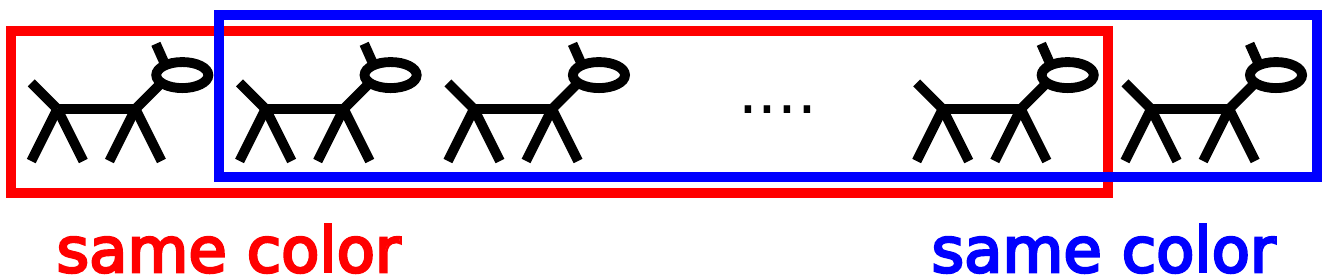}
\end{center}

Removing the last horse, the inductive assumption says that the first horse is the same color as the horses in the middle.
Removing the first horse, the inductive assumption says that the last horse is the same color as the horses in the middle.
Hence all $k+1$ horses in the herd are the same color.
By induction, the horses in a herd of size $n$ are the same color for all $n\ge 1$.
\end{proof}

Clearly not all horses are the same color, so where did our proof above go wrong?  Our mistake in Example~\ref{proofhorse} 
is that the inductive step fails when going from $k=1$ to $k=2$, even though it works for all larger values of $k$. When $k=1$, then the middle herd is empty.  This is why the bound on $k$ in the declaration ``Let $k\ge 1$ be arbitrary'' is so important. 

We will now give a proof by induction of the Hockey Stick Identity from Proposition~\ref{Ppattern5}. In this case, there are two variables $n$ and $m$ in the formula, so we need to be careful to specify which one is increasing in the inductive step.

\begin{proposition} \label{Phockey}
Suppose $n$ is a positive integer and $m$ is a non-negative integer.  Then
$$\binom{n}{0}+\binom{n+1}{1}+\binom{n+2}{2}+\ldots+\binom{n+m}{m}=\binom{n+m+1}{m}.$$
\end{proposition}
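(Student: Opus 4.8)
The plan is to prove this by induction on $m$, holding $n$ fixed. Since $n$ sits inside every binomial coefficient on the left, inducting on it would be awkward; by contrast, the right-hand side $\binom{n+m+1}{m}$ changes in a controlled way as $m$ increases by one, and this is exactly the behavior that Pascal's Recurrence (Theorem~\ref{thm:Pascal}) is designed to exploit. For the base case $m=0$, I would simply observe that the left side is $\binom{n}{0}=1$ and the right side is $\binom{n+1}{0}=1$, so the two agree.

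For the inductive step, I would fix $k \geq 0$, assume the identity for $m=k$, namely $\binom{n}{0}+\binom{n+1}{1}+\cdots+\binom{n+k}{k}=\binom{n+k+1}{k}$, and then examine the sum for $m=k+1$. The key observation is that the sum for $m=k+1$ is obtained from the sum for $m=k$ by appending the single term $\binom{n+k+1}{k+1}$. Replacing the first $k+1$ summands using the inductive hypothesis collapses the left side to $\binom{n+k+1}{k}+\binom{n+k+1}{k+1}$. Applying Pascal's Recurrence (part (b)) with upper index $n+k+1$ and lower index $k$ then rewrites this as $\binom{n+k+2}{k+1}$, which is precisely $\binom{n+(k+1)+1}{k+1}$, the right-hand side for $m=k+1$. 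This closes the induction.

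The only step requiring any care is checking the hypothesis of Pascal's Recurrence: part (b) demands the strict inequality (in its notation) $k < n+k+1$, which holds automatically because $n \geq 1$. So I do not expect a genuine obstacle here; the entire argument rests on recognizing that the telescoping supplied by the inductive hypothesis matches up exactly with one instance of Pascal's rule. As a remark, I might also note that the identity admits a combinatorial proof via the falling-ball interpretation of Lemma~\ref{Lpaths}, by partitioning the lattice paths ending at $\binom{n+m+1}{m}$ according to their last rightward step, but the inductive argument above is the cleaner route given the tools just developed.
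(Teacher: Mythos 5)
Your proposal is correct and matches the paper's own proof essentially line for line: induction on $m$ with $n$ fixed, the same base case $m=0$, and the same inductive step in which the hypothesis collapses the sum to $\binom{n+k+1}{k}+\binom{n+k+1}{k+1}$, which Pascal's Recurrence (Theorem~\ref{thm:Pascal}) turns into $\binom{n+k+2}{k+1}$. Your extra check that the hypothesis $k < n+k+1$ of Pascal's Recurrence is satisfied is a nice touch the paper leaves implicit.
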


\begin{proof}
We proceed by induction on $m$ for a fixed $n$.

\emph{(Base case.)}  
The identity is true when $m=0$ because \[\binom{n}{0}=1=\binom{n+0+1}{0}.\]

\emph{(Inductive step.)} Let $m\in \mathbb{N}$ be arbitrary, and assume that \begin{equation}
\label{Ehockeyind1}
\binom{n}{0}+\binom{n+1}{1}+\binom{n+2}{2}+\cdots+\binom{n+m}{m}=\binom{n+m+1}{m}.
\end{equation}

We must now show that 
\begin{equation}
\label{Ehockeyind2}
    \binom{n}{0}+\binom{n+1}{1}+\binom{n+2}{2}+\cdots+\binom{n+m}{m}+\binom{n+m+1}{m+1}=\binom{n+m+2}{m+1}.
\end{equation}

By substituting \eqref{Ehockeyind1}, we see that the left hand side of \eqref{Ehockeyind2} simplifies to
$\binom{n+m+1}{m}+\binom{n+m+1}{m+1}$ which equals
$\binom{n+m+2}{m+1}$ by Pascal's recurrence Theorem~\ref{thm:Pascal}.
Hence the statement is true for all non-negative integers $m$ by induction.
\end{proof}

\begin{remark}
\label{Rstrongind}
There is a variation of the inductive process, called \textit{Strong induction}.  In the inductive step of this variant, we suppose that $P_k$ is true for all $1 \leq k \leq n$, and show that $P_k$ is true when $k=n+1$.
\end{remark}

\subsection*{Exercises}

\begin{enumerate}
    \item Prove Lemma~\ref{LnewproofTn} using induction.
    
    \item Prove Lemma~\ref{Lpower2induct}
using induction.

 \item Prove the \textit{geometric series formula} using induction: $$1+r+r^2+\cdots+r^n=\frac{r^{n+1}-1}{r-1}$$

\item Prove using induction that the following identity is true for all integers $n\ge1$: \[P_n: \frac{1}{1\cdot2}+\frac{1}{2\cdot3}+\ldots+\frac{1}{n(n+1)}=\frac{n}{n+1}.\] 

\item Prove using induction
that the following identity is true for all integers $n\ge1$:
\[P_n: 1^2+2^2+3^2+\ldots+(n-1)^2+n^2=\frac{n(n+1)(2n+1)}{6}.\]

\item Prove using induction that $n^2 \geq 5n+5$ for all integers $n\ge 6$.

\item Prove using induction that $2^n \ge 10n+7$ for all integers $n\ge 7$.

\item Prove that every positive integer $n\geq 2$ can be factored as a product of one or more prime numbers using strong induction.															
															
(Hint: In the induction step, consider two cases: if the number k+1 is prime then we are done; otherwise it factors as a product of two smaller numbers, each of which factors into primes by the induction hypothesis.)															
\end{enumerate}

\section{Counting in two ways}

\begin{videobox}
\begin{minipage}{0.1\textwidth}
\href{https://www.youtube.com/watch?v=C0g_hGyeO30}{\includegraphics[width=1cm]{video-clipart-2.png}}
\end{minipage}
\begin{minipage}{0.8\textwidth}
Click on the icon at left or the URL below for the next two sections' short video lecture. \\\vspace{-0.2cm} \\ \href{https://www.youtube.com/watch?v=C0g_hGyeO30}{https://www.youtube.com/watch?v=C0g\_hGyeO30}
\end{minipage}
\end{videobox}

We can prove some formulas from earlier in the book using the principle called \textit{counting in two ways}.  
\begin{lemma}
	(Counting in Two Ways.)  If a set has $n$ elements and it also has $m$ elements then $n=m$.
\end{lemma}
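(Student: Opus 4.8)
The plan is to unpack what it means for a set to ``have $n$ elements.'' In the spirit of how counting has been set up, a finite set $S$ has $n$ elements precisely when its members can be listed as $a_1, a_2, \ldots, a_n$ with no repetition and no omission; that is, when there is a bijection between $S$ and the standard set $\{1,2,\ldots,n\}$. So the hypothesis supplies two such bijections, say $f\colon \{1,\ldots,n\} \to S$ and $g\colon \{1,\ldots,m\} \to S$. First I would compose them: $g^{-1}\circ f$ is a bijection from $\{1,\ldots,n\}$ to $\{1,\ldots,m\}$. This reduces the whole problem to a purely numerical statement about the standard sets: if there is a bijection $\{1,\ldots,n\}\to\{1,\ldots,m\}$, then $n=m$.

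To prove the reduced statement, I would use induction on $n$ (carrying the other variable $m$ along for the ride). The base case $n=0$ is immediate, since the empty set can only biject with the empty set, forcing $m=0$. For the inductive step, suppose the claim holds for $k$, and let $h\colon\{1,\ldots,k+1\}\to\{1,\ldots,m\}$ be a bijection with $k+1\ge 1$, so that $m\ge 1$. The key step is to modify $h$ so that it sends the top element $k+1$ to the top element $m$: if $h(k+1)=j$ with $j\neq m$, I would compose $h$ with the transposition of $j$ and $m$, which is itself a bijection and so leaves $h$ a bijection. Once $k+1\mapsto m$, restricting $h$ to $\{1,\ldots,k\}$ yields a bijection onto $\{1,\ldots,m-1\}$. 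The inductive hypothesis then gives $k=m-1$, hence $k+1=m$, completing the induction.

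The main obstacle is precisely this restriction step: after deleting one element from each side, one must check that the restricted map really is a bijection, which is exactly why the transposition trick is needed to line up the largest elements before cutting them off. An alternative route that sidesteps the induction is to invoke the Pigeonhole Principle twice, using that a bijection is both injective and surjective: $n>m$ is impossible (two inputs would collide on one output) and $m>n$ is impossible (some output would go unused), forcing $n=m$. I would mention this as the conceptually cleaner option, while noting that it presumes the Pigeonhole Principle has already been established.
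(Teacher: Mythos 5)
Your proof is correct, but it differs from the paper in a fundamental way: the paper offers no proof of this lemma at all. It states the principle as self-evident --- essentially as an axiom about what ``counting'' means --- and immediately moves on to using it as a technique for establishing identities such as Pascal's recurrence and Vandermonde's identity. What you have done is supply the missing set-theoretic foundation: you make the phrase ``has $n$ elements'' precise (existence of a bijection with $\{1,\ldots,n\}$), reduce the claim to the statement that $\{1,\ldots,n\}$ and $\{1,\ldots,m\}$ can only be in bijection when $n=m$, and prove that by induction on $n$, with the transposition trick correctly handling the only delicate point (making the restriction of the bijection to $\{1,\ldots,k\}$ land exactly on $\{1,\ldots,m-1\}$). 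This is a genuine and rigorous argument, and it is consistent with machinery the paper develops later (the lemma in Section~\ref{sec:bij} that bijections are exactly the maps with inverses). What the paper's treatment buys is pedagogical economy: at this point in the text the notion of cardinality is only informal, so a formal proof would require exactly the detour you take. What your treatment buys is logical completeness, and it makes visible that the lemma is a theorem about bijections rather than a tautology. One caution on your alternative route: deducing the result from the Pigeonhole Principle is slightly circular in this book's ordering, since the paper's proof of pigeonhole (Proposition~\ref{Ppigeon}) itself rests on counting a total in two ways; you flagged this dependence yourself, which is the right instinct --- the inductive argument should be regarded as the primary one.
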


This sounds like a trivial principle, but it is incredibly useful in proving algebraic identities in combinatorics.  Here is the way we can use this. Suppose we want to prove $F=G$ where $F$ and $G$ are both mathematical formulas.  Then we simply have to find a collection of objects such that both $F$ and $G$ are the answer to the question ``how many are there in the collection''?

\begin{example}
	Suppose $a$ and $b$ are positive integers.
	Here is a combinatorial proof that $$a\cdot b=\underbrace{b+b+\cdots+b}_{a \text{ summands}}$$ where the sum has $a$ copies of $b$.  By repeated application of the Addition Principle, we see that the right-hand side, $b+b+\cdots+b$, counts the number of students in a school that has $a$ classes with $b$ students each.  By the Multiplication Principle, $a\cdot b$ counts the number of ways to first walk into one of the $a$ classrooms and then single out one of the $b$ students in that class.  Thus $a\cdot b$ is also equal to the total number of students, and so $a\cdot b=b+b+\cdots+b$.
\end{example}

  We can also prove more intricate identities combinatorially.

\begin{example} \label{Epascal2}
	Here is a combinatorial proof of Pascal's recurrence in Theorem~\ref{thm:Pascal} that $$\binom{n+1}{k}=\binom{n}{k-1}+\binom{n}{k}.$$ 
	\end{example}
	\begin{proof}
	Let's find the number of ways of choosing 
	$k$ people from a classroom that has $n$ students and $1$ professor.  
	Since there are $n+1$ people in the room, the answer is $\binom{n+1}{k}$.
	On the other hand, we can separate the ways of choosing the group into (i) 
	those that \textit{do not} contain the teacher and (ii) those that \textit{do} contain the teacher.  There are $\binom{n}{k}$ choices in case (i) and $\binom{n}{k-1}$ in case (ii) (since we are choosing only $k-1$ of the students). By the addition principle, the number of ways is the sum of the answers for (i) and (ii) and the result follows.
	\end{proof}

We now give combinatorial proofs of 
the ``sum of squares'' formula from Proposition~\ref{Ppattern4}.
\begin{proposition} \label{Pproofsumsquares}
If $n$ is a natural number, then $\binom{n}{0}^2+\binom{n}{1}^2+\ldots+\binom{n}{n-1}^2+\binom{n}{n}^2=\binom{2n}{n}$.
\end{proposition}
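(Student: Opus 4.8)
The plan is to give a combinatorial proof by counting in two ways, matching the style of the surrounding section. I would take as my collection the set of all size-$n$ subsets of a $2n$-element set, and count it two different ways. The first count is immediate: by Theorem~\ref{Tupperleft}, the number of subsets of size $n$ chosen from a set of size $2n$ is exactly $\binom{2n}{n}$, which is the right-hand side of the identity.

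For the second count, I would first split the $2n$-element set into two distinguished halves, say a ``left'' group $L$ and a ``right'' group $R$, each of size $n$. To build a subset of size $n$, one chooses some number $k$ of elements from $L$ and the remaining $n-k$ elements from $R$. For a fixed value of $k$ with $0 \le k \le n$, the multiplication principle says there are $\binom{n}{k}\binom{n}{n-k}$ such subsets. Since every size-$n$ subset contributes exactly one value of $k$ (namely $k = |\text{subset} \cap L|$), the addition principle lets me sum over all $k$ to obtain
\[
\binom{2n}{n} = \sum_{k=0}^{n}\binom{n}{k}\binom{n}{n-k}.
\]

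The last step is to rewrite the summand as a square. Here I would invoke the symmetry of binomial coefficients from Corollary~\ref{cor:symmetry}, which gives $\binom{n}{n-k} = \binom{n}{k}$, so that $\binom{n}{k}\binom{n}{n-k} = \binom{n}{k}^2$. Substituting this into the sum yields $\binom{2n}{n} = \sum_{k=0}^{n}\binom{n}{k}^2$, which is precisely the claimed identity. Counting in two ways then forces the two expressions to agree.

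The computation itself is routine, so the only genuinely delicate point is the setup: I expect the main obstacle to be making the two-way count airtight, specifically justifying that partitioning by $k = |\text{subset} \cap L|$ sorts the size-$n$ subsets into disjoint classes whose sizes are exactly $\binom{n}{k}\binom{n}{n-k}$, so that the addition principle applies cleanly. Once that bookkeeping is stated carefully, the appeal to symmetry to convert the product into a square is the quick finishing move.
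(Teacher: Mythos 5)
Your proposal is correct and is essentially the paper's own first proof of Proposition~\ref{Pproofsumsquares}: the paper also counts the size-$n$ subsets of a $2n$-element set (soccer players split into gold and green jerseys in place of your $L$ and $R$), sums $\binom{n}{k}\binom{n}{n-k}$ over $k$, and finishes with the symmetry $\binom{n}{n-k}=\binom{n}{k}$. No gaps; your attention to the disjointness of the classes indexed by $k$ is exactly the bookkeeping the paper's argument relies on.
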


We will prove this in two ways, neither of which is easy.
The second proof proves an even harder identity, Vandermonde's identity, which reduces to Proposition~\ref{Pproofsumsquares} when we make a good choice for the variables.

\begin{proof}
Note that $\binom{2n}{n}$ is the number of ways to choose $n$ soccer players out of a team of $2n$ soccer players.  We will count that in another way by separating the $2n$ soccer players into two groups, by putting a gold jersey on $n$ of the soccer players and a green jersey on the other $n$ soccer players. From the first group, we choose $k$ soccer players with gold jerseys.  In order to choose $n$ soccer players altogether, we need to choose $n-k$ soccer players with green jerseys.  Symbolically,  
\[\underbrace{1,2,\ldots,n,}_{\text{choose $k$}}\hspace{3mm}\underbrace{n+1,n+2,\ldots,2n}_{\text{choose $n-k$}}.\]
So there are 
$\binom{n}{k} \cdot \binom{n}{n-k}$ ways to do that.  

The choice of the number $k$ was arbitrary and so we need to consider all the possible values of $k$, which could be as small as $0$ and as big as $n$.
 So we need to add up the number of ways to
 choose $n$ soccer players as $k$ varies between $0$ and $n$.
 This gives us:
\begin{align*}
\binom{2n}{n}
&=\sum_{k=0}^n(\text{\# choices of $k$ players with gold jerseys})\cdot(\text{\# choices of $n-k$ players with green jerseys})\\
&=\sum_{k=0}^n\binom{n}{k}\cdot\binom{n}{n-k}\\
&=\sum_{k=0}^n\binom{n}{k}\cdot\binom{n}{k}\\
&=\sum_{k=0}^n\binom{n}{k}^2\\
&=\binom{n}{0}^2+\binom{n}{1}^2+\ldots+\binom{n}{n-1}^2+\binom{n}{n}^2.
\end{align*}
\end{proof}

Here is another identity.
\begin{proposition} \label{PVandermonde}
(Vandermonde's identity.)
Suppose $n,m,\ell$ are non-negative integers such that $0 \leq \ell \leq m+n$.
\[\binom{n+m}{\ell} = \sum_{k=0}^\ell \binom{n}{k} \binom{m}{\ell-k}.\] (In the summation above, we set $\binom{n}{k}=0$ if $k>n$ and similarly set $\binom{m}{\ell -k}=0$ if $\ell - k >m$.)
\end{proposition}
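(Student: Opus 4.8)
The plan is to prove Vandermonde's identity by counting in two ways, following the same template as the jersey argument in the proof of Proposition~\ref{Pproofsumsquares}. I would fix a set $S$ of $n+m$ distinguishable objects and count the number of $\ell$-element subsets of $S$ in two different manners. Counting directly, by Theorem~\ref{Tupperleft} the number of such subsets is $\binom{n+m}{\ell}$, which is the left-hand side of the claimed identity.

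For the second count, I would first partition $S$ into two blocks: a block $A$ of size $n$ and a block $B$ of size $m$ (for instance, colour $n$ of the objects gold and the other $m$ green). Any $\ell$-element subset $T$ of $S$ contributes $k = |T \cap A|$ elements from $A$ and the remaining $\ell - k$ elements from $B$. Conditioning on the value of $k$: there are $\binom{n}{k}$ ways to choose the part of $T$ inside $A$ and $\binom{m}{\ell-k}$ ways to choose the part inside $B$, so by the multiplication principle there are $\binom{n}{k}\binom{m}{\ell-k}$ subsets with $|T \cap A| = k$. Summing over all admissible $k$ by the addition principle produces the right-hand side.

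The one point that needs care is the range of summation. The number $k$ can be at most $\min(\ell, n)$, and we also need $\ell - k \le m$, so genuinely $k$ runs only over $\max(0, \ell - m) \le k \le \min(\ell, n)$. I would reconcile this with the stated sum $\sum_{k=0}^{\ell}$ by invoking the convention in the statement that $\binom{n}{k} = 0$ when $k > n$ and $\binom{m}{\ell-k}=0$ when $\ell - k > m$; the extra terms are then all zero, so extending the sum to $0 \le k \le \ell$ changes nothing. This bookkeeping is the main (and only real) obstacle; the combinatorial heart of the argument is immediate once the two groups are fixed.

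Finally, I would close by remarking that Proposition~\ref{Pproofsumsquares} is the special case $m = n$, $\ell = n$: there the identity reads $\binom{2n}{n} = \sum_{k=0}^n \binom{n}{k}\binom{n}{n-k}$, and applying the symmetry $\binom{n}{n-k} = \binom{n}{k}$ from Corollary~\ref{cor:symmetry} recovers the sum of squares. This confirms, as the paragraph before the statement promised, that the earlier result is subsumed by Vandermonde's identity.
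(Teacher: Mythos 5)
Your proposal is correct and follows essentially the same argument as the paper's proof, which counts $\ell$-element selections from $n$ granola bars and $m$ bags of trail mix by conditioning on the number $k$ taken from the first group. Your treatment is in fact slightly more careful than the paper's on the range of summation, explicitly invoking the stated convention that out-of-range binomial coefficients vanish, but the combinatorial core is identical.
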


\begin{proof}
By analogy: in your kitchen cabinet, you have $n$ granola bars (all different) and $m$ bags of trail mix (all different). 
The left hand side counts the number of ways to choose $\ell$ snacks out of $n+m$ choices.  

Let $D_k$ be the number of ways to choose the snacks 
with exactly $k$ granola bars and $\ell -k$ bags of trail mix.
Then $D_k =\binom{n}{k}  \binom{m}{\ell-k}$ because  you are choosing $k$ granola bars out of $n$ and $\ell-k$ bags of trail mix out of $m$.
We add up the number of ways for each possibility for $k$.
This equals the right hand side
$\sum_{k=0}^\ell D_k$. 
\end{proof}

Using Vandermonde's identity, we can find a second proof of Proposition~\ref{Ppattern4}.

\begin{proof} In Proposition \ref{PVandermonde}, substitute $m=n =\ell$ and simplify.  This yields
\[\binom{2n}{n} = \sum_{i=0}^n \binom{n}{i}^2.\]
\end{proof}

\subsection*{Exercises}

\begin{enumerate}
 \item Find a counting in two ways proof for Gauss's formula for the triangular numbers: $$1+2+3+\cdots+n=\frac{n(n+1)}{2}.$$

    \item Give a proof of the following identities by counting in two ways:
    \begin{enumerate}
        \item $\binom{n}{k}=\binom{n}{n-k}$
        \item $\binom{n}{m}\binom{n-m}{k}=\binom{n}{k}\binom{n-k}{m}$
        \item $\binom{n}{0}+\binom{n}{1}+\cdots + \binom{n}{n}=2^n$
        \item $\binom{n}{k}\cdot k=n\cdot \binom{n-1}{k-1}$
        \item $\binom{n}{1}+2\cdot \binom{n}{2}+3\cdot \binom{n}{3}+\cdots +n\cdot \binom{n}{n}=n\cdot 2^{n-1}$
    \end{enumerate}

    \item Recall from Proposition~\ref{Ppattern5} that the Hockey Stick identity (version 1) states that $$\binom{n}{0}+\binom{n+1}{1}+\cdots +\binom{n+m}{m}=\binom{n+m+1}{m}.$$ Find a counting in two ways proof of this identity.
\end{enumerate}

\section{Bijective proofs}
\label{sec:bij}
\begin{videobox}
\begin{minipage}{0.1\textwidth}
\href{https://www.youtube.com/watch?v=C0g_hGyeO30}{\includegraphics[width=1cm]{video-clipart-2.png}}
\end{minipage}
\begin{minipage}{0.8\textwidth}
Click on the icon at left or the URL below for this and the last section's short video lecture. \\\vspace{-0.2cm} \\ \href{https://www.youtube.com/watch?v=C0g_hGyeO30}{https://www.youtube.com/watch?v=C0g\_hGyeO30}
\end{minipage}
\end{videobox}

Here is a sample problem that you will be able to solve at the end of this section.

\begin{question}
How many ways can you choose $4$ numbers from $\{1, \ldots, 100\}$ so that none of them are consecutive?
\end{question}

We already know that there are $\binom{100}{4}$ ways to 
choose $4$ numbers from $\{1, \ldots, 100\}$ but it is not clear how to deal with the condition that they are not consecutive.
In this problem, it is not enough to count the size of one set in two ways.  Instead, we will use a \textit{bijection} to compare the size of
two different sets.

Let $A$ and $B$ be sets.  
A bijection is a matching or pairing of the elements of $A$ with the elements of $B$.
Here is the formal definition.

\begin{definition}
Let $A$ and $B$ be sets.  Let $f:A \to B$ be a map.
\begin{enumerate}
    \item The map $f$ is \defn{$1$-to-$1$} if, whenever $a_1 \not = a_2$, then $f(a_1) \not = f(a_2)$. 
    \item The map $f$ is \defn{onto} if for every $b \in B$,
there is an $a \in A$ such that $f(a)=b$.
\item The map $f$ is a \defn{bijection} if it is $1$-to-$1$ and onto.
\end{enumerate}
\end{definition}

Intuitively, the onto condition means that the map $f$ hits every element of $B$.  Another way to say this is that the range of $f$ is all of $B$.
The $1$-to-$1$ condition means that no element of $B$ is hit twice; it is equivalent to saying, if $f(a_1)=f(a_2)$, then $a_1=a_2$.

\begin{example} \label{Esubnk}
Let $\Omega = \{1, \ldots, n\}$.
Fix $k$ such that $0 \leq k \leq n$.
Let $A$ be the set of subsets of $\Omega$ of size $k$.  Let $B$ be the set of subsets of 
$\Omega$ of size $n-k$.
There is a bijection between $A$ and $B$
taking a subset $S$ of size $k$ to the complement $\Omega - S$ which has size $n-k$. 
\end{example}

\begin{lemma}
There is a bijection $f:A \to B$ if and only if $f$ has an inverse map $g:B \to A$.
\end{lemma}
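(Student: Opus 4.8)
The plan is to prove both directions of the biconditional, using the standard meaning of an inverse map: a function $g:B\to A$ is an inverse of $f$ if $g(f(a))=a$ for every $a\in A$ and $f(g(b))=b$ for every $b\in B$ (that is, $g\circ f$ and $f\circ g$ are the identity maps on $A$ and $B$ respectively). So there are two implications to establish.

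First I would handle the forward direction: assume $f$ is a bijection and construct the inverse $g$ by hand. Given any $b\in B$, the onto property of $f$ guarantees that there is at least one $a\in A$ with $f(a)=b$, and the $1$-to-$1$ property guarantees that this $a$ is unique. Hence I can unambiguously \emph{define} $g(b)$ to be this unique preimage. I would then verify that the $g$ just defined is genuinely a two-sided inverse: for each $a\in A$, we have $g(f(a))=a$ because $a$ is by definition the unique element sent to $f(a)$; and for each $b\in B$, we have $f(g(b))=b$ directly from the defining property of $g$.

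For the reverse direction, I would assume $f$ has an inverse map $g$ and deduce that $f$ is $1$-to-$1$ and onto, using each of the two inverse identities once. To show $f$ is $1$-to-$1$, suppose $f(a_1)=f(a_2)$; applying $g$ to both sides and using $g(f(a))=a$ yields $a_1=g(f(a_1))=g(f(a_2))=a_2$. To show $f$ is onto, take any $b\in B$ and set $a=g(b)$; then $f(a)=f(g(b))=b$, so $b$ lies in the range of $f$. Since $f$ is both $1$-to-$1$ and onto, it is a bijection.

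The routine algebra here is genuinely routine, so the one point that deserves care — the main obstacle, such as it is — is the construction in the forward direction: the well-definedness of $g$ secretly uses \emph{both} halves of bijectivity (onto for existence of a preimage, $1$-to-$1$ for its uniqueness), and one must check both inverse identities rather than just one. Making this dependence explicit is what keeps the proof honest, and it also foreshadows why a merely one-sided inverse would not be enough to conclude that $f$ is a bijection.
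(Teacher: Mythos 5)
Your proposal is correct and follows essentially the same route as the paper's proof: the forward direction constructs $g$ by sending each $b\in B$ to its unique preimage (using onto for existence and $1$-to-$1$ for uniqueness) and then checks both identities, while the reverse direction uses $g\circ f = \mathrm{id}_A$ for injectivity and $f\circ g = \mathrm{id}_B$ for surjectivity, exactly as in the text. No gaps.
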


\begin{proof}
Suppose $f:A \to B$ is a bijection.  Here is how to define the map $g:B \to A$.  Given $b \in B$, since $f$ is surjective, there is an $a \in A$ such that $f(a) =b$.  Also $a$ is unique since $f$ is $1$-to-$1$.  Define $g(b)=a$.
To show that $g$ is the inverse of $a$, we need to show that 
$g \circ f: A \to A$ and $f \circ g:B \to B$ are both the identity map.  That is true since 
$(g \circ f)(a)=g(f(a)) = g(b) = a$
and $(f \circ g)(b) = f(g(b))=f(a)=b$.

Conversely, if $g:B \to A$ is an inverse of $f$, we want to 
show that $f$ is a bijection.
To show that $f$ is $1$-to-$1$, 
suppose that $f(a_1)=f(a_2)$.  Then $g(f(a_1)) = g(f(a_2))$.
So $(g \circ f)(a_1)= (g \circ f)(a_2)$.
Since $g \circ f$ is the identity map, 
this shows that $a_1 = a_2$, completing the proof that $f$ is $1$-to-$1$.
Finally, to show that $f$ is onto, choose $b \in B$.
Let $a=g(b)$, which is in $A$.  
Then $f(a)=f(g(b))=(f \circ g)(b)$.
Since $f\circ g$ is the identity map, this shows $f(a)=b$,
completing the proof that $f$ is onto.
\end{proof}

\begin{example}
Let $A$ be the set of positive integers whose last digit is $3$.
Let $B$ be the set of positive integers whose last digit is $7$.
There is a bijection $f:A \to B$ given by $f(a)=a+4$.
Its inverse map $g:B \to A$ is given by $g(b) = b-4$.
\end{example}

\begin{example}
Let $A$ be the set of integers.  Let $B$ be the set of multiples of $7$.  There is a bijection $f:A \to B$ given by $f(a)=7a$.
The inverse map is $g:B \to A$ given by 
$g(b)=b/7$.
\end{example}

\begin{lemma}
	(Bijective proof.) If $A$ and $B$ are sets such that $A$ has $m$ elements, $B$ has $n$ elements, and there is a bijection $f:A\to B$, then $m=n$.
\end{lemma}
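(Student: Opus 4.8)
The plan is to convert the abstract bijection into an explicit enumeration of $B$ and then invoke the Counting in Two Ways principle. Since $A$ has $m$ elements, I would first list them without repetition as $A=\{a_1,a_2,\ldots,a_m\}$, where $a_i\neq a_j$ whenever $i\neq j$. Applying $f$ produces a list $f(a_1),f(a_2),\ldots,f(a_m)$ of elements of $B$, and the whole argument amounts to showing that this list is in fact a repetition-free enumeration of \emph{all} of $B$.

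The two key steps handle the two halves of the bijection hypothesis. First, I would use that $f$ is $1$-to-$1$: whenever $i\neq j$ we have $a_i\neq a_j$, and hence $f(a_i)\neq f(a_j)$, so the $m$ listed images are genuinely distinct and the image of $f$ is a subset of $B$ of size exactly $m$. Second, I would use that $f$ is onto: every $b\in B$ equals $f(a)$ for some $a\in A$, and since that $a$ occurs somewhere among $a_1,\ldots,a_m$, the element $b$ occurs among $f(a_1),\ldots,f(a_m)$. Together these give $B=\{f(a_1),\ldots,f(a_m)\}$, a set with exactly $m$ elements.

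Finally I would combine the two counts: the above shows $B$ has $m$ elements, while by hypothesis $B$ has $n$ elements, so the Counting in Two Ways lemma yields $m=n$. The only place requiring genuine care is the distinctness step, since injectivity is exactly what prevents the image from collapsing to fewer than $m$ elements; the surjectivity and final comparison are routine bookkeeping. (Alternatively, one could phrase the same idea via the inverse map $g:B\to A$ guaranteed by the preceding lemma, running the symmetric argument to bound $n$ by $m$ as well, but the direct enumeration above is the cleaner route.)
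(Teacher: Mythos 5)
Your proof is correct, but it is worth knowing that the paper itself offers \emph{no} proof of this lemma: it is stated as a basic principle (immediately after the equally unproved ``Counting in Two Ways'' lemma) and the text moves straight on to applications. So your proposal is not matching or diverging from an argument in the paper; it is supplying one. What you do is reduce the bijective principle to the Counting in Two Ways principle: enumerate $A=\{a_1,\ldots,a_m\}$, use injectivity to see that $f(a_1),\ldots,f(a_m)$ are pairwise distinct, use surjectivity to see that they exhaust $B$, conclude that $B$ has $m$ elements as well as $n$, and invoke Counting in Two Ways. This is a legitimate and clean organization at the textbook's level of rigor, and it makes explicit which unproved principle is really doing the work. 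One caveat to keep in mind: the step ``a repetition-free list of length $m$ is a set with exactly $m$ elements'' is essentially the definition of having $m$ elements, and the residual content --- that a single set cannot simultaneously have $m$ elements and $n$ elements with $m\neq n$ --- is precisely what Counting in Two Ways asserts and what the paper also leaves unproved (a fully rigorous version would prove, by induction, that no bijection exists between $\{1,\ldots,m\}$ and $\{1,\ldots,n\}$ when $m\neq n$). So your argument correctly shifts all foundational weight onto that one principle rather than eliminating it, which is exactly the right thing to do given the tools the chapter makes available; the alternative route you mention via the inverse map $g:B\to A$ would land in the same place.
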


Again, this sounds like a very simple principle, but it can be tricky to execute in practice.  Here, we provide several harder examples. 

Recall the following identity (alternating sum of binomial coefficients) from Proposition~\ref{Ppattern3}.
\begin{proposition}
\label{Paltv2}
If $n >0$, then $\binom{n}{0}-\binom{n}{1}+\binom{n}{2}-\cdots \pm \binom{n}{n}=0$.
\end{proposition}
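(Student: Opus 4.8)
The plan is to give a genuinely bijective proof, as promised in the discussion following Proposition~\ref{Ppattern3}, rather than repeating the Binomial Theorem substitution. First I would reinterpret the alternating sum combinatorially. Recall from Theorem~\ref{Tsubset} and Theorem~\ref{Tupperleft} that $\binom{n}{k}$ counts the subsets of $\Omega=\{1,\ldots,n\}$ of size $k$. Collecting the positive terms (those with $k$ even) and the negative terms (those with $k$ odd), the left-hand side becomes $|E|-|O|$, where $E$ is the collection of even-sized subsets of $\Omega$ and $O$ is the collection of odd-sized subsets. Thus the identity is equivalent to the assertion $|E|=|O|$, and by the bijective proof principle it suffices to exhibit a bijection $f\colon E\to O$.

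To construct $f$, I would use the hypothesis $n>0$, which guarantees that the element $1$ actually belongs to $\Omega$. Given a subset $S$, define $f(S)=S\,\triangle\,\{1\}$, the symmetric difference: if $1\in S$ we remove it, and if $1\notin S$ we adjoin it. This operation changes the cardinality of $S$ by exactly one, hence flips the parity of $|S|$. In particular $f$ carries an even-sized subset to an odd-sized subset, so $f$ really does map $E$ into $O$, as in the complementation idea of Example~\ref{Esubnk} but toggling a single element instead.

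Finally I would verify that $f$ is a bijection, and the cleanest route is to observe that $f$ is its own inverse: applying the toggle twice restores the original membership status of $1$, so $f(f(S))=S$ for every $S$. Hence the same rule defines a map $O\to E$ inverting $f$, and by the lemma characterizing bijections through inverse maps, $f$ is a bijection. Therefore $|E|=|O|$, so $|E|-|O|=0$, which is exactly the claimed alternating sum. The only real subtlety, and the step I would flag for care, is precisely the role of $n>0$: it is what lets us pick the distinguished element $1$ to toggle, and it is also where the hypothesis is essential, since for $n=0$ the sum is $\binom{0}{0}=1\neq 0$ and no such parity-swapping bijection can exist.
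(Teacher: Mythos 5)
Your proof is correct and matches the paper's argument: the paper also reduces the identity to showing that $\Omega=\{1,\ldots,n\}$ has equally many even-sized and odd-sized subsets (Proposition~\ref{Psubodd}), and its general proof pairs each subset with the one obtained by toggling the distinguished element $1$, exactly your involution $S\mapsto S\,\triangle\,\{1\}$. Your explicit remark about why $n>0$ is needed is the same nonemptiness hypothesis the paper uses implicitly when it picks the element $1$.
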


Another way to write Proposition~\ref{Paltv2} is to move the negative terms to the right hand side: $$\binom{n}{0}+\binom{n}{2}+\binom{n}{4}+\cdots = \binom{n}{1}+\binom{n}{3}+\binom{n}{5}+\cdots$$ 
For a set $\Omega$ of size $n$, we can interpret the left hand side as the number of subsets of $\Omega$ having even size, and the right hand side as the number of subsets of $\Omega$ having odd size.  Thus, to prove Proposition~\ref{Paltv2}, we just need to find a bijection between subsets of odd and even size, as in the following proposition.

\begin{proposition} \label{Psubodd}
Let $\Omega = \{1, \ldots, n\}$.
Let $A$ (resp.\ $B$) be the set of subsets of $\Omega$ of even (resp.\ odd) size.  Then there is a bijection between $A$ and $B$.
In particular, the number of subsets of $\Omega$ of even size equals the number of subsets of odd size. 
\end{proposition}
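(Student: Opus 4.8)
The plan is to construct an explicit bijection $f : A \to B$ by toggling the membership of a single fixed element, thereby changing the parity of the size of each subset by exactly one. Since the interesting case is $n \geq 1$ (for $n = 0$ the set $\Omega$ is empty and no such bijection can exist), the element $1$ belongs to $\Omega$, and I would use it as the toggle.

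First I would define a map $f$ on \emph{all} subsets of $\Omega$ by the rule
\[
f(S) = \begin{cases} S - \{1\} & \text{if } 1 \in S, \\ S \cup \{1\} & \text{if } 1 \notin S. \end{cases}
\]
The key observation is that $f$ either removes $1$ from $S$ or adds $1$ to $S$, so $|f(S)|$ differs from $|S|$ by exactly $1$. Hence $f$ flips parity: it sends a subset of even size to one of odd size and vice versa. In particular, $f$ restricts to a map $A \to B$ and also to a map $B \to A$.

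Next I would verify that $f$ is its own inverse, that is, that $f(f(S)) = S$ for every subset $S$. This is a short case check: if $1 \in S$, then $f(S) = S - \{1\}$ does not contain $1$, so applying $f$ again adds $1$ back and recovers $S$; the case $1 \notin S$ is symmetric. Since $f \circ f$ is the identity, the map $f : A \to B$ has an inverse (namely the map $f : B \to A$), so by the lemma relating bijections to inverse maps it is a bijection. Finally, by the Bijective Proof principle, $|A| = |B|$, which is exactly the claim; combined with the interpretation of the two sides of Proposition~\ref{Paltv2} as the counts of even- and odd-sized subsets, this simultaneously gives a new proof of that alternating-sum identity.

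The steps here are all routine; the only real subtlety — and the one point I would be careful to flag — is the need for at least one element to toggle, which is why the construction requires $n \geq 1$ and breaks precisely in the degenerate case $n = 0$, where $|A| = 1 \neq 0 = |B|$.
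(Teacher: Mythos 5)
Your proposal is correct and takes essentially the same approach as the paper's general proof, which also pairs each subset with the subset obtained by toggling membership of the distinguished element $1 \in \Omega$ (the paper phrases this as grouping subsets into pairs differing only in whether they contain $1$, while you make the involution $f$ and the check $f \circ f = \mathrm{id}$ explicit). Your remark about the degenerate case $n = 0$ is a sensible flag and matches the paper's hypothesis that $\Omega$ is nonempty.
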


Since there are $2^n$ subsets of $\Omega$,
it follows that the number of subsets of even size is $2^n/2 = 2^{n-1}$.

\begin{example}
Let $\Omega=\{1,2,3\}$. Its subsets can be paired as follows:
\begin{itemize}
\item $\emptyset \leftrightarrow \{1,2,3\}$
\item $\{1\} \leftrightarrow \{2,3\}$
\item $\{2\} \leftrightarrow \{1,3\}$
\item $\{3\} \leftrightarrow \{1,2\}$
\end{itemize}
\end{example}

This example generalizes to a proof in the case that $n$ is odd.

\begin{proof}[Proof of Proposition \ref{Psubodd} when $n$ is odd] Suppose $n$ is odd.  If $S \subset \Omega$ has size $k$, then the complement $\Omega - S$ has size $n-k$.  If $k$ is even, then $n-k$ is odd and vice-versa.
Each subset has exactly one complement, so we can pair each subset of even size with exactly one subset of odd size.
In conclusion, when $n$ is odd, there is a bijection between the set $A$ (resp.\ $B$) of subsets of $\Omega$ of even (resp.\ odd) size taking $S$ to $\Omega-S$.  
\end{proof}

The proof above does not work when $n$ is even, so we will set up a different bijection.  Here is a motivating example when $n=4$ to see how this bijection works.

\begin{example}
Let $\Omega=\{1,2,3,4\}$. Its subsets can be paired as follows:
\[ 
\xymatrix@R-1pc{
& & \{1,2\} \\
& \{1\} & \{1,3\} & \{1,2,3\}  \\
\emptyset \ar@{<->}[ur] & \{2\} \ar@{<->}[uur] & \{1,4\} & \{1,2,4\} & \{1,2,3,4\} \\
& \{3\} \ar@{<->}[uur] & \{2,3\} \ar@{<->}[uur] & \{1,3,4\}  \\
& \{4\} \ar@{<->}[uur] & \{2,4\} \ar@{<->}[uur] & \{2,3,4\} \ar@{<->}[uur]\\
& & \{3,4\} \ar@{<->}[uur]
}
\]
\end{example}

\begin{proof}[Proof of Proposition \ref{Psubodd} for $n$ even (and odd)]
Since $\Omega$ is nonempty, pick a distinguished element, say $1\in \Omega$. Group the subsets of $\Omega$ into pairs where the two subsets in each pair differ only in whether they contain $1$ or not. Every subset is in exactly one pair.  Each pair contains one subset of even size and one subset of odd size.
So the number of subsets of odd size and of even size is the same.
\end{proof}

As another example of a bijective proof, we prove
the ``sticks and stones'' theorem (Theorem \ref{thm:choose-with-repeats}) again here, using ``M\&M's and toothpicks'' instead.  The logic we used in Section~\ref{sec:sticks-and-stones} to prove this result 
is really a bijection in disguise.

\begin{proposition}
	Suppose $n$ is a positive integer and 
	$k$ is a non-negative integer such that 
	$0 \leq k \leq n$.  Then the number of ways to choose a set of size $k$ from a set of size $n$ (with repeats allowed) is
	 \[\binom{n+k-1}{k}.\]
	\end{proposition}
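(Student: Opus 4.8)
The plan is to exhibit an explicit bijection between the set of multisets we wish to count and a set whose cardinality we already know to be $\binom{n+k-1}{k}$, and then invoke the Bijective proof principle stated above. Let $A$ be the set of multisets of size $k$ drawn from the alphabet $\{1,\ldots,n\}$. For the target, recall from Theorem~\ref{Tupperleft} that the number of (repeat-free, unordered) subsets of size $k$ of a set of size $n+k-1$ is exactly $\binom{n+k-1}{k}$. I would realize this target concretely as the set $B$ of rows built from $k$ M\&M's and $n-1$ toothpicks: such a row has $n+k-1$ slots, and specifying it amounts to choosing which $k$ of the $n+k-1$ slots hold an M\&M, so $|B|=\binom{n+k-1}{k}$.

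Next I would define the forward map $f\colon A\to B$. Given a multiset in $A$, let $c_j$ denote the number of times the symbol $j$ occurs, for $1\le j\le n$, so that $c_1+\cdots+c_n=k$. Send this multiset to the row consisting of $c_1$ M\&M's, then a toothpick, then $c_2$ M\&M's, then a toothpick, and so on, ending with $c_n$ M\&M's; the $n-1$ toothpicks separate the $n$ blocks of M\&M's. This row has exactly $k$ M\&M's and $n-1$ toothpicks, so it lies in $B$. The inverse map $g\colon B\to A$ reads a row left to right: the $n-1$ toothpicks cut it into $n$ (possibly empty) gaps, and recording the number of M\&M's in the $j$-th gap as the multiplicity of symbol $j$ recovers a multiset of size $k$ in $A$. (Equivalently, one could map instead to strictly increasing sequences $1\le b_1<\cdots<b_k\le n+k-1$ via $b_i=a_i+(i-1)$, where $a_1\le\cdots\le a_k$ is the multiset written in weakly increasing order; I would use the M\&M--toothpick picture as the primary one.)

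To finish, I would check that $g\circ f$ and $f\circ g$ are the respective identity maps — a direct computation, since applying $f$ then $g$ returns the same multiplicities $c_1,\ldots,c_n$, and applying $g$ then $f$ rebuilds the same row — and then conclude from the earlier lemma (a map is a bijection exactly when it has an inverse) that $f$ is a bijection. The Bijective proof principle then yields $|A|=|B|=\binom{n+k-1}{k}$, as desired.

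The computations here are routine; the one place that deserves care is \emph{well-definedness} of $f$. A multiset carries no intrinsic ordering of its elements, so the map must not secretly depend on one. It does not: the construction uses only the fixed ordering $1<2<\cdots<n$ of the ground alphabet to decide which block is which, never an ordering of the multiset itself. I would also make sure the empty-gap case is explicitly allowed, since a symbol $j$ with $c_j=0$ produces adjacent toothpicks with no M\&M between them; this is exactly why allowing repeats (including multiplicity $0$) corresponds to arbitrary, possibly empty, gaps rather than forcing at least one M\&M per block.
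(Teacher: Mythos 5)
Your proof is correct and follows essentially the same route as the paper's: both arguments realize the target count $\binom{n+k-1}{k}$ as rows of $k$ indistinguishable markers (M\&M's, or zeroes) separated by $n-1$ dividers (toothpicks, or ones), define the forward map by sorting the multiset according to the fixed order on the alphabet and inserting dividers, and define the inverse by reading off the $n$ (possibly empty) gap sizes as multiplicities. Your added remarks on well-definedness and on empty gaps make explicit points the paper's proof leaves implicit, but the underlying bijection is the same.
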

	
	\begin{proof}
	We first find a set $A$ whose size is the number of ways to choose a set of size $k$ from a set of size $n$.
	To do this, let $A$ be the set of cups filled with $k$ M\&M's, where there are $n$ different possible colors of M\&M's.  
	
	Next, we find a set $B$ whose size is $\binom{n+k-1}{k}$.
	Let $B$ be the set of binary sequences consisting of $k$ zeroes and $n-1$ ones.  Such a sequence has $n+k-1$ entries total and is determined by choosing which $k$ positions contain the $0$'s.  So the size of $B$ is
	$\binom{n+k-1}{k}$.
	
	We need to find a bijective map between $A$ and $B$.
	Fix an ordering of colors, for example, rainbow order with brown at the end.
	An element of the set $A$ is a cup of $k$ colored M\&M's.  Given such a cup, spill the M\&M's onto a plate and sort them by color, in a row.  Now there are $k$ M\&M's separated into $n$ sections (some of which may be empty).  Put a toothpick between each section, using $n-1$ toothpicks total.  Finally, put on color-filter goggles so that all the M\&M's look grey. 
	What we see is a row of $k$ grey M\&M's and $n-1$ toothpicks.  Write a sequence of $0$'s and $1$'s by writing a $0$ for each grey M\&M and writing a $1$ for each toothpick.  This results in a sequence of $k$ zeroes and $n-1$ ones, which is an element of the set $B$.
	
	Now we will construct the inverse map from $B$ to $A$.  Starting from a row of $0$'s and $1$'s, we can interpret this as a row of $k$ grey M\&M's separated by $n-1$ toothpicks, and then color the M\&M's according to the specified ordering of colors and put them into a cup.  Since these processes reverse each other, the two sets are in bijection, as desired.
\end{proof}

\begin{example} \label{Ebijectcons}
Let's finish the problem about choosing $4$ numbers from 
$\{1, \ldots, 100\}$, none of which are consecutive.
Call these numbers $a_1, a_2, a_3, a_4$, written in increasing 
order.  We want $a_2 > a_1+1$, $a_3 > a_2 + 1$, and 
$a_4 > a_3 + 1$.
The number of ways to do this is the size of the set 
\[A = \{(a_1,a_2, a_3,a_4) \mid 1 \leq a_1 < a_2 -1 < a_3 -2 < a_4 - 3 \leq 97.\}\]

Let's define new numbers $b_1,b_2,b_3,b_4$ so that these conditions are easier to state.
Let $b_1 = a_1$ (the conditions on $a_1$ are easy, so there is no reason to change $a_1$).
Let $b_2 = a_2 - 1$; the condition $a_2 > a_1 +1$
can be re-expressed as
$b_2 > b_1$.
Next, let $b_3 = a_3 -2$;  the condition 
$a_3 > a_2+1$ can be re-expressed as $b_3+2 > b_2+2$ or, more simply, $b_3 > b_2$.
Finally, let $b_4 = a_4 -3$; the condition $a_4 > a_3 +1$ can be re-expressed as $b_4 > b_3$.
Notice that $a_4 \leq 100$ is equivalent to $b_4 \leq 97$.

Define a new set $B$ with these conditions:
\[B = \{(b_1,b_2,b_3,b_4) \mid 1 \leq b_1 < b_2 < b_3 < b_4 \leq 97\}.\] 
There is a map $f:A \to B$ where $f(a_1,a_2,a_3,a_4)=
(a_1, a_2-1, a_3-2, a_4-3)$.
It has an inverse map $g:B\to A$
where $g(b_1,b_2,b_3,b_4) = (b_1, b_2+1, b_3 + 2, b_4 + 3)$.
So $f$ is a bijection.  

The size of $B$ is $\binom{97}{4}$ because choosing an element of $B$ is the same as choosing $4$ distinct numbers in 
$\{1, \ldots, 97\}$.  So that is the size of $A$ also.
\end{example}

For additional facts about bijections and cardinality, see the following supplemental video.

\begin{videobox}
\begin{minipage}{0.1\textwidth}
\href{https://www.youtube.com/watch?v=L5s0m5cjq2Q}{\includegraphics[width=1cm]{video-clipart-2.png}}
\end{minipage}
\begin{minipage}{0.8\textwidth}
Click on the icon at left or the URL below for more on bijections and cardinality. \\\vspace{-0.2cm} \\ \href{https://www.youtube.com/watch?v=L5s0m5cjq2Q}{https://www.youtube.com/watch?v=L5s0m5cjq2Q}
\end{minipage}
\end{videobox}

\subsection*{Exercises}

\begin{enumerate}
\item Follow the outline of Example~\ref{Ebijectcons} to find the number of ways to choose $5$ numbers from $\{1, \ldots, 80\}$ so that none of them are consecutive?

\item Follow the outline of Example~\ref{Ebijectcons} to find the number of ways to choose $4$ numbers from $\{1, \ldots, 100\}$ so that
$a_1 \leq a_2 \leq a_3 \leq a_4 \leq a_5$.

\item Prove that $\binom{n}{k}=\binom{n}{n-k}$ using a bijection.

  \item 
  Let $S$ be the set of binary sequences of length $n$.
  Explain why $|S|=2^n$.
  Find a bijection between 
  $S$ and subsets of $\{1,2,\ldots,n\}$.
  Conclude that 
  $2^n$ is also the number of subsets of $\{1,2,\ldots,n\}$.
  \end{enumerate}

\section{Proof by Contradiction}
\label{Sproofcont}

Here is the strategy for a proof by contradiction: assume the opposite (negation) of what you want to show; then show that this implies something that is definitely false; this means that the 
initial assumption must have been false.

\begin{example}
If I want to get on my flight which leaves Denver International Airport at 10 am, then I need to leave the house before 7 am.
\end{example}

\begin{proof}
Assume I leave my house in Fort Collins at 7 (or later).
Then I arrive at the parking lot by 8:15.  After taking the shuttle, I arrive at the airport at 8:30.  After checking my bag, it is 8:45.  After going through security, it is 9:00.
After taking the little train, it is 9:15.  Then I run and run 
to my gate, which is the very last one, and arrive there at 9:45. This is too late; the doors have closed.
\end{proof}

In this section, we give some examples of proofs by contradiction that come from the subject of number theory.

\begin{proposition}
The number $\sqrt{3}$ is irrational (not a fraction).
\end{proposition}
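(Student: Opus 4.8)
The plan is to prove that $\sqrt{3}$ is irrational by contradiction, following the classic template for $\sqrt{2}$ but with the key divisibility argument carried out modulo $3$ instead of modulo $2$. First I would suppose, for contradiction, that $\sqrt{3}$ is rational, so that $\sqrt{3} = a/b$ for some integers $a, b$ with $b \neq 0$. Crucially, I would assume the fraction is written in lowest terms, meaning $a$ and $b$ share no common factor (in particular, they are not both divisible by $3$). This reduction is harmless: any fraction can be reduced to lowest terms, and it is exactly what generates the contradiction later.

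Next I would square both sides and clear denominators to obtain the equation $a^2 = 3b^2$. This shows that $a^2$ is a multiple of $3$. The heart of the argument is then to deduce that $a$ itself must be divisible by $3$. This is where I expect the main obstacle to lie, and it is the one genuinely number-theoretic step: it is \emph{not} automatic that if $3 \mid a^2$ then $3 \mid a$ (the analogous claim fails for composite numbers, e.g.\ $4 \mid 6^2$ but $4 \nmid 6$). The cleanest justification uses that $3$ is prime, so that if $3$ divides the product $a \cdot a$ it must divide one of the factors, hence $3 \mid a$. Alternatively, one could argue by cases on the remainder of $a$ modulo $3$: writing $a \equiv 0, 1,$ or $2 \bmod 3$ and checking that $a^2 \equiv 0, 1, 1 \bmod 3$ respectively, so $a^2 \equiv 0 \bmod 3$ forces $a \equiv 0 \bmod 3$. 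This case-check is elementary and self-contained, so I would lean toward presenting it that way given the textbook's level, though noting the prime-divisibility phrasing as well.

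Once $3 \mid a$, I would write $a = 3c$ for some integer $c$, substitute to get $9c^2 = 3b^2$, and simplify to $3c^2 = b^2$. By the identical divisibility argument applied to $b$, this forces $3 \mid b$. But now both $a$ and $b$ are divisible by $3$, contradicting our assumption that $a/b$ was in lowest terms. This contradiction shows that no such representation exists, so $\sqrt{3}$ cannot be rational, completing the proof.

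I would close by remarking that the argument generalizes immediately: the only special property of $3$ we used was that it is prime, so the same proof shows $\sqrt{p}$ is irrational for every prime $p$. The two load-bearing ingredients are the lowest-terms reduction (which manufactures the contradiction) and the prime-divisibility step ($3 \mid a^2 \Rightarrow 3 \mid a$), and I would make sure the write-up flags the latter as the step that genuinely requires $3$ to be prime rather than just any integer.
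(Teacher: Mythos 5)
Your proof is correct and follows essentially the same route as the paper's: assume $\sqrt{3}=a/b$ with $\gcd(a,b)=1$, square to get $a^2=3b^2$, use primality of $3$ to conclude $3\mid a$, substitute to force $3\mid b$, and contradict the lowest-terms assumption. The only cosmetic difference is that you offer a mod-$3$ case check as an alternative justification of the step $3\mid a^2 \Rightarrow 3\mid a$, whereas the paper cites unique factorization; both are fine.
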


\begin{proof}
Assume that $\sqrt{3}$ is rational (a fraction).
Then we can write $\sqrt{3} = a/b$, where $a,b$ are integers with $b \not = 0$.  If ${\rm gcd}(a,b) \not = 1$, then we can simplify the fraction.  So, without loss of generality, we can suppose that ${\rm gcd}(a,b)=1$.  Squaring both sides, we see that 
$3=a^2/b^2$.  So $3b^2=a^2$.  
This means that $3$ divides $a^2$.
Since $3$ is prime, this shows that $3$ divides $a$ (since $a$ has a unique factorization into primes).
Write $a=3a_1$, where $a_1$ is an integer.
Substituting, we see that $3b^2 = (3a_1)^2 = 9a_1^2$.
So $b^2 = 3a_1^2$.  Then $3$ divides $b^2$, so $3$ divides $b$.
But then ${\rm gcd}(a,b)$ is a multiple of $3$.  So this is a contradiction.  So $\sqrt{3}$ is not a fraction.
\end{proof}

Below is Euclid's proof that there are infinitely many primes, which he discovered around 300 B.C.  The reason to use a proof by contradiction is because no one has found a formula or algorithm to generate prime numbers.

\begin{proposition}
There are infinitely many prime numbers.
\end{proposition}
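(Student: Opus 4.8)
The plan is to use proof by contradiction, following Euclid's classical argument. First I would assume the opposite of the statement: suppose there are only finitely many primes. Under this assumption, I can list them all as $p_1, p_2, \ldots, p_n$, so that every prime number appears somewhere on this finite list.

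The key idea is to construct a new number that cannot be divisible by any prime on the list, yet must have a prime factor. Specifically, I would define
\[N = p_1 \cdot p_2 \cdots p_n + 1,\]
the product of all the primes plus one. Since $N \geq 2$, by the result on prime factorization (which can be proven by strong induction, as in the exercises of Section~\ref{sec:induction}), the number $N$ must be divisible by at least one prime number. Since our list $p_1, \ldots, p_n$ is assumed to contain \emph{every} prime, this prime factor of $N$ must be some $p_i$ on the list.

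Now I would derive the contradiction. On one hand, $p_i$ divides $N$ by the previous step. On the other hand, $p_i$ divides the product $p_1 \cdots p_n$, since $p_i$ is one of the factors. Therefore $p_i$ would have to divide the difference $N - p_1 \cdots p_n = 1$. But no prime divides $1$, since every prime is at least $2$. This is a contradiction.

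The main obstacle, conceptually, is justifying that $N$ has a prime factor at all; this relies on the fact that every integer $n \geq 2$ factors into primes, which itself requires strong induction. The rest of the argument is the clean divisibility manipulation: the crucial observation is that $p_i \mid N$ and $p_i \mid (N-1)$ together force $p_i \mid 1$, which is impossible. Since the assumption that there are finitely many primes leads to a contradiction, I would conclude that there must be infinitely many prime numbers.
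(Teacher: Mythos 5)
Your proof is correct and follows essentially the same approach as the text: Euclid's argument, assuming finitely many primes, forming $N = p_1 \cdots p_n + 1$, and using the fact that $N$ has a prime factor that cannot be any $p_i$. The only cosmetic difference is that you phrase the contradiction as $p_i \mid 1$ via divisibility of the difference, whereas the text observes $N \equiv 1 \bmod p_i$ and concludes the prime factor is a ``new'' prime outside the assumed complete list; these are the same observation.
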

\begin{proof}
Assume there are only finitely many primes;  
let $r$ be the number of primes and 
let $S=\{p_1, \ldots, p_r\}$ be the set of all the primes.
Consider the number $N=1+ \prod_{i=1}^r p_i$.
Then $N$ is divisible by at least one prime number $q$ (since any positive integer has a unique prime factorization.)
But $N$ is not divisible by any of the primes in $S$;
this is because $N \equiv 1 \bmod p_i$ for each $1 \leq i \leq r$.  So $q$ is a new prime which is not in $S$.
This contradicts the statement that $S$ is the set of all the primes.  So there are infinitely many primes.
\end{proof}

\begin{itemize}
    \item A combinatorial existence proof using contradiction (``Prove that there must have been two people who shook hands the with the same number of people.'')
\end{itemize}

\begin{remark}
Sometimes proofs by contradiction are so fun that people use them more often than needed.  It is better to use a direct proof when you can. 
\end{remark}

\subsection*{Exercises}

\begin{enumerate}
\item Prove that $\sqrt{5}$ is irrational. 	
\item Show that $\sqrt[3]{2}$ is irrational.
	
\item Use the quantity $N=3 + (2 \prod_{i=1}^r p_i)^2$
to show there are infinitely many primes which are congruent to 
$3$ modulo $4$.

\end{enumerate}

\section{The Pigeonhole Principle}

\begin{videobox}
\begin{minipage}{0.1\textwidth}
\href{https://www.youtube.com/watch?v=8j1F_KDijxc}{\includegraphics[width=1cm]{video-clipart-2.png}}
\end{minipage}
\begin{minipage}{0.8\textwidth}
Click on the icon at left or the URL below for this section's short lecture video. \\\vspace{-0.2cm} \\ \href{https://www.youtube.com/watch?v=8j1F_KDijxc}{https://www.youtube.com/watch?v=8j1F\_KDijxc}
\end{minipage}
\end{videobox}

The pigeonhole principle gives a way to show that certain objects need to be grouped together.  This principle relies on the strategy of a proof by contradiction and it has a lot of applications in combinatorics.

\begin{example}
In 2017, the population of Colorado was 5,612,000.
Prove that there are at least 6 people in Colorado with the same number of hairs on their heads. 
You may assume that each person's head has less than 1,000,000 hairs.
\end{example}

\begin{answer}
If there were 

at most $5$ people with 0 hairs,

at most $5$ people with 1 hair,

at most $5$ people with 2 hairs,

\hspace{35mm}\vdots

and at most $5$ people with 999,999 hairs,

\noindent then the number of people in Colorado would be at most 5,000,000. Since $5,612,000 > 5,000,000$, there must be at least 6 people in Colorado with the same number of hairs on their heads.
\end{answer}

\begin{proposition} \label{Ppigeon} (Pigeonhole principle Version 1)
If we place \emph{more} than $n$ objects into $n$ boxes, then some box contains 2 or more objects.
\end{proposition}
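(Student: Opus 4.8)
The plan is to argue by contradiction, mirroring the strategy introduced at the start of this section. First I would suppose the conclusion fails: assume that \emph{every} one of the $n$ boxes contains at most one object. The aim is then to derive something that is definitely false --- namely, a contradiction with the hypothesis that more than $n$ objects were placed into the boxes.

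Next I would count the total number of objects under this assumption. Since each of the $n$ boxes holds at most $1$ object, the total number of objects distributed among the boxes is at most $\underbrace{1 + 1 + \cdots + 1}_{n \text{ times}} = n$, by repeated application of the addition principle. Thus the assumption forces the total number of objects to be at most $n$.

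But this contradicts our hypothesis that we placed \emph{more} than $n$ objects into the boxes, since that hypothesis says the total is strictly greater than $n$. Having reached a contradiction, the initial assumption must be false, and so there must be at least one box containing $2$ or more objects, as desired.

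The proof is short, and the only care needed is in the bookkeeping: the bound ``at most one object per box'' must be summed over exactly $n$ boxes to produce the clean inequality ``at most $n$ objects total,'' which is precisely what clashes with ``more than $n$ objects.'' I do not expect any genuine obstacle here; the main task is simply to state the contradiction cleanly rather than to overcomplicate an intuitively obvious fact.
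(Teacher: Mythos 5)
Your proof is correct and is essentially identical to the paper's: both assume each box holds at most one object, conclude the total is at most $n$, and derive a contradiction with the hypothesis of more than $n$ objects. The paper's version is just a one-sentence compression of the same argument.
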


\begin{center}
\includegraphics[width=2in]{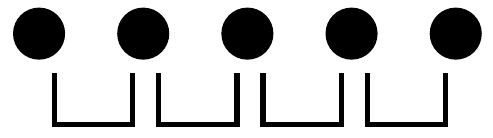}
\end{center}

\begin{proof}
If each box contained at most 1 object, then the total number of objects would be at most $n$, which is a contradiction. 
\end{proof}

\begin{etymology}
A pigeonhole is a place where birds ``roost", i.e., rest or sleep.  This principle is often stated with $n+1$ pigeons roosting in $n$ holes.  
\end{etymology}

\begin{remark}
The pigeonhole principle does not say anything about how many of the boxes are empty.  It is possible that all of the pigeons are stuffed into the same hole.
\end{remark}

\begin{example}
If 11 numbers are chosen from 1 to 100, then show that two of them have difference less than 10.
\end{example}

\begin{answer}
Separate the numbers $1-100$ into 10 ``boxes'', namely $1-10$, $11-20$, $21-30$, up to $91-100$.  Since there are 11 numbers and only 10 boxes, two of the numbers must be in the same box.  Then the difference between those numbers is less than $10$.
\end{answer}

\begin{example}
A target is in the shape of an
equilateral triangle, with side-length 2 meters.  If we hit the target with 5 arrows, 
prove that two arrows strike within 1 meter of each other.
\begin{center}
\includegraphics[width=1.2in]{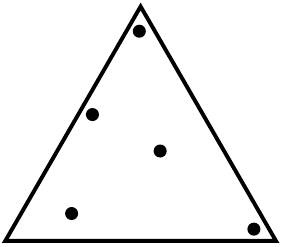}
\end{center}

To approach this using the pigeonhole principle, we divide the target into 4 parts, each of which is an equilateral triangle of side-length $1$, as drawn below.  We will think of these four smaller triangles as the ``boxes'' and the striking points of the arrows as the objects.
\begin{center}
\includegraphics[width=1.2in]{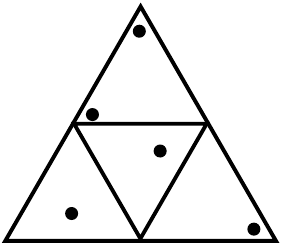}
\end{center}
By the pigeonhole principle, one of the smaller triangles must contain the striking points of at least two arrows. The distance between these striking points is at most the length of the side of the smaller triangle.  It follows that the striking points of those two arrows are within 1 meter of each other.
\end{example}

\begin{example}
There are $n$ guests at a party, for some integer $n \geq 2$.  Some pairs of them shake hands; (no one shakes hands with themself).
Prove that at least two guests shook hands with the same number of people. 

Each guest shook between $0$ and $n-1$ hands.  Let's label the boxes by the numbers $0$ through $n-1$ and put a person in box $i$ if they shook hands with exactly $i$ people.  At first, it does not look like we can use the pigeonhole principle because there are $n$ people and $n$ boxes.

However, there is one piece of information we did not use. 
Either box 0 or box $n-1$ is empty. The reason is that if someone shook $0$ hands, then no one shook hands with all $n-1$ of the other guests; 
if someone shook hands with all $n-1$ of the other guests, then no one shook $0$ hands.
Hence we are assigning $n$ people to $n-1$ boxes (labeled either by 0 through $n-2$ or by 1 through $n-1$). By the pigeonhole principle, two people must be assigned to the same box, and hence shook the same number of hands.
\end{example}

\begin{proposition} \label{Ppigeonv2}
(Pigeonhole principle Version 2)
Suppose $k$ is a non-negative integer.
If we place \emph{more} than $kn$ objects into $n$ boxes, then some box contains at least $k+1$ objects
\end{proposition}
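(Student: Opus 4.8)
The plan is to prove the generalized pigeonhole principle (Version 2) by the same contradiction strategy used for Version 1 in Proposition~\ref{Ppigeon}, simply scaling the counting argument by a factor of $k$. The statement says that if we place more than $kn$ objects into $n$ boxes, then some box contains at least $k+1$ objects. So I would begin by assuming the negation of the conclusion: suppose that \emph{no} box contains $k+1$ or more objects. Since the number of objects in each box is a non-negative integer, this assumption is equivalent to saying that every box contains \emph{at most} $k$ objects.

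First I would bound the total number of objects using this assumption. If each of the $n$ boxes contains at most $k$ objects, then the total number of objects is at most $\underbrace{k + k + \cdots + k}_{n \text{ times}} = kn$, where this is a direct application of the addition principle (and, when phrased as $n$ copies of $k$, the multiplication principle). This gives a total of at most $kn$ objects.

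The contradiction then comes immediately from comparing this bound with the hypothesis. We assumed there are \emph{more} than $kn$ objects, but the negated conclusion forces the total to be \emph{at most} $kn$. These two statements cannot both hold, so our assumption that every box contains at most $k$ objects must be false. Therefore some box contains at least $k+1$ objects, as claimed.

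I do not expect any serious obstacle here; the argument is a routine adaptation of the $k=1$ case already proven, and the only thing to be careful about is the precise translation between ``not at least $k+1$'' and ``at most $k$'' (which relies on the objects-per-box counts being integers). It is also worth noting as a sanity check that setting $k=1$ recovers Proposition~\ref{Ppigeon} exactly, and that the $k=0$ case is a trivial but valid statement (more than $0$ objects in $n$ boxes forces some box to contain at least $1$ object). The cleanest presentation is the contrapositive-style contradiction above rather than a direct counting argument, since it matches the structure of the proof already given for Version 1.
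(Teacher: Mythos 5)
Your proposal is correct and matches the paper's own proof exactly: both assume every box holds at most $k$ objects, bound the total by $kn$, and derive a contradiction with the hypothesis of more than $kn$ objects. The extra remarks (the integrality point and the $k=1$ sanity check) are fine but not needed.
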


\begin{remark}
To obtain Version 1 from Version 2, substitute the value $k=1$.
\end{remark}

\begin{proof}
If each box contained at most $k$ objects, then the total number of objects would be at most $kn$, which is a contradiction.
\end{proof}

To solve the problem about Colorado at the beginning of the section, we use Version 2 of the pigeonhole principle as follows

\begin{answer} 
Label the boxes by the numbers $1$ through $n=1,000,000$. 
People are assigned to the $i$th box if they have exactly $i$ hairs on their heads.  Let $k=5$. Since $5,612,000 > 5,000,000 = (5)\cdot (1,000,000)=k\cdot n$, there is a box with at least $6=k+1$ people by the pigeonhole principle (Version 2). 
These $6$ people all have the same number of hairs on their heads. 
\end{answer}

\begin{remark}
For pigeonhole principle problems, our recommendation is to:
\begin{itemize}
\item Identify $k+1$, and hence $k$.
\item Define your $n$ boxes (such that if enough objects land in a single box, then you have accomplished the goal of the problem!)
\item Make sure that the number of objects is bigger than $k\cdot n$. 
\end{itemize}
\end{remark}

We provide some harder examples.

\begin{example}
If 17 people are seated in a row of 20 chairs, then prove some consecutive set of 5 chairs are filled with people.
\end{example}

\begin{brainstorm} $ $
\begin{center}
\includegraphics[width=5in]{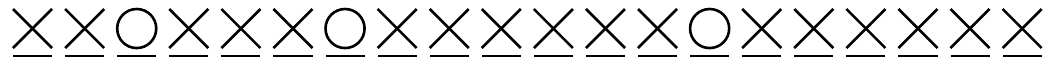}
\end{center}
\end{brainstorm}

\begin{answer}
There are 3 empty chairs.
These separate the row into $n=4$ ``boxes" which we can call the far left, center left, center right, and far right. Each of these parts contains filled chairs, which are consecutively filled since there are no other empty chairs.  (It is possible for one part to be empty or to contain only one chair.)  Let  $k=4$, so that $k+1=5$. Since $17>16=4\cdot 4=k\cdot n$, Version 2 of the pigeonhole principle says that some box contains at least $k+1=5$ objects, which means there are at least 5 consecutively filled chairs.
\end{answer}

\begin{example}
If 17 people are seated in a circle of 20 chairs, then prove some consecutive set of 6 chairs are filled with people.
\end{example}

\begin{answer}
The 3 empty chairs partition the circle into $n=3$ ``boxes" of consecutively filled chairs. Let $k+1=6$, so $k=5$. Since $17>15=5\cdot 3=k\cdot n$, Version 2 of the pigeonhole principle says that some box contains at least $k+1=6$ objects, which are consecutively filled chairs.
\end{answer}

One of the most fun applications of the pigeonhole principle is to \textit{Ramsey theory}.  Here we give one example of this.

\begin{example}
Consider the complete graph $K_6$ with $6$ vertices (every pair of its vertices is connected by an edge).
If you color the edges green and gold, show that there must be either a green triangle or a gold triangle of edges. 
\end{example}

\begin{answer}
Pick one vertex $v$.
It has 5 edges connecting it to the $5$ other vertices.  Let $n=2$ (for the 2 colors) and let $k=2$.
Since $5 > 2 \cdot 2$, at least 3 of these $5$ edges have the same color by Version 2 of the pigeonhole principle.  Let's say that color is green; (if not, we can continue with the same reasoning by swapping the colors).

These 3 green edges connect $v$ to $3$ other vertices, let's call them $w,x,y$. 
If the edge between $w$ and $x$ is green, then we are done because then $v,w,x$ form a triangle with only green edges. Similarly, if the edge between $w$ and $y$ is green, then we are done and if the edge between $x$ and $y$ is green then we are done.
So the only remaining case is if the three edges between $w,x,y$ are all gold.  But then they form a triangle with only gold edges and we are done! 
\end{answer}

\subsection*{Exercises}

\begin{enumerate}
\item If 5 people receive $11$ letters, show that someone gets at least $3$.
\item If a drawer contains 16 black and 16 white socks, how many do you need to pick in order to have a matching pair?  

\item The city of Phoenix has 1.5 million people.  The number of hairs on a head is at most $250,000$.
Show that at least 7 people have the same number of hairs on their head.

\item More than 6,000 of the students attending a private university were born in one of the 50 US states. Prove that at least 121 of these students were born in the same state.

\item In a set of 42 positive integers, prove that at least 5 of them have the same last digit.

\item In a set of 112 positive integers, prove that at least 12 of them have the same last digit.

\item Pick any 7 integers $x_1, \ldots, x_7$. Show there are two whose sum or difference is a multiple of $10$.

\item If 7 numbers are chosen from 1 to 100, show that two of them have a difference which is less than 17.

\item Prove that, among any 10 points inside a unit square, there must exist 3 of them that are all within $\sqrt{2}/2$ of each other.								
\item 
\begin{enumerate}
\item If 94 people are seated in a row of 100 chairs, then show that some consecutive set of 14 chairs is filled with people.
\item If 94 people are seated in a circle of 100 chairs, then show that some consecutive set of 16 chairs is filled with people.
\end{enumerate}

\end{enumerate}

\section{Additional problems for Chapter 5}

\begin{enumerate}
 \item Give a combinatorial proof (either by counting in two ways or using a bijection) for each of the following formulas.
  \begin{enumerate}
    \item  $n!=n\cdot (n-1)\cdot (n-2)\cdot\cdots\cdot 3\cdot 2 \cdot 1 $
    \item $n^k=n\cdot n\cdot \cdots\cdot n$, the product of $k$ copies of $n$
    \item $n\cdot (n-1)\cdot (n-2)\cdot \cdots (n-k+1)=\frac{n!}{(n-k)!}$
    \item $\binom{n}{k}=\frac{n!}{k!(n-k)!}$
  \end{enumerate}

\item Give a combinatorial proof
  of the following identities.
  \begin{enumerate}
    \item $\sum_{k=0}^n k \binom{n}{k}^2= n\binom{2n-1}{n-1}$
    \item $\sum_{k=0}^n \binom{n}{k}s^kt^{n-k}=(s+t)^n$ (Here $s$, $t$, and $n$ are positive integers.)
  \end{enumerate}

\item Give a combinatorial proof that the following identity holds for all positive integers n:					
$$2^0\binom{n}{0}+2^1\binom{n}{1}+...+2^n\binom{n}{n}=3^n.$$

\item If 30 numbers are chosen from 1 to 82, then show that there is a group of at least 5 of them such that the difference between any two numbers in this group is less than 12.

\item Suppose that 64 chairs are arranged in a circle. Prove that if 57 people sit in this circle of 64 chairs, then there are at least 9 consecutively filled chairs.

\item Suppose that 80 chairs are arranged in a \emph{row}. Prove that if 75 people sit in this row of 80 chairs, then there are at least 13 consecutively filled chairs.

\item An office needs to connect 15 computers to 10 printers, with the minimal number of cables.  At any moment, 10 computers might need printers.
\begin{enumerate}
\item Show that $r=5$ connections might not be enough for certain print jobs.

\item Show that $r=6$ connections is enough for every print job.
\end{enumerate}

\item 
 \begin{enumerate}
 \item Look up the statement of the 
 Well-Ordering Principle.  Explain how it is similar to and different from induction.
 \item Prove the Well-Ordering principle
 using only strong induction and the axiom that $0$ is smaller than every positive integer, as follows.
   \begin{enumerate}
     \item Let $P(n)$ be the statement that any subset of the natural numbers containing $n$ has a least element.
     \item Prove $P(0)$.
     \item Prove that if $P(0),P(1),\ldots,P(n)$ are all true, then $P(n+1)$ is true.
   \end{enumerate}
   \end{enumerate}

    \item 
   Let $p$ be a prime.  Fermat's Little Theorem states that $a^p \equiv a \bmod p$, for any positive integer $a$.
   For example, when $p=7$: then $7$ divides $1^7-1$; $7$ divides $2^7-2$;  $7$ divides $3^7-3$; etc.
   
   \begin{enumerate}
 \item For any $a \geq 2$, generalize Exercise~\ref{Exmodpbinom2} from Section~\ref{sec:binomial-theorem} to 
       show $(x_1 + \cdots + x_a)^p \equiv x_1^p + \cdots + x_a^p \bmod p$.
        \item Substitute $x_1=1, \ldots, x_a =1$ to show that 
        $a^p \equiv a \bmod p$, which gives a proof of Fermat's Little Theorem.
       \end{enumerate}

 \item In this problem, we will prove Fermat's Little Theorem using a combinatorial argument.
 \begin{enumerate}
     \item Let $S$ be the set of sequences of length $p$ whose entries are in the alphabet $\{1, \ldots, a\}$ such that not all the entries of the sequence are the same.
     Explain why $|S|=a^p-a$.
     \item We say that two sequences in $S$ are equivalent if the entries of one can be rotated to be the entries of the other.
     For example, when $p=5$, then $(1,2,3,4,5)$ is equivalent to $(2,3,4,5,1)$.
     Explain why each equivalence class in $S$ has size $p$.
     Explain why it is important for $p$ to be prime for this to be true.
     \item Use the division principle to show that $p$ divides $a^p-a$.
 \end{enumerate}
   
\end{enumerate}

\newpage
\section{Investigation: Pascal's triangle modulo 2 and Sierpinski's triangle}

Which elements of Pascal's triangle are even, and which are odd?  In other words, what is the value of the binomial coefficient $\binom{n}{k}$ mod $2$?  (See Section \ref{sec:mod} for modular arithmetic.)

\begin{center}
\begin{tabular}{rccccccccccccc}
$n=0$:&    &    &    &    &    &    &   1 \\\noalign{\smallskip\smallskip}
$n=1$:&    &    &    &    &    &  1 &  &  1\\\noalign{\smallskip\smallskip}
$n=2$:&    &    &    &    &  1 &    &  2 &    &  1\\\noalign{\smallskip\smallskip}
$n=3$:&    &    &    &  1 &    &  3 &    &  3 &    &  1\\\noalign{\smallskip\smallskip}
$n=4$:&    &    &  1 &    &  4 &    &  6 &    &  4 &    &  1\\\noalign{\smallskip\smallskip}
$n=5$:&    &   1 &    &   5 &    &  10  &    &  10  &    &  5  &   &  1\\\noalign{\smallskip\smallskip}
$n=6$:& 1 &    &  6 &    &  15 &    &  20 &    &  15 &    &  6  &   & 1\\\noalign{\smallskip\smallskip}
\end{tabular}
\end{center}

If an integer $a$ is even, then the remainder when dividing $a$ by $2$  is $0$, so $a \equiv 0 \bmod 2$;
if $a$ is odd, then the remainder when dividing $a$ by $2$ is 1, so $a \equiv 1 \bmod 2$. We can therefore write down ``Pascal's triangle mod 2'' by replacing all even numbers with $0$ and all odd numbers with $1$:

\begin{center}
\begin{tabular}{rccccccccccccc}
$n=0$:&    &    &    &    &    &    &   1 \\\noalign{\smallskip\smallskip}
$n=1$:&    &    &    &    &    &  1 &  &  1\\\noalign{\smallskip\smallskip}
$n=2$:&    &    &    &    &  1 &    &  0 &    &  1\\\noalign{\smallskip\smallskip}
$n=3$:&    &    &    &  1 &    &  1 &    &  1 &    &  1\\\noalign{\smallskip\smallskip}
$n=4$:&    &    &  1 &    &  0 &    &  0 &    &  0 &    &  1\\\noalign{\smallskip\smallskip}
$n=5$:&    &   1 &    &   1 &    &  0  &    &  0  &    &  1  &   &  1\\\noalign{\smallskip\smallskip}
$n=6$:& 1 &    &  0 &    &  1 &    &  0 &    &  1 &    &  0  &   & 1\\\noalign{\smallskip\smallskip}
\end{tabular}
\end{center}

\begin{question}
Continue writing down Pascal's triangle mod $2$ by copying the above table and then writing the next $5$ rows of $0$'s and $1$'s.  Find three patterns for the arrangement of $0$'s and $1$'s.
\end{question}

\begin{question}
How can you determine the next row of Pascal's triangle mod $2$ from the previous, without computing the original Pascal's triangle?  Explain your answer.
\end{question}

\begin{question}
After the row for $n=3$, when is the next row that only contains $1$'s? What does the row after that one look like?
\end{question}

\begin{proposition}
\label{Psierpinski2}
If $n=2^m$ is a power of $2$, then the numbers in row $n$ of Pascal's triangle mod $2$ are all $0$ except for the first and last entry. 
\end{proposition}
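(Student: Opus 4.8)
The plan is to translate the statement into a congruence about binomial coefficients and then establish it using the ``freshman's dream'' congruence $(x+y)^2 \equiv x^2+y^2 \bmod 2$ applied repeatedly. First I would restate the claim: the entries of row $n=2^m$ are $\binom{2^m}{0},\binom{2^m}{1},\ldots,\binom{2^m}{2^m}$, so the proposition asserts that $\binom{2^m}{0}\equiv\binom{2^m}{2^m}\equiv 1 \bmod 2$ while $\binom{2^m}{k}\equiv 0 \bmod 2$ for every $k$ with $1\le k\le 2^m-1$. The two endpoint values equal $1$ by part (a) of Pascal's Recurrence (Theorem~\ref{thm:Pascal}), so the real content is the vanishing of the interior entries modulo $2$.

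The key step is the polynomial congruence
\[(x+y)^{2^m}\equiv x^{2^m}+y^{2^m} \bmod 2,\]
which I would prove by induction on $m$. For the base case $m=1$, observe that $(x+y)^2=x^2+2xy+y^2\equiv x^2+y^2 \bmod 2$, since $\binom{2}{1}=2$ is even; this is exactly the $p=2$ instance of Exercise~\ref{Exmodpbinom2}. For the inductive step, assuming the congruence holds for $m$, I would square both sides and simplify:
\[(x+y)^{2^{m+1}}=\bigl((x+y)^{2^m}\bigr)^2\equiv\bigl(x^{2^m}+y^{2^m}\bigr)^2=x^{2^{m+1}}+2x^{2^m}y^{2^m}+y^{2^{m+1}}\equiv x^{2^{m+1}}+y^{2^{m+1}} \bmod 2.\]

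To finish, I would expand the left-hand side with the Binomial Theorem (Theorem~\ref{Tbinomialthm}), giving
\[(x+y)^{2^m}=\sum_{k=0}^{2^m}\binom{2^m}{k}x^{2^m-k}y^k,\]
and then compare this with the congruence above coefficient by coefficient. Since $x$ and $y$ are formal variables, two polynomials congruent modulo $2$ must agree modulo $2$ in each coefficient; the expression $x^{2^m}+y^{2^m}$ has nonzero coefficients only for the monomials $x^{2^m}$ and $y^{2^m}$, that is, for $k=0$ and $k=2^m$. Therefore $\binom{2^m}{k}\equiv 0 \bmod 2$ for every interior $k$, which is what we wanted.

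The main obstacle is the coefficient-comparison step: one must justify that a congruence between two polynomials modulo $2$ forces their coefficients to be congruent modulo $2$, which relies on treating $x$ and $y$ as indeterminates rather than substituting specific numerical values. I would also take care to anchor the induction correctly at $m=1$ (the case $m=0$ gives only the trivial identity $(x+y)^1=x+y$) and to emphasize that the identity $2^{m+1}=2\cdot 2^m$ is precisely what allows a single squaring to advance the induction from row $2^m$ to row $2^{m+1}$.
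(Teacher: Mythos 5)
Your proof is correct and follows essentially the same route the paper lays out in its second suggested method (the ``algebraic proof'' question): reduce the claim to the congruence $(x+y)^{2^m}\equiv x^{2^m}+y^{2^m}\bmod 2$ via the Binomial Theorem, establish the base case from $\binom{2}{1}=2$, and advance the induction by squaring both sides. The coefficient-comparison step you flag as the main obstacle is exactly the paper's part (1) of that question, and your justification of it---polynomial congruence modulo $2$ is coefficient-wise congruence in the formal variables $x$ and $y$---is the right one.
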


The next three questions outline three different ways of proving the above proposition.

\begin{question} 
Prove Proposition~\ref{Psierpinski2} with this method:
Write $\binom{2^m}{k}$ as $(2^m)!/(k!(2^m-k)!)$.  We want to show there are more $2$'s in the prime factorization of the numerator than in the denominator. Do this by finding a way to pair up each factor of $2$ in the denominator with a factor of $2$ in the numerator.  Show that there are always more factors of $2$ in the numerator.
\end{question}

\begin{question}
(An algebraic proof.) Prove Proposition~\ref{Psierpinski2} with this method:
\begin{enumerate}
    \item Explain why it is enough to show that $(x+y)^{2^m} \equiv x^{2^m} + y^{2^m} \bmod 2$.
    (Here we reduce the coefficients of the polynomial mod 2 but not the exponents).
    \item Show that $(x+y)^2 \equiv x^2 + y^2 \bmod 2$.
    \item Use induction on $m$ to prove statement 1, where the inductive hypothesis is that
    $(x+y)^{2^{m-1}} 
    \equiv x^{2^{m-1}} + y^{2^{m-1}} \bmod 2$.  For the inductive step, square both sides of the equation.
\end{enumerate}
\end{question}

 \begin{question}
  (A bijective/combinatorial proof.)
  Prove Proposition~\ref{Psierpinski2} with this method.  
 \begin{enumerate}
  \item Explain why $\binom{2^m}{k}$ is the number of ways to make a subset of $S$ of size $k$. (For example, if $m=2$ and $k=2$, then the number of subsets listed in
      \eqref{Estringsub} is  $\binom{4}{2}=6$.)
    \item Because of part (1), 
  we want to show the number of subsets of $S$ of size $k$ is even.
  Use part (3) of the previous problem to explain why this is true if $k$ is odd.
  
  \item To handle the case when $k$ is even, 
  we will use induction on $m$; the inductive hypothesis is that the statement is true for $m-1$, in other words, that $\binom{2^{m-1}}{k}$ is even for all $0<k<2^{m-1}$. 
  Write down what the base case is and verify that it is true.
\item If $T$ is a subset, 
recall the definition of the complement subset $T^c$ from 
Problem~\ref{Exercisedefcomplement} in 
Section~\ref{Sadditional2}.
For the inductive step,
we want to show that $\binom{2^m}{k}$ is even for all $0<k<2^m$. Suppose $k$ is even (because we are already done with the case that $k$ is odd).  Show that the number of subsets $T$ such that $T \not = T^c$ is even.
Show that the number of subsets $T$ such that $T=T^c$ is even using the inductive hypothesis and part (4) of the previous problem.
\end{enumerate}
\end{question}
Now we investigate a pattern in Pascal's triangle modulo $2$.
Look at the numbers in the rows $n=0$ to $n=3$.  Compare them with the numbers on the left side (then right side) of rows $n=4$ to $n=7$.
What do you notice?

\begin{question} \label{QzeroPas}
\begin{enumerate}
    \item If you have a green ball in the $n=4$ and $k=0$ spot in Pascal's triangle, and a gold ball in the $n=4$ and $k=4$ of Pascal's triangle, which is the closest spot of Pascal's triangle that they could both land?
    \item Repeat the previous part when  the green ball is at the $n=2^m$ and $k=0$ spot and the gold ball is at the $n=2^m $ and $k=n$ spot.  
    \item Explain why there is an upside down triangle of zeros in the middle of the rows from $n=2^m$ to $n=2^{m+1} -1$.
    
\end{enumerate} 
  
\end{question}

\begin{question}\label{self-similar}
In Pascal's triangle modulo $2$, explain why the rows $n=0$ to $n=2^m-1$ repeat again on the left hand and right hand sides of the 
rows $n=2^m$ to $n=2^{m+1} -1$.
\end{question}

The next question is a more precise version of the previous two.

\begin{question}
\begin{enumerate}
  \item Convince yourself that the following claim produces the upside down triangle of zeros described in Question~\ref{QzeroPas}:
  if $n=2^m + \ell$, where $0 \leq \ell \leq 2^m-2$, the $k$th entry of Pascal's triangle in the $n$th row is zero modulo $2$ for $\ell+1 \leq k \leq n-(\ell+1)$.

\item Convince yourself that the following claim produces the self-similar pattern 
described in Question~\ref{self-similar}:
if $0 \leq n \leq 2^m-1$ and $0 \leq k\le n$, then $$\binom{n}{k}\equiv \binom{n+2^m}{k}\pmod 2$$ and $$\binom{n}{k}\equiv \binom{n+2^m}{k+2^m}\pmod {2}.$$
\item Show that (reducing coefficients but not exponents modulo $2$)
\[(x+y)^{n+2^m} \equiv 
(x^{2^m} + y^{2^m})\cdot (x+y)^n \bmod 2.\]
\item Use part (3) to prove the claims in parts (1) and (2).
\end{enumerate}
\end{question}

\subsection*{Sierpinski's triangle}

\textit{Sierpinski's triangle} is a famous example of a \textbf{fractal}, which is a shape that exhibits \textit{self-similarity}.  In particular, Sierpinski's triangle is the unique 2-dimensional pattern that is made up of three shrunken copies of itself, each half of the total size, and arranged in a triangle.  Here is a sketch of a finite approximation of Sierpinski's triangle:
\begin{center}
\includegraphics{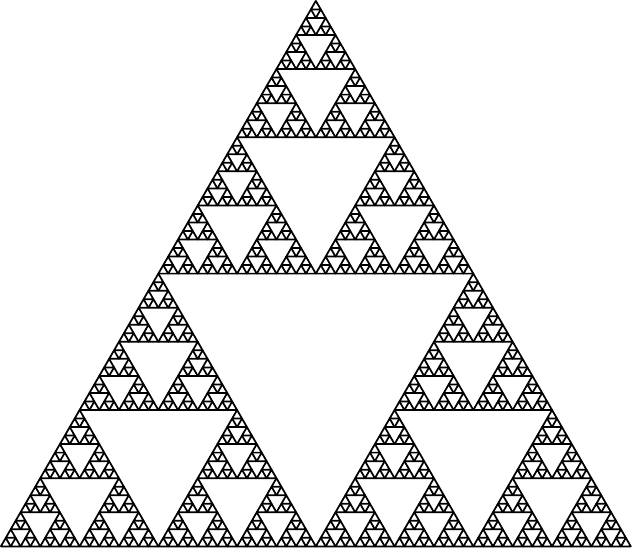}
\end{center}
One can obtain this drawing by starting with the smallest triangle at the top, then copying it to its lower left and lower right, and then repeating this process on the resulting shape several times.  Sierpinski's triangle itself is actually the ``limit'' of this diagram as the number of operations go to infinity.

Notice that the pattern of Sierpinski's triangle, formed by copying a triangle pattern down to the left and down to the right at each step, is exactly the same as the pattern we discovered about Pascal's triangle mod $2$ in Question \ref{self-similar}.  Indeed, the $1$'s in Pascal's triangle mod $2$ outline a Sierpinski triangle shape!  Demonstrate this for the first $16$ rows as follows:

\begin{question}
Use Sage to print out the first $16$ rows of Pascal's triangle mod $2$. 
(It's ok if it prints as a sequence of lists which is left justified, rather than as an equilateral triangle.)
Draw an approximation of Sierpinski's triangle on top of this printout by connecting the $1$'s with lines going in the three triangular directions.
\end{question}

There is another way to construct Sierpinski's triangle.
Let's start with an equilateral triangle, colored black.  We divide it into 4 smaller equilateral triangles and change the color of the middle one to white.  Suppose the sides of the original triangle have length $1$; then the sides of the smaller triangles have length $1/2$.  We then repeat this process for the $3$ smaller black triangles.  Repeating this process indefinitely produces the Sierpinski triangle.

\chapter{Recurrence relations}\label{chap:recurrence}

Many interesting sequences of integers are defined using \emph{recurrence relations}.
In this chapter, we explore fun examples of recurrence relations, including: the Fibonacci sequence, the tower of Hanoi, regions of the plane, derangements, and Catalan numbers.  
For the most part, we avoid treating the subject systematically, but in Section~\ref{Sotherlinear}, we describe linear recurrence relations and their characteristic polynomials in general.

\begin{definition}
A \emph{recursive relation} for an  infinite sequence $a_1, a_2, \ldots$
is a formula for $a_n$ in terms of the previous values in the sequence.
\end{definition}

\section{Fibonacci numbers}

\begin{videobox}
\begin{minipage}{0.1\textwidth}
\href{https://www.youtube.com/watch?v=VindfwTtkSA}{\includegraphics[width=1cm]{video-clipart-2.png}}
\end{minipage}
\begin{minipage}{0.8\textwidth}
Click on the icon at left or the URL below for this section's short lecture video. \\\vspace{-0.2cm} \\ \href{https://www.youtube.com/watch?v=VindfwTtkSA}{https://www.youtube.com/watch?v=VindfwTtkSA}
\end{minipage}
\end{videobox}

The Fibonacci numbers form a well-known sequence of numbers 
\[(1,1,2,3,5,8,13,21,34,55, \ldots).\]
They were first published in a book by Leonardo of Pisa in 1202 concerning the following question about rabbits:

\begin{question} \label{Qrabbit}
Suppose that rabbits mature in one month, that an adult pair of rabbits produces exactly one baby pair of rabbits each month, and that no rabbits ever die.  Starting with one pair of baby rabbits in month 1, how many pairs of rabbits will there be in month 17?
\end{question}

\begin{answer} We compute the following table of the number $F_n$ of pairs of rabbits in month $n$, starting with 1 rabbit.
\begin{center}
\begin{tabular}{|c|c|c|c|c|c|c|c|c|c|c|c|c|c|c|c|c|c|}
\hline
$n$ & 1 & 2 & 3 & 4 & 5 & 6 & 7 & 8 & 9 & 10 & 11 & 12 & 13 & 14 & 15 & 16 & 17 \\
\hline
$F_n$ & 1 & 1 & 2 & 3 & 5 & 8 & 13 & 21 & 34 & 55 & 89 & 144 & 233 & 377 & 610 & 987 & 1597 \\
\hline
\end{tabular}
\end{center}
\end{answer}

\begin{definition}
Set $F_1=1$ and $F_2=1$.  For $n\geq 3$,
the $n$-th Fibonacci number $F_n$ is defined by the formula $F_{n}=F_{n-1}+F_{n-2}$. 
\end{definition}

While the question about rabbits is completely unrealistic, Fibonacci numbers show up in nature in many unexpected places, including leaves on stems, pine cones, artichokes, pineapples, and family trees of bees\footnote{Ask CSU math professor Dr.\ Patrick Shipman about some of his work on the Fibonacci numbers!}.
There are also applications of Fibonacci numbers to many other scientific fields, including economics, logic, optics, and pseudo random number generators.

Here is another concrete question 
whose answer involves the Fibonacci numbers.

\begin{question} 
A staircase has $n$ steps.  How many ways can you go up the stairs if you can take one or two steps at a time and never go down?
\end{question}

For example, when $n=4$, the number of ways is 5 and the different ways to go up can be represented by $(1,1,1,1)$, $(1,1,2)$, $(1,2,1)$, $(2,1,1)$, or $(2,2)$.

Let $S_n$ be the number of ways to go up $n$ steps in this way.
We compute the following table.

\begin{center}
\begin{tabular}{|c|c|c|c|c|c|c|c|c|c|c|c|}
\hline
$n=$ \# stairs & 1 & 2 & 3 & 4 & 5 \\
\hline
 & 1 & \textcolor{blue}{1,1} & \textcolor{red}{1,1,1} & 1,\textcolor{red}{1,1,1} & 1,1,1,1,1 \\
& & \textcolor{blue}{2} & \textcolor{red}{1,2} & 1,\textcolor{red}{1,2} & 1,1,1,2 \\
& & & \textcolor{red}{2,1} & 1,\textcolor{red}{2,1} & 1,1,2,1 \\
sequence of number of steps taken & & & & 2,\textcolor{blue}{1,1} & 1,2,1,1 \\
& & & & 2,\textcolor{blue}{2} & 1,2,2 \\
& & & & & 2,1,1,1 \\
& & & & & 2,1,2 \\
& & & & & 2,2,1 \\
\hline
$S_n=$ \# ways to climb & 1 & 2 & 3 & 5 & 8 \\
\hline

\end{tabular}
\end{center}

Notice, in order to climb 4 steps, you must first climb 2 steps at a time and then 2 more steps, or 1 step and then 3 more steps as indicated in blue and red, respectively. Based on this example, we conjecture that $S_4=S_3+S_2.$ We will prove more generally that the stair climbing problem is in fact a recurrence relation in Lemma \ref{lem:stairs}, but for now let's take a  deeper look into the example of 4 stairs. 

\begin{example}
\noindent \textbf{Picture of $S_4=S_3+S_2$.}
\begin{center}
\includegraphics[width=2in]{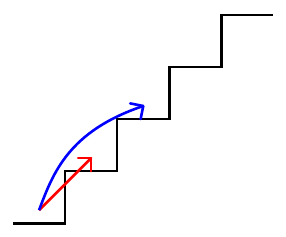}
\end{center}

After the red step of size 1, there are $S_3=3$ ways to proceed: $1,1,1$ or $1,2$ or $2,1$.

After the blue step of size 2, there are $S_2=2$ ways to proceed: $1,1$ or $2$.

In total there are $S_4=S_3+S_2$ ways:
\[\underbrace{1,\textcolor{red}{1,1,1}\hspace{8mm} 1,\textcolor{red}{1,2}\hspace{8mm} 1,\textcolor{red}{2,1}}_{\text{from $S_3$}}\hspace{10mm}\underbrace{2,\textcolor{blue}{1,1}\hspace{8mm} 2,\textcolor{blue}{2}}_{\text{from $S_2$}}\]
\end{example}

The values of $S_n$ are all Fibonacci numbers, but they are shifted to the left compared to the values of $F_n$ in the table for Question~\ref{Qrabbit}.
For example, $S_5 = F_6=8$.

Based on this, we write down the following lemma.
We prove the lemma using strong induction as described in Remark~\ref{Rstrongind}.

\begin{lemma}\label{lem:stairs}
The relationship between the number of ways to climb the stairs and the Fibonacci numbers is that $S_n=F_{n+1}$ for all $n \geq 1$.  
\end{lemma}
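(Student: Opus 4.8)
The plan is to first show that the climbing numbers $S_n$ satisfy exactly the same recurrence as the Fibonacci numbers, then match the initial values and propagate the equality by strong induction, as suggested by Remark~\ref{Rstrongind}.

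First I would establish the recurrence $S_n = S_{n-1} + S_{n-2}$ for all $n \geq 3$. Fix $n \geq 3$ and record each way of climbing the $n$ stairs as a sequence of $1$'s and $2$'s summing to $n$. Every such sequence begins with either a $1$ or a $2$, and these two cases are mutually exclusive and exhaustive. If the first move is a single step, the remaining moves form a way of climbing the remaining $n-1$ stairs; conversely, prepending a $1$ to any climb of $n-1$ stairs produces a valid climb of $n$ stairs. This gives a bijection between climbs of $n$ stairs that start with $1$ and all climbs of $n-1$ stairs, so there are $S_{n-1}$ of them. Similarly, prepending a $2$ gives a bijection with climbs of $n-2$ stairs, so there are $S_{n-2}$ climbs starting with $2$. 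By the addition principle, $S_n = S_{n-1} + S_{n-2}$, which is precisely the relation illustrated by the picture of $S_4 = S_3 + S_2$.

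Next I would check the base cases and run the induction. Directly one has $S_1 = 1 = F_2$ and $S_2 = 2 = F_3$, so the claim $S_n = F_{n+1}$ holds for $n = 1$ and $n = 2$. For the inductive step, fix $n \geq 2$ and assume (strong inductive hypothesis) that $S_k = F_{k+1}$ for every $1 \leq k \leq n$. Then combining the recurrence just proved with the Fibonacci recurrence $F_{n+2} = F_{n+1} + F_n$ yields
\[
S_{n+1} = S_n + S_{n-1} = F_{n+1} + F_n = F_{n+2} = F_{(n+1)+1}.
\]
Hence the claim holds for $n+1$, and by strong induction $S_n = F_{n+1}$ for all $n \geq 1$.

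The main obstacle—indeed the only nontrivial content—is the first step: proving the recurrence $S_n = S_{n-1} + S_{n-2}$ cleanly rather than merely gesturing at the table of examples. The rigorous work is in exhibiting the two bijections (prepend a $1$, prepend a $2$) and verifying that together they partition the set of all climbs of $n$ stairs. Once the recurrence is in hand, the rest is a routine strong induction; note that \emph{two} base cases are genuinely required, because the recurrence for $S_{n+1}$ reaches back to both $S_n$ and $S_{n-1}$, so the inductive step is only valid for $n \geq 2$.
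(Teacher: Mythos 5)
Your proposal is correct and follows essentially the same route as the paper: both establish the recurrence $S_n = S_{n-1}+S_{n-2}$ by splitting climbs according to whether the first step has size $1$ or size $2$ (the paper phrases this via the addition principle, you make the prepending bijections explicit), and both then verify the base cases $S_1=F_2$, $S_2=F_3$ and finish by strong induction. No gaps.
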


\begin{proof}
In order for this to be true, we need to see whether the values of $S_n$ satisfy the same recurrence relation as the values of $F_n$.
We claim that 
$S_{n}=S_{n-1}+S_{n-2}$ for $n \geq 3$.

If the claim is true, then we can use strong induction to show that $S_n=F_{n+1}$ for all $n \geq 1$.  Here is how that works.
The base cases $n=1$ and $n=2$ are true because 
$S_1 = 1 = F_2$ and $S_2 = 2 = F_3$.
Suppose that $n \geq 3$ and $S_i=F_{i+1}$
for all $1 \leq i \leq n-1$.
Using the claim, the inductive hypothesis, and the formula for $F_{n+1}$ shows that
\[S_n = S_{n-1} + S_{n-2} 
= F_{n} + F_{n-1}
= F_{n+1}.\]

So we need to check that $S_{n}=S_{n-1}+S_{n-2}$ for $n \geq 3$.
Every sequence of steps up the $n$ stairs starts with either
\begin{itemize}
\item a size 1 step, after which there are $S_{n-1}$ ways to climb the remaining $n-1$ stairs; or
\item a size 2 step, after which there are $S_{n-2}$ ways to climb the remaining $n-2$ stairs.
\end{itemize}
By the addition principle for sets, the number of sequences of steps up the $n$ stairs is 
$S_{n-1} + S_{n-2}$, verifying the claim.
\end{proof}

Another example of the Fibonacci numbers occurring in graph theory can be found in 
Example~\ref{EgraphwalkFibo}.

\subsection{The golden ratio and a formula for the Fibonacci numbers}

One drawback with recurrence relations is that it is time-consuming to compute the values in the sequence.
For example, it would be nice to find the value of $F_{100}$, without needing to compute the value of $F_n$ for $1 \leq n \leq 99$.
In this section, we describe a 
\emph{closed form formula} for the Fibonacci numbers.

The \emph{golden ratio} is the number $\alpha = \frac{1+\sqrt{5}}{2}$.
Its \emph{conjugate} 
is $\bar{\alpha} = \frac{1-\sqrt{5}}{2}$.
Note that $\alpha \approx 1.61803\ldots$ and $\bar{\alpha} \approx -0.61803\ldots$. 
Using the quadratic formula, we see that $\alpha$ and $\bar{\alpha}$ are
the roots of the quadratic polynomial 
$f(x)=x^2-x-1$. 

The golden ratio is an important number for the ratio of length to height in Greek architecture and natural objects like sea shells.

\begin{theorem} \label{TclosedFibo}
For all $n \geq 0$, the Fibonacci number $F_n$ is given by the formula
\begin{equation} \label{EclosedFn}
F_n = \frac{1}{\sqrt{5}}\left(\left(\frac{1+\sqrt{5}}{2}\right)^n-\left(\frac{1-\sqrt{5}}{2}\right)^n\right).
\end{equation}
\end{theorem}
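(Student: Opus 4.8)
The plan is to prove the formula by strong induction on $n$, using the fact—already noted in the excerpt—that $\alpha=\frac{1+\sqrt5}{2}$ and $\bar\alpha=\frac{1-\sqrt5}{2}$ are the two roots of $f(x)=x^2-x-1$. The crucial consequence is that each root satisfies $\alpha^2=\alpha+1$ and $\bar\alpha^2=\bar\alpha+1$, and hence, after multiplying these by $\alpha^{n-2}$ and $\bar\alpha^{n-2}$ respectively, that $\alpha^n=\alpha^{n-1}+\alpha^{n-2}$ and $\bar\alpha^n=\bar\alpha^{n-1}+\bar\alpha^{n-2}$. In other words, both geometric sequences $(\alpha^n)$ and $(\bar\alpha^n)$ obey exactly the Fibonacci recurrence, and therefore so does any linear combination of them; the whole content of the theorem is that the particular combination $\frac{1}{\sqrt5}(\alpha^n-\bar\alpha^n)$ \emph{also} matches the correct starting values.

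Write $G_n=\frac{1}{\sqrt5}(\alpha^n-\bar\alpha^n)$ for the right-hand side; the goal is to show $G_n=F_n$ for all $n\ge 0$, where I extend the definition by the convention $F_0=0$. First I would verify the two base cases. For $n=0$ we get $G_0=\frac{1}{\sqrt5}(1-1)=0=F_0$, and for $n=1$ we get $G_1=\frac{1}{\sqrt5}(\alpha-\bar\alpha)=\frac{1}{\sqrt5}\cdot\sqrt5=1=F_1$, since $\alpha-\bar\alpha=\sqrt5$. These two cases anchor the induction.

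For the inductive step, fix $n\ge 2$ and assume (strong induction) that $G_{n-1}=F_{n-1}$ and $G_{n-2}=F_{n-2}$. Using the recurrences for $\alpha^n$ and $\bar\alpha^n$ derived above, I would compute
\[
G_n=\frac{1}{\sqrt5}\bigl(\alpha^n-\bar\alpha^n\bigr)
=\frac{1}{\sqrt5}\bigl(\alpha^{n-1}-\bar\alpha^{n-1}\bigr)+\frac{1}{\sqrt5}\bigl(\alpha^{n-2}-\bar\alpha^{n-2}\bigr)
=G_{n-1}+G_{n-2}.
\]
By the inductive hypothesis this equals $F_{n-1}+F_{n-2}$, which is $F_n$ by the definition of the Fibonacci numbers. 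Hence $G_n=F_n$, completing the induction.

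The computations here are genuinely routine, so there is no deep obstacle; the points that need care are bookkeeping rather than ideas. The main thing to get right is the indexing: the theorem is stated for $n\ge 0$ while the definition in the text begins at $F_1$, so I would explicitly record the convention $F_0=0$ and make sure \emph{both} base cases $n=0$ and $n=1$ are handled, since (as the horse-coloring warning earlier in the chapter illustrates) a single base case never suffices for a two-term recurrence. An optional remark I would add is the motivation for why the answer has this shape: any sequence satisfying $a_n=a_{n-1}+a_{n-2}$ is a linear combination $A\alpha^n+B\bar\alpha^n$ of the two geometric solutions coming from the roots of $x^2-x-1$, and solving $A+B=0$ together with $A\alpha+B\bar\alpha=1$ yields $A=\frac{1}{\sqrt5}$ and $B=-\frac{1}{\sqrt5}$; this foreshadows the systematic treatment of characteristic polynomials promised in Section~\ref{Sotherlinear}.
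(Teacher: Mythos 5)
Your proof is correct, and it takes a genuinely different route from the one the book follows. The text never proves Theorem~\ref{TclosedFibo} at the point where it is stated: it only remarks that a proof by induction is possible, gives a heuristic derivation in Example~\ref{Efiboclosed} of Section~\ref{Sotherlinear} (solving for the constants $z_1,z_2$ via the characteristic polynomial, on the strength of a theorem whose proof is postponed), and finally gives a complete derivation in Chapter~\ref{chap:generatingfunctions} using generating functions, the reversed characteristic polynomial $1-x-x^2$, partial fractions, and the geometric series. Your argument instead verifies the closed form directly: since $\alpha$ and $\bar\alpha$ are roots of $x^2-x-1$, the sequences $(\alpha^n)$ and $(\bar\alpha^n)$ — hence the combination $G_n=\frac{1}{\sqrt{5}}(\alpha^n-\bar\alpha^n)$ — satisfy the Fibonacci recurrence, and the two base cases $G_0=0=F_0$ and $G_1=1=F_1$ pin the sequence down by strong induction. (Your care with the convention $F_0=0$ and with checking both base cases is exactly right; one could add the one-line observation that with this convention the recurrence $F_n=F_{n-1}+F_{n-2}$ holds already at $n=2$, so the induction covers every $n\ge 2$.) What your approach buys is brevity and self-containedness: it uses nothing beyond the quadratic satisfied by $\alpha$ and is available immediately in Chapter~6, whereas the book must defer a real proof for a full chapter. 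What the book's approach buys is discovery and generality: the generating-function derivation produces the formula without having to guess it first, and the same machinery proves the general theorem that any order-$d$ linear recurrence with distinct characteristic roots $r_1,\ldots,r_d$ has solutions $z_1r_1^n+\cdots+z_dr_d^n$. Your closing remark about solving $A+B=0$ and $A\alpha+B\bar\alpha=1$ is precisely the content of the book's Example~\ref{Efiboclosed}, so your write-up in effect supplies the verification step that the book leaves implicit.
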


\begin{remark}
This is a surprising statement!  Initially, the right-hand side of \eqref{EclosedFn} does not even look like an integer!  Also, it is not clear why there is a connection between the Fibonacci numbers and the golden ratio.
\end{remark}

\begin{remark}
Because $|\bar{\alpha}|<1$, the powers of $\bar{\alpha}$ approach $0$. 
As a consequence, $F_n \approx \alpha^n$ for $n$ large.
\end{remark}

We will not prove Theorem~\ref{TclosedFibo} but there is some explanation of why it is true
in Section~\ref{Sotherlinear}, see Example~\ref{Efiboclosed}.  It is also possible to prove it using induction.

\subsection*{Exercises}

\begin{enumerate}

\item Consider the sequence defined by $a_0=2$, $a_1=1$, and for all $n\geq1$, $a_n=a_{n-1}+2a_{n-2}$.			Compute the first several terms of the sequence. What is $a_5$?

\item Let $D_n$ be the number of ways that an $n$-by-$2$ board can be covered by $2$-by-$1$ dominos, so that no dominos overlap.
\begin{enumerate}
\item Show that $D_1 = 1$, $D_2 = 2$, and $D_3 = 3$.  
\item Show that $D_n=F_{n+1}$ for $n \geq 1$.
\end{enumerate}

    \item 
    In a subway car, there is a row of $n$ seats.
    Find the number $S_n$ of ways that the seats can be filled by an arbitrary number of people, so that no two people sit next to each other.
    For example,
    $S_1=2$ since the one seat can be empty or full.  Letting $e$ denote empty and $f$ denote full, we see that $S_2=3$, because the choices are $ee$, $fe$, or $ef$.  Show that 
    $S_n=F_{n+2}$ for $n \geq 3$.

\item 
    A fence is constructed from $n$ posts.
Prove that $F_{n+2}$ is equal to the number of ways to paint each post either silver or navy, in such a way that no two silver posts are next to each other.

\item How many subsets of the set $\{1,2,3,...,9\}$ contain no two consecutive integers?

\item What number do the successive quotients $\frac{F_2}{F_1}$, $\frac{F_3}{F_2}$, $\frac{F_4}{F_3}$, \ldots, $\frac{F_n}{F_{n-1}}$, $\frac{F_{n+1}}{F_n}$, \ldots approach as $n$ gets larger and larger? 

\item
 Using (strong) induction, prove that $F_n\le 2^{n-1}$ for all $n \geq 1$.

\item Here is a table of the sum of the first $n$ Fibonacci numbers.

\begin{center}
\begin{tabular}{|c|c|c|c|c|c|c|c|c|c|}
\hline
$n$ & 1 & 2 & 3 & 4 & 5 & 6 & 7 & 8 \\
\hline
$F_1+F_2+\cdots +F_n$ & 1 & 2 & 4 & 7 & 12 & 20 & 33 & 54 \\
\hline
\end{tabular}
\end{center}

For $n\ge 1$, prove that $F_1+F_2+\cdots+F_n=F_{n+2}-1$ by induction on $n$.





\item Let $F_n$ be the $n$-th Fibonacci number. Show that for all $n\ge 1$ we have \[F_1+F_3+F_5+\cdots+F_{2n-1}=F_{2n}.\]

\item Let $F_n$ be the $n$-th Fibonacci number. Prove that $F_1^2+\cdots+F_n^2=F_nF_{n+1}$ for all $n\ge1$.

\item True or False: Starting with $1,1,2$, the Fibonacci numbers rotate $odd, odd, even$, then $odd, odd, even$, then $odd, odd, even$, etc.

\item What happens to the values of the Fibonacci sequence modulo $3$?

\end{enumerate}

\section{Linear recurrence relations} \label{Sotherlinear}

\begin{videobox}
\begin{minipage}{0.1\textwidth}
\href{https://www.youtube.com/watch?v=8QMJNWQQ0lc}{\includegraphics[width=1cm]{video-clipart-2.png}}
\end{minipage}
\begin{minipage}{0.8\textwidth}
Click on the icon at left or the URL below for this section's short lecture video. \\\vspace{-0.2cm} \\ \href{https://www.youtube.com/watch?v=8QMJNWQQ0lc}{https://www.youtube.com/watch?v=8QMJNWQQ0lc}
\end{minipage}
\end{videobox}

The examples in the last section were all related to the Fibonacci numbers.  In this section, we explore some of the many other recurrence relations. We explain how to implement recurrence relations in SAGE.
We define linear recurrence relations and describe the characteristic polynomial method of solving them.

\subsection{Another recurrence relation}
Here is a problem that introduces a new recurrence relation.

\begin{example}
How many ways can you go up a staircase with 9 steps if you can take 1, 2, or 3 steps at a time? 
\end{example}

\begin{brainstorm} $ $
\begin{center}
\begin{tabular}{|c|c|c|c|c|c|c|c|c|c|c|}
\hline
\# stairs & 1 & 2 & 3 & 4 \\

\hline
 & \textcolor{green}{1} & \textcolor{red}{1,1} & \textcolor{blue}{1,1,1} & 1,\textcolor{blue}{1,1,1} \\
& & \textcolor{red}{2} & \textcolor{blue}{1,2} & 1,\textcolor{blue}{1,2} \\
& & & \textcolor{blue}{2,1} & 1,\textcolor{blue}{2,1} \\
sequence of number of steps taken & & & \textcolor{blue}{3} & 1,\textcolor{blue}{3} \\
& & & & 2,\textcolor{red}{1,1} \\
& & & & 2,\textcolor{red}{2} \\
& & & & 3,\textcolor{green}{1} \\
\hline
\# ways & 1 & 2 & 4 & 7 \\
\hline
\end{tabular}
\end{center}
\end{brainstorm}

\begin{answer}
Let $T_n=\#$ ways to climb a staircase with $n$ steps.
We claim that $T_n=T_{n-1}+T_{n-2}+T_{n-3}$.
The reason is that every sequence of steps up the $n$ stairs starts with either
\begin{itemize}
\item a size 1 step, after which there are $T_{n-1}$ ways to climb the remaining $n-1$ stairs, or
\item a size 2 step, after which there are $T_{n-2}$ ways to climb the remaining $n-2$ stairs, or
\item a size 3 step, after which there are $T_{n-3}$ ways to
climb the remaining $n-3$ stairs.
\end{itemize}

Using this recurrence relation and the base cases found by hand, we compute:
\begin{center}
\begin{tabular}{|c|c|c|c|c|c|c|c|c|c|c|}
\hline
$T_1$ & $T_2$ & $T_3$ & $T_4$ & $T_5$ & $T_6$ & $T_7$ & $T_8$ & $T_9$ \\
\hline
1 & 2 & 4 & 7 & 13 & 24 & 44 & 81 & 149 \\
\hline
\end{tabular}
\end{center}
Hence there are 149 ways to climb this staircase with 9 stairs.
\end{answer}

\begin{question}
Find $T_{10}$, the number of ways to climb $10$ stairs with these rules.
\end{question}

\subsection{SAGE commands for recurrence relations}

It is tedious to use a recurrence relation to compute numerous values in a sequence.
Now we will see how to use SAGE to do this work efficiently.

\begin{videobox}
\begin{minipage}{0.1\textwidth}
\href{https://youtu.be/IstO1arARxI}{\includegraphics[width=1cm]{video-clipart-2.png}}
\end{minipage}
\begin{minipage}{0.8\textwidth}
Click on the icon at left or the URL below for this section's short lecture video. \\\vspace{-0.2cm} \\ \href{https://youtu.be/IstO1arARxI}{https://youtu.be/IstO1arARxI}
\end{minipage}
\end{videobox}

Let's first set-up the recursive definition of the sequence $T_n$. (Note that the indentation of these commands is quite important in SAGE and may not be accurately reflected here.)

Sage commands:

\begin{verbatim}
def T(x):
    if x ==1:
        return 1;
    if x ==2:
        return 2;
    if x==3:
        return 4
    else:
        return T(x-1)+T(x-2)+T(x-3)
\end{verbatim}

Now we can print the first several values of the sequence $T_n$.

\begin{verbatim}
[print(T(n)) for n in [1..10]]
\end{verbatim}

\begin{example}
This is the recursive definition of the fibonacci sequence:
\begin{verbatim}
def fibo(x):
    if x ==1:
        return 1;
    if x ==2:
        return 1;
    else:
        return fibo(x-1)+fibo(x-2)
\end{verbatim}
Use SAGE to print the first 18 Fibonacci numbers.
\end{example}

\begin{example}
\label{ex:wasp}
A colony of wasps is building a nest in your roof.  Let ${\rm wasp}_n$ be the number of worker wasps (other than the queen) after $n$ days.
Suppose ${\rm wasp}_1=1$ and ${\rm wasp}_2=1$.
For $n \geq 3$, suppose that the number of wasps increases by this recurrence relation:
\[{\rm wasp}_n={\rm wasp}_{n-1}+20 \cdot {\rm wasp}_{n-2}.\]
\end{example}

\begin{question}
Set-up the wasp recurrence relation in SAGE and use SAGE to find the number of wasps on days 1-10. 
\end{question}

\begin{question}
What happens if you try to find the 100th entry of any of the sequences above?  Try to compute $T(100)$, ${\rm fibo}(100)$ or ${\rm wasp}(100)$ using SAGE.
\end{question}

\subsection*{Exercises}
\begin{enumerate}

\item Let $a_0,a_1,a_2,\ldots $ be the sequence defined by $a_0=4$,$a_1=5$, and $a_n=a_{n-1}+2a_{n-2}$. Then using the roots of the characteristic polynomial, we know that $a_n$ has the explicit formula $b\cdot2^n+c\cdot(-1)^n$ for some values of $b$ and $c$. Find $b$.															
\item Consider the sequence defined by $L_0=0,$ $L_1=1$, and $L_n=3L_{n-1}-L_{n-2}$. Implement this recursion in Sage in order to compute $L_{20}.$		
															
\end{enumerate} 
\subsection{Solving linear recurrence relations of degree 2}

Suppose $a_n$ is a sequence defined from a recurrence relation
and some initial values.
The computer is very fast at computing the first several entries of $a_n$, but gets very slow at computing entries later in the sequence.
In this section, we explain how to find a closed-form formula for $a_n$ when the recurrence relation is linear of degree $2$.  In the next section, we cover similar material for linear recurrence relations of higher degree.

\begin{definition}
An \emph{linear recurrence relation of order $2$} is an equation of the form
\begin{equation}
\label{Elinearrec2}
a_{n}=c_{1}a_{n-1}+c_{2}a_{n-2}.
\end{equation}
Here the values $c_1$ and $c_2$ are constants and $c_2 \not = 0$.
The \emph{initial values} of a linear recurrence relation of order $2$ are a choice of the numbers 
$a_1$ and $a_2$. 
\end{definition}

For example, the Fibonacci recurrence relation is linear of
order $2$ with constants $c_1=c_2=1$, and initial values $a_1=a_2=1$.
The wasp recurrence relation is linear of order $2$ with constants $c_1=1$ and $c_2=20$ and initial values $a_1=a_2=1$. 

\begin{definition}
The \emph{characteristic polynomial} of \eqref{Elinearrec2} is the degree $2$ polynomial
\[c(x) = x^2 - c_1x - c_2.\] 
\end{definition}

By the quadratic formula, the roots of $c(x)$ are 
\[\frac{c_1 \pm \sqrt{c_1^2 + 4c_2}}{2}.\]
We call these roots $r_1$ and $r_2$.

\begin{theorem} \label{Tlinchar2}
Let $r_1$ and $r_2$ be the roots of the characteristic polynomial.
Suppose that $r_1 \not = r_2$.
Then there are constants $z_1$ and $z_2$ such that, if $n \geq 1$, then
\begin{equation} \label{Esolrec2}
    a_n = z_1 r_1^n + z_2 r_2^n.
    \end{equation}
Furthermore, there is only one choice of $z_1$ and $z_2$ such that \eqref{Esolrec} is true for all $n$.
\end{theorem}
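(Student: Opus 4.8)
The plan is to use the same strategy that works for the Fibonacci numbers in Theorem~\ref{TclosedFibo}: produce explicit ``building-block'' solutions of the recurrence, combine them linearly, and then pin down the coefficients using the initial data. First I would observe that if $r$ is a root of the characteristic polynomial, so that $r^2 = c_1 r + c_2$, then the geometric sequence $(r^n)$ satisfies the recurrence \eqref{Elinearrec2}: multiplying $r^2 = c_1 r + c_2$ by $r^{n-2}$ gives $r^n = c_1 r^{n-1} + c_2 r^{n-2}$. In particular both $(r_1^n)$ and $(r_2^n)$ are solutions. Since \eqref{Elinearrec2} is linear, any sequence of the form $b_n = z_1 r_1^n + z_2 r_2^n$ is again a solution; this is a one-line check substituting into the recurrence. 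Thus, for every choice of constants $z_1, z_2$, the sequence $(b_n)$ obeys the same recurrence as $(a_n)$.

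The second step is to choose $z_1, z_2$ so that $b_1 = a_1$ and $b_2 = a_2$. This amounts to solving the linear system
\[
z_1 r_1 + z_2 r_2 = a_1, \qquad z_1 r_1^2 + z_2 r_2^2 = a_2,
\]
whose coefficient matrix has determinant
\[
\det \begin{pmatrix} r_1 & r_2 \\ r_1^2 & r_2^2 \end{pmatrix} = r_1 r_2^2 - r_2 r_1^2 = r_1 r_2 (r_2 - r_1).
\]
The crucial point---and the step I expect to be the main obstacle---is showing this determinant is nonzero, as this is exactly where both hypotheses of the theorem are used. The hypothesis $r_1 \neq r_2$ handles the factor $r_2 - r_1$, while the factor $r_1 r_2$ is nonzero because expanding $c(x) = (x-r_1)(x-r_2) = x^2 - (r_1+r_2)x + r_1 r_2$ shows that the product of the roots equals $-c_2$, which is nonzero by assumption. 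Since the determinant is nonzero, the system has a unique solution $(z_1, z_2)$, giving both the existence of suitable constants and their uniqueness.

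Finally I would conclude by strong induction that $a_n = b_n$ for all $n \geq 1$, which establishes formula \eqref{Esolrec2}. The base cases $n=1$ and $n=2$ hold by the construction of $z_1, z_2$. For the inductive step, if $a_i = b_i$ for all $i < n$ with $n \geq 3$, then
\[
a_n = c_1 a_{n-1} + c_2 a_{n-2} = c_1 b_{n-1} + c_2 b_{n-2} = b_n,
\]
using that both sequences satisfy \eqref{Elinearrec2}. The uniqueness clause then follows directly from the uniqueness of the solution to the linear system: any pair $(z_1, z_2)$ making \eqref{Esolrec2} hold for all $n$ must in particular reproduce $a_1$ and $a_2$, hence must solve that system, hence equals the unique solution already found.
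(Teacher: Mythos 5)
Your proof is correct, but it takes a genuinely different route from the one in the text. The book does not prove Theorem~\ref{Tlinchar2} directly in Chapter~\ref{chap:recurrence}; it defers the proof (in its general order-$d$ form) to Chapter~\ref{chap:generatingfunctions}, where the argument runs through generating functions: multiplying $A(x)=\sum_{n} a_n x^n$ by the reversed characteristic polynomial $1-c_1x-c_2x^2$ kills all coefficients in degree $\geq 2$, so $A(x)=p(x)/\bigl((1-r_1x)(1-r_2x)\bigr)$ for a polynomial $p(x)$, and then partial fractions plus the geometric series yield the form $a_n=z_1r_1^n+z_2r_2^n$. Your argument instead verifies directly that the geometric sequences $(r_1^n)$ and $(r_2^n)$ satisfy the recurrence, observes that linear combinations do as well, solves the $2\times 2$ system coming from the initial values $a_1,a_2$, and propagates agreement by induction. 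What your approach buys: it is elementary and self-contained (no generating-function machinery), it shows exactly where both hypotheses enter --- $r_1\neq r_2$ and $c_2\neq 0$ are precisely what make the determinant $r_1r_2(r_2-r_1)$ nonzero, since $r_1r_2=-c_2$ --- and it handles the uniqueness clause cleanly and explicitly, whereas the text's generating-function proof establishes existence and leaves uniqueness essentially implicit. What the paper's approach buys: it extends verbatim to order $d$ without needing the nonvanishing of a general Vandermonde determinant (which your route would require), and it ties the theorem into that chapter's program of reading off coefficient formulas from closed forms of generating functions.
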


In general, the way to find $z_1$ and $z_2$ is to use \eqref{Esolrec2} when $n=1$ and $n=2$.  We solve for $z_1$ and $z_2$, using that we know $a_1,a_2,r_1,r_2$ using these equations:
\[a_1 = z_1 r_1^1 + z_2 r_2^1 \text{ and } a_2 = z_1 r_1^2 + z_2 r_2^2.\]
For example, the characteristic polynomial for the Fibonacci recurrence relation is $c(x)=x^2-x-1$.  Its roots are the golden ratio $\alpha = (1+\sqrt{5})/2$
and its conjugate $\bar{\alpha} = (1-\sqrt{5})/2$.

We explain how to find the roots and the numbers $z_1$ and $z_2$ in SAGE.
First, we define a ring of polynomials, define the characteristic polynomial $c(x)$ and use SAGE to find its roots.

\begin{verbatim}
R = PolynomialRing(RR, 'x')
c=x^2-x-1
c.roots();
\end{verbatim}

The output looks a little complicated. This notation tells us what the roots are and that each root happens exactly once.
\begin{verbatim}
[(-1/2*sqrt(5) + 1/2, 1), (1/2*sqrt(5) + 1/2, 1)]
\end{verbatim}

Let's extract the roots from this complicated notation (switching their order to make things look better at the end).  Remembering that lists are indexed starting at $0$, for the root $r_2$ we take the $0$th entry and then the $0$th entry of that;
for the root $r_1$ we take the $1$st entry and then the $0$th entry of that.

\begin{verbatim}
r2=c.roots()[0][0];
r1=c.roots()[1][0];
\end{verbatim}

Now we need to find the constants $z_1$ and $z_2$.  To do this, we define them as variables and use \eqref{Esolrec2} when $n=1$ and $n=2$.

\begin{verbatim}
var('z1,z2')
eqnfibo = [z1*r1 + z2*r2==1, z1*r1^2 + z2*r2^2==1]; 
Efibo = solve(eqnfibo, z1,z2); Efibo
\end{verbatim}

This shows us that $z_1=(1/5)\sqrt{5} = 1/\sqrt{5}$ and $z_2=-(1/5)\sqrt{5} = -1/\sqrt{5}$.
We then define the closed form formula, use it to check the case when $n=3$ and then find the $100$th Fibonacci number.

\begin{verbatim}
s=sqrt(5)
def Fibo(n): return expand((1/s)*(r1^n - r2^n));
Fibo(3);
Fibo(100);
\end{verbatim}

\begin{question}
Find the characteristic polynomial of the wasp recurrence relation from Example~\ref{ex:wasp}.
Use SAGE to find its roots and then the constants $z_1$ and $z_2$.  
Check your work by computing the third wasp number.  Find the 100th wasp number.
\end{question}

\subsection{Linear recurrence relations of higher order}

More generally, we can set up a recurrence relation as follows.
Fix a positive integer $d$. 

\begin{definition}
An \emph{linear recurrence relation of order $d$} is an equation of the form
\begin{equation}\label{Elinearrec}
a_{n}=c_{1}a_{n-1}+c_{2}a_{n-2}+\cdots +c_{d}a_{n-d}.
\end{equation}
Here the values $c_1, \ldots, c_d$ are constants and $c_d \not = 0$.
The \emph{initial values} of a linear recurrence relation of order $d$ are a choice of the numbers 
$a_1, \ldots, a_{d-1}$.
\end{definition}

Sometimes this is called an order $d$ 
homogeneous linear recurrence with constant coefficients; what a mouthful!

The stair climbing problem in this section is a linear recurrence relation of order $d=3$ with constants
$c_1=c_2=c_3=1$, and initial values $a_1=1$, $a_2 =2$, $a_3=4$.

Given a linear recurrence relation of order $d$ and initial values $a_1, \ldots, a_{d-1}$, we briefly describe how to find a closed form formula for the values $a_n$ for large $n$.

\begin{definition}\label{def:charpoly}
The \emph{characteristic polynomial} of \eqref{Elinearrec} is the degree $d$ polynomial
\[c(x) = x^d - c_1x^{d-1} - \cdots - c_{d-1} x - c_d.\] 
\end{definition}

\begin{theorem}
Let $r_1, \ldots, r_d$ be the roots of the characteristic polynomial.  Suppose that the $d$ roots are all different.
Then there are constants $z_1, \ldots, z_d$ such that, if $n \geq 1$, then
\begin{equation} \label{Esolrec}
    a_n = z_1 r_1^n + \cdots z_d r_d^n.
    \end{equation}
Furthermore, there is only one choice of constants $z_1, \ldots, z_n$ such that \eqref{Esolrec} is true for all $n$.
\end{theorem}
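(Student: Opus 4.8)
The plan is to view the collection of sequences satisfying \eqref{Elinearrec} as a $d$-dimensional space: such a sequence is completely determined once its first $d$ values are fixed, and I will show the closed-form expressions in \eqref{Esolrec} are flexible enough to realize any prescribed first $d$ values. First I would check that, for each root $r_i$, the pure power sequence $n \mapsto r_i^n$ already satisfies the recurrence. Since $c(r_i)=0$, the defining relation of the characteristic polynomial (Definition~\ref{def:charpoly}) gives $r_i^d = c_1 r_i^{d-1} + \cdots + c_{d-1} r_i + c_d$; multiplying through by $r_i^{n-d}$ yields $r_i^n = c_1 r_i^{n-1} + \cdots + c_d r_i^{n-d}$, which is exactly \eqref{Elinearrec} for this sequence. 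Because the recurrence is linear and homogeneous, any combination $b_n = z_1 r_1^n + \cdots + z_d r_d^n$ satisfies it as well, for arbitrary constants $z_1,\ldots,z_d$.

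Next I would choose the constants so that $b_n$ agrees with $a_n$ on the first $d$ terms. Requiring $b_j = a_j$ for $j=1,\ldots,d$ produces the linear system
\[
a_j = z_1 r_1^{\,j} + z_2 r_2^{\,j} + \cdots + z_d r_d^{\,j}, \qquad j = 1,\ldots,d.
\]
The crux is to show this system has a unique solution in the $z_i$. Its coefficient matrix has $(j,i)$ entry $r_i^{\,j}$; factoring one power of $r_i$ out of the $i$th column exhibits the determinant as $\left(\prod_{i=1}^d r_i\right)$ times a Vandermonde determinant $\prod_{1 \le i < i' \le d}(r_{i'} - r_i)$. The Vandermonde factor is nonzero precisely because the roots are assumed distinct, and the product $\prod_i r_i$ is nonzero because it equals $\pm c_d$ and the definition of \eqref{Elinearrec} requires $c_d \neq 0$ (so $0$ is never a root). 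Hence the matrix is invertible, and the constants $z_1,\ldots,z_d$ exist and are unique.

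Finally, with these constants fixed, both $a_n$ and $b_n$ obey the same order-$d$ recurrence and agree for $n=1,\ldots,d$; since each later term is determined by the $d$ preceding ones, a strong-induction argument in the spirit of Lemma~\ref{lem:stairs} shows $a_n = b_n$ for every $n \ge 1$. This establishes \eqref{Esolrec}, and the uniqueness of $z_1,\ldots,z_d$ follows from the invertibility already proved. The main obstacle is this invertibility step: everything hinges on recognizing the Vandermonde structure and on genuinely using \emph{both} hypotheses, distinct roots for the Vandermonde factor and $c_d \neq 0$ for the product-of-roots factor. (I would also note that, to match the order-$2$ case, the theorem should take $d$ initial values $a_1,\ldots,a_d$ rather than $a_1,\ldots,a_{d-1}$ as the surrounding definition states.)
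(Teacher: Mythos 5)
Your proof is correct, but it takes a genuinely different route from the one in the paper. The paper deliberately postpones this theorem to Chapter~\ref{chap:generatingfunctions} and proves it with generating functions: writing $A(x)=\sum_{n\ge 0}a_nx^n$, it multiplies by the reversed characteristic polynomial $r(x)=1-c_1x-\cdots-c_dx^d$, notes that the recurrence forces $r(x)A(x)$ to be a polynomial $p(x)$, factors $r(x)=(1-r_1x)\cdots(1-r_dx)$, and then uses partial fractions (this is where distinctness of the roots enters) together with the geometric series to read off the coefficient $z_1r_1^n+\cdots+z_dr_d^n$. You instead work directly in the space of solutions of the recurrence: each power sequence $r_i^n$ solves it because $c(r_i)=0$, linear combinations solve it by homogeneity, and matching the first $d$ terms of $a_n$ is a linear system whose coefficient matrix is a column-scaled Vandermonde matrix, invertible exactly because the roots are distinct (Vandermonde factor) and nonzero (the factor $\prod_i r_i=\pm c_d\neq 0$); strong induction then propagates the agreement to all $n$. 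Your route is more elementary --- it needs no generating-function machinery and could be given already in Chapter~\ref{chap:recurrence} --- and it makes the uniqueness assertion completely transparent, since it is precisely the invertibility of that system; the paper's partial-fraction argument establishes existence but leaves the ``only one choice of $z_1,\ldots,z_d$'' claim essentially implicit. What the paper's route buys is the illustration that generating functions mechanically solve such recurrences, the same technique it then applies to the Catalan numbers. Your closing remark is also a correct catch: an order-$d$ recurrence needs $d$ initial values, so the surrounding definition's list $a_1,\ldots,a_{d-1}$ should indeed be $a_1,\ldots,a_d$.
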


We won't see the proof of this theorem until Chapter \ref{chap:generatingfunctions}.  Here is how to use the theorem to find an explicit formula for the Fibonacci numbers without using SAGE.

\begin{example} 
\label{Efiboclosed}
The characteristic polynomial of the Fibonacci recurrence relation is $c(x)=x^2-x -1$.  By the quadratic formula, this has roots $r_1 = (1+\sqrt{5})/2$ and 
$\bar{\alpha}=(1-\sqrt{5})/2$.
So $a_n = z_1 \alpha +z_2 \bar{\alpha}$.
Substituting $n=1$ and $n=2$, we obtain the following system of equations for $z_1$ and $z_2$:
\begin{eqnarray*} 
1 & = & z_1 \frac{1+\sqrt{5}}{2} + z_2 \frac{1-\sqrt{5}}{2} \\
1 & = & z_1 \left(\frac{1+\sqrt{5}}{2}\right)^2 + z_2 \left(\frac{1-\sqrt{5}}{2}\right)^2
\end{eqnarray*}
A long computation shows that $z_1=1/\sqrt{5}$ and $z_2=-1/\sqrt{5}$.
This shows where Theorem~\ref{TclosedFibo} comes from!
\end{example}

\subsection*{Exercises}

\begin{enumerate}

\item Define the \emph{Lucas numbers} by $L_0=2$, $L_1=1$, and $L_{n}=L_{n-1}+L_{n-2}$ for $n\ge 2$. Find $L_{10}$.

\item How many ways are there to climb a staircase with $9$ steps if you can take either one or three steps at a time?
\begin{enumerate}
    \item Find how many ways to climb $n=1,2,3,4$ steps 
    if you can take either one or three stairs at a time.
    \item Find a recurrence relation for this problem and explain why it is true. 
    \item 
Compute what happens for $n=9$ steps using the recurrence relation.
\end{enumerate}

\item In how many ways can you cover a $3 \times 10$ grid of squares with identical dominoes, where each domino is of size $3 \times 1$ (or $1 \times 3$ if you turn it sideways), and you must use exactly $10$ dominoes?\\
\emph{Remark: Each domino covers 3 squares, and so the 10 dominoes cover the 30 squares exactly}.

\item
You have \$9 dollars to spend. Each day at lunch you buy exactly one item: either an apple for \$1, or a yogurt for \$2, or a sandwich for \$4. You continue buying one item each day until you have exactly \$0. This could take anywhere from 9 days, if you buy 9 apples in a row, to 3 days, if you buy 2 sandwiches in a row then an apple. Buying two sandwiches then an apple is different from buying an apple then two sandwiches. In how many different ways can you spend your money?

\item
 You have $n$ dollars to spend. Each day you buy exactly one item: either a banana for \$1 or a bagel for \$4, and you continue until you have exactly \$0. Let $S_n$ be the number of ways you can spend your $n$ dollars (buying a banana the first day and a bagel the second is different from a bagel the first day and a banana the second). Find and explain a recurrence relation for $S_n$.

\item Solve the linear recurrence relation of order $2$ given by
$a_n = 5a_{n-1} -6a_{n-2}$ if $a_1=5$ and $a_2=13$.

\end{enumerate}

\section{The tower of Hanoi}\label{sec:hanoi}

The tower of Hanoi is a game with $3$ pegs.  On the left hand peg, there are disks of different sizes (each with a hole in its center)
stacked in order of size from largest on the bottom to smallest on the top.
On each move, you can transfer one of the disks to another peg, without ever putting a larger disk on top of a smaller one.  The object of the game is to move all the disks to another peg.

\begin{videobox}
\begin{minipage}{0.1\textwidth}
\href{https://www.youtube.com/watch?v=2HJqtgiD6Y8&list=PL5J6K3znOvOmzBUoxlk-W0N4j7L1Y9yfW&index=20}{\includegraphics[width=1cm]{video-clipart-2.png}}
\end{minipage}
\begin{minipage}{0.8\textwidth}
Click on the icon at left or the URL below for this section's short lecture video. \\\vspace{-0.2cm} \\ \href{https://www.youtube.com/watch?v=2HJqtgiD6Y8}{https://www.youtube.com/watch?v=2HJqtgiD6Y8}
\end{minipage}
\end{videobox}

\begin{question}
Can the tower of Hanoi be represented as a recurrence relation?
\end{question}
\begin{brainstorm}
Let $H_n$ be the minimal number of moves needed to finish the game for $n$ disks.

$H_1=1$: if there is one disk, just move it to another peg.

$H_2=3$: first move the small disk to the middle peg, then move the big disk to the right peg, then put the small disk on top of the big disk on the right peg.

In the exercises, we will show that
$H_3 = 7$ and $H_4 = 15$.
\end{brainstorm}

\begin{lemma} \label{LHanoirec}
The recurrence relation for the tower of Hanoi is $H_1=1$ and $H_n = 2H_{n-1} + 1$ for $n\ge 2$.
\end{lemma}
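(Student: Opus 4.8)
The plan is to treat the two clauses of the lemma separately. The base case $H_1=1$ is immediate: a single disk can be transferred to another peg in one move, and no solution uses zero moves, so the minimum is exactly $1$. The substance of the lemma is the recurrence $H_n = 2H_{n-1}+1$ for $n \ge 2$, which I would establish by proving the two inequalities $H_n \le 2H_{n-1}+1$ and $H_n \ge 2H_{n-1}+1$ separately, since $H_n$ is defined as a minimum over all valid solutions.

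For the upper bound, I would exhibit an explicit strategy that finishes the $n$-disk game in $2H_{n-1}+1$ moves, which shows the minimum is no larger. Suppose the disks start on the left peg and we wish to move them to the right peg. First, transfer the top $n-1$ disks from the left peg to the middle (spare) peg; this is itself an $(n-1)$-disk instance of the game, so it can be completed in $H_{n-1}$ moves. Crucially, the largest disk resting at the bottom of the left peg never obstructs these moves, because it is larger than every other disk and so no rule is violated by leaving it in place or stacking a smaller disk elsewhere. Next, move the largest disk from the left peg onto the now-empty right peg in a single move. Finally, transfer the $n-1$ disks from the middle peg onto the right peg in another $H_{n-1}$ moves. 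The total is $H_{n-1}+1+H_{n-1}=2H_{n-1}+1$.

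For the lower bound, I would argue that every valid solution uses at least $2H_{n-1}+1$ moves. The key observation is that the largest disk can be placed on a peg only when that peg holds no other disk, since every other disk is smaller. Now consider any valid solution. The largest disk starts on the left peg and ends on the right peg, so it is moved at least once. Consider its \emph{first} move: just before it, the peg it moves onto must be empty of all smaller disks, so at that instant the $n-1$ smaller disks are all gathered on the single remaining peg; reaching this configuration from the start is an $(n-1)$-disk relocation and hence costs at least $H_{n-1}$ moves. Symmetrically, consider the \emph{last} move of the largest disk, after which it sits permanently on the right peg: at that instant the $n-1$ smaller disks are again gathered on a single peg and must still be relocated onto the right peg, costing at least $H_{n-1}$ further moves. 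These two blocks of moves are disjoint and are separated by at least one move of the largest disk itself, giving at least $2H_{n-1}+1$ moves in all.

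The main obstacle is making the lower-bound ``gathering'' argument fully rigorous — specifically, justifying that each of the two blocks genuinely constitutes a complete $(n-1)$-disk relocation, so that the bound $H_{n-1}$ applies to each. I would handle this by observing that while the largest disk is stationary it is irrelevant to the smaller disks' legality constraints, so any contiguous stretch of moves not involving the largest disk is a legal sequence in the $(n-1)$-disk game on the three pegs; the definition of $H_{n-1}$ as a minimum then applies to each block. Combining the two inequalities yields $H_n = 2H_{n-1}+1$, completing the proof.
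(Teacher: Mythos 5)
Your proposal is correct, and it is genuinely more complete than the argument the paper gives. The paper's proof consists only of the strategy you use for your upper bound: transfer the top $n-1$ disks (asserted, ``by definition,'' to take $H_{n-1}$ moves), move the biggest disk, and transfer the $n-1$ disks back on top, for $2H_{n-1}+1$ moves in total. That phrasing quietly treats the recursive strategy as if it were forced on any optimal solution, so it really establishes only $H_n \le 2H_{n-1}+1$; the reverse inequality, which is needed because $H_n$ is a \emph{minimum}, is never argued. Your split into two inequalities supplies exactly this missing half: the observation that just before the first move of the largest disk, and just after its last move, the $n-1$ smaller disks must be stacked on a single peg, together with the remark that moves not involving the largest disk are legal moves of the $(n-1)$-disk game, gives the lower bound $H_n \ge 2H_{n-1}+1$. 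What the paper's version buys is brevity and the intuitive recursive picture; what yours buys is an actual proof that the strategy is optimal, which is the mathematical content of the word ``minimal'' in the definition of $H_n$.
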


\begin{proof}
By definition, it takes $H_{n-1}$ moves to transfer all the disks except the biggest one to another peg.  Then it takes $1$ move to move the biggest disk to the empty peg.  Then, by definition, it takes $H_{n-1}$ moves to transfer all the other disks on top of the biggest one.
\end{proof}

\begin{lemma}
A closed form formula for the minimal number of moves in the tower of Hanoi game is 
$H_n=2^n-1$.
\end{lemma}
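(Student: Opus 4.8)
The plan is to prove the closed form $H_n = 2^n - 1$ by induction on $n$, using the recurrence relation established in Lemma~\ref{LHanoirec}. Let $P_n$ denote the statement that $H_n = 2^n - 1$. The base case $P_1$ is immediate: by definition $H_1 = 1 = 2^1 - 1$. For the inductive step, I would let $k \geq 1$ be arbitrary, assume the inductive hypothesis $H_k = 2^k - 1$, and then apply the recurrence $H_{k+1} = 2 H_k + 1$ from Lemma~\ref{LHanoirec}. Substituting the inductive hypothesis gives
\[
H_{k+1} = 2(2^k - 1) + 1 = 2^{k+1} - 2 + 1 = 2^{k+1} - 1,
\]
which is exactly $P_{k+1}$. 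By induction, $P_n$ holds for all $n \geq 1$.

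An alternative approach, which I would mention as a second method, is to \emph{unroll} the recurrence directly rather than guess-and-verify. Repeatedly substituting $H_m = 2 H_{m-1} + 1$ yields
\[
H_n = 2 H_{n-1} + 1 = 2^2 H_{n-2} + 2 + 1 = \cdots = 2^{n-1} H_1 + \left(2^{n-2} + 2^{n-3} + \cdots + 2 + 1\right).
\]
Since $H_1 = 1$, this becomes $H_n = 2^{n-1} + (2^{n-1} - 1) = 2^n - 1$, where I have used the sum of powers of two from Lemma~\ref{Lpower2induct}, namely $1 + 2 + \cdots + 2^{n-2} = 2^{n-1} - 1$. This route has the advantage of explaining \emph{why} the formula is $2^n - 1$ rather than merely confirming a conjectured answer.

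There is essentially no serious obstacle here, since the recurrence is linear of order one with a constant forcing term, so the induction is routine. The only point requiring mild care is bookkeeping in the unrolling argument: one must track the geometric-series remainder correctly and invoke Lemma~\ref{Lpower2induct} with the right upper index. For the cleanest writeup I would present the direct induction as the main proof, since it is shortest and self-contained, and leave the unrolling computation as an illuminating remark.
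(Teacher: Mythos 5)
Your proposal is correct, and your main argument (induction on $n$ using the recurrence $H_n = 2H_{n-1}+1$ from Lemma~\ref{LHanoirec}) is exactly the proof the paper gives. The unrolling argument you sketch as a second method is also valid and a nice addition, but the core approach is the same.
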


\begin{proof}
The formula is true in the base case $n=1$.
The inductive hypothesis is that 
$H_{n-1} = 2^{n-1} -1$.
By Lemma~\ref{LHanoirec}, 
$H_n=2 H_{n-1} + 1$.
So $H_n=2(2^{n-1} -1) + 1 = 2^n -2 +1 = 2^n-1$.  So the result is true by induction.
\end{proof}

\subsection*{Exercises}
\begin{enumerate}

\item How many steps does it take to solve the Tower of Hanoi problem with 5 disks?										
								
\item True or false: the recursion for the Tower of Hanoi problem is a homogeneous linear recurrence that can be solved with the methods of section \ref{Sotherlinear}.

\item When $n=3$, label the disks $d_1$ (smallest) to $d_3$ (largest).  Write out a sequence of $7$ moves to show that $H_3=7$.
    \item 
    When $n=4$, label the disks $d_1$ (smallest) to $d_4$ (largest).
    Write out a sequence of $15$ moves to show that $H_4=15$.
    \item In the sequences of moves found in problems 1 and 2, which disks does $d_1$ rest on along the way?
    \item When $n=5$,
    label the disks $d_1$ (smallest) to $d_5$ (largest).  In order to solve the tower of Hanoi in 31 steps, what rules do you need to follow about
    where the disks $d_1$ and $d_2$ can rest along the way?
    
\item In order to solve the tower of Hanoi in the minimal number of steps, the moves must be chosen carefully.  Write an algorithm to solve the tower of Hanoi most efficiently.   
\end{enumerate}

\section{Regions of the plane} \label{sec:regions-plane}

\begin{videobox}
\begin{minipage}{0.1\textwidth}
\href{https://www.youtube.com/watch?v=CSUmdpc5dB8}{\includegraphics[width=1cm]{video-clipart-2.png}}
\end{minipage}
\begin{minipage}{0.8\textwidth}
Click on the icon at left or the URL below for this section's short lecture video. \\\vspace{-0.2cm} \\ \href{https://www.youtube.com/watch?v=CSUmdpc5dB8}{https://www.youtube.com/watch?v=CSUmdpc5dB8}
\end{minipage}
\end{videobox}

On a piece of paper, draw non-parallel lines so that at most $2$ lines intersect at a point.
We define $P_n$ to be the number of regions formed if you draw $n$ such lines.

\begin{example} $ $
\begin{figure}[h]
\begin{center}
\subfigure[$P_1=2$]{\includegraphics[width=1.4in]{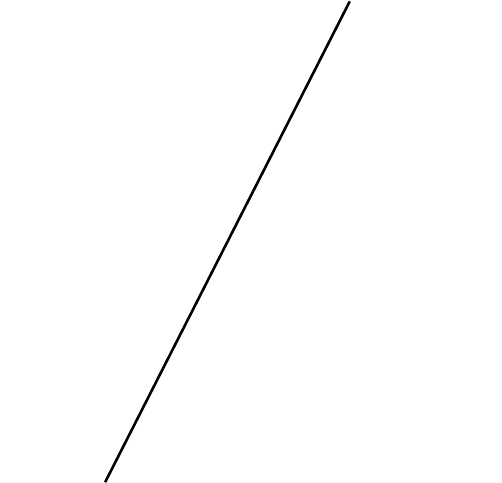}}
\hspace{1mm}
\subfigure[$P_2=4=P_1+2$]{\includegraphics[width=1.4in]{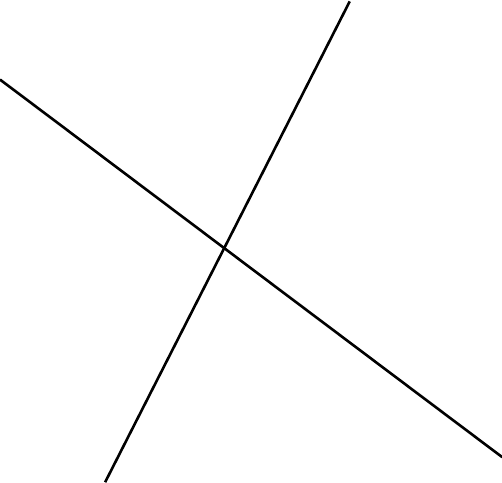}}
\hspace{1mm}
\subfigure[$P_3=7=P_2+3$]{\includegraphics[width=1.4in]{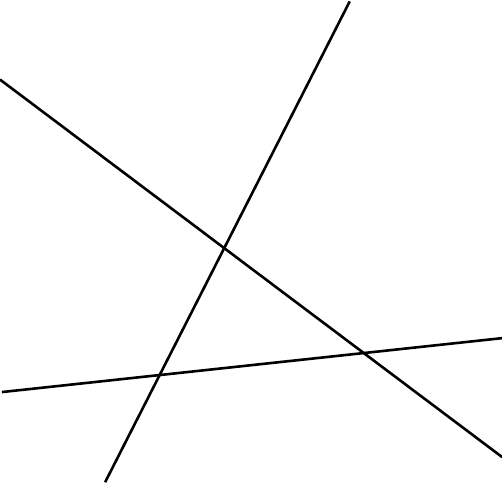}}
\hspace{1mm}
\subfigure[$P_4=11=P_3+4$]{\includegraphics[width=1.4in]{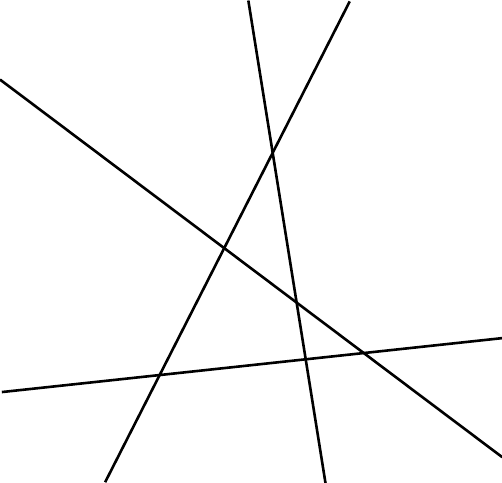}}
\end{center}
\end{figure} 
\end{example}

\begin{lemma}
The recurrence relation for the number $P_n$ of regions formed by drawing $n$ non-parallel lines so that at most two lines intersect in a point is $P_0=1$ and  $P_n=P_{n-1}+n$ for $n \geq 1$.
\end{lemma}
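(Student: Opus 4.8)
The plan is to establish the base case directly and then prove the recurrence by analyzing what happens when a single new line is added to an existing arrangement. First I would observe that $P_0 = 1$ is immediate: with no lines drawn, the entire plane is a single undivided region. The substance of the lemma is the relation $P_n = P_{n-1} + n$, which I would prove by the incremental counting argument of showing that the $n$-th line adds exactly $n$ new regions to any valid arrangement of $n-1$ lines.

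The key step is to count the intersection points of the new line with the existing ones. Since the lines are non-parallel, the $n$-th line must cross each of the other $n-1$ lines, so it meets them in $n-1$ points; since no three lines are concurrent (at most two lines meet at any point), these $n-1$ intersection points are all distinct and all lie on the new line. I would then use the elementary fact that $n-1$ distinct points on a line partition it into $n$ pieces (two infinite rays at the ends and $n-2$ bounded segments in between). Each such piece lies inside a single region of the previous arrangement, because the interior of a piece crosses none of the old lines, and crossing an old line is precisely what separates one old region from an adjacent one.

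The heart of the argument — and the step I expect to be the main obstacle to make fully rigorous — is justifying that each of these $n$ pieces splits the region containing it into exactly two regions, thereby contributing exactly one new region. Intuitively, a connected arc passing through a region, with its two ends on the region's boundary (or escaping to infinity), cuts that region into two connected pieces; summing over the $n$ pieces gives $n$ new regions. At the level of this text I would present this as a clear geometric observation, emphasizing that consecutive pieces are separated by an intersection point with an existing line and hence lie in distinct adjacent regions, so there is no double-counting. A more careful treatment would invoke a connectivity argument (a single arc with endpoints on the boundary of a connected planar region disconnects it into exactly two components), but I would keep the exposition at the intuitive level appropriate to the chapter and simply conclude that adding the $n$-th line increases the region count by $n$, giving $P_n = P_{n-1} + n$ for all $n \geq 1$.
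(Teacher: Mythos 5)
Your proposal is correct and follows essentially the same approach as the paper's proof: both establish $P_0=1$ directly and then argue that the $n$-th line, crossing the $n-1$ existing lines in distinct points, splits $n$ of the old regions in two. Your version simply spells out the intermediate step (the $n-1$ intersection points cut the new line into $n$ pieces, each lying in and bisecting one old region) that the paper's terser proof leaves implicit.
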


\begin{proof}
If we have no lines, there is just one region, the entire plane, so $P_0=1$.  Then, suppose we start with $n-1$ lines, for which there are $P_{n-1}$ regions. The $n$-th line crosses $n-1$ other lines, and hence divides $n$ of these regions in two. This shows that $P_n=P_{n-1}+n$.
\end{proof}

\begin{example}
Using the recurrence relation, we can compute more values of $P_n$.
\begin{center}
\begin{tabular}{|c|c|c|c|c|c|c|c|c|c|c|c|}
\hline
$P_0$ & $P_1$ & $P_2$ & $P_3$ & $P_4$ & $P_5$ & $P_6$ & $P_7$ & $P_8$ & $P_9$ & $P_{10}$ \\
\hline
1 & 2 & 4 & 7 & 11 & 16 & 22 & 29 & 37 & 46 & 56 \\
\hline
\end{tabular}
\end{center}
\end{example}

\begin{proposition} 
A closed form formula for the number of regions is
 $P_n=\frac{n^2+n+2}{2}$ for all $n\ge1$.
 \end{proposition}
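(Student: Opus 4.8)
The plan is to solve the recurrence explicitly and then recognize the closed form, using Gauss's formula for the triangular numbers (Lemma~\ref{lem:Tn}). The key observation is that the recurrence $P_n = P_{n-1} + n$ can be rewritten as $P_n - P_{n-1} = n$, so that the successive differences of the sequence are exactly $1, 2, 3, \ldots$. First I would telescope: summing these differences from $i=1$ up to $i=n$ collapses the left-hand side to $P_n - P_0$, giving
\[
P_n - P_0 = \sum_{i=1}^{n} (P_i - P_{i-1}) = \sum_{i=1}^{n} i = \frac{n(n+1)}{2},
\]
where the last equality is Lemma~\ref{lem:Tn}. Since the base value is $P_0 = 1$, this yields $P_n = 1 + \frac{n(n+1)}{2}$, and a one-line algebraic simplification
\[
1 + \frac{n(n+1)}{2} = \frac{2 + n^2 + n}{2} = \frac{n^2 + n + 2}{2}
\]
gives the desired formula.

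As an alternative (and perhaps more in keeping with the induction-heavy style of this chapter), I would instead argue directly by induction on $n$. The base case $n=1$ checks that $\frac{1^2 + 1 + 2}{2} = 2 = P_1$. For the inductive step, I would assume $P_k = \frac{k^2 + k + 2}{2}$ and compute
\[
P_{k+1} = P_k + (k+1) = \frac{k^2 + k + 2}{2} + (k+1) = \frac{k^2 + 3k + 4}{2},
\]
then verify that this equals $\frac{(k+1)^2 + (k+1) + 2}{2}$ by expanding the numerator. Either route is short; I would likely present the telescoping argument as the main proof since it explains \emph{why} the triangular numbers appear, and it reuses a result already established in the text.

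There is no serious obstacle here: the content is entirely routine algebra once the recurrence is unwound. The only points requiring care are bookkeeping ones — making sure the telescoping sum starts at the correct index so that $P_0$ (rather than $P_1$) is the leftover boundary term, and confirming that the stated range $n \geq 1$ is consistent with the base case $P_0 = 1$ used in the recurrence. If I use induction instead, the analogous care is simply to align the inductive hypothesis and the target expression after expanding $(k+1)^2$.
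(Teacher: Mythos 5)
Your proposal is correct, and your preferred route differs from the paper's. The paper proves this proposition by induction on $n$ --- exactly your second, alternative argument, down to the same algebra: base case $P_1 = 2$, then $P_{k+1} = P_k + (k+1) = \frac{k^2+k+2}{2} + \frac{2k+2}{2} = \frac{k^2+3k+4}{2}$. Your main proof, by contrast, telescopes the recurrence $P_i - P_{i-1} = i$ and invokes Lemma~\ref{lem:Tn} to evaluate $\sum_{i=1}^n i$, starting from $P_0 = 1$ (which the paper's recurrence lemma does establish, so that boundary term is available to you). The telescoping argument is a genuine derivation: it produces the formula $P_n = 1 + \frac{n(n+1)}{2}$ rather than merely verifying a formula guessed in advance, and it makes transparent the structural fact that $P_n$ is one more than the $n$th triangular number. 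The paper's induction is self-contained, requires no appeal to an earlier lemma, and fits the pedagogical thrust of the surrounding chapter (the section on recurrences repeatedly models proof by induction), but it gives no hint of where the closed form comes from. Both are complete and correct; the only care needed in your version is the index bookkeeping you already flagged, namely that the sum starts at $i=1$ so the leftover term is $P_0$, not $P_1$.
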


\begin{proof}
One way to prove this is by induction.\\

\noindent\emph{Base case.} Note $\displaystyle P_1=2=\frac{1^2+1+2}{2}$.\\

\noindent\emph{Inductive step.} Assume $\displaystyle P_n=\frac{n^2+n+2}{2}$.
To prove: $\displaystyle P_{n+1}=\frac{(n+1)^2+(n+1)+2}{2}=\frac{n^2+3n+4}{2}$.

Note
\begin{align*}
P_{n+1} &= P_n + (n+1) &&\text{by the recurrence relation}\\
&=\frac{n^2+n+2}{2}+(n+1)&&\text{by the inductive assumption}\\
&=\frac{n^2+n+2}{2}+\frac{2n+2}{2}\\
&=\frac{n^2+3n+4}{2}.
\end{align*}
Hence we are done by induction.
\end{proof}

\subsection*{Exercises}

\begin{enumerate}
\item What is the largest number of regions that can be cut out using 11 lines in the plane?		

\item Let's draw $n$ circles in the plane.  We say that they are in \emph{general position} if each pair of circles intersects in exactly two points.
Let $a_n$ be the number of regions of the plane formed by $n$ circles in general position.  For example, $a_1=2$ because there are two regions (inside or outside the circle).

\begin{enumerate}
    \item Show that $a_2=4$, $a_3=8$, and $a_4=14$.
    \item Explain why the $n$th circle intersects the other $n-1$ circles in $2(n-1)$ points.
    \item Label these points as $P_1, \ldots P_{2(n-1)}$ so that they are arranged clockwise on the $n$th circle.
    Explain why each of the arcs below separates a region formed by the $n-1$ circles into $2$ regions:
    \[P_1P_2, \ P_2P_3, \ldots, P_{2(n-1)}P_1.\]
    \item Show that $a_n=a_{n-1} + 2(n-1)$.
    \item Use induction to show that $a_n=n^2-n+2$. 
    
\end{enumerate}

\end{enumerate}

\section{Derangements}

\begin{videobox}
\begin{minipage}{0.1\textwidth}
\href{https://youtu.be/zv0G3nCpkes}{\includegraphics[width=1cm]{video-clipart-2.png}}
\end{minipage}
\begin{minipage}{0.8\textwidth}
Click on the icon at left or the URL below for this section's short lecture video. \\\vspace{-0.2cm} \\ \href{https://youtu.be/zv0G3nCpkes}{https://youtu.be/zv0G3nCpkes}
\end{minipage}
\end{videobox}

\begin{example} \label{Ederange3}
How many ways are there to rearrange the letters in MAT, so that no letter is in its initial position?
The answer is only 2 (out of the $3!=6$ permutations): namely ATM and TMA.
\end{example}

\begin{example}
\label{Ederange4}
How many ways are there to rearrange the letters in MATH, so that no letter is in its initial position?
The answer is only 9 (out of the $4!=24$ permutations): 
namely:\\
AMHT, ATHM, AHMT, 
TMHA, THMA, THAM, 
HMAT, HTMA, HTAM.
\end{example}

It is possibly to check Examples~\ref{Ederange3} and \ref{Ederange4} using the inclusion-exclusion principle, but it is not so easy.

\begin{definition}
A \textit{derangement}
of a sequence
$a_1,a_2, \ldots, a_n$ is a permutation such that no element $a_i$ appears in its initial position (the $i$th spot).
Let $D_n$ be the number of derangements of a sequence of $n$ distinct letters.
\end{definition}

We can compute that $D_1=0$, $D_2=1$, $D_3=2$, and $D_4=9$.  
To compute $D_n$ when $n$ is larger, it is helpful to have a recurrence relation.

\begin{proposition}
\label{Pderange}
For $n \geq 3$, the number $D_n$ of derangements of $n$ objects satisfies the recurrence relation
\begin{equation}
 \label{Ederange1}  
D_n = (n-1)(D_{n-1} + D_{n-2}).
\end{equation}
\end{proposition}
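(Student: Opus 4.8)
The plan is to give a combinatorial proof of the recurrence $D_n = (n-1)(D_{n-1}+D_{n-2})$ by classifying derangements according to a natural case split. First I would set up the problem: a derangement of $\{1,2,\ldots,n\}$ is a permutation $\pi$ with $\pi(i) \neq i$ for all $i$. The key observation is to look at what happens with position $1$ and with the element $1$. Since $\pi$ is a derangement, position $1$ holds some value $j \neq 1$; there are $n-1$ choices for this value $j = \pi(1)$. The factor $(n-1)$ in the recurrence should come from this choice, so the main goal is to show that, once the value $j = \pi(1)$ is fixed, the number of ways to complete the derangement is $D_{n-1} + D_{n-2}$.

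With $j = \pi(1)$ fixed, I would split into two cases based on where the element $j$ goes, i.e.\ on whether $\pi(j) = 1$ or $\pi(j) \neq 1$. In the first case, positions $1$ and $j$ simply swap their entries ($\pi(1) = j$ and $\pi(j) = 1$), and the remaining $n-2$ positions must form a derangement of the remaining $n-2$ elements; there are $D_{n-2}$ of these. In the second case, $\pi(j) \neq 1$. Here the idea is to build a bijection with derangements of an $(n-1)$-element set: I would argue that these derangements are in one-to-one correspondence with derangements of $\{2,\ldots,n\}$ (viewing position $1$ as removed and the ``forbidden'' value for position $j$ becoming $1$ rather than $j$). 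Concretely, remove position $1$ and treat element $j$ as the element that may not occupy position $j$; the condition $\pi(j)\neq 1$ translates, after relabeling, into the no-fixed-point condition on the smaller set, giving $D_{n-1}$ completions. Adding the two cases yields $D_{n-1}+D_{n-2}$ completions for each of the $n-1$ choices of $j$, establishing the recurrence.

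The main obstacle I anticipate is making the bijection in the second case fully rigorous, since it is the subtle half of the argument. The intuition ``element $j$ inherits the forbidden position $1$'' needs a careful description: I would define the map explicitly by sending a permutation $\pi$ of $\{1,\ldots,n\}$ with $\pi(1)=j$ and $\pi(j)\neq 1$ to a permutation of $\{2,\ldots,n\}$ obtained by deleting position $1$ and redirecting the value $1$ (which must appear somewhere among positions $2,\ldots,n$) to behave as though it were the value $j$. I would then verify that the result is genuinely a derangement of the $(n-1)$-element set and that the map is invertible, so that it is a bijection. Verifying injectivity, surjectivity, and the derangement property of the image is where the real care is needed; the rest is bookkeeping.

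Alternatively, I would keep in mind a cleaner framing of the same idea: for a fixed value $j=\pi(1)$, a completion is a derangement of $\{2,\ldots,n\}$ except that the entry in position $j$ is additionally allowed to equal $1$ but not $j$. The completions where position $j$ equals $1$ correspond to the swap case ($D_{n-2}$), and the completions where position $j$ equals neither $1$ nor $j$ are exactly derangements of $\{2,\ldots,n\}$ in the usual sense ($D_{n-1}$). This reframing makes the count $D_{n-1}+D_{n-2}$ transparent and may let me avoid an explicit relabeling bijection, so I would likely present the proof this way to minimize the risk of error in the delicate step.
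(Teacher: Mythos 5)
Your proposal is correct and is essentially the paper's own argument: the paper fixes the spot $c$ where the element $1$ sits (the dual, via inverse permutations, of your fixing the value $j=\pi(1)$), splits into the swap case (giving $D_{n-2}$) and the non-swap case, and handles the latter exactly as in your final paragraph, by observing that each of the remaining $n-1$ items is forbidden from exactly one spot, so the count is $D_{n-1}$. The only slip is verbal: with your convention, $\pi(j)$ is the value occupying position $j$, not ``where the element $j$ goes,'' but the case split $\pi(j)=1$ versus $\pi(j)\neq 1$ is the intended and correct one.
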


\begin{proof}
Let's work with derangements of the sequence $1,2,3,\ldots,n$.
Given a derangement, let $c$ be the number of the spot where $1$
appears.  Since $c\neq 1$ (else this would  not be a valid derangement), there are $n-1$ choices for $c$.  Once $c$ is fixed,
we have two cases.

Case 1: the numbers $1$ and $c$ switched spots.
For example, when $n=5$ and $c=3$, the derangement 
$34152$ is in this case.
Ignoring the first and the $c$th spot, the other $n-2$ numbers are a derangement of their initial positions, and there are $D_{n-2}$ ways to make that derangement.

Case 2: the numbers $1$ and $c$ did not switch spots.
For example, when $n=5$ and $c=3$, the derangement
$23154$ is in this case.
We are left with $n-1$ numbers ($2,3,\ldots, n$)
that need to fit into $n-1$ spots ($1,2, \ldots, n$, excluding $c$).  Each number is restricted from exactly one spot: for the number $c$, this is because it cannot be in the first spot in order to be in Case 2;
for a number $j$ other than $c$, it cannot be in the $j$th spot.  So the number of options is $D_{n-1}$, the number of derangements of $n-1$ objects.

There is no overlap between cases 1 and 2.  
So, by the addition principle, for a fixed choice of $c$, the number of derangements is $D_{n-1} + D_{n-2}$.
Since there are $n-1$ choices for $c$, the number of derangements is $(n-1)(D_{n-1} + D_{n-2})$ by the multiplication principle.

\end{proof}

Using the recurrence relation, we can compute some more derangement values.

\begin{center}
\begin{tabular}{|c|c|c|c|c|c|c|c|c|c|c|}
\hline
$D_1$ & $D_2$ & $D_3$ & $D_4$ & $D_5$ & $D_6$ & $D_7$ & $D_8$ & $D_9$ & $D_{10}$ \\
\hline
0 & 1 & 2 & 9 & 44 & 265 & 1,854 & 14,833 & 133,496 & 1,334,961 \\
\hline
\end{tabular}
\end{center}

Here are two more formulas for derangements; one is recursive and the other is explicit. We will see proofs of these facts in the exercises below.

\begin{proposition}
\label{Pderange2}
For $n \geq 2$, the number $D_n$ of derangements of $n$ objects satisfies this other recurrence relation
\begin{equation}
 \label{Ederange2}  
D_n = nD_{n-1} + (-1)^n
\end{equation}
\end{proposition}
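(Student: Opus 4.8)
The plan is to prove the identity by induction on $n \geq 2$, using the recurrence from Proposition~\ref{Pderange} as the engine for the inductive step. Since Proposition~\ref{Pderange} is only valid for $n \geq 3$, the case $n=2$ must be checked by hand as the base case: here $D_2 = 1$ while $2D_1 + (-1)^2 = 2\cdot 0 + 1 = 1$, so the two sides agree.

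For the inductive step, I would fix $n \geq 3$ and assume the statement holds for $n-1$, that is, $D_{n-1} = (n-1)D_{n-2} + (-1)^{n-1}$. Rearranging the inductive hypothesis gives $(n-1)D_{n-2} = D_{n-1} - (-1)^{n-1}$, and this is the key substitution. Starting from Proposition~\ref{Pderange}, I would expand $D_n = (n-1)(D_{n-1}+D_{n-2}) = (n-1)D_{n-1} + (n-1)D_{n-2}$ and then replace the last term using the rearranged hypothesis:
\[D_n = (n-1)D_{n-1} + D_{n-1} - (-1)^{n-1} = nD_{n-1} - (-1)^{n-1}.\]
Since $-(-1)^{n-1} = (-1)^n$, this is exactly $D_n = nD_{n-1} + (-1)^n$, completing the induction.

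An alternative approach that avoids carrying the inductive hypothesis through the algebra is to introduce the auxiliary sequence $a_n := D_n - nD_{n-1}$; the goal is precisely to show $a_n = (-1)^n$. A short computation from Proposition~\ref{Pderange} shows $a_n = (n-1)D_{n-2} - D_{n-1} = -a_{n-1}$, so the sequence simply alternates in sign. Combined with $a_2 = D_2 - 2D_1 = 1 = (-1)^2$, this forces $a_n = (-1)^n$ for all $n \geq 2$, which is the claim.

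I expect the main obstacle to be purely bookkeeping rather than conceptual: one must be careful that Proposition~\ref{Pderange} only applies for $n \geq 3$ (hence the separate base case at $n=2$), and the sign manipulation $-(-1)^{n-1} = (-1)^n$ must be tracked correctly. There is no deep idea beyond feeding one recurrence into the other and simplifying.
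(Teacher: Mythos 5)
Your proof is correct and follows essentially the same route the paper prescribes: the paper relegates this to an exercise whose outline is exactly your induction (base case $n=2$, inductive hypothesis $D_{n-1}=(n-1)D_{n-2}+(-1)^{n-1}$, and substitution into the recurrence $D_n=(n-1)(D_{n-1}+D_{n-2})$ from Proposition~\ref{Pderange}). Your auxiliary-sequence variant $a_n = D_n - nD_{n-1} = -a_{n-1}$ is just a tidier packaging of the same induction, so there is nothing to add.
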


\begin{theorem} \label{Tderange}
Here is a closed form formula for the number of derangements:
\[D_n=n!\left(1 - \frac{1}{1!} + \frac{1}{2!} - \frac{1}{3!} + \cdots + \frac{(-1)^n}{n!}\right).\]
\end{theorem}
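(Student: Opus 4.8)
The plan is to prove the closed form by induction on $n$, using the recurrence $D_n = nD_{n-1} + (-1)^n$ from Proposition~\ref{Pderange2}, which I am allowed to assume. First I would rewrite the target as $D_n = n!\sum_{k=0}^n \frac{(-1)^k}{k!}$, since the parenthesized expression in the theorem is exactly $\sum_{k=0}^n \frac{(-1)^k}{k!}$ (the leading $1$ being the $k=0$ term $\frac{(-1)^0}{0!}=1$, using $0!=1$ from Remark~\ref{rmk:zero-factorial}); this compact form is what makes the bookkeeping manageable. For the base case I would check $n=1$: the formula gives $1!\left(1 - \frac{1}{1!}\right) = 0 = D_1$, matching the value computed before the theorem.

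For the inductive step, I would assume $D_{n-1} = (n-1)!\sum_{k=0}^{n-1}\frac{(-1)^k}{k!}$ and, applying Proposition~\ref{Pderange2} together with $n\cdot(n-1)! = n!$, obtain
\[D_n = nD_{n-1} + (-1)^n = n!\sum_{k=0}^{n-1}\frac{(-1)^k}{k!} + (-1)^n.\]
The crux of the argument — more a clean observation than an obstacle — is that the stray term $(-1)^n$ is precisely the missing $k=n$ summand scaled by $n!$, since $(-1)^n = n!\cdot\frac{(-1)^n}{n!}$. Absorbing it extends the sum to $k=n$ and gives $D_n = n!\sum_{k=0}^n \frac{(-1)^k}{k!}$, completing the induction. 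The whole proof hinges on choosing the recurrence $D_n = nD_{n-1} + (-1)^n$ rather than the two-term recurrence $D_n = (n-1)(D_{n-1}+D_{n-2})$ of Proposition~\ref{Pderange}; the latter would force a strong-induction argument with two base cases and considerably messier algebra.

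An alternative, more self-contained route would be a direct count by inclusion-exclusion: letting $A_i$ be the set of permutations fixing element $i$, one has $|A_{i_1}\cap\cdots\cap A_{i_k}| = (n-k)!$, so the number of permutations fixing at least one element is $\sum_{k=1}^n (-1)^{k-1}\binom{n}{k}(n-k)!$, and subtracting this from $n!$ yields $D_n = \sum_{k=0}^n (-1)^k \frac{n!}{k!}$ at once. The difficulty with this approach is that the excerpt establishes inclusion-exclusion only for two sets (Theorem~\ref{thm:pie}) and three sets (Theorem~\ref{Tvenn3}); I would first have to prove the general $n$-set version. I therefore expect the inductive proof above to be preferable, since it uses only results already available in the text.
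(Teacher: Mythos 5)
Your proof is correct and follows exactly the route the paper intends: the text defers the proof of Theorem~\ref{Tderange} to the exercises, where the reader is asked to derive it from the recurrence $D_n = nD_{n-1}+(-1)^n$ of Proposition~\ref{Pderange2}, which is precisely your induction. Your alternative inclusion-exclusion argument also mirrors the paper, since a later exercise (on the sets $W_k$ of permutations fixing $k$) sketches that same derivation.
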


\subsection*{Exercises}

\begin{enumerate}
\item How many permutations of the word MONDAY have at least one letter in the correct spot?	

\item What is the distance between $D_5$ (the number of derangements of 5 objects) and $5!/e$ on the number line, to the nearest 1/1000 as a decimal?										
									
    \item 
One cool fact about derangements is that $D_n$ is the integer closest to $n!/e$.
Check that this is true for $n=1,\ldots, 5$.

\item Use the previous problem to estimate the probability that a permutation of $n$ objects is a derangement when $n$ is large.

\item Prove Proposition~\ref{Pderange2} that 
$D_n = n D_{n-1} + (-1)^n$ by induction as follows:
\begin{enumerate}
    \item Show that \eqref{Ederange2} is true when $n=2$.
    \item Write out the inductive hypothesis by substituting $n-1$ for $n$ into \eqref{Ederange2}.
    \item Use part (2) and \eqref{Ederange1} to prove the inductive step.
\end{enumerate}

\item Prove Theorem~\ref{Tderange} using Proposition~\ref{Pderange2}.

\item Let $W_k$ be the permutations of $1, \ldots, n$ that fix the number $k$.
\begin{enumerate}
    \item 
    For $1 \leq k \leq n$, show that $|W_k| = (n-1)!$. 
    
\item Show there are $\binom{n}{2}$ ways to choose two sets $W_k$ and $W_\ell$ with $k \not = \ell$; explain which permutations are in $W_k \cap W_\ell$ and find $|W_k \cap W_\ell|$.

\item How many ways are there to choose $i$ of the sets $W_1, \ldots, W_n$; what is the size of the intersection of these sets?

\item The inclusion-exclusion principle gives a way to 
find the size of the union of the sets $W_1, \ldots, W_n$.
Use that to find the size of the complement of this union and show that
$D_n = n! \sum_{i=0}^n \frac{(-1)^n}{i!}$.
\end{enumerate}
\end{enumerate}

\section{The Catalan numbers}\label{sec:Catalan}

\begin{videobox}
\begin{minipage}{0.1\textwidth}
\href{https://www.youtube.com/watch?v=NETfwiSbGEA}{\includegraphics[width=1cm]{video-clipart-2.png}}
\end{minipage}
\begin{minipage}{0.8\textwidth}
Click on the icon at left or the URL below for this section's short lecture video. \\\vspace{-0.2cm} \\ \href{https://www.youtube.com/watch?v=NETfwiSbGEA}{https://www.youtube.com/watch?v=NETfwiSbGEA}
\end{minipage}
\end{videobox}

The \textit{Catalan numbers} are one of the most celebrated sequences in combinatorics.  They are defined by the following recursion.

\begin{definition}
 The \textbf{Catalan numbers} are the sequence of numbers $C_0,C_1,C_2,\ldots$
  having initial value
  $C_0=1$ and satisfying the recurrence relation $$C_{n+1}=C_{0}C_n+C_1C_{n-1}+C_2C_{n-2}+\cdots + C_{n}C_0.$$
\end{definition}
Starting with $n=0$, we can use the above recursion to compute that the first $10$ Catalan numbers are 
\[1, 1, 2, 5, 14, 42, 132, 429, 1430, 4862.\]

One of the most important combinatorial interpretations of the Catalan numbers is in terms of \textit{Dyck paths}.  

\begin{definition}
 A \textbf{Dyck path} of length $2n$ is a path on the integer lattice grid from $(0,0)$ to $(n,n)$ that stay on or above the main diagonal.
\end{definition}

For example, here are the Dyck paths of length 6:

\begin{center}
    \includegraphics[width=.75\textwidth]{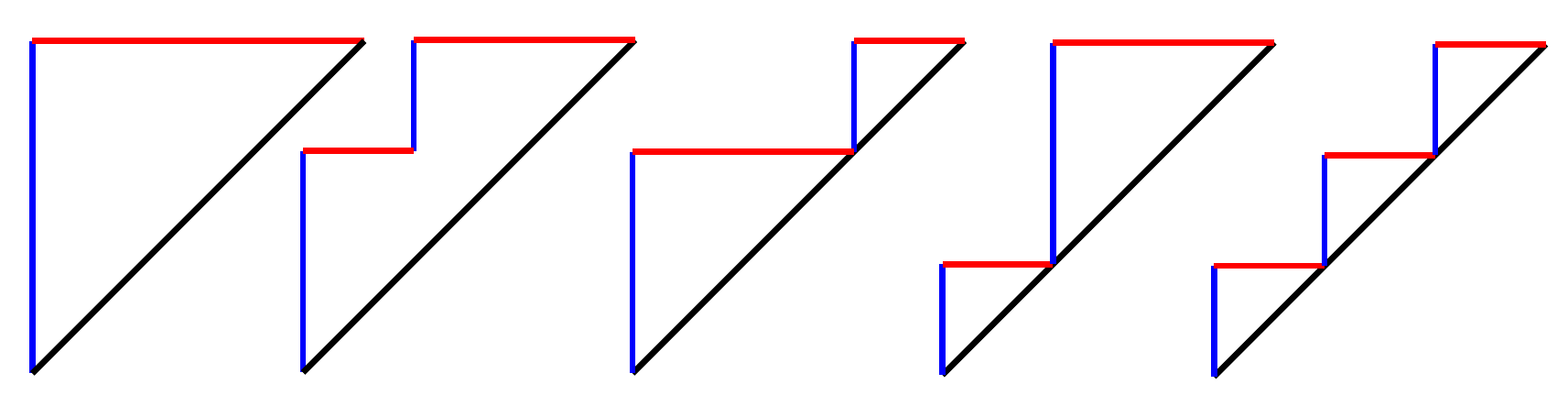}
\end{center}

We can alternatively represent these paths as \textbf{Dyck words}, which are sequences of $n$ $U$'s and $n$ $R$'s such that as we read from right to left, the number of $U$'s we have read is always at least equal to the number of $R$'s we have read.  Here are the six corresponding Dyck words of length $6$:
\[\text{\textcolor{blue}{UUU}\textcolor{red}{RRR}}, \ 
\text{\textcolor{blue}{UU}\textcolor{red}{R}\textcolor{blue}{U}\textcolor{red}{RR}}, \ 
 \text{\textcolor{blue}{UU}\textcolor{red}{RR}\textcolor{blue}{U}\textcolor{red}{R}}, \ 
 \text{\textcolor{blue}{U}\textcolor{red}{R}\textcolor{blue}{U}\textcolor{blue}{U}\textcolor{red}{R}\textcolor{red}{R}}, \ 
 \text{\textcolor{blue}{U}\textcolor{red}{R}\textcolor{blue}{U}\textcolor{red}{R}\textcolor{blue}{U}\textcolor{red}{R}}.\]

\begin{proposition}
The number of Dyck paths of length $2n$ is equal to the $n$th Catalan number $C_n$.
  \end{proposition}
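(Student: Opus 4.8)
The plan is to show that the sequence $d_n$ counting Dyck paths of length $2n$ obeys exactly the same initial condition and recurrence as the Catalan numbers, and then conclude $d_n = C_n$ by induction on $n$. The base case is immediate: there is a single Dyck path of length $0$ (the empty path from $(0,0)$ to $(0,0)$), so $d_0 = 1 = C_0$. Thus everything reduces to proving that $d_{n+1} = \sum_{k=0}^{n} d_k\, d_{n-k}$, which is the defining recurrence of the Catalan numbers.

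The heart of the argument is a \emph{first-return decomposition}. First I would observe that any nonempty Dyck path must begin with an up step $U$, since an initial right step would immediately drop the path below the main diagonal. Tracking the quantity $h = (\text{number of up steps}) - (\text{number of right steps})$ along the path, $h$ starts at $0$, is forced to stay $\geq 0$, and ends at $0$; the opening step raises it to $1$. Let the \emph{first return} be the earliest step after the opening $U$ at which $h$ comes back to $0$; this step is necessarily a right step $R$. Writing the path as $P = U\,A\,R\,B$, the block $A$ is the portion during which $h \geq 1$, so $A$ has equally many up and right steps and, after shifting down by one unit, is itself a Dyck path, say of length $2k$; the tail $B$ begins and ends on the diagonal and is a Dyck path of length $2(n-k)$.

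A quick count of steps shows that $U A R B$ uses $1 + k + 0 + (n-k) = n+1$ up steps and $0 + k + 1 + (n-k) = n+1$ right steps, matching a Dyck path of length $2(n+1)$. The map $P \mapsto (A,B)$ is a bijection onto pairs of Dyck paths of lengths $2k$ and $2(n-k)$ for $0 \leq k \leq n$: its inverse reattaches an opening $U$, raises $A$ back up by one unit, inserts the returning $R$, and appends $B$. Summing over the possible split points $k$ and using the addition and multiplication principles gives $d_{n+1} = \sum_{k=0}^{n} d_k\, d_{n-k}$, so $d_n$ and $C_n$ satisfy the same recursion and the same initial value, and hence $d_n = C_n$ for all $n$ by induction.

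The main obstacle I anticipate is justifying rigorously that the first-return decomposition is a genuine bijection --- that is, that the first return to the diagonal always exists and is unique, that the block $A$ really does shift down to a valid Dyck path, and that the index bookkeeping $k \leftrightarrow n-k$ is correct --- rather than any algebraic difficulty. Some care is also needed to keep the orientation and the $U$/$R$ labeling consistent with the paper's word convention so that ``begins with $U$'' and ``first return is an $R$'' are stated in the right direction.
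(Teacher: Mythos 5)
Your proof is correct and follows essentially the same route as the paper: both verify the initial value $d_0 = 1 = C_0$ and then use the first-return decomposition (the paper's ``first return at height $i$'' corresponds to your $U\,A\,R\,B$ factorization with $k = i-1$) to show the Dyck-path counts satisfy the Catalan recurrence $d_{n+1}=\sum_{k=0}^{n} d_k d_{n-k}$. The only cosmetic difference is that you make the bijectivity of the decomposition and the step-count bookkeeping more explicit, which the paper leaves largely implicit.
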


\begin{proof}
Let $D_n$ be the number of Dyck paths of length $n$.  There is only one Dyck path of length $0$, namely the empty path.  So $D_0=1=C_0$, and the initial value is satisfied.

To prove that $D_n$ satisfies the recursion, consider the first time at which a Dyck path of length $n+1$ returns to the diagonal after the first step.  The height of this \textit{first return} is some number $i$ between $1$ and $n+1$ inclusive.  Let $D_{n+1,i}$ be the number of Dyck paths of length $n+1$ whose first return has height $i$.  Then by the addition principle we have \begin{equation}\label{CD}D_{n+1}=D_{n+1,1}+D_{n+1,2}+\cdots+D_{n+1,n+1}.\end{equation}
Now, to compute $D_{n+1,i}$, notice that since the first return to the diagonal is at height $i$, the first part of the path (from point $(0,0)$ to $(i,i)$) is uniquely determined by drawing a Dyck path from point $(0,1)$ to $(i-1,i)$, which gives $D_{i-1}$ possibilities for the first part. 

\begin{center}
    \includegraphics[width=6cm]{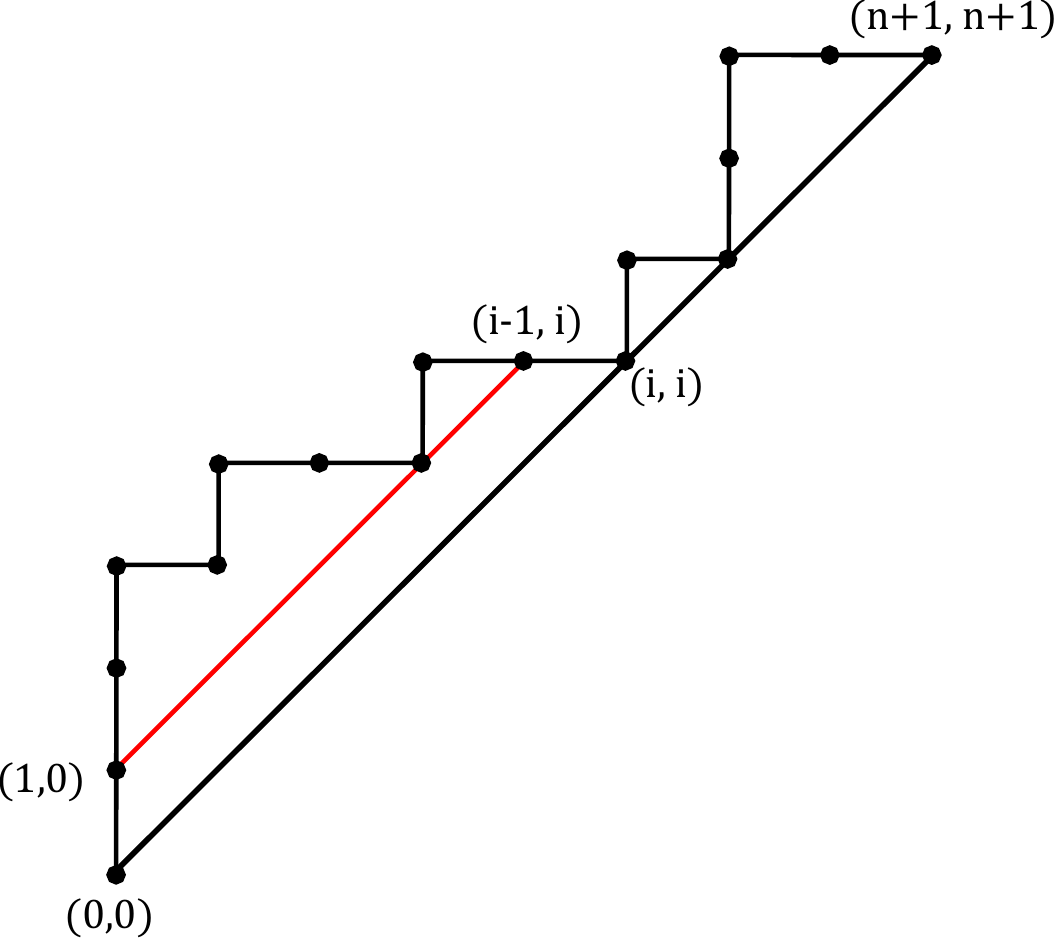}
\end{center}

The remaining steps in the path simply form a Dyck path of height $n+1-i$, of which there are $D_{n+1-i}$.  Therefore, by the multiplication principle, we have $$D_{n+1,i}=D_{i-1}D_{n+1-i}.$$  Thus in particular we have $D_{n+1,1}=D_0D_{n}$, $D_{n+1,2}=D_1D_{n-1}$, and so on.  Plugging these formulas into Equation \ref{CD} above, we obtain  \begin{equation*}D_{n+1}=D_0D_{n}+D_1D_{n-1}+\cdots+D_n D_{0}\end{equation*} as desired.  Therefore, $D_n$ satisfies the same recursion as the Catalan numbers, so $D_n=C_n$ for all $n$.
\end{proof}

\begin{theorem} \label{Tcatalan}
A closed form formula for the $n$th Catalan number is 
\begin{equation}\label{Ecatalan}
C_n = \binom{2n}{n} - \binom{2n}{n+1} = \frac{1}{n+1} \binom{2n}{n}.
\end{equation}
\end{theorem}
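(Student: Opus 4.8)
The plan is to prove the two equalities separately. The right-hand equality $\binom{2n}{n} - \binom{2n}{n+1} = \frac{1}{n+1}\binom{2n}{n}$ is a routine algebraic identity, while the left-hand equality $C_n = \binom{2n}{n} - \binom{2n}{n+1}$ is the substantive combinatorial claim, which I would prove using the Dyck-path interpretation established in the preceding proposition together with the reflection principle (André's method).

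First I would dispatch the algebraic equality. Using the factorial formula from Lemma~\ref{Lformulabinom}, one computes
\[\frac{\binom{2n}{n+1}}{\binom{2n}{n}} = \frac{n!\,n!}{(n+1)!\,(n-1)!} = \frac{n}{n+1},\]
so that $\binom{2n}{n+1} = \frac{n}{n+1}\binom{2n}{n}$, and hence $\binom{2n}{n}-\binom{2n}{n+1} = \binom{2n}{n}\left(1-\frac{n}{n+1}\right) = \frac{1}{n+1}\binom{2n}{n}$. This is a short cancellation and needs no cleverness.

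For the left-hand equality, I would count Dyck paths of length $2n$, that is, lattice paths from $(0,0)$ to $(n,n)$ using unit right-steps and up-steps that stay on or above the diagonal $y=x$, since the preceding proposition shows these are counted by $C_n$. The total number of monotone lattice paths from $(0,0)$ to $(n,n)$ is $\binom{2n}{n}$, because such a path is determined by choosing which $n$ of its $2n$ steps are up-steps (Theorem~\ref{Tupperleft}). I would call a path \emph{bad} if it fails the Dyck condition, i.e.\ if it ever dips below the diagonal and so touches the line $y = x-1$. The key step is to biject the bad paths with the set of all monotone lattice paths from $(1,-1)$ to $(n,n)$: given a bad path, locate the first lattice point at which it meets $y=x-1$, and reflect the initial portion of the path (from $(0,0)$ up to that first meeting point) across the line $y=x-1$. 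Since the reflection of $(0,0)$ across $y=x-1$ is $(1,-1)$, the reflected object is a path from $(1,-1)$ to $(n,n)$; conversely, every such path must cross $y=x-1$ because it starts below and ends above, and reflecting its initial segment up to the first crossing recovers a bad path, so the two constructions are mutually inverse. A path from $(1,-1)$ to $(n,n)$ uses $n-1$ right-steps and $n+1$ up-steps, so there are $\binom{2n}{n+1}$ of them. Subtracting the bad paths from the total then gives $C_n = \binom{2n}{n} - \binom{2n}{n+1}$.

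The main obstacle will be making the reflection bijection rigorous: I must verify that reflecting a monotone path across the slope-one line $y=x-1$ again yields a monotone path (the reflection interchanges right-steps and up-steps, so this requires a careful check of step types), that the ``first meeting point'' is well defined, and that the forward and reverse constructions genuinely invert one another. An alternative route avoiding geometry would be to verify by induction that $a_n := \frac{1}{n+1}\binom{2n}{n}$ satisfies the Catalan convolution recurrence $C_{n+1} = C_0 C_n + C_1 C_{n-1} + \cdots + C_n C_0$, but this reduces to a binomial convolution identity that is itself awkward to prove directly at this stage; the reflection argument is cleaner and self-contained given the Dyck-path proposition, so that is the approach I would take.
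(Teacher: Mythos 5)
Your proposal is correct, but it takes a genuinely different route from the book. Where you prove the theorem, the text explicitly declines to: at the point of the statement it says only that the two expressions in \eqref{Ecatalan} agree, and the actual proof is deferred to the generating functions chapter, where one solves $xC(x)^2 - C(x) + 1 = 0$ to get $C(x) = \frac{1-\sqrt{1-4x}}{2x}$ and then extracts coefficients using the Generalized Binomial Theorem (Theorem~\ref{Tgbt}, itself stated without proof). Your argument instead stays entirely within the combinatorial framework of the Catalan section: the preceding proposition identifies $C_n$ with the number of Dyck paths, the total count of monotone paths is $\binom{2n}{n}$, and the reflection bijection sends the paths that touch $y=x-1$ to the monotone paths from $(1,-1)$ to $(n,n)$, of which there are $\binom{2n}{n+1}$; your algebraic reduction of $\binom{2n}{n}-\binom{2n}{n+1}$ to $\frac{1}{n+1}\binom{2n}{n}$ is also right. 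What each approach buys: the reflection argument is elementary and self-contained (it needs only the Dyck-path proposition and basic binomial counting, not the unproven generalized binomial theorem), and it explains the difference form $\binom{2n}{n}-\binom{2n}{n+1}$ conceptually rather than as an afterthought; the book's generating-function route is less elementary but showcases a general machine — the same manipulation that solves the Catalan recursion also produces $C(x)$ itself, which is reused and which handles many other recurrences uniformly. Your one flagged obstacle (rigor of the reflection) is genuine but routine: reflection across a slope-one line swaps right-steps and up-steps, hence preserves monotonicity, and the first-touch point is well defined because a lattice path crossing from one side of $y=x-1$ to the other must land on it; with those two observations the two maps are visibly mutually inverse, so the proof goes through.
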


We will not include a proof of Theorem~\ref{Tcatalan} in this book.
It is not hard to check that the two formulas for $C_n$ in \eqref{Ecatalan} are the same.

Catalan numbers do not just count Dyck paths; in fact, there are 214 different known combinatorial descriptions of the Catalan numbers\footnote{All of these descriptions can be found in the book ``Catalan Numbers'' by Richard Stanley.}!  Let's consider some descriptions in the case $n=3$. 
\begin{example}
The fact that $C_3=5$ is equivalent to there being $5$ ways to:

\begin{enumerate}
\item put parentheses around $4$ letters
\[((xy)z)w, \      (x(yz))w, \  (xy)(zw), \      x((yz)w), \      x(y(zw));\]
\item divide a convex pentagon into triangles;
\begin{center}
    \includegraphics[width=.75\textwidth]{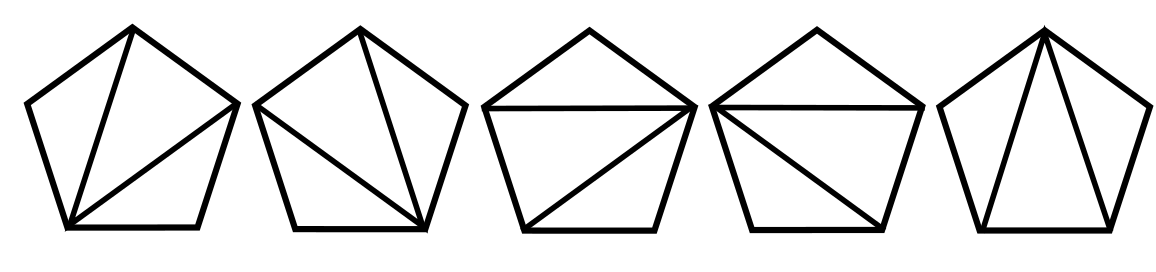}
\end{center}
\item tile a stair of height $3$ with $3$ rectangles;
\begin{center}
    \includegraphics[width=.75\textwidth]{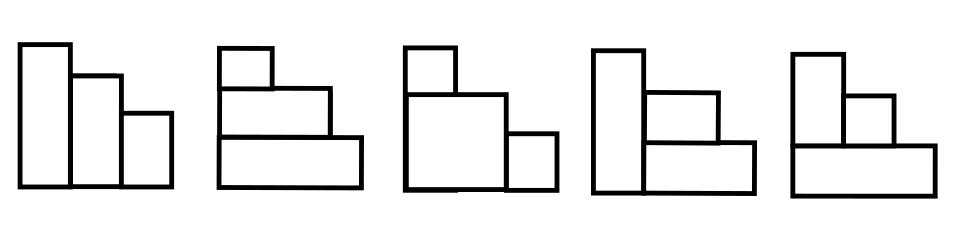}
\end{center}
\item arrange the numbers $\{1, \ldots, 6\}$ in a $2 \times 3$ grid so that each row and each column is increasing;
\[\begin{array}{|c|c|c|}
\hline 
 1&2 &3 \\ \hline
 4& 5& 6\\ \hline
\end{array}, \ 
\begin{array}{|c|c|c|}
\hline 
 1& 2& 4\\ \hline
 3& 5& 6\\ \hline
\end{array}, \ 
\begin{array}{|c|c|c|}
\hline 
 1& 2& 5\\ \hline
 3& 4& 6\\ \hline
\end{array}, \ 
\begin{array}{|c|c|c|}
\hline 
 1& 3& 4\\ \hline
 2& 5& 6\\ \hline
\end{array}, \ 
\begin{array}{|c|c|c|}
\hline 
 1& 3& 5\\ \hline
 2& 4& 6\\ \hline
\end{array}\]

\item pair the vertices of a hexagon so that the line segments joining paired vertices do not intersect;
\begin{center}
    \includegraphics[width=.75\textwidth]{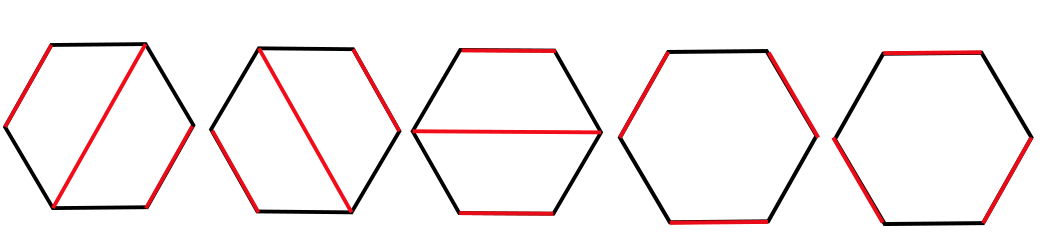}
\end{center}
\item draw a rooted binary tree with $4$ leaves such that every vertex has either $0$ or $2$ children (we will see more about trees in Chapter \ref{chap:trees}).

\begin{center}
    \includegraphics[width=.75\textwidth]{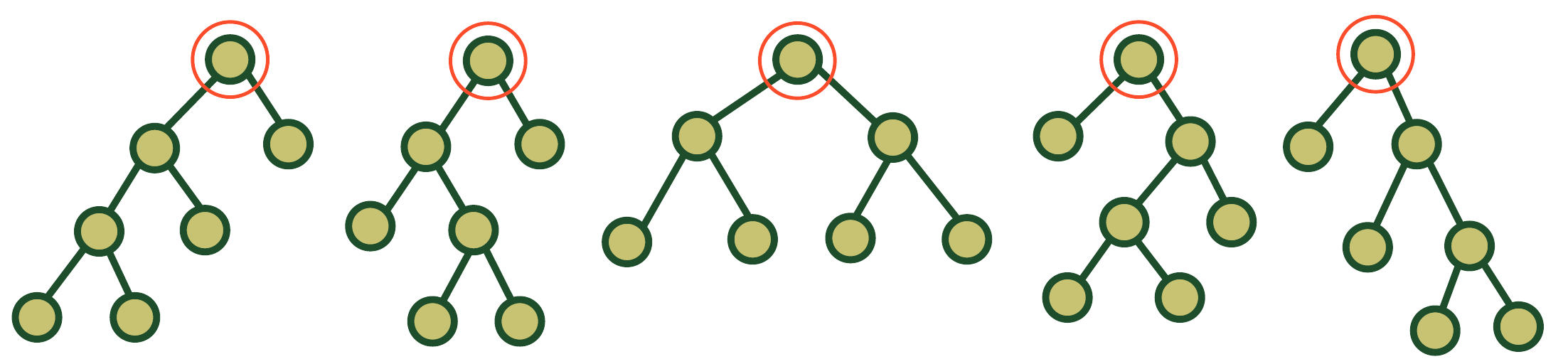}
\end{center}

\item write down a sequence $(a_1,a_2,\ldots,a_{6})$ of $1$'s and $-1$'s such that each partial sum $a_1+a_2+\cdots +a_j$ is nonnegative and the entire sum $a_1+a_2+\cdots+a_{6}$ is $0$.
$$(1,1,1,-1,-1,-1),(1,1,-1,1,-1,-1),(1,1,-1,-1,1,-1),(1,-1,1,1,-1,-1),(1,-1,1,-1,1,-1)$$

\end{enumerate}
\end{example}

 \subsection*{Exercises}
 \begin{enumerate}
\item How many lattice paths from (0,0) to (4,4) stay *on or below* the diagonal?																									
\item How many Dyck paths from (0,0) to (5,5) stay *strictly above* the diagonal, that is, they don't touch the diagonal at any point between (0,0) and (5,5)?																									
\item Suppose candidates Uppity and Rightley are running for class president. 13 of the students in the class plan on voting; 6 of them will be voting for Uppity and 7 will be voting for Rightley. How many sequences of ballots (which can be thought of as sequences of 6 U's and 7 R's in some order) have the property that, as they are counted in order, Rightly is never ahead until the very last ballot is counted?																									
 
\item When $n=4$, then $C_4=14$.
 In these exercises, find the 14 ways to do each of the following.
 
 \begin{enumerate}
         \item Draw a Dyck path;
         \item write a Dyck word;
         \item put parentheses around $5$ letters $xyzwv$;
\item divide a convex hexagon into triangles;

\item tile a stair of height $4$ with rectangles;

\item arrange the numbers $\{1, \ldots, 8\}$ in a $2 \times 4$ grid so that each row and each column is increasing;
\item pair the vertices of an octagon so that the line segments joining paired vertices do not intersect;
\item draw a rooted binary tree with $5$ leaves such that every vertex has either $0$ or $2$ children.
\item write down a sequence $(a_1,a_2,\ldots,a_{8})$ of $1$'s and $-1$'s such that each partial sum $a_1+a_2+\cdots +a_j$ is nonnegative and the entire sum $a_1+a_2+\cdots+a_{8}$ is $0$.
\end{enumerate} 
 \end{enumerate}




\chapter{Generating Functions}\label{chap:generatingfunctions}

This chapter centers on \textit{generating functions}, which are a technique for using polynomials and series for solving problems in combinatorics.  Generating functions are the cornerstone of a field of mathematics called \textit{algebraic combinatorics}.\\

\begin{videobox}
\begin{minipage}{0.1\textwidth}
\href{https://youtu.be/7Rw7pEMJneI}{\includegraphics[width=1cm]{video-clipart-2.png}}
\end{minipage}
\begin{minipage}{0.8\textwidth}
Click on the icon at left or the URL below for this section's short lecture video. \\\vspace{-0.2cm} \\ \href{https://youtu.be/7Rw7pEMJneI}{https://youtu.be/7Rw7pEMJneI}
\end{minipage}
\end{videobox}

First we motivate why generating functions are useful by studying some problems about making change.

\section{Making change}\label{sec:change}

\begin{example} \label{Egen16}
Suppose you have $6$ pennies and $2$ nickels in your pocket.  The pennies look identical and the nickels look identical.  How many different ways can you give some of the coins to your friend?

It is clear that the most you can give is $16$ cents and the least you can give is $0$ cents.  There are two ways that you can give $6$ cents, either $1$ penny and $1$ nickel or $6$ pennies.  To be more systematic, the number of cents you give will be
\[0,1,2,3,4,5, \text{or }  6 \ \text{in pennies plus $0,5, \text{or } 10$ in nickels}.\]
We want to add the value of pennies to the value of nickels.  The key insight is that we \textit{add} exponents in a product of polynomials.  So we use the exponents above in two polynomials and then multiply them.
\[(x^0+x+x^2+x^3+x^4+x^5+x^6) \cdot (x^0+x^5+x^{10}).\] 
We expand this to get:
\[x^0+x+x^2+x^3+x^4+2x^5+2x^6+x^7+x^8+x^9+2x^{10}+2x^{11}+x^{12}+x^{13}+x^{14}+x^{15}+x^{16}.\]

The fact that the largest value you can give is $16$ is represented by the term 
$x^{16}= x^6 \cdot x^{10}$, for $6$ cents in pennies and $10$ cents in nickels.
The fact that the smallest value you can give is $0$ is represented by the term $x^0=x^0\cdot x^0$ for $0$ cents in pennies and $0$ cents in nickels.
The fact that there are two ways that you can give $6$ cents is represented by the term $2 x^6=x^6\cdot x^0 + 
x^1 \cdot x^5$, for either $6$ cents in pennies and $0$ cents in nickels or $1$ cent in pennies and $5$ cents in nickels.

More generally, this polynomial
encodes the number of ways to give \textit{any} number of cents using $6$ pennies and $2$ nickels!  
For example, the term $2x^{11}=x^6 \cdot x^5+x^1 \cdot x^{10}$ tells us that there are two ways to give $11$ cents, which makes sense since you can give $6$ cents in pennies and $5$ cents in nickels or $1$ cent in pennies and $10$ cents in nickels. 
Just by reading the coefficients above, we see that there are $2$ ways of giving $10$ cents but only one way of giving $7$ cents.
It even tells us that there are no ways of giving $17$ cents!
\end{example}

Let's now try a somewhat more complicated example.

\begin{example} \label{Egen100}
  Suppose you have 6 pennies (\$0.01 each), 2 nickels (\$0.05 each), 4 dimes (\$0.10 each), and 3 quarters (\$0.25 each) in your wallet.  You can't tell the difference between any two coins of the same type; for instance, the pennies are indistinguishable from each other. In how many different ways can you make change for a dollar?

Again, one way to approach this is to write out the possibilities.  As a way of organizing it, we'll start with the maximum possible number of quarters, then the maximum possible number of dimes, and so on.  So the first possibility is using all 3 quarters, then the maximum of 2 dimes, and then a nickel.  The next possibility uses 3 quarters and 2 dimes but then 5 pennies instead of the nickel.  Then we consider the possibilities with 3 quarters and only 1 dime, and so on. We get the following list:
\begin{itemize}
    \item 3 quarters, 2 dimes, and a nickel
    \item 3 quarters, 2 dimes, and 5 pennies
    \item 3 quarters, 1 dime, 2 nickels, and 5 pennies
    \item 2 quarters, 4 dimes, and 2 nickels
    \item 2 quarters, 4 dimes, 1 nickel, and 5 pennies
\end{itemize}
We do not have enough smaller coins to use 2 quarters and only 3 dimes, or only 1 quarter.  So there are only 5 possibilities total.

Let's now use polynomial multiplication to solve this instead.  For each type of coin, if the coin is worth $k$ cents, we associate the polynomial $1+x^k+x^{2k}+\cdots+x^{mk}$ where $m$ is the number of that coin we have.  This way, the exponents in our polynomial encode the possible amounts of cents we can make using just those coins.  So we would get the polynomials:
\begin{itemize}
    \item \textbf{Pennies:} $1+x+x^2+x^3+x^4+x^5+x^6$
    \item \textbf{Nickels:} $1+x^5+x^{10}$
    \item \textbf{Dimes:} $1+x^{10}+x^{20}+x^{30}+x^{40}$
    \item \textbf{Quarters:} $1+x^{25}+x^{50}+x^{75}$
\end{itemize}

We take the product of all four polynomials: 
\[(1+x+x^2+x^3+x^4+x^5+x^6)\cdot(1+x^5+x^{10})\cdot (1+x^{10}+x^{20}+x^{30}+x^{40})\cdot(1+x^{25}+x^{50}+x^{75})\]  Typing this product into Sage using the command \texttt{expand}, we find that the product expands as: \\

$x^{131} + x^{130} + x^{129} + x^{128} + x^{127} + 2 \, x^{126} + 2 \, x^{125} + x^{124} + x^{123} + x^{122} + 3 \, x^{121} + 3 \, x^{120} + 2 \, x^{119} + 2 \, x^{118} + 2 \, x^{117} + 3 \, x^{116} + 3 \, x^{115} + x^{114} + x^{113} + x^{112} + 3 \, x^{111} + 3 \, x^{110} + 2 \, x^{109} + 2 \, x^{108} + 2 \, x^{107} + 4 \, x^{106} + 4 \, x^{105} + 2 \, x^{104} + 2 \, x^{103} + 2 \, x^{102} + 5 \, x^{101} + 5 \, x^{100} + 3 \, x^{99} + 3 \, x^{98} + 3 \, x^{97} + 6 \, x^{96} + 6 \, x^{95} + 3 \, x^{94} + 3 \, x^{93} + 3 \, x^{92} + 6 \, x^{91} + 6 \, x^{90} + 3 \, x^{89} + 3 \, x^{88} + 3 \, x^{87} + 6 \, x^{86} + 6 \, x^{85} + 3 \, x^{84} + 3 \, x^{83} + 3 \, x^{82} + 6 \, x^{81} + 6 \, x^{80} + 3 \, x^{79} + 3 \, x^{78} + 3 \, x^{77} + 6 \, x^{76} + 6 \, x^{75} + 3 \, x^{74} + 3 \, x^{73} + 3 \, x^{72} + 6 \, x^{71} + 6 \, x^{70} + 3 \, x^{69} + 3 \, x^{68} + 3 \, x^{67} + 6 \, x^{66} + 6 \, x^{65} + 3 \, x^{64} + 3 \, x^{63} + 3 \, x^{62} + 6 \, x^{61} + 6 \, x^{60} + 3 \, x^{59} + 3 \, x^{58} + 3 \, x^{57} + 6 \, x^{56} + 6 \, x^{55} + 3 \, x^{54} + 3 \, x^{53} + 3 \, x^{52} + 6 \, x^{51} + 6 \, x^{50} + 3 \, x^{49} + 3 \, x^{48} + 3 \, x^{47} + 6 \, x^{46} + 6 \, x^{45} + 3 \, x^{44} + 3 \, x^{43} + 3 \, x^{42} + 6 \, x^{41} + 6 \, x^{40} + 3 \, x^{39} + 3 \, x^{38} + 3 \, x^{37} + 6 \, x^{36} + 6 \, x^{35} + 3 \, x^{34} + 3 \, x^{33} + 3 \, x^{32} + 5 \, x^{31} + 5 \, x^{30} + 2 \, x^{29} + 2 \, x^{28} + 2 \, x^{27} + 4 \, x^{26} + 4 \, x^{25} + 2 \, x^{24} + 2 \, x^{23} + 2 \, x^{22} + 3 \, x^{21} + 3 \, x^{20} + x^{19} + x^{18} + x^{17} + 3 \, x^{16} + 3 \, x^{15} + 2 \, x^{14} + 2 \, x^{13} + 2 \, x^{12} + 3 \, x^{11} + 3 \, x^{10} + x^{9} + x^{8} + x^{7} + 2 \, x^{6} + 2 \, x^{5} + x^{4} + x^{3} + x^{2} + x + 1$ \\

Look closely at the $x^{100}$ term above.  Its coefficient is $5$, which is exactly the number of ways of making $100$ cents (one dollar) out of the coins!  Indeed, each way of getting $x^{100}$ from the product corresponds to a choice of some number of the pennies, nickels, dimes, and quarters that add up to $100$ cents. 
\end{example}

\begin{proposition} \label{Pgenchange1}
Suppose you have $k_1$ pennies, $k_5$ nickels, $k_{10}$ dimes, and $k_{25}$ quarters in your pocket. 
Then the number of ways you can make change for $n$ cents is the coefficient of $x^n$ in the polynomial
\[(1+x+ \cdots + x^{k_1})\cdot 
(1+x^5 + \cdots + x^{5k_5}) \cdot
(1+x^{10} + \cdots + x^{10k_{10}}) \cdot
(1+x^{25} + \cdots + x^{25k_{25}}) .\]
\end{proposition}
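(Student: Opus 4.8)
The plan is to generalize the reasoning already developed in Examples~\ref{Egen16} and \ref{Egen100}, where the key insight was that multiplying polynomials \emph{adds} exponents. First I would set up the correspondence precisely: choosing how much change to give is the same as choosing, independently, how many pennies $p$ (with $0 \leq p \leq k_1$), how many nickels $q$ (with $0 \leq q \leq k_5$), how many dimes $r$ (with $0 \leq r \leq k_{10}$), and how many quarters $s$ (with $0 \leq s \leq k_{25}$) to give. Such a choice produces change totaling
\[
p + 5q + 10r + 25s
\]
cents. Making change for $n$ cents corresponds exactly to a quadruple $(p,q,r,s)$ in the allowed ranges satisfying $p + 5q + 10r + 25s = n$, so the number of ways to make change for $n$ cents is the number of such quadruples.

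Next I would expand the product symbolically without collecting like terms. Writing out
\[
(1+x+\cdots+x^{k_1})(1+x^5+\cdots+x^{5k_5})(1+x^{10}+\cdots+x^{10k_{10}})(1+x^{25}+\cdots+x^{25k_{25}}),
\]
the distributive law tells us that every term in the expansion is obtained by selecting exactly one monomial from each factor and multiplying them. Selecting $x^p$ from the first factor, $x^{5q}$ from the second, $x^{10r}$ from the third, and $x^{25s}$ from the fourth produces the monomial $x^{p+5q+10r+25s}$, and each allowed quadruple $(p,q,r,s)$ gives exactly one such selection. Thus there is a bijection (in the spirit of Section~\ref{sec:bij}) between the terms in the uncollected expansion that equal $x^n$ and the quadruples counted above.

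Finally I would collect like terms: after combining, the coefficient of $x^n$ counts precisely how many of these selections produce exponent $n$, which by the bijection equals the number of ways to make change for $n$ cents. This completes the argument.

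The step I expect to require the most care is articulating the bijection cleanly, that is, making rigorous the claim that choosing one monomial from each factor is \emph{the same data} as choosing a number of each coin. The underlying principle is just the multiplication principle (Theorem~\ref{thm:strings}) applied to the four independent choices of monomials, but stating the correspondence between ``a term contributing to $x^n$'' and ``a valid way to make $n$ cents'' without hand-waving is the heart of the proof; the rest is the routine observation that exponents add under multiplication.
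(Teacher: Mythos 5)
Your proposal is correct and matches the paper's own argument: the paper proves the general version of this statement (for $m$ coin types) by exactly the same reasoning, namely that each term in the expanded product arises from choosing one monomial $x^{a_i k_i}$ per factor, that exponents add to give the total value, and that the coefficient of $x^n$ therefore counts the valid coin selections. Your extra care about phrasing the correspondence as a bijection is a fine touch but not a departure from the paper's approach.
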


Notice that the above reasoning works no matter what values the coins are.  Here is an example using a different kind of currency.

\begin{example}\label{ex:HarryPotter}
  In Harry Potter currency, a Knut is the analog of a penny, a Sickle is worth $29$ Knuts, and a Galleon is worth $17$ Sickles (so a Galleon is $17\cdot 29=493$ Knuts).  If we have a wizarding money pouch containing two Galleons, two Sickles, and three Knuts, the corresponding polynomials are:

\begin{itemize}
    \item \textbf{Knuts:} $1+x+x^2+x^3$
    \item \textbf{Sickles:} $1+x^{29}+x^{58}$
    \item \textbf{Galleons:} $1+x^{493}+x^{986}$
\end{itemize}

Multiplying these polynomials together and expanding in Sage, we find:
$$(1+x+x^2+x^3)(1+x^{29}+x^{58})(1+x^{493}+x^{986})$$

$=x^{1047} + x^{1046} + x^{1045} + x^{1044} + x^{1018} + x^{1017} + x^{1016} + x^{1015} + x^{989} + x^{988} + x^{987} + x^{986} + x^{554} + x^{553} + x^{552} + x^{551} + x^{525} + x^{524} + x^{523} + x^{522} + x^{496} + x^{495} + x^{494} + x^{493} + x^{61} + x^{60} + x^{59} + x^{58} + x^{32} + x^{31} + x^{30} + x^{29} + x^{3} + x^{2} + x + 1$ \\

Notice that all of the coefficients in this polynomial are $1$.  That means there is exactly one way to make change for each of the exponents appearing: $1047, 1046, 1045, 1044, 1018,1017$, etc., and no ways to make any other amount of wizarding money using the coins in that pouch.

\end{example}

We can summarize our findings in the following theorem.

\begin{theorem}
  Suppose there are $m$ types of coins of values $k_1,k_2,\ldots,k_m$, and you have $n_1,n_2,\ldots,n_m$ of each type of coin respectively. 
  The number of ways to make change for a total value of $N$ using these coins is the coefficient of $x^N$ in the product $$(1+x^{k_1}+x^{2k_1}+\cdots +x^{n_1 k_1})(1+x^{k_2}+x^{2k_2}+\cdots+x^{n_2 k_2})\cdots (1+x^{k_m}+x^{2k_m}+\cdots+x^{n_m k_m}).$$
\end{theorem}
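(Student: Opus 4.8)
The plan is to interpret the coefficient of $x^N$ combinatorially, exactly as in Examples~\ref{Egen16} and \ref{Egen100}, by setting up a bijection (in the sense of Section~\ref{sec:bij}) between ``ways to make change for $N$'' and the monomial terms that contribute to $x^N$ when the product is fully expanded. First I would make precise what a way to make change is. Since coins of the same type are indistinguishable, such a way is completely determined by how many coins of each type are used: it is a tuple $(a_1, a_2, \ldots, a_m)$ of natural numbers with $0 \le a_i \le n_i$ for each $i$, satisfying $a_1 k_1 + a_2 k_2 + \cdots + a_m k_m = N$. Let $A$ be the set of all such tuples; then the number of ways to make change for $N$ is $|A|$, and the goal becomes showing that the coefficient of $x^N$ equals $|A|$.

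Next I would expand the product using the distributive law, in the same spirit as the ``giant foil method'' used to prove the Binomial Theorem (Theorem~\ref{Tbinomialthm}). To form a single term in the expansion, we choose exactly one monomial from each of the $m$ factors. Choosing a monomial from the $i$-th factor $1 + x^{k_i} + x^{2k_i} + \cdots + x^{n_i k_i}$ amounts to choosing an exponent of the form $a_i k_i$ with $0 \le a_i \le n_i$. Multiplying the chosen monomials, and using the rule that exponents add, the resulting term is
\[x^{a_1 k_1} \cdot x^{a_2 k_2} \cdots x^{a_m k_m} = x^{a_1 k_1 + a_2 k_2 + \cdots + a_m k_m}.\]
Thus each choice of $(a_1, \ldots, a_m)$ with $0 \le a_i \le n_i$ produces exactly one term $x^{\sum_i a_i k_i}$, and every term in the expansion arises in this way.

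The key step is then to collect like terms. A given choice contributes to the coefficient of $x^N$ precisely when $a_1 k_1 + \cdots + a_m k_m = N$, that is, precisely when $(a_1, \ldots, a_m) \in A$. Since each such tuple contributes a single copy of $x^N$, the coefficient of $x^N$ equals the number of tuples in $A$, namely $|A|$. This completes the argument, and I would note that Proposition~\ref{Pgenchange1} is the special case $m=4$ with $(k_1,k_2,k_3,k_4)=(1,5,10,25)$.

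I do not expect a serious obstacle here; the substance is just the observation that polynomial multiplication adds exponents, combined with careful bookkeeping. The one point that genuinely requires care is distinguishing between the formal monomial terms \emph{before} collecting like terms (which are in bijection with the tuples in $A$ contributing exponent $N$) and the single coefficient obtained \emph{after} collecting them (which counts those tuples), so that the reasoning is not circular. I would therefore state the correspondence between $A$ and the contributing terms explicitly, rather than leaving it implicit, to keep the counting step clean.
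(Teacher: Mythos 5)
Your proposal is correct and follows essentially the same approach as the paper's proof: expand the product by choosing one monomial from each factor, observe that exponents add to give $x^{a_1k_1+\cdots+a_mk_m}$ with $0 \le a_i \le n_i$, and conclude that the coefficient of $x^N$ counts exactly the tuples corresponding to ways of making change. Your explicit bookkeeping of the set $A$ and the bijection with contributing terms is a slightly more formal presentation of the same argument.
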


\begin{proof}
  If we multiply together one term from each factor in the product, we get a product of the form $$x^{a_1 k_1}x^{a_2k_2}\cdots x^{a_mk_m}=x^{a_1k_1+a_2k_2+\cdots+a_mk_m}$$ where each $a_i\le n_i$.  The exponent $a_1k_1+a_2k_2+\cdots+a_mk_m$ equals $N$ exactly when adding $a_i$ copies of $k_i$ for each $i$ adds up to $N$, which corresponds to using $a_i$ coins of the $i$-th type to make change for $N$.  Therefore, the number of times $x^N$ occurs in the expansion of the product is exactly the number of ways to make change for $N$. 
\end{proof}
\subsection*{Exercises}

\begin{enumerate}
    \item Suppose you have the same coins in your wallet as in the first example above, namely
    $6$ pennies, $2$ nickels, $4$ dimes, and $3$ quarters.
    \begin{enumerate}
        \item How many ways can you give someone $70$ cents?
        \item How many ways can you give someone $110$ cents?
        \item What are the values $m$ such that there is exactly one way to give someone $m$ cents?
    \end{enumerate}
  
    \item The polynomial in Example \ref{ex:HarryPotter} shows that there should be exactly one way of making exact change for $553$ Knut values using at most two Galleons, at most two Sickles, and at most three Knuts.  Figure out how to make change for $553$ using these coins, and explain how it comes from multiplying terms from the three factors in the product.
    
    \item At the video arcade, 
the red token is worth $3$ dollars, the green token is worth $4$ dollars, and the black token is worth
$5$ dollars.  You have $8$ red tokens, $6$ green tokens, and $4$ black tokens.  The question is how many ways you can spend $50$ dollars in tokens.
\begin{enumerate}
    \item Find the polynomial $f(x)$ that shows how many ways to spend every number of dollars in tokens.
    \item Expand $f(x)$ in Sage using the command \texttt{expand}.
    \item How many ways can you spend 50 dollars in tokens?
\end{enumerate}
  
    \item The Haunted Game Cafe in Fort Collins sells buffalo wings in small or large baskets.  The small basket contains $9$ wings and the large basket contains $15$ wings.
    A group of friends wants to order some buffalo wings, but there are 
    only 6 small baskets and 5 large baskets left. 
    \begin{enumerate}
        \item Find a polynomial that shows how many ways they can order every possible number of buffalo wings.
        \item How many ways can they order $60$ buffalo wings?
    \end{enumerate}
\end{enumerate}

\section{Generating functions}

\begin{videobox}
\begin{minipage}{0.1\textwidth}
\href{https://youtu.be/evP2mK7l7I0}{\includegraphics[width=1cm]{video-clipart-2.png}}
\end{minipage}
\begin{minipage}{0.8\textwidth}
Click on the icon at left or the URL below for this section's short lecture video. \\\vspace{-0.2cm} \\ \href{https://youtu.be/evP2mK7l7I0}{https://youtu.be/evP2mK7l7I0}
\end{minipage}
\end{videobox}

Suppose a cashier with an \textit{unlimited} supply of each coin was trying to figure out how many ways they can make change for a dollar.  Then the polynomials from Section \ref{sec:change} would turn into \textit{infinite} summations:
\begin{itemize}
    \item \textbf{Pennies}: $1+x+x^2+x^3+\cdots$
    \item \textbf{Nickels}: $1+x^{5}+x^{10}+x^{15}+\cdots$
    \item \textbf{Dimes}: $1+x^{10}+x^{20}+x^{30}+\cdots$
    \item \textbf{Quarters}: $1+x^{25}+x^{50}+x^{75}+\cdots$
\end{itemize}

We therefore need to define a notion of addition and multiplication of these types of infinite series, which we call \textit{generating functions}.

\begin{definition}
  The \textbf{generating function} of the sequence of numbers $a_0,a_1,a_2,\ldots$ is the series  $$\sum_{i=0}^\infty a_i x^i=a_0+a_1x+a_2x^2+a_3x^3+\cdots.$$ (Here $x$ is just a formal symbol, and the coefficients of the series are the important part!)
\end{definition}

\begin{remark}
Any polynomial can be interpreted as a generating function of a sequence that is eventually $0$; for instance, the generating function of the sequence $$1,1,3,0,0,0,0,0,0,\ldots$$ is $$1+x+3x^2+0x^3+0x^4+0x^5+\cdots=1+x+3x^2.$$
\end{remark}

\begin{example}
  The generating function of the sequence $1,1,1,1,1,1,1,0,0,0,0,\ldots$ is the polynomial $$1+x+x^2+x^3+x^4+x^5+x^6,$$ which is the polynomial we used to record the six pennies in Example \ref{Egen100}.  On the other hand, the polynomial that recorded the two nickels was $$1+x^5+x^{10},$$ which is the generating function of the sequence $$1,0,0,0,0,1,0,0,0,0,1,0,0,0,0,0,0,0,0,0,0,\ldots$$ having a $1$ in the $0$th, $5$th, and $10$th positions of the sequence and $0$ everywhere else.
\end{example}

\begin{example} \label{EgenN}
  The generating function of the sequence $1,2,3,4,5,\ldots$ (in other words, the sequence $a_0,a_1,a_2,\ldots$ defined by $a_i=i+1$) is
  $$1 + 2x + 3x^2 + 4x^3 + \cdots.$$
  We will revisit this example in the next section.
\end{example}

\begin{example}
  The generating function of the Fibonacci sequence $1,1,2,3,5,8,\ldots$ is $$1+x+2x^2+3x^3+5x^4+8x^5+\cdots.$$  By the end of this chapter we will see how to use generating functions to find an explicit formula for the Fibonacci numbers themselves.
\end{example}

\begin{example} (Revisiting the binomial theorem.)
Fix a number $n$.  The generating function of the sequence $\binom{n}{n},\binom{n}{n-1},\binom{n}{n-2},\ldots, \binom{n}{0}$ is:
\[\binom{n}{n} + \binom{n}{n-1} \cdot x + \binom{n}{n-2} x^2 + \cdots + \binom{n}{0}x^n.\]

This looks very familiar!  The right hand side is what we get if we substitute $y=1$ into Theorem~\ref{Tbinomialthm} and rearrange the terms:
\[(1+x)^n = \binom{n}{n} + \binom{n}{n-1} x + \binom{n}{n-2} x^2 + \cdots + \binom{n}{0} x^n.\]

In fact we can replace $\binom{n}{n}$ with $\binom{n}{0}$, replace $\binom{n}{n-1}$ with $\binom{n}{1}$, and so on by the symmetry of binomial coefficients, to get the somewhat nicer looking identity
\[(1+x)^n = \binom{n}{0} + \binom{n}{1} x + \binom{n}{2} x^2 + \cdots + \binom{n}{n} x^n.\]
Here is a different way to think about this that is similar to the problem of making change.  
Let's say there are $n$ people in a room and you need exactly $k$ of them to complete a survey.  
You can ask each person to hold up a card saying $0$ if they did not complete the survey and saying $1$ if they did.  You need the sum of the cards to be $k$.

Equivalently, from the polynomial $1+x$ you can ask each person to choose the monomial $1=x^0$ if they did not complete the survey and to choose the monomial $x=x^1$ if they did.  Then you need the product of the chosen monomials, which is the sum of the exponents, to equal $x^k$.  The conclusion is that the coefficient of
$x^k$ in $(1+x)^n$ is the number of ways to choose exactly $k$ people out of the $n$ people to complete the survey, and this number of ways is $\binom{n}{k}$.
In other words, $\binom{n}{k}$ is the coefficient of 
$x^k$ in $(1+x)^n$.  
\end{example}

\subsection{Are generating functions actually functions?}

In this book, we will not be thinking of generating functions as functions.  While it is common to use notation like $$A(x)=a_0+a_1x+a_2x^2+a_3x^3+\cdots$$ to denote a generating function, the notation $A(x)$ is just a convenient shorthand. 
 We will not be substituting a numerical value of $x$ into the formula for a generating function.  Indeed, if we try to plug in certain values of $x$ into an infinite series, the sum may not even converge.  

Instead, we like to think of $x$ 
as a formal symbol.  
The reason we take powers of $x$ is that the ways we manipulate generating functions are very similar to the ways we manipulate polynomials and power series in calculus.
We will define properties of generating functions in the next section. 

As Herbert Wilf says in his book Generatingfunctionology, 
\begin{quote}
    \it ``A generating function is a clothesline on which we hang up a sequence of numbers up for display.''
\end{quote}
In other words, it is the \textit{coefficients} (the sequence $a_0,a_1,a_2,\ldots$ that we are hanging up for display) that we are studying here, not any sort of output of the function.  In combinatorics, we study the series itself 
to gain information about its coefficients.

\begin{remark}
We can use any variable we want to write down the generating function of a sequence.  So far we have been considering generating functions in $x$, but for instance, the generating function of the sequence $1,2,3,4,5,\ldots$ in the variable $y$ is $$1+2y+3y^2+4y^3+\cdots$$ and its generating function in the variable $\smiley$ is $$1+2\smiley+3\smiley^2+4\smiley^3+\cdots$$
\end{remark}

\subsection*{Exercises}

\begin{enumerate}
    \item Write down the generating function of the sequence of even numbers $0,2,4,6,8,\ldots$ in the variable $x$, in the variable $y$, and in the variable $\smiley$.
    
    \item Write down the generating function (in $x$) of the sequence $1,-1,1,-1,1,-1,\ldots$.
    
    \item What sequence gives rise to the following generating function? $$1+x^2+x^4+x^6+\cdots$$
\end{enumerate}

\section{Operations on generating functions}

\begin{videobox}
\begin{minipage}{0.1\textwidth}
\href{https://www.youtube.com/watch?v=FFUH4IpS-V0}{\includegraphics[width=1cm]{video-clipart-2.png}}
\end{minipage}
\begin{minipage}{0.8\textwidth}
Click on the icon at left or the URL below for this section's short lecture video. \\\vspace{-0.2cm} \\ \href{https://www.youtube.com/watch?v=FFUH4IpS-V0}{https://www.youtube.com/watch?v=FFUH4IpS-V0}
\end{minipage}
\end{videobox}

If the cashier has an \textit{unlimited} supply of pennies, nickels, dimes, and quarters, we wish to ``multiply'' the corresponding infinite generating functions just like we multiplied the polynomials in Example \ref{Egen100}.  We therefore now need to define operations on generating functions.  The four main operations we will consider are addition, multiplication, substitution of a monomial, and differentiation.

In this section, let $A(x) = \sum_{n=0}^\infty a_n x^n$ and $B(x) = \sum_{n=0}^{\infty} b_n x^n$ be two generating functions.

\subsection{Addition (and subtraction) of generating functions}

Addition is the simplest operation on generating functions; we simply add corresponding coefficients: 
\begin{equation}\label{eq:add-generating-functions}
A(x)+B(x) = \sum_{n=0}^\infty a_n x^n+\sum_{n=0}^\infty b_n x^n=\sum_{n=0}^\infty (a_n+b_n) x^n.
\end{equation}
Writing out the summations, the first few terms look like: $$(a_0+a_1x+a_2x^2+\cdots)+(b_0+b_1x+b_2x^2+\cdots)=(a_0+b_0)+(a_1+b_1)x+(a_2+b_2)x^2+\cdots$$

\begin{example}
  The sum of the generating functions $$1+x+x^2+x^3+x^4+x^5+x^6+\cdots$$ and $$1-x+x^2-x^3+x^4-x^5+x^6+\cdots$$ is $$2+2x^2+2x^4+2x^6+\cdots.$$
\end{example}

We can similarly subtract generating functions.  In fact, any generating function minus itself equals $0$ (the generating function of the sequence $0,0,0,0,0,\ldots$).

\subsection{Multiplication of generating functions}

Multiplication of generating functions works much like multiplication of polynomials.  Let's start with an example.

\begin{example}
  Consider the generating function of the sequence $1,2,3,4,5,6,\ldots$.  We will compute its product with itself: $$(1+2x+3x^2+4x^3+5x^4+\cdots)(1+2x+3x^2+4x^3+5x^4+\cdots)$$
  Naively we would expect the expansion of this product to be performed by multiplying one term from the left factor with one term from the right factor in all possible ways, and adding up the resulting terms.  We can organize these products in a table as follows:
  
  \begin{center}
  \begin{tabular}{c|cccccc}
      $\times$ & $1$ & $2x$ & $3x^2$ & $4x^3$ & $5x^4$ & $\cdots$ \\\hline 
    $1$ & $1$ & $2x$ & $3x^2$ & $4x^3$ & $5x^4$ & \\
    $2x$   & $2x$   & $4x^2$ & $6x^3$ & $8x^4$ & $10x^5$ & \\
    $3x^2$ & $3x^2$ & $6x^3$ & $9x^4$ & $12x^5$ & $15x^6$ & \\
    $4x^3$ & $4x^3$ & $8x^4$ & $12x^5$ & $16x^6$& $20x^7$& \\
    $5x^4$ & $5x^4$ & $10x^5$& $15x^6$ & $20x^7$& $25x^8$& \\
    \vdots & & & & & &       
  \end{tabular}
  \end{center}
  Finally, to compute the sum of all the entries of this table, we notice that the down-and-left diagonals all have the same exponent of $x$, and we can combine the terms along each diagonal, obtaining the sum: $$1+(2x+2x)+(3x^2+4x^2+3x^2)+(4x^3+6x^3+6x^3+4x^3)+\cdots=1+4x+10x^2+20x^3+\cdots$$
\end{example}

We can generalize the above example as follows.  To compute the product $$A(x) B(x) = (a_0+a_1x+a_2x^2+\cdots)(b_0+b_1x+b_2x^2+\cdots),$$ consider the table:

  \begin{center}
  \begin{tabular}{c|cccccc}
      $\times$ & $b_0$ & $b_1x$ & $b_2x^2$ & $b_3x^3$ & $b_4x^4$ & $\cdots$ \\\hline 
    $a_0$ & $a_0b_0$ & $a_0b_1x$ & $a_0b_2x^2$ & $a_0b_3x^3$ & $a_0b_4x^4$ & \\
    $a_1x$  & $a_1b_0x$ & $a_1b_1x^2$ & $a_1b_2x^3$ & $a_1b_3x^4$ & $a_1b_4x^5$ & \\
    $a_2x^2$ & $a_2b_0x^2$ & $a_2b_1x^3$ & $a_2b_2x^4$ & $a_2b_3x^5$ & $a_2b_4x^6$ & \\
    $a_3x^3$ & $a_3b_0x^3$ & $a_3b_1x^4$ & $a_3b_2x^5$ & $a_3b_3x^6$ & $a_3b_4x^7$& \\
    $a_4x^4$ & $a_4b_0x^4$ & $a_4b_1x^5$ & $a_4b_2x^6$ & $a_4b_3x^7$ & $a_4b_4x^8$& \\
    \vdots & & & & & &       
  \end{tabular}
  \end{center}
  Again we add down the down-and-left diagonals to get the following summation: $$A(x)B(x)=(a_0b_0)+(a_0b_1+a_1b_0)x^2+(a_0b_2+a_1b_1+a_2b_0)x^2+\cdots.$$
In general, we see that the coefficient of $x^n$ in the product above is $a_0b_n+a_1b_{n-1}+a_2b_{n-2}+\cdots+a_nb_0.$  This leads to the general definition of multiplication of generating functions.

\begin{definition} We define the product of the generating functions $A(x)$ and $B(x)$ by 
\begin{eqnarray*}
A(x)B(x)=
\left(\sum_{n=0}^\infty a_n x^n\right)\left(\sum_{n=0}^\infty b_n x^n\right) & = & \sum_{n=0}^\infty (a_0b_n+a_1b_{n-1}+a_2b_{n-2}+\cdots+a_nb_0)x^n \\
& = & \sum_{n=0}^{\infty}\left(\sum_{k=0}^n a_kb_{n-k}\right)x^n.
\end{eqnarray*}
The coefficient $\sum_{k=0}^n a_kb_{n-k}$ of $x^n$ is called the $n$-th \textbf{convolution} of the sequences $a_0,a_1,\ldots$ and $b_0,b_1,\ldots$. 
\end{definition}

We can use the multiplication rule to prove one of the most important generating function identities, known as a \textit{geometric series}.

\begin{theorem}\label{Tgeoseries}
We have the identity $$1+x+x^2+x^3+\cdots=\frac{1}{1-x}.$$
\end{theorem}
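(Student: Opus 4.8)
The plan is to interpret the right-hand side correctly and then verify the identity using the convolution product defined just above the statement. Since we are working with formal power series rather than genuine functions, the symbol $\frac{1}{1-x}$ cannot mean ``evaluate and divide'' (the book explicitly warns against substituting numerical values of $x$); instead it must denote the \emph{multiplicative inverse} of the generating function $1-x$, that is, the unique generating function $B(x)$ satisfying $(1-x)\cdot B(x)=1$, where $1$ is the generating function of the sequence $1,0,0,0,\ldots$. Thus, to prove the theorem, it suffices to show that the candidate series $B(x)=1+x+x^2+x^3+\cdots$ satisfies $(1-x)B(x)=1$.

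First I would record the coefficients of the two factors. Writing $A(x)=1-x=\sum_{n=0}^\infty a_n x^n$, we have $a_0=1$, $a_1=-1$, and $a_n=0$ for $n\geq 2$. Writing $B(x)=\sum_{n=0}^\infty b_n x^n$, we have $b_n=1$ for every $n\geq 0$. Next I would compute the product $A(x)B(x)$ using the convolution formula, so that the coefficient of $x^n$ is $\sum_{k=0}^n a_k b_{n-k}$. For $n=0$ this sum is $a_0 b_0 = 1$. For $n\geq 1$, only the terms $k=0$ and $k=1$ can be nonzero, since $a_k=0$ whenever $k\geq 2$; hence the coefficient equals $a_0 b_n + a_1 b_{n-1} = 1\cdot 1 + (-1)\cdot 1 = 0$.

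Therefore every coefficient of $A(x)B(x)$ vanishes except the constant term, which equals $1$, so $(1-x)B(x)=1$. I would then conclude that $B(x)$ is the multiplicative inverse of $1-x$, and hence $1+x+x^2+x^3+\cdots = \frac{1}{1-x}$ by the definition of that symbol.

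The main obstacle here is not computational — the convolution collapses immediately by cancellation — but conceptual: the entire argument hinges on reading $\frac{1}{1-x}$ as a formal inverse rather than as a function, and on the fact that such an inverse is unique. I would note briefly that uniqueness holds because the constant term of $1-x$ is nonzero, which guarantees that $1-x$ is invertible in the ring of formal power series and that its inverse is uniquely determined; this is what makes exhibiting a single series $B(x)$ with $(1-x)B(x)=1$ enough to genuinely identify it as $\frac{1}{1-x}$.
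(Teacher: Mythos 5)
Your proof is correct and takes essentially the same approach as the paper: the paper likewise interprets the identity as the statement $(1-x)(1+x+x^2+\cdots)=1$ and verifies it with the multiplication rule, finding constant coefficient $1$ and all other coefficients $1-1=0$. Your added remark on uniqueness of the formal inverse is a nice refinement the paper leaves implicit, but it does not change the argument.
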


\begin{proof}
   The meaning of this identity is that the product of the generating functions $1+x+x^2+x^3+\cdots$ and $1-x$ is equal to $1=1+0x+0x^2+0x^3+\cdots$.  Indeed, using the formula for multiplying generating functions, we find that the coefficient of $x^0$ is $1$, and all the other coefficients are $1-1=0$.
\end{proof}

\begin{remark}
If you have a good memory, you might remember the formula in Theorem~\ref{Tgeoseries} from Calculus II.
In Calculus II, you learn that
$1+x+x^2 + \cdots$ is the Maclaurin series for the function $f(x)=1/(1-x)$, in other words the Taylor series for $f(x)$ centered at $x=0$; also you might have learned that the radius of convergence for the Maclaurin series is $R=1$.

It is important to clarify the difference between those statements in calculus and Theorem~\ref{Tgeoseries}.  In this chapter, we are defining everything algebraically and only need the multiplication and addition rules for series to prove Theorem~\ref{Tgeoseries}.  We will not focus on the radius of convergence since we are not evaluating this series at any number.  
\end{remark}

A ``closed form formula'' for a generating function is a formula that does not involve an infinite summation.  In Theorem~\ref{Tgeoseries}, we found the closed form formula $1/(1-x)$ 
for $\sum_{i=0}^\infty x^i$.
We will see in Section~\ref{sec:recurrence-gf} that finding a closed form formula for a generating function can often come in handy for computing its coefficients.

\begin{example}
  A similar argument to Theorem \ref{Tgeoseries} shows that the alternating series $1-x+x^2-x^3+x^4-x^5+\cdots$ has the closed form formula $1/(1+x)$.
\end{example}

\begin{example}\label{ex:mult}
  Let's compute the square of the geometric series $\sum_{i=0}^\infty x^i$.  By the multiplication rule, we have \begin{align*}
      (1+x+x^2+x^3+\cdots)(1+x+x^2+x^3+\cdots)&=1\cdot 1+(1\cdot 1+1\cdot 1)x+(1\cdot 1+1\cdot 1+1\cdot 1)x^2+\cdots \\
      &=1+2x+3x^2+4x^3+\cdots
  \end{align*} We can simplify the left hand side as $1/(1-x)^2$ using Theorem \ref{Tgeoseries}, leading to the closed form generating function identity $$1+2x+3x^2+4x^3+\cdots=\frac{1}{(1-x)^2}$$ for the sequence $1,2,3,4,\ldots$.
\end{example}

\begin{example}\label{ex:mult-by-x}
{\bf Multiplying a generating function by $x$.}  
We can think of $x$ as the generating function $0+x+0x^2+0x^3+\cdots$, and use the multiplication formula to find:
\begin{align*}
    x(a_0+a_1x+a_2x^2+a_3x^4+\cdots)
 &= (0+1\cdot x+0x^2+0x^3+\cdots)(a_0+a_1x+a_2x^2+a_3x^4+\cdots) \\
 &= (0\cdot a_0)+(0\cdot a_1+1\cdot a_0)x+(0\cdot a_2+1\cdot a_1+0\cdot a_2)x^3+\cdots \\
 &= 0+a_0x+a_1x^2+a_2x^3+\cdots. 
\end{align*}
This is indeed intuitively what we would expect if we simply ``distributed'' the $x$ across the infinite summation.  In terms of coefficients, it also shows that multiplying a generating function by $x$ shifts the coefficients; precisely, 
when $A(x)$ is the generating function of the sequence 
$a_0,a_1,a_2,\ldots$, 
then the coefficient of $x^n$ in $xA(x)$ is $a_{n-1}$.  
In summation notation, 
$$
x\left(\sum_{n=0}^\infty a_n x^n\right) = 
\sum_{n=0}^\infty a_n x^{n+1} = \sum_{m=1}^\infty a_{m-1} x^{m},
$$
where the last equality uses the 
substitution $m=n+1$.
Sometimes, we 
replace $m$ by $n$ in the last line and write
$$x\left(\sum_{n=0}^\infty a_n x^n\right)=
\sum_{n=1}^\infty a_{n-1}x^n.
$$
 \end{example}

\begin{example}
For instance, we can multiply both sides of the geometric series formula by $x$ to find $$x+x^2+x^3+x^4+\cdots=\frac{x}{1-x}.$$
\end{example}

Similarly, we can multiply a generating function by $x^k$ to shift the coefficients by $k$ spots, and we can multiply through by a constant.  For instance, we can multiply the geometric series formula by $2x^2$ to get the identity $$2x^2+2x^3+2x^4+2x^5+\cdots = \frac{2x^2}{1-x}.$$

\subsection{Substituting monomials into generating functions}

A \textbf{monomial} in $x$ is any term of the form $ax^n$ where $a$ is a number and $n$ is an integer called the degree.  We may substitute monomials of positive degree into generating function identities to produce more identities.

\begin{example}
  Write $G(x)=1+x+x^2+x^3+x^4+\cdots$.  Then we have the closed form formula $G(x)=\frac{1}{1-x}$ by Theorem \ref{Tgeoseries}.  We can substitute $2x$ for $x$ to obtain
  \begin{eqnarray*}
  G(2x) & = & 1 + (2x) + (2x)^2 + (2x)^3 + \cdots \\ 
  \frac{1}{1-2x} & = & 
  1+2x+4x^2+8x^3+\cdots.\end{eqnarray*}
\end{example}

\begin{example}
  If we want to find a closed form formula for $1+x^2+x^4+x^6+x^8+\cdots$, we notice that it is the result of substituting $x^2$ for $x$ in
  the geometric series, to get $G(x^2)=\frac{1}{1-x^2}$.
\end{example}

\begin{remark}
Warning:  
do not try to substitute constants or formulas that are not monomials into generating functions without caution!  What goes wrong if we substitute $y=1+x$ in the generating function $1+y+y^2+y^3+y^4+\cdots$?  What goes wrong if we substitute $y=1$?
More generally, the composition of generating functions $G(F(x))$ makes sense only when $F(x)$ has no constant term. 
\end{remark}

\subsection{Differentiation of generating functions}

We now define differentiation of generating functions, by extending the power rule for derivatives in calculus.

\begin{definition}
 We define $$\frac{d}{dx} \left(\sum_{n=0}^\infty a_n x^n\right)=\sum_{n=1}^\infty na_nx^{n-1}=\sum_{n=0}^\infty (n+1)a_{n+1}x^n.$$  
 In the last equality, we re-indexed by replacing $n$ with $n+1$.
 More visually: $$\frac{d}{dx}\left(a_0+a_1x+a_2x^2+a_3x^3+\cdots\right)=a_1+2a_2x+3a_3x^2+4a_4x^3+\cdots.$$
\end{definition}

Remember that generating functions are formal power series that we are not thinking of as actual functions.
The amazing thing is that, despite this, \textit{all of the usual differentiation rules from calculus still apply here}. In particular, for any generating functions $F(x)$ and $G(x)$, the following identities hold: 

\begin{itemize}
  \item \textbf{Sum Rule:} $\displaystyle{\frac{d}{dx} (F(x)+G(x))=\frac{d}{dx}F(x)+\frac{d}{dx}G(x)}$
  \item \textbf{Product Rule:} $\displaystyle{\frac{d}{dx} (F(x)G(x))=G(x)\cdot\frac{d}{dx}F(x)+F(x)\cdot \frac{d}{dx}G(x)}$
  \item \textbf{Quotient Rule:} If $G(x)$ has a nonzero constant term $g_0x^0$, then $$\displaystyle{\frac{d}{dx} \left(\frac{F(x)}{G(x)}\right)=\frac{G(x)\frac{d}{dx}F(x)-F(x)\frac{d}{dx}G(x)}{G(x)^2}}$$
  \item \textbf{Chain Rule for monomials:} Let $H(x)=\frac{d}{dx}F(x)$ and let $ax^n$ be any monomial in $x$.  Then $\frac{d}{dx}(F(ax^n))=ax^{n-1}\cdot H(ax^n)$.
\end{itemize}

\begin{example}
  The derivative of the geometric series $1+x+x^2+x^3+x^4+x^5+\cdots$ is $1+2x+3x^2+4x^3+5x^4+\cdots.$  We therefore have \begin{align*}
      1+2x+3x^2+4x^3+5x^4+\cdots&=\frac{d}{dx}(1+x+x^2+x^3+x^4+\cdots) \\
      &=\frac{d}{dx}\left(\frac{1}{1-x}\right) \\
      &=\frac{1}{(1-x)^2}
  \end{align*}
  which gives another proof of the identity we found via multiplication in Example \ref{ex:mult}.
\end{example}

\subsection*{Exercises}

\begin{enumerate}
\item Find a closed form formula for the infinite series $1-x+x^2-x^3+\cdots$.
\item Prove that 
$1/(1-nx)=\sum_{k=0}^\infty n^k x^k$ for any fixed positive integer $n$.
    \item For any positive integer $m$, show that $1+x^m+x^{2m}+x^{3m}+\cdots=\frac{1}{1-x^m}$.
    
    \item 
    Use the derivative to find the coefficients 
    $a_0, a_1, a_2, \ldots$ that yield these closed form formulas:
    \begin{enumerate}
        \item 
    \[\frac{1}{(1-x)^3} = a_0 + a_1 x + a_2 x^2 + \cdots.\]
    \item \[\frac{1}{(1-x)^4} = a_0 + a_1 x + a_2 x^2 + \cdots.\]
    \item 
    \[\frac{1}{(1-x)^N} = a_0 + a_1 x + a_2 x^2 + \cdots.\]
    \end{enumerate}

    \item 
    Recall this fact about the sequence $1,2,3, \ldots$ whose generating function has the closed form $1/(1-x)^2$.
    Let $a_k = k+1$.  Note that $a_k$ is the number of ways to buy $k$ flags, from an infinite collection of gold and green flags.
    \begin{enumerate}
        \item Let $a_0, a_1, \ldots$ be the sequence found in part (a) of the previous problem, whose generating function has the closed form $1/(1-x)^3$.
        Find a formula for $a_k$.  Show that $a_k$ is the number of ways to buy $k$ flags, from an infinite collection of gold, green, and pink flags.
        \item Repeat using the sequence found in part (b) of the previous problem, 
        where the generating function has the closed form $1/(1-x)^4$ and there are $4$ colors of flags.
        \item Repeat using the sequence found in part (c) of the previous problem, 
        where the generating function has the closed form $1/(1-x)^N$ and there are $N$ colors of flags.
    \end{enumerate}

   \item Find another way to verify the formulas you found for (a), (b), and (c) in the two previous problems using the geometric series and multiplication of power series.  Your answer should include the formula for the number of ways to count $n$ objects where order does not matter and repeats are allowed.  Your answer should include an explanation, like the ones in the section about making change, about the meaning of the coefficient of $x^n$.    
    
    \item We define $E(x)$ to be the generating series for the sequence $a_n=\frac{1}{n!}$, whose first terms are $1,1,\frac{1}{2}, \frac{1}{6}, \ldots$.  In other words
    \[E(x) = 1 + x + \frac{1}{2!} x^2 + \frac{1}{3!} x^3 + \cdots.\]
    Use the process of taking a derivative of an infinite series to show that $d(E(x))/dx = E(x)$.  Explain why this makes $E(x)$ behave like the function $e^x$.

\end{enumerate}

\section{Solving recurrences with generating functions}
\label{sec:recurrence-gf}

\begin{videobox}
\begin{minipage}{0.1\textwidth}
\href{https://www.youtube.com/watch?v=whqRcIVKLOw}{\includegraphics[width=1cm]{video-clipart-2.png}}
\end{minipage}
\begin{minipage}{0.8\textwidth}
Click on the icon at left or the URL below for this section's short lecture video. \\\vspace{-0.2cm} \\ \href{https://www.youtube.com/watch?v=whqRcIVKLOw}{https://www.youtube.com/watch?v=whqRcIVKLOw}
\end{minipage}
\\
\begin{minipage}{0.1\textwidth}
\href{https://youtu.be/9DCkPPY5WVA}{\includegraphics[width=1cm]{video-clipart-2.png}}
\end{minipage}
\begin{minipage}{0.8\textwidth}
Click on the icon at left or the URL below for another short video discussing the second method utilized in this section. \\\vspace{-0.2cm} \\ \href{https://youtu.be/9DCkPPY5WVA}{https://youtu.be/9DCkPPY5WVA}
\end{minipage}
\end{videobox}

In this section, we will see how to find explicit formulas for recursively defined sequences using generating functions.  
Then we use this to investigate some of the nonlinear recurrences that we defined in Chapter \ref{chap:recurrence}.

The steps to find explicit formulas for recursively defined sequences are as follows:
\begin{enumerate}
    \item \textbf{Define the generating function.} If $a_0,a_1,a_2,\ldots$ is a recursively defined sequence, set $A(x)=a_0+a_1x+a_2x^2+\cdots$ to be its generating function.
    \item \textbf{Solve for the generating function.} Use the recursion that defines the sequence, along with the generating function operations we have studied, to find an equation that $A(x)$ satisfies.  Solve this equation to find a closed form formula for $A(x)$ in terms of $x$.  
    \item \textbf{Expand to compute the coefficients of the generating function.}  Expand the closed form formula for $A(x)$ found in the previous step as a generating function (this step will often involve the geometric series formula or some other well-known identity). If this expansion has a nice formula for the coefficient of $x^n$, then that gives an explicit formula for $a_n$.  
\end{enumerate}

\begin{example}
Consider the sequence defined by $a_0=3$ and for all $n\ge 1$, $a_n=2a_{n-1}$.  While it is not hard to find an explicit formula for this sequence without using generating functions, we'll start with this as a simple example to demonstrate the three steps of the method.

\begin{enumerate}
    \item \textbf{Define the generating function:} We define $$A(x)=\sum_{n=0}^\infty a_n x^n=a_0+a_1x +a_2x^2+a_3x^3+\cdots.$$ 
    \item \textbf{Solve for the generating function:} 
    We would like to find a closed form formula for $A(x)$.
    The recursion formula $a_n=2a_{n-1}$ tells us that each coefficient is twice the previous one.  By Example~\ref{ex:mult-by-x},  
    the coefficient of $x^n$ in $xA(x)$ is $a_{n-1}$ for every $n \geq 1$.
    We can also multiply the result by $2$ to get:
$$2xA(x)=\sum_{n=1}^{\infty} 2a_{n-1}x^n=\sum_{n=1}^\infty a_n x^n$$ where the second equality above follows from the recursion $a_{n}=2a_{n-1}$.  Now, the right hand side above looks almost like $A(x)$ --- but wait!  The summation starts at $n=1$, not $n=0$.  Let's fix this by subtracting and adding the $a_0$ term back in (which does not change the equality) to get an equation for $A(x)$:
$$2xA(x)=-a_0+a_0+\sum_{n=1}^\infty a_n x^n=-a_0+\sum_{n=0}^\infty a_n x^n =-a_0 + A(x)=-3 + A(x).$$
We have nearly completed step 2 --- solving the equation $2xA(x)=-3 + A(x)$ for $A(x)$, we have $(2x-1)A(x)=-3$, so $A(x)=3/(1-2x)$.
    \item \textbf{Expand to compute the coefficients:}  By substituting $2x$ in for $x$ in the geometric series formula, and then multiplying by $3$, we see that $$A(x)=\frac{3}{1-2x}=3\sum_{n=0}^\infty (2x)^n=3\sum_{n=0}^\infty 2^nx^n=\sum_{n=0}^\infty 3\cdot 2^n x^n.$$ 
\end{enumerate}
 Having completed the three steps, we now have the big payoff: we know that $A(x)=\sum_{n=0}^\infty a_n x^n$, but also $A(x)=\sum_{n=0}^\infty 3\cdot 2^n x^n$.  Since these are the same generating function, we can conclude that $a_n=3\cdot 2^n$ for all $n$, and voila!  We've found an explicit formula for $a_n$.
\end{example}

\textit{Tip:} It is often useful to get a more concrete handle on this generating function in steps 1 and 2 by using the recursion to calculate the first few terms of the sequence.  Starting with $a_0=3$, we have $a_1=2a_0=6$, $a_2=2a_1=12$, and $a_3=2a_2=24$. 
So
\begin{eqnarray*}
A(x)  = & 3+ & 6x+12x^2+24x^3+\cdots \\
2xA(x)  = & & 6x + 12x^2 + 24 x^3 + \dots
\end{eqnarray*}
By lining up the terms for $A(x)$
and $2xA(x)$, we see that $2x A(x) - A(x) = 3$.

\subsection{The Tower of Hanoi}
In Section \ref{sec:hanoi}, we studied the Tower of Hanoi game.  We found that the recurrence relation for the number $H_n$ of steps it takes to solve satisfies $H_1=1$ and $H_n=2H_{n-1}+1$ for all $n\ge 2$. Notice that if we set $H_0=0$ we get a sequence that satisfies the recurrence for all $n\ge 1$.  Let's use this recursion to try to solve for the generating function of the sequence $H_0,H_1,H_2,\ldots$, by going through the three steps again:
\begin{enumerate}
    \item \textbf{Define the generating function:} First set $H(x)$ to be the generating function:
$$H(x)=H_0+H_1x+H_2x^2+H_3x^3+\cdots.$$  

    \item \textbf{Solve for the generating function:}  We can use the recursion to replace the coefficients $H_1,H_2,\ldots$ with $2H_0+1$, $2H_1+1$, $2H_2+1$ and so on respectively:
$$H(x)=H_0+(2H_0+1)x+(2H_1+1)x^2+(2H_2+1)x^3+\cdots.$$ We can set $H_0=0$, and then by the addition rule for generating functions, we can split the remaining terms up as a sum of generating functions: 
\begin{align}
    H(x)&=((2H_0)x+(2H_1)x^2+(2H_2)x^3+\cdots)+(x+x^2+x^3+\cdots) \\
    H(x)&=2x(H_0+H_1x+H_2x^2+\cdots)+x(1+x+x^2+\cdots) \\
    H(x)&=2xH(x)+\frac{x}{1-x}
\end{align}
where the last simplification is by the definition of $H(x)$ and the geometric series formula.  (Notice that we did this step using the $\cdots$ notation by writing out the first few terms of each generating function, rather than using $\sum$ notation like in the previous example.  Can you rewrite it using $\sum$ notation?)

We can now gather all the $H(x)$ terms on the left hand side to solve for $H(x)$:
\begin{align}
    H(x)-2xH(x)&=\frac{x}{1-x} \\
    H(x)(1-2x)&=\frac{x}{1-x} \\
    H(x)&=\frac{x}{(1-x)(1-2x)}
\end{align}
\item \textbf{Expand to compute the coefficients:} We'd like to break up the fraction as a sum of two fractions to make it look more like the geometric series formula.  Luckily, there's a tool from algebra that does just that: the method of \textbf{partial fractions}.  This states that there are numbers $a$ and $b$ such that
$$\frac{x}{(1-x)(1-2x)} = \frac{a}{1-x}+\frac{b}{1-2x}.$$ 
There are a few ways to see that $a=-1$ and $b=1$, for example, by finding a common denominator for the right hand side.  
(Try adding the fractions to double check this yourself!).
So we have $$H(x)=\frac{1}{1-2x}-\frac{1}{1-x}.$$ Then we can expand both terms using the geometric series formula: $$H(x)=\sum_{n=0}^\infty 2^n x^n -\sum_{n=0}^\infty x^n=\sum_{n=0}^\infty (2^n-1)x^n.$$ 
\end{enumerate}

Therefore, the coefficient $H_n$ is equal to $2^n-1$ for all $n$.

\subsection{Regions of the plane}

In Section \ref{sec:regions-plane}, we showed that the number $P_n$ of regions in the plane cut out by drawing $n$ lines (with no two parallel and no three concurrent) satisfies the recurrence $P_0=1$ and $P_n=P_{n-1}+n$.  We go through the steps once again.

\begin{enumerate}
    \item \textbf{Define the generating function:}  We set $$P(x)=\sum_{n=0}^\infty P_n=P_0+P_1x+P_2x^2+P_3x^3+\cdots.$$
    \item \textbf{Solve for the generating function:} In order to make use of the recursion $P_n=P_{n-1}+n$, we multiply $P(x)$ by $x$ to obtain the generating function for $P_{n-1}$:  $$\sum_{n=1}^\infty P_{n-1}x^n=xP(x)$$ and we also compute the generating function for $n$: $$\sum_{n=1}^\infty n x^n=x+2x^2+3x^3+4x^4+\cdots=x(1+2x+3x^2+4x^3+\cdots)=\frac{x}{(1-x)^2}$$ by Example \ref{ex:mult}.
    Adding these two generating functions, we have \begin{align*}
        \sum_{n=1}^\infty (P_{n-1}+n)x^n&=xP(x)+\frac{x}{(1-x)^2} \\
        \sum_{n=1}^\infty P_nx^n &=xP(x)+\frac{x}{(1-x)^2} \\
        P(x)-P_0 &= xP(x)+\frac{x}{(1-x)^2} \\
        P(x)-xP(x) &= P_0+\frac{x}{(1-x)^2} \\
        P(x)(1-x) &= 1+\frac{x}{(1-x)^2} \\
        P(x) &= \frac{1}{1-x}+\frac{x}{(1-x)^3}
    \end{align*}
    \item \textbf{Expand to compute the coefficients:} The first term $\frac{1}{1-x}$ can be expanded using the geometric series formula, but we haven't seen the generating function $\frac{x}{(1-x)^3}$ yet.  For this, we can take another derivative of the generating function $\frac{1}{(1-x)^2}$, which we've already computed (Example \ref{ex:mult}):
    \begin{align*}
    \frac{1}{(1-x)^2} &= \sum_{n=0}^\infty (n+1)x^n\\
        \frac{d}{dx}\left((1-x)^{-2}\right) &= \frac{d}{dx}\left(\sum_{n=0}^\infty (n+1)x^n\right) \\
        \frac{2}{(1-x)^3} &=\sum_{n=1}^\infty n(n+1)x^{n-1} \\
        \frac{1}{(1-x)^3} &= \frac{1}{2}\sum_{n=1}^\infty n(n+1)x^{n-1} \\
        \frac{x}{(1-x)^3} &=  \sum_{n=1}^\infty \frac{n(n+1)}{2} x^n
    \end{align*}
    where the last equality forms by multiplying both sides by $x$.  Notice that since this sum has constant term $0$, we can also start the indexing of $n$ at $0$ to write it as $\sum_{n=0}^\infty \frac{n(n+1)}{2} x^n$.
    
    We can finally substitute to find 
    \begin{align*}
        P(x) &= \frac{1}{1-x}+\frac{x}{(1-x)^3} \\
            &= \sum_{n=0}^\infty x^n + \sum_{n=0}^\infty \frac{ n(n+1)}{2}x^n \\
             &= \sum_{n=0}^\infty \left(1+\frac{n(n+1)}{2}\right)x^n \\
             &= \sum_{n=0}^\infty \left(\frac{n^2+n+2}{2}\right)x^n 
    \end{align*}
\end{enumerate}
We can conclude that $P_n=\frac{n^2+n+2}{2}$ for all $n$, as we found in Section \ref{sec:regions-plane}.

\subsection*{Exercises}

      \begin{enumerate}
          \item Find a closed form for the generating functions for the following recursively defined sequences:
          \begin{enumerate}
              \item $a_0 = 1$, $a_1 = 5$, $a_{n} = 4a_{n-1} -3a_{n-2}$ for $n\ge 2$.
          \item $b_0 = 1$, $b_1 = 6$, $b_{n} = 4b_{n-1} - 4b_{n-2}$ for $n\ge 2$. 
          \item $c_0 = 1$, $c_1=1$, $c_2=1$, $c_{n}=c_{n-1}+c_{n-2}+c_{n-3}$ for $n\ge 3$.
          \item $d_0=1$, $d_{n}=2d_{n-1}+n+1$ for $n\ge 1$.
          \item $e_0=1$, $e_n=e_{n-1}+e_{n-2}+\cdots+e_2+e_1+e_0$ for $n\ge 1$.
          \item $h_0=1$,  $h_n=\left(-\frac{1}{n}\right)h_{n-1}$ for $n\ge 1$.
          \end{enumerate}
         
    \item In part (a) above, use partial fractions on the generating function to find an explicit formula for the sequence $a_n$.
    
    \item In part (b) above, use monomial substitution into the formula $\frac{1}{(1-x)^2}=1+2x+3x^2+\cdots$ to find an explicit formula for $b_n$.
          
    \item Consider the \textit{double recursion} defined by $f_0=1$, $f_1=2$, $g_0=1$, $g_1=1$, $f_n=f_{n-1}+g_{n-2}$, $g_n=g_{n-1}+f_{n-1}$.  Find the generating functions $F(x)$ and $G(x)$ of the sequences $f_n$ and $g_n$.
      \end{enumerate}

\section{Generating functions and linear recurrences}

We now show how to solve linear recurrences using generating functions.  
\begin{example}
Suppose the sequence $a_0,a_1,a_2,\ldots$ is defined by $a_0=0$, $a_1=1$, and the recurrence relation  $a_n=5a_{n-1}-6a_{n-2}$ for $n\ge 2$.  
The first several terms of this sequence are $0, 1, 5, 19, 89, \ldots, $ 
Then we define $A(x)$ to be the generating function for this sequence: $$A(x)=a_0+a_1x+a_2x^2+\cdots=\sum_{n=0}^\infty a_n x^n.$$ 
Its first several terms are:
\[0 + x + 5x^2 + 19x^3 + 89x^4 + \cdots.\]
\end{example}
The next lemma shows how we can use generating functions to find a closed-form 
formula for $a_n$ for all $n$. 

\begin{lemma}
If $a_0=0$, $a_1=1$, and $a_n=5a_{n-1}-6a_{n-2}$ for $n\ge 2$, then 
$a_{n}=3^n-2^n$ for all $n$.
\end{lemma}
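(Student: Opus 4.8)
The plan is to follow the three-step generating function method developed throughout this section, since this lemma is presented as the culminating example of solving a linear recurrence of order $2$. First I would define the generating function $A(x)=\sum_{n=0}^\infty a_n x^n$, so that by the given initial values $a_0=0$ and $a_1=1$ it begins $A(x)=x+5x^2+19x^3+\cdots$.

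Second, I would solve for $A(x)$ in closed form. Peeling off the first two terms and applying the recurrence $a_n=5a_{n-1}-6a_{n-2}$ (valid for $n\ge 2$) to all later coefficients gives $A(x)=x+\sum_{n=2}^\infty(5a_{n-1}-6a_{n-2})x^n$. The key manipulation, exactly as in Example~\ref{ex:mult-by-x}, is that multiplying by a power of $x$ shifts coefficients: reindexing shows $\sum_{n=2}^\infty a_{n-1}x^n = x\,A(x)$ (here the fact that $a_0=0$ makes the boundary term vanish cleanly) and $\sum_{n=2}^\infty a_{n-2}x^n = x^2 A(x)$. This produces the equation $A(x)=x+5x\,A(x)-6x^2 A(x)$, and solving the resulting linear equation $A(x)(1-5x+6x^2)=x$ yields the closed form $A(x)=\dfrac{x}{1-5x+6x^2}$.

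Third, I would expand this closed form to extract the coefficients. Factoring the denominator as $1-5x+6x^2=(1-2x)(1-3x)$ and applying partial fractions as in the Tower of Hanoi computation earlier in this section, I would seek constants with $\dfrac{x}{(1-2x)(1-3x)}=\dfrac{a}{1-2x}+\dfrac{b}{1-3x}$; clearing denominators to get $x=a(1-3x)+b(1-2x)$ and substituting $x=1/2$ and $x=1/3$ gives $a=-1$ and $b=1$. Expanding each piece by the geometric series formula (Theorem~\ref{Tgeoseries}) through the substitutions $x\mapsto 2x$ and $x\mapsto 3x$ then gives $A(x)=\sum_{n=0}^\infty 3^n x^n-\sum_{n=0}^\infty 2^n x^n=\sum_{n=0}^\infty(3^n-2^n)x^n$, and comparing coefficients with $A(x)=\sum a_n x^n$ yields $a_n=3^n-2^n$ for all $n$.

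All of the individual computations are routine; the step that most needs care is the reindexing in the second step, where the lower limits of the shifted sums must be tracked correctly and the vanishing of $a_0$ must be invoked to justify $\sum_{n\ge 2}a_{n-1}x^n=xA(x)$ without a leftover term. An entirely different route would be a direct (strong) induction verifying that $3^n-2^n$ satisfies both initial conditions and the recurrence, or an appeal to the characteristic polynomial $x^2-5x-(-6)=x^2-5x+6=(x-2)(x-3)$ via Theorem~\ref{Tlinchar2}; I would favor the generating function derivation here because it is in the spirit of the section and has the advantage of \emph{producing} the formula rather than merely checking a guessed one.
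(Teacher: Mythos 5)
Your proposal is correct and follows essentially the same route as the paper's own proof: define $A(x)$, use the recurrence with the coefficient-shifting identity to derive $A(x)(1-5x+6x^2)=x$, factor as $(1-2x)(1-3x)$, apply partial fractions, and expand via geometric series to read off $a_n=3^n-2^n$. The only cosmetic difference is how the partial-fraction constants are found (you substitute $x=1/2$ and $x=1/3$ into the polynomial identity, while the paper equates coefficients and solves the linear system), which does not change the argument.
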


\begin{proof}
The first step is to use the recurrence relation to find a closed form formula for $A(x)$.  
For all $n\ge 2$, we can substitute $a_n=5a_{n-1}-6a_{n-2}$, so we can write \begin{align*}
    A(x)&=a_0+a_1x+\sum_{n=2}^\infty a_n x^n \\
        &=a_0+a_1x+\sum_{n=2}^\infty (5a_{n-1}-6a_{n-2}) x^n \\
        &=0+1x+5\left(\sum_{n=2}^\infty a_{n-1}x^n\right)-6\left(\sum_{n=2}^\infty a_{n-2}x^n\right).
 \end{align*}   We re-index the first infinite sum in the line above, replacing $n$ by $n+1$, and the second infinite sum in the line above by replacing $n$ by $n+2$.  After that, we can factor out an $x$ from each term in the first infinite sum and an $x^2$ in the second infinite sum.  
So
 \begin{align*}
    A(x)&=       x+5\left(\sum_{n=1}^\infty a_nx^{n+1}\right)-6\left(\sum_{n=0}^\infty a_{n}x^{n+2}\right) \\
        &=x+5x\left(\sum_{n=1}^\infty a_n x^n\right)-6x^2\left(\sum_{n=0}^\infty a_n x^n\right)
         \end{align*} 
      Now the first infinite sum is the sum as $A(x)$ without its constant term (which is zero anyway) and the second infinite sum is the same as $A(x)$.  So 
  \begin{align*}
    A(x)&=        
     x+5xA(x)-6x^2A(x) \\
        &=x+(5x-6x^2)A(x)
\end{align*}

We can now solve this equation for $A(x)$ and then factor the denominator to find $$A(x)=\frac{x}{1-5x+6x^2}=\frac{x}{(1-2x)(1-3x)}$$
We now use the method of \textit{partial fractions} to write this as a sum of two fractions, of the form $$A(x)=\frac{r}{1-2x}+\frac{s}{1-3x}.$$ To solve for $r$ and $s$, we combine the fractions to get $\frac{r+s-(3r+2s)x}{(1-2x)(1-3x)}$, so $r+s-(3r+2s)x=x$.  Therefore $r+s=0$ and $3r+2s=-1$.  Solving, we find $r=-1$ and $s=1$.  Therefore  $$A(x)=\frac{-1}{1-2x}+\frac{1}{1-3x},$$ and we can now expand both fractions using the geometric series theorem.  Indeed, we have $$A(x)=-\sum_{n=0}^\infty 2^nx^n+\sum_{n=0}^\infty 3^n x^n=\sum_{n=0}^\infty (3^n-2^n)x^n.$$  Since this is exactly the same generating function as $\sum_{n=0}^\infty a_n x^n$, we can conclude that $a_{n}=3^n-2^n$ for all $n$.  
\end{proof}

\begin{remark}
In the above example, here is a different way to solve for $A(x)$. 
We recall that multiplying $A(x)$ by $x$ shifts the coefficients by one index and multiplying by $x^2$ shifts the coefficients by two indices.  In particular we have $$xA(x)=a_0x+a_1x^2+a_2x^3+\cdots = \sum_{n=1}^\infty a_{n-1}x^n$$ and $$x^2A(x)=a_0x^2+a_1x^3+a_3x^4+\cdots =\sum_{n=2}^\infty a_{n-2}x^n.$$  Therefore, if we add together $A(x)$, $-5xA(x)$, and $6x^2A(x)$, most of the coefficients will cancel since $a_n-5a_{n-1}+6a_{n-2}=0$ for all $n\ge 2$:

\begin{center}
\begin{tabular}{rcccccccccc}
    $A(x)$&=& $a_0$ &$+$& $a_1x$ &$+$& $a_2x^2$ &$+$& $a_3x^3$ &$+$& $\cdots$ \\
    $-5xA(x)$&$=$& &$-$&$5a_0x$&$-$&$5a_1x^2$&$-$&$5a_2x^3$&$+$&$\cdots$ \\
    $+6x^2A(x)$&$=$& & & & &$6a_0x^2$&$+$&$6a_1x^3$&$+$&$\cdots$ \\\hline
    $A(x)-5xA(x)+6x^2A(x)$&$=$&$a_0$&$+$&$(a_1-5a_0)x$&$+$&$0x^2$&$+$&$0x^3$&$+$&$\cdots$
\end{tabular}
\end{center}

Therefore, $A(x)(1-5x+6x^2)=a_0+(a_1-5a_0)x=x$.
\end{remark}

Notice in the previous calculation that we multiplied $A(x)$ by something very close to the \textbf{characteristic polynomial} of the recursion $a_n-5a_{n-1}+6a_{n-2}=0$ (see Definition \ref{def:charpoly}), which is $x^2-5x+6$.  Indeed, we multiply here by the \textit{reverse} of the characteristic polynomial, defined by reversing the sequence of coefficients to make it $1-5x+6x^2$ instead.  

We now apply this principle to solve another recursion.

\begin{example}
Let's use generating functions to derive the explicit formula for the $n$th Fibonnacci number $F_n$.  To do this, it is easier to start with the index $n=0$, so we write $F_0=0$, $F_1=1$, and $F_{n}=F_{n-1}+F_{n-2}$ for all $n\ge 2$.   Consider the generating function $$G(x)=\sum_{n=0}^\infty F_n x^n.$$  As in the previous problem, we multiply by the reverse of the characteristic polynomial, $1-x-x^2$, to get
\begin{eqnarray*} G(x)-xG(x)-x^2G(x)&  =&  F_0+F_1x-F_0x+\sum_{n=2}^\infty (F_n-F_{n-1}-F_{n-2})x^n\\
& = & 0 + 1 \cdot x - 0 \cdot x + \sum_{n=2}^\infty 0 \cdot x^n =x.
\end{eqnarray*}
Therefore $G(x)=x/(1-x-x^2)$.  We can factor the denominator as $(1-ax)(1-bx)$ where $a+b=1$ and $ab=-1$.   Solving, we get $a=\frac{1+\sqrt{5}}{2}$ and $b=\frac{1-\sqrt{5}}{2}$.  Using partial fractions, we can write $$G(x)=\frac{A}{1-ax}+\frac{B}{1-bx}.$$ To find $A$ and $B$, we add the fractions and compare coefficients: $A(1-bx)+B(1-ax)=x$, so $A+B=0$ and $Ab+Ba=-1$.  Thus $A=-B$ and so $A(b-a)=-1$.  Since $a-b=\sqrt{5}$ we have $A=\frac{1}{\sqrt{5}}$ and $B=-\frac{1}{\sqrt{5}}$.

We now have $$G(x)=\frac{1/\sqrt{5}}{1-ax}-\frac{1/\sqrt{5}}{1-bx}.$$ Expanding each term as a geometric series, we find that $$G(x)=\sum_{n=0}^\infty \frac{1}{\sqrt{5}}a^nx^n-\sum_{n=0}^\infty \frac{1}{\sqrt{5}}b^nx^n=\sum_{n=0}^\infty \frac{1}{\sqrt{5}}\left(\left(\frac{1+\sqrt{5}}{2}\right)^n-\left(\frac{1-\sqrt{5}}{2}\right)^n\right)x^n.$$ By the definition of equality of generating functions, it follows that $$F_n=\frac{1}{\sqrt{5}}\left(\left(\frac{1+\sqrt{5}}{2}\right)^n-\left(\frac{1-\sqrt{5}}{2}\right)^n\right).$$
\end{example}

We can now generalize our methods above to prove the following theorem, which we stated before without proof in Chapter \ref{chap:recurrence}.

\begin{theorem}
Let $r_1, \ldots, r_d$ be the roots of the characteristic polynomial of the recurrence relation $a_n=c_{1}a_{n-1}+c_2a_{n-2}+\cdots +c_da_{n-d}=0$.  Suppose that the $d$ roots are all different.
Then there are constants $z_1, \ldots, z_d$ such that, if $n \geq 1$, then
\begin{equation}\label{Esolgen}
a_n = z_1 r_1^n + \cdots z_d r_d^n.    
\end{equation}
Furthermore, there is a unique choice of $z_1, \ldots, z_d$ such that
\eqref{Esolgen} is true.
\end{theorem}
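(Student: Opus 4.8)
The plan is to carry out, in full generality, the same generating-function computation done above for the Fibonacci numbers and for $a_n = 5a_{n-1} - 6a_{n-2}$. First I would set $A(x) = \sum_{n=0}^\infty a_n x^n$ (reindexing so that the recurrence \eqref{Elinearrec} holds for all $n \geq d$) and multiply by the \emph{reverse characteristic polynomial}
\[ q(x) = 1 - c_1 x - c_2 x^2 - \cdots - c_d x^d. \]
For each $n \geq d$, the coefficient of $x^n$ in the product $q(x) A(x)$ is exactly $a_n - c_1 a_{n-1} - \cdots - c_d a_{n-d}$, which vanishes by the recurrence. Hence $q(x) A(x) = p(x)$ for some polynomial $p(x)$ whose only possibly-nonzero coefficients are those of $x^0, \ldots, x^{d-1}$, so $\deg p \leq d-1$ and $A(x) = p(x)/q(x)$ is a \emph{proper} rational function.

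The second step is to factor $q(x)$ in terms of the characteristic roots. Since $q(x) = x^d c(1/x)$, where $c(x) = x^d - c_1 x^{d-1} - \cdots - c_d$ is the characteristic polynomial of Definition~\ref{def:charpoly}, and since $c_d \neq 0$ forces every root $r_i$ to be nonzero, each $r_i$ being a root of $c$ makes $1/r_i$ a root of $q$. Matching constant terms and leading coefficients (equivalently, using Vieta's formulas) then yields the clean factorization
\[ q(x) = \prod_{i=1}^d (1 - r_i x). \]
Because the $r_i$ are distinct by hypothesis, these linear factors are distinct, and so the method of partial fractions applies to the proper fraction $p(x)/q(x)$: there exist unique constants $z_1, \ldots, z_d$ with
\[ A(x) = \frac{p(x)}{q(x)} = \sum_{i=1}^d \frac{z_i}{1 - r_i x}. \]

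The final step is to expand each summand as a geometric series, using Theorem~\ref{Tgeoseries} with $x$ replaced by the monomial $r_i x$, which gives $z_i/(1 - r_i x) = \sum_{n=0}^\infty z_i r_i^n x^n$. Summing over $i$ and comparing the coefficient of $x^n$ on both sides produces $a_n = z_1 r_1^n + \cdots + z_d r_d^n$, which is \eqref{Esolgen}. Uniqueness of the $z_i$ then follows directly from the uniqueness of the partial-fraction decomposition for distinct linear factors; alternatively, evaluating \eqref{Esolgen} at $n = 1, \ldots, d$ gives a linear system for the $z_i$ whose coefficient matrix is a weighted Vandermonde matrix, invertible precisely because the $r_i$ are distinct.

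I expect the main obstacle to be the careful justification of the partial-fraction step: proving both that a proper rational function with distinct linear denominators admits such a decomposition and that its coefficients are uniquely determined. This is standard algebra, but it is the single place where the distinctness hypothesis is genuinely essential --- repeated roots would force terms of the form $z_i/(1 - r_i x)^2$, which expand with polynomial factors in $n$ and change the shape of the answer --- so I would take care to isolate exactly where distinctness enters. A secondary bookkeeping point is confirming $\deg p \leq d-1$, since that properness is what guarantees the partial-fraction expansion has no leftover polynomial part.
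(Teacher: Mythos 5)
Your proposal is correct and follows essentially the same route as the paper's proof: multiply $A(x)$ by the reversed characteristic polynomial, factor it as $(1-r_1x)(1-r_2x)\cdots(1-r_dx)$ via the identity $q(x)=x^d c(1/x)$, apply partial fractions using the distinctness of the roots, and expand each term as a geometric series. If anything, you are slightly more careful than the paper, which does not explicitly verify that $\deg p \leq d-1$, that the roots are nonzero, or that the $z_i$ are unique.
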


\begin{proof}
The characteristic polynomial 
of the recurrence relation is 
$c(x)=x^d - c_{1}x^{d-1} - c_2x^{d-2} - \cdots 
- c_{d-1} x - c_d$.  Reversing its coefficients, we get the polynomial $$r(x)=1-c_1x-c_2x^2-\cdots-c_dx^d.$$  Now, let $A(x)=\sum_{n=0}^\infty a_n x^n$ be the generating function of the recursively defined sequence $a_0,a_1,\ldots$.  Then by using the generating function multiplication rule on $r(x)A(x)$, we see that all the coefficients starting at $x^d$ and beyond are $0$.  Thus $r(x)A(x)=p(x)$ for some polynomial $p(x)$, and so $$A(x)=\frac{p(x)}{r(x)}.$$  Now, we claim that $r(x)$ factors as $(1-r_1x)(1-r_2x)\cdots (1-r_dx)$.  Indeed, since $r_1,\ldots,r_d$ are defined to be the roots of $c(x)$, we have $c(x)=(x-r_1)(x-r_2)\cdots (x-r_d)$.  But notice that $r(x)=x^dc(1/x)=x^d(1/x-r_1)(1/x-r_2)\cdots (1/x-r_d)$.  Multiplying each factor through by one of the $x$'s from the $x^d$ factor, we get $r(x)=(1-r_1x)(1-r_2x)\cdots (1-r_dx)$.

Putting this all together, we have $$A(x)=\frac{p(x)}{r(x)}=\frac{p(x)}{(1-r_1x)(1-r_2x)\cdots (1-r_d)x}.$$ Since the roots are assumed to be all different, this decomposes using partial fractions as a sum of fractions of the form $$\frac{z_1}{1-r_1x}+\frac{z_2}{1-r_2x}+\cdots +\frac{z_d}{1-r_dx}$$ for some constants $z_1,\ldots,z_d$.  Expanding each of these terms using the geometric series formula and adding the generating functions, we find that the $n$-th coefficient of the series is indeed $z_1r_1^n+\cdots +z_dr_d^n$.
\end{proof}

\section{The Catalan numbers}

We end this chapter by deriving the generating function and explicit formula for the celebrated Catalan numbers (see Section \ref{sec:Catalan}).  Recall that the \textbf{Catalan numbers} are defined by the recursion \begin{equation}
    \label{eq:Catalan} C_{n+1}=C_{0}C_n+C_1C_{n-1}+C_2C_{n-2}+\cdots + C_nC_0,
\end{equation} with initial condition $C_0=1$.  In particular, for the first few values of $n$, this recursion says that \begin{align}
C_1&=C_0C_0 \label{eq:c1}\\
C_2&=C_0C_1+C_1C_0 \label{eq:c2}\\
C_3&=C_0C_2+C_1C_1+C_2C_0 \label{eq:c3}
\end{align} and so on.  
\begin{enumerate}
    \item \textbf{Define the generating function:}  We set $$C(x)=C_0+ C_1 x +C_2x^2+C_3x^3+\cdots.$$ 
    \item \textbf{Solve for the generating function:}  Notice that the right hand side of the Catalan recursion (\ref{eq:Catalan}) very closely resembles the coefficients we get when we multiply $C(x)$ by itself.  Indeed:
$$C(x)^2=C(x)\cdot C(x)=C_0C_0+(C_0C_1+C_1C_0)x+(C_0C_2+C_1C_1+C_2C_0)x^2+\cdots$$ by the multiplication rule for generating functions.  The expressions in parentheses on the right hand side exactly match equations (\ref{eq:c1}), (\ref{eq:c2}), (\ref{eq:c3}), and in general the coefficient of $x^n$ will match the right hand side of equation (\ref{eq:Catalan}).  Substituting these in, we have $$C(x)^2=C_1+C_2x+C_3x^2+C_4x^3+\cdots.$$  We now want to re-express the right hand side in terms of $C(x)$ to solve for $C(x)$.  In order to line up the subscripts with the right powers of $x$, we can multiply both sides by $x$:
$$xC(x)^2=C_1x+C_2x^2+C_3x^3+C_4x^4+\cdots.$$  The right hand side is almost $C(x)$, but it is missing the $C_0$ term at the start.  So let's add that to both sides:
$$xC(x)^2+C_0=C_0+C_1x+C_2x^2+C_3x^3+C_4x^4+\cdots.$$
Now the right hand side is $C(x)$, and we can substitute $C_0=1$ on the left, to get $xC(x)^2+1=C(x)$,  
or $$xC(x)^2-C(x)+1=0.$$
We can now use the quadratic formula to solve for $C(x)$: 

$$C(x)=\frac{1\pm \sqrt{1-4x}}{2x}$$

    \item \textbf{Expand to compute the coefficients:} To interpret $\sqrt{1-4x}$ as a generating function, we want to think of it as ``the generating function whose square is $1-4x$''.  To get an intuition for what this looks like, we can start determining the coefficients one by one, starting with the equation: 
\begin{align*}
    (a_0+a_1x+a_2x^2+a_3x^3+\cdots )^2 &= 1-4x; \\
    a_0^2+(a_0a_1+a_1a_0)x+(a_0a_2+a_1a_1+a_2a_0)x^2+\cdots&=1-4x.
\end{align*}
By comparing the coefficients of $x^n$ from the left and right sides of this equation, we can solve for $a_0, a_1, a_2, \ldots$ one at a time.
From the coefficients of $x^0$, we see that $a_0^2=1$, and we take the positive solution $a_0=1$.\footnote{ This is similar to the convention of setting $\sqrt{4}$ to be $2$ rather than $-2$, even though both numbers have a square of $4$.}  
From the coefficients of $x^1$, we see that $a_0a_1+a_1a_0=-4$, and substituting $a_0=1$, we find that $a_1=-2$.  From the coefficients of $x^2$, we see that $a_0a_2+a_1a_1+a_2a_0=0$, and substituting $a_0=1$, $a_1=-2$, we find that $a_2=-2$.  Continuing in this manner, we claim that once the value of $a_0, a_1, \ldots, a_{n-1}$ are found, then there is a unique choice of $a_n$ that makes the coefficients of $x^n$ from the left and the right side of the equation equal.
This provides a well-defined ``square root'' of $1-4x$ as a generating function: $$\sqrt{1-4x}=1-2x-2x^2-4x^3-\cdots.$$ This also means that in the formula $C(x)=\frac{1\pm \sqrt{1-4x}}{2x}$, we should use the negative term in the symbol $\pm$, so that the constant term $1$ cancels and all the terms in the numerator are divisible by $2x$. 
\end{enumerate}

We've found the first several coefficients for the generating function of $\sqrt{1-4x}$ but now we want to find a formula for all of them.
To do this, we will introduce a generalization of binomial numbers:

\begin{definition}[Generalized Binomial Coefficients.]
Let $\alpha$ be any rational number, and define $$\binom{\alpha}{k}=\frac{\alpha(\alpha-1)(\alpha-2)\cdots(\alpha-k+1)}{k!}$$ 
if $k \not = 0$ and $\binom{\alpha}{0}=1$.
\end{definition}

If $\alpha$ is a positive integer $n$, then this definition is the same as the ordinary definition of $\binom{n}{k}$.

\begin{example}
If we set $\alpha=1/2$ and $k=3$ in the above formula, we have $$\binom{1/2}{3}=\frac{(1/2)(-1/2)(-3/2)}{3!}=\frac{3/8}{6}=\frac{1}{16}.$$
\end{example}

\begin{example} \label{Ehalfbinom1}
Let $\alpha = 1/2$ and $k=n+1$.
Then 
\begin{eqnarray*}
\binom{1/2}{n+1}&=
&\frac{(1/2)(1/2-1)(1/2-2)\cdots (1/2-n)}{(n+1)!} \\
&=& \frac{(1/2)(-1/2)(-3/2)(-5/2)\cdots((1-2n)/2)}{(n+1)!} \\
&=&\frac{(1)(3)(5)\cdots(2n-1) (-1)^n}{2^{n+1}(n+1)!}\\
& = & \frac{(1)(2)(3)(4)(5)\cdots(2n-1)(2n) (-1)^n}{(2)(4)\cdots (2n) 2^{n+1}(n+1)!}\\
& = & \frac{(2n)! (-1)^n}
{(2\cdot 1)(2 \cdot 2)
(2 \cdot 3)\cdots (2\cdot n) 2^{n+1}(n+1)!}\\
& = & \frac{(2n)! (-1)^n}{2^n 2^{n+1}n!(n+1)!}
\end{eqnarray*}
\end{example}

To find a formula for the coefficients of $\sqrt{1-4x}$, we'll use (without proof) this next theorem.

\begin{theorem}[Generalized Binomial Theorem.] \label{Tgbt} As a generating function, $$(1+x)^\alpha = \sum_{k=0}^\infty \binom{\alpha}{k}x^k.$$ 
\end{theorem}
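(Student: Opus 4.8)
The plan is to first settle what the symbol $(1+x)^\alpha$ actually means as a formal power series, and only then to identify it with the series $B_\alpha(x) := \sum_{k=0}^\infty \binom{\alpha}{k} x^k$. Writing $\alpha = p/q$ with $q$ a positive integer, I would define $(1+x)^{p/q}$ to be the unique formal power series $F(x)$ with constant term $F(0)=1$ satisfying $F(x)^q = (1+x)^p$. This is the same style of definition already used for $\sqrt{1-4x}$, where the coefficients are pinned down one at a time, so uniqueness of such a $q$-th root is automatic. The theorem then reduces to showing $B_{p/q}(x)^q = (1+x)^p$ together with $B_{p/q}(0)=1$.

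The engine of the proof is the multiplicativity $B_\alpha(x)\cdot B_\beta(x) = B_{\alpha+\beta}(x)$, valid for all rationals $\alpha,\beta$. By the multiplication rule for generating functions, the coefficient of $x^k$ in $B_\alpha B_\beta$ is the convolution $\sum_{j=0}^k \binom{\alpha}{j}\binom{\beta}{k-j}$, so this is precisely the generalized Vandermonde identity $\sum_{j=0}^k \binom{\alpha}{j}\binom{\beta}{k-j} = \binom{\alpha+\beta}{k}$. I would prove that identity by the polynomial method: for fixed $k$, each $\binom{\alpha}{j}$ is a degree-$j$ polynomial in $\alpha$, so $D(\alpha,\beta) := \sum_{j=0}^k \binom{\alpha}{j}\binom{\beta}{k-j} - \binom{\alpha+\beta}{k}$ is a polynomial in the two variables $\alpha,\beta$. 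Proposition~\ref{PVandermonde} shows $D$ vanishes whenever $\alpha,\beta$ are positive integers. Fixing a positive integer $\beta=b$, the one-variable polynomial $D(\alpha,b)$ vanishes at infinitely many $\alpha$ and is therefore identically zero; then fixing any $\alpha$, the polynomial $D(\alpha,\beta)$ vanishes at all positive integers $\beta$ and is identically zero, so $D\equiv 0$ on all of $\alpha,\beta$.

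With multiplicativity in hand I would assemble the base cases and conclude. For a nonnegative integer $n$, $B_n(x) = (1+x)^n$ is exactly the Binomial Theorem (Theorem~\ref{Tbinomialthm}), and $B_{-1}(x) = \sum_{k=0}^\infty (-1)^k x^k = \frac{1}{1+x}$ since $\binom{-1}{k}=(-1)^k$ and by the geometric series (Theorem~\ref{Tgeoseries}); iterating $B_\alpha B_\beta = B_{\alpha+\beta}$ then gives $B_m(x)=(1+x)^m$ for every integer $m$. The same iteration yields $B_\alpha(x)^m = B_{m\alpha}(x)$, hence $B_{p/q}(x)^q = B_p(x) = (1+x)^p$, while $B_{p/q}(0) = \binom{p/q}{0} = 1$. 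By the uniqueness of the $q$-th root, $B_{p/q}(x) = (1+x)^{p/q}$, which is the theorem.

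The main obstacle is conceptual rather than computational: making the definition of $(1+x)^\alpha$ precise for non-integer $\alpha$ and justifying uniqueness of roots among formal power series, so that $B_{p/q}^q = (1+x)^p$ genuinely pins down $B_{p/q}$. A clean alternative that sidesteps fractional roots entirely is the differential-equation route. One first checks the Pascal-like identity $(k+1)\binom{\alpha}{k+1} = (\alpha-k)\binom{\alpha}{k}$, which follows directly from the defining formula for $\binom{\alpha}{k}$. Using the differentiation rule, this identity shows that $B_\alpha$ satisfies the formal equation $(1+x)\,\frac{d}{dx}B_\alpha(x) = \alpha\, B_\alpha(x)$ with $B_\alpha(0)=1$, since the coefficient of $x^k$ on the left is $(k+1)\binom{\alpha}{k+1}+k\binom{\alpha}{k} = \alpha\binom{\alpha}{k}$. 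As this equation forces the recurrence $a_{k+1} = \frac{\alpha-k}{k+1}a_k$ with $a_0=1$, it has a unique formal power series solution; one would then only need to adopt the convention that $(1+x)^\alpha$ \emph{is} that solution to finish immediately.
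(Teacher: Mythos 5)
Your proposal does not match any argument in the paper for a simple reason: the paper states Theorem~\ref{Tgbt} explicitly ``without proof'' and only ever invokes it for $\alpha=1/2$ (composed with the monomial $-4x$) to extract the Catalan number formula. So what you have written is genuinely new content, and it is correct. Your route is well chosen in that every ingredient is something the paper does prove or define: the integer Vandermonde identity (Proposition~\ref{PVandermonde}) supplies the base case for your polynomial-method proof of $\sum_{j=0}^k \binom{\alpha}{j}\binom{\beta}{k-j}=\binom{\alpha+\beta}{k}$; the multiplication rule for generating functions turns that identity into the multiplicativity $B_\alpha B_\beta = B_{\alpha+\beta}$; the ordinary Binomial Theorem (Theorem~\ref{Tbinomialthm}) and the alternating geometric series handle the integer exponents; and your definition of $(1+x)^{p/q}$ as the unique $q$-th root with constant term $1$ is exactly the coefficient-matching device the paper itself uses to define $\sqrt{1-4x}$, so your theorem specializes to precisely the statement the paper actually needs. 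You also correctly identify the one conceptual issue that the paper glosses over, namely that $(1+x)^\alpha$ has no prior meaning for fractional $\alpha$ and must be defined before the identity can be proved; this is the honest way to read the theorem. Two small points to tighten: first, Proposition~\ref{PVandermonde} carries the hypothesis $0\le \ell\le m+n$, so for fixed $k$ and fixed positive integer $b$ your polynomial $D(\alpha,b)$ is known to vanish only at integers $\alpha\ge \max(0,k-b)$ --- still infinitely many points, so the argument survives, but the restriction should be acknowledged; second, uniqueness of the $q$-th root deserves one line (if $F(0)=G(0)=1$ and $F^q=G^q$, then $F-G$ times $F^{q-1}+F^{q-2}G+\cdots+G^{q-1}$ is zero, and the latter factor has nonzero constant term $q$, so $F=G$ because formal power series over $\Q$ form an integral domain). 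Finally, note that your ``alternative'' via the differential equation $(1+x)\frac{d}{dx}B_\alpha = \alpha B_\alpha$ is really a definition of $(1+x)^\alpha$ rather than a proof, so the substance of the theorem lives in your main, root-based argument; the ODE route is better viewed as a consistency check or as a slick way to organize the coefficient recurrence.
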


\begin{example} \label{Ehalfbinom2}
We set $\alpha=1/2$ and substitute $-4x$ for $x$ to see that:
\begin{eqnarray*}
\sqrt{1-4x} & = &  \sum_{k=0}^\infty \binom{1/2}{k}(-4x)^k\\ 
& = &  \sum_{k=0}^\infty \binom{1/2}{k}(-1)^k4^kx^k
\end{eqnarray*}
\end{example}

We use this now to find the explicit formula $C_n=\frac{1}{n+1}\binom{2n}{n}$ for the $n$th Catalan number.

\begin{corollary}
The $n$th Catalan number is $C_n=\frac{1}{n+1}\binom{2n}{n}$. 
\end{corollary}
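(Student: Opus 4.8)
The plan is to read off the coefficients of the generating function $C(x)=\frac{1-\sqrt{1-4x}}{2x}$ that was just derived, using the series expansion of $\sqrt{1-4x}$ from Example~\ref{Ehalfbinom2} together with the simplified form of the generalized binomial coefficient $\binom{1/2}{n+1}$ from Example~\ref{Ehalfbinom1}. First I would write
\[
\sqrt{1-4x} = \sum_{k=0}^\infty \binom{1/2}{k}(-1)^k 4^k x^k,
\]
and observe that the $k=0$ term equals $\binom{1/2}{0}(-1)^0 4^0 = 1$, so it cancels the leading $1$ in the numerator of $C(x)$. This leaves
\[
1 - \sqrt{1-4x} = -\sum_{k=1}^\infty \binom{1/2}{k}(-1)^k 4^k x^k.
\]

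Next I would divide by $2x$, which lowers every exponent by one; concretely, the coefficient of $x^n$ in $C(x)$ is one half the coefficient of $x^{n+1}$ in $1-\sqrt{1-4x}$. Matching against $C(x)=\sum_{n=0}^\infty C_n x^n$ therefore gives the closed expression
\[
C_n = -\tfrac{1}{2}\,\binom{1/2}{n+1}(-1)^{n+1}4^{n+1}.
\]
The remaining work is purely algebraic: substitute the formula
$\binom{1/2}{n+1} = \frac{(2n)!\,(-1)^n}{2^n\,2^{n+1}\,n!\,(n+1)!}$
from Example~\ref{Ehalfbinom1} and simplify. The signs combine as $(-1)^n(-1)^{n+1}=-1$, which together with the outer $-\tfrac12$ yields a factor of $\tfrac12$, and the powers of two reduce via $4^{n+1}=2^{2n+2}$ against the denominator factor $2^n 2^{n+1}=2^{2n+1}$, leaving exactly $2^{(2n+2)-(2n+1)}\cdot\tfrac12 = 1$.

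After this cancellation one is left with $C_n = \frac{(2n)!}{n!\,(n+1)!}$, and factoring out $\frac{1}{n+1}$ gives $C_n = \frac{1}{n+1}\binom{2n}{n}$, as desired. I do not expect any genuine obstacle here, since all the hard analytic input (the Generalized Binomial Theorem in Theorem~\ref{Tgbt}, the quadratic solution for $C(x)$, and the evaluation of $\binom{1/2}{n+1}$) has already been established; the only place to be careful is the index shift caused by dividing by $2x$ and the tracking of signs and powers of two, which is the step most prone to arithmetic slips.
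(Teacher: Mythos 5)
Your proposal is correct and takes essentially the same route as the paper's own proof: both read off the coefficient of $x^n$ in $C(x)=\frac{1-\sqrt{1-4x}}{2x}$ by expanding $\sqrt{1-4x}$ with the Generalized Binomial Theorem (Example~\ref{Ehalfbinom2}), cancelling the constant term, shifting the index when dividing by $2x$, and then substituting the evaluation of $\binom{1/2}{n+1}$ from Example~\ref{Ehalfbinom1}. Your sign and power-of-two bookkeeping matches the paper's computation exactly, so there is nothing to add.
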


\begin{proof}
The $n$th Catalan number $C_n$ is the coefficient of $x^n$ in $C(x)=\frac{1\pm \sqrt{1-4x}}{2x}$.
Using Example~\ref{Ehalfbinom2}, we see that
\begin{eqnarray*}
    \frac{1- \sqrt{1-4x}}{2x} &=& 
    \frac{1-\left(\sum_{k=0}^\infty 
    \binom{1/2}{k}(-1)^{k}4^{k}x^{k}\right)}{2x}\\
    &=& 
    \frac{1-\left(1+\sum_{k=1}^\infty 
    \binom{1/2}{k}(-1)^{k}4^{k}x^{k}\right)}{2x}\\
    & = & - \sum_{n=0}^\infty
    \frac{\binom{1/2}{n+1}(-1)^{n+1}2^{n+1}2^{n+1}x^{n+1}}{2x}\\
    & = & \sum_{n=0}^\infty 
    \binom{1/2}{n+1}(-1)^{n}2^n2^{n+1}x^{n}
    \end{eqnarray*}
In the third equality, we made the substitution $n=k-1$.    
    
  Using Equation~\ref{Ehalfbinom1}, 
  we substitute for the binomial coefficients to see that the coefficient of $x^n$ is:
    \begin{eqnarray*}
    C_n &=& 
    \frac{(2n)! (-1)^n}{2^n 2^{n+1}n!(n+1)!}
    (-1)^{n}2^n2^{n+1} \\
    &=&\frac{(2n)!}{n!(n+1)!} \\
    &=&\frac{1}{(n+1)}\frac{(2n)!}{n!n!} \\
    &=&\frac{1}{n+1} \binom{2n}{n}.
\end{eqnarray*}
This completes the proof.
\end{proof}

\section{Additional problems for Chapter~7}

\begin{enumerate}

\item Suppose you have 8 pennies, 3 nickels, and 1 dime in your pocket and you want to give a subset of these to your friend. For each of pennies, nickels, and dimes, write down a polynomial whose exponents are the values of that coin you could give.																																										
\item Define $f_p$, $f_n$, $f_d$ to be the polynomials for pennies, nickels, and dimes in Sage (sagecell.sagemath.org), as in:			
$$f_d=1+x^{10}.$$Use Sage to also define $f_p$ and $f_n$, and expand the product F of these 3 polynomials and use the output to answer the questions below. \\

\noindent Note: If you just end the Sage cell with F rather than the F.coefficients line, it will return the entire polynomial. The coefficients line will just return the coefficient of $x^{17}$ and the number 17.				\begin{enumerate}	
\item What is the degree of the polynomial F? In other words, what is the largest power of x that appears?																									
\item How many ways can you give 17 cents?																									
\item How many ways can you give 20 cents?																									
\item If the number of dimes changes to 2, what does the degree of the polynomial change to?																									
\item If the number of dimes changes to 2, what is the new number of ways you can give 20 cents?\\

\noindent \textbf{Hint:} 
These commands may be useful:
\begin{verbatim}
R=PolynomialRing(RR, x)	
F=expand(fp*fn*fd);				
F;			
F.coefficients()[17]
\end{verbatim}

\end{enumerate}	

\item What is the coefficient of $x^4$ in the generating function whose closed form is $1/(1-3x)$?																									
\item What is the coefficient of $x^4$ in the generating function $x^2/(1-3x)$?																									
\item What is the coefficient of $x^4$ in the generating function $1/(1-3x)^2$?																									
\item Let $$F(x)=F_0+F_1x+F_2x^2+F_3x^3+\cdots$$ be the generating function of the Fibonacci numbers (with $F_0=0,F_1=1$, and $F_n=F_{n-1}+F_{n-2} $ for all $n\geq 2$.) Compute the coefficient of $x^5$ in the generating function $1/1-xF(x)$.																									
\item Let $F(x)$ be the generating function of the Fibonacci sequence as defined in the previous question. What is the coefficient of $x^5$ in $d/dx$ $F(x)$?																									
																							
\item Find a closed formula for the generating function of the recursively defined sequence defined by $b_0=5,b_1=12$, and $b_n=5b_{n-1}+6b{n-2}$ for all $n\geq2$.																									
\item Find a closed formula for the generating function of the sequence defined by $e_0=1$ and $e_n=e_{n-1}+e_{n-2}+\cdots +e_0$ for all $n\geq1$.																									

   \item Prove, using generating functions, that $$\sum_{k=0}^n k \binom{n}{k}=n\cdot 2^{n-1}.$$
   
   \item We found that the generating function for the Fibonacci numbers is $\sum_{n=0}^\infty F_n x^n=\frac{x}{1-x-x^2}$.  Write this as $\frac{x}{1-(x+x^2)}$ and use the geometric series formula to expand in terms of $x+x^2$ to prove that $$F_n=\sum_{k=0}^{n-1} \binom{n-k-1}{k}.$$
   
      \item \textbf{Infinite products:} Let $G(x)=\sum_{i=0}^\infty b_i x^i$ be a generating function.  We say $G(x)=\prod_{n=0}^\infty F_n(x)$ if the partial products $P_m(x)=\prod_{n=0}^{m} F_n(x)$ have the property that for all $i$, there exists $N$ such that if $m>n$, the coefficient of $x^i$ in $P_m$ equals $b_i$.  In other words, the $P_m$'s coefficients eventually match that of $G$.
   
   \begin{enumerate}
     \item What generating function is equal to the infinite product \[(1+x)(1+x^2)(1+x^4)(1+x^8)(1+x^{16})\cdots?\]
     \item Let $p(n)$ be the number of \textit{partitions} of $n$, the number of ways of writing $n$ as a sum of other positive integers, where the order of the summands doesn't matter.  Show that $$\sum_{n=0}^\infty p(n)x^n=\frac{1}{(1-x)}\cdot \frac{1}{(1-x^2)}\cdot \frac{1}{(1-x^3)}\cdot \frac{1}{(1-x^4)}\cdots$$
     \item Prove that the number of partitions of $n$ having distinct summands is equal to the number of partitions of $n$ having all its summands being odd. (Hint: can you factor the generating functions of each of the two types of partitions?)
   \end{enumerate}
 
 \newpage
 \section{Investigation: Partitions and their generating functions}
 
 Generating functions can be used to enumerate many different combinatorial objects.  One very famous such object is called a \textit{partition}.  The following bonus video explores this topic.
 
 \begin{videobox}
\begin{minipage}{0.1\textwidth}
\href{https://www.youtube.com/watch?v=Lghi2Otq5Q4}{\includegraphics[width=1cm]{video-clipart-2.png}}
\end{minipage}
\begin{minipage}{0.8\textwidth}
Click on the icon at left or the URL below for this section's video. \\\vspace{-0.2cm} \\ \href{https://www.youtube.com/watch?v=Lghi2Otq5Q4}{https://www.youtube.com/watch?v=Lghi2Otq5Q4}
\end{minipage}
\end{videobox}
 
\end{enumerate}

\chapter{Graph Theory Basics}\label{chap:graphtheory}

Graphs are a convenient data structure for a wide variety of problems in mathematics and computer science.
Some graphs are valuable because they illustrate beautiful patterns and complicated symmetries.
Other graphs are valuable for their applications.
If an optimization problem can be described in terms of a graph, then diverse and powerful tools can be used to solve that problem.

In this chapter, we introduce the basic objects of graph theory, including vertices, edges, degrees, subgraphs, paths, cycles, and data structures. Later in the chapter, we describe results about walks on graphs, including Euler cycles, Hamiltonian circuits, and the number of walks. 
In Chapter~\ref{chap:trees}, we study a particular type of graph --- trees --- which are deceptively simple because they have no loops, but have deep impacts because they can model hierarchical relationships.
We solve optimization (maximization or minimization) problems on graphs in Chapter~\ref{chap:optimization}, and consider graphs that can be drawn with no crossing edges in Chapter~\ref{chap:planar}.

\begin{videobox}
\begin{minipage}{0.1\textwidth}
\href{https://www.youtube.com/watch?v=bfr4Y4PIXl4}{\includegraphics[width=1cm]{video-clipart-2.png}}
\end{minipage}
\begin{minipage}{0.8\textwidth}
Click on the icon at left or the URL below for a video covering sections 8.1--8.6. \\\vspace{-0.2cm} \\ \href{https://www.youtube.com/watch?v=bfr4Y4PIXl4}{https://www.youtube.com/watch?v=bfr4Y4PIXl4}
\end{minipage}
\end{videobox}

\section{Graphs in combinatorics}

As Students C and D are finishing lunch at Corbett Hall, a classic Colorado hail storm arrives.  Not wanting to walk, they decide to take the bus.  They first need to stop at the library to pick up a textbook.  Next, they need to attend their combinatorics lecture in Weber.  Finally, after class, they plan on meeting Student F at the Recreation Center. What bus routes should Students C and D take to avoid the hail?

\begin{figure}[h!]
\begin{center}
    \includegraphics[width=.8\textwidth]{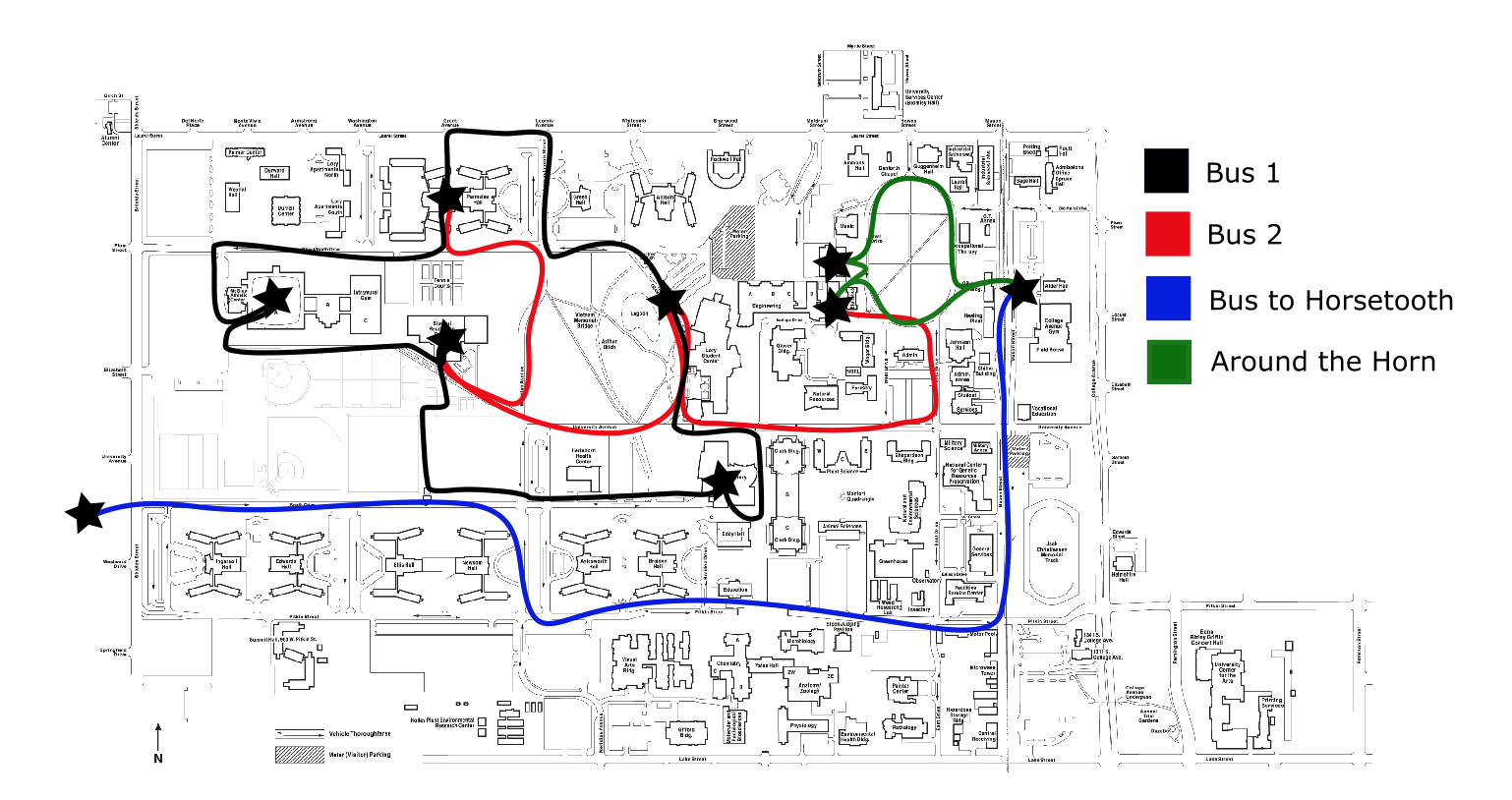}
    \caption{A map of CSU bus routes. CSU map from \url{http://www.mappery.com/Colorado-State-University-Map}.}
    \label{fig:bus-map}
\end{center}
\end{figure}

Looking at the map can be pretty confusing and they do not want to get lost.  We can instead simplify this map to something called a graph\footnote{The word graph in mathematics can also mean the graph of a function: for example, the graph of the function $y=x^2$ is a parabola.}.

\begin{figure}[h!]
\begin{center}
    \includegraphics[width=.7\textwidth]{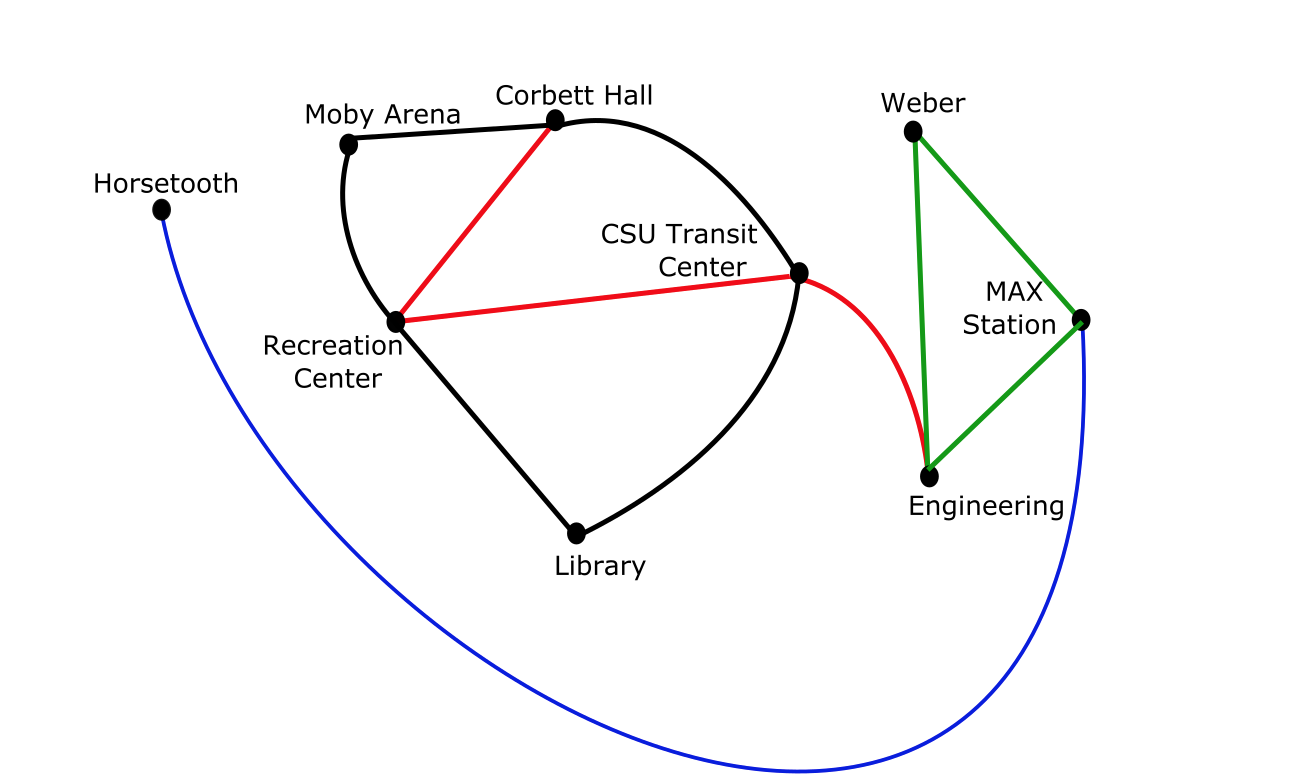}
    \caption{Bus routes near campus represented as a graph.}
    \label{fig:bus}
\end{center}
\end{figure}

In this class, a \defn{graph} is a collection of \defn{vertices} (dots) and \defn{edges} (line segments) between the vertices. The edges describe 
\defn{adjacency} (a connection or relationship) 
between the vertices.  In the graph associated with the bus routes, the locations of the bus stops are represented by vertices and the routes between stops are represented by edges.  An example of two adjacent bus stops are Corbett Hall and Moby Arena. 

As a second example of a graph, consider the social network in Figure~\ref{fig:Social}. Each vertex corresponds to a person\footnote{or at least a mammal},
and an edge between two people represents that they have met before.
The one intersection of edges that does not have a dot means nothing and can be ignored.
Suppose you are running for Student Body President at CSU and want to collect votes. Which person would you want to contact first?

\begin{figure}[h!]
\begin{center}
\includegraphics[width=.5\textwidth]{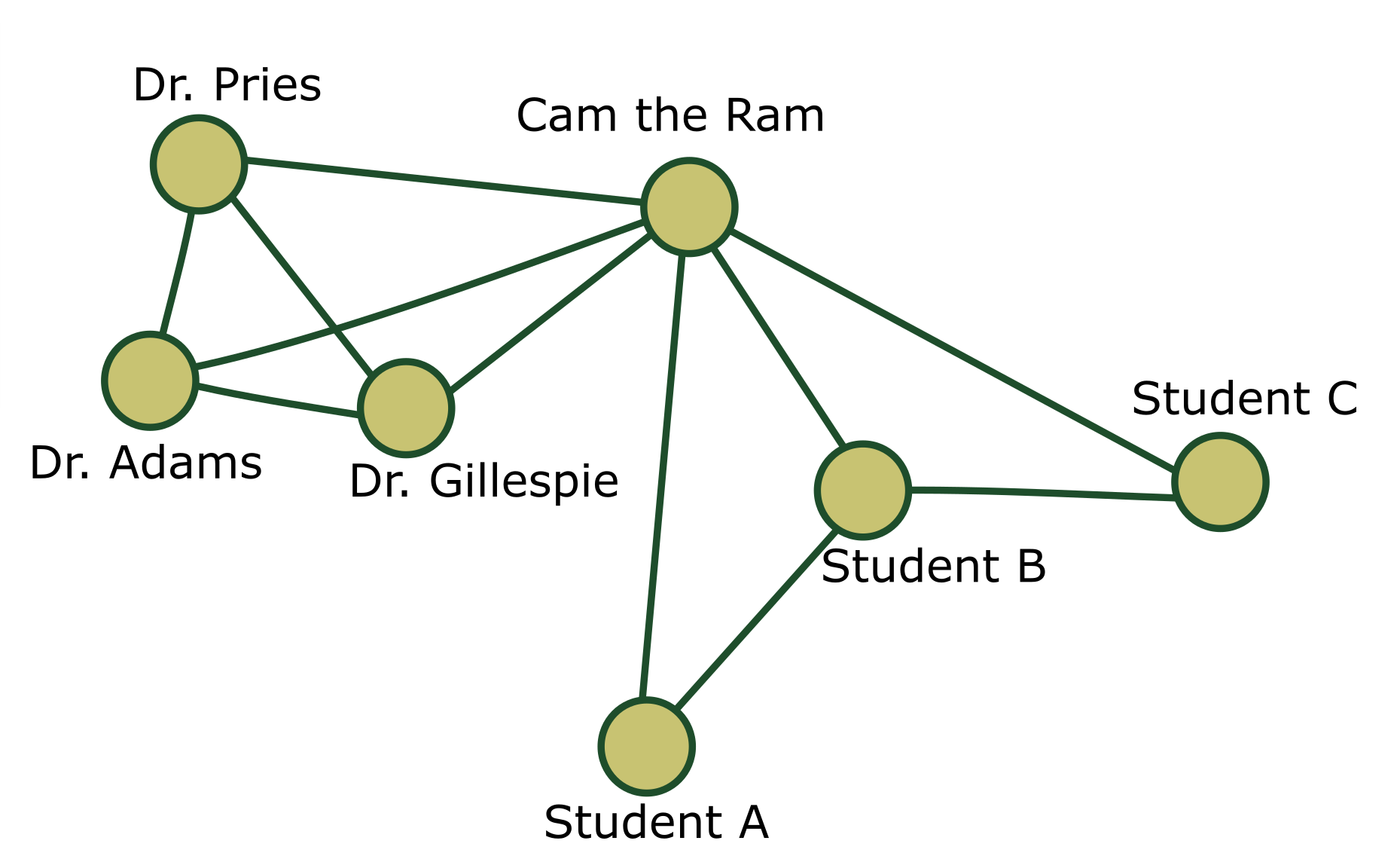}
\end{center}
\caption{An example of a CSU social network.}
\label{fig:Social}
\end{figure}

\subsection{Graphs, vertices, edges, and adjacency}
\begin{definition}
\label{def:graph}
A \defn{graph} $G=(V,E)$ consists of a finite set $V$ of \defn{vertices} (or nodes) and a finite set $E$ of \defn{edges}, where each edge $e\in E$ is of the form $e=\{u,v\}$ with $u,v\in V$.
\end{definition}

\begin{figure}[h!]
\begin{center}
\includegraphics[height=0.9in]{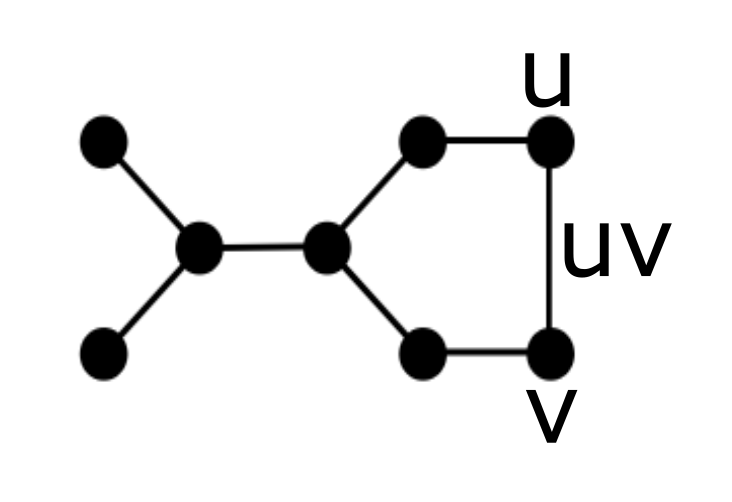} 
\end{center}
\caption{A graph with 8 vertices and 8 edges.
}
\end{figure}

The edge $\{u,v\}$ is often written as $uv$.  Unless mentioned otherwise, it is the same as the edge $\{v,u\} = vu$.

\begin{remark} \label{Ngraphrules}
In this book, unless we say otherwise:
\begin{enumerate}
\item we do not allow any edge to be a self-loop (an edge $uu$ that starts and ends at the same vertex $u$);
\item we do not allow more than one edge between any pair of vertices $\{u,v\}$.
\end{enumerate}

\begin{figure}
\begin{center}
\includegraphics[height=0.8in]{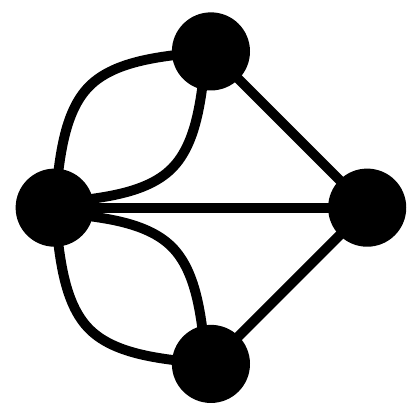}
\hspace{10mm}
\includegraphics[height=0.7in]{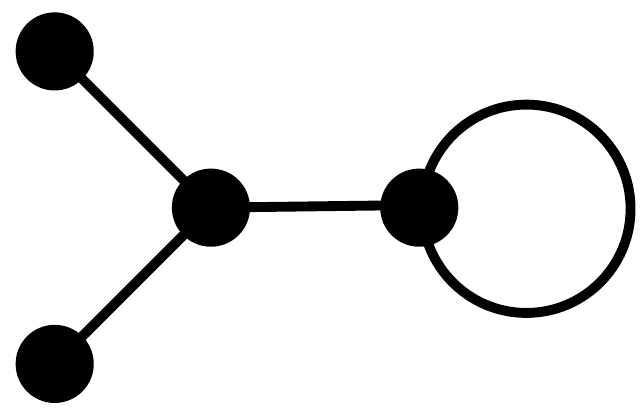}
\end{center}
\caption{We do not allow repeated edges (left) or self loops (right) in our graphs, unless stated otherwise.}
\label{Fnotationgraph}
\end{figure}

Indeed, in  Definition~\ref{def:graph}, the fact that each edge $e$ is a set automatically implies that self-loops are not allowed and the fact that $E$ is a set automatically implies that multiple edges are not allowed.
Later in the book, we will also consider a more general class of graphs, by allowing multiple edges and self-loops.
\end{remark}

\begin{definition}
Two vertices, $u$ and $v$, are \defn{adjacent} if there is an edge, $uv$, connecting them. If $u$ and $v$ are adjacent, we write $u\sim v.$
\end{definition}

A graph is defined only by its set of vertices and its set of edges; the same graph may be drawn in different ways.
Indeed, the two graphs below are the same graph; they are just drawn differently in the plane.
\begin{center}
\includegraphics[width=3in]{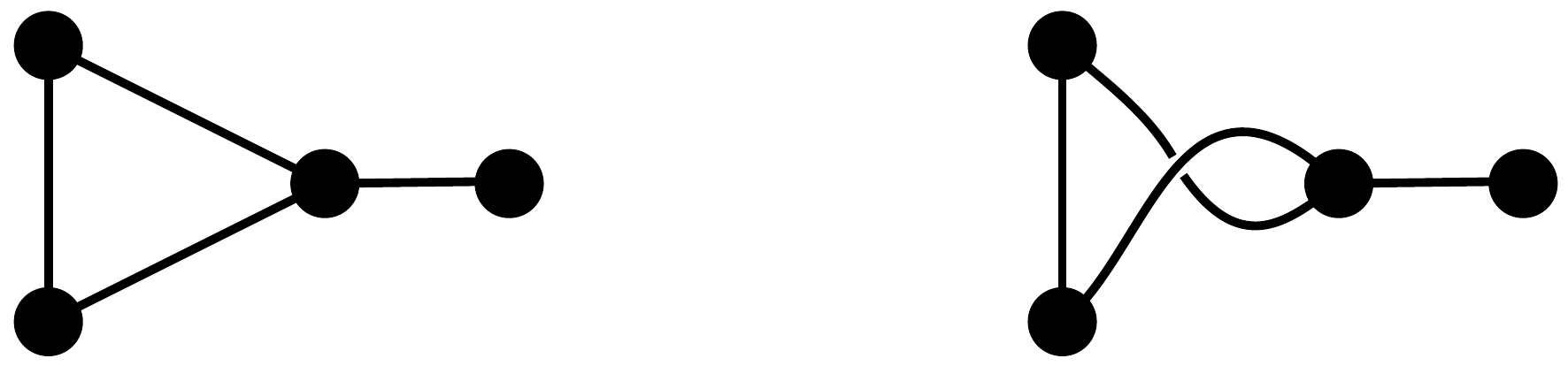}
\end{center}

A \defn{labeling} of a graph is an assignment of numbers to each vertex.
If a graph has $n$ vertices, then then we will always use the labels $0$ through $n-1$, where each such integer is used exactly once as a label.

\subsection*{Exercises}

\begin{enumerate}
 \item Let $V$ be a set of $4$ vertices.  How many different graphs $(V,E)$ are there if:
 \begin{enumerate}
     \item the vertices are labeled as $V=\{v_1,v_2,v_3,v_4 \}$?  For example, in this case, the graph with the one edge $v_1v_2$ is considered different from the graph with the one edge $v_2v_3$.
    \item the vertices are not labeled? For example, in this case, all graphs having $4$ vertices and one edge are considered the same. 
 \end{enumerate}

\item What is the largest number of edges that a graph with $5$ vertices can have?

\item Let $V$ be a set of $5$ vertices.  How many different graphs $(V,E)$ with exactly three edges are there if:
 \begin{enumerate}
     \item the vertices are labeled as $V=\{v_1,v_2,v_3,v_4, v_5 \}$?  
    \item the vertices are not labeled? 
 \end{enumerate}
 
\end{enumerate}

\section{Everyday graphs}

Some graphs are used frequently enough that they have names of their own. 
Here are some examples of these.
Let $n\geq 1$.

\subsection{Complete graphs}
\begin{definition}
The \defn{complete graph} $K_n$ is the graph with $n$ vertices having an edge between every pair of distinct vertices. 
\end{definition}

\begin{example} Below are the complete graphs, $K_1,K_2, K_3$ and $K_4$, respectively.
\begin{center}
\includegraphics[width=3in]{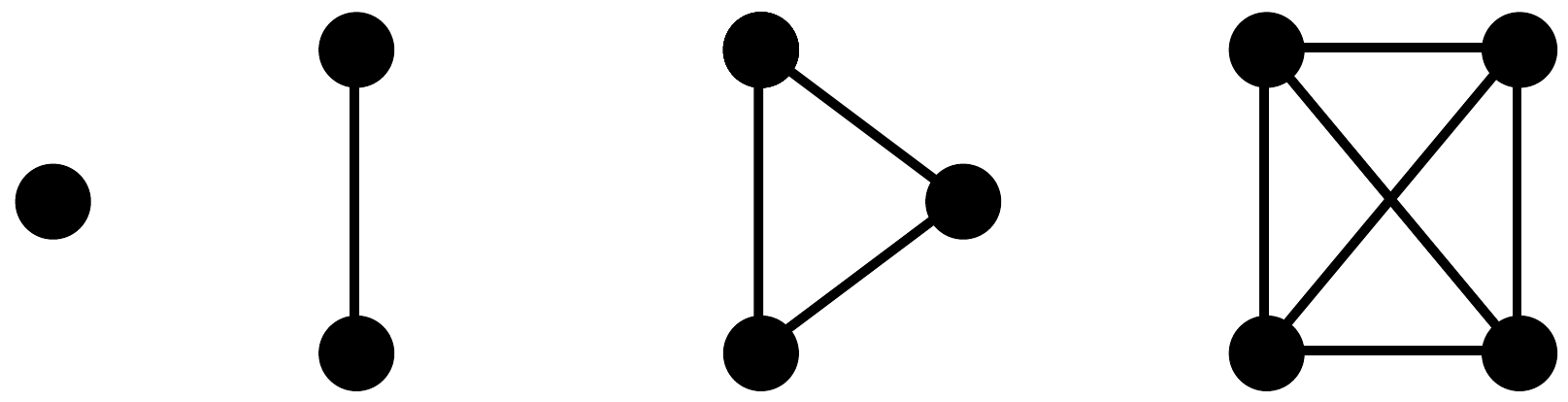}
\end{center}
\end{example}

\begin{example}
What is the largest number of edges that a graph with 9 vertices can have?
In other words, what is the maximal possible number of bus connections that can exist between the 9 stops in Figure~\ref{fig:bus}, assuming that any two bus stops have at most one bus route between them? 
\end{example}

\begin{brainstorm}
How many possible edges can we draw on 9 vertices?
\begin{center}
\includegraphics[width=1in]{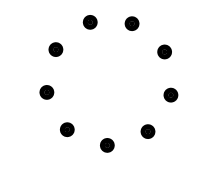}

\end{center}
\end{brainstorm}

\begin{answer}
There are at most $\binom{9}{2}$ possible edges.
The graph with all $\binom{9}{2}$ edges is the complete graph $K_9$.
\end{answer}

By a similar argument, we can show that $K_n$ has $\binom{n}{2}$ edges.

\subsection{Path and cycle graphs}

\begin{definition} The \defn{path graph} $P_n$ is the graph with $n$ vertices and $n-1$ edges that can be drawn so that all vertices and edges lie on a straight line. 
\end{definition}

\begin{example} For instance, the path graph $P_5$ has five vertices edges and four edges.
\begin{center}
\includegraphics[width=2in]{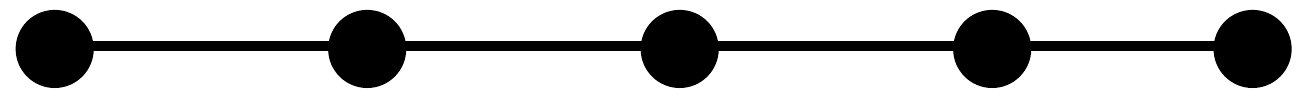}
\end{center}
\end{example}

The bus route in Figure~\ref{fig:bus} contains a path graph $P_6$ connecting from Moby Arena, to Corbett Hall, to the Library, to the CSU Transit Center, to Engineering, to the MAX Station.

\begin{definition} If $n \geq 3$, the \defn{cycle graph}  $C_n$ is the graph with $n$ edges and $n$ vertices such that every vertex is adjacent to exactly two others.
\end{definition}
\begin{example} Cycle graphs are usually arranged in the shape of a circle, as in the case of $C_6$ below. 
\begin{center}
\includegraphics[width=1in]{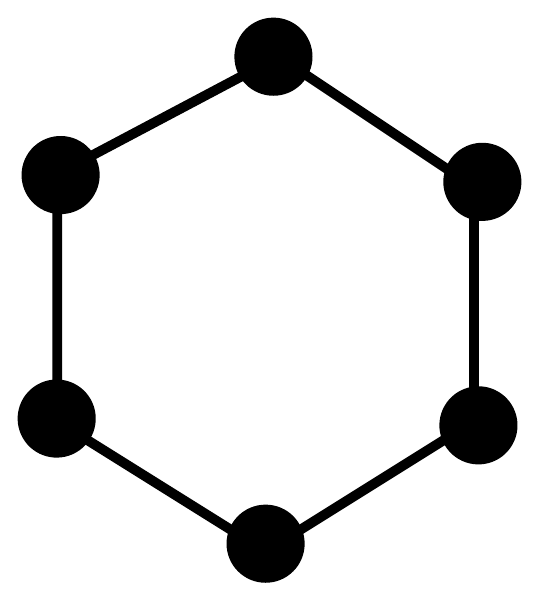}
\end{center}
\end{example}
The green loop, or the Around the Horn bus, in Figure~\ref{fig:bus}, connecting Weber to Engineering to the MAX Station back to Weber, forms a cycle graph $C_3$ inside the bus route map.

\subsection{Bipartite graphs}

The three most popular pizza parlors in Fort Collins are Pizza Casbah, Krazy Karl's, and Cosmos's Pizza.
Four student houses, A, B, C, and D often order pizza.
Their patterns of pizza ordering can be modeled as a graph.
The graph contains seven vertices --- three vertices for the three pizza parlors, and four vertices for the four student houses.
If a house ordered pizza from a particular parlor in the last month, then this support is modeled by an edge in the graph.
(Later, we might also choose to weight the edge by the number of pizzas ordered from that parlor over the last month.)

\begin{figure}[H]
\begin{center}
\includegraphics{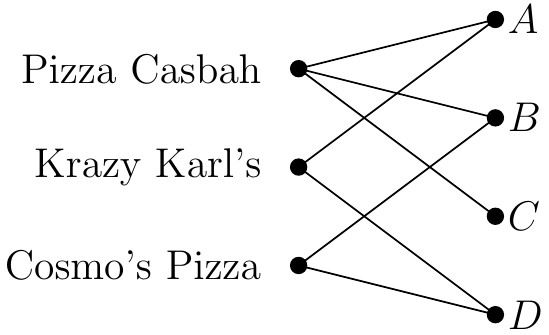}
\caption{Bipartite graph with three vertices on the left corresponding to pizza parlors, and four vertices on the right corresponding to houses.
An edge represents an order between that house and that parlor in the last month.}
\end{center}
\end{figure}

This graph has a particular structure: there are no edges between two pizza parlors, since a pizza parlor never orders pizza from another parlor;
similarly, there are no edges between two houses, since no house ever orders pizza from another house!

Many problems in graph theory involve matching two different types of objects, like students with jobs, or pets with owners.  

\begin{definition}
A graph $G=(V,E)$ is \defn{bipartite}
if $V$ is the disjoint union of 
two sets $L$ (left) and $R$ (right) and every edge connects a vertex in $L$ with a vertex in $R$.
\end{definition}

In other words, there are no edges between two vertices in $L$ and no edges between two vertices in $R$.

\begin{center}
    \includegraphics[width=1.2in]{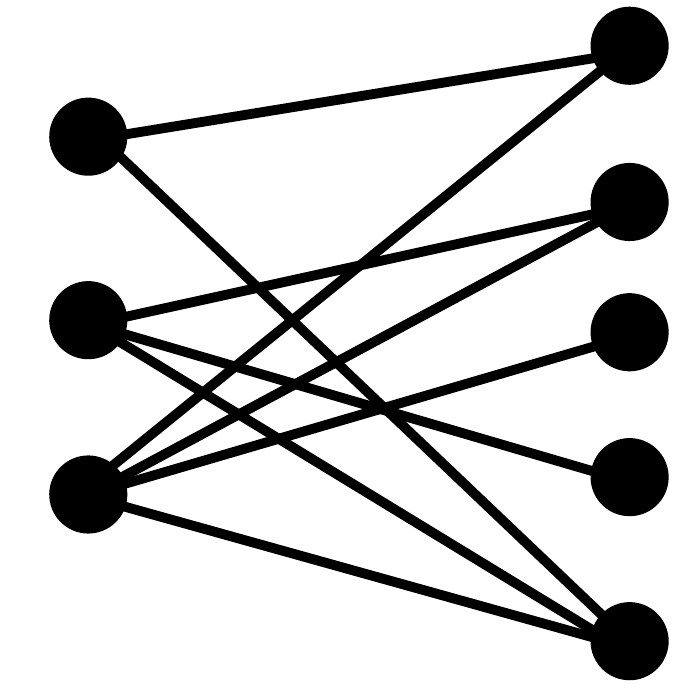}
\end{center}

\begin{definition}
    The \defn{complete bipartite graph} $B_{n,m}$ is the bipartite graph with $n$ vertices on the left and $m$ vertices on the right, such that every vertex on the left is adjacent to every vertex on the right.   
    \end{definition}
    
In the example with three pizza parlors and four student houses, the complete bipartite graph $B_{3,4}$ would model the situation in which all four houses have ordered pizza from all three parlors within the last month.

\begin{center}
\includegraphics{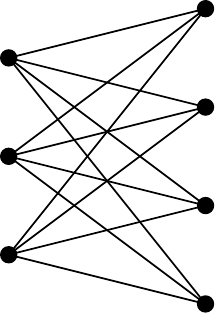}
\end{center}

 \subsection*{Exercises}

    \begin{enumerate}
    \item True or False: A graph with 30 edges has at least 9 vertices.
    
    \item What is the largest $n$ such that the graph in 
    Figure~\ref{fig:bus} contains a cycle graph $C_n$ (no edge can be used twice)?
    
    \item What is the largest $n$ such that the graph in 
    Figure~\ref{fig:bus} contains a path graph $P_n$ (no edge can be used twice)?
    \item Explain why the complete bipartite graph $B_{1,m}$ is called a \defn{fan}.
    \item How many vertices does $B_{n,m}$ have?
    \item 
    How many edges does $B_{n,m}$ have?
    \item Can you draw $B_{3,3}$ so that none of the edges cross?
\end{enumerate}

\section{The degree of a vertex} \label{Sdegree}

The students walk around campus, ending at the bus stop at the MAX Station, where they plan to catch a bus.
How many different bus stops can they visit next?
Looking at Figure~\ref{fig:bus}, they decide they can visit three stops next: Weber, Engineering, or Horsetooth.
In other words, the \emph{degree} of the CSU Transit Center vertex is three.
The students decide to take the bus to Horsetooth.
After dipping their toes in the reservoir, they realize that they have only one choice for their next stop, since their is only one route from Horsetooth.
In other words, the \emph{degree} of the Horsetooth vertex is one.

\begin{definition}
If $v$ is a vertex in a graph $G$, the \defn{degree} of $v$, denoted $\deg(v)$, is the number of edges adjacent to $v$.
\end{definition}

Given a set of $n$ non-negative numbers, we can ask whether this can be the set of degrees of the vertices in a graph with $n$ vertices.
In the rest of this section,
we give some examples and conditions about this question.

\begin{example}
Is there a graph on 5 vertices with vertices of degrees 1, 2, 2, 2, 3?
\end{example}

\begin{answer}
Yes!
\begin{center}
\includegraphics[width=0.9in]{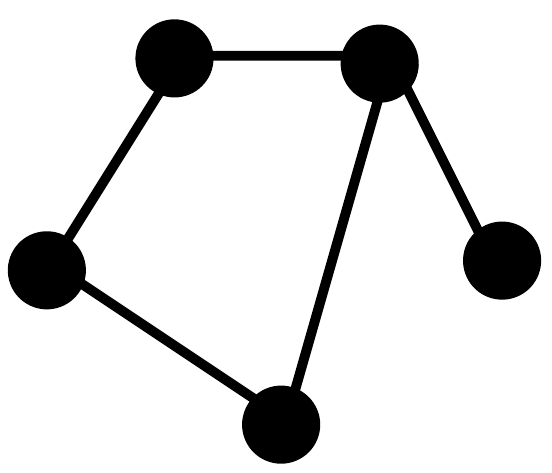}
\end{center}
\end{answer}

\begin{example}
Is there a graph on 9 vertices of degrees 1, 2, 2, 2, 3, 3, 3, 4, 4?
\end{example}

\begin{answer}
Yes, Figure~\ref{fig:bus} is one such example!
\end{answer}

\begin{example}
Is there a graph on 5 vertices with vertices of degrees 0, 1, 2, 3, 4?
\end{example}

\begin{answer}
No.
The vertex of degree 4 would be connected to every other vertex, and hence there cannot also be a vertex of degree 0.
\begin{center}
\includegraphics[width=0.9in]{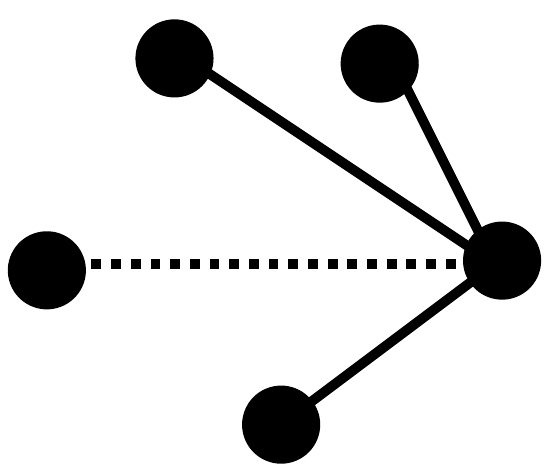}
\end{center}
\end{answer}

\begin{example}
Is there a graph on 5 vertices with vertices of degrees 2, 2, 3, 3, 3?
\end{example}

\begin{brainstorm}
Try to draw a picture of such a graph!
\end{brainstorm}

\begin{answer}
No, by the following theorem (since the sum of vertex degrees $2+2+3+3+3=13$ is not even):
\end{answer}

\begin{theorem}
\label{thm:graphDegreeTwiceEdges}
The sum of the degrees of all vertices in a graph is twice the number of edges.
In particular the sum of all vertex degrees is even, and hence the number of vertices of odd degree must be even.
\end{theorem}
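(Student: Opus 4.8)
The plan is to prove the degree-sum formula by \emph{counting in two ways}, applying the principle from the ``Counting in Two Ways'' lemma to a single set of objects that is naturally counted both by $\sum_{v\in V}\deg(v)$ and by $2|E|$. The right object to count is the set of \emph{incidences}
\[
I = \{(v,e) \mid v \in V,\ e \in E,\ v \text{ is an endpoint of } e\},
\]
that is, the pairs consisting of a vertex together with an edge meeting it. I would compute $|I|$ in two ways and equate the results.

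First I would count $|I|$ by vertices. For a fixed vertex $v$, the incidences of the form $(v,e)$ correspond exactly to the edges adjacent to $v$, and there are $\deg(v)$ of these by the definition of degree. Summing over all vertices gives $|I| = \sum_{v \in V}\deg(v)$. Next I would count $|I|$ by edges. For a fixed edge $e=\{u,v\}$, the incidences involving $e$ are exactly $(u,e)$ and $(v,e)$; since Remark~\ref{Ngraphrules} forbids self-loops, we have $u\neq v$, so each edge contributes exactly two distinct incidences. Summing over all edges gives $|I| = 2|E|$. By counting in two ways these must be equal, so $\sum_{v\in V}\deg(v) = 2|E|$, which is the main statement.

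For the two corollaries I would argue by parity. Since $2|E|$ is manifestly an even integer, the equation just proved shows that $\sum_{v\in V}\deg(v)$ is even. I would then split the vertex set into those of even degree and those of odd degree, so that the total degree sum is the sum of the even-degree contributions plus the sum of the odd-degree contributions. The even-degree contributions sum to an even number, so the odd-degree contributions must also sum to an even number. A sum of odd integers is even precisely when the number of summands is even, and hence the number of vertices of odd degree is even.

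The hard part is not the algebra but the bookkeeping in the edge count: the argument works only because each edge contributes \emph{exactly} two incidences, and this is precisely where the standing convention that graphs have no self-loops is essential. (If self-loops were permitted, one would instead adopt the convention that a self-loop adds $2$ to the degree of its vertex, and the edge-count step would need to be restated accordingly.) Once the degree sum is pinned down, the final parity step is routine.
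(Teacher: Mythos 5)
Your proposal is correct and is essentially the same argument as the paper's proof, which observes that in the sum of all vertex degrees each edge $\{u,v\}$ is counted exactly twice (once at $u$ and once at $v$); your incidence set $I$ just makes this double count explicit. The added parity argument for the odd-degree corollary and the remark about the no-self-loop convention are fine elaborations of steps the paper treats as immediate.
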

\begin{proof}[Proof of Theorem~\ref{thm:graphDegreeTwiceEdges}]
In the sum of the degrees of all vertices, each edge $\{u,v\}$ is counted exactly twice: once in the degree of vertex $u$, and once in the degree of vertex $v$.
\end{proof}
\begin{example}
Consider the following graph, which has 6 edges, and the sum of the vertex degrees is $2\cdot 6=12$.
\begin{center}
\includegraphics[width=2in]{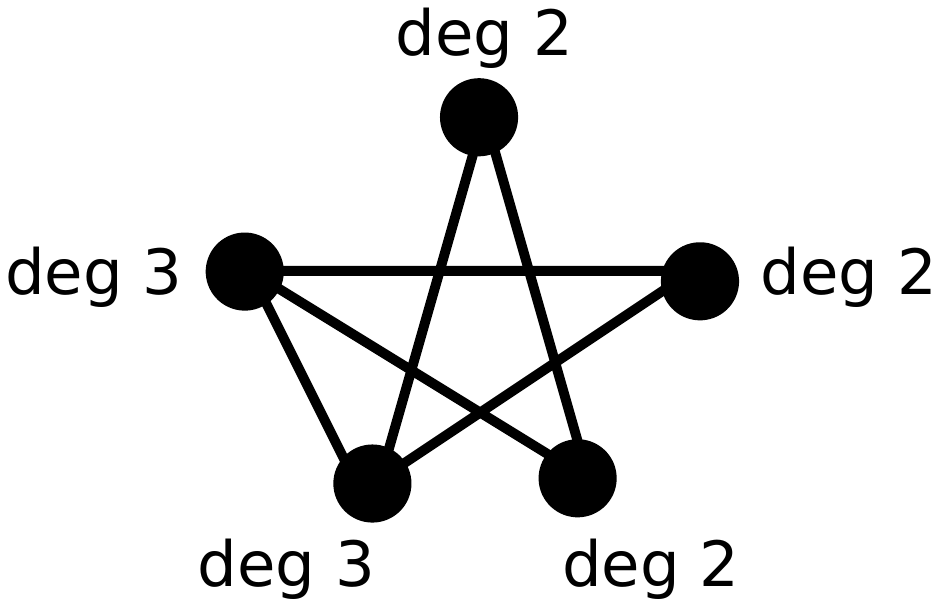}
\end{center}
\end{example}

Theorem~\ref{thm:graphDegreeTwiceEdges} implies that in any graph, the number of vertices of odd degree must be even.
We give a second proof of this consequence.
This proof is longer, but illustrates how induction can be used to prove results in graph theory.

\begin{proof}[Proof that in any graph $G$, the number of vertices of odd degree must be even.]
This proof will use induction on the number of edges in a graph.

The base case is a graph with no edges (only vertices), and so the number of vertices of odd degree is zero, which is an even number.

In the inductive step, we will add one edge.\footnote{There are a few ways that this is not as precise as it could be.
First, we need to prove that we can always build a graph by adding edges one at a time.
Also, in an inductive proof, the number of edges 
would become arbitrarily large, but that is not possible unless we allow multiple edges between vertices.}
\begin{center}
\includegraphics[width=5in]{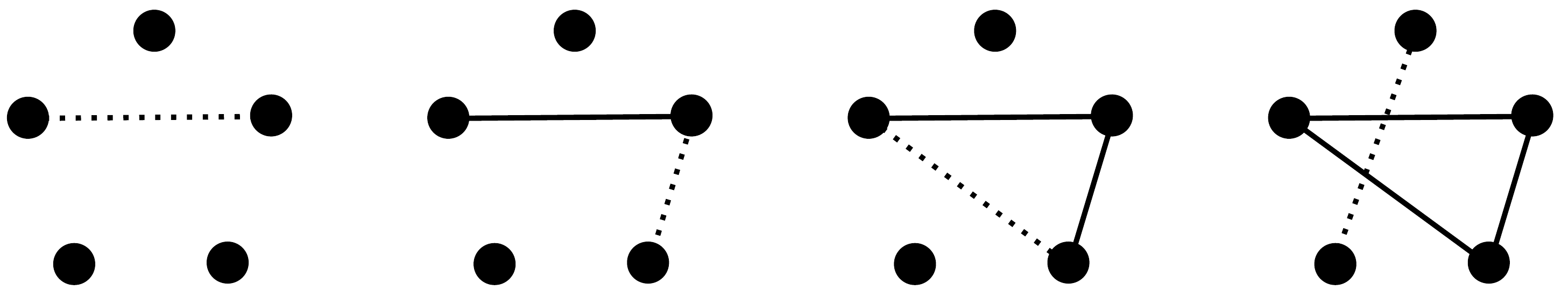}
\end{center}
Adding an edge changes the degree of two vertices, as described by one of the following three cases:
\begin{itemize}
\item \{even degree, even degree\} $\to$ \{odd degree, odd degree\}.
\item \{even degree, odd degree\} $\to$ \{odd degree, even degree\}.
\item \{odd degree, odd degree\} $\to$ \{even degree, even degree\}.
\end{itemize}
In each of these cases, the number of vertices with odd degree stays even.
Hence in any graph, the number of vertices of odd degree must be even.
\end{proof}

\subsection*{Exercises}

\begin{enumerate}

\item Is there a graph on 6 vertices with vertices of degrees \ldots
\begin{enumerate}
\item[(a)] 1, 1,  2, 2, 3, 3?
\item[(b)] 1, 1, 2, 2, 4, 6?
\item[(c)] 1, 1, 2, 3, 3, 5?
\item[(d)] 5, 5, 5, 5, 5, 5?
\item[(e)] 2, 2, 2, 2, 2, 4?
\end{enumerate}

\item Is there a graph with:
\begin{enumerate}
    \item  6 vertices with vertices of degrees
    0, 1, 1, 1, 2, 5?
    \item 7 vertices with vertices of degrees 0, 2, 2, 2, 4, 4, 6?
\end{enumerate}

\item True or False: There exists a graph with 7 vertices such the total sum of all vertex degrees is 44.

\item True or False: If a graph has an odd number of vertices, then it has a vertex with even degree.

\item Re-express each of the following assertions as a theorem about degrees in graphs.
The theorem should be of the following form: In any graph $G$ with at least 2 vertices, [some property about vertex degrees holds].
Then explain why the theorem is true.
\begin{enumerate}
\item  At every party there are at least two people who know the same number of other people at the party. 
\item If one person knows everyone at the party, then it is not possible for another person to know no one.
\end{enumerate}

\item In the complete bipartite graph $B_{n,m}$ with $n$ vertices on the left and $m$ vertices on the right, what is the degree of each vertex on the left? what is the degree of each vertex on the right?

\item Is there a graph on 6 vertices whose vertices have degrees 1,1,2,3,5,5?					
\item Is there a graph on 6 vertices whose vertices have degrees 0,1,2,2,2,5?					
\item Is there a graph on 6 vertices whose vertices have degrees 2,2,3,3,3,3?

\end{enumerate}

\section{Subgraphs}

A recurring theme in mathematics is that whenever you have an interesting object, it is often fruitful to also consider its subobjects.
In this section, we consider subgraphs of a graph.
Intuitively, a graph $H$ is a subgraph of $G$ if each of its vertices is a vertex of $G$, and if each of its edges is a edge of $G$.

For example, consider the bus route graph in Figure~\ref{fig:bus}.
Suppose that the bus route from Moby Arena to Corbett Hall closes due to construction.
Then we obtain the subgraph in Figure~\ref{fig:bus-route-subgraphs}(left), with a single edge removed.

\begin{figure}[H]
\begin{center}
    \includegraphics[width=.4\textwidth]{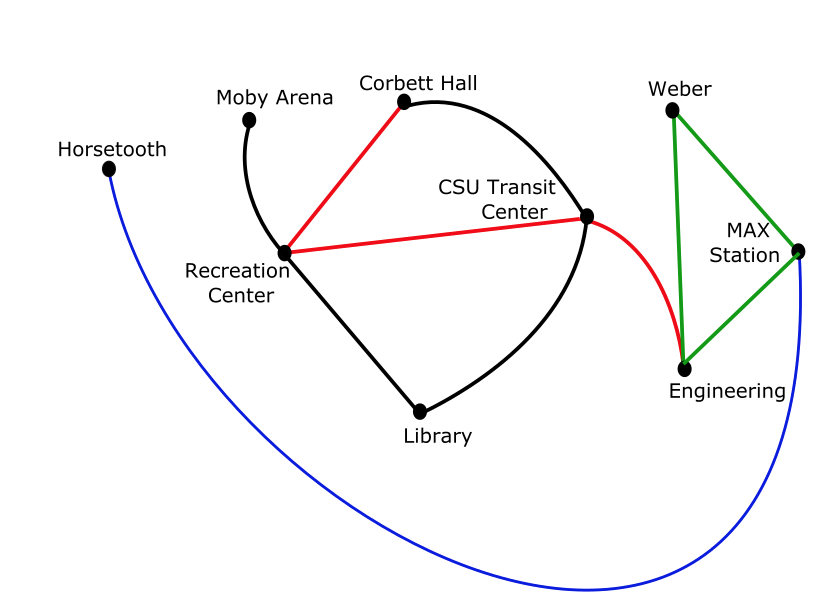}
    \hspace{20mm}
    \includegraphics[width=.4\textwidth]{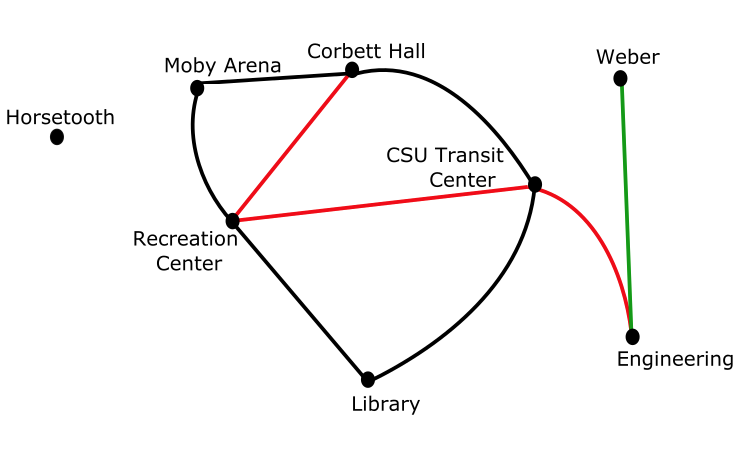}
\end{center}
\caption{Two subgraphs of the bus route graph, as various edges and vertices get removed.}
\label{fig:bus-route-subgraphs}
\end{figure}

Suppose furthermore that repairs to the MAX Station need to be made.
If the MAX Station closes, then necessarily the three routes adjacent to the MAX Station close as well, and we obtain the subgraph in Figure~\ref{fig:bus-route-subgraphs}(right), with one vertex and three more edges removed.

We now formalize what it means for one graph to be a subgraph of another.

\begin{definition}
Let $G=(V,E)$ be a graph.
A graph $G'=(V',E')$ is a \defn{subgraph} of $G$ if 
$V' \subseteq V$ and $E' \subseteq E$.
A \defn{spanning subgraph} of a graph $G$ is a subgraph that contains all of the vertices of $G$. 
\end{definition}

\begin{example}
Let $G$ be the graph drawn in (a).
Then (b) is a subgraph of $G$.
However, the object drawn in (c) is not a subgraph of $G$ because it is not even a graph (it is missing the vertex for one of its edges).
Also, the graph drawn in (d) is not a subgraph of $G$ because it contains an edge that is not in $G$.
\begin{center}
\includegraphics[width=.5\textwidth]{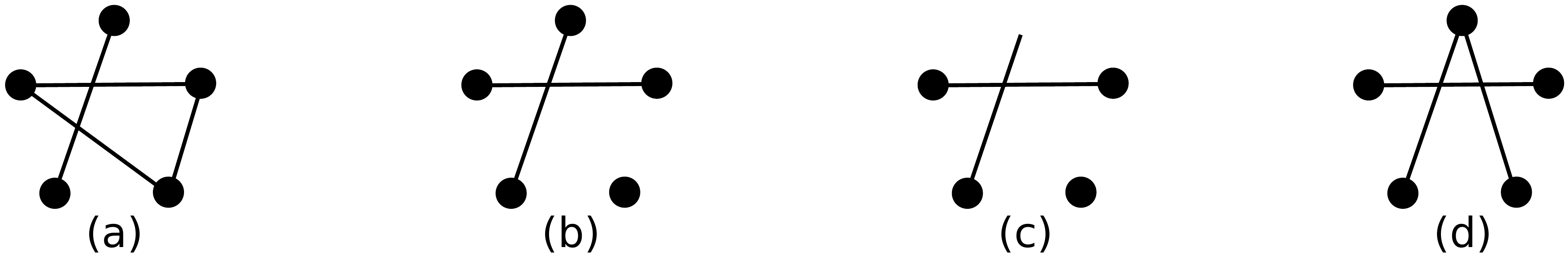}
\end{center}
\end{example}

In Figure~\ref{fig:subgraphs}, we draw a graph with three vertices and one edge, along with all of its subgraphs.

\begin{figure}
\begin{center}
    \includegraphics[width=4in]{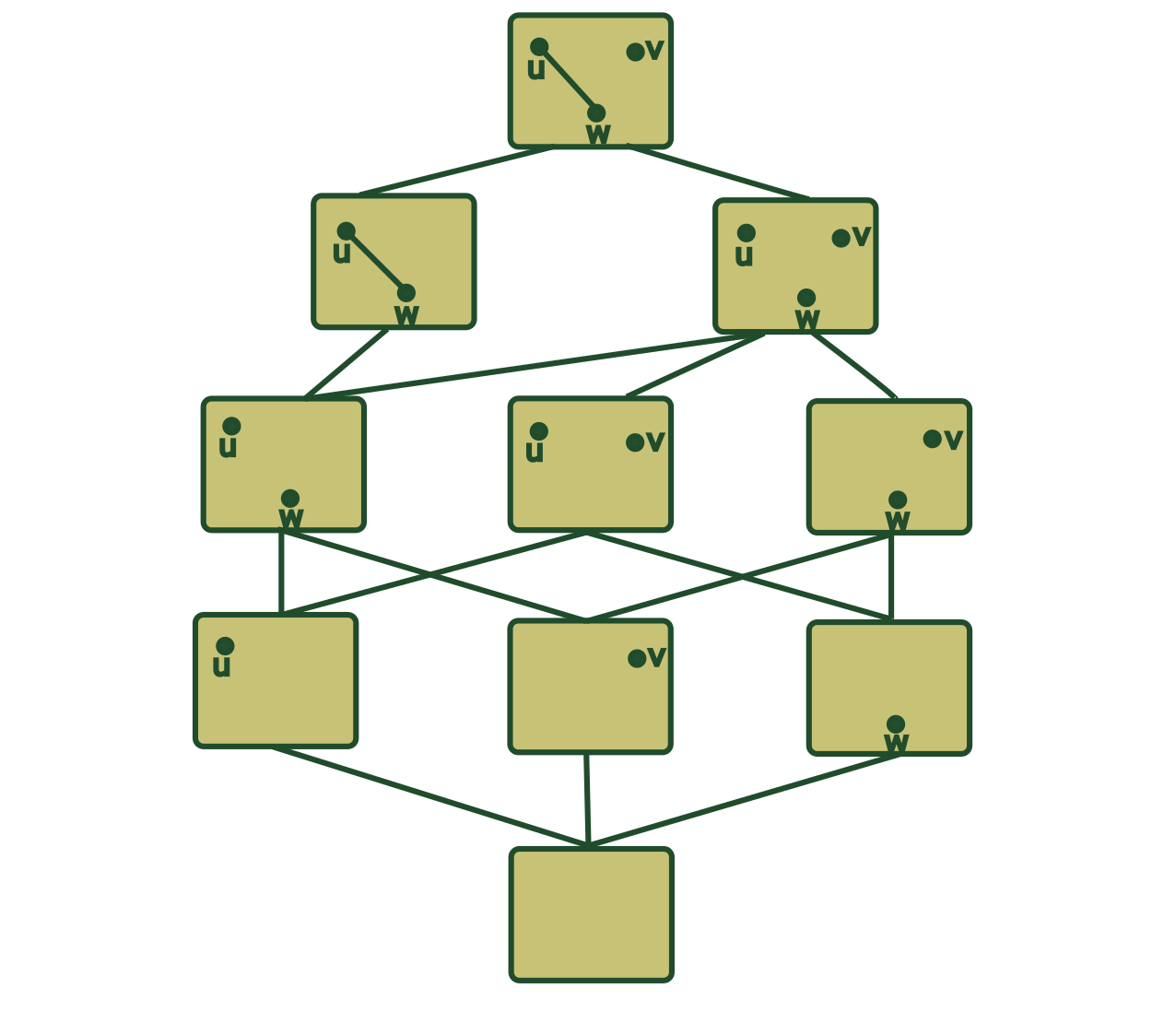}
\end{center}
\caption{
A graph $G$ (drawn in the top box) and all of its subgraphs.
Each connection represents a subgraph that is obtained from the one above it by removing a single vertex or edge.}
\label{fig:subgraphs}
\end{figure}

Note that a graph $G$ is always a subgraph of itself.
Similarly, the empty graph (which has no vertices or edges) is a subgraph of any graph $G$.

\subsection*{Exercises}

\begin{enumerate}

    \item How many subgraphs does a labeled cycle $C_3$ have?
    Assume the three vertices are labelled $a$, $b$, and $c$. We consider the subgraph with two vertices $a,b$ and a single edge $\{a,b\}$ to be different from the subgraph with two vertices $b,c$ and a single edge $\{b,c\}$.
    Hint: count the number of subgraphs with 0 vertices, then the number of subgraphs with 1 vertex, then the number of subgraphs with 2 vertices, etc.

    \item How many subgraphs does a labeled cycle of length 4 have? Assume the four vertices are labelled $a$, $b$, $c$, $d$. We consider the subgraph with two vertices $a,b$ and a single edge $\{a,b\}$ to be different from the subgraph with two vertices $b,c$ and a single edge $\{b,c\}$.

\item Explain why every graph with $n$ vertices is a subgraph of $K_n$.

\item Explain why $P_{n-1}$ is a subgraph of $C_n$

\item Explain why $C_6$ is a subgraph of $B_{3,3}$.

\item If $C_n$ is a subgraph of a bipartite graph, explain why $n$ is even.
 
\item True or False: A labeled graph with $n$ vertices always has at least $2^n$ subgraphs.

\end{enumerate}

\section{Walks and connected graphs}
\label{sec:graphs-connected-comp}

\begin{videobox}
\begin{minipage}{0.1\textwidth}
\href{https://youtu.be/jWZbEoWIQgM}{\includegraphics[width=1cm]{video-clipart-2.png}}
\end{minipage}
\begin{minipage}{0.8\textwidth}
Click on the icon at left or the URL below for a video covering sections 8.1--8.6. \\\vspace{-0.2cm} \\ \href{https://youtu.be/jWZbEoWIQgM}{https://youtu.be/jWZbEoWIQgM}
\end{minipage}
\end{videobox}

After class in Weber, some of the students need to go to their intramural water polo game.
However, Bus 2 in Figure~\ref{fig:bus-map} broke down while they were learning about derangements and now it is not available for service.
This means that the three red edges in Figure~\ref{fig:bus} disappear.
The students are dismayed to find that there is no longer a bus route to travel from Weber to Moby Arena.

We say that a graph is \emph{connected} if you can travel from any vertex of the graph to any other vertex by walking along edges.  For example, when all of the busses are running, the CSU bus system is connected.  However, if the edge between the CSU transit center and engineering disappears, the graph becomes disconnected.
We make this notion more precise by first introducing the concept of a walk.

\begin{definition}
A \defn{walk} in a graph is a sequence of vertices and edges $v_0$, $e_1$, $v_1$, $e_2$, $v_2$, \ldots, $v_{k-1}$, $e_k$, $v_k$ such that edge $e_i$ connects vertices $v_{i-1}$ and $v_i$, for $1 \leq i \leq k$. For example, the sequence of vertices and edges, $v_0,e_1,v_1,e_2,v_2,e_3,v_3$, is a walk on the following graph. 
\end{definition}

\begin{center}
    \includegraphics[width=.3\textwidth]{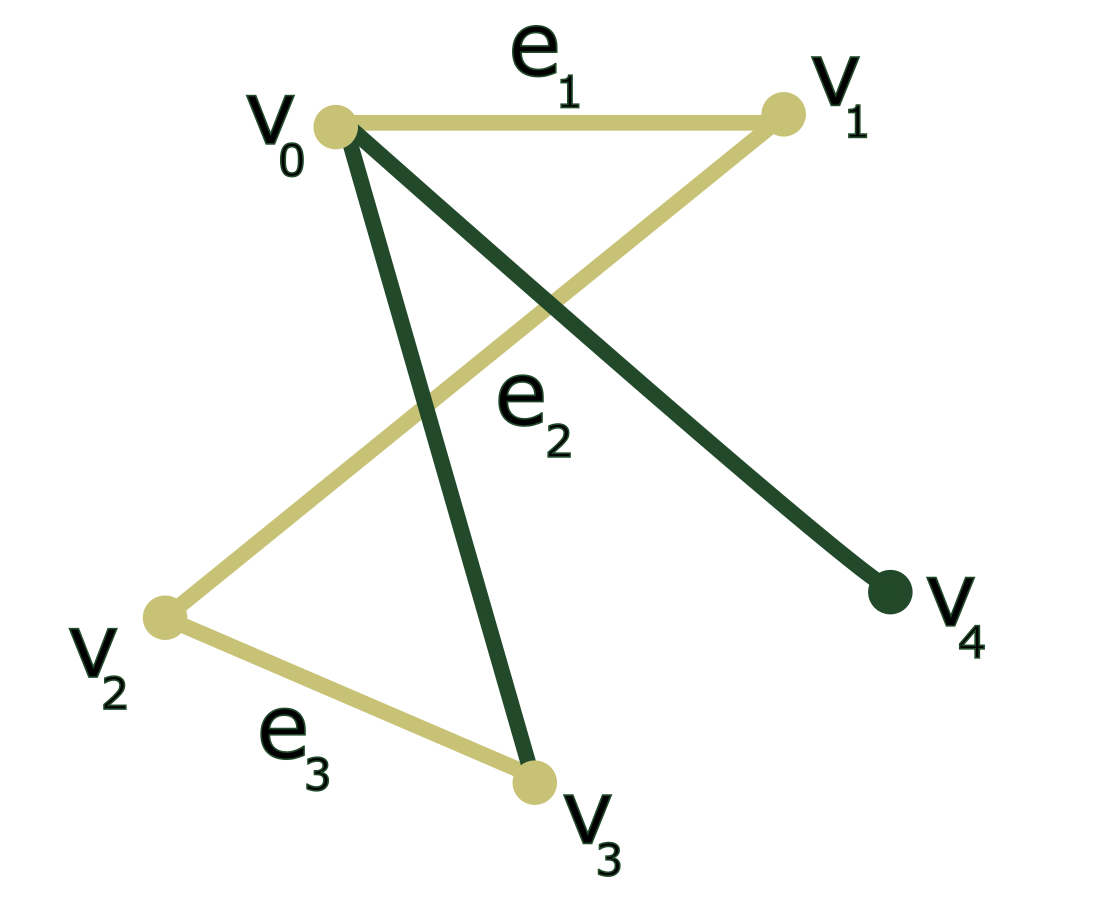}
\end{center}

\begin{definition}
A graph $G$ is \defn{connected} if for any two vertices $u$ and $v$ of $G$, there is a walk in $G$ from $u$ to $v$.
\end{definition}

For example, the graph drawn below is connected.
Indeed, no matter which two vertices you consider in this graph, you can always find a walk between them.
\begin{center}
\includegraphics[width=0.6in]{08-GraphsWalksCycles/graphFixedDegrees.pdf}
\end{center}

By contrast, the graph drawn below is not connected, because there is no walk between a vertex on the left side and a vertex on the right side.

\begin{center}
\includegraphics[height=0.4in]{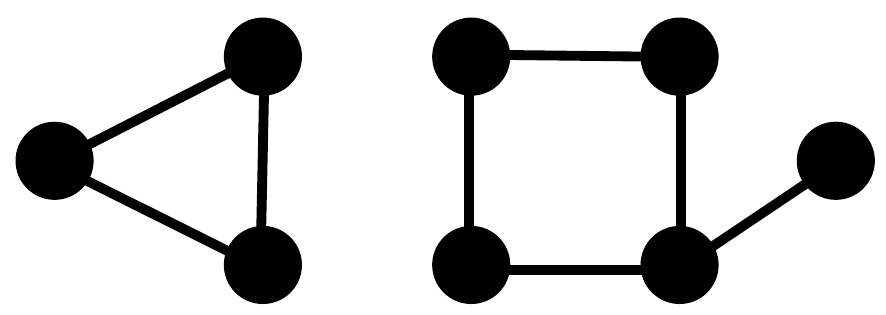}
\end{center}

It is often more convenient to work with connected graphs than with disconnected graphs.
If a graph is disconnected, it breaks up uniquely into its connected components, which are connected subgraphs that we define below.
It is then possible to work separately with each connected component of a graph.
 
\begin{definition}
A \defn{connected component} $H$ of a graph $G$ is a maximal subgraph that is connected.
\end{definition}

The word ``maximal" in the above definition means that there are no connected subgraphs of $G$ that contain $H$ and that are strictly larger than $H$.
In other words, if $H'$ is another connected subgraph of $G$, and if $H$ is a subgraph of $H'$, then $H=H'$.

\begin{example}
If $G$ is the graph
\includegraphics[height=0.2in]{08-GraphsWalksCycles/pathsCyclesComponents1.pdf}, then the connected components of $G$ are \includegraphics[height=0.2in]{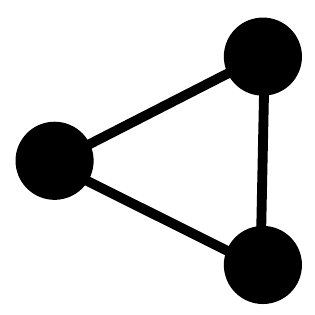}
and 
\includegraphics[height=0.2in]{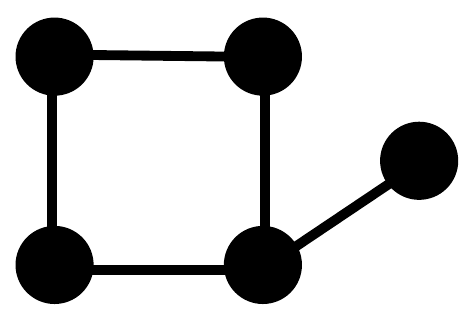}.
Note that
\includegraphics[height=0.2in]{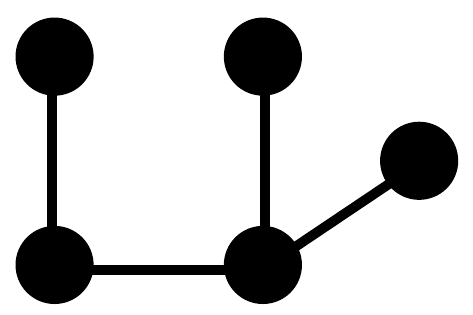}
is not a connected component of $G$ because it is not a \emph{maximal} connected subgraph.
\end{example}

Some of the exercises below illustrate that by deleting an edge from a connected graph, it is possible to make it disconnected.

\subsection*{Exercises}
\begin{enumerate}

\item What is the smallest number $k$ such that it is possible to make the following graph disconnected by removing $k$ edges:
\begin{enumerate}
    \item $K_7$?
    \item $C_7$?
    \item $P_7$?
    \item $B_{3,4}$?
\end{enumerate}

\item What is the smallest number $t$ such that it is possible to divide the following graph into $3$ connected components by removing $t$ edges:
\begin{enumerate}
    \item $K_7$?
    \item $C_7$?
    \item $P_7$?
    \item $B_{3,4}$?
\end{enumerate}

\item
By taking away $3$ edges from the graph below, what is the maximal number of connected components 
that you can make:
\begin{enumerate}
    \item $K_7$?
    \item $C_7$?
    \item $P_7$?
    \item $B_{3,4}$?
    \end{enumerate}

\item Let $G$ be a graph.
Let $H_1=(V_1,E_1)$ and $H_2=(V_2,E_2)$ be connected subgraphs with a vertex $v$ in common.
Form their union $H=(V,E)$, where $V=V_1\cup V_2$ and $E=E_1\cup E_2$.
Prove that $H$ is connected

\item Suppose that graph $G$ is connected and contains a cycle.
Prove that if an edge from this cycle is removed from graph $G$, then the remaining graph is connected.

\item Let $u$ and $v$ be two vertices in a graph $G$ that are not connected by an edge. Show that adding the new edge $uv$ creates a cycle if and only if $u$ and $v$ are in the same connected component of $G$.
\end{enumerate}

\section{Graph complements}

Student A decides to host a picnic, with the goal of introducing new people to each other.
There are 7 attendees for this picnic: Dr.\ Pries, Dr. Adams, Dr. Gillespie, Cam the Ram, Student A, Student B, and Student C.
These are the 7 people (or animals) represented by the vertices of the graph in Figure~\ref{fig:Social}, or in the image below on the left.
Recall that an edge in this graph represents the fact that the two people at the endpoints of this edge have met.

The goal of the picnic is to introduce new folks to each other, and so Student A decides to draw a new graph in which each edge represents that two people have not yet met, and therefore need to be introduced to each other.
This new graph is drawn on the right below; it is the \emph{complement} of the original social network graph drawn on the left.

\begin{center}
    \includegraphics[width=.9\textwidth]{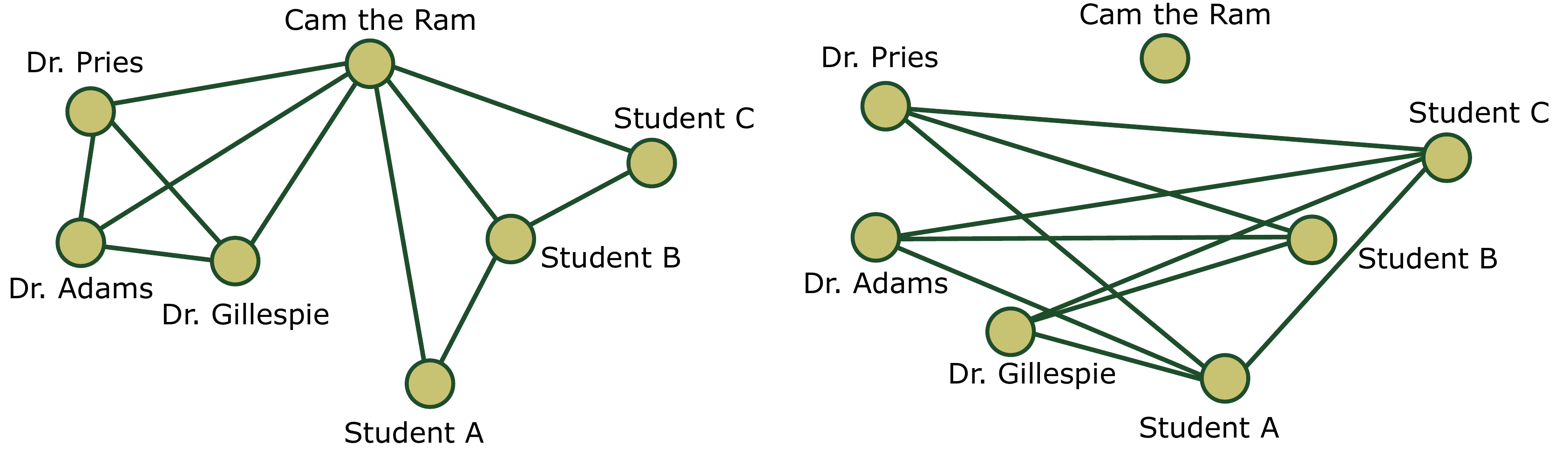}
\end{center}

\begin{definition}
Given a graph $G$, its complement $\overline{G}$ has the same vertex set as $G$, and has an edge between vertices $u$ and $v$ precisely when $G$ does not.
\label{def:graph-complement}
\end{definition}

For example, the figure below has the path graphs $P_2$ through $P_5$ in the left column, and their complement graphs $\overline{P_2}$ through $\overline{P_5}$ in the right column.
\begin{center}
\includegraphics[width=5in]{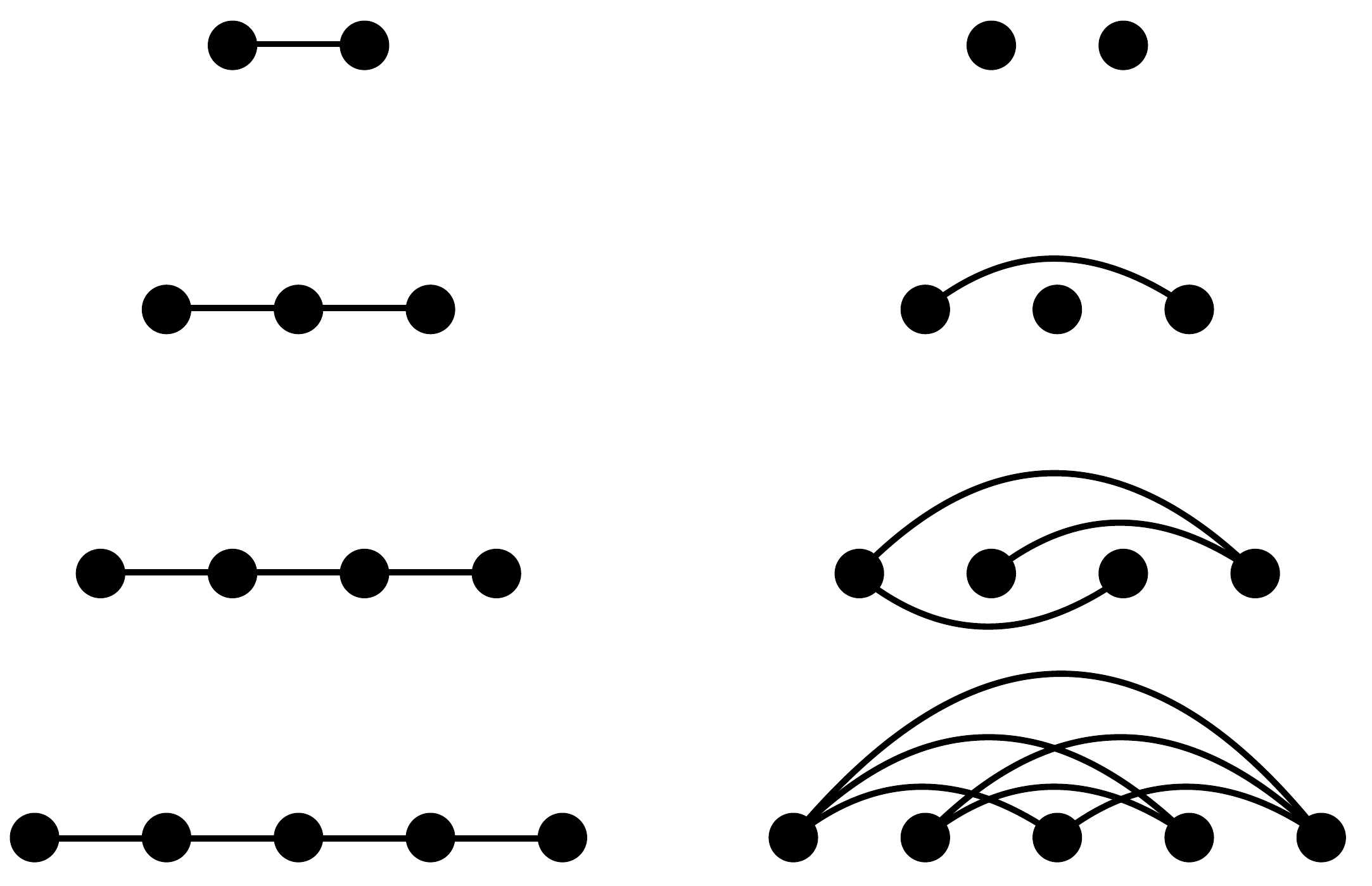}
\end{center}

An equivalent definition of a graph complement is as follows.
Let $G=(V,E)$ be a graph with vertex set $V$ and edge set $E$.
Let $S$ denote the set of all pairs of vertices in $V$.
Note that $E$ is a subset of $S$.
The graph complement $\overline{G}$ can also be defined as $\overline{G}=(V,S-E)$, where $S-E$ is the set complement as defined in Definition~\ref{def:complement}.
Explain why this agrees with the definition of graph complement given in Definition~\ref{def:graph-complement}.

\subsection*{Exercises}

\begin{enumerate}
\item Describe the complement of the following graphs:
\begin{enumerate}
    \item $C_5$;
    \item $B_{3,4}$;
\end{enumerate}
\item Is the complement of $C_6$ a connected graph?		
\item If a vertex $v$ has degree $d$ in a graph $G$ on $n$ vertices, what is the degree of $v$ in the complement of $G$?

    \item 
Prove that a graph complement $\overline{G}$ is a subgraph of the original graph $G$ only if $G$ is the complete graph on its vertex set.

\item
For which $n$, is the complement of $P_n$ a path?

\item
For which $n$, is the complement of $C_n$ a cycle?

\item True or False: Let $G$ and $H$ be graphs. If $G$ is the complement of $H$, then $H$ is the complement of $G$.

\item Prove that the complement of $B_{n,m}$ has two connected components, which are $K_n$ and $K_m$.
\end{enumerate}

\section{Storage structures for graphs}
\label{sec:graphStorage}

Given a graph, how do we store it in a computer?
Once a graph has been stored, how do we edit it?
We give two possible graph representations to address these questions.

The first way to store a graph is as an \defn{adjacency matrix}.
Given a graph $G$ with $n$ vertices, label its vertices from $0$ to $n-1$.
The adjacency matrix encoding this graph will be an $n\times n$ matrix whose entries are either 0 or 1.
Entry $(i,j)$ of the adjacency matrix is 1 when the edge $\{i,j\}$ between vertices $i$ and $j$ is in $G$, and 0 otherwise.
Below is the graph $C_6$ and its adjacency matrix.

\begin{center}
\raisebox{-1.3cm}{\includegraphics{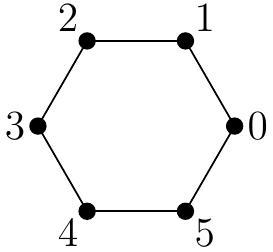}}\hspace{2cm}\begin{tabular}{c|ccccccc}  & 0 & 1 & 2 & 3 & 4 & 5 \\
\hline
0 & 0 & 1 & 0 & 0 & 0 & 1 \\
1 & 1 & 0 & 1 & 0 & 0 & 0 \\
2 & 0 & 1 & 0 & 1 & 0 & 0 \\
3 & 0 & 0 & 1 & 0 & 1 & 0 \\
4 & 0 & 0 & 0 & 1 & 0 & 1 \\
5 & 1 & 0 & 0 & 0 & 1 & 0
\end{tabular}
\end{center}

There are $n^2$ entries in this matrix that are each 0 or 1.
The matrix is symmetric: entry $(i,j)$ equals to entry $(j,i)$, because the edge $\{i,j\}$ is the same as the edge $\{j,i\}$.
Furthermore, each of the $n$ entries along the diagonal is $0$, since self-loops are not allowed in graphs.
Therefore, we only need to store the $\frac{n^2-n}{2}$ entries above the diagonal in order to fully represent the adjacency matrix.

The adjacency matrix depends on the labeling we chose for the vertices.  Changing the labeling changes the adjacency matrix by permuting the rows and the columns.

A second way to store a graph is a list of edges.
We store $n$, the number of vertices, and then maintain a list of size $2\times e$, where $e$ is the number of edges in our graph.
Each column of this edge list stores the two vertices incident to that edge.
Below is the edge list for the graph $C_6$:
\[\begin{matrix}
0 & 1 & 2 & 3 & 4 & 5 \\
1 & 2 & 3 & 4 & 5 & 0
\end{matrix}\]
In this representation, we need to store $2e$ integers between $0$ and $n-1$.

An adjacency matrix has the advantage of being easily editable.
Suppose we want to add (resp.\ remove) the edge $\{i,j\}$ from a graph.  Then to update the storage structure, we only need to change the entries $(i,j)$ and $(j,i)$ in the matrix from $0$ to $1$ 
(resp.\ from $1$ to $0$).

An edge list has the advantage of being a smaller storage method, especially if the graph is \emph{sparse}, meaning that the number of edges is small compared with the number of vertices.
However, an edge list has the disadvantage of being harder to edit.
For example, to remove an edge, we need to search through the entire list to find the location of the edge and then shuffle all of the later entries to the left.

For a graph that is a tree (connected graph with no cycles) and has a root, there is another
disadvantage of an edge list.
For this kind of graph, it is useful to find all the `descendents' of a vertex $v$, meaning all the vertices that are further from the root than $v$ is.
When the graph is stored as an edge list, it is very time-consuming to find all the descendants of a vertex, because you need to make repeated queries. For this reason, in this context, an edge list is usually replaced by a materialized path.  In the storage structure for a materialized path, the idea is to store the entire walk from the root to each vertex.
\subsection*{Exercises}
\begin{enumerate}
    \item Write down the adjacency matrix and the edge list for the graphs below (for some labeling of the vertices):
    \begin{enumerate}
        \item $K_5$;
        \item $P_5$;
        \item $C_5$;
        \item $B_{2,3}$.
    \end{enumerate}
 
 \item 
 Describe the structure of the adjacency matrix and the edge list for the graphs below, for a good choice of labeling of the vertices:
 \begin{enumerate}
     \item $K_n$;
     \item $P_n$;
     \item $C_n$;
     \item $B_{n,m}$.
 \end{enumerate}

\item How many non-zero entries does the adjacency matrix for C4 have?					
\end{enumerate}

\section{Eulerian walks and Hamiltonian cycles}

\begin{videobox}
\begin{minipage}{0.1\textwidth}
\href{https://www.youtube.com/watch?v=oP2paQL8zmM}{\includegraphics[width=1cm]{video-clipart-2.png}}
\end{minipage}
\begin{minipage}{0.8\textwidth}
Click on the icon at left or the URL below for this section's short lecture video. \\\vspace{-0.2cm} \\ \href{https://www.youtube.com/watch?v=oP2paQL8zmM}{https://www.youtube.com/watch?v=oP2paQL8zmM}
\end{minipage}
\end{videobox}

We are now prepared to return to one of our motivating problems from Chapter~1.
Consider the following map of seven bridges crossing the Pregel River in K\"{o}nigsberg, Prussia, which is today called Kaliningrad, Russia.
Is it possible to take a walk in which you cross each bridge \emph{exactly} once?
You do not need to return to your starting location.

\begin{center}
\includegraphics[width=3in]{01-Introduction/PregelRiver3.png}
\end{center}

Euler had the insight to transform this question about bridge crossings into a question about graph theory.  What was truly innovative about this was that no one had ever thought about graph theory abstractly before then.
After this rephrasing, the K\"{o}nigsberg bridge problem can be transformed into the following question: is there an ``Eulerian walk" (which we will later define) in the following graph?

\begin{center}
\includegraphics[width=1in]{01-Introduction/PregelRiverGraph.png}
\end{center}

Note that here we are allowing multiple edges between two vertices in a graph.  This happens because there are multiple bridges between some of the land masses in the picture.

Recall from Section~\ref{sec:graphs-connected-comp} that a \emph{walk} in a graph is a sequence of vertices and edges $v_0$, $e_1$, $v_1$, $e_2$, $v_2$, \ldots, $v_{k-1}$, $e_k$, $v_k$ such that edge $e_i$ connects vertices $v_{i-1}$ and $v_i$ for $1 \leq i \leq k$.

\begin{center}
\includegraphics[width=2in]{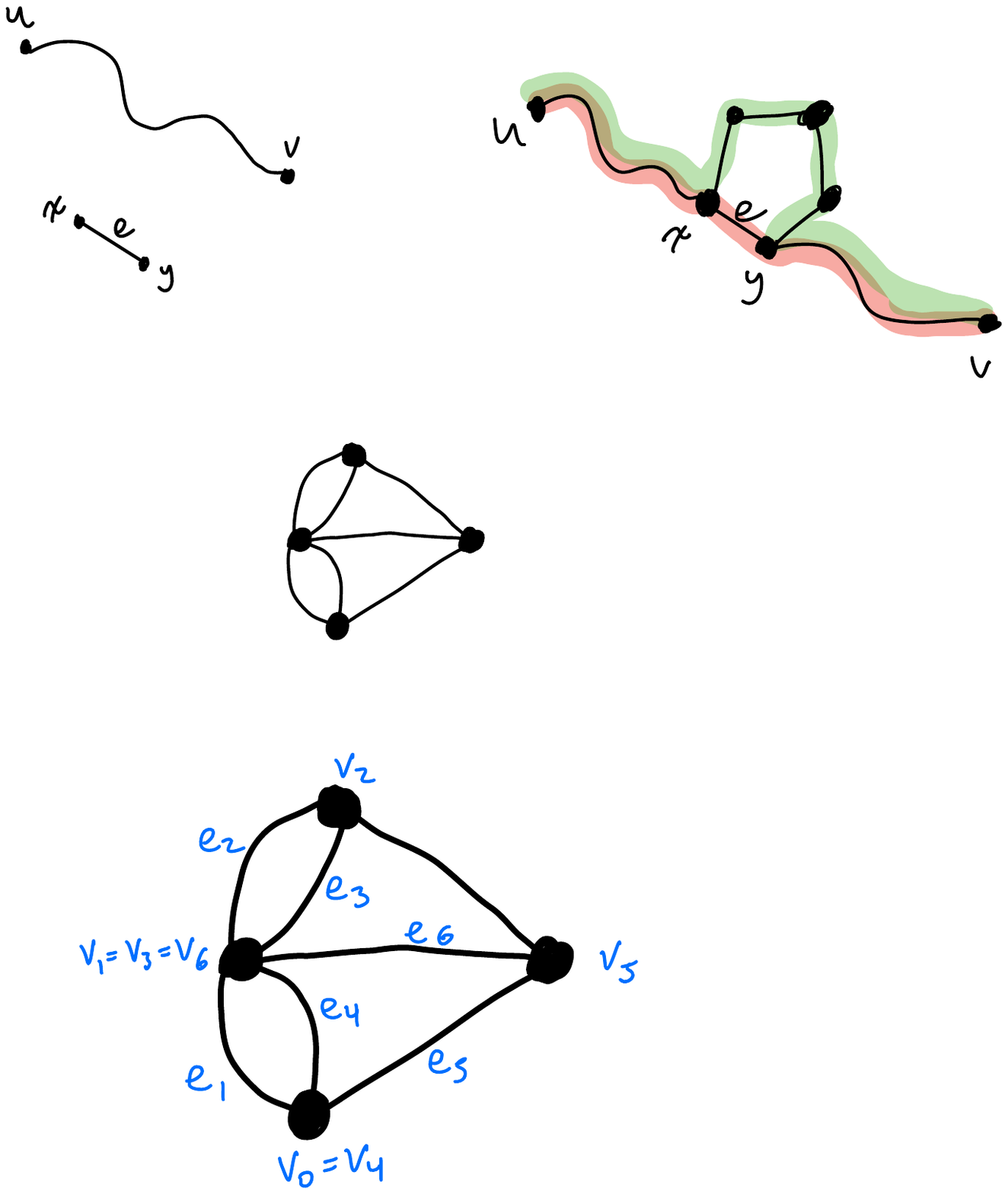}

\end{center}

We say that a walk is \emph{closed} if it begins and ends at the same location, i.e.\ if $v_0=v_k$.

\begin{definition}
An \defn{Eulerian walk} through a connected graph $G$ goes through every edge exactly once.
\end{definition}

\begin{figure}
\begin{center}
\subfigure[No Eulerian walks.]{\includegraphics[width=1.4in]{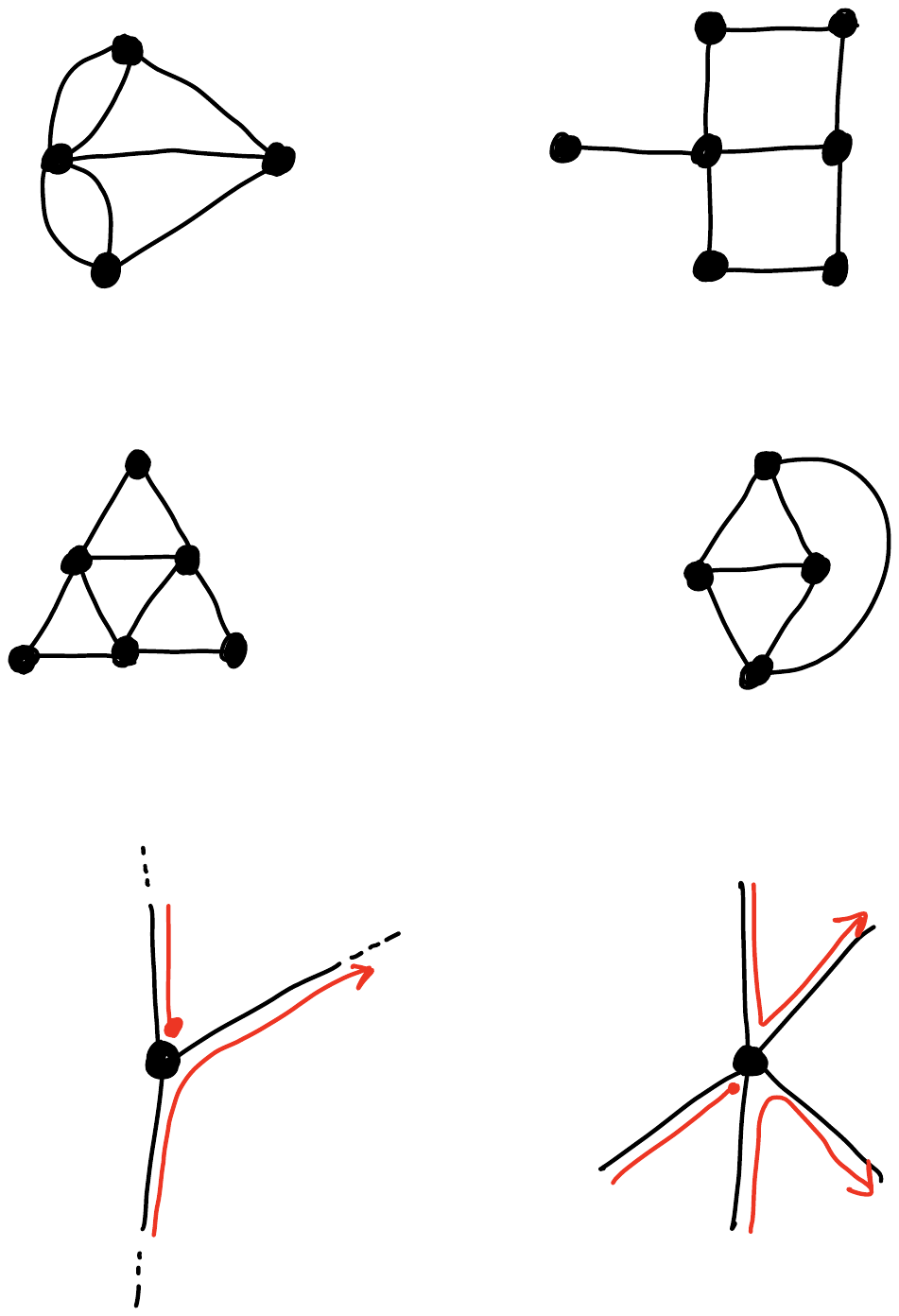}}
\hspace{1mm}
\subfigure[Eulerian walks exist! None of them are closed.]{\includegraphics[width=1.4in]{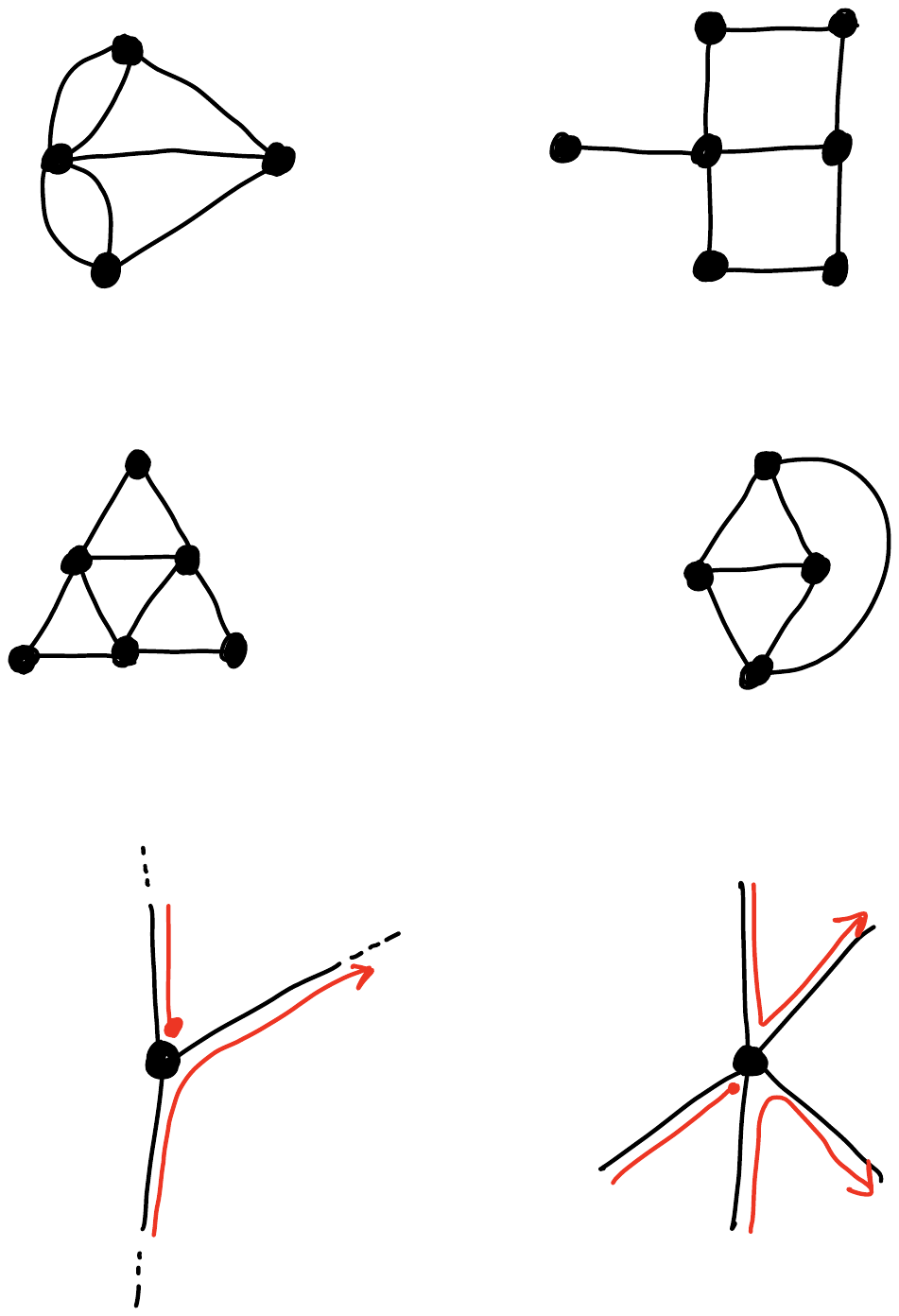}}
\hspace{1mm}
\subfigure[Eulerian walks exist! All of them are closed.]{\includegraphics[width=1.4in]{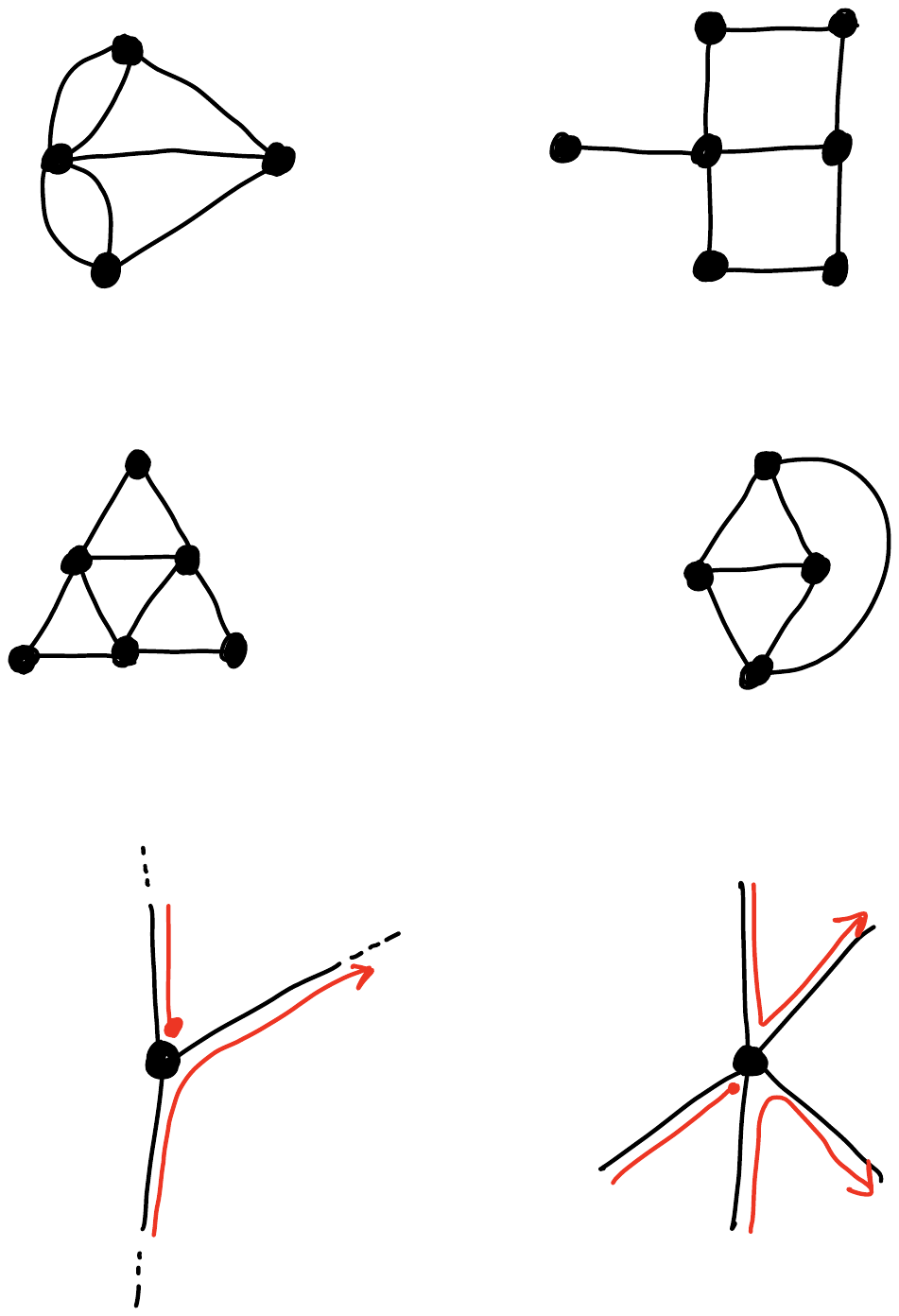}}
\hspace{1mm}
\subfigure[No Eulerian walks.]{\includegraphics[width=1.4in]{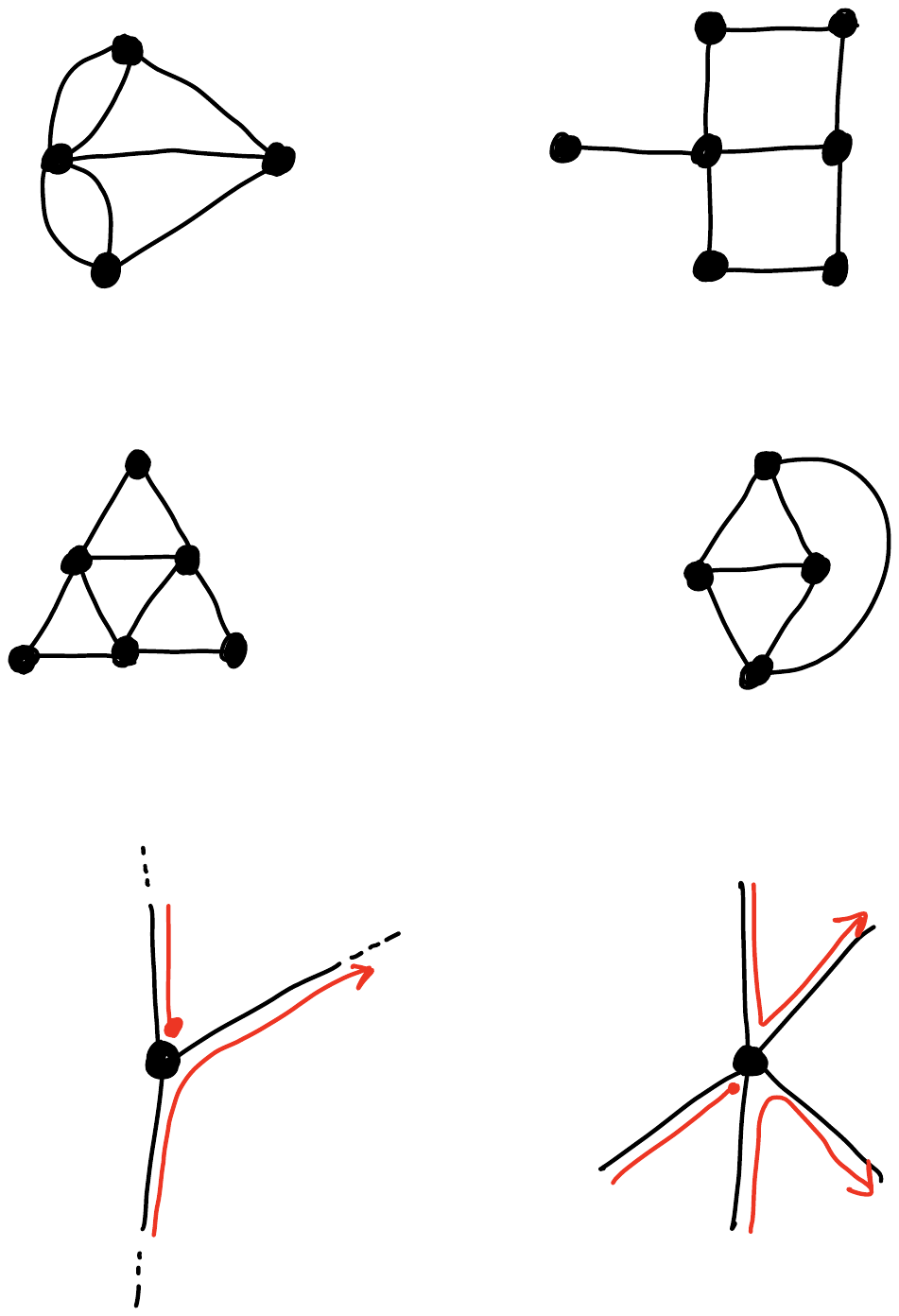}}
\caption{
Examples of graphs with or without (closed) Eulerian walks.
}
\end{center}
\end{figure}

The following theorem is considered to be one of the earliest theorems in graph theory!

\begin{theorem}[Euler, 1736]
\label{thm:euler}
Let $G$ be a connected graph.
\begin{enumerate}
\item[(a)] If $G$ has more than two vertices with odd degree, then it has no Eulerian walks.
\item[(b)] If $G$ has exactly two vertices of odd degree, then it has an Eulerian walk. Every Eulerian walk starts and ends at these two vertices.
\item[(c)] If $G$ has no vertices of odd degree, then it has an Eulerian walk. Every Eulerian walk is necessarily closed.
\end{enumerate}
\end{theorem}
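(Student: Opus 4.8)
The plan is to separate the two logically distinct halves of the statement: the \emph{necessary} conditions (no walk can exist, and any walk must have the prescribed endpoints), which are easy, and the \emph{sufficiency} (a walk actually exists), which is the real content. First I would record that by Theorem~\ref{thm:graphDegreeTwiceEdges} the number of odd-degree vertices is always even, so the only possibilities are $0$, $2$, or at least $4$ such vertices; this confirms that cases (a), (b), (c) are genuinely exhaustive. For the necessary direction I would prove a single parity lemma: if $W$ is an Eulerian walk and $x$ is a vertex that is \emph{not} an endpoint of $W$, then each visit to $x$ consumes two edges at $x$ (one to enter, one to leave), and since $W$ uses every edge exactly once, $\deg(x)$ must be even. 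At an open endpoint the first departure (or final arrival) leaves one unpaired edge, so an endpoint has odd degree, unless the two endpoints coincide, in which case the walk is closed and that common vertex is again even. This lemma immediately yields part (a) (any walk would force at most two odd vertices), the endpoint claim in (b), and the ``necessarily closed'' claim in (c).

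The hard part is existence, which I would establish first for case (c) by strong induction on the number of edges. The key sublemma is that in a graph with all even degrees, if one starts at any vertex $v$ of positive degree and repeatedly follows unused edges, one can never get stuck anywhere except back at $v$: on arriving at any $x \neq v$ the walk has used an odd number of edges at $x$, and evenness of $\deg(x)$ guarantees an unused edge remains by which to leave. This produces a closed trail $T$ of positive length. Deleting the edges of $T$ yields a graph $G'$ in which every vertex still has even degree and in which every connected component has strictly fewer edges than $G$. By the inductive hypothesis, each component of $G'$ containing an edge has its own closed Eulerian walk, and I would then \emph{splice} these into $T$: traverse $T$, and upon first reaching a vertex lying in such a component, detour through that component's closed walk before continuing along $T$. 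The result visits every edge of $G$ exactly once and is closed.

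Finally, case (b) reduces cleanly to (c). Given the two odd-degree vertices $u$ and $w$, I would add a single new edge $uw$ (legitimate here, since this section permits multigraphs), obtaining a connected graph all of whose degrees are even; applying (c) gives a closed Eulerian walk, which I then cut open at the added edge to obtain an Eulerian walk of the original $G$ running from $u$ to $w$. The main obstacle I anticipate is the bookkeeping in the splicing step — in particular, confirming that every edge-containing component of $G'$ actually meets the trail $T$, for otherwise such a component could never be reached. I would prove this using connectedness of $G$: trace a path in $G$ from a vertex of the component to $T$, and observe that its first vertex on $T$ must itself lie in the component, since every edge traversed before reaching $T$ joins two vertices off $T$ and hence is not an edge of $T$, so it survives in $G'$.
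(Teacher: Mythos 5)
Your proof is correct, and it is in fact more complete than the paper's own treatment. On the necessity side, your single parity lemma is exactly the paper's pair of claims (an odd-degree vertex must be an endpoint of any Eulerian walk; an even-degree vertex is either both start and end, or neither), so part (a), the endpoint statement in (b), and the closedness statement in (c) are handled the same way in both. The difference is on the existence side: the paper explicitly \emph{omits} those proofs, recording only the two key ideas --- for (c), that a walk arriving at an even-degree vertex can always leave by an unused edge, and for (b), that one can add an edge $uw$ between the two odd vertices, apply (c), and cut the resulting closed walk open at the added edge. Your proposal turns both sketches into complete arguments: the strong induction on the number of edges, the closed trail $T$ produced by the never-get-stuck observation, the deletion of $T$ and splicing in the closed Eulerian walks of the components of $G'$, and, importantly, the verification (via connectedness of $G$) that every edge-containing component of $G'$ actually meets $T$ --- a point that is easy to overlook and that the paper's sketch does not address. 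One small wording correction in that verification: the final edge of your path, the one arriving at the first vertex $y$ of $T$, joins a vertex off $T$ to $y$ itself, so it does not literally ``join two vertices off $T$''; but it still has an endpoint off $T$, hence is not an edge of $T$ and survives in $G'$, which is all the argument needs. With that reading, the splicing step and the reduction of (b) to (c) (legitimate here, since this section of the paper allows multiple edges) are both sound.
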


Consider again the graph that results from the K\"{o}nigsberg bridge problem.
This graph has four vertices of odd degree.
Since 4 is more than 2, by part (a) of Theorem~\ref{thm:euler} this graph has no Eulerian walks.
It is therefore not possible to take a walk in K\"{o}nigsberg that crosses each of the seven bridges exactly once!

\begin{question}
Why can't we have one vertex of odd degree in a graph?
\end{question}

\begin{answer}
The sum of all vertex degrees is twice the number of edges (Section~\ref{Sdegree}), and hence an even number. It follows that in any graph, the number of vertices with odd degree is even.
\end{answer}

In the next several claims, we prove parts of Theorem~\ref{thm:euler}.

\begin{claim}
If a vertex $v$ has odd degree, then any Eulerian walk must start or end at $v$.
This proves (a) and the second sentence of (b).
\end{claim}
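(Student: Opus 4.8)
The plan is to track what happens to the degree of a vertex $v$ as we traverse an Eulerian walk, distinguishing between the endpoints of the walk and all other vertices. The key observation is that every time the walk passes *through* a vertex (entering along one edge and leaving along another), it uses up exactly two of the edges at that vertex. So I would count the edges at $v$ in terms of how many times the walk visits $v$.

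First I would set up the counting. Suppose $v$ is not an endpoint of the Eulerian walk, i.e.\ $v \neq v_0$ and $v \neq v_k$. Each time the walk arrives at $v$ along some edge $e_i$, it must immediately depart along the next edge $e_{i+1}$ (since $v$ is not where the walk terminates), and these two edges are distinct because an Eulerian walk uses each edge exactly once. Thus each visit to $v$ accounts for precisely two edges incident to $v$. Since the walk is Eulerian, every edge incident to $v$ is used exactly once, so the edges at $v$ are partitioned into these pairs, one pair per visit. Therefore $\deg(v) = 2 \cdot (\text{number of visits to } v)$, which is even.

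Next I would handle the contrapositive to get the claim as stated. If $v$ has odd degree, then by the paragraph above $v$ cannot be a non-endpoint vertex of the walk, so $v$ must be one of the two endpoints $v_0$ or $v_k$. This is exactly the assertion that any Eulerian walk must start or end at $v$. For part (a): if there are more than two vertices of odd degree, then there are at least three vertices each of which must be an endpoint of the walk; but a walk has only two endpoints (and if it is closed, effectively one), so at most two vertices can be endpoints, giving a contradiction — hence no Eulerian walk exists. For the second sentence of (b): when there are exactly two vertices of odd degree, each of them must be an endpoint, and since a walk has exactly two endpoints, these two odd-degree vertices are precisely the start and end of any Eulerian walk.

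**The main obstacle** I anticipate is being careful about the endpoints of the walk, especially in the boundary cases. At an endpoint $v_0$, the first edge $e_1$ is a departure with no matching arrival, and similarly at $v_k$ the last edge is an arrival with no matching departure; so the pairing argument must account for these unpaired edges, contributing an odd count at an endpoint (when it is visited once as a pure endpoint) while still allowing pass-through visits to contribute pairs. A subtle point is that an endpoint might also be visited in the middle of the walk, so its degree is odd only because of the single unpaired edge at the terminus — I would phrase the count as $\deg(v) = 2m + 1$ at a genuine endpoint and $\deg(v) = 2m$ otherwise, where $m$ counts through-visits, and note that when the walk is closed ($v_0 = v_k$) both unpaired edges attach to the same vertex, restoring evenness there. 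Handling multigraphs (which the K\"onigsberg graph requires) needs no change, since the pairing argument refers to edges, not to distinct neighbors.
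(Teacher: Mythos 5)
Your proof is correct and uses essentially the same idea as the paper's: each pass-through visit to $v$ consumes exactly two edges incident to $v$, so a vertex of odd degree cannot be a mere pass-through vertex; the paper phrases this as a direct argument (the walk keeps entering and leaving $v$ until one edge remains and it is forced to end there), while you phrase it as a parity count plus contrapositive. You also spell out the deductions of part (a) and the second sentence of (b) from the claim, which the paper leaves implicit, but this is elaboration rather than a different route.
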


\begin{proof} $ $
\begin{center}
\includegraphics[width=2.5in]{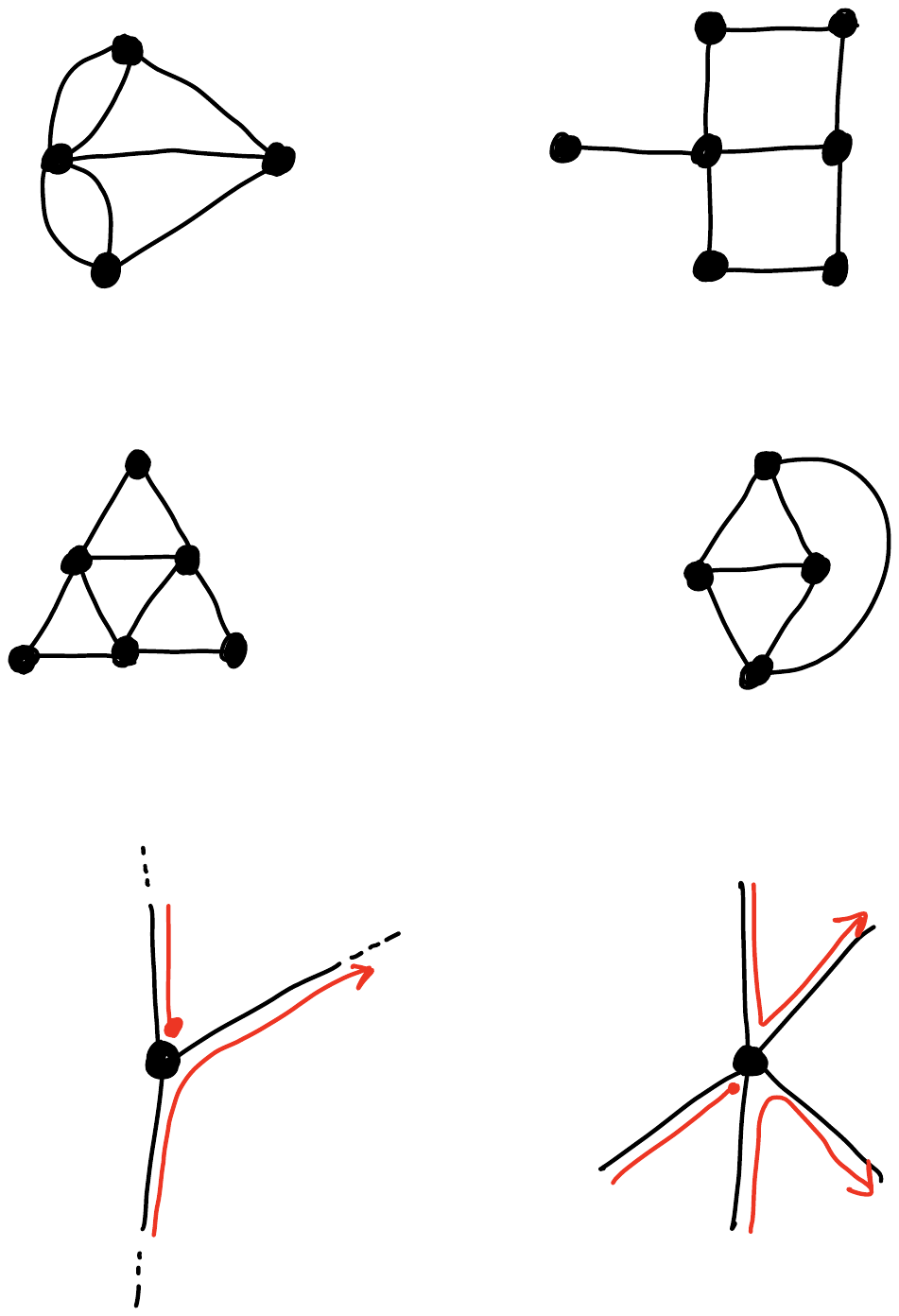}

\end{center}
If the walk doesn't start at $v$, then it:
\begin{itemize}
\item enters and leaves $v$ (using up two edges),
\item enters and leaves $v$ (using up two edges),
\item \ldots
\end{itemize}
This continues until there is one remaining edge incident to $v$ (since $v$ has odd degree).
The walk then enters along this edge and cannot leave.
Hence if the walk doesn't start at $v$, then it must end at $v$.
\end{proof}

\begin{claim}
If a vertex $v$ has even degree, then any Eulerian walk either starts and ends at $v$, or starts and ends somewhere else. This proves the second sentence of (c).
\end{claim}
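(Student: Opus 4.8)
The plan is to mirror the proof of the previous claim, replacing its parity bookkeeping with a clean count of the edges incident to $v$. The statement to prove is equivalent to saying that $v$ cannot be \emph{exactly one} of the two endpoints of an Eulerian walk: either it is both (so the walk starts and ends at $v$) or it is neither (so the walk starts and ends somewhere else). Since an Eulerian walk uses every edge exactly once, each of the $\deg(v)$ edges incident to $v$ is consumed by the walk precisely once, and I would organize these edges according to how the walk meets $v$.

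First I would classify each appearance of $v$ in the walk. Whenever the walk reaches $v$ at an interior position (that is, $v=v_i$ with $0 < i < k$), it enters $v$ along one edge and leaves along another, consuming two incident edges; call such an appearance a \emph{pass-through}. Whenever $v$ occurs as an endpoint of the walk ($v=v_0$ or $v=v_k$), it consumes exactly one incident edge: the first step leaves $v$, and the last step enters $v$. Letting $p$ be the number of pass-throughs and $s$ the number of endpoint roles played by $v$ (so $s=0$, $1$, or $2$), these edges are pairwise distinct and exhaust all edges at $v$, giving

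\begin{equation}
\deg(v) = 2p + s.
\end{equation}

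Because $\deg(v)$ is even by hypothesis and $2p$ is even, $s$ must be even, so $s \in \{0,2\}$ and the value $s=1$ is impossible. The case $s=2$ means $v$ is both the start and the end, and $s=0$ means $v$ is neither, which is exactly the dichotomy claimed.

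The main obstacle will be making the decomposition in the displayed identity rigorous: I need to verify that the edges counted by the pass-throughs and by the endpoint roles are genuinely distinct and together account for every edge at $v$ with no double counting. This reduces to checking that each edge incident to $v$ is used exactly once (immediate from the definition of an Eulerian walk) and that no single edge is simultaneously counted as an ``enter'' of one appearance and a ``leave'' of another. Care is also needed in the closed-walk situation, where $v_0=v_k$ might coincide with $v$ and thereby contribute two endpoint edges (one leaving at the start, one entering at the end); this is precisely the $s=2$ case and should be handled explicitly rather than absorbed into the pass-through count.
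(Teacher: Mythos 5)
Your proposal is correct, and it is essentially the argument the paper intends: the paper only sketches this claim with examples and defers to the pairing idea from the previous claim (each pass-through of $v$ consumes two incident edges), which your identity $\deg(v)=2p+s$ makes rigorous in static, counting form. The parity conclusion $s\in\{0,2\}$ is exactly the dichotomy claimed, and your attention to the no-self-loop condition and to the closed-walk case $v_0=v_k=v$ fills in precisely the details the paper omits.
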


\begin{proof}[Ideas towards the proof] $ $
Two example walks that start and end at $v$ are drawn as follows:
\begin{center}
\includegraphics[width=3in]{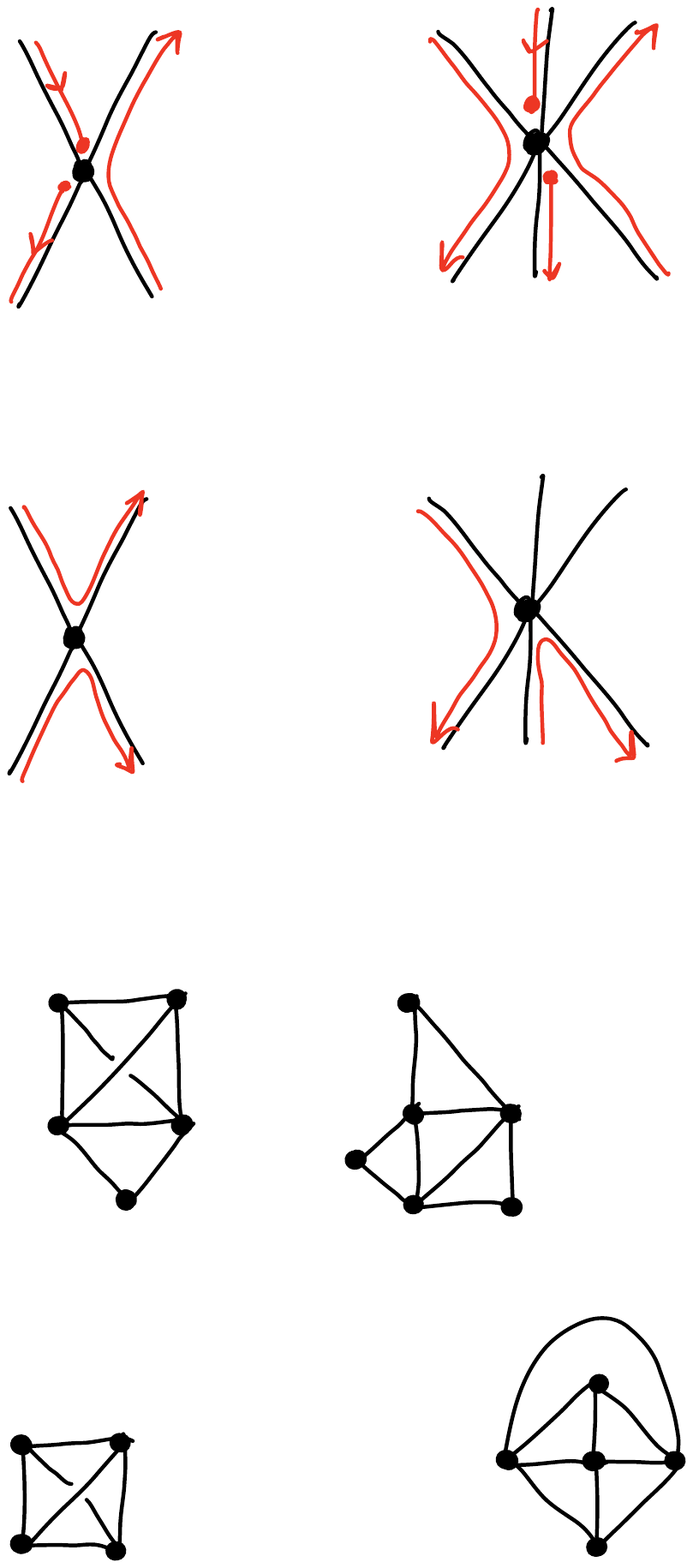}

\end{center}

Two example walks that neither start nor end at $v$ are drawn as follows:
\begin{center}
\includegraphics[width=3in]{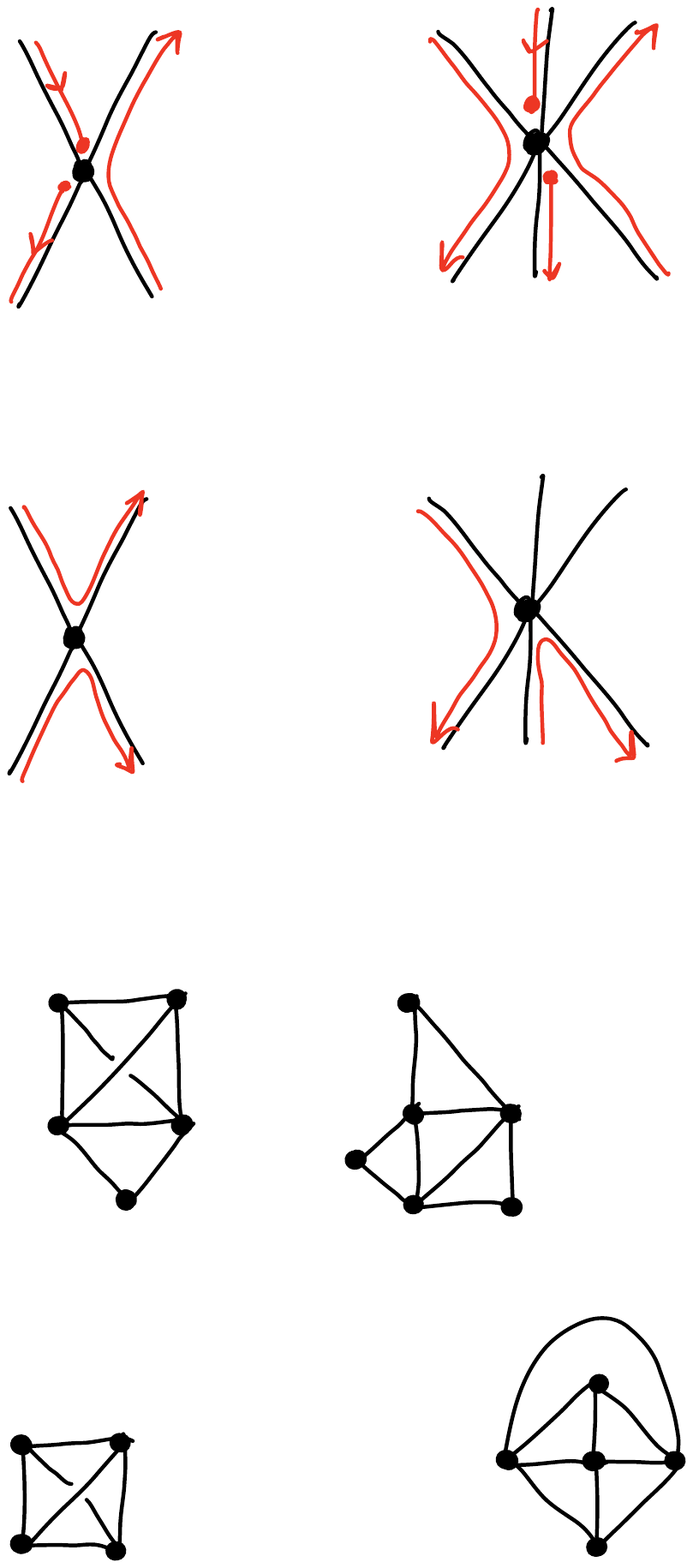}

\end{center}
The complete proof is much like that of the previous claim.
\end{proof}

We omit the proofs that Eulerian walks exist in cases (b) and (c), although they are very interesting.  The main idea for (c) is that if a walk arrives at a vertex $v$ by one edge, then it is possible to continue the walk leaving by another edge, thus reducing the number of unused edges at $v$ by $2$. There is a clever proof of (b) using (c); the main idea of this is to add an edge between the two vertices $u$ and $v$ with odd degree in the graph, then find an Eulerian cycle using (c), and then to adjust the starting point of the cycle to be $u$; deleting the edge between $u$ and $v$ produces an Eulerian walk that starts and ends at $u$ and $v$. 

\begin{example}
\label{Emoreeuler}
Consider the four connected graphs drawn below.
If an Eulerian walk exists, then find one, and say whether all such Eulerian walks are closed or not!
If no Eulerian walk exists, then say why.
\begin{figure}[h]
\begin{center}
\subfigure[]{\includegraphics[width=1in]{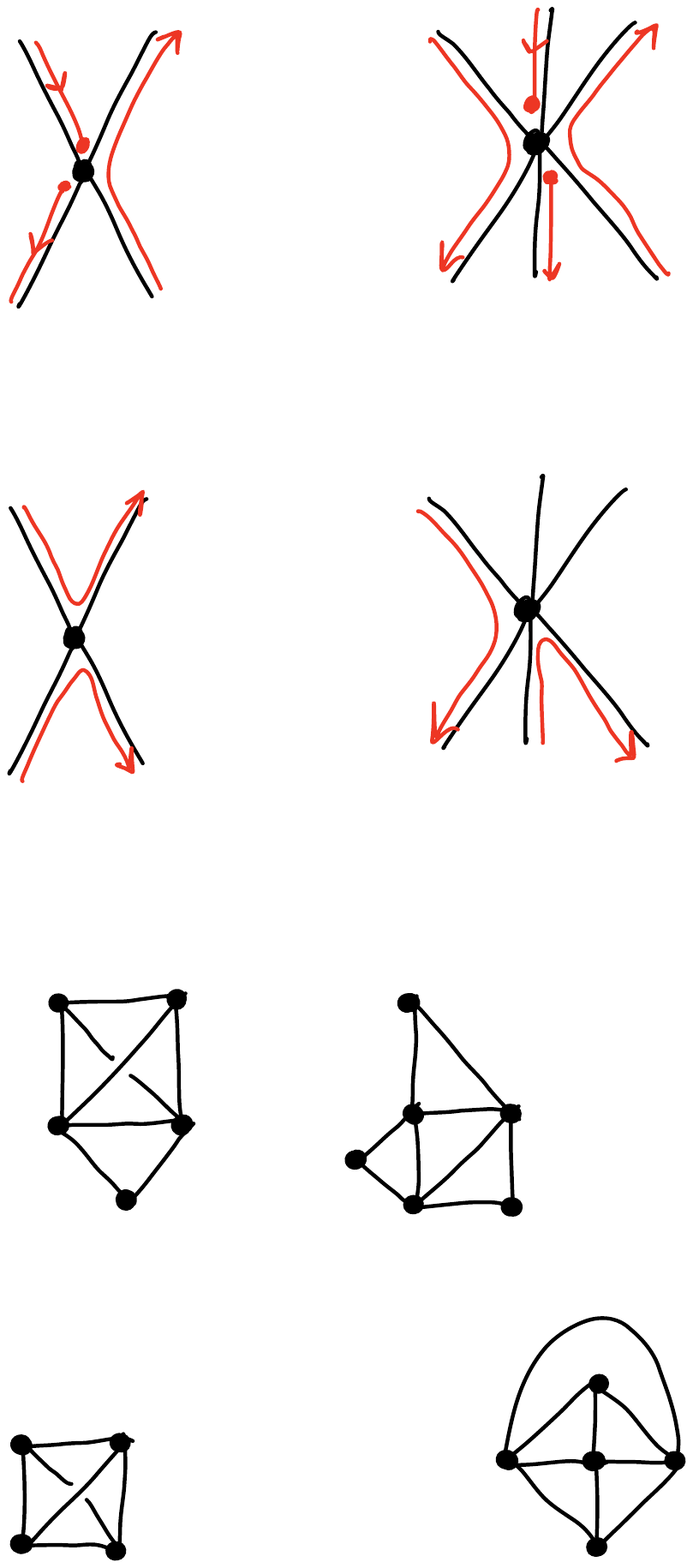}}
\hspace{1mm}
\subfigure[]{\includegraphics[width=1.2in]{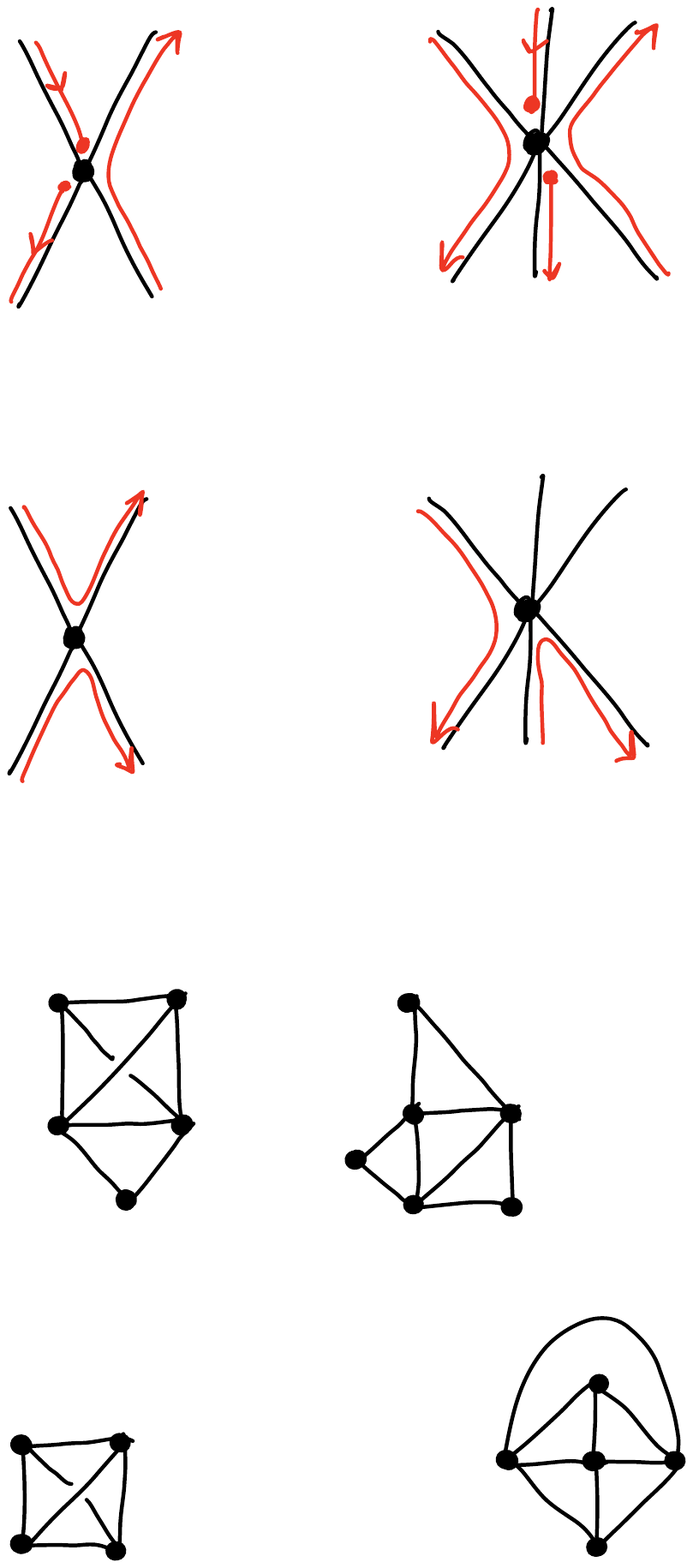}}
\hspace{1mm}
\subfigure[]{\includegraphics[width=1.2in]{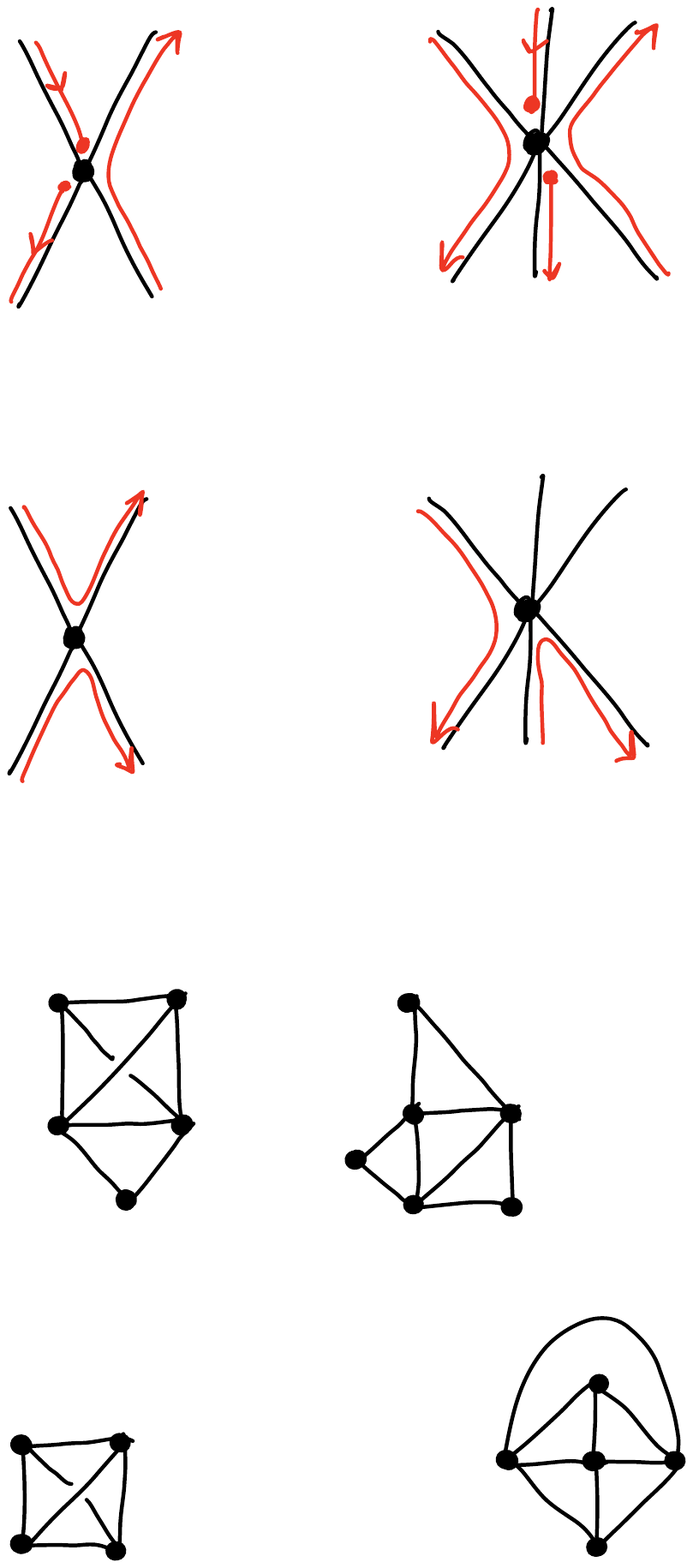}}
\hspace{1mm}
\subfigure[]{\includegraphics[width=1.2in]{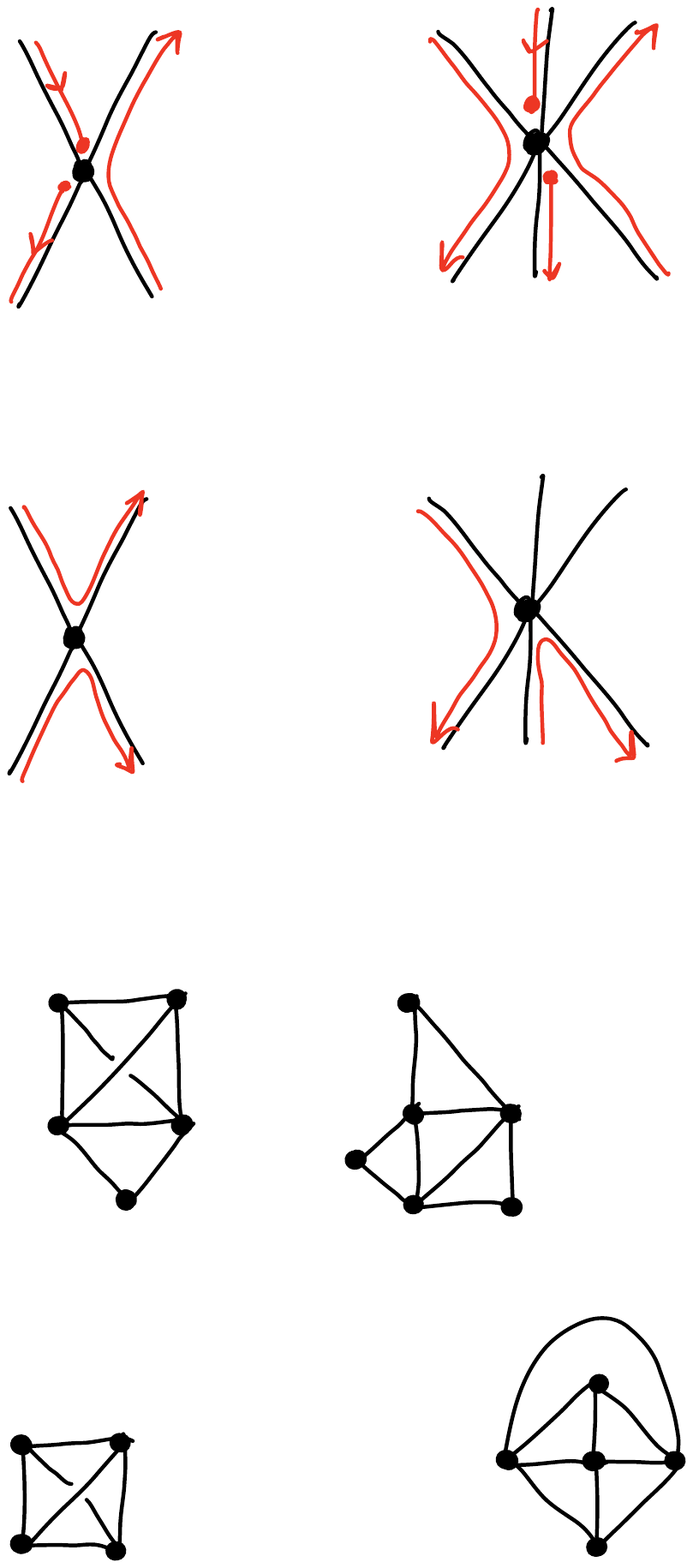}}

\end{center}
\end{figure}
\end{example}

Euler was able to solve the K\"{o}nigsberg bridge problem by transforming it into a question about graph theory.
Though there are efficient techniques for solving many questions in graph theory, it is important to remark that not all graph theory questions have easy answers! 

One such question is as follows: does there exist a walk in a graph that passes through each vertex exactly once? Such a walk is called a \emph{Hamiltonian cycle}.

\begin{definition}
A \defn{Hamiltonian cycle} in a connected graph $G$ is a closed walk that passes through each vertex exactly once.
\end{definition}

Here are two examples of Hamiltonian cycles in graphs, where the Hamiltonian cycle is drawn in red.

\begin{center}
\includegraphics[width=1in]{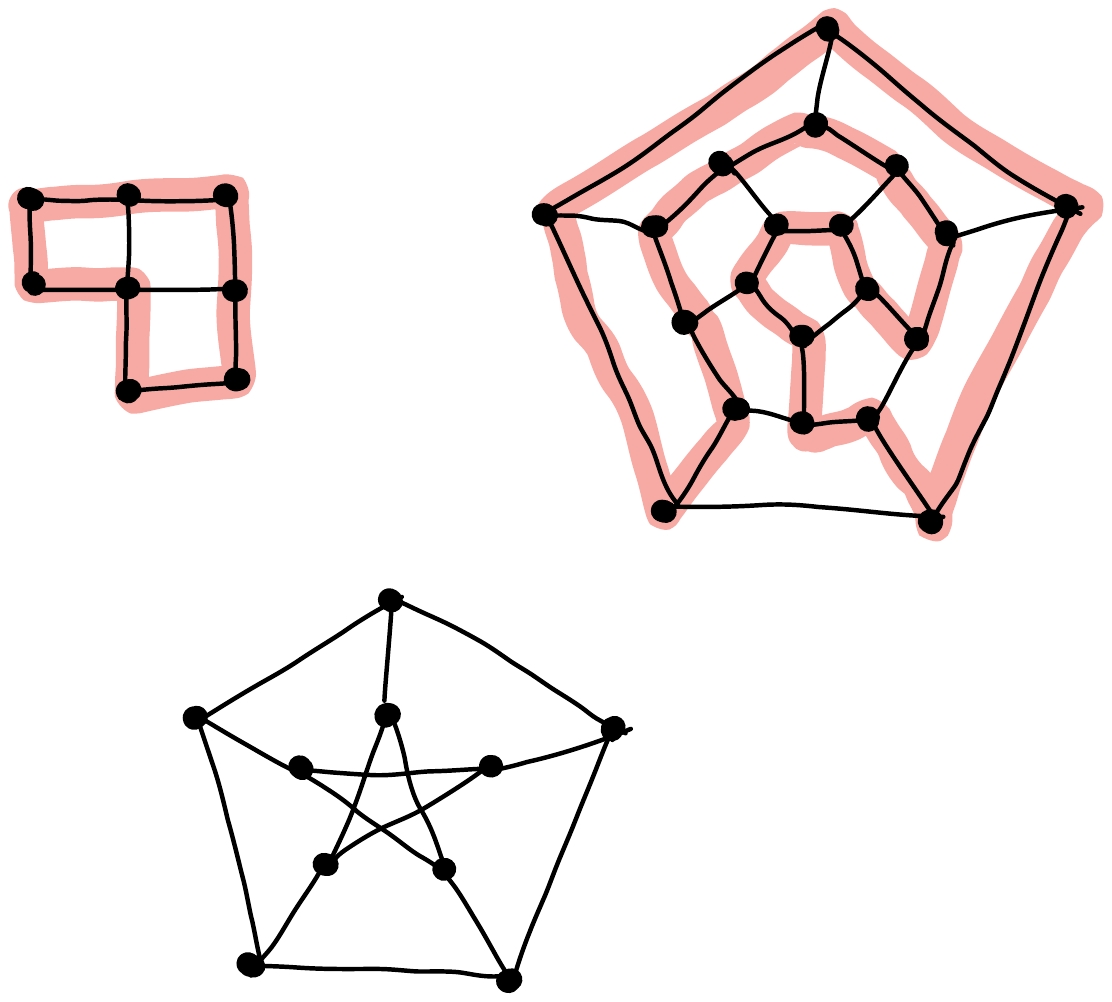}
\hspace{20mm}
\includegraphics[width=2in]{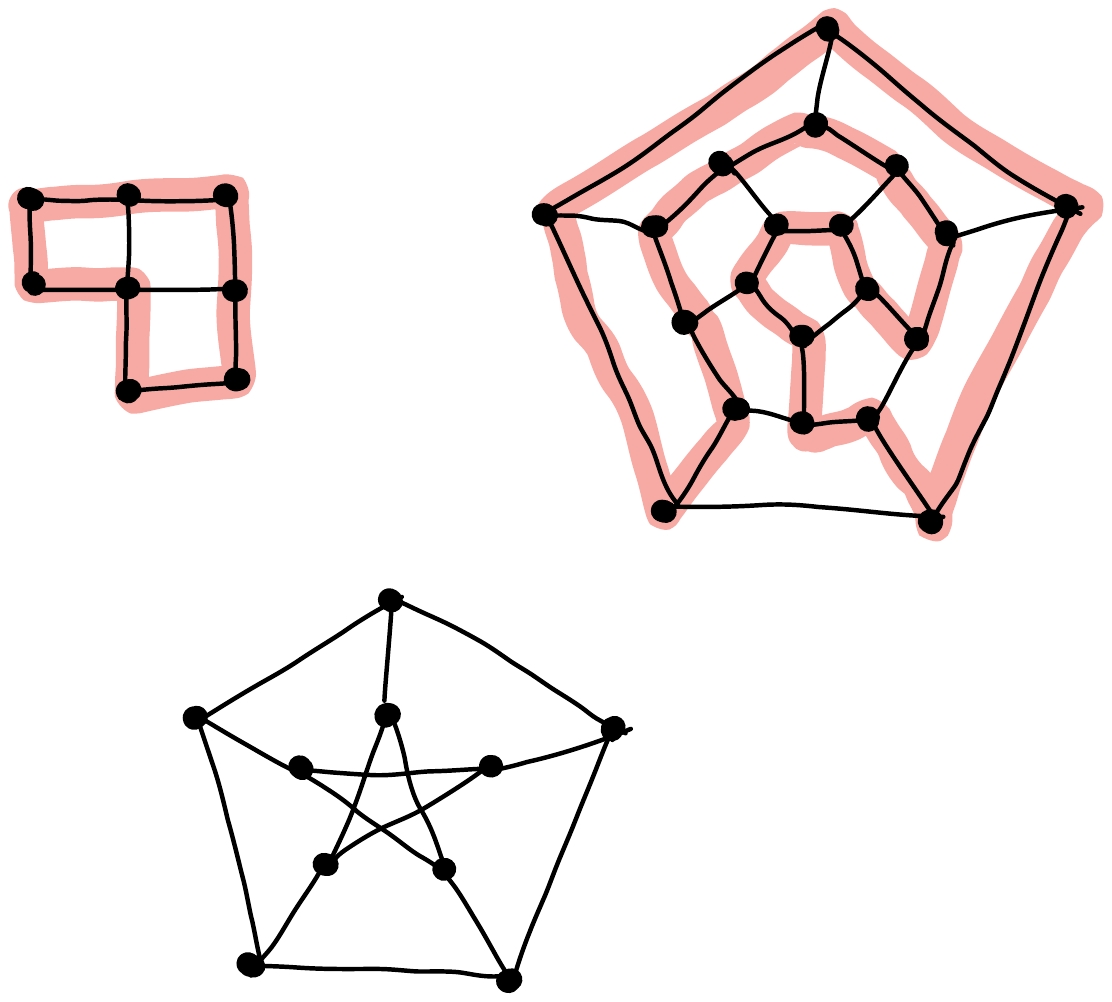}

\end{center}

By contrast, the Petersen graph drawn below has no Hamiltonian cycle.
\begin{center}
\includegraphics[width=1in]{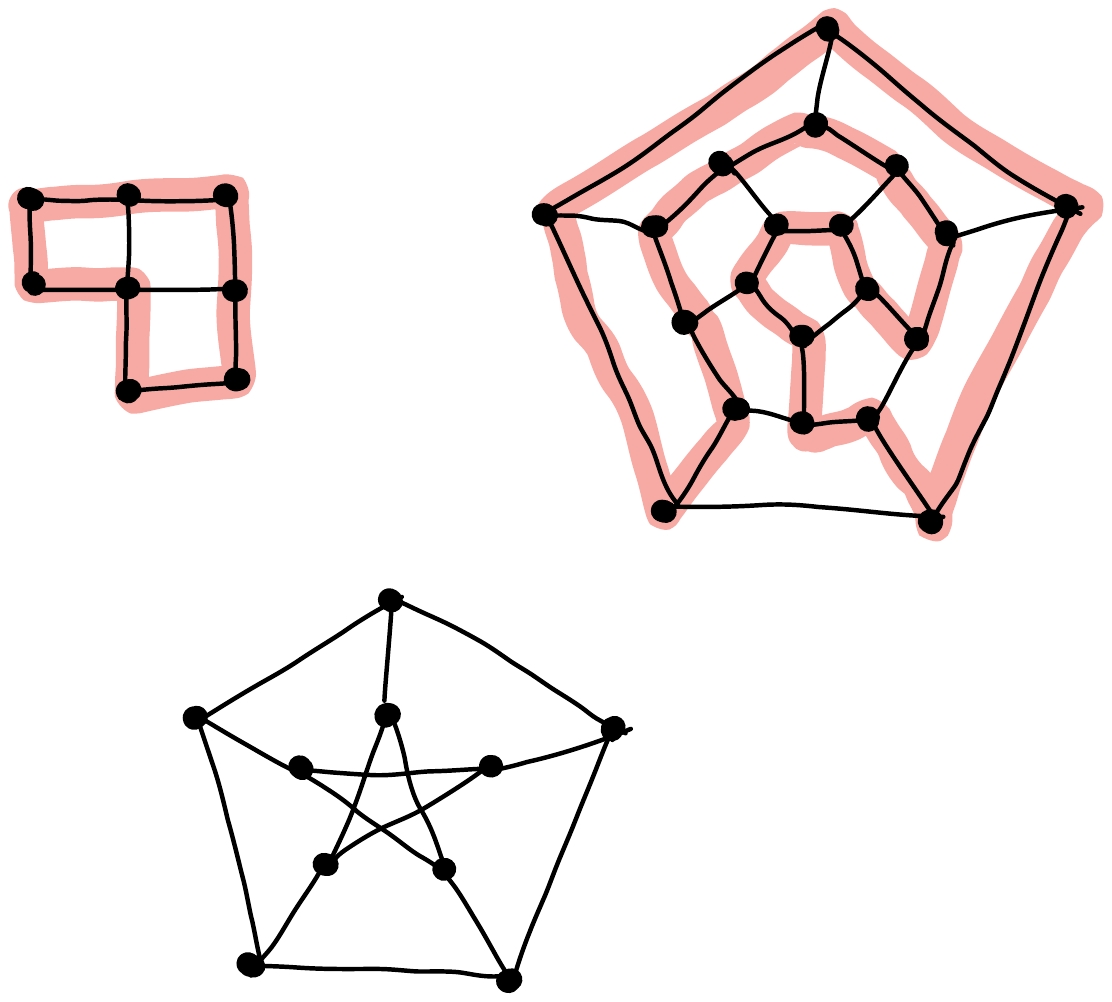}
\end{center}

In general, is very hard to determine whether a graph has a Hamiltonian cycle or not!
Indeed, determining whether a graph has a Hamiltonian cycle or not is \emph{NP-complete}, which is a very difficult class of problems in theoretical computer science.

\subsection*{Exercises}
\begin{enumerate}

\item Does the complete graph $K_5$ have an Eulerian walk?								
\item Does the complete graph $K_5$ have an Eulerian cycle (that is, an Eulerian walk that ends at the same vertex it started at)?								
\item The complete graph $K_6$ have an Eulerian walk?								
\item Does the complete bipartite graph $B_{3,3}$ have an Eulerian walk?								
\item Does the complete bipartite graph $B_{3,3}$ have a Hamiltonian walk?	

\item True or False: The complete graph $K_5$ on 5 vertices contains a closed Eulerian cycle.

\item True or False: The complement of the path graph on 5 vertices has an Eulerian walk.

\item Solve Example~\ref{Emoreeuler}.

\item True or False: There exists a connected graph $G$ with vertices of degrees 2, 2, 2, 3, 3, 4, 4, 4, 6 that contains an Eulerian walk starting at a vertex of degree 2 and ending at a vertex of degree 4.

\item True or False: There exists a graph $G$ containing both closed and non-closed Eulerian walks.

\item True or False: There exists a connected graph $G$ with vertices of degrees 3, 3, 4, 4, 4, 4, 4, 6 that contains an Eulerian walk starting at a vertex of degree 4 and ending at a vertex of degree 4.

\item True or False: There exists a connected graph $G$ with vertices of degrees 2, 2, 2, 2, 3, 4, 4, 6 that contains a non-closed Eulerian walk starting at the vertex of degree 3.

    \item Prove that a graph $G$ with $n$ vertices has a Hamiltonian cycle if and only if $C_n$ is a subgraph of $G$.
    \item For which pairs $(n,m)$ does the complete bipartite graph $B_{n,m}$ contain a Hamiltonian cycle?
\end{enumerate}

\section{Investigation: The number of walks}

This section is about counting the number of walks between two vertices.

\begin{example}
Your hamster's cage looks like this graph:
\begin{center}
\includegraphics[width=0.9in]{08-GraphsWalksCycles/graphFixedDegrees.pdf}
\end{center}
Let's label the vertices of this graph by $1$ (for the vertex of degree 1), $2$ (for the vertex of degree $3$, and then $3$, $4$, $5$ around the cycle (in either direction).

Every minute, the hamster walks along an edge to an adjacent vertex (the hamster never remains put at the same vertex and is always on the move).
Pick a starting vertex $i$ and an ending vertex $j$ and a number of minutes $N$.
Our goal is to count the number of ways that the hamster can travel from vertex $i$ to vertex $j$ in exactly $N$ minutes; let's call that number $h_{i,j}(N)$.

For example, let $i=j=1$.
Then $h_{1,1}(N)$ counts the number of ways the hamster can travel from 
vertex $1$ back to vertex $1$ in $N$ minutes.
When $N=1$, then $h_{1,1}(1)=0$ because after one minute, the hamster is at vertex $2$ and has not returned to vertex $1$.

When $N=2$, then the possible hamster routes are 
\[1 \mapsto 2 \mapsto 1, \ 1 \mapsto 2 \mapsto 3, \ 1 \mapsto 2 \mapsto 5.\]This shows that $h_{1,1}(2)=1$ and $h_{1,3}(2)=1$ and $h_{1,5}(2) =1$.
It also shows that $h_{1,2}(2)=0$ and $h_{1,4}(2)=0$.
When $n=3$, the possible hamster routes starting at vertex $1$ are:
\[1 \mapsto 2 \mapsto 3 \mapsto 4, \ 1 \mapsto 2 \mapsto 3 \mapsto 2, \ 
1 \mapsto 2 \mapsto 5 \mapsto 4, \  1 \mapsto 2 \mapsto 5 \mapsto 2, \ 
1 \mapsto 2 \mapsto 1 \mapsto 2.\]
In particular, this shows that $h_{1,1}(3)=0$, because the hamster is not at vertex $1$ after $3$ minutes.
\end{example}

In the hamster example above, there is a lot of information to keep track of: the starting position $i$, the ending position $j$, and the number of minutes $N$.
To keep track of all that information, for each positive integer $N$, we will make a $5 \times 5$ matrix $M_N$, where the entry in the $i$th row and $j$th column is $h_{i,j}(N)$.
For example, 
\[M_1=\left(\begin{array}{ccccc}
0 & 1 & 0 & 0 & 0\\
1 & 0 & 1 & 0 & 1\\
0 & 1 & 0 & 1 & 0\\
0 & 0 & 1 & 0 & 1\\
0 & 1 & 0 & 1 & 0 \\
\end{array}\right).\]
By definition, $M_1$ is the adjacency matrix, because the $i,j$th entry is $1$ exactly when the $i$th and $j$th vertices are adjacent.

With a lot of work, we compute the matrix for $N=2$:
\[M_2=\left(\begin{array}{ccccc}
1 & 0 & 1 & 0 & 1 \\
0 & 3 & 0 & 2 & 0 \\
1 & 0 & 2 & 0 & 2 \\
0 & 2 & 0 & 2 & 0 \\
1 & 0 & 2 & 0 & 2 \\
\end{array}\right).\]
For example, the first row of $M_2$ contains the values $h_{1,j}(2)$ that we computed before.
Notice that the entries in the diagonal 
of $M_2$ are the degrees of the vertices.
This makes sense, because the number of walks that leave vertex $i$, go to another vertex $j$ and then return to vertex $i$ is exactly the degree of vertex $i$.

We could try to compute $M_3$ by hand, but it would be very time-consuming. We would need to count the number $h_{i,v}(2)$ of routes the hamster could take in two minutes, organized by their ending vertex $v$, and then check if the hamster can travel from $v$ to the endpoint $j$ along an edge. 
For example, when $i=1$ and $j=4$, we would compute the following table.
\[\begin{array}{|c|c|c|c|c|c|}
\hline
v & 1 & 2 & 3 & 4 & 5 \\ \hline
h_{1,v}(2) & 1 & 0 & 1 & 0 & 1 \\ \hline
v \text{ adjacent to } 3 & no & no & yes & no & yes \\ \hline
\end{array}\]
We ignore the $2$-minute routes ending at the vertices $v=1,2,4$ and keep track of the $2$-minute routes ending at the vertices $v=3,5$, allowing us to compute:
\begin{eqnarray*}
h_{1,4}(3)& = & h_{1,1}(2) \cdot 0 + h_{1,2}(2) \cdot 0 +
h_{1,3}(2) \cdot 1 + 
h_{1,4}(2) \cdot 0 + 
h_{1,5}(2) \cdot 1 \\
& = & 1 \cdot 0 + 
0 \cdot 0 + 1\cdot 1 + 0 \cdot 0 + 1 \cdot 1=2.
\end{eqnarray*}
As a sanity check, we can check that there are exactly two $3$-minute routes starting at $i=1$ and ending at $j=4$, namely:
\[1 \mapsto 2 \mapsto 3 \mapsto 4, \ 1 \mapsto 2 \mapsto 5 \mapsto 4.\]

So far, we have computed only one entry of the matrix $M_3$!  

Luckily there is a faster way to compute the matrix $M_3$, by using matrix multiplication from linear algebra.
The first thing we do is to use Sage to check that $M_2$ is the product of the matrix $M_1$ with itself:
\begin{verbatim}
M1 = Matrix([[0,1,0,0,0], [1,0,1,0,1], [0,1,0,1,0], [0,0,1,0,1], [0,1,0,1,0]]);

M2 = Matrix([[1,0,1,0,1], [0,3,0,2,0], [1,0,2,0,2], [0,2,0,2,0], [1,0,2,0,2]]);
\end{verbatim}
Then 
\begin{verbatim}
M2==M1^2
\end{verbatim} 
gives an output of TRUE.

The cube of $M_3$ can be computed on SAGE with the command 
$M3=M1^3$ and it gives that
\[M3=\left(\begin{array}{ccccc}
0 & 3 & 0 & 2 & 0 \\
3 & 0 & 5 & 0 & 5 \\
0 & 5 & 0 & 4 & 0 \\
2 & 0 & 4 & 0 & 4 \\
0 & 5 & 0 & 4 & 0
\end{array} \right).\]
It is time-consuming, but you can check that the $i,j$th entry of this matrix $M3$ equals $h_{i,j}(3)$. For example, the 
entry in the first row and fourth column is $2$ and we computed that $h_{1,4}(3)=2$.

The hamster example generalizes to this very important theorem. 

\begin{theorem} \label{Tnumberwalk}
Let $G$ be a labeled graph with $n$ vertices.
Let $M_N$ be the $n \times n$ matrix whose $i,j$th entry is the number of walks in $G$ of length $N$ starting at vertex $i$ and ending at vertex $j$.
Then $M_1$ is the adjacency matrix of $G$ and $M_N = M_1^N$. 
In other words, the number $h_{i,j}(N)$
of walks in $G$ starting at vertex $i$ and ending at vertex $j$ that have exact length $N$ equals the entry in the $i$th row and $j$th column of the $N$th power of the matrix $M_1$. 
\end{theorem}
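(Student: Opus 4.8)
The plan is to prove this by induction on the walk length $N$, where the engine of the argument is the definition of matrix multiplication together with the addition and multiplication principles from Chapter~\ref{chap:countingprinciples}.

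First I would establish the base case $N=1$. By the definition of the adjacency matrix given in Section~\ref{sec:graphStorage}, the entry in row $i$ and column $j$ of $M_1$ is $1$ when $\{i,j\}$ is an edge and $0$ otherwise. A walk of length $1$ from $i$ to $j$ is precisely a single edge joining $i$ to $j$, so $h_{i,j}(1)$ equals $1$ or $0$ in exactly the same cases. Hence $M_1$ records the values $h_{i,j}(1)$, which is both the base case and the first assertion of the theorem.

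Next I would carry out the inductive step. Assume that $M_N = M_1^N$, meaning that the entry in row $i$ and column $j$ of $M_1^N$ equals $h_{i,j}(N)$ for all vertices $i,j$. The key observation is that every walk of length $N+1$ from $i$ to $j$ is uniquely built by first taking a walk of length $N$ from $i$ to some intermediate vertex $k$ (the vertex visited after $N$ steps) and then traversing a single edge from $k$ to $j$. Conversely, any such pair extends to a walk of length $N+1$. For a fixed intermediate vertex $k$, the number of these walks is $h_{i,k}(N)\cdot (M_1)_{k,j}$ by the multiplication principle, where the factor $(M_1)_{k,j}$ is $1$ exactly when the final step along edge $\{k,j\}$ is available. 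Summing over all possible intermediate vertices $k$ using the addition principle gives
\[
h_{i,j}(N+1) = \sum_{k} h_{i,k}(N)\cdot (M_1)_{k,j}.
\]
By the inductive hypothesis $h_{i,k}(N)$ is the $(i,k)$ entry of $M_1^N$, so the displayed sum is exactly the formula for the $(i,j)$ entry of the product $M_1^N \cdot M_1 = M_1^{N+1}$. Therefore $h_{i,j}(N+1)$ is the $(i,j)$ entry of $M_1^{N+1}$, completing the induction and showing $M_N = M_1^N$ for all $N \geq 1$.

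The main obstacle I expect is the decomposition step: one must argue carefully that splitting a walk at its $N$th vertex gives a genuine bijection between walks of length $N+1$ and pairs consisting of a length-$N$ walk followed by a single edge, so that no walk is counted twice and none is missed. In particular, it is important to note that walks allow repeated vertices and edges, so the intermediate vertex $k$ may coincide with $i$ or $j$, and this causes no difficulty since we are only tracking the step-by-step sequence. Once this bijection is pinned down, matching it to the entrywise definition of matrix multiplication is routine.
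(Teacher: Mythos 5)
Your proposal is correct and follows essentially the same route as the paper's proof: induction on the walk length with base case $N=1$ identifying $M_1$ with the adjacency matrix, and an inductive step that splits each walk at its penultimate vertex into a length-$N$ walk plus one final edge, then matches the resulting sum $\sum_k h_{i,k}(N)\cdot (M_1)_{k,j}$ with the entrywise definition of matrix multiplication. The only cosmetic difference is that you pass from $N$ to $N+1$ while the paper passes from $N-1$ to $N$.
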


\begin{proof}
We provide only the main ideas in the proof.  
When $N=1$, then the $i,j$th 
entry of $M_1$ is the number of $1$-edge routes from vertex $i$ to vertex $j$. 
This number equals $1$ if vertices $i$ and $j$ are adjacent and equals $0$ otherwise.
That is the same as the $i,j$th entry of the adjacency matrix.  

We will now prove the result by induction on $N$, with the case $N=1$ being the previous paragraph.
Let $N \geq 2$. 
The inductive hypothesis is that the $i,v$th entry of $A=M_1^{N-1}$ equals the number of number of walks of length $N-1$ starting at vertex $i$ and ending at vertex $v$; this hypothesis holds for all 
$n^2$ entries of the matrix $A$, so all integer values of $i$ and $v$ such that
$1 \leq i \leq n$ and $1 \leq v \leq n$.
Let $B=M_1$.  We need to show that the 
$i,j$th entry of the matrix multiplication
$M_1^N = A \times B$ is the number of walks in $G$ of length $N$ starting at vertex $i$ and ending at vertex $j$. 

This will rely on the following fact from linear algebra.  Suppose $A$ and $B$ are both $n \times n$ matrices. 
By matrix multiplication, if $C=A \times B$,
then $C$ is also an $n \times n$ matrix. To find the entry $c_{i,j}$ in the $i$th row and $j$th column of $C$, we take the dot product of the $i$th row of $A$ with the $j$th column of $B$.  In other words, 
if the $i$th row of $a$ is 
$(a_{i,1}, \ldots, a_{i,n})$ and the 
$j$th column is (the transpose of) 
$(b_{1,j}, \ldots, b_{n,j})$, then 
\[c_{i,j} = a_{i,1} \cdot b_{1,j} + \cdots + a_{i,n} \cdot b_{n,j}.\]

So, fixing $i$ and $j$, we need to see why this dot product equals the number of walks in $G$ of length $N$ starting at vertex $i$ and ending at vertex $j$.
We separate these walks into groups,
based on the last vertex $v$ in the walk which occurs before returning to the vertex $j$.
If the last vertex of the walk (before returning to $j$) is $v$, then this walk separates into (i) a walk in $G$ of length $N-1$ from $i$ to $v$, and then (ii) a walk of length $1$ from $v$ to $j$.
The number of walks of type (i) is $a_{i,v}$, the $i,v$th coefficient of $A$, by the inductive hypothesis.  The number of walks of type (ii) is 1 if $v$ and $j$ are adjacent in $G$ and is $0$ otherwise; this is $b_{v,j}$, the $v,j$th entry in $B$.
So the number of walks of length $N$ from $i$ to $j$ such that $v$ is the last vertex before $j$ is
$a_{i,v} \cdot b_{v,j}$, which equals $a_{i,v}$ if $v$ and $j$ are adjacent and which equals $0$ if $v$ and $j$ are not adjacent.
Since $1 \leq v \leq n$, the total number of walks is 
\[\sum_{v=1}^n a_{i,v} \cdot b_{v,j}.\] 
By the rules of matrix multiplication, this equals the entry $c_{i,j}$ of $C =M_1^N$. 
\end{proof}

In fact, Theorem~\ref{Tnumberwalk} is true even when the graph has a self-loop or directed edges (meaning edges that can be traveled in only one direction).  In the next example, we show a graph where the Fibonacci numbers occur as the number of walks.

\begin{example} \label{EgraphwalkFibo}
Consider the directed graph drawn below.
\begin{center}
\includegraphics[width=2in]{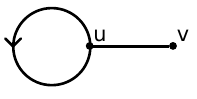}
\end{center}
How many routes of length $N$ are there from $u$ to $v$?
\end{example}

\begin{brainstorm} $ $
We brainstorm by making a table of the number of routes of length $N$ from $u$ to $v$, as $n$ varies from 0 to 5.
\begin{center}
\begin{tabular}{|c|c|c|c|c|c|c|}
\hline
$N$ & 0 & 1 & 2 & 3 & 4 & 5\\
\hline
\# ways & 0 & 1 & 1 & 2 & 3 & 5\\
\hline
picture & & \includegraphics[width=0.3in]{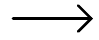} & \includegraphics[width=0.6in]{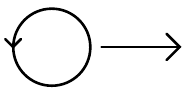} & \includegraphics[width=0.9in]{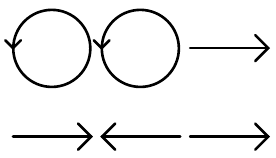} & \includegraphics[width=1.2in]{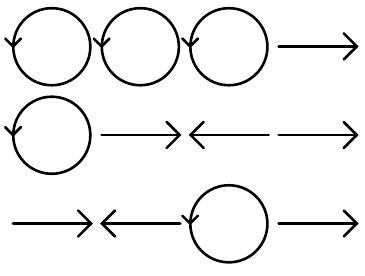} & \includegraphics[width=1.5in]{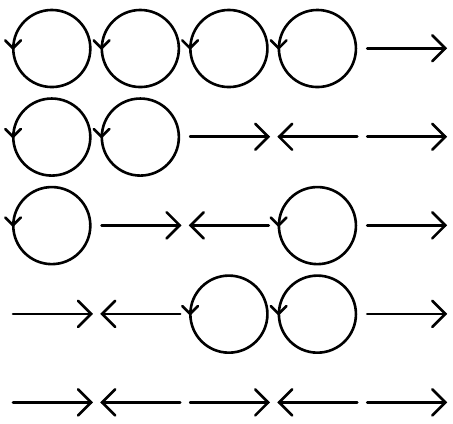} \\
\hline
\end{tabular}
\end{center}
We guess that the number of routes of length $N$ from $u$ to $v$ is the $N$-th Fibonacci number, $F_N$.
\end{brainstorm}

\begin{answer}
We give a proof to show that our above guess is correct.

Let $P_N$ be the number of routes of length $N$ from $u$ to $v$.

Note $P_0=0$ and $P_1=1$ (or, if you prefer, note $P_1=1$ and $P_2=1$).

Note $P_{N+1}=P_N+P_{N-1}$, since every path of length $N+1$ starts with either
\begin{itemize}
\item \includegraphics[width=0.3in]{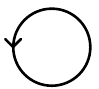}
, after which there are $P_N$ continuing paths, or
\item \includegraphics[width=0.6in]{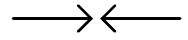}
, after which there are $P_{N-1}$ continuing paths.
\end{itemize}

Since $P_N$ satisfies the same recurrence relation as the Fibonacci numbers $F_n$, and since $P_0=F_0$ and $P_1=F_1$ (or, if you prefer, $P_1=F_1$ and $P_2=F_2$), this shows $P_N=F_N$ for all $N$.
\end{answer}

Here is another way to solve this problem using Theorem~\ref{Tnumberwalk}.
Let $u$ be the first vertex and $v$ be the second vertex.
The adjacency matrix for this graph is 
\[M_1=\left(\begin{array}{cc}
1 & 1 \\
1 & 0 
\end{array}
\right).\]
(The self-loop from $u$ to $u$ leads to the non-zero entry in the diagonal of $M_1$).
By Theorem~\ref{Tnumberwalk}, 
the number of routes from $u$ to $v$ of length $N$ is the entry in the first row and second column of $M_1^N$.
Let $F_N$ be the $N$th Fibonacci number.
For $N \geq 1$, we claim that 
\[M_1^N = \left(\begin{array}{cc}
F_{N+1} & F_N \\
F_N & F_{N-1}
\end{array} \right).\]
We prove this by induction.

\begin{proof}
When $N=1$, then the claim is true because $F_0=0$, $F_1=1$, and $F_2=1$.
The inductive hypothesis is that
\[M_1^{N-1} = \left(\begin{array}{cc}
F_{N} & F_{N-1} \\
F_{N-1} & F_{N-2}
\end{array} \right).\]
Then we compute $M_1^N$ using matrix multiplication:
\begin{eqnarray*}
M_1^N & = & M_1^{N-1} \times M_1 \\
& = & \left(\begin{array}{cc}
F_{N} & F_{N-1} \\
F_{N-1} & F_{N-2}
\end{array} \right) \times 
\left(\begin{array}{cc}
1 & 1 \\
1 & 0 
\end{array}
\right) \\
& = & \left(\begin{array}{cc}
F_N+F_{N-1} & F_N \\
F_{N-1} + F_{N-2} & F_{N-1}
\end{array}
\right)\\
& = & 
\left(\begin{array}{cc}
F_{N+1} & F_N \\
F_{N} & F_{N-1}
\end{array}
\right).
\end{eqnarray*}
This ends the proof of the inductive step.
\end{proof}

\subsection*{Exercises}

\begin{enumerate}
\item Let $G$ be the complete graph $K_4$, with labeled vertices.  
    
\begin{enumerate}
    \item Find the adjacency matrix $M_1$ of $G$.
    \item Using SAGE, compute $M_n=M_1^n$ for $n$ from $2$ to $6$ but do not write down the answer. 
    \item Find the number of routes of length 6 that start and end at vertex $1$. 
    \item Find the number of routes of length $6$ that start at vertex $1$ and end at vertex $2$.
    \item Explain why it makes sense that all the entries on the diagonal of $M_n$ are the same number $d$ and all the entries that are not on the diagonal of $M_n$ are the same number $e$.
    \item If $n$ is even, what do you conjecture 
    that $d-e$ always equals?
    \item If $n$ is odd, what do you conjecture that $d-e$ always equals?
\end{enumerate}
    
\item Let $G$ be the cycle graph $C_4$, with labeled vertices.  
\begin{enumerate}
    \item Find the adjacency matrix $M_1$ of $G$.
    \item Using SAGE, compute $M_n=M_1^n$ for $n$ from $2$ to $6$ but do not write down the answer.
    \item Find the number of routes of length 6 that start and end at vertex $1$. 
    \item Find the number of routes of length $6$ that start at vertex $1$ and end at vertex $2$.
    \item Half the entries of $M_n$ are $0$.  Explain why this makes sense - your answer should split into two cases depending on whether $n$ is even or odd. 
    \item Make a conjecture about which non-zero number appears in the other half of entries of $M_n$.
\end{enumerate}
    
\item Let $G$ be the path graph $P_4$, with labeled vertices.  
\begin{enumerate}
    \item Find the adjacency matrix $M_1$ of $G$.
    \item Using SAGE, compute $M_n=M_1^n$ for $n$ from $2$ to $6$ but do not write down the answer.
    \item Half the entries of $M_n$ are $0$.  Explain why this makes sense - your answer should split into two cases depending on whether $n$ is even or odd. 
    \item Explain what each of the non-zero entries in $M_6$ means. 
    \item Explain why it makes sense that the non-zero entries in the center of $M_6$ are larger than the others. 
\end{enumerate}

\item 
Show there are 149 routes from vertex 0 to vertex 9 in the following directed graph.
Paths in this directed graph are only allowed to cross an edge in the direction indicated by the arrow on that edge.
\begin{center}
\includegraphics[width=5in]{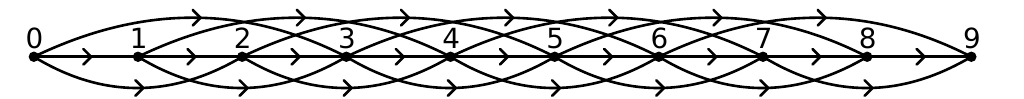}
\end{center}

\begin{center}
\includegraphics[width=5in]{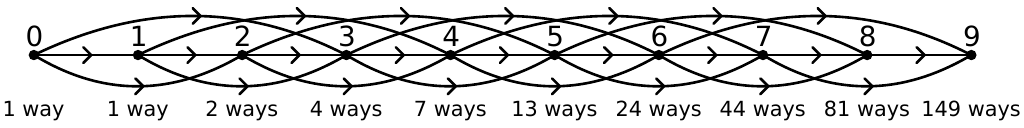}
\end{center}
\end{enumerate}

\chapter{Trees}\label{chap:trees}

\begin{videobox}
\begin{minipage}{0.1\textwidth}
\href{https://youtu.be/cbfz9xeByok}{\includegraphics[width=1cm]{video-clipart-2.png}}
\end{minipage}
\begin{minipage}{0.8\textwidth}
Click on the icon at left or the URL below for this section's short lecture video. \\\vspace{-0.2cm} \\ \href{https://youtu.be/cbfz9xeByok}{https://youtu.be/cbfz9xeByok}
\end{minipage}
\end{videobox}

Trees are in some sense the ``simplest" of all possible graphs, but do not underestimate their usefulness!
Trees can be used to represent hierarchical structure, such as in a family tree of ancestors and descendants.
In computer science, trees are a powerful data structure, and they can be used to implement tasks such as searching or sorting with faster times than na\"{i}ve implementations allow.

For example, suppose you are given a list of $n$ numbers.
You sort these numbers from smallest to largest, and then you would like to store them in a way which
\begin{itemize}
\item does not require much space,
\item allows you to search for a specific item in the list,
\item makes it easy to add new items to the sorted list, and
\item allows you to easily remove items from the list.
\end{itemize}
\begin{center}
\scalebox{.55}{
\begin{tikzpicture}[
            > = stealth, 
            shorten > = 1pt, 
            auto,
            node distance = 3cm, 
            semithick 
        ]

        \tikzstyle{every state}=[
            draw = black,
            thick,
            fill = white,
            minimum size = 4mm
        ]
        \node[state, fill=CSUGold] (0) {$~0~$};
        \node[state, fill=CSUGold] (1) [above right of=0] {$~1~$};
        \node[state, fill=CSUGold] (5) [above right of=1] {$~5~$};
        \node[state, fill=CSUGold] (6) [below right of=5] {$6$};
        \node[state, fill=CSUGold] (7) [below right of=6] {$7$};
        \node[state, fill=CSUGold] (3) [below right of=1] {$~3~$};
        \node[state, fill=CSUGold] (2) [below left of=3] {$~2~$};
        \node[state, fill=CSUGold] (4) [below right of=3] {$~4~$};
        \node[state, fill=CSUGold] (8) [below right of=7] {$8$};

        \path[->, ultra thick, CSUGreen] (5) edge node {} (1);
        \path[->, ultra thick, CSUGreen] (1) edge node {} (0);
        \path[->, ultra thick, CSUGreen] (5) edge node {} (6); 
        \path[->, ultra thick, CSUGreen] (1) edge node {} (3);
        \path[->, ultra thick, CSUGreen] (3) edge node {} (2);
        \path[->, ultra thick, CSUGreen] (6) edge node {} (7);
        \path[->, ultra thick, CSUGreen] (3) edge node {} (4);
        \path[->, ultra thick, CSUGreen] (7) edge node {} (8);

    \end{tikzpicture}}
\end{center}
A binary search tree, such as the one drawn above, is a data structure that is very efficient for these tasks.
Note that each arrow pointing down and to the left points to a smaller number, and each arrow pointing down and to the right points to a larger number.
The amount of space a binary search tree requires is $O(n)$, where $n$ is the number of vertices.
Searching, adding, and deleting numbers are all operations of average running time $O(\log n)$.
Here $O$ is the ``big-O" notation used in computational complexity; these average operation times are considered to be quite fast.

For example, to add the number 5.5 to the tree, we would start at the top vertex, the \emph{root}, which in this case is 5.
Since $5.5>5$, we proceed down and to the right.
We get to the vertex $6$, and since $5.5<6$, we insert a new vertex labeled $5.5$ to the bottom left of vertex 6, along with a directed arrow from 6 to 5.5.
If we were later asked to verify whether 5.5 was present in our search tree or not, we would take the same sequence of steps.

How was this search tree created?
In particular, why have we chosen vertex 5 as the root of this tree?
This tree was created because we were handed the numbers 5, 1, 0, 6, 3, 2, 4, 7, 8, in that order, and asked to store them as a binary search tree.
Since we were handed 5 first, we made it the root (the top vertex).
As 1 came next, and is less than 5, we added 1 to the bottom left of 5, connected by an arrow.
Similarly, since 0 came next, and is less than 5 and less than 1, we added 0 to the bottom left of 1.
Next comes 6, which is more than 5, and so we added 6 to the bottom right of 5.
Next comes 3, which is less than 5 and more than 1, and hence we add it to the bottom right of 1.
We proeceed in this manner to create the entire tree.

In this chapter, we give a mathematical introduction to trees.

\section{The language of trees}

\begin{videobox}
\begin{minipage}{0.1\textwidth}
\href{https://youtu.be/0J4Hc5Iy0T0}{\includegraphics[width=1cm]{video-clipart-2.png}}
\end{minipage}
\begin{minipage}{0.8\textwidth}
Click on the icon at left or the URL below for this section's short lecture video. \\\vspace{-0.2cm} \\ \href{https://youtu.be/0J4Hc5Iy0T0}{https://youtu.be/0J4Hc5Iy0T0}
\end{minipage}
\end{videobox}

Trees are the simplest possible graphs. 

\begin{definition}
A graph $G=(V,E)$ is a \defn{tree} if it is connected and contains no cycle as a subgraph.
\end{definition}

Because a tree is connected, there is a walk between every pair of vertices.  Because a tree has no cycles, for every pair of vertices there is a unique walk between them that does not backtrack.
This is one reason why trees are the most simple graphs.

Below we show four examples of trees.
\begin{center}
\includegraphics[width=0.8\textwidth]{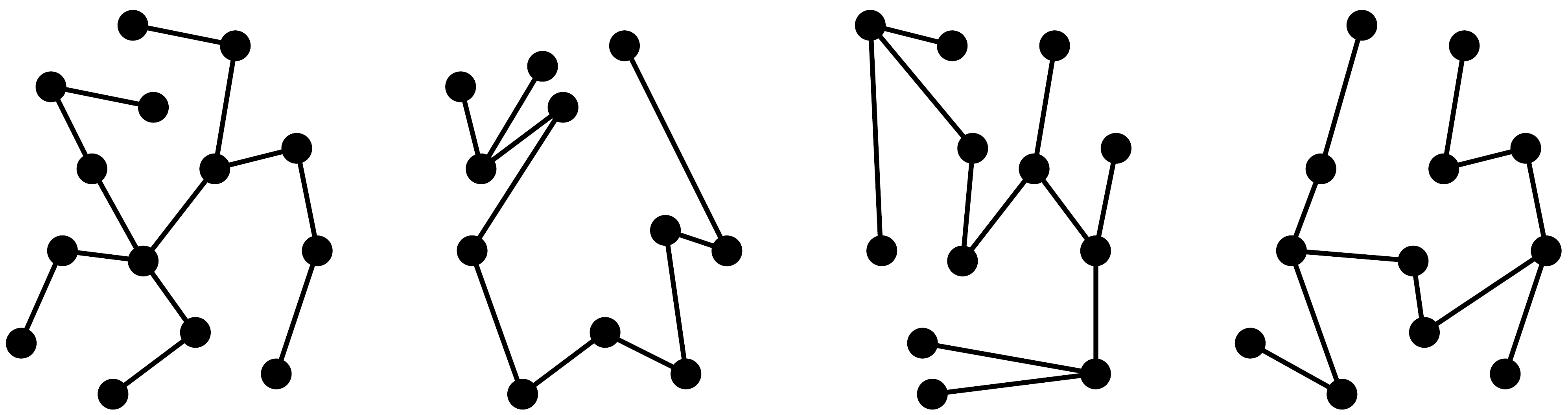}
\end{center}

We also draw four non-examples of trees: the first three graphs are not trees because they contain cycles, and the fourth graph is not a tree because it is not connected.
\begin{center}
\includegraphics[width=0.8\textwidth]{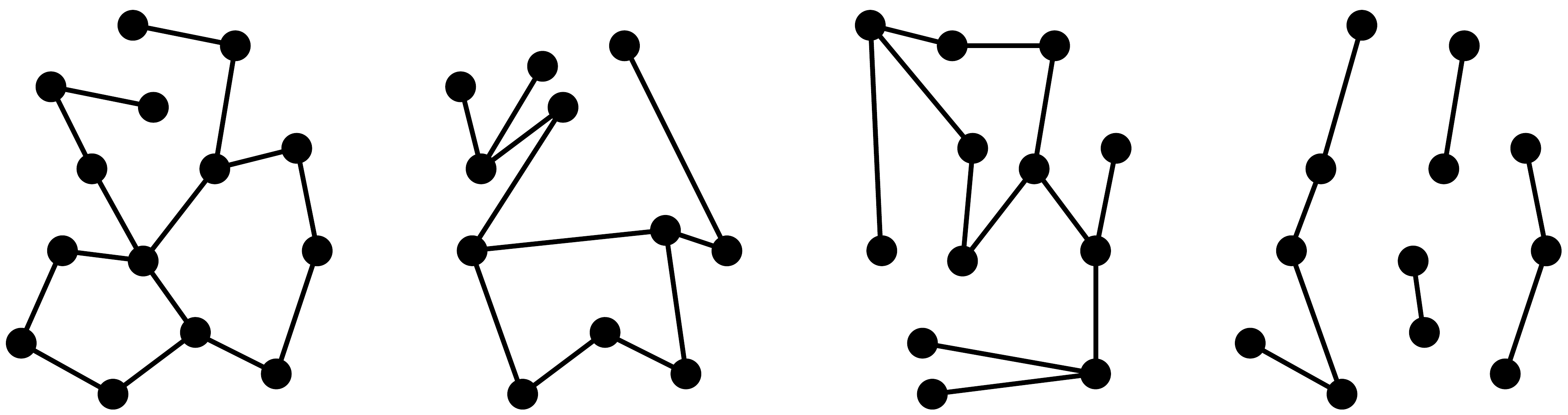}
\end{center}

\begin{definition}
A \defn{leaf} of a tree is a vertex of degree 1.
\end{definition}

Let $G=(V,E)$ be a connected graph.
While keeping the vertex set $V$ fixed, if we add an edge to $G$, then it will still be connected.
In contrast, while keeping the vertex set $V$ fixed, if we remove an edge from $G$ then it might become disconnected.
Trees can be described as the smallest possible connected graphs on a given vertex set, as we make precise in the following theorem.

\begin{theorem}
\label{thm:tree-minimal-connected}
A graph $G$ is a tree if and only if it is connected, but deleting any edge makes it disconnected.
\end{theorem}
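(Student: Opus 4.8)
The statement is an ``if and only if,'' so the plan is to prove both directions separately. The key structural fact I would rely on is that a tree is defined as a connected graph with no cycles, so each direction amounts to translating the ``deleting any edge disconnects'' condition into a statement about cycles.

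First I would prove the forward direction: if $G$ is a tree, then $G$ is connected and deleting any edge disconnects it. Connectedness is immediate from the definition of a tree. For the second part, suppose for contradiction that some edge $e=\{u,v\}$ can be deleted while leaving $G$ still connected. Then in $G-e$ there is still a walk from $u$ to $v$, and this walk uses only edges other than $e$. Appending the edge $e$ to close this walk up produces a cycle in $G$ through $u$ and $v$. This contradicts the assumption that $G$, being a tree, has no cycles. Hence deleting any edge must disconnect $G$. The one technical point to be careful about here is passing from a ``walk'' from $u$ to $v$ to an actual cycle (a walk may repeat vertices), but one can extract a non-backtracking path from any walk, and I would invoke the earlier remark that in a connected graph there is a walk between any two vertices and trim it to a simple path.

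Next I would prove the reverse direction: if $G$ is connected and deleting any edge disconnects it, then $G$ is a tree. Since $G$ is already assumed connected, by the definition of a tree it suffices to show $G$ contains no cycle. I would argue by contrapositive: suppose $G$ contains a cycle $C$, and let $e$ be any edge of $C$. By the result established earlier in the excerpt (the exercise stating that removing an edge from a cycle in a connected graph leaves it connected), deleting $e$ from $G$ leaves $G$ connected. But this contradicts the hypothesis that deleting \emph{any} edge disconnects $G$. Therefore $G$ has no cycles, and being connected and acyclic, $G$ is a tree.

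The main obstacle, and the step I would be most careful with, is the reverse direction's reliance on the fact that removing one edge of a cycle preserves connectedness. This is exactly the content of the earlier exercise (``Suppose that graph $G$ is connected and contains a cycle. Prove that if an edge from this cycle is removed, then the remaining graph is connected''), so I would cite it directly rather than reprove it. The forward direction's subtlety lies only in the bookkeeping of turning a closed walk into a genuine cycle, which is routine. Overall the proof is short precisely because both halves reduce cleanly to the cycle-versus-connectivity dictionary already built up in the chapter.
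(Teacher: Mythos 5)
Your proof is correct and follows essentially the same route as the paper's: the forward direction derives a cycle from a surviving $u$--$v$ connection after deleting $\{u,v\}$, and the reverse direction removes an edge from a hypothetical cycle to contradict the disconnection hypothesis, citing the earlier cycle-removal fact. Your extra care in trimming a walk to a simple path is a minor refinement the paper glosses over, not a different approach.
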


\begin{proof}
For the forward direction, let $G$ be a tree.
Then $G$ is connected by definition.
If removing some edge $uv$ produced a connected graph $G'$, then the path from $u$ to $v$ in $G'$ along with the edge $uv$ would create a cycle in $G$, contradicting the fact that $G$ is a tree.
Hence removing any edge from $G$ makes it disconnected.

For the reverse direction, to show $G$ is a tree we must show it is cycle-free (we already know it is connected by assumption).
If $G$ had a cycle, then removing any edge from this cycle would produce a connected graph, which is a contradiction. Hence $G$ is cycle-free, and thus a tree.
\end{proof}

Similarly, if a graph $G$ has no cycles, then the graph remains cycle-free if we remove edges.
By contrast, adding edges can certainly make a formerly cycle-free graph now have cycles.

\begin{theorem}
\label{thm:tree-max-cycle-free}
A graph $G$ is a tree if and only if it contains no cycles, but adding any new edge creates a cycle.
\end{theorem}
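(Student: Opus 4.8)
The plan is to prove both directions of the biconditional, closely mirroring the structure of the proof of Theorem~\ref{thm:tree-minimal-connected}. For the forward direction, assume $G$ is a tree. By definition $G$ contains no cycles, so the first half of the conclusion is immediate. For the second half, suppose we add a new edge $uv$, meaning $u$ and $v$ were not already adjacent. Since a tree is connected, there is a walk from $u$ to $v$ in $G$; by deleting any repeated vertices I would reduce this to a non-backtracking path $u=w_0, w_1, \ldots, w_\ell = v$. This path, together with the newly added edge $uv$, forms a cycle in the augmented graph. Hence adding any new edge to $G$ creates a cycle.

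For the reverse direction, assume $G$ has no cycles and that adding any new edge creates a cycle. Since $G$ is already cycle-free, to conclude that $G$ is a tree it remains only to show that $G$ is connected. I would argue by contradiction: suppose $G$ is disconnected, and choose vertices $u$ and $v$ lying in two different connected components. Then $u$ and $v$ are not adjacent, so $uv$ is a legitimate new edge. I claim that adding $uv$ creates no cycle: any cycle in the new graph would have to use the edge $uv$, since $G$ had no cycles to begin with, and such a cycle would require a path from $u$ to $v$ lying entirely within $G$ --- which is impossible, as $u$ and $v$ lie in different components. This contradicts the hypothesis that adding any new edge creates a cycle, so $G$ must be connected, and being cycle-free, it is a tree. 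This claim is precisely the content of the earlier exercise characterizing exactly when adding an edge $uv$ creates a cycle (namely, iff $u$ and $v$ already lie in the same connected component), which one could cite directly instead of reproving.

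The step I expect to be the main obstacle is making the two path-manipulation arguments fully rigorous. In the forward direction, the key subtlety is passing from an arbitrary \emph{walk} to a genuine simple path, so that the closed walk obtained by appending $uv$ really is a \emph{cycle} with no repeated vertices rather than merely a closed walk. In the reverse direction, the delicate point is the assertion that \emph{every} cycle created by the augmentation must traverse the new edge $uv$; this needs the observation that the only structural change is the single added edge, so any cycle avoiding $uv$ would already have existed in $G$, contradicting cycle-freeness. I would also briefly note the vacuous boundary cases (such as $K_1$ and $K_2$), where there are no new edges available to add, so the second clause holds trivially and these graphs are correctly classified as trees.
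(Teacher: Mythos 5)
Your proposal is correct, and in fact the paper gives no proof of this theorem at all: it says ``We omit the proof; it is included as one of the exercises in this section,'' so your argument fills a genuine gap rather than duplicating anything. Both directions are sound --- the forward direction correctly handles the walk-to-simple-path reduction before closing up the cycle with the new edge $uv$, and the reverse direction's contradiction argument (any cycle in the augmented graph must use $uv$, which would force a $u$--$v$ path inside $G$ across two distinct components) is exactly the content of the exercise from Section~\ref{sec:graphs-connected-comp} that you cite; your argument is also the natural structural dual of the paper's own proof of Theorem~\ref{thm:tree-minimal-connected}, including the proof-by-contradiction shape of its second half.
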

We omit the proof; it is included as one of the exercises in this section.

Therefore trees achieve a sweet-spot.
Among all connected graphs, trees are the minimal ones, in the sense that removing any edge from a tree makes it disconnected.
Among all graphs without cycles, trees are the maximal ones, in the sense that adding any edge to a tree creates a cycle.

If $G$ is a connected graph with $n$ vertices, then a \defn{spanning tree} of $G$ is a subgraph of $G$ that is a tree with $n$ vertices.  There can be many spanning trees in a graph.

\subsection*{Exercises}

\begin{enumerate}
    \item Show that every tree is bipartite.
    \item Find an example of a bipartite graph that is not a tree.
    \item How many edges do you need to remove from the complete bipartite graph B4,5 in order to make it into a tree?							
\item How many edges do you need to remove from P7 to divide it into 3 connected components?							
\item Let $T$ be a tree with 10 vertices. How many non-zero entries does the adjacency matrix for T have?							
\item Let $T$ be a tree with 10 vertices. To build an edge list for T, how many numbers between 1 and 10 need to be stored?					

    \item Show that both 
    $B_{1,6}$ and $P_7$ are spanning trees inside $K_7$.
    \item Show that both 
    $B_{1,n-1}$ and $P_n$ are spanning trees inside $K_n$.
    \item Show using strong induction that every connected graph contains a spanning tree.
    
    \item Show that a graph $G$ is a tree if and only if it contains no cycles, but adding any new edge creates a cycle.
    This Theorem~\ref{thm:tree-max-cycle-free}.
\end{enumerate}

\section{The number of edges in a tree}

There are many different trees that one can draw with 14 vertices;
some of them are drawn below.
\begin{center}
\includegraphics[width=0.8\textwidth]{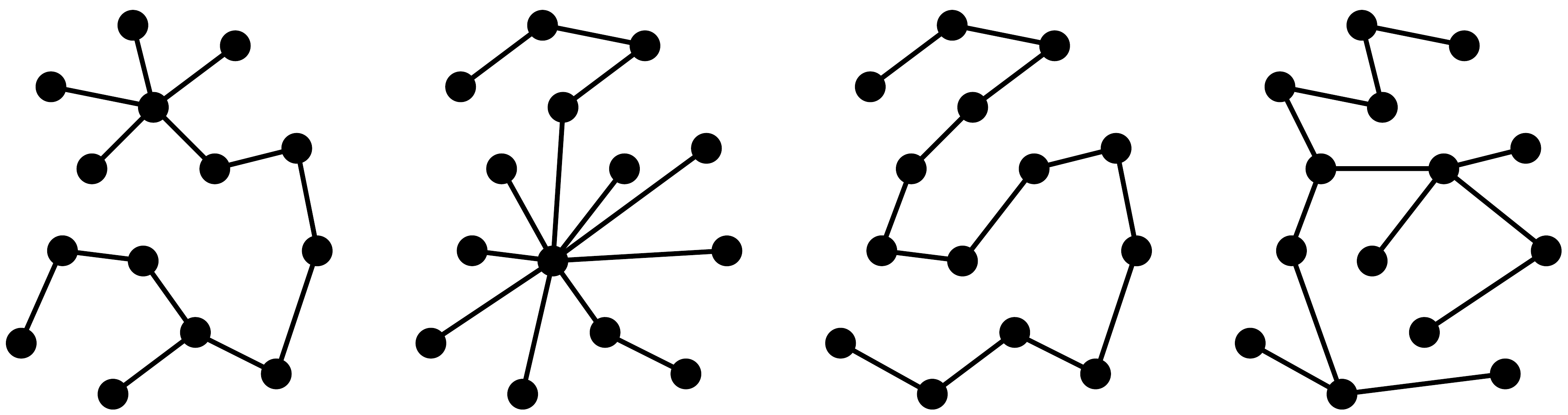}
\end{center}
Despite this wide variety, surprisingly, every tree with 14 vertices has exactly 13 edges.
This is generalized by the following theorem.

\begin{theorem}\label{thm:treeNumEdges}
Every tree with $n$ vertices has $n-1$ edges.
\end{theorem}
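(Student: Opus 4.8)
The plan is to prove Theorem~\ref{thm:treeNumEdges} by induction on the number of vertices $n$. First I would establish the base case $n=1$: a tree with a single vertex has no edges, and indeed $1-1=0$, so the claim holds.

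For the inductive step, I would assume that every tree with $k$ vertices has $k-1$ edges, and consider an arbitrary tree $G$ with $k+1$ vertices. The key idea is to find a leaf (a vertex of degree $1$) and remove it. So the first main step is to argue that every tree with at least two vertices has a leaf. One way to see this is to take a longest non-backtracking walk in the tree; its endpoints must have degree $1$, since otherwise the walk could be extended, or else a cycle would be forced, contradicting that $G$ is a tree. Once I have a leaf $v$, I would remove $v$ together with its single incident edge to form a new graph $G'$ with $k$ vertices.

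The next step is to verify that $G'$ is still a tree. It remains connected because the only vertex that could have been ``cut off'' is $v$ itself, and any walk between two remaining vertices in $G$ never needed to pass through the degree-$1$ vertex $v$ as an intermediate point. It remains cycle-free because removing a vertex and edge cannot create a cycle. Therefore $G'$ is a connected, cycle-free graph on $k$ vertices, so by the inductive hypothesis it has $k-1$ edges. Adding back $v$ and its one edge shows that $G$ has $(k-1)+1 = k$ edges, which is exactly $(k+1)-1$, completing the induction.

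The main obstacle I anticipate is the careful justification that every tree with at least two vertices has a leaf, and that deleting it preserves both connectedness and the cycle-free property. The leaf-existence argument via a longest path needs to be stated precisely enough to rule out cycles, using the definition that a tree has no cycle as a subgraph. These structural facts are intuitively clear but deserve a clean argument; everything else is a routine bookkeeping of the edge count.
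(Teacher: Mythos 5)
Your proof is correct, but it takes a different route from the one in the text. You remove a \emph{leaf} and use ordinary induction: the heart of your argument is the lemma that every tree on at least two vertices has a vertex of degree $1$ (proved via a longest non-backtracking path, whose endpoint cannot be extended and cannot close a cycle), after which deleting that leaf and its edge clearly preserves connectivity and cycle-freeness. The text instead removes an \emph{arbitrary edge} $e=v_1v_2$: by Theorem~\ref{thm:tree-minimal-connected} this disconnects the tree into exactly two connected components, each of which is itself a tree on fewer vertices, and then \emph{strong} induction applies to both pieces, giving $(m_1-1)+(m_2-1)+1=n-1$. The trade-off is this: your approach needs the leaf-existence lemma but only ordinary induction and a single application of the inductive hypothesis; the text's approach avoids any discussion of leaves entirely, leaning instead on the already-proved minimal-connectivity characterization of trees, at the cost of invoking strong induction and checking that edge removal yields exactly two components. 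Both are standard and complete; your leaf lemma is worth stating carefully (as you note), since it is independently useful elsewhere in the theory of trees.
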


\begin{proof}
We give a proof using strong induction.  As the base case, note that every tree with $n=1$ vertex has no edges.

For the inductive step, let $n\ge 2$.
Suppose that every tree with $m<n$ vertices has $m-1$ edges; we must prove that every tree with $n$ vertices has $n-1$ edges.

Let $G$ be an arbitrary tree with $n$ vertices.
Since $G$ is connected with $n\ge 2$, there must be at least one edge $e=v_1v_2$ in $G$.
By Theorem~\ref{thm:tree-minimal-connected}, removing the edge $e$ leaves a graph $G'$ that is disconnected, as shown below.  Also $G'$ has exactly two connected components because adding one edge can only create a connection between two parts.

\begin{center}
    \includegraphics{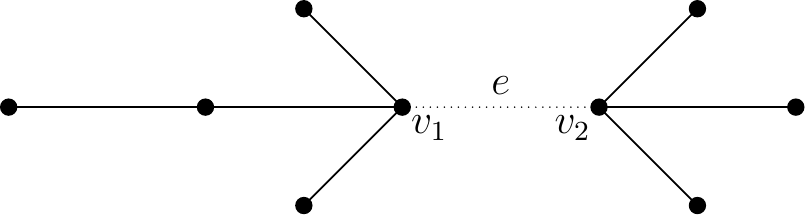}
\end{center}
Let the connected component containing vertex $v_1$ have $m_1$ vertices, and let the connected component containing vertex $v_2$ have $m_2$ vertices.  (In the diagram above, $m_1=5$ and $m_2=4$.)  Note $m_1+m_2=n$.  Since $G$ has no cycles, the connected component containing vertex $v_1$ has no cycles, and is therefore a tree.  Furthermore, since this tree has $m_1<n$ edges, by our inductive assumption the connected component with vertex $v_1$ has $m_1-1$ edges.  Similarly, the connected component containing vertex $v_2$ is a tree with $m_2-1$ edges.
Therefore, when we add edge $e=v_1v_2$ back in, we see that the number of edges in tree $G$ is
\[ (m_1-1)+(m_2-1)+1 = m_1+m_2-1 = n-1. \]
Hence we are done by strong induction.
\end{proof}

\begin{example}
Each of the below trees have their number of edges equal to their number of vertices minus one, as we know must be the case from Theorem~\ref{thm:treeNumEdges}.
\begin{center}
\includegraphics[width=0.6\textwidth]{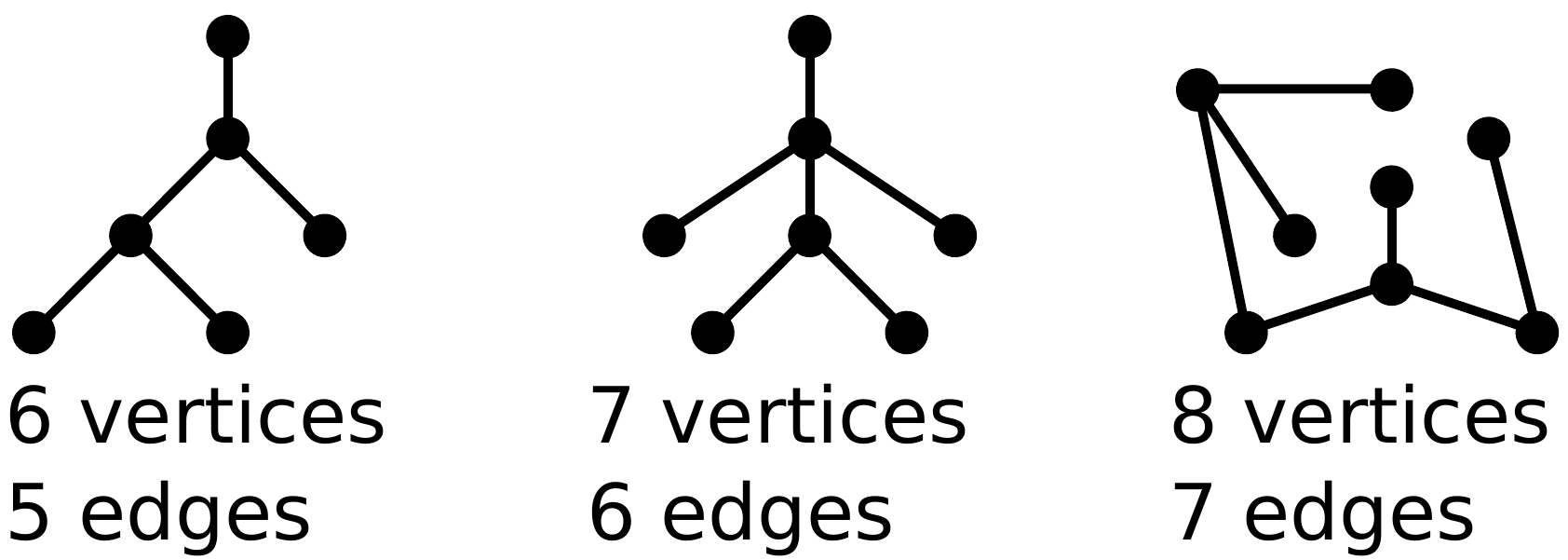}
\end{center}
\end{example}

A \emph{forest} is a graph whose connected components are all trees.
The image below is of a single forest, which consists of six trees.
Note that one tree consists of only a single vertex, in the bottom right.
\begin{center}
\includegraphics[width=0.6\textwidth]{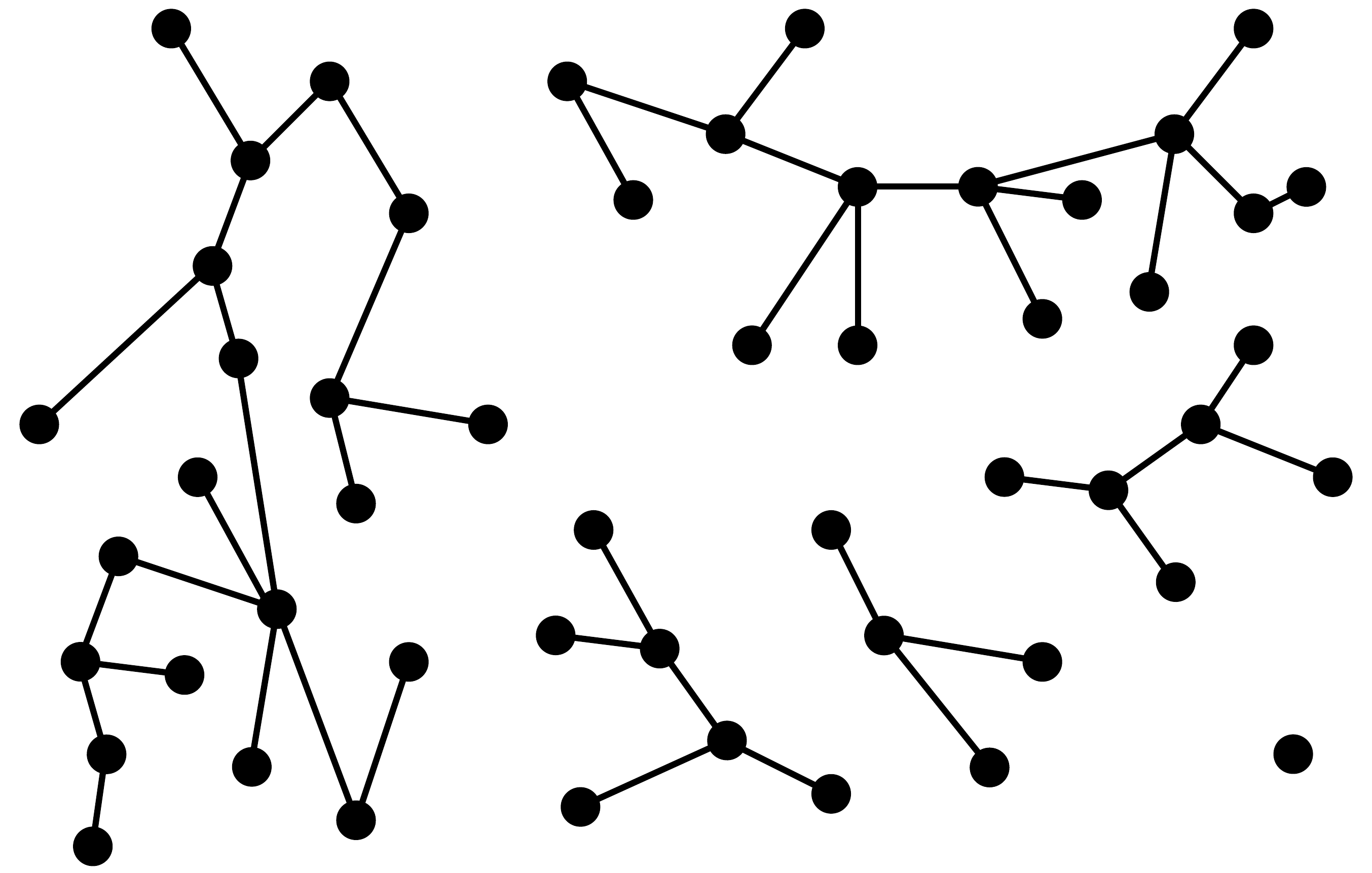}
\end{center}

The following theorem is a generalization of Theorem~\ref{thm:treeNumEdges}; let's not miss the forest for the trees.

\begin{theorem}\label{thm:forrestNumEdges}
Every forest with $n$ vertices and $m$ connected components has $n-m$ edges.
\end{theorem}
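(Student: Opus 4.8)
The plan is to reduce Theorem~\ref{thm:forrestNumEdges} to the already-proven Theorem~\ref{thm:treeNumEdges} by treating each connected component separately. Since a forest is by definition a graph whose connected components are all trees, I would first name the components: let the forest $F$ have connected components $T_1, T_2, \ldots, T_m$, where each $T_i$ is a tree. Let $n_i$ denote the number of vertices in $T_i$. Because every vertex of $F$ lies in exactly one connected component, the component vertex counts partition the total vertex set, so $n_1 + n_2 + \cdots + n_m = n$.

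Next I would apply Theorem~\ref{thm:treeNumEdges} to each component. Since each $T_i$ is a tree with $n_i$ vertices, it has exactly $n_i - 1$ edges. Every edge of $F$ lies in exactly one connected component (an edge cannot join two different components, since then those components would in fact be connected to each other), so the total number of edges in $F$ is the sum of the edge counts of the components. Therefore the number of edges in $F$ is
\[
\sum_{i=1}^m (n_i - 1) = \left(\sum_{i=1}^m n_i\right) - m = n - m,
\]
as claimed.

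The main step to get right is the bookkeeping that both the vertices and the edges distribute across components without overlap or omission — that is, that each vertex and each edge belongs to exactly one $T_i$. This follows from the definition of connected component (a maximal connected subgraph), but I would state it explicitly since it is the crux of why the two sums telescope correctly. I do not expect any genuine obstacle here: the only subtlety is confirming that no edge can straddle two components, which is immediate because the endpoints of an edge are connected by that edge and hence lie in the same component. An alternative would be a direct strong induction on $m$ (or on $n$) mirroring the proof of Theorem~\ref{thm:treeNumEdges}, but the component-wise summation is cleaner and avoids redoing the inductive work already encapsulated in that theorem.
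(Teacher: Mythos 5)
Your proof is correct and follows exactly the strategy the paper outlines (in the exercise at the end of the section): decompose the forest into its $m$ tree components, apply Theorem~\ref{thm:treeNumEdges} to each to get $n_i-1$ edges, and sum to obtain $\sum_{i=1}^m (n_i-1) = n-m$. Your explicit justification that every vertex and edge lies in exactly one component is a welcome addition, but otherwise the argument is the same.
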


Since every tree is a graph, the storage methods for graphs in Section~\ref{sec:graphStorage} can also be used to store trees.
However, since the structure of trees is very specific, these storage structures for graphs are less efficient when storing trees.
Recall from Section~\ref{sec:graphStorage} that a labeled graph with $n$ vertices can be stored as an $n\times n$ adjacency matrix with binary entries, either $0$ or $1$ depending on whether an edge is present or not.
Such a representation can store $2^{(n^2-n)/2}$ different labeled graphs with $n$ vertices.
But as we will learn in Cayley's theorem in Section~\ref{sec:cayley}, the number of labeled trees on $n$ vertices is a lot smaller, only $n^{n-2}$.
Similarly, recall from Section~\ref{sec:graphStorage} that a labeled graph with $n$ vertices can be stored as a $2\times e$ list of edges.
Since a tree with $n$ vertices has $n-1$ edges, a tree can be stored as a $2 \times (n-1)$ list of edges, i.e.\ as $2(n-1)$ entries in $\{0,1,\ldots,n-1\}$.
This representation is closer to being efficient for trees, while still not perfectly efficient, since we could store $n^{2(n-1)}$ different things, even though there are only $n^{n-2}$ labeled trees with $n$ vertices.

Binary search trees, which we described briefly at the very beginning of this chapter, are reasonably efficient storage structure for labeled trees.
Binary search trees are also easy to edit, say by adding or removing nodes from the tree.
Can you think of other good storage structures for trees?

\subsection*{Exercises}

\begin{enumerate}
\item If $G$ is a connected graph with $n$ vertices and $n-1$ edges, show that $G$ is a tree.

\item How many edges need to be removed from $K_7$ to create a spanning tree?  
\item How many edges need to be removed from $K_n$ to create a spanning tree?

\item How many edges need to be removed from $B_{3,4}$ to create a spanning tree?  
\item How many edges need to be removed from $B_{n,m}$ to create a spanning tree?

\item True or False: There exists a tree with 6 vertices of degrees 1, 1, 2, 2, 3, 3.

\item For a tree with $n$ vertices, what is the sum of the degrees of the vertices? 

\item
Prove Theorem~\ref{thm:forrestNumEdges} using the following strategy.
Suppose $G$ is a forest with $n$ vertices and $m$ connected components.
For $i=1,\ldots,m$, let connected component $i$ have $n_i$ vertices.
So $n_1+\ldots+n_m=n$.
Apply Theorem~\ref{thm:treeNumEdges} on each connected component of $G$.
How many edges do you get in total?

\end{enumerate}

\section{Labeled trees: Cayley's theorem}
\label{sec:cayley}

\begin{videobox}
\begin{minipage}{0.1\textwidth}
\href{https://www.youtube.com/watch?v=Z15ofSdio3I}{\includegraphics[width=1cm]{video-clipart-2.png}}
\end{minipage}
\begin{minipage}{0.8\textwidth}
Click on the icon at left or the URL below for this section's short lecture video. \\\vspace{-0.2cm} \\ \href{https://www.youtube.com/watch?v=Z15ofSdio3I}{https://www.youtube.com/watch?v=Z15ofSdio3I}
\end{minipage}
\end{videobox}

A \defn{labeling} of a tree is an assignment of numbers to each vertex.
If a tree has $n$ vertices, then then we will always use the labels $0$ through $n-1$, where each such integer is used exactly once as a label.

For example, the below two trees are different as labeled trees.
\begin{center}
\includegraphics[width=0.28\textwidth]{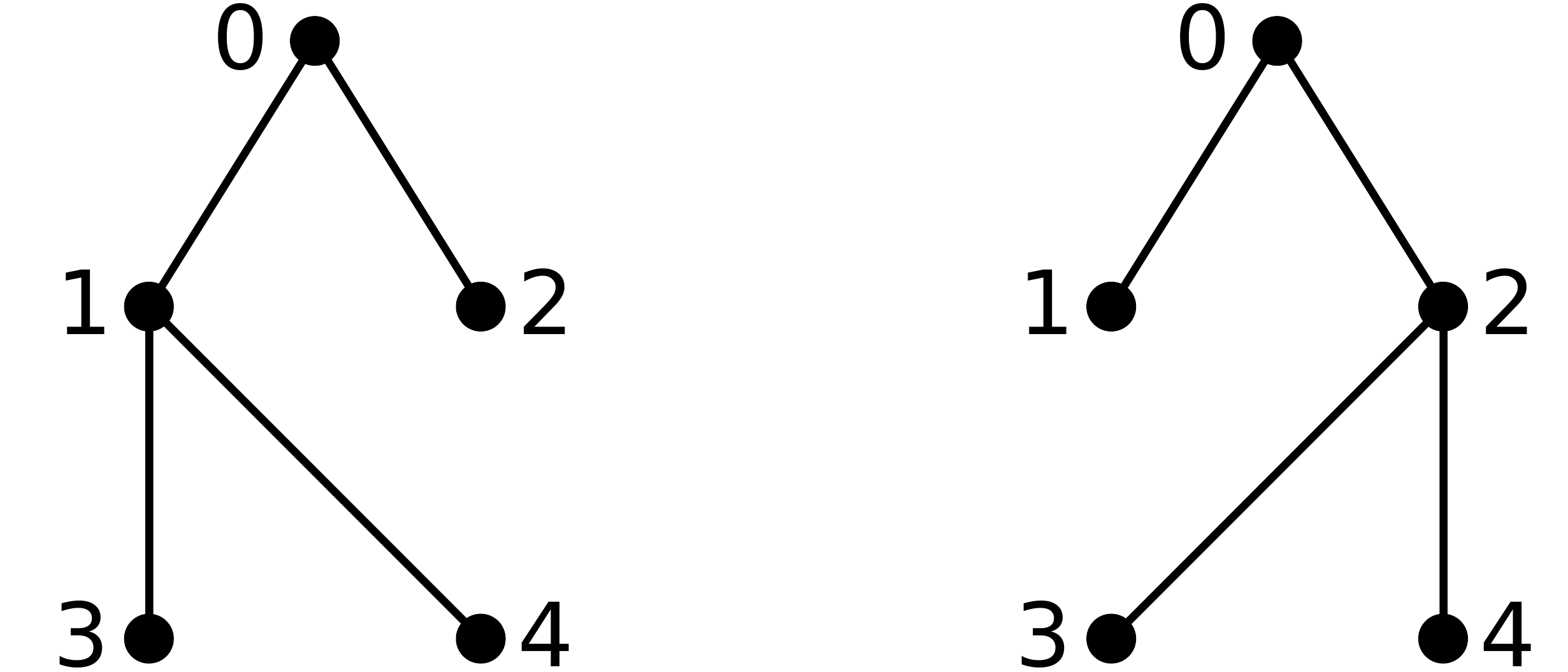}
\end{center}
By contrast, the below two trees are the same as unlabeled trees!
\begin{center}
\includegraphics[width=0.28\textwidth]{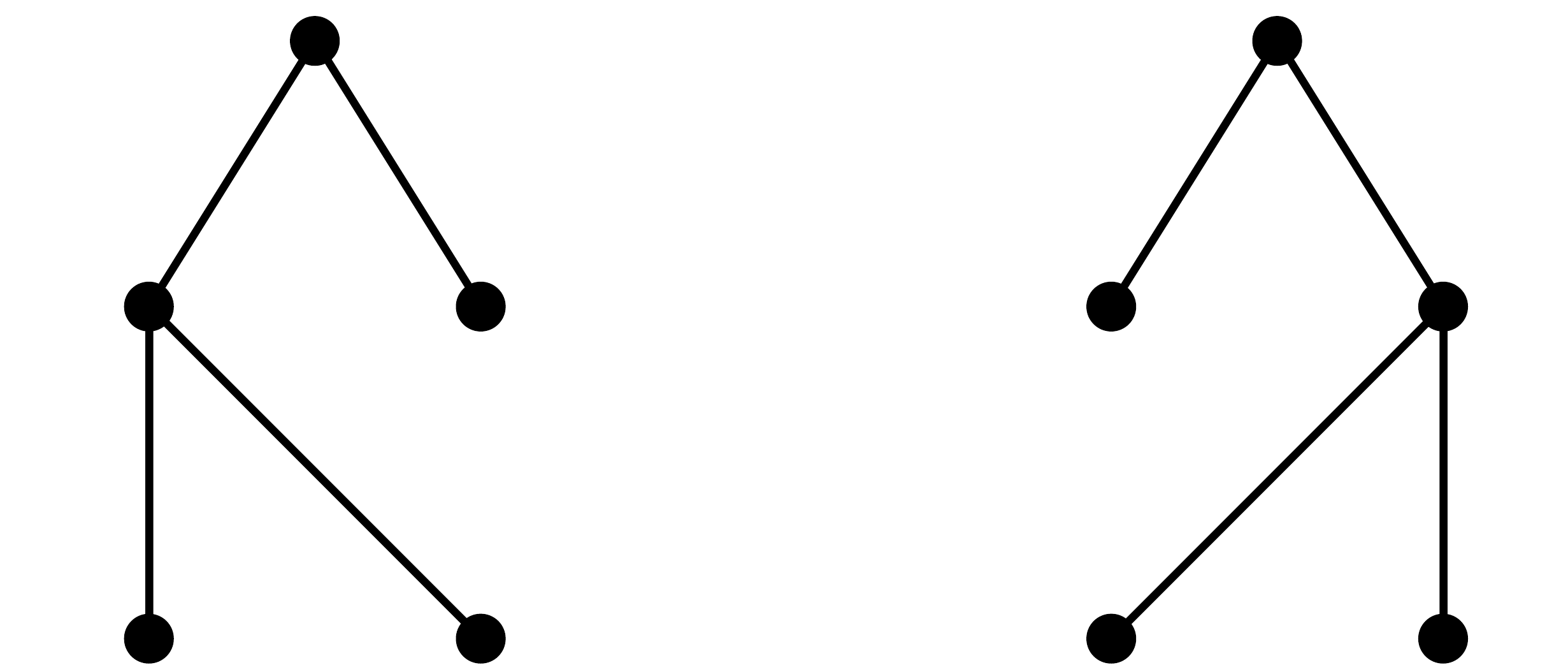}
\end{center}

How many different labeled trees with $n$ are there, as a function of $n$?
Let's begin by doing some counts for $n$ small.

There is only a single labeled tree on $n=1$ vertex --- the single vertex, equipped with label $0$.
\begin{center}
\includegraphics[width=0.07\textwidth]{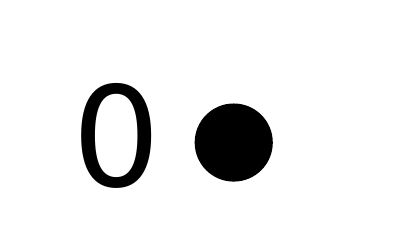}
\end{center}

There is only a single labeled tree on $n=2$ vertices --- an edge with its two vertices labeled 0 and 1.
Note that swapping these vertex labels still results in the same labeled tree!
\begin{center}
\includegraphics[width=0.2\textwidth]{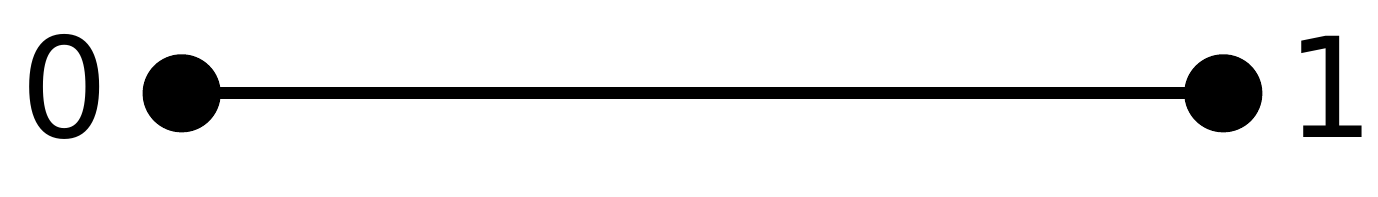}
\end{center}

There are three labeled trees on $n=3$ vertices, drawn below.
\begin{center}
\includegraphics[width=0.7\textwidth]{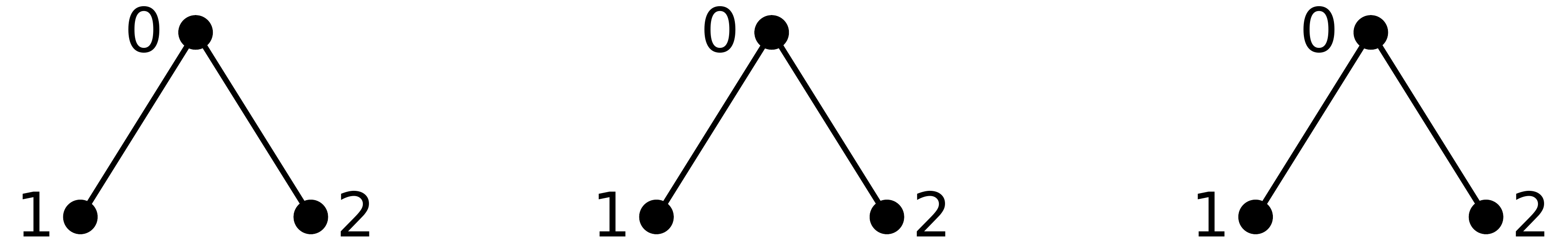}
\end{center}

With work, we can count the number of labeled trees on $n=4$ vertices.
By Theorem~\ref{thm:treeNumEdges} a tree on 4 vertices has 3 edges.
As we see below, there are $\binom{4}{2}=6$ possible locations for these 3 edges.
\begin{center}
\includegraphics[width=0.15\textwidth]{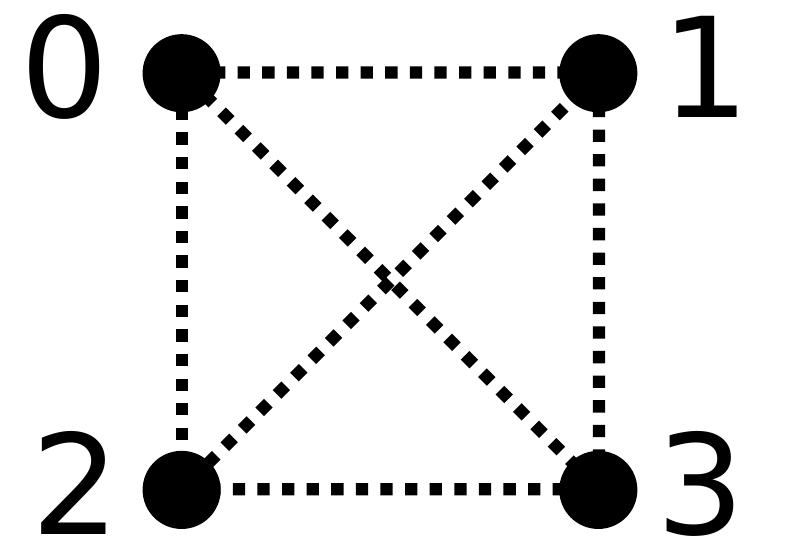}
\end{center}
Hence there are $\binom{6}{3}=20$ possible labeled graphs with 3 edges.
Of these 20 labeled, all except for the following 4 are trees.
\begin{center}
\includegraphics[width=0.7\textwidth]{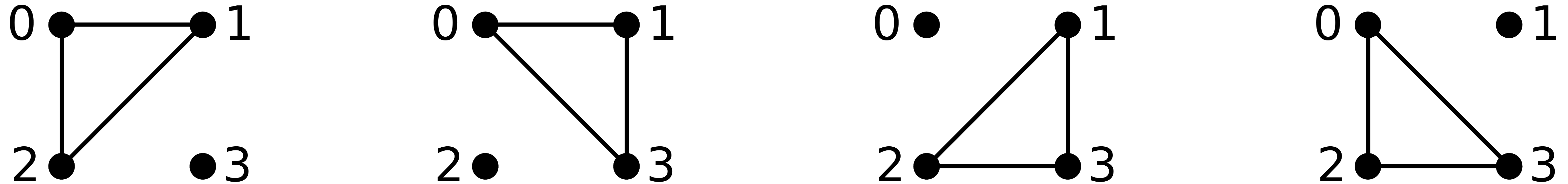}
\end{center}
Hence there are 16 labeled trees on $n=4$ vertices, as counted below.

Let $L(n)$ be the number of labeled trees on $n$ vertices. 
We computed that
$L(1)=1$, $L(2)=1$, $L(3)=3$, and $L(4)=16$.
It is not easy to guess a formula for $L(n)$ in general given the limited number of computations we have done so far.
But a formula is known, and it is included in Cayley's theorem.

\begin{theorem}[Cayley's Theorem]
\label{thm:treeCayley}
The number of labeled trees on $n$ vertices is $L(n)=n^{n-2}$.
\end{theorem}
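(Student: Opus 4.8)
The plan is to prove Cayley's theorem via a bijection, following the philosophy of Section~\ref{sec:bij}. Specifically, I would set up a one-to-one correspondence between the set of labeled trees on the vertex set $\{0,1,\ldots,n-1\}$ and the set of sequences of length $n-2$ drawn from the alphabet $\{0,1,\ldots,n-1\}$. By Theorem~\ref{Tstring}, the number of such sequences is exactly $n^{n-2}$, so once the bijection is established, the count $L(n)=n^{n-2}$ follows immediately. The edge cases $n=1$ and $n=2$ would be handled by direct inspection, since we have already observed $L(1)=L(2)=1$, matching $1^{-1}$ and $2^0$ under the usual conventions (here the sequence length $n-2$ degenerates, so these cases are best treated separately).

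First I would define the encoding map, called the \emph{Pr\"ufer sequence}, from trees to sequences. Given a labeled tree $T$ with $n\ge 3$ vertices, repeat the following step $n-2$ times: locate the leaf with the smallest label, record the label of its unique neighbor as the next entry of the sequence, and then delete that leaf from the tree. To see this is well-defined, I would note that after each deletion the remaining graph is still connected and acyclic, hence a tree, and that any tree with at least two vertices must contain a leaf (a fact that also underlies Theorem~\ref{thm:treeNumEdges}); so a smallest-labeled leaf always exists. After $n-2$ deletions, exactly two vertices remain, and we stop; the recorded labels form a sequence of length $n-2$ with entries in $\{0,\ldots,n-1\}$.

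Next I would construct the inverse (decoding) map from sequences to trees, which is where the real content lies. The key observation is a bookkeeping invariant: in the Pr\"ufer sequence of $T$, each vertex $v$ appears exactly $\deg(v)-1$ times. Using this, given an arbitrary sequence $(a_1,\ldots,a_{n-2})$, I would first assign to each vertex $v$ a provisional degree equal to one plus the number of times $v$ occurs in the sequence. Then I would reconstruct the tree greedily: for $i=1,\ldots,n-2$, find the smallest-labeled vertex whose current degree is $1$ and which has not yet been consumed, join it by an edge to $a_i$, and decrement the degrees of both endpoints; finally, join the two vertices that remain with degree $1$. I would verify that this produces a graph with $n-1$ edges that is connected and acyclic, and hence a tree.

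The hard part will be proving that these two maps are genuine inverses of one another. The crux is showing that the greedy leaf-selection in the decoding always mirrors the leaf-deletion in the encoding: at each step the vertex the decoder attaches is precisely the smallest-labeled leaf that the encoder removed. This rests on the degree invariant above, together with the fact that a vertex ceases to appear in the remaining suffix of the sequence exactly when it has become a leaf in the reduced tree. I would prove the invariant and the step-by-step agreement by induction on the number of deletions, which simultaneously establishes that encoding followed by decoding returns the original tree and that decoding followed by encoding returns the original sequence. Once both compositions are shown to be the identity, the maps are mutually inverse bijections, and the theorem follows.
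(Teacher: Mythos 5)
Your Pr\"ufer-sequence argument is a correct proof strategy, but note that the paper itself never proves Cayley's theorem: it states the result, remarks that ``the proof of Cayley's theorem is much deeper than a simple count,'' and defers the argument to an exercise that points to Joyal's bijective proof (the same exercise set that asks you to count \emph{doubly rooted} labeled trees, which are the key object in that argument). So you are supplying a proof where the paper supplies none, and your route differs from the one the paper gestures at. Joyal's proof counts trees with an ordered pair of distinguished vertices, exhibiting a bijection between such doubly rooted trees and functions from the vertex set to itself; this gives $n^2\,L(n)=n^n$, hence $L(n)=n^{n-2}$. Your approach instead builds a direct bijection between labeled trees and sequences of length $n-2$ over the vertex alphabet, whose count is $n^{n-2}$ by Theorem~\ref{Tstring}. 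What your route buys: it is fully constructive (explicit encoding and decoding algorithms), it uses nothing beyond leaves and degrees, and the Pr\"ufer code incidentally resolves the storage-efficiency question the paper raises just before stating the theorem --- it stores a labeled tree in exactly $n-2$ symbols from $\{0,\ldots,n-1\}$, matching the count $n^{n-2}$ exactly. What it costs: the verification that encoding and decoding are mutually inverse is a genuinely fiddly induction, which you correctly identify as the crux; Joyal's argument avoids that bookkeeping at the price of introducing a new class of objects. Two small points to tidy in a full write-up: the leaf-existence fact you cite is true but is not what the paper's proof of Theorem~\ref{thm:treeNumEdges} actually uses (that proof deletes an edge, not a leaf); and in your decoding step you should make explicit that ``current degree $1$ and not yet consumed'' is equivalent to ``does not occur in the remaining suffix of the sequence,'' since that equivalence is precisely what makes the greedy decoder track the encoder step by step.
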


The proof of Cayley's theorem is much deeper than a simple count.
To see this, try to expand the methods we used to compute $L(4)$ to compute $L(5)$ or $L(6)$.

\subsection*{Exercises}
\begin{enumerate}
\item How many trees on 5 labeled vertices are there?																									
\item How many rooted trees on 5 labeled vertices are there?																									
\item How many doubly rooted trees on 5 labeled vertices are there?	

\item Some of the problems below are about the NCAA basketball tournament.  This is a single-elimination tournament with 64 teams; it may be viewed as a rooted binary tree with 64 leaves. An \emph{unfilled NCAA bracket} is a partially labeled binary tree with 64 leaves, where the only labeled vertices are the 64 leaf vertices in Round 0 (each labeled with one of the 64 teams in the tournament). To fill out a bracket you must label the remaining vertices with which team you think will win each game.  

\begin{enumerate}
\item Explain why a filled out NCAA bracket is not a labeled tree. 

\item Show that the NCAA tournament has 63 games, as follows. Pretend each team brings one new basketball to the tournament. One basketball is used in each match. The losing team departs with the used ball, and the still-unused ball is taken by the match's winner to the next round. How many basketballs remain unused at the end of the tournament? How many basketballs must have been used? Hence, how many games must have been played?

\item How many ways are there to fill out the bracket? (The labels on the 64 leaf nodes are fixed; filling out the bracket means choosing a winner for each game.)

\item Draw all 16 labeled trees with $4$ vertices.

\item Read the proof of Cayley's theorem at 
\url{https://golem.ph.utexas.edu/category/2019/12/a\_visual\_telling\_of\_joyals\_pro.html}
\end{enumerate}
\end{enumerate} 
\section{Binary trees}

A \textit{binary tree} is a recursive tree-like structure defined as follows.

\begin{definition}
A \textbf{binary tree} is either a single vertex with no edges, or a graph on more than one vertex consisting of a \textbf{root vertex} $v$ and two edges, $L$ and $R$, each connecting $v$ to the root vertex of a smaller binary tree.
\end{definition}

Using this definition, we can build up binary trees recursively as follows.  The single vertex graph:
\begin{center}
$\bullet$
\end{center}
is a binary tree by the initial condition case of the recursive definition.  Then, we can join two single vertex graphs (which we label $u$ and $w$) to a new root vertex $v$ to obtain the following graph: 

\begin{center}
    \includegraphics{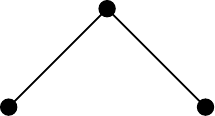}
\end{center}

Here, we draw edges $L$ and $R$ as going down-and-left and down-and-right from $v$, and we call $u$ and $w$ the \textbf{left child} and \textbf{right child} of $v$, and $v$ the \textbf{parent} of $u$ and $w$.  We can then build up more binary trees by combining the ones we have constructed so far.  For instance, we could have the left and right children of the root both connect to the above three-node graph, or just one of the children being the above three-node graph and the other being a single vertex.  This gives us three more binary trees:

\begin{center}
    \includegraphics{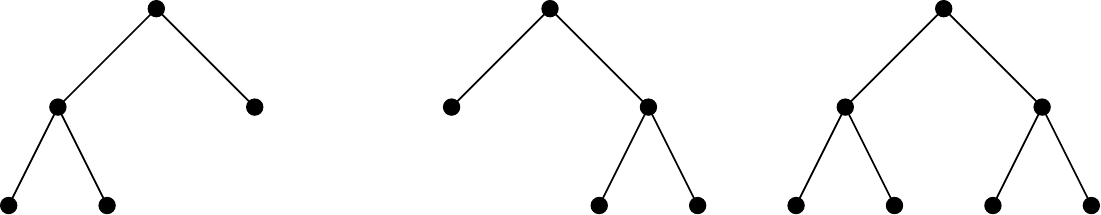}
\end{center}

Because these trees are generated recursively and any new edge is marked as a left edge or a right edge with one vertex being the ``parent'' and the other the ``child'', the notion of \textit{left child} and \textit{right child} is well defined for any vertex in a binary tree.

Using the notion of left and right children, a more informal alternate definition of a binary tree may be stated as follows.

\begin{definition}[Informal definition of binary tree]
A \textbf{binary tree} is a graph formed by starting with a root vertex at the top, and drawing edges and vertices downwards to the left or right in such a way that every vertex either has two children (a left and right child) or no children.
\end{definition}

\subsection{Tournaments and Catalan numbers}

One application of binary trees is as a way to model \textbf{tournaments} in which players face off in pairs, and the winner advances to the next rounds.  In the diagram below, we can think of the labels on the leaves as the initial ``seeding'' of the tournament, where it is determined who will face off against one another first.  

\begin{center}
    \includegraphics{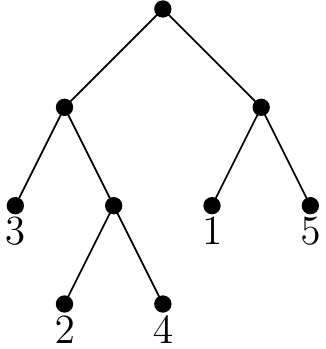}
\end{center}

Suppose it is a chess tournament, so if a left child and right child of a node face off against each other, we assign the left child to be the player who goes first, and the right to be the player who goes second (thus making the distinction between left vs right important). Suppose the numbered labels are some measure of skill, so that larger numbers will likely beat smaller numbers.  Then in the first round of the tournament drawn above, $2$ and $4$ face off against each other and $1$ and $5$ face off against each other, and $4$ and $5$ advance to the appropriate parent nodes:

\begin{center}
    \includegraphics{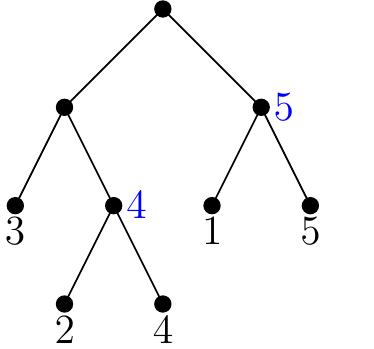}
\end{center}

Then, $4$ defeats $3$ and advances, and finally $5$ defeats $4$ in the last round and advances to the top node, being declared the winner.

\begin{center}
    \includegraphics{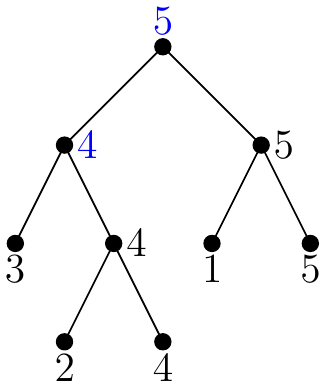}
\end{center}

This was not the only tournament we could have seeded for 5 players!  How many possible ways can we set up such a tournament with $n$ players?  In terms of binary trees, this boils down to the following question.

\begin{question}
How many binary trees have exactly $n$ leaves?
\end{question}

For now we will put aside the question of labeling the leaves with numbers, and just answer the question above for unlabeled binary trees with $n$ leaves.

\begin{theorem}
The number of binary trees with $n$ leaves is the $(n-1)$st Catalan number $C_{n-1}$.
\end{theorem}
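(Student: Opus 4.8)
The plan is to prove this by strong induction on $n$, showing directly that the sequence counting binary trees satisfies the same recurrence and initial condition as the (shifted) Catalan numbers. Let $B_n$ denote the number of binary trees with exactly $n$ leaves. First I would establish the base case: the only binary tree with $n=1$ leaf is the single vertex, which counts as its own leaf, so $B_1 = 1 = C_0$, matching the claimed formula $B_n = C_{n-1}$ when $n=1$.

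The key structural step is to derive a recurrence for $B_n$ from the recursive definition of a binary tree. A binary tree with $n \geq 2$ leaves is not a single vertex, so by definition it consists of a root vertex $v$ together with a left subtree $T_L$ and a right subtree $T_R$, each itself a binary tree. Since $n \geq 2$, the root $v$ has two children and hence is not a leaf, so the leaves of the whole tree are precisely the leaves of $T_L$ together with those of $T_R$. If $T_L$ has $k$ leaves then $T_R$ has $n-k$ leaves, where $1 \leq k \leq n-1$ (each subtree has at least one leaf). Because the decomposition into the ordered pair $(T_L, T_R)$ is uniquely determined by the tree, and conversely any ordered pair of binary trees with $k$ and $n-k$ leaves assembles into a unique binary tree with $n$ leaves, the multiplication and addition principles give
\[
B_n = \sum_{k=1}^{n-1} B_k\, B_{n-k}.
\]

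Finally I would run the induction. Assuming $B_j = C_{j-1}$ for all $1 \le j < n$, I substitute into the recurrence and reindex with $i = k-1$:
\[
B_n = \sum_{k=1}^{n-1} C_{k-1}\, C_{n-k-1} = \sum_{i=0}^{n-2} C_i\, C_{(n-2)-i}.
\]
By the defining recurrence of the Catalan numbers, $C_{m+1} = C_0 C_m + C_1 C_{m-1} + \cdots + C_m C_0$, the right-hand side with $m = n-2$ equals $C_{n-1}$, completing the inductive step. The main thing to get right is the bookkeeping of the index shift (the ``$n$ leaves versus $C_{n-1}$'' offset), together with a careful justification that root-splitting really is a bijection between binary trees with $n$ leaves and ordered pairs of smaller binary trees whose leaf counts sum to $n$; once that structural decomposition is pinned down, matching it to the Catalan recurrence is routine. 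An alternative would be to set up a direct bijection with Dyck paths of length $2(n-1)$, but the recurrence route is cleaner here, since the tree decomposition produces exactly the Catalan convolution.
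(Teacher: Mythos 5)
Your proof is correct and takes essentially the same approach as the paper: both decompose a binary tree with $n\geq 2$ leaves at the root into an ordered pair of left and right subtrees, derive the convolution recurrence $B_n=\sum_{k=1}^{n-1}B_kB_{n-k}$, and match it (after an index shift) to the defining Catalan recurrence. Your explicit strong induction and the paper's ``same recurrence, same initial condition'' phrasing are the same argument; your version is just slightly more careful about the bijection and the index bookkeeping.
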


\begin{proof}
Let $B_n$ be the number of binary trees with $n$ leaves.  We simply need to show that the numbers $B_1,B_2,B_3,\ldots$ satisfy the same recursion as the one that defines the sequence of Catalan numbers $C_0,C_1,C_2,\ldots$.

We first check the initial condition: we have $B_1=1$ since there is only one binary tree with one leaf, and $C_0=1$, so $B_1=C_0$.

We now need to show the numbers $B_n$ satisfy the Catalan recursion.  To count the number of binary trees on $n+1$ leaves, note that any such tree has some number $k\ge 1$ of leaves on the left subtree branching from the root, and the other $n+1-k$ on the right subtree.  We break into cases based on the value of $k$: if $k=1$ there are $B_1B_{n}$ possibilities, if $k=2$ there are $B_2B_{n-1}$ possibilities, and so on.  Adding these together, we have $$B_{n+1}=B_1B_{n}+B_{2}B_{n-1}+B_{3}B_{n-2}+\cdots +B_{n}B_1.$$  Comparing this to the recursion $$C_{n}=C_0C_{n-1}+C_1C_{n-2}+\cdots+C_{n-1}C_0$$ we see that shifting all the indices up by $1$ and replacing $C$ by $B$ does indeed make the recursions match.  Therefore $B_n=C_{n-1}$ for all $n\ge 1$.
\end{proof}

Let's see how this applies to our tournaments.  There are $C_4$ binary trees with $5$ leaves.  Recall the explicit formula for Catalan numbers that we derived in Section \ref{sec:Catalan}: $$C_n=\frac{1}{n+1}\binom{2n}{n}.$$  Using this formula, we have $C_4=\frac{1}{5}\binom{8}{4}=14$.  Finally, given a binary tree with $5$ leaves, if we have 5 different players, the number of ways we can choose who labels which leaf is just the number of ways of rearranging those players, which is $5!=120$.  Therefore there are $14\cdot 120=1680$ different tournaments you can design for the 5 players!

\subsection{At-most binary trees}

It is somewhat harder to count binary trees according to how many vertices they have, rather than how many leaves.  However, there is a nice answer if we modify the definition slightly.

\begin{definition}
An \textbf{at-most binary} tree is a tree constructed recursively starting from a root node at the top, where every node has either a left child only, a right child only, both a left and right child, or neither.
\end{definition}

Here are all of the at-most binary trees on $3$ vertices:
\begin{center}
    \includegraphics{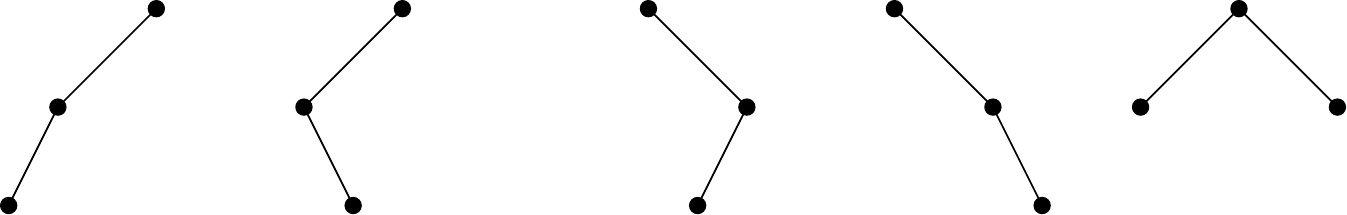}
\end{center}
Notice that there are $5$ of them, which is a Catalan number.  Indeed, the Catalan numbers arise here as well.

\begin{theorem}
The number of at-most binary trees on $n$ vertices is the $n$th Catalan number $C_n$.
\end{theorem}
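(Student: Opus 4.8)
The plan is to show that the number of at-most binary trees on $n$ vertices satisfies the same recursion as the Catalan numbers, mirroring the proof just given for binary trees counted by their leaves. Let $A_n$ denote the number of at-most binary trees on $n$ vertices. The first step is a bookkeeping convention: I would extend the notion of at-most binary tree to allow the \emph{empty tree} with no vertices, and set $A_0 = 1$. This is the natural convention, because in the informal definition a node that has ``a left child only'' is really a node whose right subtree is empty, and permitting empty subtrees is exactly what makes the recursive decomposition clean.

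With this convention the base case is immediate: $A_0 = 1 = C_0$. The heart of the argument is a decomposition of an at-most binary tree on $n \geq 1$ vertices. Given such a tree $T$ with root $v$, deleting $v$ leaves a left subtree rooted at the left child of $v$ and a right subtree rooted at the right child of $v$; each of these is itself an at-most binary tree, and either may be empty. If the left subtree has $k$ vertices, then the right subtree has $n-1-k$ vertices, where $0 \leq k \leq n-1$. Conversely, any ordered pair consisting of an at-most binary tree with $k$ vertices and an at-most binary tree with $n-1-k$ vertices can be reassembled into a unique at-most binary tree on $n$ vertices by introducing a new root and attaching the two pieces as its left and right subtrees. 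This is a bijection, so
\[ A_n = \sum_{k=0}^{n-1} A_k\, A_{n-1-k} \qquad \text{for } n \geq 1. \]

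Comparing with the Catalan recursion, which after reindexing $C_{n+1}=C_0C_n+\cdots+C_nC_0$ reads $C_n = \sum_{k=0}^{n-1} C_k C_{n-1-k}$, I would finish by strong induction that $A_n = C_n$ for all $n \geq 0$: the base case is $A_0 = C_0 = 1$, and if $A_k = C_k$ for all $k < n$ then the two identical recursions force $A_n = C_n$. The main obstacle is not any calculation but getting the setup exactly right: I must justify that the four cases of the informal definition (left child only, right child only, both, neither) correspond precisely to the four possibilities for which of the two subtrees are empty, so that the pair of subtrees is genuinely \emph{ordered} and each tree is counted exactly once. In particular, the distinction between ``left child only'' and ``right child only'' is what forces the product $A_k A_{n-1-k}$ rather than an unordered count, and a small check is needed to confirm that the empty-tree convention correctly counts the single-vertex tree (the case $n=1$, where both subtrees are empty) without over- or under-counting.
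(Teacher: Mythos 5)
Your proposal is correct and follows essentially the same route as the paper's proof: adopt the convention that the empty tree counts (so $A_0 = 1 = C_0$), decompose an at-most binary tree on $n \geq 1$ vertices into its root plus an ordered pair of possibly-empty left and right subtrees, and conclude that $A_n = \sum_{k=0}^{n-1} A_k A_{n-1-k}$ matches the Catalan recursion. Your added care in spelling out the reassembly bijection and the explicit strong-induction step is sound but not a different method.
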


\begin{proof}
 Let $A_n$ be the number of at-most binary trees on $n$ vertices. We will consider the empty tree as being an at-most binary tree, so that $A_0=1$.  This is equal to $C_0$, so the initial condition of the Catalan recursion is satisfied.
 
 Now, we wish to show that the sequence of numbers $A_n$ satisfies the same recursion as the Catalan numbers.  If $n\ge 1$ there is a root vertex in any at-most binary tree on $n$ vertices, and this vertex has either a left child only, a right child only, both, or neither.  In all cases, both the left and the right branches from the root form a (possibly empty) at-most binary tree, where there are a total of $n-1$ vertices in the left and right trees.  
 
 If there are $k$ vertices on the left, and $n-1-k$ on the right, there are therefore $A_k A_{n-1-k}$ possible trees of this form.  Summing over all $k$, we have $$A_n=A_0A_{n-1}+A_1A_{n-2}+A_2A_{n-3}+\cdots+A_{n-1}A_0,$$ and so indeed the recursion is satisfied.
\end{proof}

\subsection{Investigation: Increasing binary trees and permutations}

At-most binary trees, with appropriate vertex labelings, can give us a new way of thinking about permutations.

\begin{definition}
An \textbf{increasing binary tree} is an at-most binary tree along with a labeling of its vertices by the numbers $1,2,\ldots,n$ such that the labels increase as one reads any downwards path in the tree.
\end{definition}

\textit{Warning:} An increasing binary tree may not be an actual binary tree, because it only needs to be an at-most binary tree!

Here is an example of an increasing binary tree.

\begin{center}
    \includegraphics{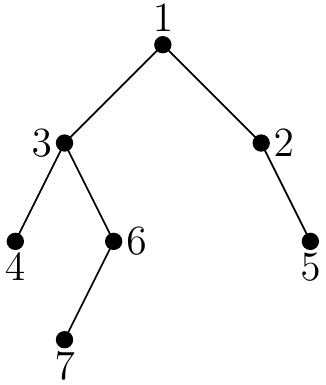}
\end{center}

This tree can be assigned to a permutation as follows.  The $1$ at the root corresponds to the $1$ in the permutation; then, the left subtree consists of the entries to the left of the $1$ and the right subtree consists of the entries to the right of the $1$ in the permutation.  So we know that the $1$ has four numbers before it and two after it in the permutation:

$$\underline{\phantom{2}},\underline{\phantom{2}},\underline{\phantom{2}},\underline{\phantom{2}},1,\underline{\phantom{2}},\underline{\phantom{2}}$$

Then, we repeat this analysis on the right and left subtrees; the $3$ is at the top of the left subtree, with one entry to its left and two to its right, so in the permutation it is in the second position.  Similarly the $2$ is just after the $1$:
$$\underline{\phantom{2}},3,\underline{\phantom{2}},\underline{\phantom{2}},1,2,\underline{\phantom{2}}$$
We then continue this process on the subtrees whose top nodes are $4$, $6$, and $5$, to obtain the permutation
$$4,3,7,6,1,2,5$$

To show that this correspondence gives a bijection, we need to show that we can reverse it.  We illustrate this by continuing with this example.  Starting from the permutation $4,3,7,6,1,2,5$, can we reconstruct the original tree?  Indeed, we must place $1$ at the top, and then its left and right children must be the smallest element left of $1$ and the smallest element right of $1$ respectively.  The left and right children of each successive node is similarly determined. 

We have shown:

\begin{theorem}
Increasing binary trees on $n$ labeled vertices are in bijection with permutations of the set $\{1,2,3,\ldots,n\}$.
\end{theorem}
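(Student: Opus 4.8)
The plan is to make the correspondence sketched in the example above completely precise as a pair of recursively defined maps, and then to verify that they are mutually inverse; by the lemma in Section~\ref{sec:bij}, a map that admits an inverse is automatically a bijection, so this suffices. First I would introduce a convenient generalization: for any finite set $S$ of distinct positive integers, call a labeled at-most binary tree whose labels are exactly the elements of $S$, increasing along every downward path, an \emph{increasing binary tree on $S$}. The original statement is the case $S=\{1,\ldots,n\}$, but the extra generality is exactly what lets the recursion close up (see the obstacle below).

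Next I would define the map $\Phi$ from increasing binary trees to sequences. If $T$ is empty, set $\Phi(T)$ to be the empty sequence. Otherwise $T$ has a root labeled $m$, a left subtree $T_L$, and a right subtree $T_R$ (either of which may be empty); define $\Phi(T)=\Phi(T_L)\,m\,\Phi(T_R)$, the concatenation of the left word, the root label, and the right word. Since the labels of $T$ are exactly $S$, the word $\Phi(T)$ is an ordering of $S$, and when $S=\{1,\ldots,n\}$ this is a permutation. In the reverse direction I would define $\Psi$ on a sequence $w$ of distinct positive integers: if $w$ is empty, $\Psi(w)$ is the empty tree; otherwise let $m=\min(w)$ occur at position $i$, write $w=w_L\,m\,w_R$, and let $\Psi(w)$ be the tree with root $m$, left subtree $\Psi(w_L)$, and right subtree $\Psi(w_R)$. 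One must check that $\Psi(w)$ really is an increasing binary tree on the underlying set of $w$: this holds because the root $m$ is the minimum, hence smaller than every label appearing in either subtree, and by induction each subtree is increasing, so every downward path increases.

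The heart of the argument is then to show that $\Phi\circ\Psi$ and $\Psi\circ\Phi$ are the identity, which I would do by strong induction on $|S|$ (equivalently, on the length of $w$). For $\Phi(\Psi(w))=w$: by construction $\Psi(w)$ has root $m=\min(w)$ and subtrees $\Psi(w_L),\Psi(w_R)$, so $\Phi(\Psi(w))=\Phi(\Psi(w_L))\,m\,\Phi(\Psi(w_R))$, which equals $w_L\,m\,w_R=w$ by the inductive hypothesis applied to the shorter sequences $w_L$ and $w_R$. The verification that $\Psi(\Phi(T))=T$ is symmetric, using that the root label of $T$ is the smallest label in $T$, so that $\Phi$ places it at the correct splitting position and $\Psi$ recovers the same root and subtrees. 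Specializing to $S=\{1,\ldots,n\}$ then gives the desired bijection.

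The main obstacle is bookkeeping rather than any deep idea: when the minimum is removed, the two subtrees are labeled by arbitrary subsets of the remaining values, not by an initial block $\{1,\ldots,k\}$. An induction phrased only in terms of $n$ would have no hypothesis to apply to these pieces, so the generalization to increasing binary trees on an arbitrary finite set $S$ (equivalently, to sequences of distinct positive integers) is essential; with that in place, the inductive steps are routine. A secondary point worth stating carefully is that the root of an increasing binary tree is forced to be its minimum label, since any other label lies strictly below some smaller one along a downward path. This is precisely what guarantees that $\Phi$ and $\Psi$ split at the same position and hence invert one another.
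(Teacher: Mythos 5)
Your proposal is correct and formalizes exactly the correspondence the paper uses: the root of the tree is the minimum label, the permutation is recovered by in-order reading (left word, root, right word), and the inverse splits the word at its minimum, just as the paper does with the label $1$ and the smallest elements to its left and right. The generalization to trees labeled by an arbitrary finite set $S$, and the strong induction showing $\Phi$ and $\Psi$ are mutually inverse, are precisely the bookkeeping needed to make the paper's example-driven argument rigorous, not a different route.
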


Since there are $n!$ permutations, this immediately allows us to count the increasing binary trees.

\begin{corollary}
There are exactly $n!$ increasing binary trees on $n$ vertices.
\end{corollary}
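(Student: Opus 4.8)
The plan is to deduce this corollary immediately from the bijection established in the theorem that directly precedes it, rather than to build a fresh argument. First I would name the two sets in play: let $A$ be the set of increasing binary trees on $n$ (labeled) vertices, and let $B$ be the set of permutations of $\{1,2,\ldots,n\}$. The preceding theorem asserts precisely that the tree-to-permutation rule (read off the root label, then recurse on the left and right subtrees to fill in the positions before and after it) is a bijection $f\colon A \to B$, with the reconstruction procedure serving as the inverse. I would simply cite that theorem; no new map needs to be constructed.

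Next I would compute $|B|$. A permutation of $\{1,\ldots,n\}$ is, by definition, an ordering of these $n$ objects, so by Theorem~\ref{thm:factorial} we have $|B| = n!$. Finally I would invoke the bijective proof principle from Section~\ref{sec:bij}: if $A$ and $B$ are finite sets and there is a bijection $f\colon A \to B$, then $|A| = |B|$. Combining this with the count of permutations yields
\[
|A| = |B| = n!,
\]
which is the claimed number of increasing binary trees on $n$ vertices.

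I do not expect a genuine obstacle in this corollary, because all of the real work has already been absorbed into the bijection theorem; the corollary is essentially a one-line consequence of ``bijection plus a known count.'' The only point that warrants a sentence of care is making explicit that the correspondence is a true bijection in \emph{both} directions (so that the two cardinalities are genuinely equal), but since the theorem already verified invertibility through the explicit insertion and reconstruction steps, I would just point back to it rather than re-argue it here.
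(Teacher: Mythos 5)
Your proposal is correct and matches the paper's approach exactly: the paper likewise deduces the corollary immediately from the preceding bijection theorem together with the fact that there are $n!$ permutations of $\{1,2,\ldots,n\}$. Your write-up simply makes explicit the citation of Theorem~\ref{thm:factorial} and the bijective counting principle, which the paper leaves implicit.
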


\subsection*{Exercises}

\begin{enumerate}

    \item Draw all $14$ binary trees having $5$ leaves.
    \item How many binary trees are there on $7$ labeled leaves?
    \item Draw all $14$ at-most binary trees having $4$ vertices.
    \item How many unlabeled binary trees have exactly 4 leaves?					
\item How many leaf-labeled binary trees are there on labeled leaves 1,2,3,4?					
\item How many unlabeled at-most binary trees on 4 vertices are there?	
    \item How many at-most binary trees have $6$ vertices?
    \item Draw the permutation $2,1,5,3,4$ as an increasing binary tree.
    \item Draw all $24$ increasing binary trees on $4$ vertices.
    \item Define a \textbf{weakly increasing binary tree} to be an at-most binary tree with every vertex labeled by a positive integer, such that for any vertex labeled $v$ with left child $w$ and/or right child $u$, we have $w>v$ and $u\ge v$.  In other words, we can now have repeated numbers, but ties can only go rightwards.  So the tree at left is a weakly increasing binary tree, but the one at right is not:
    \begin{center}
        \includegraphics{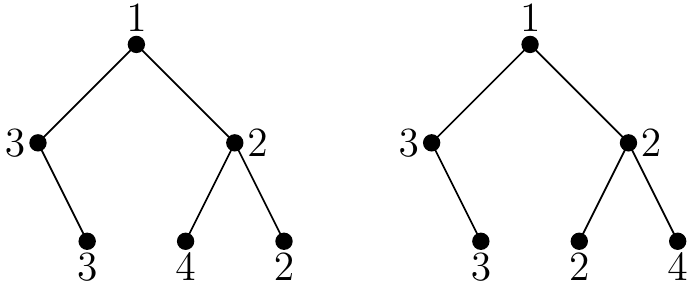}
    \end{center}
    How many weakly increasing binary trees on $6$ vertices use the numbers $1,2,2,3,3,4$ as their labels?  How many permutations are there of the sequence $1,2,2,3,3,4$?  Can you find a bijection between these two objects?
\end{enumerate}

\chapter{Graph optimization}\label{chap:optimization}

Consider the below graph, in which each vertex represents a ski lift hut, and in which each edge represents a ski lift.
Each ski lift edge is decorated with an arrow showing in which direction the lift moves up the hill.
Furthermore, each edge is labeled with the number of people that can be transported up the ski lift per minute.
What is the maximum number of people that can be moved from the bottom of the hill to the top of the hill per minute?

\begin{center}
\includegraphics[]{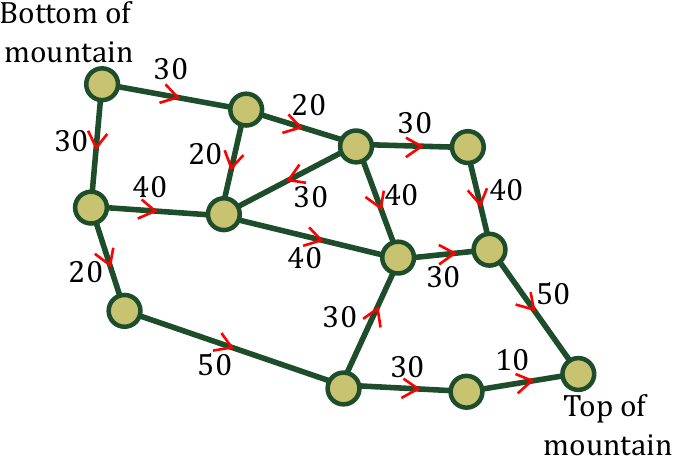}
\end{center}

It is clear that no more than 60 people can go up every minute, since at most 60 can leave the bottom of the mountain.
It turns out that it is possible to transport 60 people per minute from the bottom to the top of the hill.
Below, one strategy for how to transport 60 people per minute is drawn in red, with the number indicating how many people ride a lift.
Note that the number of passengers on a ski lift, as labeled below in red, is always less than or equal to the capacity of that lift, as labeled above in black.
To achieve this flow of 60 people per minute, different skiers need to take different routes up the hill!

\begin{center}
\includegraphics[]{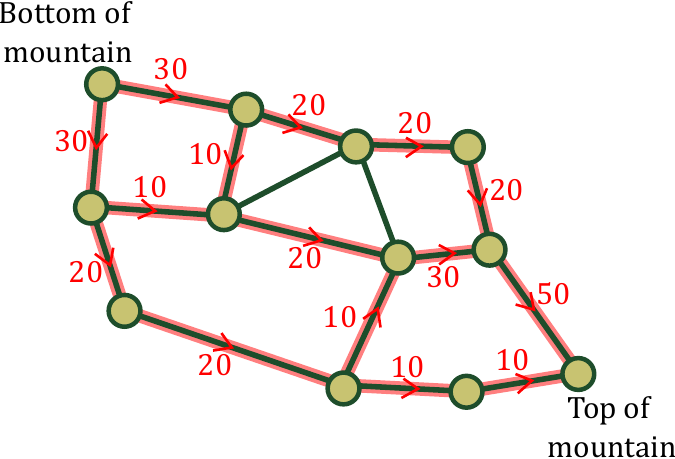}
\end{center}

In this chapter we study optimization problems on graphs.
In an optimization problem, we are trying to maximize or minimize something.
We may want to maximize the number of people transported per minute, or the number of pizzas we can deliver in an hour, or the amount of space between seats in a room.
Alternatively, we may want to minimize the cost we have to pay, the amount of trash we produce, or the amount of time necessary to complete a task.
Some optimization problems on graphs, such as finding the maximum spanning tree in Section~\ref{sec:minimal-spanning-trees}, can be solved efficiently via known algorithms.
Other optimization problems, such as the traveling salesperson problem in Section~\ref{sec:traveling-salesperson}, are difficult to solve exactly, although efficient algorithms to find approximately optimal solutions may exist.

\section{Minimal spanning trees}
\label{sec:minimal-spanning-trees}

\begin{videobox}
\begin{minipage}{0.1\textwidth}
\href{https://youtu.be/ld1tR8Oom5U}{\includegraphics[width=1cm]{video-clipart-2.png}}
\end{minipage}
\begin{minipage}{0.8\textwidth}
Click on the icon at left or the URL below for this section's short lecture video. \\\vspace{-0.2cm} \\ \href{https://youtu.be/ld1tR8Oom5U}{https://youtu.be/ld1tR8Oom5U}
\end{minipage}
\end{videobox}

Suppose the state of Colorado wants to connect the following towns with fiber-optic cable, enabling fast internet.
The cost of installing fiber-optic cable along a single edge is indicated by the number label on that edge.
What is the cheapest way to connect these 6 cities with fiber-optic cable?
\begin{center}
\includegraphics[]{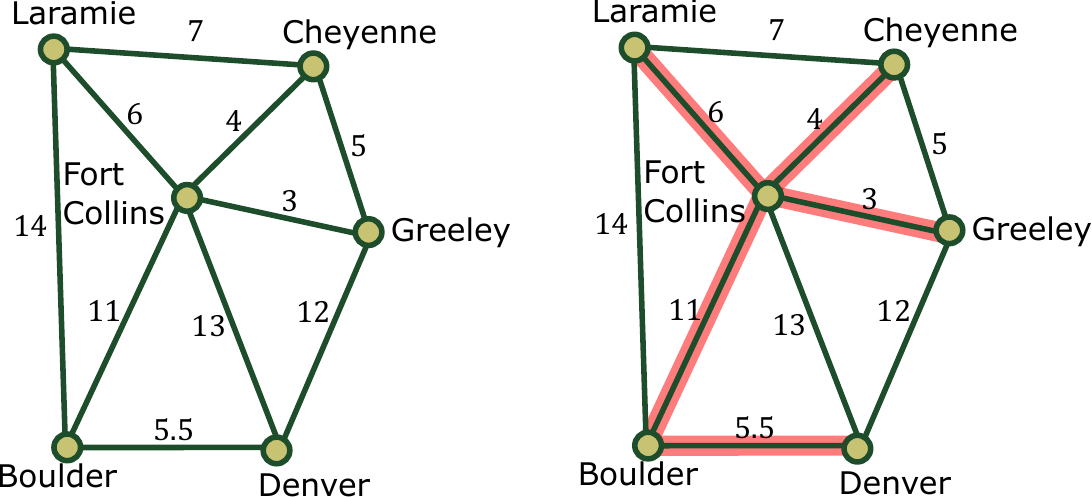}
\end{center}

Let us define the mathematical concept, minimal spanning trees, which underlies this fiber-optic cable example.

\begin{definition}
A \defn{weighted graph} is a graph equipped with a real number weight on each edge.
\end{definition}

A sub-tree of a graph $G$ is a subgraph of $G$ which also happens to be a tree.

\begin{definition}
Given a weighted connected graph $G$, a \defn{minimal spanning tree} is a sub-tree containing all vertices of $G$ whose total sum of edge weights is as small as possible.
\end{definition}

\begin{remark}
\label{rmk:mst-unique}
If no two edges of $G$ have the same weight, then a minimal spanning tree is unique.
\end{remark}

\begin{example}
If some edges of $G$ have the same weights, then a minimal spanning tree may or may not be unique.
\begin{center}
\includegraphics[height=1.7in]{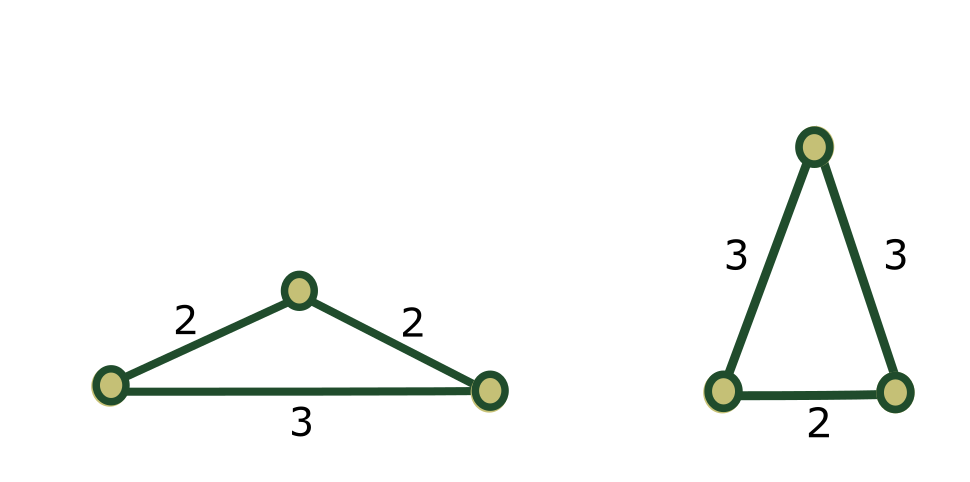}
\end{center}

Indeed, the weighted graph on the left above has a unique minimal spanning tree, whereas the weighted graph on the right has two different minimal spanning trees.
\end{example}

A poor algorithm to compute a minimal spanning tree would to be to compute the cost of all possible trees and then pick the cheapest.
Let's see that this is a poor algorithm when $G$ is the complete labeled graph on $n$ vertices.
In this case, there is a bijection between minimal spanning trees for $G$ and labeled trees on $n$ vertices.
By Cayley's Theorem (Theorem~\ref{thm:treeCayley}), there are $n^{n-2}$ possible labeled trees on $n$ vertices to check.
For $n=50$ vertices, we would have $n^{n-2}=50^{48}\approx 3.6\times 10^{81}$ trees to check, which is on the order of the number of atoms in the universe!

A much better algorithm for finding a minimal spanning tree is Kruskal's algorithm.

\vspace{1em}

\noindent\underline{\textbf{Kruskal's algorithm to produce a minimal spanning tree from a weighted connected graph $G$}}
\begin{enumerate}
\item Include all vertices of the graph $G$.
\item Include the cheapest edge (the one with the smallest weight).
Remove that edge from consideration.
\item If the subgraph is not connected, add the cheapest remaining edge that connects two of its connected components to the subgraph.
Remove that edge from consideration.
\item Repeat step (3) until the graph is connected. 
\end{enumerate}

At each step of the algorithm beyond the first, one edge is added and the output is a subgraph of $G$ with no cycles. 
When the algorithm terminates, the 
output is a connected graph with no cycles, thus a tree, and it contains all the vertices of $G$.
Thus the output is a spanning tree of $G$.
Later in the section, we will prove that
it is a minimal spanning tree.

If $G$ has $n$ vertices, 
then this algorithm will stop after adding exactly $n-1$ edges by Theorem~\ref{thm:treeNumEdges}.

Kruskal's algorithm is a \emph{greedy} algorithm, which means that at each step you do the cheapest possible thing to make progress.
What is surprising is that this greedy algorithm also happens to give you the global optimum!

If there are ties in Kruskal's algorithm, i.e., if there is more than one cheapest remaining edge that does not form a cycle, then these ties can be broken arbitrarily.
Whichever way you proceed to break this tie will still lead to a minimial spanning tree.

\begin{question}
If you and your friend each perform Kruskal's algorithm on a weighted graph, will you necessarily get the same minimal spanning trees as output? Will the minimal spanning trees you get necessarily have the same costs?
\end{question}

\begin{theorem}
When performed on any weighted connected graph $G$, Kruskal's algorithm will return a minimal spanning tree.
\end{theorem}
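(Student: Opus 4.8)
The plan is to prove correctness by an \emph{exchange argument}: I will show that any minimal spanning tree can be transformed, one edge swap at a time, into the tree $T$ that Kruskal's algorithm outputs without ever increasing the total weight, which forces $T$ itself to be minimal. The discussion preceding the theorem already establishes that $T$ is a spanning tree --- it is connected, acyclic, contains every vertex, and has exactly $n-1$ edges by Theorem~\ref{thm:treeNumEdges} --- so the only thing left to verify is weight optimality.

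First I would set up a proof by contradiction. List the edges of $T$ as $e_1, e_2, \ldots, e_{n-1}$ in the order Kruskal's algorithm added them, so that their weights are non-decreasing. Among all minimal spanning trees of $G$, let $T^*$ be one sharing the greatest number of edges with $T$. If $T = T^*$ we are done, so assume they differ, and let $e_k$ be the first edge (in Kruskal's order) that lies in $T$ but not in $T^*$. Then $e_1, \ldots, e_{k-1}$ all belong to both $T$ and $T^*$.

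Next I would produce the swap. Adding $e_k$ to $T^*$ creates a unique cycle $C$ (by Theorem~\ref{thm:tree-max-cycle-free}), every edge of which except $e_k$ lies in $T^*$. Consider the moment Kruskal's algorithm selected $e_k$: at that stage the chosen edges formed the forest $F = \{e_1, \ldots, e_{k-1}\}$, and $e_k$ joined two of its connected components, one of which I call $A$. Because any cycle crosses the cut separating $A$ from the remaining vertices an even number of times, $C$ must contain a second edge $f \neq e_k$ crossing that cut. This $f$ lies in $T^*$ but not in $T$ (otherwise $C$ would be a cycle inside $T$). Moreover, the endpoints of $f$ lie in different components of $F$, so $f$ formed no cycle with $F$ and was an eligible candidate when Kruskal chose $e_k$; since Kruskal selected the cheapest eligible edge, $w(e_k) \le w(f)$.

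Finally I would close the loop. Form $T' = (T^* \cup \{e_k\}) \setminus \{f\}$; deleting an edge of the cycle $C$ keeps the graph connected and acyclic, so $T'$ is again a spanning tree, with total weight $w(T') = w(T^*) + w(e_k) - w(f) \le w(T^*)$. Since $T^*$ is minimal this inequality is an equality, so $T'$ is itself a minimal spanning tree; but $T'$ now agrees with $T$ on the edge $e_k$ and on the absence of $f$, hence shares strictly more edges with $T$ than $T^*$ does, contradicting the choice of $T^*$. I expect the main obstacle to be the cut/parity step --- carefully justifying that the cycle $C$ contains an edge $f$ crossing the $A$-cut that is simultaneously eligible for Kruskal's selection and no cheaper than impossible, i.e.\ satisfies $w(e_k) \le w(f)$ --- while the rest is routine bookkeeping about which edges the two trees share.
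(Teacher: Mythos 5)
Your proof has the right skeleton, but there is a genuine gap at the claim that the cut-crossing edge $f$ ``lies in $T^*$ but not in $T$.'' The parenthetical justification is a non sequitur: if $f \in T$, it does \emph{not} follow that $C$ is a cycle inside $T$, because the remaining edges of $C$ are only known to lie in $T^*$, not in $T$. Nothing in your setup rules out the possibility that $f$ is a shared edge, i.e.\ $f \in T \cap T^*$ (it would then be some $e_j$ with $j > k$, since $e_1, \ldots, e_{k-1}$ do not cross the cut). And this claim is essential to your conclusion: if $f \in T$, then $T' = (T^* \cup \{e_k\}) \setminus \{f\}$ gains the shared edge $e_k$ but loses the shared edge $f$, so $|T' \cap T| = |T^* \cap T|$ and the contradiction with the maximality of $T^*$ evaporates. (Note also that the step you flagged as the main obstacle --- eligibility of $f$ and $w(e_k) \le w(f)$ --- is actually fine; the real problem is here.)

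The gap can be repaired in either of two ways. First, choose $f$ differently: since $C$ is a cycle and $T$ is acyclic, $C$ must contain an edge $f \notin T$; this $f$ is still eligible at stage $k$ because $\{e_1, \ldots, e_{k-1}\} \cup \{f\} \subseteq T^*$ is acyclic, so $w(e_k) \le w(f)$ still holds, and now the exchange genuinely increases the number of shared edges. This choice of $f$ is exactly the one made in the paper's proof, which otherwise differs from yours by iterating the exchange starting from an \emph{arbitrary} spanning tree $G_0$ (producing $G_0, G_1, \ldots, G_m = T$ with non-increasing cost) rather than invoking an extremal $T^*$. Second, alternatively, keep your cut-crossing $f$ but change the extremal criterion: choose $T^*$ to be a minimal spanning tree containing the longest prefix $e_1, \ldots, e_{k-1}$ of $T$ in Kruskal's order. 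Since $f$ crosses the cut while $e_1, \ldots, e_{k-1}$ do not, $f$ is not one of those prefix edges, so $T'$ is a minimal spanning tree containing the strictly longer prefix $e_1, \ldots, e_k$, giving the desired contradiction whether or not $f \in T$.
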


\begin{proof}
Let $T$ be a spanning tree found by Kruskal's algorithm. Let $G_0$ be any other spanning tree. Our goal is to show $\cost(T) \leq \cost(G_0)$, which will then imply that $T$ is a minimal spanning tree.

Let $e$ be the first edge (when constructing tree $T$ via Kruskal's algorithm) that is not in $G_0$.
Let $G_0\cup e$ denote the graph that is obtained by adding the edge $e$ to the graph $G_0$.
By Theorem~\ref{thm:tree-max-cycle-free}, adding any new edge between pre-existing vertices in a tree creates a cycle, and therefore the graph $G_0\cup e$ has a cycle $C$.
This cycle $C$ is not contained in the tree $T$, and therefore there is an edge $f$ of the cycle $C$ that is not in the tree $T$.

\begin{center}
\includegraphics[width=5in]{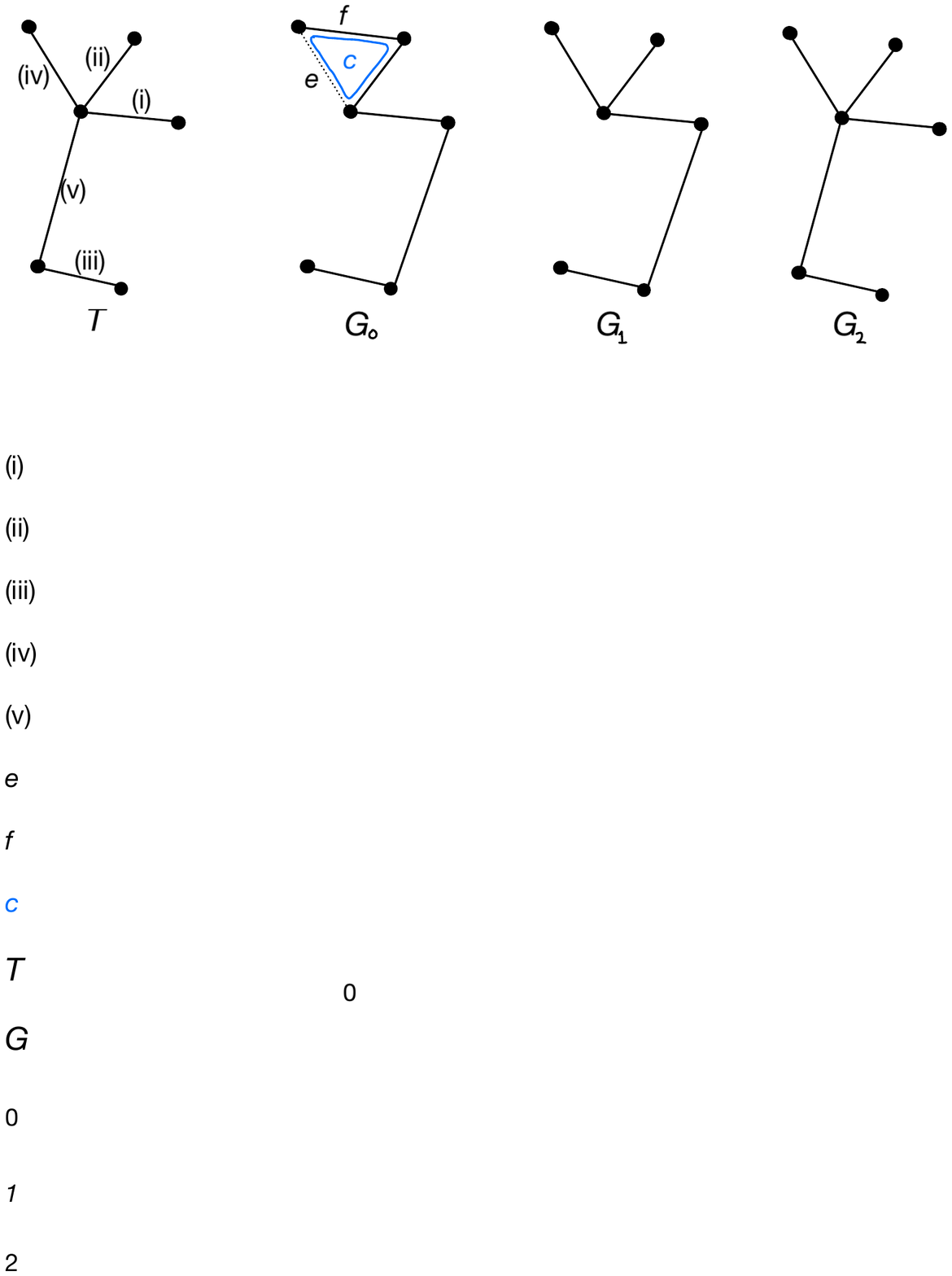}
\end{center}

Let $G_1=(G_0\cup e)\setminus f$ be the graph obtained by removing edge $f$ from $G_0\cup e$.
The weighting of the edge $e$ is less than or equal to the weighting of the edge $f$.  The reason is 
that $e$ was added to the graph of $T$ rather than $f$ at this stage of constructing $T$ in Kruskal's algorithm.  
So $\cost(G_1) \leq \cost(G_0)$.
So the proof will be complete if we can show the stronger statement that
$\cost(T) \leq \cost(G_1)$.

We repeat this same process with the tree $G_1$ in place of the tree $G_0$, obtaining a tree $G_2$.
We note that $G_2$ has one more edge from $T$ added and one more edge not in $T$ removed, and also satisfies $\cost(G_1)\ge\cost(G_2)$.
Since we keep adding edges in $T$ while removing edges not in $T$, eventually we get $G_k=T$ for some integer $k$.
We have
\[ \cost(G_0)\ge\cost(G_1)\ge\cost(G_2)\ge\ldots\ge\cost(G_k)=\cost(T), \]
as desired.
Hence Kruskal's algorithm always produces a minimal spanning tree.
\end{proof}

\subsection*{Exercises}

\begin{enumerate}

\item Consider the complete bipartite graph $B_{3,3}$, and label the vertices in the left set 1,2,3 and the vertices in the right set 1,2,3. Then label each edge by the sum of the two vertices it connects; for instance, the edge connecting left vertex 1 to right vertex 3 is labeled 4, and the edge connecting left vertex 2 to right vertex 2 is labeled 4 as well. What is the weight of a minimal spanning tree of this graph?	

\item Answer the previous problem but for $B_{4,4}$ labeled in the same way; label the vertices on the left and right 1,2,3,4, and label each edge by the sum of its two vertices. What is the weight of a minimal spanning tree?																									
\item A complete graph K4 with vertices labeled 1,2,3,4 has all its edges labeled by the same value 1. How many different minimal spanning trees are there? (Hint: Cayley's Theorem may come in handy.)																									

\item Find a minimal spanning tree on the weighted graph drawn below. 
\begin{center}
\includegraphics[width=.85\textwidth]{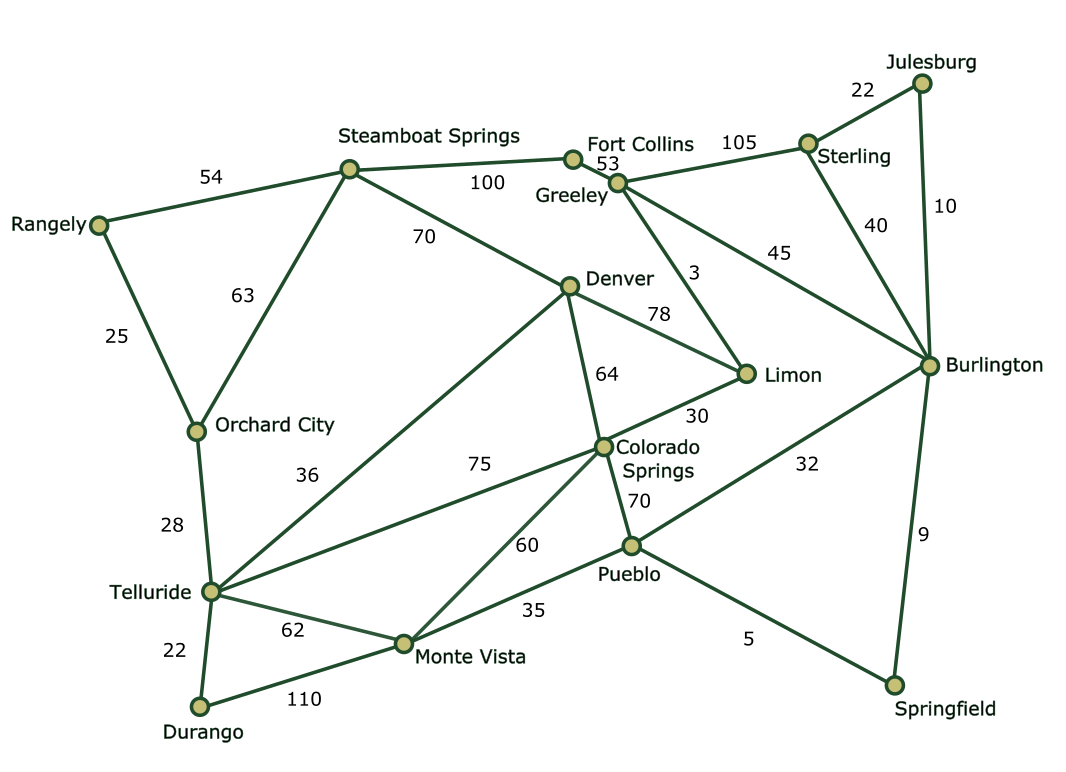}
\end{center}

\item Let $G$ be a weighted connected graph with positive edge costs.
\begin{enumerate}
\item Describe how to find a spanning tree for which the sum of the edge-costs is \emph{maximal}.

\emph{Hint: Create a new weighted graph $G'$ by multiplying each edge weight of $G$ by $-1$.}

\item Describe how to find a spanning tree for which the \emph{product} of the edge-costs is minimal.

\emph{Hint: Create a new weighted graph $G'$ by editing the edge weights of $G$ using logarithms.}
\end{enumerate}

\item 
Another way to measure the cost of a spanning tree is by the maximal cost of its edges;
let's call that the max-edge cost.
For example, the max-edge cost of the diagram at the beginning of this section is 11, for the edge between Boulder and Fort Collins.
Decide whether the following statement is true or false and explain:
the spanning tree produced by Kruskal's algorithm minimizes
the max-edge cost.
In thinking about this, it might be helpful to sort the edges in order of cost so that the first edge has the lowest cost and the last edge has the largest cost.

\item{Draw a weighted connected graph that has at least three different minimal spanning trees.}

\item Prove Remark~\ref{rmk:mst-unique}, which says that if no two edges of a connected graph $G$ have the same weight, then there is a unique minimal spanning tree for $G$.

\item True or False: If two edges in a weighted connected graph have the same cost, then the graph has more than one minimal spanning tree.

\end{enumerate}

\section{Traveling salesperson problem}
\label{sec:traveling-salesperson}

Let $G$ be a connected graph.
A \defn{tour} of $G$ is a closed walk that is allowed to retrace its steps (cross an edge more than once), and that visits every vertex of $G$.

Given a weighted connected graph $G$, can we find a tour of minimal cost?

\begin{example}
We could consider an arbitrary connected graph with arbitrary edge costs.
An optimal tour on this graph is drawn below in red.
\begin{center}
\includegraphics[width=5cm]{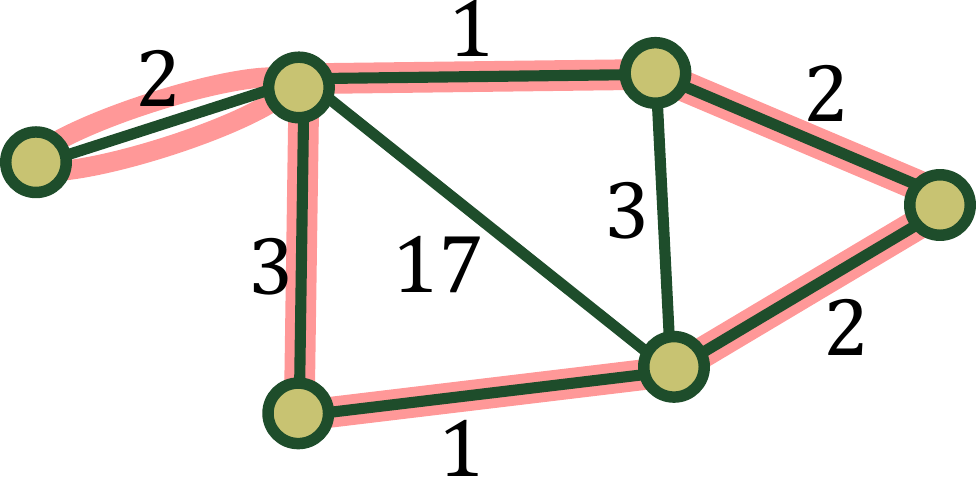}
\end{center}
Notice that an optimal tour on the above graph consists of utilizing an edge twice.
\end{example}

\begin{example}
We could consider the complete graph, where the edge costs are the Euclidean distances.
We have not drawn any of the edges in this complete graph, nor have we labeled the edge weights.
But since the edge costs are the distances on the page, you can tell that the tour drawn below is either optimal, or otherwise very close to optimal.
\begin{center}
\includegraphics[width=2.2in]{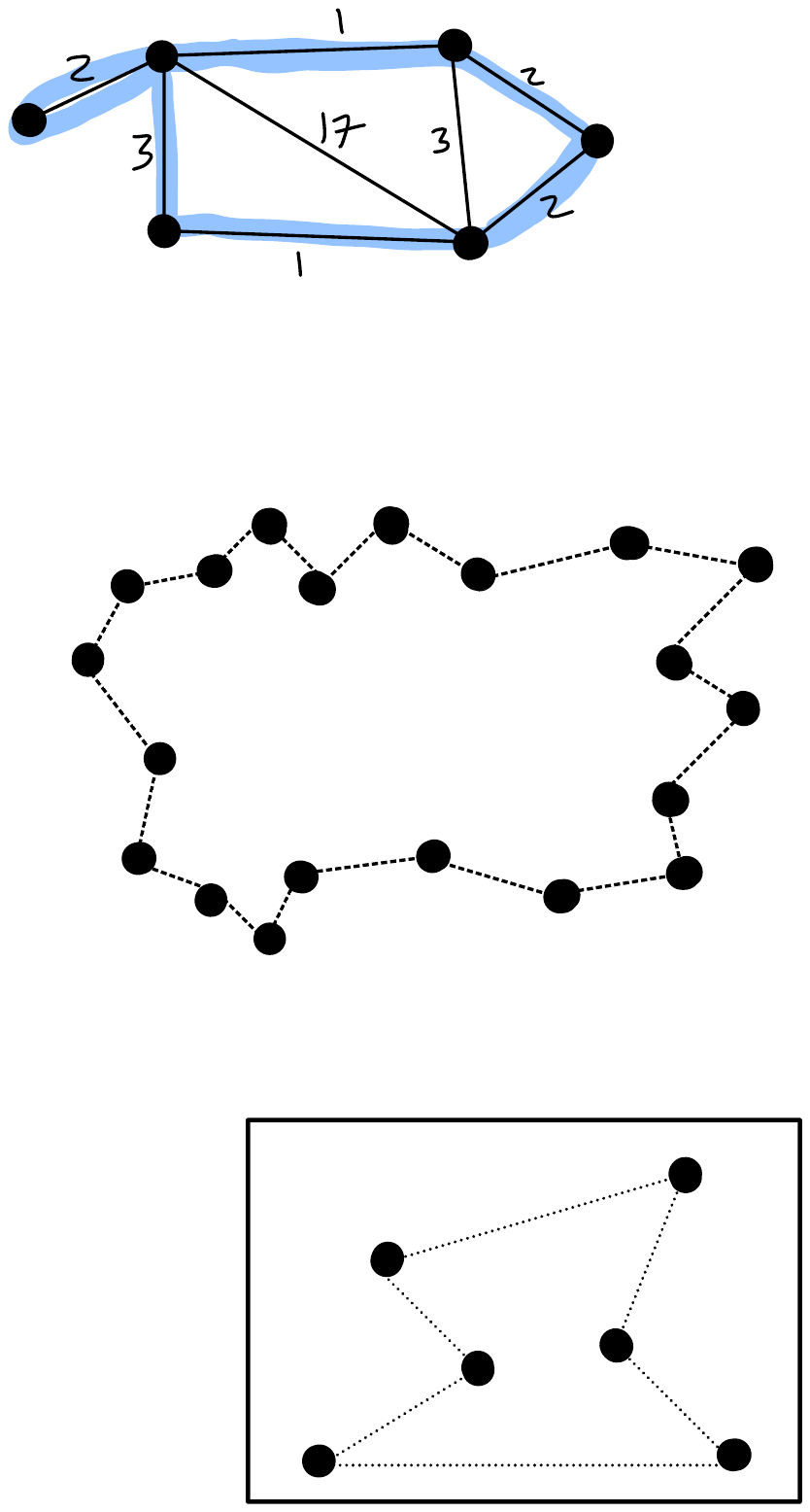}
\end{center}
\end{example}

In what contexts does the traveling salesperson problem arise?
Consider a salesperson visiting cities and then driving back home, while trying to use as little gas as possible.
Or consider a manufacturing plant --- a drill bit needs to drill holes on a metal plate and then return to its starting position before the next metal plate on the assembly line arrives.
Below we show a metal plate with six drilled holes.
The dotted lines show the tour traveled by the drill bit.
As this tour by the drill bit is repeated thousands of times per day in the manufacturing plant, it is important to make the tour as short as possible.
\begin{center}
\includegraphics[width=2.2in]{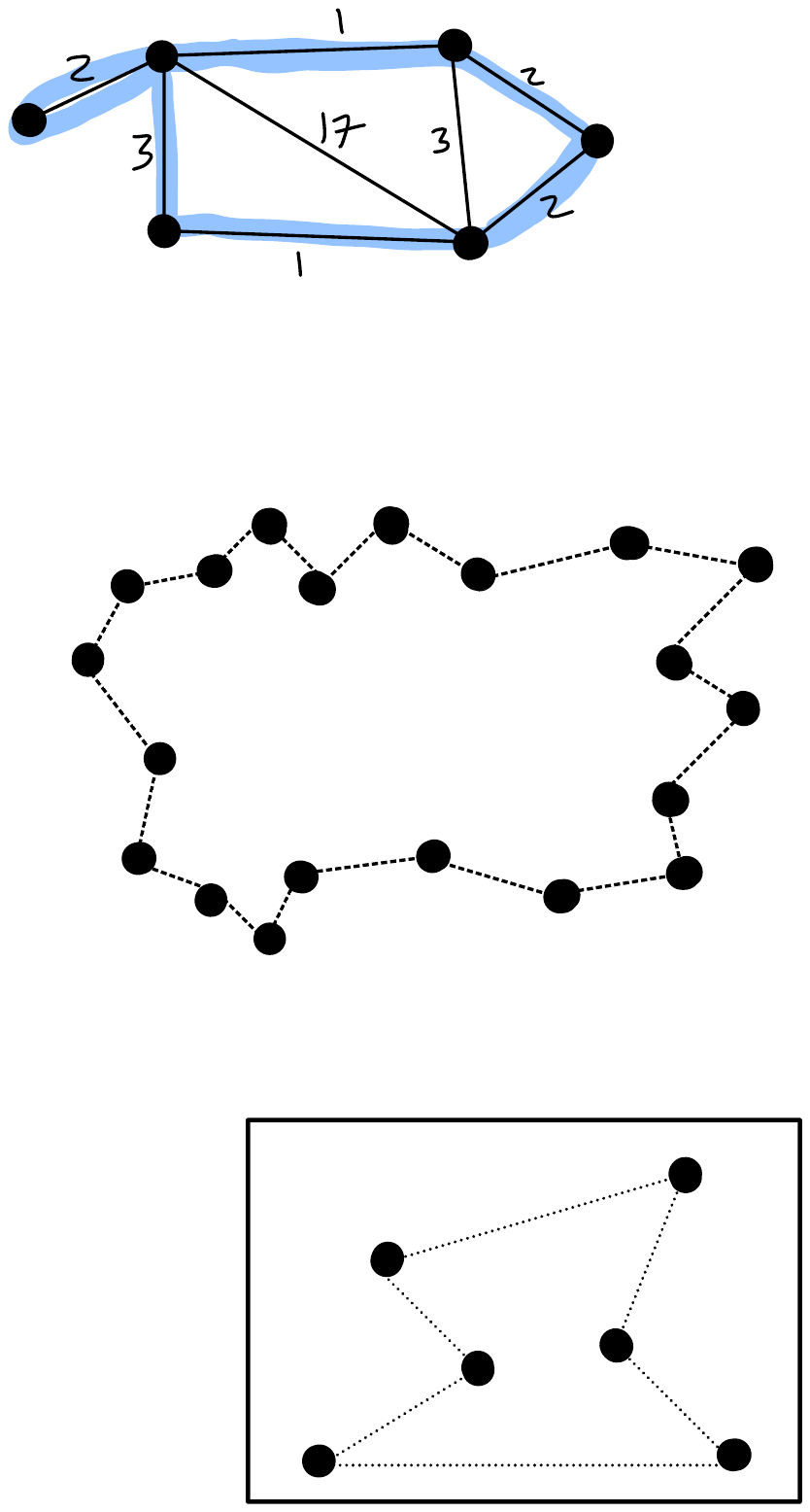}
\end{center}

In Section~\ref{sec:minimal-spanning-trees}, we saw that finding a minimal spanning tree can be solved via a greedy algorithm, namely Kruskal's algorithm.
By contrast, greedy approaches for finding an optimal traveling salesperson tour fail miserably.
Indeed, if you try to find a tour by first adding in the cheapest possible edges, you may be forced later on to choose extremely expensive edges, leading to a suboptimal solution.

In general, it is very hard to find an optimal solution to the traveling salesperson problem.
Finding an optimal tour is an \emph{NP-hard problem}, which roughly speaking means that the running time of a computer program is likely super-polynomial in terms of the number of vertices in a graph.

\subsection*{Triangle inequality}

A weighted complete graph $G$ satisfies the \defn{triangle inequality} if for any triangle (or subgraph $C_3$) in the graph $G$, the edge weights $a$, $b$, and $c$, of this triangle satisfy $a\le b+c$, $b\le a+c$, and $c\le a+b$.

\begin{center}
\includegraphics[width=1.5in]{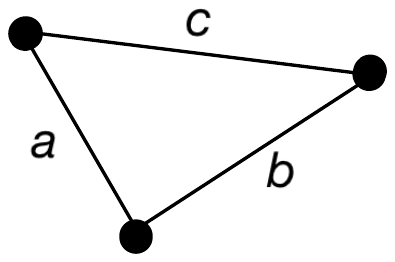}
\end{center}
In other words, the cost of traversing one side of a triangle is never more expensive than the cost of traversing the other two sides of that triangle.
Complete graphs drawn in the plane with edge weights given by Euclidean distances always satisfy the triangle inequality.

In the tree shortcut algorithm, described below, we will find a way to approximate an optimal traveling salesperson tour in a complete graph that satisfies the triangle inequality.  While this tour need not be optimal, we will show that it is optimal up to a factor of 2, so long as the weighted graph $G$ is a complete graph $K_n$ and satisfies the triangle inequality.

\vspace{3mm}
\noindent\underline{\textbf{Tree Shortcut Algorithm}}
\begin{itemize}
\item[(a)] Find a minimal spanning tree for the complete weighted graph $G$.
\item[(b)] Consider the tour that wraps around the outside of the minimal spanning tree, crossing every edge in this tree twice.
\item[(c)] When traversing the tour in (b), whenever possible skip visiting a vertex that has already been visited.
\end{itemize}
This algorithm is drawn below for the case of a complete graph on 7 vertices, with edge labels given by the Euclidean distances between the points.
The (arbitrary) starting edge we use to pass from step (b) to step (c) is indicated by the red arrow.
\begin{center}
\includegraphics[width=4.5in]{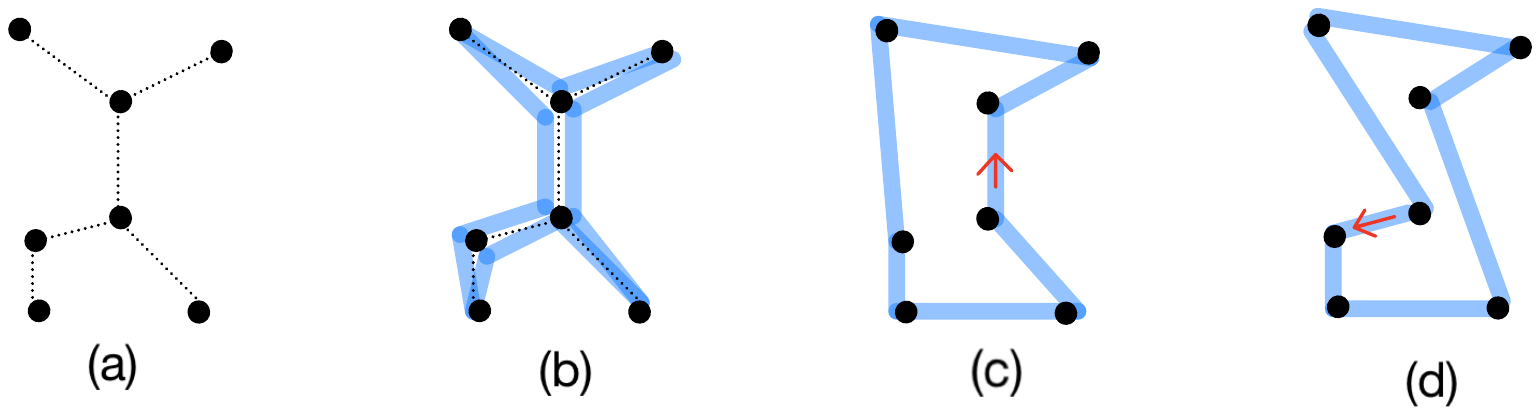}
\end{center}

We note that there is not a unique way to proceed from step (b) to step (c).
Indeed, if you start your shortcut tour by starting at a different edge (indicated by the red arrow), then you may end up with a different shortcut tour, as drawn below.

\begin{center}
\includegraphics[width=4.5in]{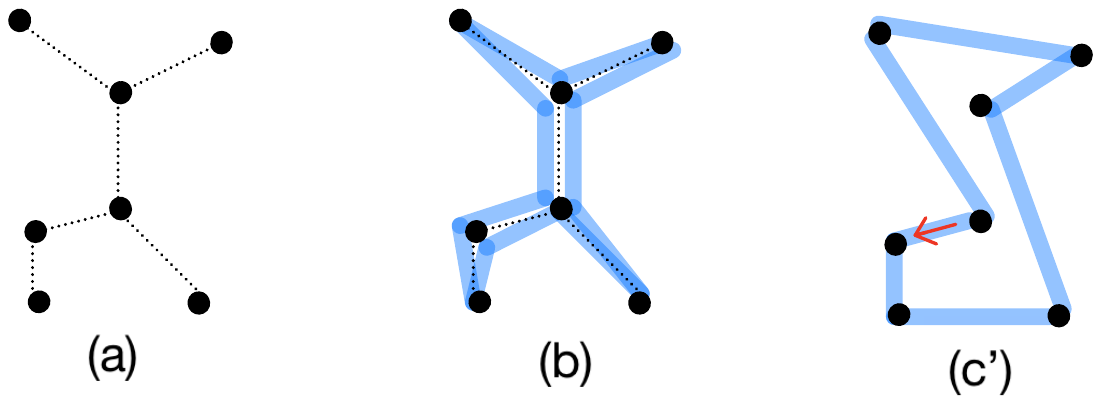}
\end{center}

\begin{theorem}
\label{thm:tree-shortcut}
If the nonnegative costs in a weighted complete graph $G$ satisfy the triangle inequality, then the Tree Shortcut Algorithm finds a tour that costs at most twice as much as an optimal tour.
\end{theorem}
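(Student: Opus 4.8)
The plan is to sandwich the cost of the algorithm's output between two quantities tied to the minimal spanning tree $T$ produced in step (a). Write $\cost(T)$ for the total weight of $T$, let $W$ denote the walk from step (b) that wraps around $T$ crossing each edge twice, and let $S$ denote the final shortcut tour from step (c). Let $T^*$ be an optimal tour. The goal is to establish the two inequalities
\[
\cost(T) \le \cost(T^*) \qquad \text{and} \qquad \cost(S) \le 2\,\cost(T),
\]
and then chain them to conclude $\cost(S) \le 2\,\cost(T) \le 2\,\cost(T^*)$.

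First I would prove the lower bound $\cost(T) \le \cost(T^*)$. The optimal tour $T^*$ is a closed walk visiting every vertex, so the set of edges it uses forms a connected spanning subgraph of $G$. Every connected graph contains a spanning tree (this was shown earlier in the trees chapter), so $T^*$ contains a spanning tree $T'$ as a subgraph. Since all edge weights are nonnegative, $\cost(T') \le \cost(T^*)$ (the tour may reuse edges and can only have more total weight than the distinct edges of $T'$). Because $T$ is a \emph{minimal} spanning tree, $\cost(T) \le \cost(T')$, and combining gives $\cost(T) \le \cost(T^*)$.

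Next I would handle the upper bound. The walk $W$ in step (b) traverses every edge of $T$ exactly twice, so $\cost(W) = 2\,\cost(T)$ immediately. The content is in step (c): I must show that each shortcut \textbf{does not increase} the cost, which is exactly where the triangle inequality is used. When the tour skips a block of already-visited vertices, it replaces a sequence of edges $v_{i_0}v_{i_1}, v_{i_1}v_{i_2}, \ldots, v_{i_{k-1}}v_{i_k}$ along $W$ by the single direct edge $v_{i_0}v_{i_k}$ (which exists because $G$ is complete). By repeated application of the triangle inequality, $\cost(v_{i_0}v_{i_k}) \le \cost(v_{i_0}v_{i_1}) + \cdots + \cost(v_{i_{k-1}}v_{i_k})$, so each replacement weakly decreases the total. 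Applying this to every shortcut yields $\cost(S) \le \cost(W) = 2\,\cost(T)$.

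The main obstacle I expect is the step (c) estimate: one must argue cleanly that the shortcut tour really is obtained from $W$ by a sequence of such edge-block replacements, and that a block of skipped vertices collapses to a single direct edge bounded by the triangle inequality. I would make this rigorous by induction on the number of shortcuts, with the inductive step being a single application of the triangle inequality to collapse one skipped vertex at a time, so that skipping a run of several vertices follows by iterating. Once both inequalities are in hand, the conclusion $\cost(S) \le 2\,\cost(T^*)$ is immediate.
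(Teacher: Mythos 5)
Your proposal is correct and follows essentially the same approach as the paper: the paper also sandwiches the output cost via the two inequalities $\cost(\text{output}) \le 2M \le 2\cdot\cost(\text{optimal tour})$, proved as two separate lemmas (the tour-versus-spanning-tree bound and the triangle-inequality shortcut bound). The only minor difference is in the lower bound, where you extract a spanning tree from the tour's edge set using nonnegativity, while the paper deletes a single edge from the tour to obtain a spanning path; both are routine variants of the same step.
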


The proof of this theorem will involve two lemmas: the cost of an optimal tour is at least as much as the cost of a minimal spanning tree, and twice the cost of a minimal spanning is at least as much as the cost of the output of the Tree Shortcut Algorithm.

\begin{lemma}\label{lem:tour-tree}
In a connected graph with nonnegative edge weights, the cost of an optimal tour is at least as much as the cost of a minimal spanning tree.
\end{lemma}

\begin{proof}
Let $M$ be the cost of a minimal spanning tree in $G$.
We claim
\[
\text{[cost of an optimal tour]}
\ge\ \text{[cost of an optimal tour minus any edge]} \ge M.
\]
Indeed, the first inequality follows since removing an edge (with a nonnegative cost) cannot increase the total cost.
The second inequality follows since a tour minus an edge is a spanning tree (indeed it is a spanning path), and hence must cost at least as much as $M$, the cost of a \emph{minimal} spanning tree.
\end{proof}

\begin{lemma}\label{lem:tree-shortcut-twiceMST}
If the edge costs in a weighted complete graph $G$ satisfy the triangle inequality, then the Tree Shortcut Algorithm finds a tour that costs at most twice as much as a minimal spanning tree.
\end{lemma}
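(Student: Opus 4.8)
The plan is to track the cost through each stage of the algorithm and show it never exceeds $2M$, where $M$ denotes the cost of the minimal spanning tree produced in step (a). First I would observe that the tour in step (b) wraps around the outside of this tree, crossing each of its edges exactly twice; since a tree on $n$ vertices has $n-1$ edges by Theorem~\ref{thm:treeNumEdges}, this doubled walk has cost exactly $2M$. The whole argument then reduces to showing that the shortcutting in step (c) never increases the cost.

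The key tool will be a \emph{generalized triangle inequality}: in a complete graph $G$ satisfying the triangle inequality, for any walk $u = x_0, x_1, \ldots, x_k = w$, the direct edge $uw$ satisfies
\[ \cost(uw) \le \sum_{i=1}^{k} \cost(x_{i-1} x_i). \]
I would prove this by induction on $k$, with base cases $k = 1, 2$ being immediate (the second being exactly the given triangle inequality). For the inductive step I would apply the triangle inequality to write $\cost(x_0 x_k) \le \cost(x_0 x_{k-1}) + \cost(x_{k-1} x_k)$ and then bound $\cost(x_0 x_{k-1})$ by the inductive hypothesis; the edge $x_0 x_{k-1}$ exists precisely because $G$ is complete.

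With this in hand, I would analyze step (c). Each time the walk from (b) is about to revisit a vertex, we instead jump directly from the most recent new vertex $u$ to the next new vertex $w$, replacing an entire sub-walk from $u$ to $w$ (all of whose interior vertices have already been visited) by the single edge $uw$. This edge exists by completeness of $G$, and by the generalized triangle inequality its cost is at most the cost of the sub-walk it replaces. Since these shortcut sub-walks together account for every edge of the doubled walk of step (b), summing the inequalities shows that the output tour costs at most $2M$.

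The main obstacle will be the bookkeeping in this last step: I must argue carefully that step (c) genuinely produces a tour (here in fact a closed walk visiting each vertex exactly once) and that the replaced sub-walks form a disjoint decomposition of the step-(b) walk, so that the individual cost bounds add up without double-counting. Once the shortcuts are formalized as a partition of the doubled walk into consecutive segments, each bounded by a single shortcut edge, the inequality $\cost(\text{tour}) \le 2M$ follows immediately by summation.
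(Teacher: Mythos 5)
Your proposal is correct and takes essentially the same route as the paper's proof: step (b) produces a closed walk of cost exactly $2M$, and the triangle inequality ensures that the shortcutting in step (c) can only decrease the total cost. The only difference is one of detail --- the paper asserts the cost decrease in a single sentence, whereas your generalized triangle inequality (proved by induction on the length of the replaced sub-walk) supplies exactly the justification the paper leaves implicit.
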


\begin{proof}
Let $M$ be the cost of a minimal spanning tree in $G$.
Note that the tour produced by step (b) of the Tree Shortcut Algorithm is a tour of cost $2M$, twice the length of a minimal spanning tree.
We claim
\[
\text{[cost of output of Tree Shortcut Algorithm]} 
\le\ 2M.
\]
This follows from the triangle inequality assumption: when progressing from step (b) of the Tree Shortcut Algorithm (which is a tour of cost $2M$) to step (c), the cost of the tour can only decrease by the triangle inequality assumption.
\end{proof}

By combining these two lemmas, we obtain a proof of Theorem~\ref{thm:tree-shortcut}.

\begin{proof}[Proof of Theorem~\ref{thm:tree-shortcut}]
Let $M$ be the cost of a minimal spanning tree in $G$.
We claim
\[
\text{[cost of output of Tree Shortcut Algorithm]} 
\le\ 2M
\le\ 2\cdot\text{[cost of optimal tour]}.
\]
Indeed, the first inequality follows from Lemma~\ref{lem:tree-shortcut-twiceMST}, and the second inequality follows from Lemma~\ref{lem:tour-tree}.
\end{proof}

\begin{question}
Can you find an example to show that Theorem~\ref{thm:tree-shortcut} may fail if the edge weights don't satisfy the triangle inequality?
\end{question}

\subsection*{Exercises}

\begin{enumerate}

\item Describe a real-world example that interests you in which you might need to solve a traveling salesperson type problem.

\item Let $K_4$ be the complete graph on 4 vertices, with vertices labeled $0$, $1$, $2$, and $3$.
How many different tours of $K_4$ are there that start and end at vertex $0$, and that visit every other vertex exactly once?

\item Let $K_{3,3}$ be the complete bipartite graph with 3 vertices on each side.
Fix a single arbitrary vertex $v$ in $K_{3,3}$.
How many different tours of $K_{3,3}$ are there that start and end at vertex $v$, and that visit every other vertex exactly once?


\item Let $G$ be a weighted connected graph in which all edge weights are nonnegative.
Prove that the cost of a minimal spanning tree is less than or equal to the cost of an optimal tour (i.e., a tour solving the Traveling Salesperson Problem).

\item Let $G$ be a weighted connected graph in which all edge weights are positive. Show that the cost of a minimal spanning tree is \emph{strictly} smaller ($<$ instead of $\le$) than the cost of an optimal tour (i.e., a tour solving the Traveling Salesperson Problem).

\item Show by an example that if the triangle inequality is not true on a weighted complete graph, then a tour found by the Tree Shortcut Algorithm can be longer than 1000 times an optimal tour.

\end{enumerate}

\section{Matchings}\label{section:matchings}

\begin{videobox}
\begin{minipage}{0.1\textwidth}
\href{https://www.youtube.com/watch?v=JgWQPKVOl3o}{\includegraphics[width=1cm]{video-clipart-2.png}}
\end{minipage}
\begin{minipage}{0.8\textwidth}
Click on the icon at left or the URL below for this section's short lecture video. \\\vspace{-0.2cm} \\ \href{https://www.youtube.com/watch?v=JgWQPKVOl3o}{https://www.youtube.com/watch?v=JgWQPKVOl3o}
\end{minipage}
\end{videobox}

A \textit{matching} in a graph $G=(V,E)$ is a subset $S$ of the edge set $E$ such that no two of the edges in $S$ share a common vertex.  

For instance, two matchings of the graph $K_5$ are shown below, where the edges we chose to be in the subset are shaded.  The first matching contains only one edge and the second has two:
\begin{center}
    \includegraphics[width=.5\textwidth]{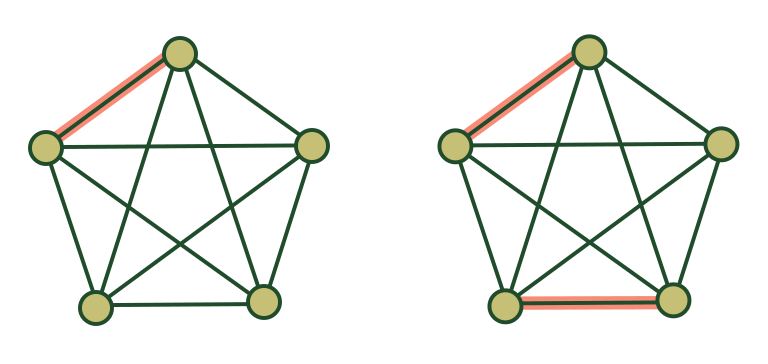}
\end{center}

An important problem in combinatorial optimization is to find 
matchings of the largest possible size. 
In other words, we would like as many vertices as possible to be an endpoint of an edge in the matching.

In the next two definitions, we will define maximal and maximum matchings.
Even though the words differ in only two letters --- maxim\emph{\textbf{al}} vs.\ maxim\emph{\textbf{um}} --- they mean different things!
As you will see, every maximum matching is also maximal, but not necessarily vice versa.

\begin{definition}
  A matching is \textbf{maximal} if it is not contained in a larger matching.
\end{definition}

For instance, the first matching of $K_5$ drawn above is contained in the second matching.  Thus it is not maximal.  However, the second drawing is maximal because there is only one vertex remaining, and connecting it to any other vertex with a shaded edge will result in two bold edges sharing a vertex.

However, it is possible to have a maximal matching that does not include as many edges as possible.  

\begin{example} \label{EP4maxmax}
Consider the path graph $P_4$ on $4$ vertices.  Here are two matchings:
\begin{center}
    \includegraphics[width=.5\textwidth]{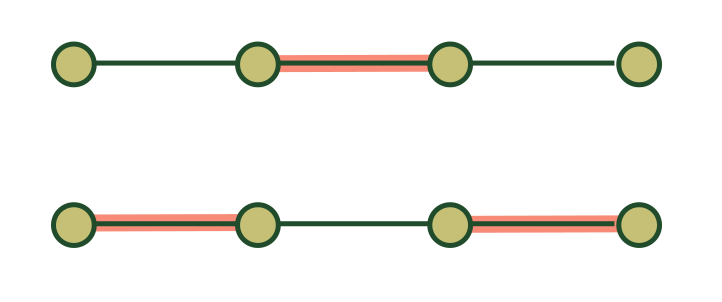}
\end{center}

Both are maximal, because we cannot add any more edges to make a larger matching.
But the second one includes more edges than the first one.
\end{example}

\begin{definition}
  A \textbf{maximum} matching is a matching with the largest possible number of edges.
  A \textbf{perfect} matching is a matching such that every vertex of $V$ 
  is an endpoint of (exactly one) edge in $S$.
\end{definition}

In Example~\ref{EP4maxmax}, the first matching is maximal but not maximum and the second matching is maximal, maximum and perfect.

In $K_5$, every maximal matching has size $2$, and this is the largest possible size of a matching, so it is also a maximum matching.  Exactly four vertices of $K_5$ are contained in a maximum matching of $K_5$, so the maximum matchings are not perfect. 

One algorithm for finding a maximal matching is the \textbf{greedy algorithm}.  Label the vertices $1,2,3,\ldots,n$ and choose the edge $(i,j)$ where $i$ is minimal and $j$ is smallest among all edges from $i$.
Then consider the subgraph formed by deleting vertices $i$ and $j$, and again choose the minimal vertex and edge out of it, and so on until no more edges can be added to make a larger matching.  By definition, this process results in a maximal matching.

However, finding a \textit{maximum} matching is in general much harder.  The above greedy algorithm will not necessarily result in a maximum matching.  The \textit{Hopcroft-Karp} algorithm is a general algorithm for finding a maximum matching in a graph, and while we will not be describing the full algorithm here, we give a hint of the strategy using the key idea of \textit{augmenting paths}.

The idea is to start with a matching that is not maximum and replace it with a different matching that contains one more edge.

\begin{definition}
Let $G$ be a graph and let $M$ be a matching of $G$.
Suppose that $G$ has (at least two) 
vertices $v$ and $w$
that are not endpoints of edges in $M$.
Given this data, an \textit{augmenting path} is a path that starts at $v$ and ends at $w$ that contains an odd number $m$ of distinct edges $e_1,e_2,\ldots,e_m$, whose endpoints are all distinct, and 
such that the edges alternate between 
being in $M$ and not in $M$:
$$e_1\not\in M, e_2\in M, e_3\not\in M, e_4\in M,\ldots,e_m\not\in M.$$  
\end{definition}

\begin{example} \label{Eaugment}
Consider the complete bipartite graph $B_{4,4}$, where the vertices on the left are labeled $A,B,C,D$ and the vertices on the right are labeled $1,2,3,4$.
Let $G$ be the graph obtained from this by removing the edge $\{4,A\}$.
We start with the matching $M$ of $G$ consisting of the edges $\{1,B\}$, $\{2,C\}$, and $\{3,D\}$.
This is a maximal matching, because  the edge $\{4,A\}$ is missing so there 
is no way to increase the number of matched vertices.
On the other hand, it is not a maximum matching.

Here is an example of an augmenting path with edges: \[\{A,1\}, \ \{1,B\}, \ \{B,2\}, \ \{2,C\}, \ \{C,3\}, \ \{3,D\}, \ \{D,4\}.\] 
We illustrate it below, with the black edges being the edges in the path and the red overlined edges being the ones in the matching $M$.
\begin{center}
\includegraphics[width=2in]{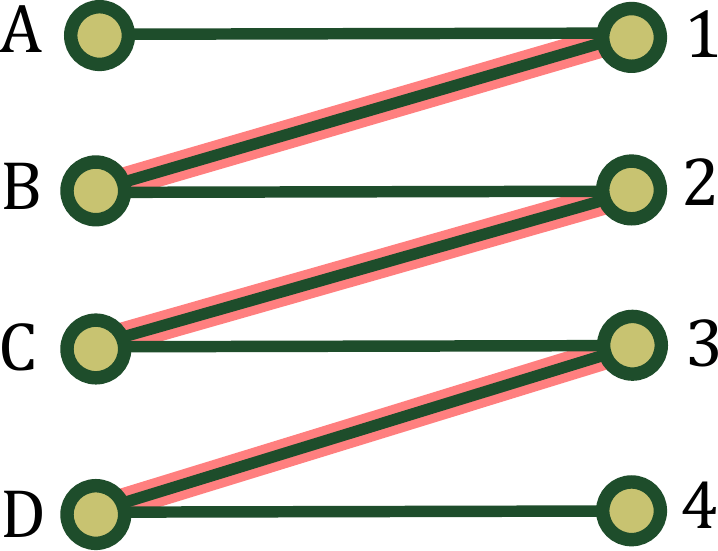}
\end{center}
\end{example}

Suppose $P$ is an augmenting path of a matching $M$.  The set $P-M$ contains all the edges of $P$ that are not in $M$.  We can see that $P-M$ is also a matching of $G$ and the number of edges in $P-M$ is one larger than the number of edges in $M$. 

For instance, in Example~\ref{Eaugment}, $P-M$ contains the edges 
\[\{A,1\}, \ \{B,2\}, \ \{C,3\},
\ \{D,4\},\]
so $P-M$ is a matching of size $4$, which is a maximum matching (and a perfect matching).

\begin{example}
Let $M$ be the set of red edges in the graph below.
Find an augmenting path for $M$ and 
use it to find a larger matching than $M$?  
\begin{center}
\includegraphics[width=2in]{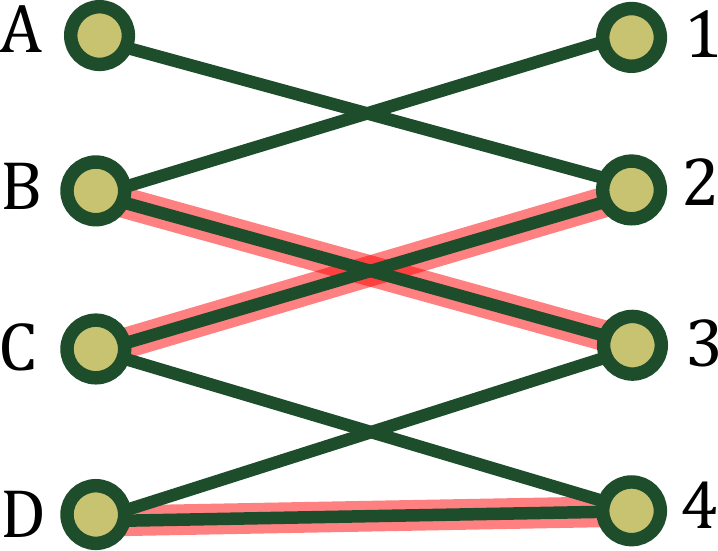}
\end{center}
\end{example}

\subsection{Matchings of bipartite graphs}

In this section, we describe \textit{Hall's Marriage Theorem}
for matchings in bipartite graphs. 
To illustrate it, we start with an example.

\begin{example}
Suppose new CSU students $a$, $b$, $c$,
and $d$ all went to the 
 same high school, and so they would rather not be in the same dorm as each other so that they get more opportunities to meet new people.  

They fill out their dorm preferences form, and unfortunately have quite similar preferences for dorms!  Student $a$ checks only Parmelee, $b$ only checks Braiden, $c$ checks Academic Village, Braiden, and Corbett, and $d$ checks Braiden and Parmelee.  Is there a way to match each of the students to a dorm so that none of the four students are in the same dorm as each other?

To answer this, we can model it as a bipartite graph:

\begin{center}
    \includegraphics[width=.5\textwidth]{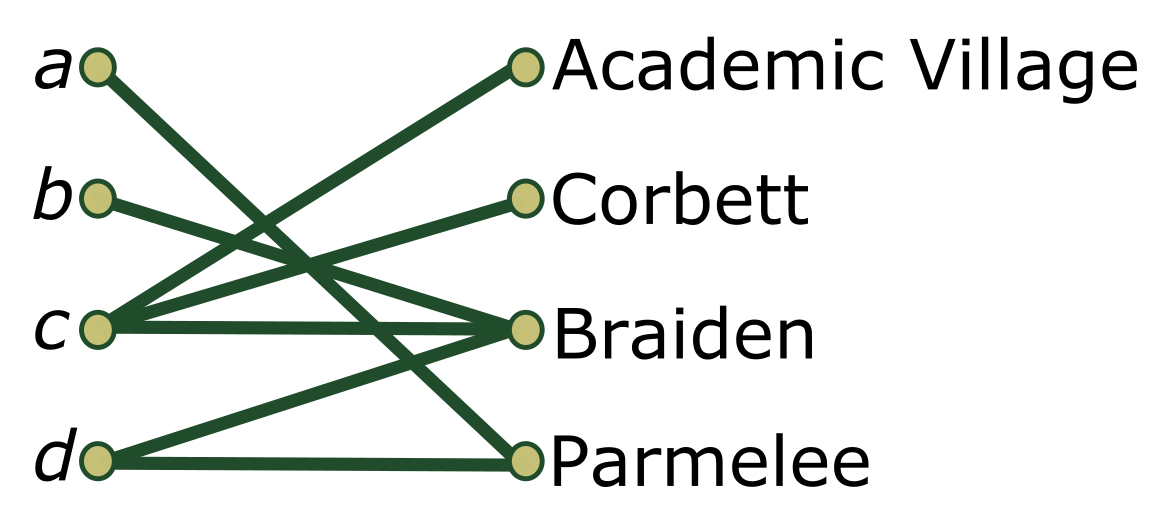}
\end{center}

If we focus on students $a,b,d$, we notice that there is no way to do it; $a$ needs to be in Parmelee to have their preference satisfied, $b$ needs to be in Braiden, and $d$ needs to be in one or the other, forcing it to overlap with $a$ or $b$.  
\end{example}

\begin{example}
Realizing the issue in the above example, the four students decide to change their preferences form so that each checks exactly two preferred dorms.
Student $a$ checks Academic Village and Parmelee, $b$ checks Braiden and Corbett, $c$ checks Academic Village and Corbett, and $d$ checks Braiden and Parmelee.  Is there a way to match them to distinct dorms now? 

\begin{center}
    \includegraphics[width=.5\textwidth]{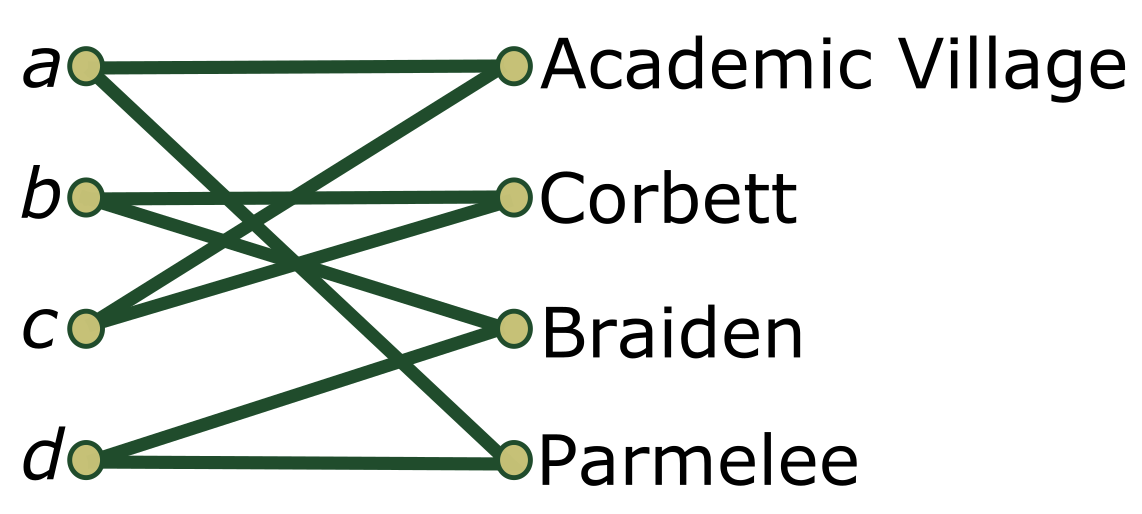}
\end{center}

One way to do it is to match $a$ to Academic Village, $b$ to Braiden, $c$ to Corbett, and $d$ to Parmelee.  But there are other possibilities too; do you see another?
\end{example}

There are many situations in which is it important to find suitable matchings --- for example when matching medical residents to desired medical residencies, when matching pets to suitable homes, and when matching prospective employees to appropriate jobs.

Notice that in the two examples above, we were trying to find a matching in a bipartite graph for which every element on one side of the graph is matched with an element on the other side.  We call such matchings \textit{saturated}, and any saturated matching are necessarily a maximum matching.

\begin{definition}
  Let $G$ be a bipartite graph with left set $L$, right set $R$, and $|L|\le |R|$.  Then any matching of $G$ that matches every element of $L$ with some element of $R$ is called a \textbf{saturated} matching.
\end{definition}

Hall's Marriage Theorem tells us exactly when a saturated matching exists in a bipartite graph.  To state it, we need the concept of the \textit{neighborhood} of a set of vertices in a graph.

\begin{definition}
 Let $G=(V,E)$ be a graph.   The \textbf{neighborhood} of a subset $S\subseteq V$ of the vertices, denoted $N_G(S)$, is the set of all vertices $v$ such that $v$ is connected by an edge to some element of $S$.
\end{definition}

For example, when $S=\{v\}$ contains only one vertex $v$, then the neighborhood of $S$ is the set of edges adjacent to $v$ and the size of the neighborhood is the degree of $v$.

\begin{theorem}
(Hall's Marriage Theorem) Let $G$ be a bipartite graph with left set $L$, right set $R$, and $|L|\le |R|$.
Then a saturated matching exists if and only if, for every subset $X\subseteq L$, $$|X|\le |N_G(X)|.$$
\end{theorem}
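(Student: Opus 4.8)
The plan is to prove the two directions separately. The forward direction is routine: suppose a saturated matching $M$ exists. Each vertex of a subset $X \subseteq L$ is matched by $M$ to a distinct vertex of $R$, and every such partner lies in $N_G(X)$ by definition of the neighborhood; since the partners are distinct, $|N_G(X)| \ge |X|$. So I would dispatch this implication in a sentence or two.

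The substance is the reverse direction: Hall's condition (that $|X| \le |N_G(X)|$ for every $X \subseteq L$) implies a saturated matching exists. I would prove this by strong induction on $|L|$, as in Remark~\ref{Rstrongind}. The base case $|L| = 1$ is immediate, since Hall's condition applied to $X = L$ forces the single left vertex to have at least one neighbor, which we match it to. For the inductive step, I would split into two cases according to how tight Hall's condition is. Either (Case 1) every nonempty proper subset $X \subsetneq L$ satisfies the strict inequality $|N_G(X)| \ge |X| + 1$, or (Case 2) some nonempty proper subset $X \subsetneq L$ is tight, meaning $|N_G(X)| = |X|$. These cases are exhaustive, because the negation of Case 1 combined with Hall's condition forces equality for some proper subset.

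In Case 1, I pick any $a \in L$ and any neighbor $b$ of $a$ (which exists by Hall's condition on $\{a\}$), match them, and delete both vertices to form $G'$ with left set $L \setminus \{a\}$. For any $X \subseteq L \setminus \{a\}$, deleting $b$ removes at most one vertex from its neighborhood, so $|N_{G'}(X)| \ge |N_G(X)| - 1 \ge |X|$; thus Hall's condition survives in $G'$, the induction hypothesis yields a saturated matching of $G'$, and adding the edge $ab$ saturates $G$. In Case 2, I split the graph in two. First, the subgraph $G_1$ on $X$ and $N_G(X)$ satisfies Hall's condition (for $Y \subseteq X$ all neighbors stay inside $N_G(X)$), so by induction it has a matching saturating $X$. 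Second, the subgraph $G_2$ on $L \setminus X$ and $R \setminus N_G(X)$ also satisfies Hall's condition: for $Y \subseteq L \setminus X$, applying Hall's condition to $X \cup Y$ in $G$ gives $|N_G(X \cup Y)| \ge |X| + |Y|$, and since $N_{G_2}(Y) = N_G(X \cup Y) \setminus N_G(X)$ this forces $|N_{G_2}(Y)| \ge |Y|$. By induction $G_2$ has a matching saturating $L \setminus X$, and since the two matchings use disjoint vertex sets, their union saturates all of $L$.

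The main obstacle is verifying that Hall's condition is inherited by the two pieces in Case 2, especially the counting identity $|N_{G_2}(Y)| = |N_G(X \cup Y)| - |N_G(X)|$, which relies on the inclusion $N_G(X) \subseteq N_G(X \cup Y)$ together with the disjointness of $X$ and $Y$; I would lay these set identities out carefully. An alternative route would use the augmenting paths from Section~\ref{section:matchings}: start from any matching and repeatedly enlarge it, using Hall's condition to guarantee that an augmenting path exists whenever some left vertex is unmatched. However, turning that existence claim into a rigorous argument is itself delicate, so I would favor the induction above.
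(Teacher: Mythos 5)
Your proposal is correct, and it goes beyond what the paper itself does: the paper explicitly declines to prove Hall's Marriage Theorem (``The proof \ldots is rather complicated and we will not provide the full details here''), offering only the pigeonhole intuition for the necessity direction. Your forward direction coincides with that sketch --- matched partners are distinct and lie in $N_G(X)$. The substance you add is the sufficiency direction, and your argument is the classical Halmos--Vaughan induction, which checks out: the two cases are exhaustive because failure of the strict inequality plus Hall's condition forces a tight set; in Case 1 the strict inequality $|N_G(X)|\ge |X|+1$ for nonempty proper $X$ exactly absorbs the loss of the single deleted right vertex $b$, so Hall's condition survives in $G'$; and in Case 2 the identity $N_{G_2}(Y)=N_G(Y)\setminus N_G(X)=N_G(X\cup Y)\setminus N_G(X)$, together with tightness $|N_G(X)|=|X|$ and Hall's condition applied to $X\cup Y$, gives $|N_{G_2}(Y)|\ge |Y|$, while $G_1$ inherits Hall's condition trivially since $N_G(Y)\subseteq N_G(X)$ for $Y\subseteq X$. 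One small point worth making explicit in a final write-up: since the theorem as stated carries the hypothesis $|L|\le |R|$, you should verify this inequality for each subgraph on which you invoke the induction hypothesis --- in Case 1 it reads $|L|-1\le|R|-1$, in Case 2 it reads $|X|\le|N_G(X)|$ and $|L|-|X|\le |R|-|X|$ --- all of which hold immediately, so this is a matter of completeness rather than a gap. Your closing remark is also sound: the augmenting-path route (via Section~\ref{section:matchings}) works but requires a careful existence argument, so the induction is the cleaner choice given the tools the book has developed.
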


The proof of Hall's Marriage Theorem is rather complicated and we will not provide the full details here.  But as intuition for one direction of the statement, suppose there is a subset $X$ of the left set whose neighborhood $N_G(X)$ is strictly smaller than $X$.  Then the points in $X$ cannot all be matched to distinct points in $R$ by the pigeonhole principle.  So the inequality $|X|\le |N_G(X)|$ is necessary for a saturated matching to exist.  The interesting fact is that this condition is also sufficient.

\begin{example}
In the first examples of students and dormitories above, can you find a subset $X$ of the students for which the inequality $|X|\le |N_G(X)|$ is violated?  In the second example, can you show that the inequalities are all satisfied?
\end{example}

\subsection*{Exercises}

\begin{enumerate}
    \item The size of a maximum matching on the complete graph $K_6$?							
\item What is a maximum matching on the complete graph $K_7$?							
\item What is the size of a maximum matching on the path graph $P_6$?							
\item What is the smallest size of a maximal matching on the path graph $P_6$?							
\item What is the size of a maximum matching on the complete bipartite graph $B_{20,21}$?	
\item If $G$ has a perfect matching, explain why the number of vertices of $G$ is even.

\item In the path graph $P_5$, show that every maximal matching is maximum (but not perfect).
\item In the cycle graph $C_5$, show that every maximal matching is maximum (but not perfect).
\item For the path graphs 
$P_6, \ldots, P_{11}$,
find the size of a maximum matching and the smallest possible size of a maximal matching.

\item For the cycle graphs 
$C_6, \ldots, C_{11}$,
find the size of a maximum matching and the smallest possible size of a maximal matching.

    \item Find the size of a maximum matching for the path graph $P_n$.
    Your answer should depend on whether $n$ is even or odd.
    
    \item What is the smallest possible size of a maximal matching for $P_{n}$?
    Your answer should depend on the congruence of $n$ modulo $3$.
    \item Find the size of a maximum matching for the cycle graph $C_n$.
    Your answer should depend on whether $n$ is even or odd.
    
    \item What is the smallest possible size of a maximal matching for $C_{n}$?
    Your answer should depend on the congruence of $n$ modulo $3$.

    \item Does the below bipartite graph have a saturated matching?
    \begin{center}
    \includegraphics[width=1.2in]{08-GraphsWalksCycles/graph-bipartite.pdf}
    \end{center}
    Each node on the left represents a pet, each node on the right represents a home, and tehre is an edge when that home would be suitable for the pet.
    We would like to know if each pet can find a different home.
    
    \item Does the below bipartite graph have a saturated matching?
    \begin{center}
    \includegraphics[width=1.2in]{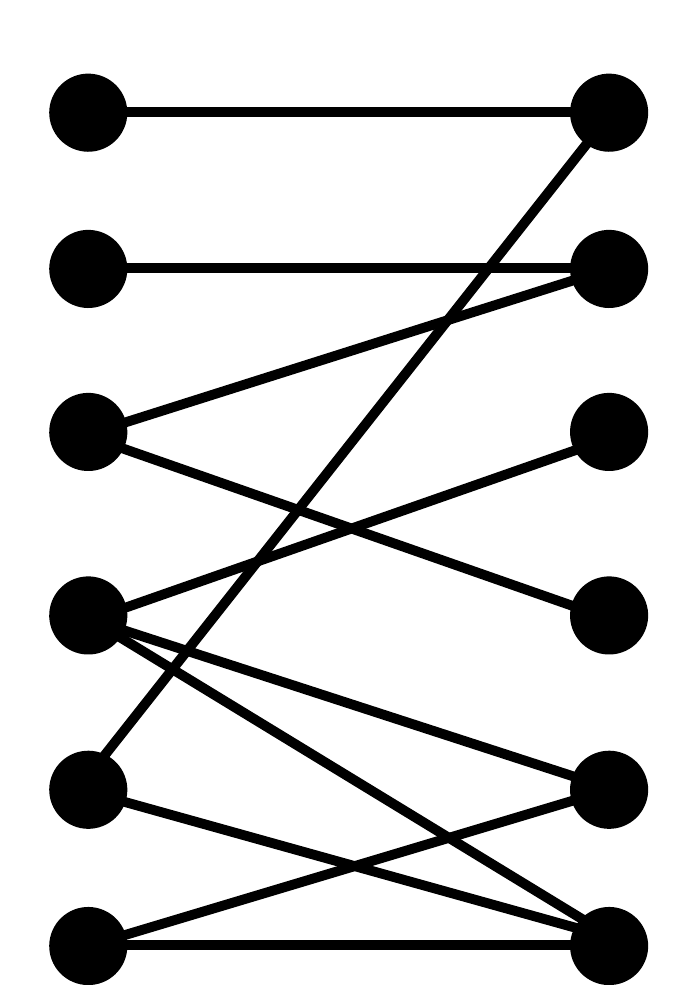}
    \end{center}
    Each node on the left represents a person, each node on the right represents a job, and there is an edge when a job would be suitable for that person.
    We would like to know if each of the jobs can be simultaneously filled, with each person having exactly one job.
    
    \item Does the below bipartite graph have a saturated matching?
    \begin{center}
    \includegraphics[width=1.2in]{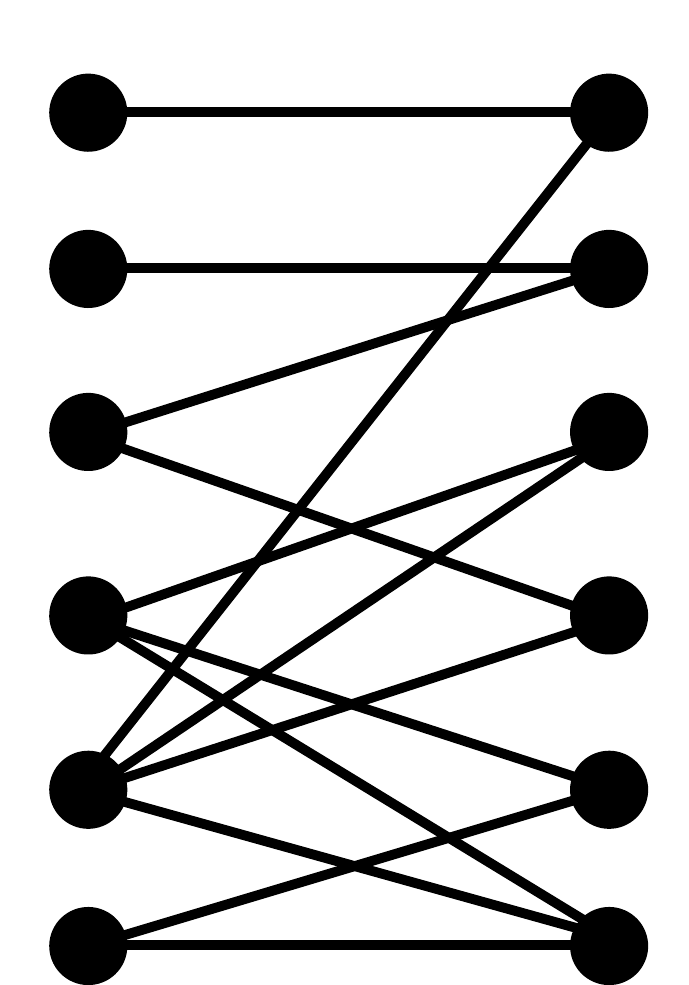}
    \end{center}
    Each node on the left represents a person, each node on the right represents a job, and there is an edge when a job would be suitable for that person.
    We would like to know if each of the jobs can be simultaneously filled, with each person having exactly one job.

    \item 
    Fact: in a complete bipartite graph $B_{n,m}$ with $n\le m$, a maximum matching has size $n$.
    Explain how that fact follows from Hall's Marriage Theorem.
    
    \item Let $G$ be a bipartite graph with the sets of left and right vertices having equal size $n$ and every vertex having the same degree $k\ge 1$. 
    Use Hall's Marriage Theorem to show that $G$ has a perfect matching.

\end{enumerate}


\section{Ramsey theory}

We conclude this chapter with a famous topic in graph optimization known as \textit{Ramsey theory}.  We start with a  classical example.

\begin{example}\label{ex:6-people}
  Six people walk into a room; some of the people know each other already and others do not.  Prove that either some three of them all mutually know each other, or some three of them all mutually do not know each other.
\end{example}

To get a handle on the above problem, we can first model it as a graph theory problem; consider a graph on $6$ vertices that represent the people, and where each pair of people is connected either by a solid or dashed edge according to whether they know each other or not, as shown:

\begin{center}
    \includegraphics{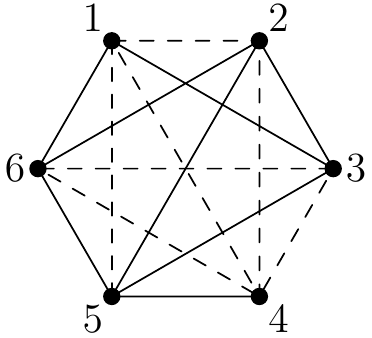}
\end{center}

In the above graph, notice that the vertices numbered $3,4,6$ all mutually do not know each other.  Their induced subgraph is a smaller complete graph $K_3$, with all its edges the same ``color'' (where the two colors are solid and dashed in this case, but any two colors would work).  
To state our problem more precisely in terms of graphs, we make the following two definitions.

\begin{definition}
  An \textbf{$m$-clique} in a graph is a subgraph isomorphic to $K_m$.  
\end{definition}

\begin{definition}
  Suppose a graph $G$ has every edge assigned a color.  A \textbf{monochromatic $m$-clique} in $G$ is an $m$-clique all of whose edges are the same color.
\end{definition}

For example, the graph above has a monochromatic $3$-clique with vertices $3,4,6$, where the edges between them are all dashed.

\begin{question}
How many monochromatic $3$-cliques can you find in the graph above?
\end{question}

We can now rephrase Example \ref{ex:6-people} in terms of graph theory:

\begin{example}
\label{Ek6ramsey}
Prove that if each edge of $K_6$ is colored with one of two colors,
then there exists a monochromatic $3$-clique.
\end{example}

\begin{proof}[Proof of Example~\ref{Ek6ramsey}]
To prove this, pick one of the vertices $x$, and consider all $5$ edges attached to $x$.  Since each of these $5$ edges is either solid or dashed, by the Pigeonhole Principle, at least three edges share the same color.  By symmetry, we can assume that this color is solid, so $x$ is connected to at least three vertices, say $y,z,w$, by solid edges.

\begin{center}
    \includegraphics{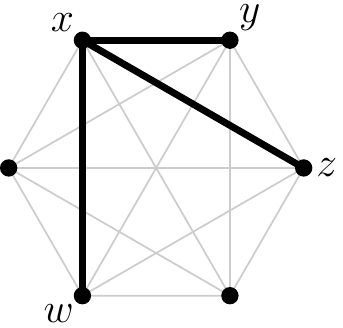}
\end{center}

Now, consider the edges in the triangle formed by $y,z,w$.
If any of these edges is solid, they form a solid triangle with $x$, and we have a monochromatic (solid) $3$-clique, as in the left first example below.
But if none of the three edges is solid, then the triangle $y,z,w$ is a monochromatic (dashed) $3$-clique, as in the right example below.
\begin{center}
    \includegraphics{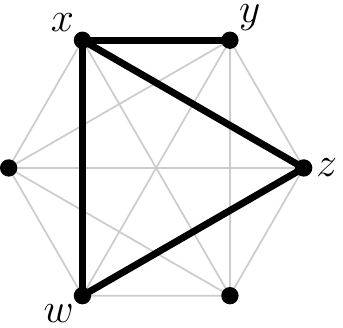} \hspace{2cm}\includegraphics{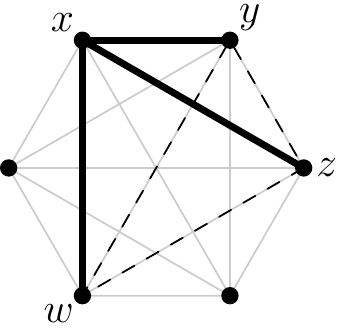}
\end{center}

So in all cases we do indeed have a monochromatic $3$-clique.
This completes the proof.
\end{proof}

We have shown that in a group of $6$ people, either some $3$ people know each other or some $3$ people all do not know each other.
Interestingly, $6$ people is the minimum number of people required for this fact to be true; it is possible to gather 5 people in a room such that no three all know each other and no three all do not know each other.  (See Exercise~\ref{ex:K5no3clique} below.)

\subsection{Investigation: Ramsey numbers}

The example above can be generalized as follows.  If we color the edges of the complete graph $K_n$ with two colors, is there guaranteed to be a monochromatic $m$-clique, for various values of $n$ and $m$?
And, for a given $m$, what is the smallest value of $n$ for which this is true?

We can further generalize this question using the notion of a \textit{Ramsey number}.

\begin{definition}
  The \textbf{Ramsey number} $R(m_1,m_2)$ is the smallest number $n$ such that any coloring of the edges of the complete graph $K_n$ using $2$ colors $c_1$ and $c_2$ must have one of the following:
  \begin{itemize}
      \item A monochromatic $m_1$-clique of color $c_1$, or
      \item A monochromatic $m_2$-clique of color $c_2$.
  \end{itemize}
\end{definition}

\begin{example}
  The number $R(3,3)$ is the smallest number $n$ for which any coloring of the edges of $K_n$ in two colors contains a monochromatic $3$-clique (of one color or the other).
  We found in Example \ref{Ek6ramsey} and Exercise~\ref{ex:K5no3clique} (below) that the smallest such $n$ is $6$, so $R(3,3)=6$.
\end{example}

\begin{example}
  Let's compute $R(2,2)$, which is the smallest number $n$ such that any coloring of the edges of $K_n$ in two colors contains a monochromatic $2$-clique (of one color or the other).  A $2$-clique is just an edge, so we only need one edge of any color, and so $n=2$ is the smallest possible number that satisfies the condition.  Therefore $R(2,2)=2$.
\end{example}

We can define Ramsey numbers not only for 2 colors, but also more generally for $k$ colors.

\begin{definition}
  The \textbf{Ramsey number} $R(m_1,m_2,\ldots,m_k)$ is the smallest number $n$ such that any coloring of the edges of the complete graph $K_n$ using $k$ colors $c_1,c_2,\ldots,c_k$ must have one of the following:
  \begin{itemize}
      \item A monochromatic $m_1$-clique of color $c_1$, or
      \item A monochromatic $m_2$-clique of color $c_2$, or
      \item $\cdots$
      \item A monochromatic $m_k$-clique of color $c_k$.
  \end{itemize}
\end{definition}

\begin{example}
  Let's compute $R(3,3,2)$.  This is the smallest number $n$ such that any coloring of the edges of $K_n$ using colors red, green, and blue has either a red $3$-clique, a green $3$-clique, or a blue $2$-clique.  
  
  First note that since there is an edge-colored graph $G$ on $5$ vertices using only red and green colors that does not have a red triangle or a green triangle (see Exercise 1 below), this graph $G$ does not have a red $3$-clique, a green $3$-clique, or a blue $2$-clique.  So $R(3,3,2)>5$.
  
  We can now show that $R(3,3,2)=6$.  First, if there is any blue edge in a coloring of $K_6$, that is a blue $2$-clique.  On the other hand, if there are no blue edges, then we have a coloring of the edges either red or green, and we know such a coloring either had a red $3$-clique or green $3$-clique since $R(3,3)=6$.  Therefore any coloring either has a red $3$-clique, a green $3$-clique, or a blue $2$-clique, and so $R(3,3,2)=6$.
\end{example}

\textbf{Ramsey's theorem} says that the Ramsey number $R(m_1,m_2,\ldots,m_k)$ is always finite; that is, there does exist some sufficiently large $n$ for which the statement is true.  But what is the minimum such number $n=R(m_1,m_2,\ldots,m_k)$?  This is in general an unsolved problem, and the study of computing the Ramsey numbers is called \textit{Ramsey theory}, and is one of a large class of optimization problems in graph theory involving coloring graphs.

\subsection*{Exercises}

\begin{enumerate}
    \item
    \label{ex:K5no3clique}
    Color the edges of $K_5$ in two colors in a way that avoids creating a monochromatic $3$-clique.
    
    \item Let $G$ be the complete graph $K_5$ on $5$ labeled vertices $1,2,3,4,5$.  How many different ways can you color each edge of $G$ either red or blue such that there exists a monochromatic $4$-clique in the graph?
    
    \item Let $H$ be the complete graph $K_6$ on $6$ labeled vertices $1,2,3,4,5,6$.  How many different $4$-cliques are subgraphs of $G$?
    
    \item Show that $R(n)=n$ for all positive integers $n$.
    
    \item Show that $R(2,n)=n$ for all positive integers $n$.
    
    \item Compute $R(2,2,2)$.
    
    \item Explain why $R(a,b,2)=R(a,b)$ for any positive integers $a$ and $b$.
    
    \item Prove that $R(a,b)=R(b,a)$ for any positive integers $a$ and $b$.
    
    \item Prove that $R(3,4)=9$.
    
    \item Look up online the value of  $R(4,4)$, and explain what this value means.
    \item 
    Look up online what is known about $R(5,5)$, and explain what this means.
    \item It is known that $R(3,3,3)=17$.  Investigate this as follows. Draw a complete graph on $6$ vertices with each edge colored either red, green, or blue that has no monochormatic $3$-clique of any color.  This shows that $R(3,3,3)>6$.  
    
    Then do the same for $7$ vertices that shows that $R(3,3,3)>7$, and so on.  Can you show that $R(3,3,3)>16$?
\end{enumerate}

\chapter{Planar Graphs}\label{chap:planar}

A graph is \emph{planar} if it can be drawn on a piece of paper with no edges crossing.  In this chapter we will explore the many particularly nice properties of planar graphs and their applications.

\section{Planar graphs}

\begin{videobox}
\begin{minipage}{0.1\textwidth}
\href{https://youtu.be/vbNBpvgeMdY}{\includegraphics[width=1cm]{video-clipart-2.png}}
\end{minipage}
\begin{minipage}{0.8\textwidth}
Click on the icon at left or the URL below for this chapter's  lecture video. \\\vspace{-0.2cm} \\ \href{https://youtu.be/vbNBpvgeMdY}{https://youtu.be/vbNBpvgeMdY}
\end{minipage}
\end{videobox}

Recall that a graph is a set of vertices and a set of edges.
There are many ways to set up the data for a graph, for example an incidence matrix or an edge list.
In this section, we focus on the more visual way of describing a graph, by drawing it on paper.
It is natural to draw a graph as 
simply and cleanly as possible.

For example, here are two ways to draw the graph $K_4$.
\begin{center}
\includegraphics[width=0.8in]{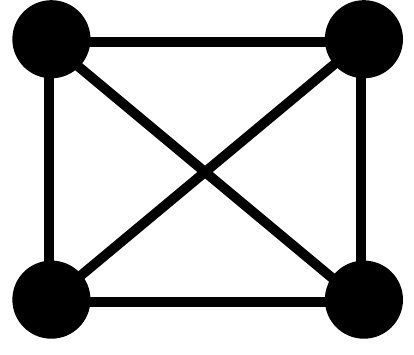}
\hspace{20mm}
\includegraphics[width=1.1in]{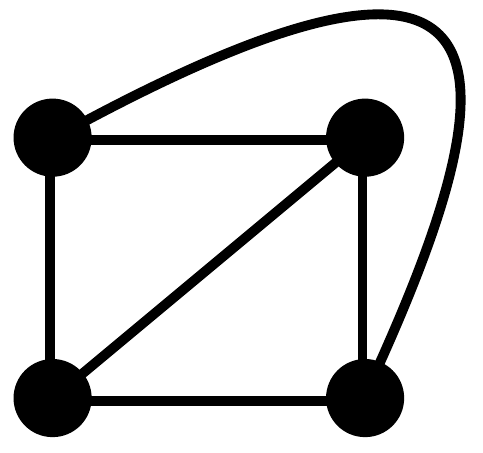}
\end{center}
The graph on the right has the advantage that none of its edges intersect.

\begin{definition}
A \defn{planar graph} is a connected graph drawn in the plane so that its edges do not cross.
\end{definition}

The pictures above show 
$K_4$ drawn as a non-planar graph and as a planar graph.
In other words, the property of being a planar graph depends not only on the graph but also on how it is drawn in the plane. 
We say that a connected graph is \defn{planar} if there is some way of drawing it in the plane as a planar graph.

\begin{example}
Is the below graph planar?
\begin{center}
\includegraphics[width=1in]{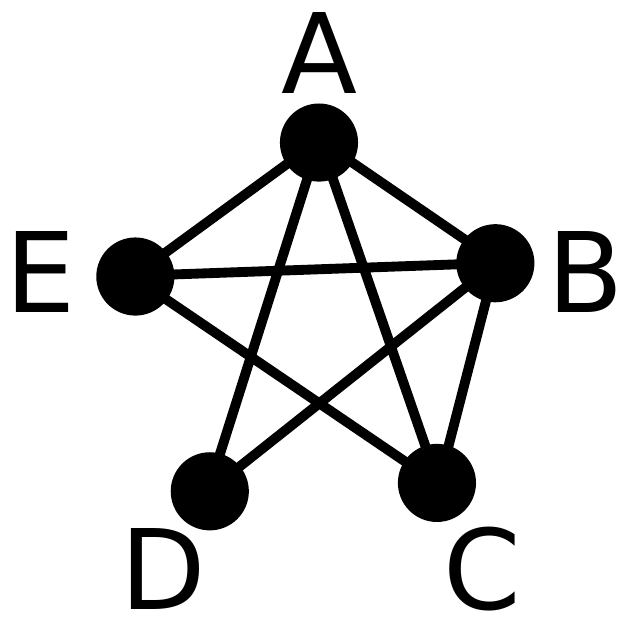}
\end{center}
\end{example}

\begin{answer}
Yes.
Two possible planar maps are drawn below.
\begin{center}
\includegraphics[width=3in]{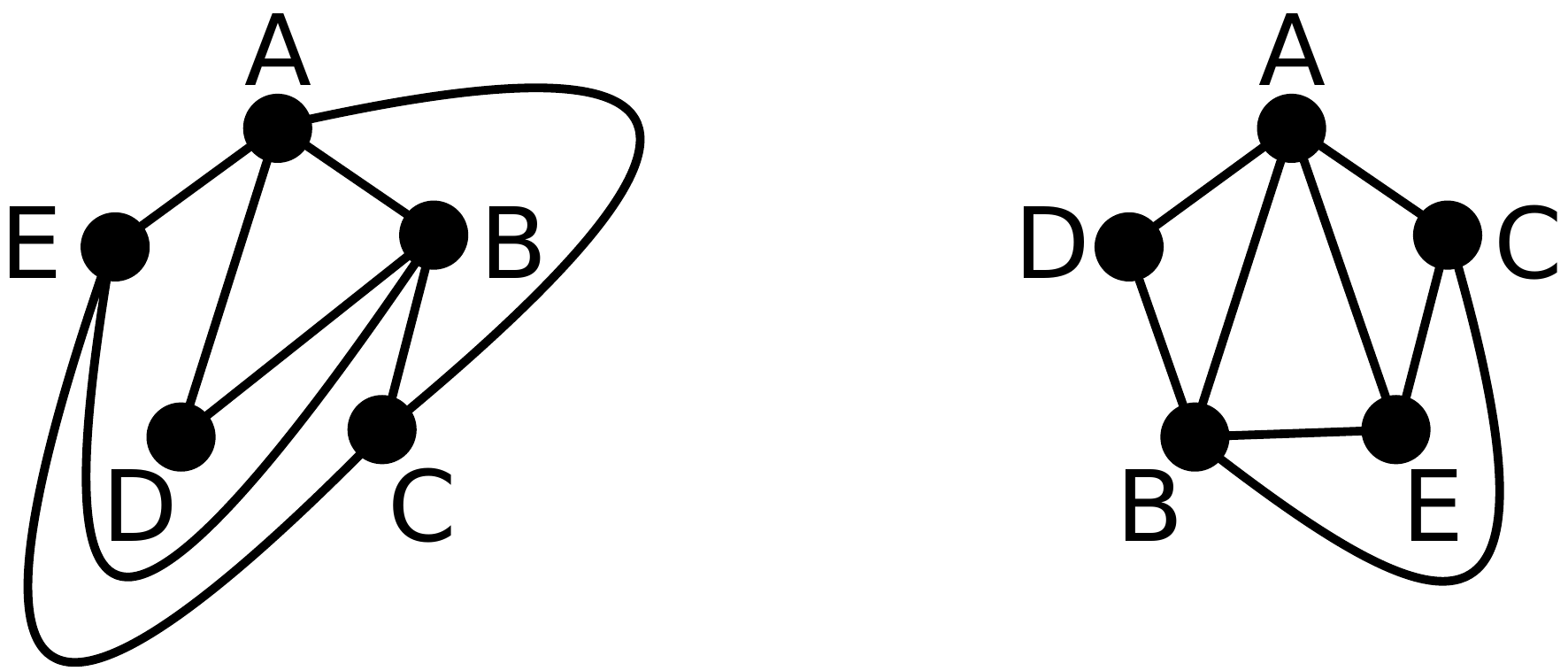}
\end{center}
\end{answer}

\begin{example}
Is the below graph planar? (Adding another edge, CD, only makes this harder.) 
\begin{center}
\includegraphics[width=1in]{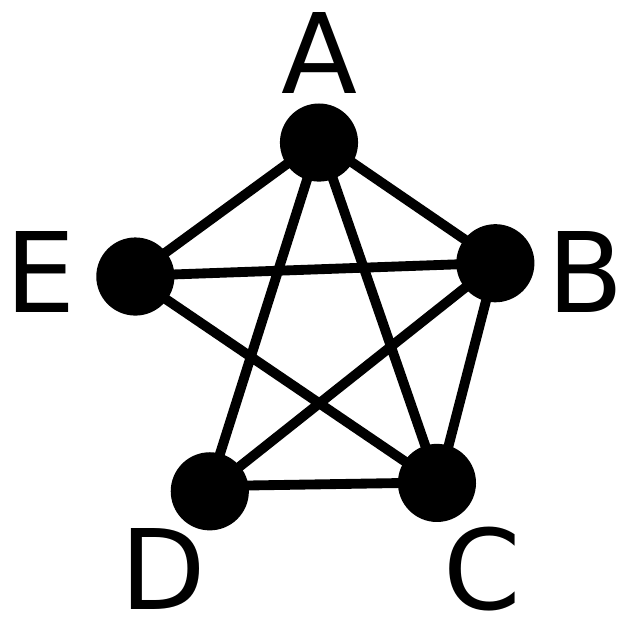}
\end{center}
\end{example}

\begin{answer}
Yes.
Two possible planar maps are drawn below.
\begin{center}
\includegraphics[height=1in]
{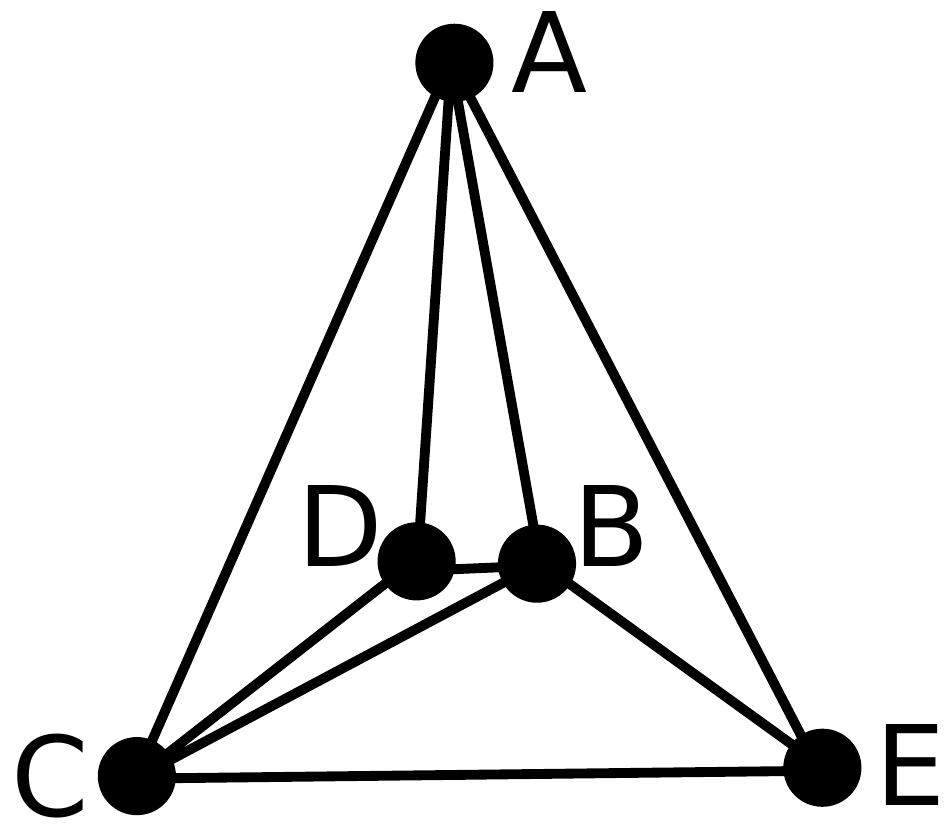}
\hspace{30mm}
\includegraphics[height=1in]
{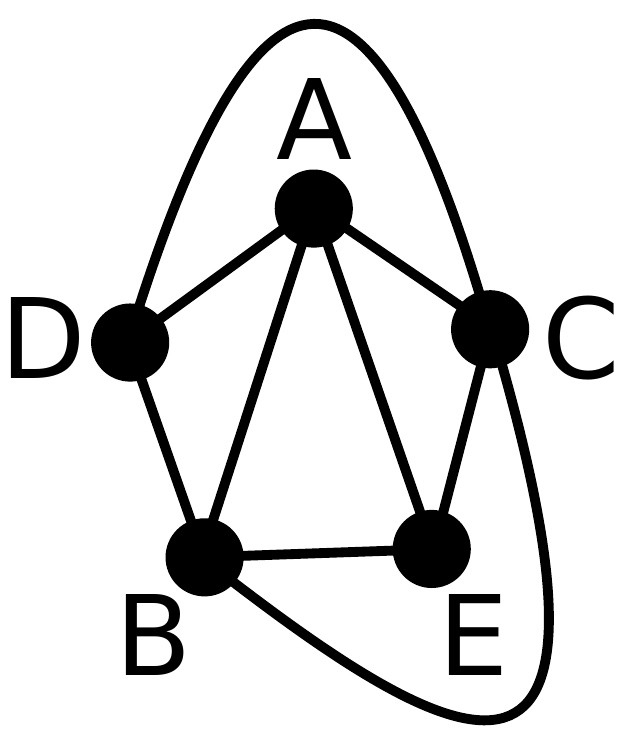}
\end{center}
\end{answer}

\begin{remark}
In designing computer chips, there is an advantage to having a small number of crossings of electrical wires.  This translates into a question on graph theory about
minimizing the number of edge crossings in a graph drawn in the plane.
\end{remark}

In Section~\ref{Seulerplanar}, we study properties of planar graphs. 
Then in later sections of this chapter, we use these properties 
to tackle the problem of deciding whether a graph is a planar graph.

\begin{remark}
\emph{F\'{a}ry's Theorem} states that if a graph is planar, then it can be drawn as a planar graph in which every edge is straight.
See \url{www.jasondavies.com/planarity} for a fun game in which you drag the vertices of a (planar) graph around until you obtain a planar embedding (with straight edges)!
\end{remark}

\begin{remark}
It is not known if every planar graph can be drawn in the plane so that every edge is straight
with length being equal to an integer.
\end{remark}

\subsection*{Exercises}
\begin{enumerate}
\item Draw a 5-pointed star and then redraw it as a planar graph.
\item Draw $B_{2,3}$ as a planar graph.
\item Draw $B_{2,4}$ as a planar graph.
\item Draw $B_{3,3}$ and then remove one of its edges.  Redraw this graph as a planar graph.

\end{enumerate}

\section{Euler's formula for planar graphs} \label{Seulerplanar}

\begin{videobox}
\begin{minipage}{0.1\textwidth}
\href{https://www.youtube.com/watch?v=GJJOIKn-1PA}{\includegraphics[width=1cm]{video-clipart-2.png}}
\end{minipage}
\begin{minipage}{0.8\textwidth}
Click on the icon at left or the URL below for this chapter's  lecture video. \\\vspace{-0.2cm} \\ \href{https://www.youtube.com/watch?v=GJJOIKn-1PA}{https://www.youtube.com/watch?v=GJJOIKn-1PA}
\end{minipage}
\end{videobox}

Euler's formula gives a remarkable connection between the edges, vertices, and regions (or \textit{faces}) of a planar graph.  Let's make these ideas precise.

\begin{definition}
In a planar graph, let $v$ be the number of vertices and let $e$ be the number of edges.
The edges divide the plane into regions, which are called \defn{faces}.  Let $f$ be the number of faces, including the face(s) which touch the edge of the paper.
\end{definition}

\begin{center}
\includegraphics[width=4in]{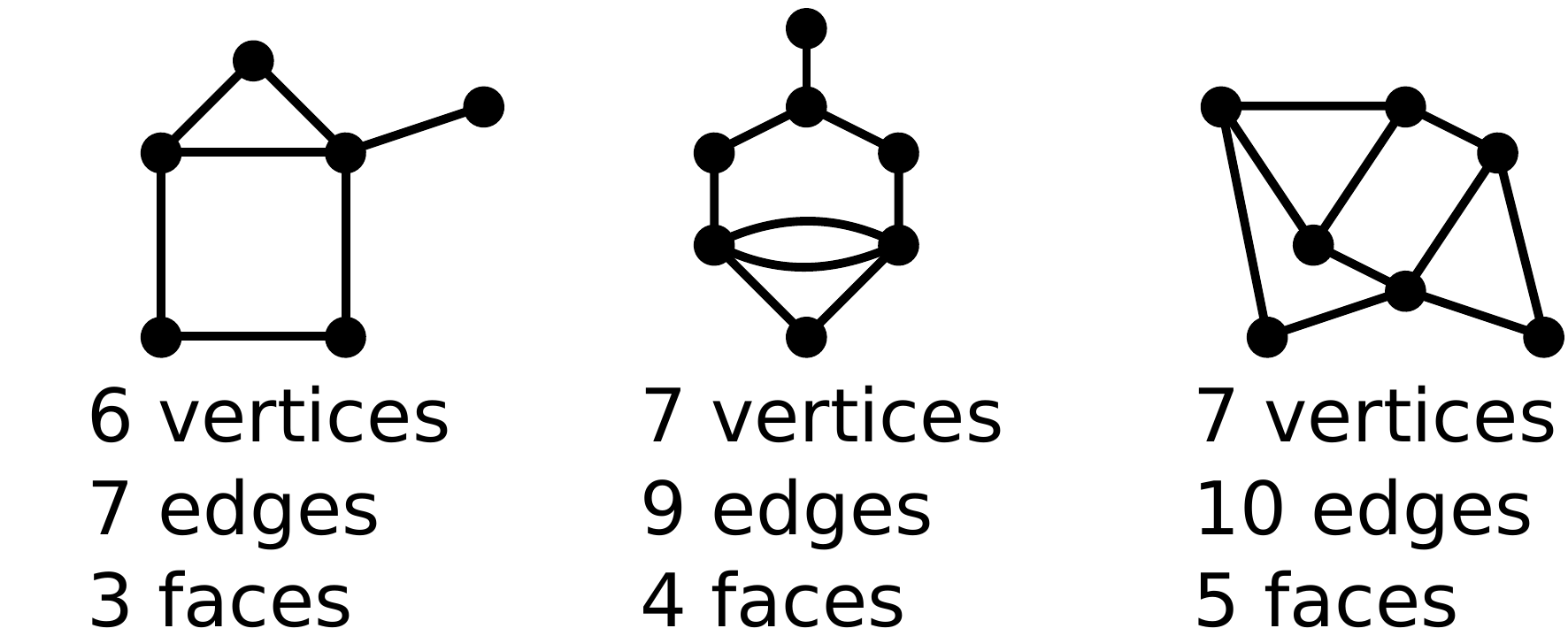}
\end{center}

\begin{definition}
Define $\chi=v-e+f$; we call this quantity the \defn{Euler characteristic} of the graph.
\end{definition}

\begin{example}
For the planar graph $K_4$,
$v=4$ and $e=6$ and $f=4$.
So $\chi =4-6+4 =2$.
\end{example}

\begin{example}
For the graph $C_n$, drawn as a regular $n$-gon in the plane, $v=n$, $e=n$, and $f=2$; (the two faces are the inside and the outside regions).  So $\chi=n-n+2=2$.
\end{example}

\begin{example}
For the graph $P_n$, drawn as a path in the plane, $v=n$, $e=n-1$, and 
$f=1$ (the path makes a slit in the one face).  So $\chi=n-(n-1)+1=2$.
\end{example}

\begin{example}
Find a way to draw the complete bipartite graph $B_{2,3}$ as a planar graph.
Show $v=5$, $e=6$, and $f=3$.
Show $\chi = 2$.
\end{example}

At this point, you may be wondering why $\chi=2$ in all these examples.
The reason is that the Euler characteristic is $2$ for \emph{every} planar graph.

\begin{theorem}[Euler's formula]
\label{Teuler} 
In a connected planar graph, let $v$ be the number of vertices, let $e$ be the number of edges, and let $f$ be the number of faces.
Then $\chi = v-e+f=2$.
\end{theorem}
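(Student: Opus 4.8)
The plan is to prove Euler's formula by induction on the number of edges $e$ of the connected planar graph, leveraging the structural theorems about trees and cycles established in Chapter~\ref{chap:trees}. First I would observe that a connected graph on $v$ vertices has at least $v-1$ edges, with equality precisely when it is a tree (Theorem~\ref{thm:treeNumEdges}). This pins down what the base case of the induction should be: the tree case, which is the sparsest possible connected graph on a fixed vertex set.

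For the base case, suppose the connected planar graph $G$ is a tree. Then $G$ has no cycles, so when drawn in the plane it encloses no bounded region; hence there is only the single unbounded face, giving $f=1$. Since a tree on $v$ vertices has $e=v-1$ edges, I would compute $\chi = v - e + f = v - (v-1) + 1 = 2$, exactly as the formula predicts. This matches all the worked examples ($C_n$, $P_n$, $B_{2,3}$) computed just before the theorem.

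For the inductive step, suppose $G$ is not a tree, so it contains a cycle. I would pick an edge $g$ lying on some cycle $C$ of $G$ and let $G'=G\setminus g$ be the graph obtained by deleting $g$. By the fact that removing an edge of a cycle from a connected graph leaves it connected (the relevant exercise in Section~\ref{sec:graphs-connected-comp}), the graph $G'$ is still connected and planar, with $v$ vertices and $e-1$ edges. The geometric claim I would then invoke is that deleting $g$ merges exactly two distinct faces of $G$ into one face of $G'$, so $G'$ has $f-1$ faces. Applying the inductive hypothesis to $G'$ gives $v - (e-1) + (f-1) = 2$, which rearranges immediately to $v - e + f = 2$, completing the induction.

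The hard part will be justifying the geometric claim that an edge lying on a cycle borders two \emph{distinct} faces, so that deleting it lowers the face count by exactly one rather than by zero. This is essentially the content of the Jordan curve theorem: the cycle $C$ separates the plane into an inside region and an outside region, and the edge $g$ has one of these regions immediately on each side, forcing the two faces adjacent to $g$ to be genuinely different. At the level of this textbook I would present this separation property as an intuitively evident topological fact, supported by a figure, rather than prove it rigorously, since a fully careful argument needs point-set topology beyond the scope of the course. (An essentially equivalent alternative would be to induct on the number of faces $f$, with base case $f=1$ forcing the graph to be a tree; the same geometric obstacle appears there as well.)
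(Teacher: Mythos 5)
Your proposal is correct and takes essentially the same route as the paper's own proof: the paper inducts on the number of faces $f$ (base case $f=1$, which forces the graph to be a tree, handled via Theorem~\ref{thm:treeNumEdges}), and in the inductive step deletes an edge lying on a cycle, invoking Lemma~\ref{Lremoveedgecycle} for connectivity and asserting $f'=f-1$, exactly matching your edge-deletion step up to the cosmetic choice of inducting on $e$ rather than $f$. The paper also leaves the claim that deleting a cycle edge merges exactly two distinct faces at the intuitive, pictorial level, so your explicit flagging of that Jordan-curve-type point matches (and is if anything more candid than) the paper's treatment.
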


The proof uses this next lemma about cycles in graphs.

\begin{lemma} \label{Lremoveedgecycle}
Let $G$ be a connected graph which is not a tree.
If $e$ is an edge that belongs to a cycle in $G$, then
the subgraph $G'=(G\text{ with edge }e\text{ removed})$ is still connected.
\end{lemma}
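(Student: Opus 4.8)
The statement is Lemma~\ref{Lremoveedgecycle}: if $G$ is a connected graph containing a cycle, and $e$ belongs to some cycle in $G$, then removing $e$ leaves $G$ connected. Let me denote the cycle containing $e$ as $C$, and write $e = uv$ where $u$ and $v$ are its endpoints.

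**The plan.** The plan is to show that any two vertices of $G$ are still joined by a walk after deleting $e$, which is exactly the definition of connectedness given earlier in the excerpt. The key observation is that the edge $e=uv$ lies on a cycle $C$, so the cycle provides an \emph{alternate route} from $u$ to $v$ that does not use $e$: namely, the rest of the cycle $C$ minus the edge $e$ is a walk from $u$ to $v$ in $G'$.

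**Key steps in order.** First I would take two arbitrary vertices $x$ and $y$ of $G$; since $G$ is connected, there is a walk $W$ from $x$ to $y$ in $G$. If $W$ does not use the edge $e$, then $W$ is already a walk in $G'$ and we are done. The interesting case is when $W$ uses $e$. In that case, I would replace each occurrence of the edge $e$ in $W$ by the detour around the cycle. Concretely, the cycle $C$ consists of $e=uv$ together with a path $C - e$ from $u$ to $v$ that avoids $e$. Every time the walk $W$ traverses $e$ (say from $u$ to $v$, or from $v$ to $u$), I splice in the path $C-e$ traversed in the appropriate direction instead. This produces a new walk $W'$ from $x$ to $y$ that never uses $e$, hence lies entirely in $G'$. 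Since $x$ and $y$ were arbitrary, $G'$ is connected.

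**Anticipated obstacle and care points.** The main thing to be careful about is the splicing argument: I must make sure that substituting the arc $C - e$ for each use of $e$ genuinely yields a valid walk (a legitimate alternating sequence of vertices and edges), and that this works regardless of the direction in which $W$ crosses $e$. This is really a bookkeeping matter rather than a deep difficulty, but it is the heart of the proof, so I would state it cleanly: since $C-e$ is a $u$--$v$ walk in $G'$, concatenating walks at shared endpoints always yields a walk, so each local replacement is valid and the result is again a walk. I do not expect to need any case analysis beyond ``$W$ uses $e$'' versus ``$W$ does not use $e$.'' I would not need the hypothesis that $G$ is not a tree separately, since the existence of a cycle through $e$ already guarantees $G$ is not a tree; that hypothesis is stated merely for context.
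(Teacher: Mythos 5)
Your proposal is correct and follows essentially the same route as the paper's proof: take an arbitrary walk between two vertices, keep it if it avoids $e$, and otherwise reroute around the cycle through the path $C-e$. The only difference is that you spell out the splicing step explicitly (replacing every traversal of $e$ by the detour), whereas the paper states this informally as ``going the other way around the cycle'' and relies on illustrations.
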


\begin{proof} $ $
\begin{center}
\includegraphics[width=4in]{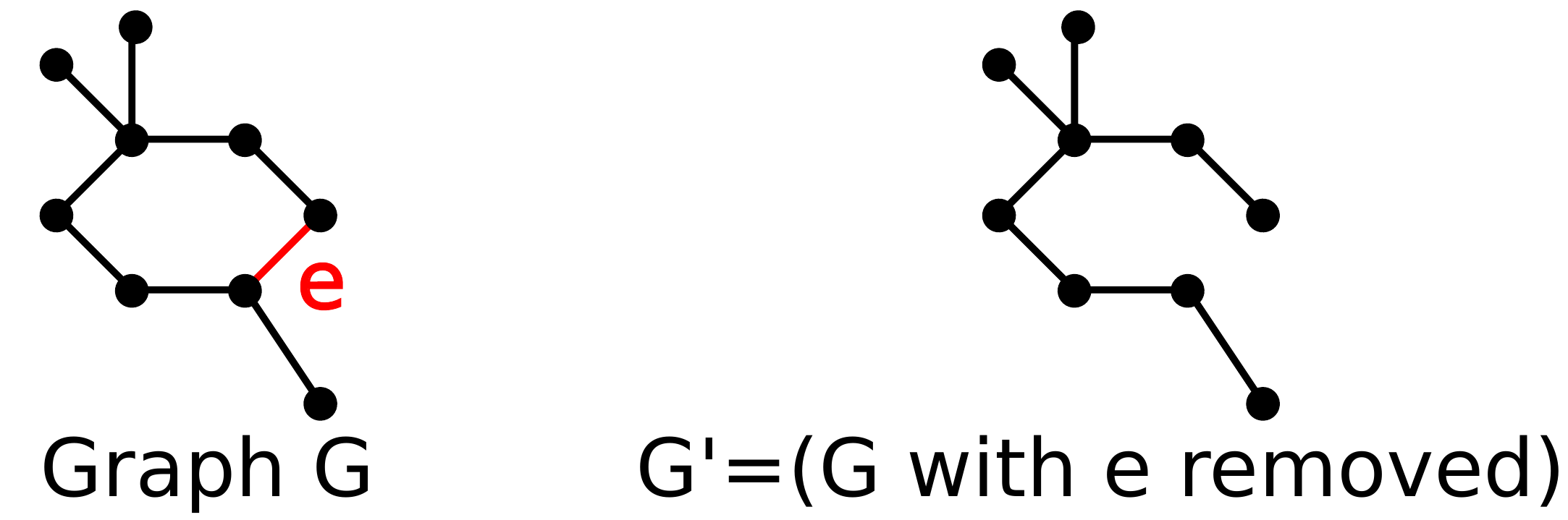}
\end{center}
The vertices of $G$ and $G'$ are the same.  The edges in $G$ are all the edges of $G'$ together with the edge $e$.
Let $u$ and $w$ be arbitrary vertices in $G'$.  To show that $G'$ is connected, we must find a walk in $G'$ from $u$ to $w$. Since $G$ is connected, there is a walk in $G$ from $u$ to $w$.
\begin{itemize}
\item If this walk does not use the edge $e$, then it is a walk in $G'$ and we are done!
\item If this walk does use $e$, then there is a second walk in $G$ from $u$ to $w$ which ``goes the other way around the cycle".  This second walk is also a walk in $G'$.
\end{itemize}
\end{proof}
The following illustrates the proof method above:
\begin{center}
\includegraphics[width=2in]{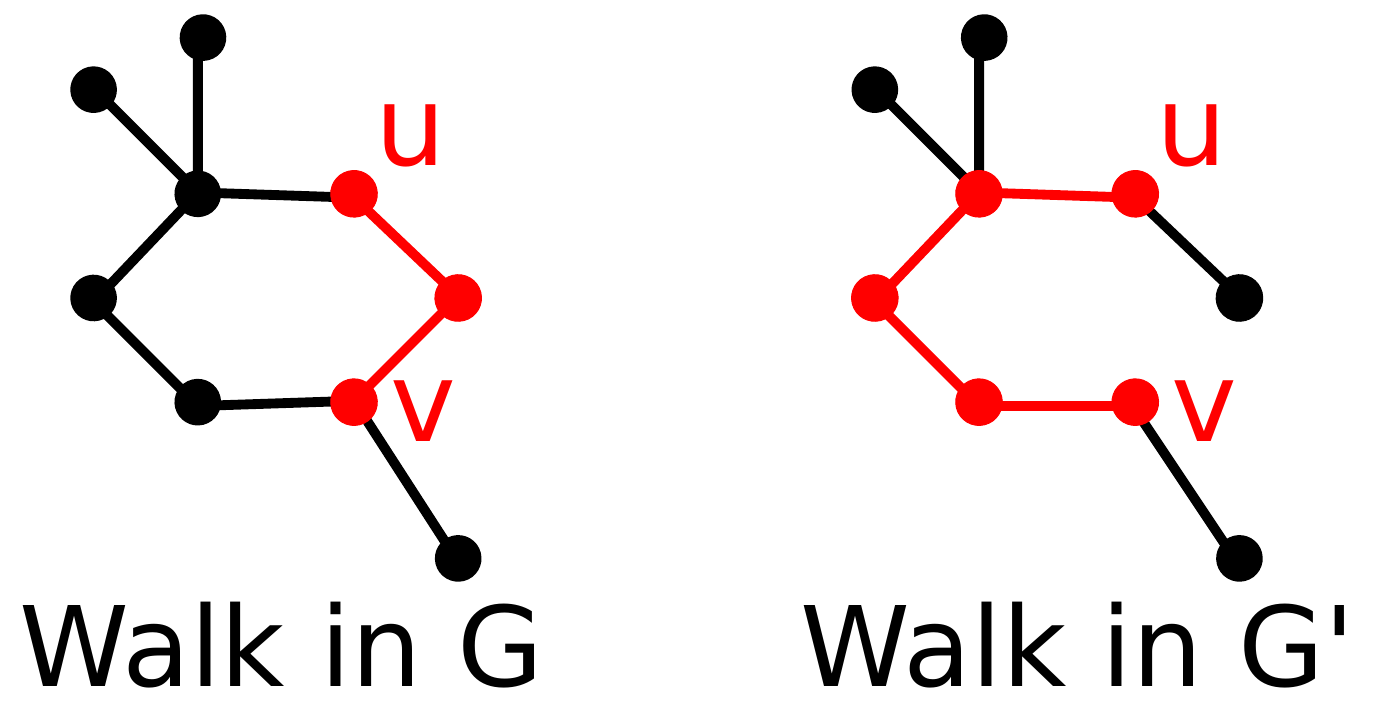}
\end{center}
Here is another example that illustrates the proof method above:
\begin{center}
\includegraphics[width=2in]{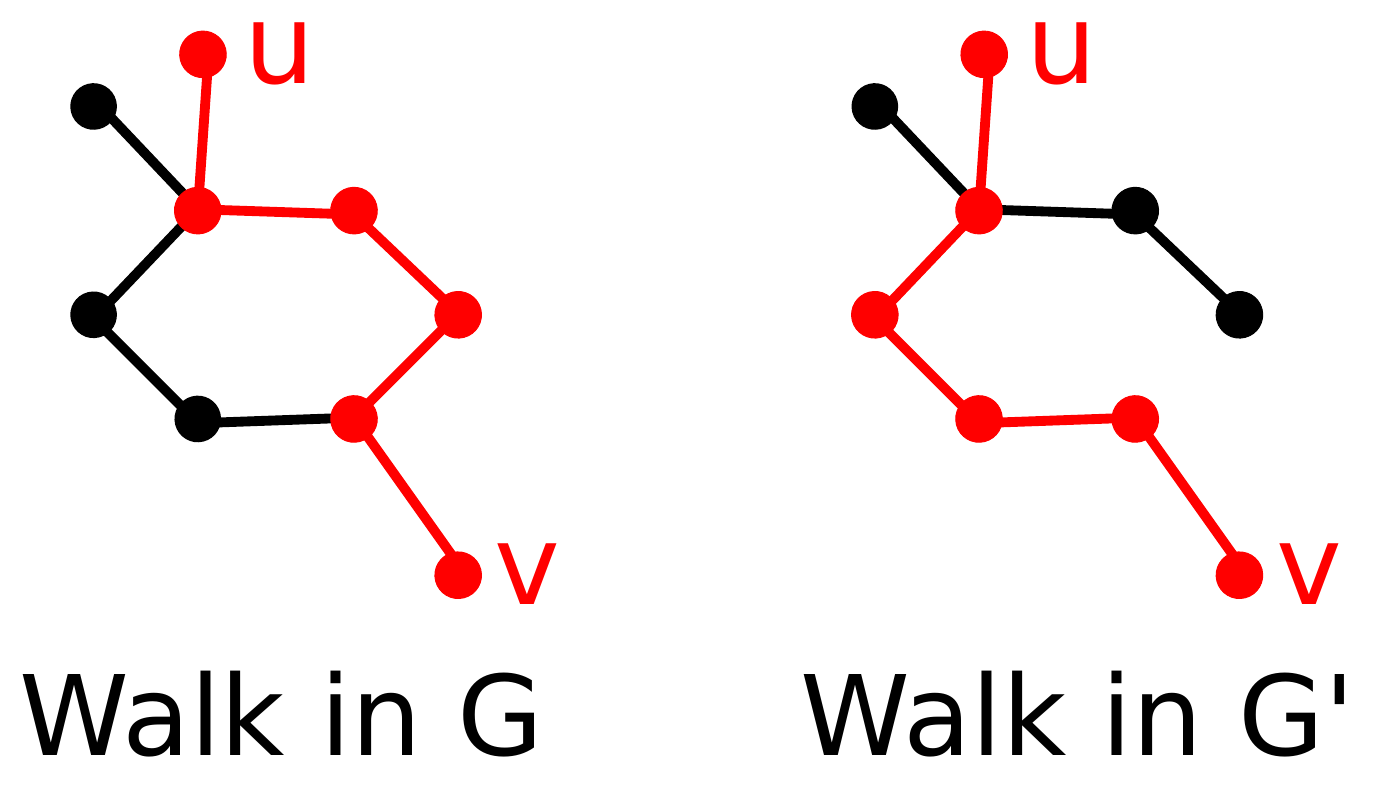}
\end{center}

We are now ready to prove Euler's formula.

\begin{proof} 
This proof will be by induction on the number of faces.  For the base case, if $f=1$, then the graph has no cycles.  So it is a connected tree.
By Theorem~\ref{thm:treeNumEdges}, a tree with $v$ vertices has $v-1$ edges.
So $\chi = v-(v-1)+1=2$.
So Euler's formula is true when $f=1$.

Now suppose $G$ is a connected graph with $f \geq 2$ faces.  We assume for induction that Euler's formula is true for any connected graph with $1 \leq f' < f$ faces.  Since $f \geq 2$, the graph $G$ is not a tree, so it has a cycle.  Pick an edge $e$ in that cycle.

By Lemma~\ref{Lremoveedgecycle}, the subgraph $G'=(G\text{ with edge }e\text{ removed})$ is still connected.  The number of vertices of $G'$ is $v'=v$.  The number of edges is $e'=e-1$ since one edge was removed.  The number of faces is $f'=f-1$.

By the inductive hypothesis, $2=\chi'=v'-e'+f'$.
Substitute that
$v'=v$, $e'=e-1$ and $f'=f-1$ to see that
$2=v-(e-1)+(f-1) = v-e+f$.
\end{proof}
To visualize the inductive proof above, think of the edge $e$ acting like a dam or levee that separates two regions; once it is removed then water will flood those two regions simultaneously.
\begin{center}
\includegraphics[width=1in]{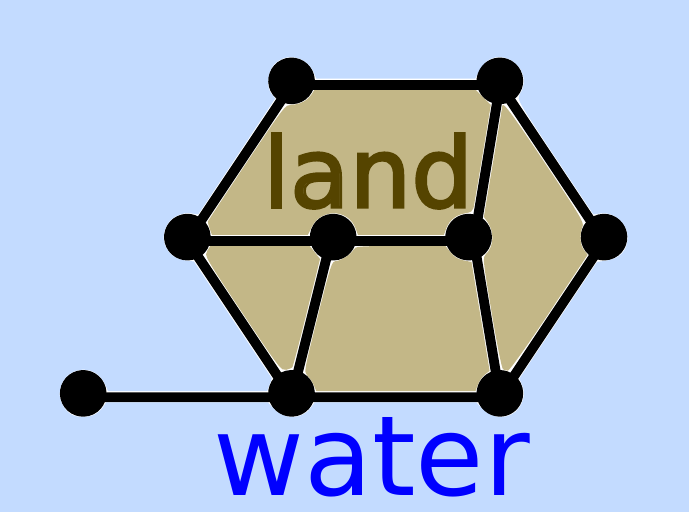}
\includegraphics[width=1in]{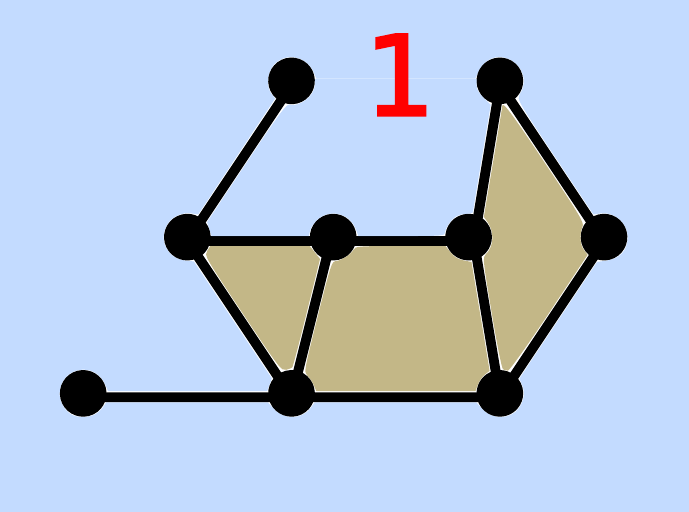}
\includegraphics[width=1in]{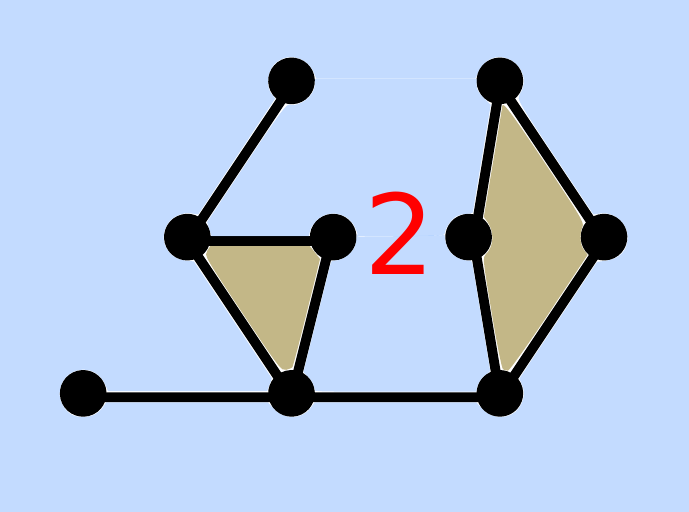}
\includegraphics[width=1in]{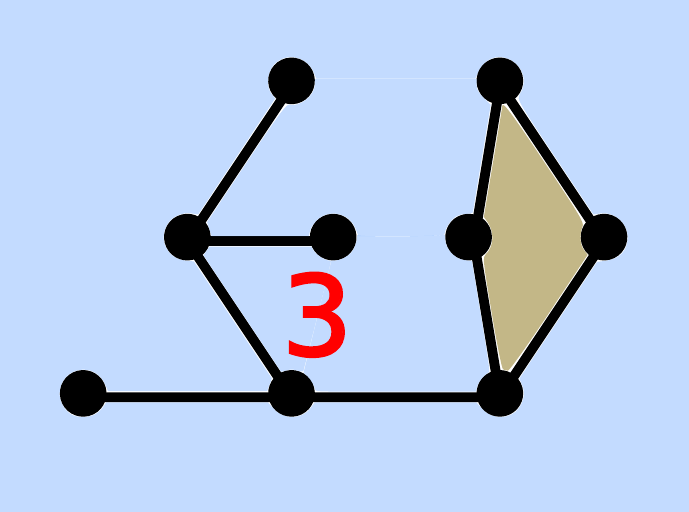}
\includegraphics[width=1in]{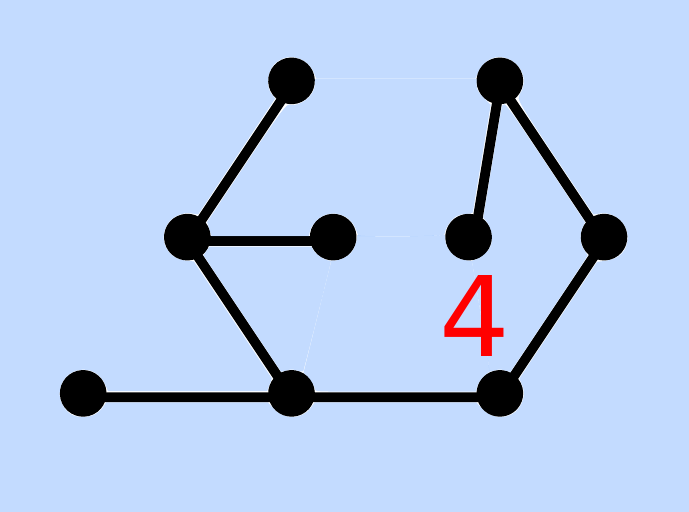}
\end{center}

\begin{remark}
Euler's formula is also true for connected planar graphs that allow loop edges (from a vertex to itself) and multiple edges between vertices.
\end{remark}

\subsection*{Exercises}

\begin{enumerate}
\item Draw a regular $n$-gon.
Add one vertex in the center and draw an edge between the center vertex and every other vertex; your picture should resemble the spokes on a bicycle wheel. 
Compute $v$, $e$, and $f$ for this graph and check that $\chi =2$.

\item If a connected planar graph has $7$ vertices and $11$ edges, what is the number of faces?  Draw a graph with these values.

\item If a connected planar graph has $5$ vertices and $5$ faces, what is the number of edges?
Draw a graph with these values.

\item If a connected planar graph has $8$ edges and $4$ faces, what is the number of vertices?  Draw a graph with these values.

\item A planar graph has 6 vertices and 7 edges. How many faces does it have?																																
\item How many edges do you need to remove from $K_5$ to make it a planar graph?																									
\item How many edges do you need to remove from $B_{3,3}$ to make it a planar graph?																									
\item What is the biggest possible number of faces for a connected bipartite planar graph with 5 edges?																									

\end{enumerate}

\section{Graphs that are not planar}
\label{sec:not-planar}
In this section, we will show that $K_5$, the complete graph on 5 vertices, is not planar.
In other words, it is not possible to draw $K_5$ on a piece of paper without edges crossing.

\begin{center}
    \includegraphics{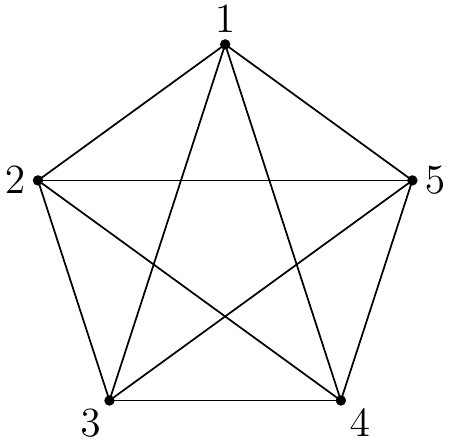}
\end{center}

For a planar graph, recall that $v$ is the number of vertices, $e$ is the number of edges, and $f$ is the number of faces.

\begin{lemma}
\label{Ledgeface}
If $G$ is a planar graph, then $e\ge\frac{3f}{2}$.
\end{lemma}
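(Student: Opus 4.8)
The plan is to count edge-face incidences in two ways, a double-counting argument in the spirit of Theorem~\ref{thm:graphDegreeTwiceEdges}. First I would observe that each face of a planar graph is bounded by a closed walk along the edges, and since the graph has no self-loops and no multiple edges (by Remark~\ref{Ngraphrules}), the boundary of each face must traverse at least $3$ edges. This is the crucial geometric input: the smallest possible cycle in such a graph is a triangle, so no face can be ``enclosed'' by fewer than $3$ edges.

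Next I would set up the double count. Define the quantity $S$ to be the total number of (edge, face) incidences, where an edge and a face are incident if the edge lies on the boundary of that face. Counting by faces: since each of the $f$ faces is bounded by at least $3$ edges, we get $S \ge 3f$. Counting by edges: each edge borders \emph{at most} two faces --- either it separates two distinct faces, or (if it is a bridge, i.e.\ lies on no cycle) it touches the same face on both sides, contributing $2$ to the count in either case. Hence $S \le 2e$. Combining these two bounds gives $3f \le S \le 2e$, so $3f \le 2e$, which rearranges to the desired inequality $e \ge \frac{3f}{2}$.

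I expect the main obstacle to be stating the ``each face is bounded by at least $3$ edges'' claim with enough care, since it is intuitively clear from the picture but subtle to justify rigorously --- one must handle the unbounded outer face and the possibility of bridges (edges not on any cycle) that are counted twice for the same face. For the level of this textbook, I would argue informally: trace the boundary walk of each face; because there are no loops or repeated edges between a pair of vertices, any closed boundary must involve at least three distinct edges, and any edge is encountered at most twice total across all face boundaries (once from each side). This gives $3f \le 2e$ directly. A fully rigorous treatment would invoke the fact that the boundary of each face is a closed walk and carefully account for bridges, but the counting inequalities $3f \le S$ and $S \le 2e$ remain valid in all cases, so the conclusion follows.
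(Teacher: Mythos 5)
Your proposal is correct and follows essentially the same route as the paper: the paper's proof also double-counts edge--face incidences, using that each edge borders two faces (so the count equals $2e$) and that each face has at least $3$ edges on its boundary (so the count is at least $3f$). Your version is in fact slightly more careful than the paper's, since you note explicitly that a bridge touches the same face on both sides and that the inequality $S \le 2e$ suffices, whereas the paper simply asserts the equality $2e = E$.
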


\begin{proof}
Let $E$ be the sum of the number of edges on every face.
Every edge borders two faces.
So $2e=E$. 
The graph has no multi-edges, so the number of edges on each face is greater than or equal to $3$.  So $E \geq 3f$. 
So $2e \geq 3f$.
\end{proof}

\begin{example}
For example, the below planar graph has $e=\frac{3+4+5}{2}\ge\frac{3+3+3}{2}=\frac{3f}{2}$.
\begin{center}
\includegraphics[width=1in]{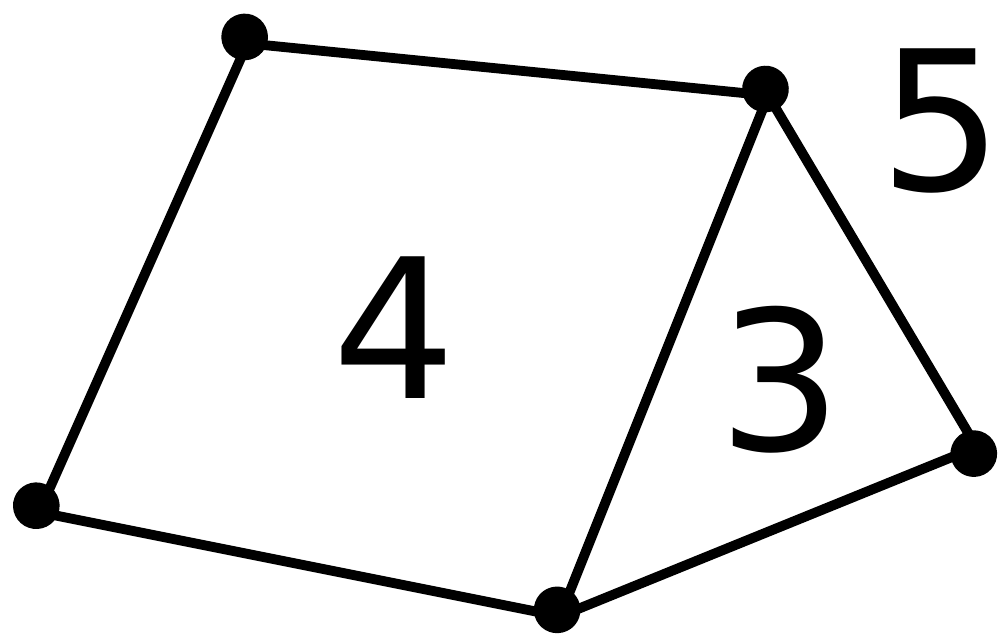}
\end{center}
\end{example}

\begin{example}
The below planar graph has $e=\frac{3+3+5+7}{2}\ge\frac{3+3+3+3}{2}=\frac{3f}{2}$.
\begin{center}
\includegraphics[width=1in]{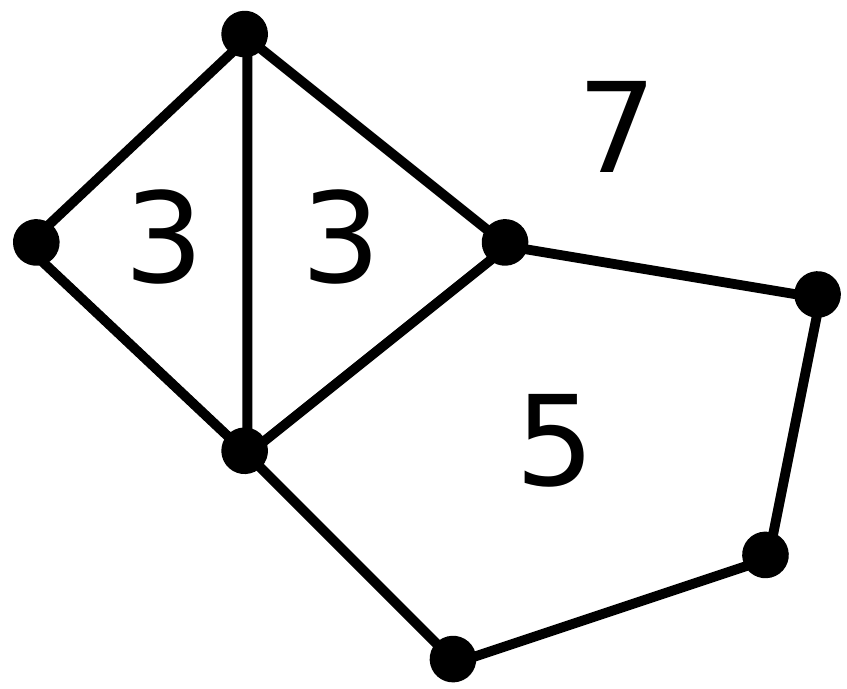}
\end{center}
\end{example}

\begin{theorem}\label{TK5notplanar}
The complete graph $K_5$ on 5 vertices is not planar.
\end{theorem}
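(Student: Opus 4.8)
The plan is to derive a contradiction from Euler's formula (Theorem~\ref{Teuler}) together with the edge-face inequality just established in Lemma~\ref{Ledgeface}. The graph $K_5$ has a fixed, easily counted number of vertices and edges, so if $K_5$ were planar, Euler's formula would pin down the number of faces exactly, and then Lemma~\ref{Ledgeface} would force an inequality on $e$ and $f$ that the actual numbers violate.

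First I would record the combinatorial data of $K_5$: it has $v=5$ vertices, and since every pair of the $5$ vertices is joined by an edge, it has $e=\binom{5}{2}=10$ edges. Next I would argue by contradiction: suppose $K_5$ can be drawn as a planar graph. Then Euler's formula $v-e+f=2$ applies. Substituting $v=5$ and $e=10$ gives $5-10+f=2$, so $f=7$.

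Now I would invoke Lemma~\ref{Ledgeface}, which says $e\ge\frac{3f}{2}$ for any planar graph. Plugging in $f=7$ yields $e\ge\frac{21}{2}=10.5$, so we would need $e\ge 11$. But $K_5$ has only $e=10$ edges, and $10<10.5$, a contradiction. Hence no planar drawing of $K_5$ exists, proving the theorem.

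The main (and really only) obstacle is making sure the hypotheses of Lemma~\ref{Ledgeface} genuinely apply: that lemma was proved using the fact that $K_5$ has no multiple edges, so that every face is bounded by at least three edges. Since the paper's standing convention (Remark~\ref{Ngraphrules}) forbids multi-edges and self-loops, this is automatic, but I would state it explicitly so the bound $E\ge 3f$ is justified. Everything else is routine arithmetic, and the contradiction $10 \ge 10.5$ closes the argument cleanly.
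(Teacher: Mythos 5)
Your proposal is correct and follows exactly the paper's own proof: assume a planar drawing exists, use Euler's formula with $v=5$, $e=10$ to get $f=7$, then apply Lemma~\ref{Ledgeface} to obtain the contradiction $10 \ge 10.5$. Your extra remark verifying that $K_5$ has no multi-edges (so the bound $2e\ge 3f$ applies) is a nice explicit check that the paper leaves implicit.
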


\begin{proof}
Suppose for a contradiction that $K_5$ can be drawn as a planar graph. 
Note that $v=5$ and $e=\binom{5}{2}=10$.
By Euler's formula, then $v-e+f=2$.
So $5-10+f=2$.
This implies that $f=7$.

By Lemma~\ref{Ledgeface},
$10=e\ge\frac{3f}{2}=\frac{3\cdot7}{2}=10.5$. This is a contradiction, and hence $K_5$ cannot be planar.
\end{proof}

Since $K_5$ is not planar, any graph that contains $K_5$ as a subgraph is not planar.  One of the exercises is to show that $B_{3,3}$ is not planar; because of this, any graph that contains $B_{3,3}$ as a subgraph is not planar.

The following theorem says that the reason any graph is non-planar is because either $K_5$ or $B_{3,3}$ is hidden in the graph. In order to understand this theorem, we need to generalize the idea of subgraphs to a new concept of \emph{minors}.
(In some texts, 
this theorem is expressed using \emph{subdivisions} rather than minors.)
We include this theorem
just for your general knowledge --- we will often ask you to determine whether a graph is planar or not \emph{without} using Kuratowski's Theorem!

\begin{definition}
Given a graph $G$, 
an \emph{edge contraction} is a way to make a new graph by removing an edge from $G$ and merging the two vertices that were at its endpoints. A graph $H$ is a \emph{minor} of $G$ if $H$ can be obtained from G by contracting some edges, deleting some edges, and deleting some isolated vertices. 
\end{definition}

In the graph below on the left, if we contract the red edge, then we obtain the graph below on the right.
\begin{center}
\includegraphics[width=4in]{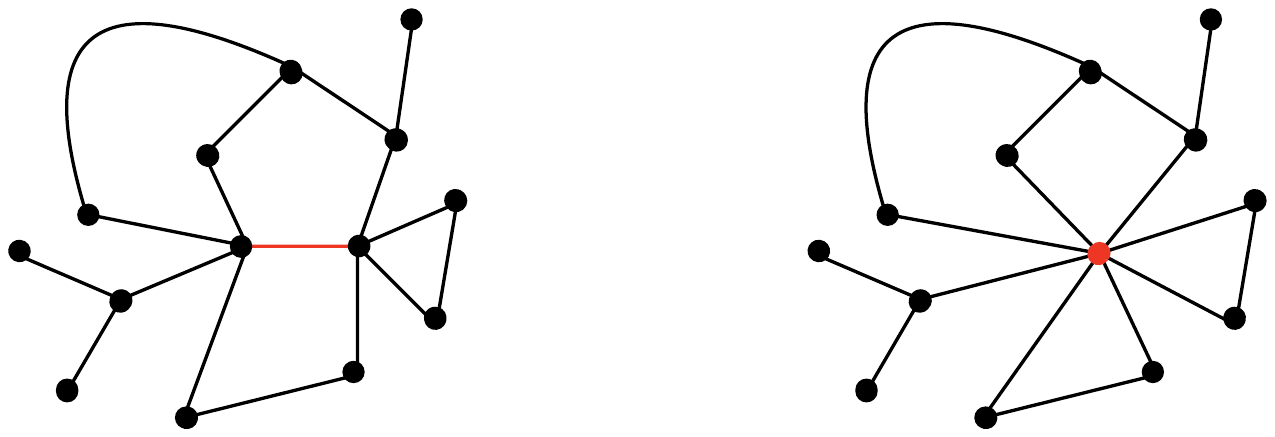}
\end{center}

\begin{theorem}[Kuratowski's Theorem]
A graph $G$ is planar if and only if it does not contain $K_5$ or $K_{3,3}$ as a minor.
\end{theorem}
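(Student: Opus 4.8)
The plan is to split the biconditional into its two directions, which are of very different difficulty, and to be honest that only one of them is within reach of the tools developed so far. The forward direction, that a planar graph can contain neither $K_5$ nor $K_{3,3}$ as a minor, is accessible; the reverse direction, that avoiding these two minors is already \emph{enough} to force planarity, is the genuinely hard statement and is the reason the book quotes the theorem without proof.

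First I would prove the forward direction. The key intermediate fact is that planarity is preserved under passing to a minor, so I would first check that each of the three minor operations keeps a planar graph planar. Deleting an edge or an isolated vertex from a planar drawing visibly leaves a planar drawing, so the only operation needing an argument is edge contraction: given a planar drawing and an edge $uv$, one contracts it by sliding the vertex $u$ continuously along the arc representing $uv$ until it coincides with $v$, dragging the other arcs incident to $u$ along with it; since the interior of the arc $uv$ met no other edge, no new crossings are introduced, so the contracted graph is again planar. By induction on the number of operations used to form a minor, every minor of a planar graph is planar. Now $K_5$ is not planar by Theorem~\ref{TK5notplanar}, and $K_{3,3}=B_{3,3}$ is not planar by the same Euler-characteristic argument: here $v=6$, $e=9$, so Euler's formula (Theorem~\ref{Teuler}) forces $f=5$, but a bipartite graph has no triangles, so every face is bounded by at least $4$ edges, giving the bipartite analogue $2e \ge 4f$ of Lemma~\ref{Ledgeface}, i.e.\ $18 \ge 20$, a contradiction. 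Taking the contrapositive of ``minors of planar graphs are planar'': if $G$ were planar and contained $K_5$ or $K_{3,3}$ as a minor, that minor would itself be planar, contradicting the non-planarity just established. Hence every planar graph avoids both minors.

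The reverse direction is the main obstacle. I would argue by contradiction, choosing a minor-minimal counterexample $G$: a non-planar graph with no $K_5$ or $K_{3,3}$ minor and with as few edges as possible. The strategy is a cascade of connectivity reductions. If $G$ is disconnected or has a cut vertex, its planarity reduces to that of its blocks, each of which still avoids the forbidden minors and so is planar by minimality; the planar pieces can then be glued at the cut vertex, contradicting non-planarity. A similar but more delicate argument handles a separating pair of vertices (a $2$-cut): one splits $G$ along the pair, adds a ``virtual'' edge between the two cut vertices in each piece, applies minimality to embed each piece, and re-glues the drawings along the shared edge. This reduces everything to the case that $G$ is $3$-connected, which is exactly where the real content lives. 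The hard part will be the $3$-connected case, and I expect it to require two ingredients outside the excerpt: a structural theorem guaranteeing that a $3$-connected graph on at least five vertices has an edge whose contraction stays $3$-connected (a wheel/splitting lemma), and Whitney's theorem that a $3$-connected planar graph has an essentially unique embedding. Using these, one contracts such an edge, obtains a planar embedding of the smaller $3$-connected graph by induction, and then ``uncontracts'' the vertex; the uniqueness of the embedding pins down where the split vertex's neighbors sit around a face, and one shows that the only way the re-expansion can fail to be planar is for a $K_5$ or $K_{3,3}$ minor to appear, contradicting the hypothesis on $G$. Developing the $3$-connected machinery rigorously is the bulk of the work and is what places a full proof well beyond the scope of this chapter.
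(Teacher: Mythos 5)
The paper itself states Kuratowski's Theorem \emph{without} proof (it is quoted ``just for your general knowledge''), so there is no textbook argument to compare against. Your forward direction is correct and is a genuine addition: the observation that deletion and contraction preserve planarity, combined with Theorem~\ref{TK5notplanar} for $K_5$ and the Euler-formula count for $K_{3,3}$ ($v=6$, $e=9$, hence $f=5$ by Theorem~\ref{Teuler}, contradicting the bipartite face bound $2e\ge 4f$), is exactly the argument the book's tools support, and it matches the exercises in Section~\ref{sec:not-planar}.

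The genuine gap is the reverse direction, and your own candor does not close it: as written, the proposal proves only the implication ``planar $\Rightarrow$ no $K_5$ or $K_{3,3}$ minor,'' not the stated equivalence. The outline of the hard direction names the right ingredients but leaves each one unproven. Concretely: (i) in the $2$-cut reduction you must show that each piece \emph{with the virtual edge added} is itself a minor of $G$ (otherwise you cannot conclude it avoids $K_5$ and $K_{3,3}$, and minimality alone does not give this); (ii) the lemma that every $3$-connected graph on at least five vertices has an edge whose contraction remains $3$-connected is asserted, not proved; and (iii) the re-expansion step --- showing that the only obstruction to uncontracting the edge inside a face is the appearance of a $K_5$ or $K_{3,3}$ minor --- is the heart of the theorem and is left as a claim. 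Until those three items are supplied, the biconditional in the statement is not established; what you have is a correct proof of one direction together with a roadmap, essentially Thomassen's, for the other.
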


For example, the Petersen graph is not planar since it has $K_5$ as a minor.  
\begin{center}
\includegraphics[width=1in]{08-GraphsWalksCycles/Eulerian11.pdf}
\end{center}
Indeed, one can contract all five edges connecting the ``inner star'' to the ``outer pentagon'' to form a copy of $K_5$.

\subsection*{Problems}

\begin{enumerate}
\item What is the smallest number of crossings needed for a graph of $K_5$ drawn in the plane? Illustrate this.

\item Is it possible to find any set of nine edges from $K_8$ such that if you remove those 9 edges then you obtain a (connected) planar graph? If so, draw it. If not, explain why not.

\item If $G$ is a bipartite graph which is a planar graph, prove that $e \geq 2f$.

\item Prove that the complete bipartite graph $B_{3,3}$ is not planar. You are not allowed to use Kuratowski's Theorem.
\begin{center}
\includegraphics[width=2.5in]{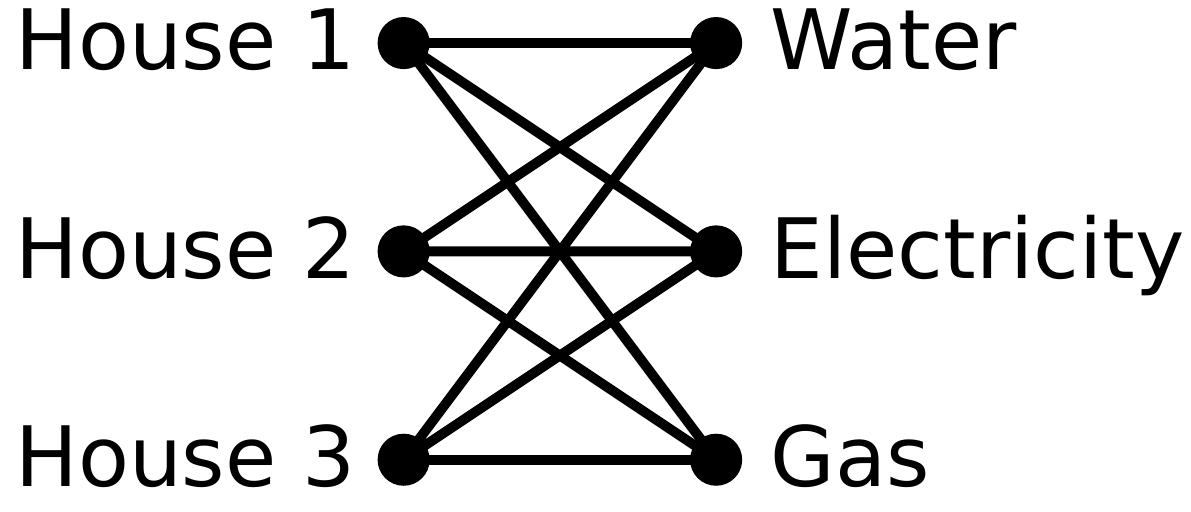}
\end{center}
(As a consequence, it is impossible to connect the above three houses to on the left to the water, electricity and gas utilities on the right, without having some of the utility lines cross.)

\item Draw the graph $B_{3,3}$ in the plane with the smallest number of crossings possible.

\item Show that the Petersen graph  
is not planar.
(You are not allowed to use Kuratowski's Theorem.)
Use the fact (which you don't need to prove) that there are no cycles of length 3 or 4 in the Petersen graph.

\item \label{Exercise7}
Prove that a planar graph with $v$ vertices (where $v\ge3$) has at most $3v-6$ edges.
This result says that a planar graph with a fixed number of vertices cannot have too many edges!
\end{enumerate}

\section{Euler's formula for polyhedra}
\begin{videobox}
\begin{minipage}{0.1\textwidth}
\href{https://youtu.be/hgMS-\_xZxic}{\includegraphics[width=1cm]{video-clipart-2.png}}
\end{minipage}
\begin{minipage}{0.8\textwidth}
Click on the icon at left or the URL below for this chapter's  lecture video. \\\vspace{-0.2cm} \\ \href{https://youtu.be/hgMS-\_xZxic}{https://youtu.be/hgMS-\_xZxic}
\end{minipage}
\end{videobox}

A polyhedron is a three-dimensional shape, whose faces are flat polygons and whose edges are straight line segments intersecting at vertices.
We restrict attention to \emph{convex} polyhedra in three-dimensional space.
The word convex means that the polyhedron is an intersection of half-spaces, or equivalently, it means that the polyhedron is the convex hull of its set of vertices, though we do not expect you to know what all of these words mean.

Here are some prisms, which are examples of polyhedra. 

\begin{center}
\includegraphics[width=1.5in]{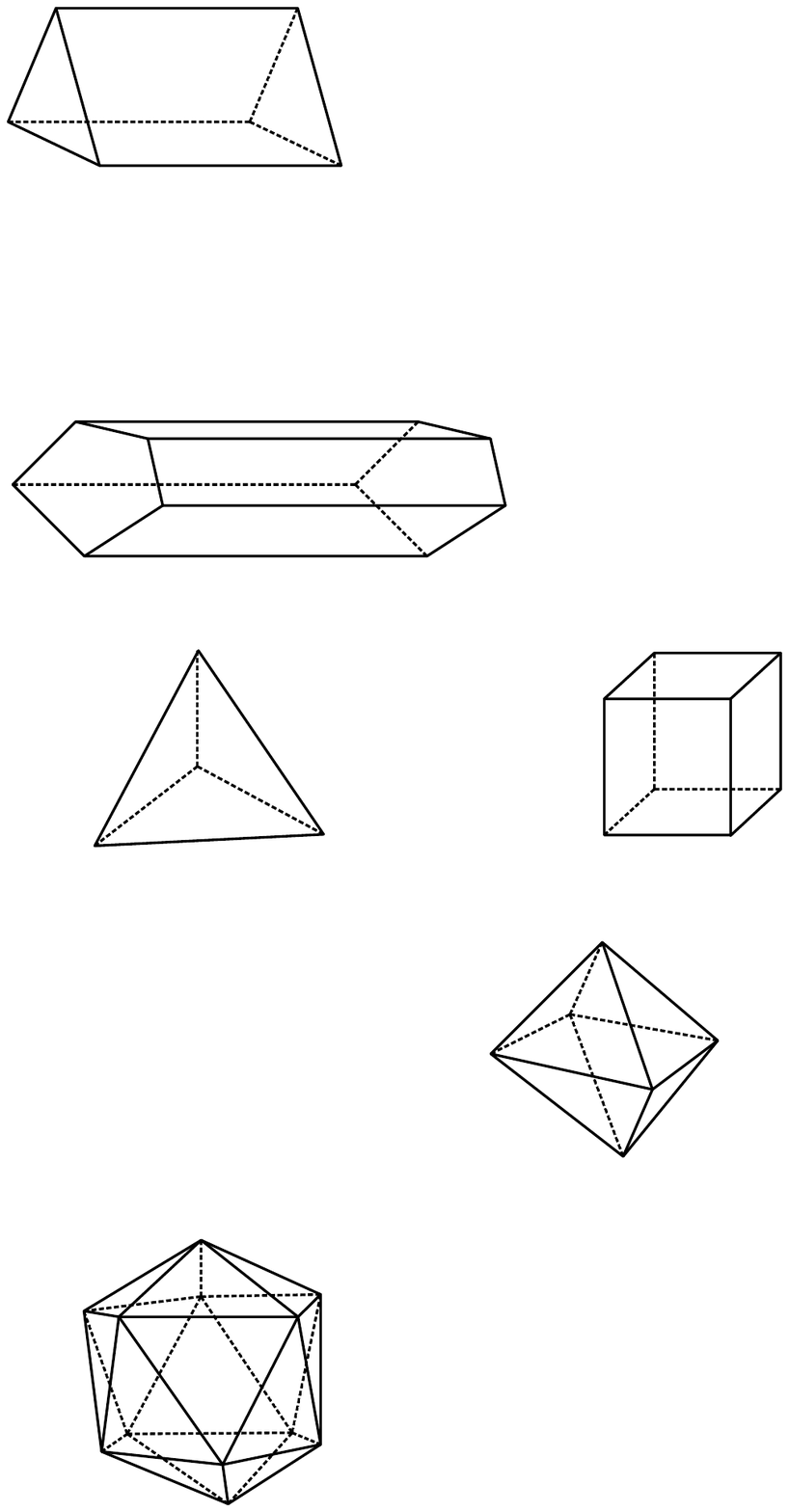}

Triangular prism: 6 vertices, 9 edges, 5 faces
\end{center}

\begin{center}
\includegraphics[width=2in]{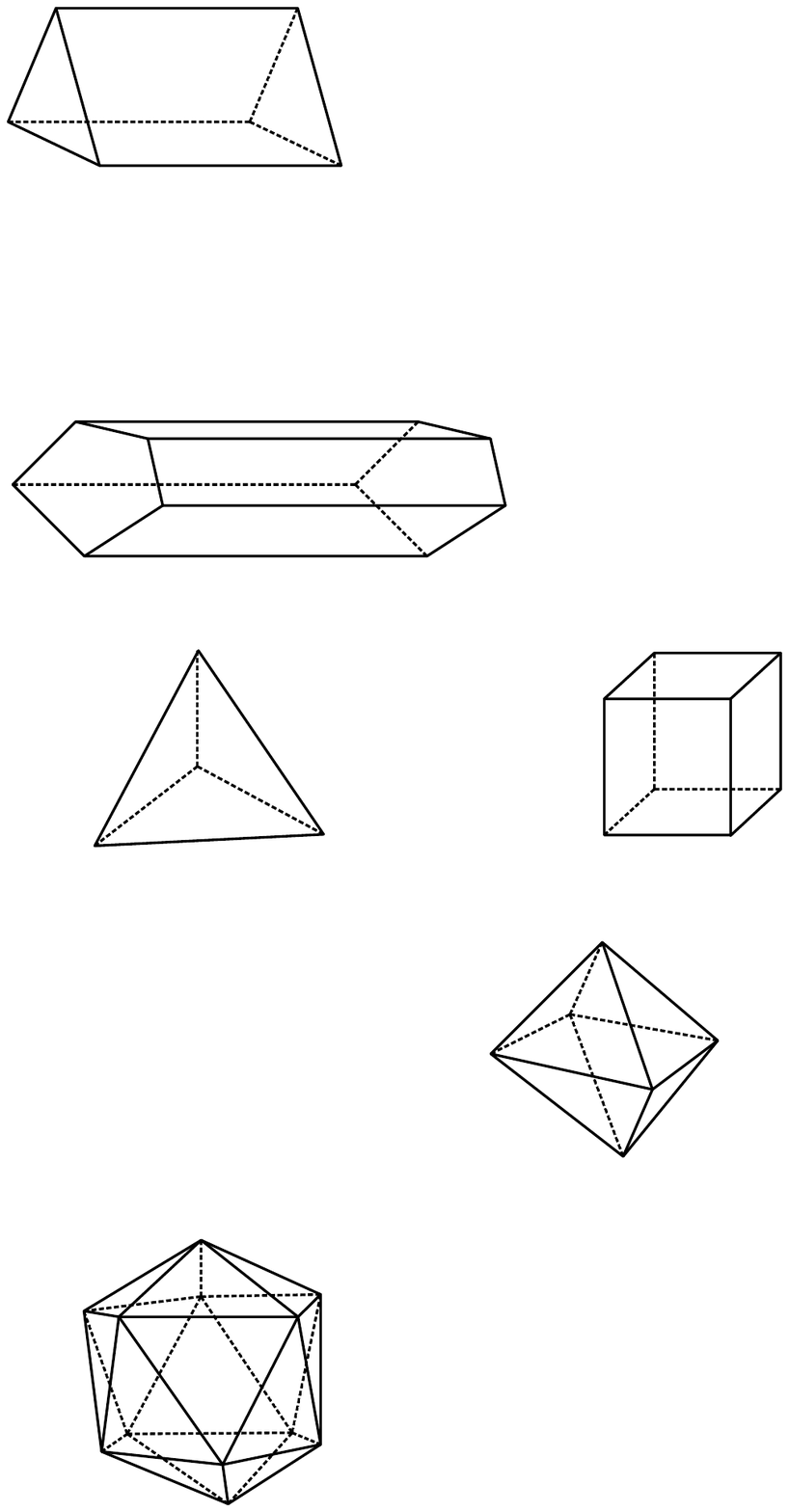}

Pentagonal prism: 10 vertices, 15 edges, 7 faces
\end{center}

The most famous polyhedra are the \emph{Platonic solids}.
These are the only \emph{regular polyhedra}, where the word regular means that all the faces are the same shape, all the edges are the same length, and all the interior angles are the same.

\begin{center}
\includegraphics[width=1in]{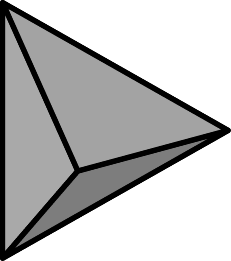}

Tetrahedron: 4 vertices, 6 edges, 4 faces
\end{center}

\begin{center}
\includegraphics[width=1in]{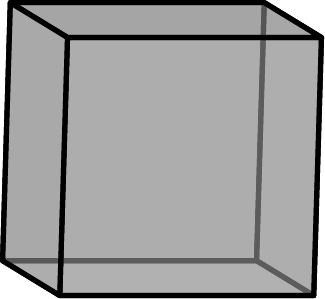}

Cube: 8 vertices, 12 edges, 6 faces
\end{center}

\begin{center}
\includegraphics[width=1in]{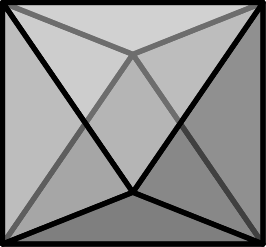}

Octahedron: 6 vertices, 12 edges, 8 faces
\end{center}

\begin{center}
\includegraphics[width=1.5in]{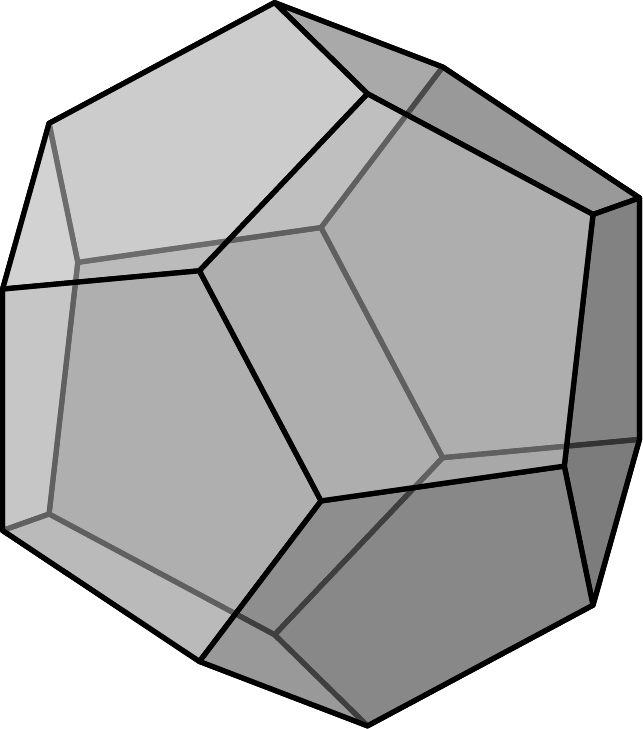}

Dodecahedron: 20 vertices, 30 edges, 12 faces
\end{center}

\begin{center}
\includegraphics[width=1.5in]{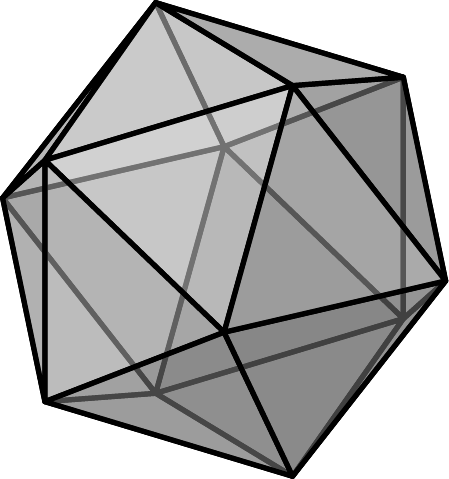}

Icosahedron: 12 vertices, 30 edges, 20 faces
\end{center}

\begin{remark}
Each polyhedron $P$ has a dual $P^D$.  Informally, $P^D$ is found by replacing each vertex with a face and replacing each face with a vertex.  Two vertices of $P^D$ are connected by an edge if and only if the corresponding faces of $P$ share an edge. 

The cube and octrahedron are ``dual" to each other; the dodecahedron and icosahedron are ``dual" to each other; and the tetrahedron is ``self-dual". 
Here is some evidence for this: the cube and the octahedron have the same number of edges, but the number of vertices and faces have been flipped. Similarly, the dodecahedron and icosahedron have the same number of edges, but the number of vertices and faces have been flipped. For the tetrahedron, the number of vertices equals the number of faces.
\end{remark}

Here are some other familiar examples of polyhedra (whose sides have been puffed out).
All of these images are licensed under the Wikipedia Creative Commons.

\begin{center}
\includegraphics[width=1.5in]{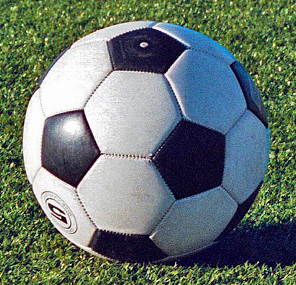}

Soccer ball: 60 vertices, 90 edges, 32 faces (12 pentagons, 20 hexagons)
\end{center}

\begin{center}
\includegraphics[width=1.5in]{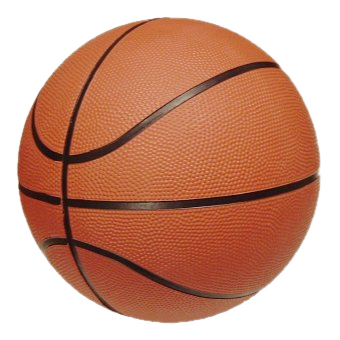}

Basketball: 6 vertices, 12 edges, 8 faces
\end{center}

\begin{center}
\includegraphics[width=1.5in]{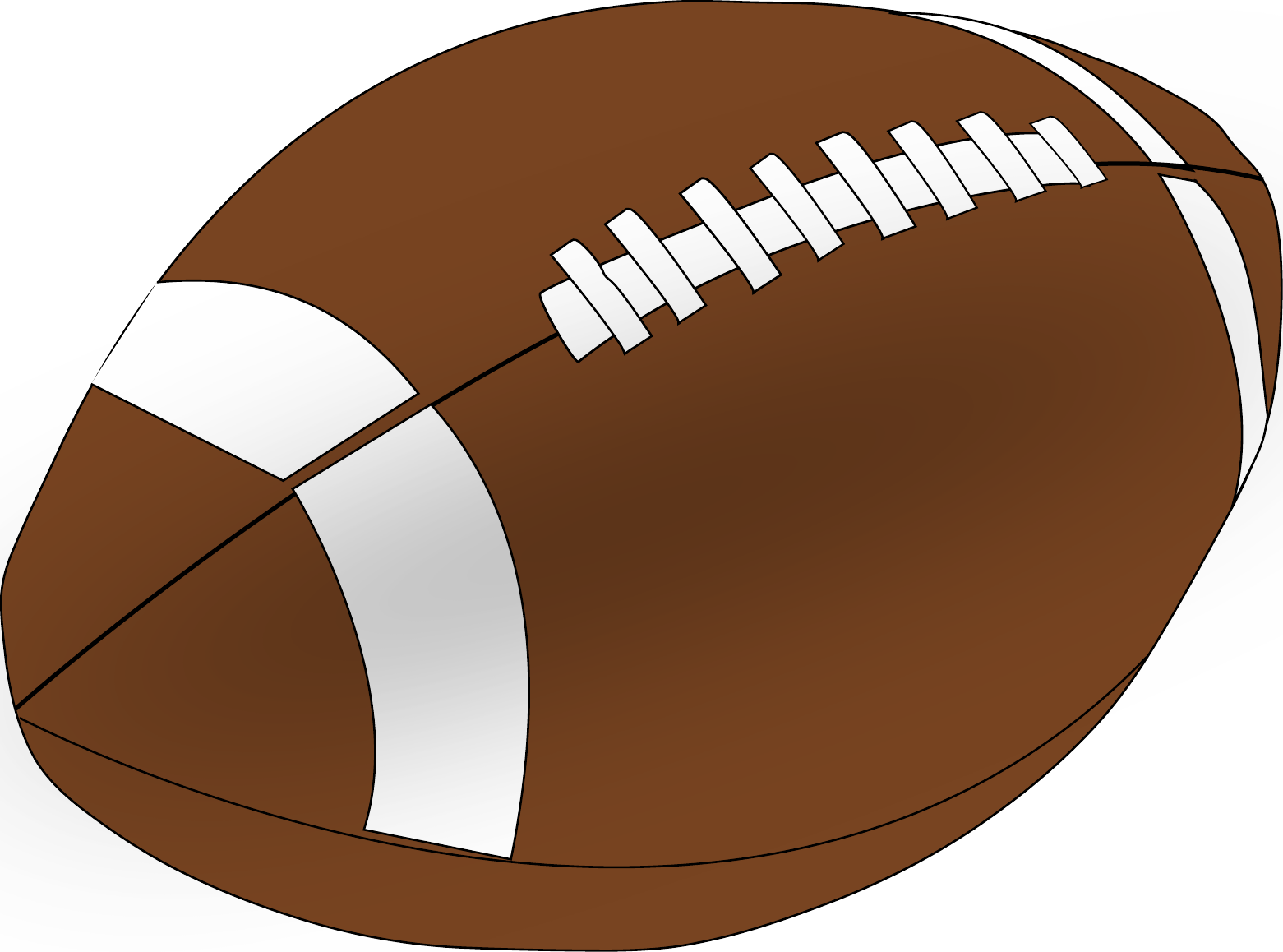}

Football: 2 vertices, 4 edges, 4 faces
\end{center}

\begin{center}
\includegraphics[width=1.5in]{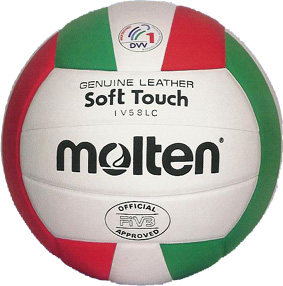}

Volleyball: 32 vertices, 48 edges, 18 faces
\end{center}

You can check that all the polyhedra above have the same Euler characteristic 
$\chi = v -e + f =2$.

\begin{theorem}[Euler]
In any convex polyhedron we have $\chi=v-e+f=2$, where $v$ is the number of vertices, $e$ is the number of edges, and $f$ is the number of faces.
\end{theorem}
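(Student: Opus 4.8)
The plan is to reduce this three-dimensional statement to the two-dimensional result we already proved, namely Euler's formula for connected planar graphs (Theorem~\ref{Teuler}). First I would observe that the vertices and edges of a convex polyhedron $P$ themselves form a graph $G=(V,E)$ (the \emph{edge skeleton} of $P$), with $v$ vertices and $e$ edges. Because the boundary surface of $P$ is connected and every edge joins two vertices along that surface, the graph $G$ is connected. The goal is then to draw $G$ in the plane as a planar graph in such a way that its vertices, edges, and faces correspond exactly to those of $P$, so that the numbers $v$, $e$, and $f$ are preserved and Theorem~\ref{Teuler} applies directly.

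The key construction is a projection that flattens $P$ into the plane, often called a \emph{Schlegel diagram}. Concretely, I would pick one face $F_0$ of $P$, place a projection point just outside the polyhedron immediately beyond the center of $F_0$, and project every vertex and edge of $P$ onto the plane containing $F_0$. Convexity guarantees that this projection sends the boundary surface of $P$, with the single face $F_0$ removed, bijectively onto the interior of the polygonal region bounded by the image of $F_0$. In particular, no two edges cross, so the image is a genuine planar graph. An equivalent and perhaps more intuitive picture is to inflate $P$ to a sphere, puncture the sphere at an interior point of $F_0$, and stretch the punctured sphere flat onto the plane.

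Under this projection the bookkeeping works out cleanly. The number of vertices is still $v$ and the number of edges is still $e$. For the faces: each of the $f-1$ faces of $P$ other than $F_0$ becomes a bounded region of the planar graph, while the removed face $F_0$ becomes the single unbounded outer region. Hence the planar graph has $(f-1)+1=f$ faces. Applying Theorem~\ref{Teuler} to this connected planar graph gives
\[
v-e+f=2,
\]
which is exactly the claim for the polyhedron $P$.

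The main obstacle is the geometric step, not the counting: rigorously justifying that the Schlegel projection really does produce a crossing-free planar graph and that the face correspondence is exactly one-to-one with the outer face accounting for $F_0$. This is precisely where \emph{convexity} is essential (a nonconvex polyhedron, or one with a handle like a torus, can fail to satisfy $\chi=2$), and a fully careful argument would verify that the projection restricts to a homeomorphism on the relevant portion of the surface. At the level of this text I would support this step with a figure showing a polyhedron being flattened, and treat the topological details as intuitively clear, since the combinatorial payoff then follows immediately from the planar case.
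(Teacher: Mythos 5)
Your proposal is correct and is essentially the same argument the paper gives: remove (or puncture at) one face, flatten the rest of the surface into the plane so that the removed face becomes the unbounded outer face, observe that $v$, $e$, and $f$ are unchanged, and apply Theorem~\ref{Teuler}. The paper even names this flattening the Schlegel diagram in a follow-up remark, exactly as you do; your write-up simply makes the projection construction and the role of convexity more explicit.
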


\begin{proof}
Remove one face of the polyhedron. Stretch it to lie flat in the plane. This gives a planar graph, where the outside face of the planar map corresponds to the face we removed from the polyhedron.
\begin{center}
\includegraphics[width=4in]{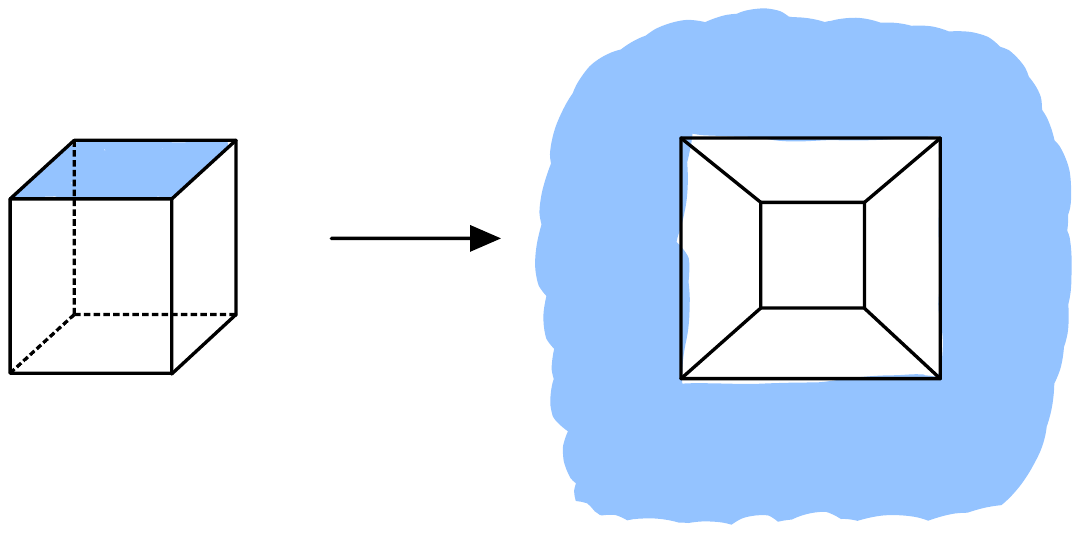}
\end{center}
This planar graph has the same values for $v$, $e$, and $f$.
For any planar graph, $\chi=v-e+f=2$ by Theorem~\ref{Teuler}.  Hence the same is true for the polyhedron.
\end{proof}

\begin{remark}
The above planar graph is called a Schlegel diagram of the polyhedron.
Show this by holding your iPhone flashlight up near the front face of a polyhedron formed by sticks and balls.
The projection of the edges of the polyhedron to the board then gives the Schlegel diagram! 
\end{remark}

\subsection*{Exercises}

\begin{enumerate}
\item Suppose a polyhedron $P$ is a triangulation of the sphere with 57 vertices and 84 edges. How many faces does $P$ have?

\item Draw planar graphs of 
the triangular and pentagonal prisms.

\item Draw planar graphs of the $5$ platonic solids.

 \item Draw several pictures of an orange with a graph on its peel.
    \begin{enumerate}
        \item Make a table of the values of $v,e,f$ for these graphs.
        \item Find the quantity $\chi = v-e+f$.
        \item Make a conjecture about $\chi$ and try  explain why it is true. 
    \end{enumerate}
\item Which platonic solid is dual to the icosahedron?			
\item Which platonic solid is dual to the tetrahedron?			
\item For which platonic solid is the ratio e/v the biggest?			
\item What is the Euler characteristic of the pentagonal prism?			
\item What is the diameter of the cube graph?			
			
\end{enumerate}

\section{Investigation: Graphs on other surfaces}

In earlier sections in this chapter, we looked at connected graphs on the plane or on the sphere.  For these graphs, the Euler characteristic $\chi=v-e+f$ was always equal to $\chi=2$.
We used this to show that $K_5$ and $K_{3,3}$ are not planar graphs.  

The Euler characteristic $\chi$ is an important concept in topology.  In this section, we illustrate the value of the Euler characteristic $\chi$ for graphs that are drawn on the torus.

\begin{center}
    \includegraphics[width=.5\textwidth]{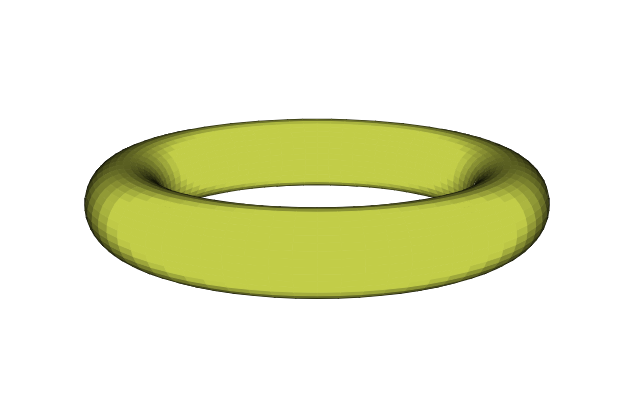}
\end{center}

The torus is the surface of a doughnut.  One way to make a torus is to take a piece of paper and tape the top to the bottom (this makes a cylinder), and then tape the left side to the right.

Another way to make a torus is as a `pacman' board.  This is a flat piece of paper where each point on the right edge is identified with the point on the left edge that has the same distance from the bottom, and each point on the top edge is identified with the point on the bottom edge which has the same distance from the left.
 
These descriptions of the torus are the same surface (in topology, we say that they are \emph{homeomorphic}).  
In the first construction, the tape forms two loops on the torus.
By cutting the torus along the two loops, it can be stretched back out to a flat piece of paper.  

There is additional flexibility when drawing graphs on the torus because edges can loop around the hole or through the hole.  On the pacman board, this extra flexibility can be seen because an edge that goes off the right side of the page comes in on the left, or an edge that goes off the top of the page comes in on the bottom.

In general, the Euler characteristic of a graph on a surface is an integer that depends only on the surface, and not on the graph, so long as the graph subdivides the surface into faces which are connected regions with no non-contractible loops!
It is a surprising fact that the Euler characteristic $\chi=v-e+f$ depends on the surface, but not on the graph.

When studying graphs on the torus, it can be tricky to count the number of vertices, edges, and faces.  Here is an example graph on the pacman board.

\begin{center}
    \includegraphics[width=.2\textwidth]{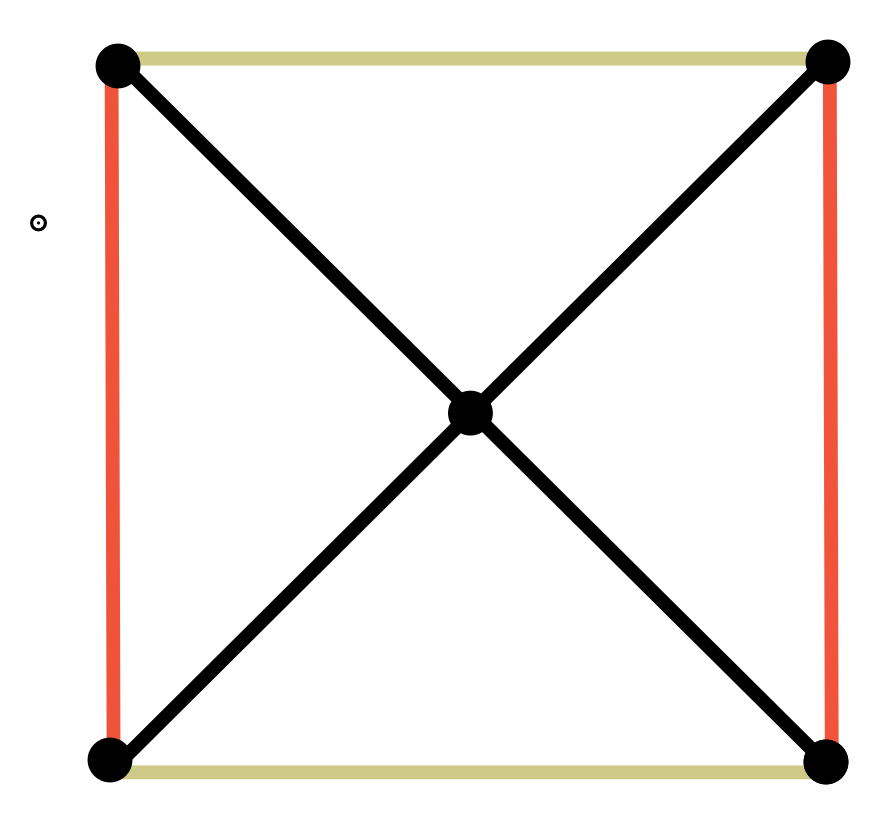}
\end{center}

In the above example on the pacman board, it looks like there are 5 vertices.
However, there are only 2.
The reason is that the 4 corners are all the same point, because of the identifications of the edges!
It looks like there are 8 edges, the 4 radiating from the center and the 
4 sides of the square.  However, there are only 6, because the top and bottom edges are identified and the left and right edges are identified.  
It looks like there are 4 faces and that is correct.  The reason is that the top region is separated from the bottom region 
by the edge along the top and the left region is separated from the right region by the edge along the left. Thus $v=2$, $e=6$, and $f=4$.  Thus $\chi=v-e+f=0$.

Indeed, the Euler characteristic of any graph on the torus (in which the faces carved out by the graph are each connected and contain no non-contractible loops) is $\chi=0$.
This is true even if the number of vertices, edges, and faces are quiet different than the above example --- e.g.\ even if the number of vertices is $v=100$ instead of $v=2$.

One interesting fact is that the complete graphs $K_5$, $K_6$, and $K_7$ can be drawn on a torus without having any edges cross, but $K_8$ cannot.

\subsection*{Exercises}

\begin{enumerate}
\item Draw several graphs on the torus.  (These graphs need to have a cycle that goes around the hole and a cycle that goes through the hole.) 
Make a table of the values of $v,e,f$ for these graphs.
    \begin{enumerate}
        \item Find the quantity $\chi = v-e+f$.
        \item Make a conjecture about $\chi$ and try  explain why it is true. 
\end{enumerate}

\item Draw some graphs on the pac man board.  (These graphs need to have a cycle that goes from the bottom to the top and a cycle that goes from the left to the right.) Make a table of the values of $v,e,f$ for these graphs.
\begin{enumerate}
        \item Find the quantity $\chi = v-e+f$.
        \item Make a conjecture about $\chi$ and try  explain why it is true. \end{enumerate}

\item Draw $K_5$ on the surface of a doughnut, with no edges crossing.

Hint: it is easier to draw this on the pacman board.

\item Draw $K_{3,3}$ on the surface of a doughnut with no edges crossing.

Hint: it is easier to draw this on the pacman board.

\item An IKEA furniture store box arrives with
   32 attachment brackets, 64 metal rods that are 1 foot long, and 32 panels measuring $1$ square foot.
   Explain how to construct these into a $3 \times 3 \times 1$ storage cabinet with a window in the center.  The storage cabinet must be fully enclosed but has no interior shelves.  
   
\item Can you draw either $K_5$ or $K_{3,3}$ on the surface of a Klein bottle?

\end{enumerate}

\chapter{Graph Coloring}\label{chap:coloring}

\section{The chromatic number of a graph}

Imagine you are in charge of creating a new schedule for final exams at CSU, where students take at most one final each day.
You need to make sure that no student has two finals scheduled on the same day, since the student cannot be in two places taking two exams at once.  An easy way to solve this problem is to give each course its own day; however, this is an impractical solution, since there are hundreds of courses offered a semester and no one wants to turn the week of finals into the year of finals!
Having all finals on the same day also does not work, unless there is in no overlap of students between classes. Our goal for this scenario is to minimize the number of days that the university offers finals, so that the students and faculty can have the longest break possible between semesters.

One way to visualize this problem is with a graph.
We can construct this graph to have $n$ vertices representing the $n$ courses offered that semester, and an edge between two vertices if there is (at least) one student who is in both of those courses.  Now, all the finals on Monday of finals week will be assigned the color blue. 
There will be no conflicts as long as any two adjacent vertices are not both blue, because adjacency represents at least one student being in both classes.
We can continue to introduce new colors for each day needed (orange for Tuesday, yellow for Wednesday, $\ldots$) until we have a graph where no two vertices of the same color are adjacent.  
The smallest number of colors required is the minimum number of days needed for final exams and is called the \textit{chromatic number} of the graph.

\begin{definition}
Let $G$ be a labeled graph.
A \defn{vertex-coloring} is a way of coloring the vertices of $G$ so that any two adjacent vertices have different colors.

A \defn{$k$-coloring} of $G$ is a vertex coloring of $G$ that uses at most $k$ colors.

A graph is \defn{$k$-colorable} if it has a vertex-coloring with $k$ colors.

The \defn{chromatic number} $C(G)$ of a graph $G$ is the smallest integer $k$ such that $G$ is $k$-colorable.
\end{definition}

For example, the following graph has chromatic number 3.
\begin{center}
\includegraphics[width=2in]{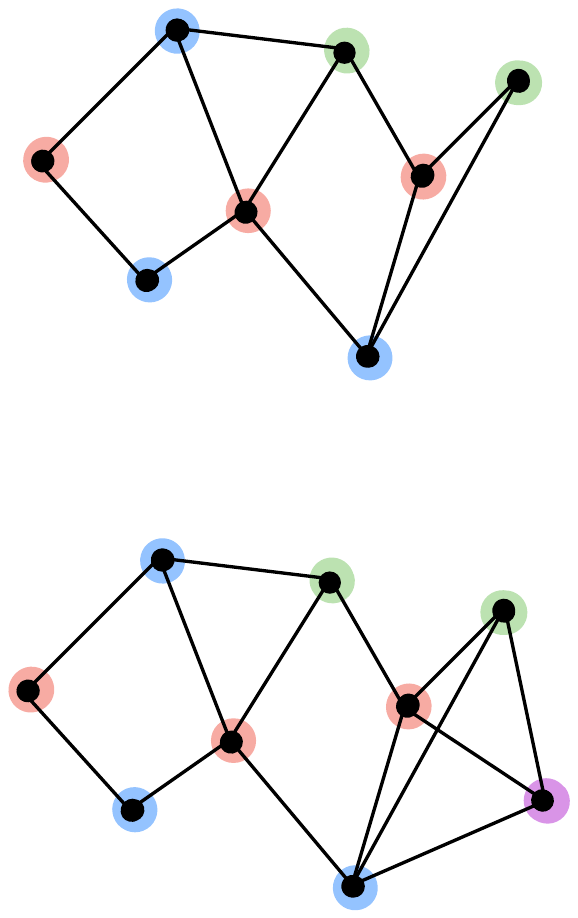}
\end{center}
The following larger graph has the chromatic number 4.
\begin{center}
\includegraphics[width=2in]{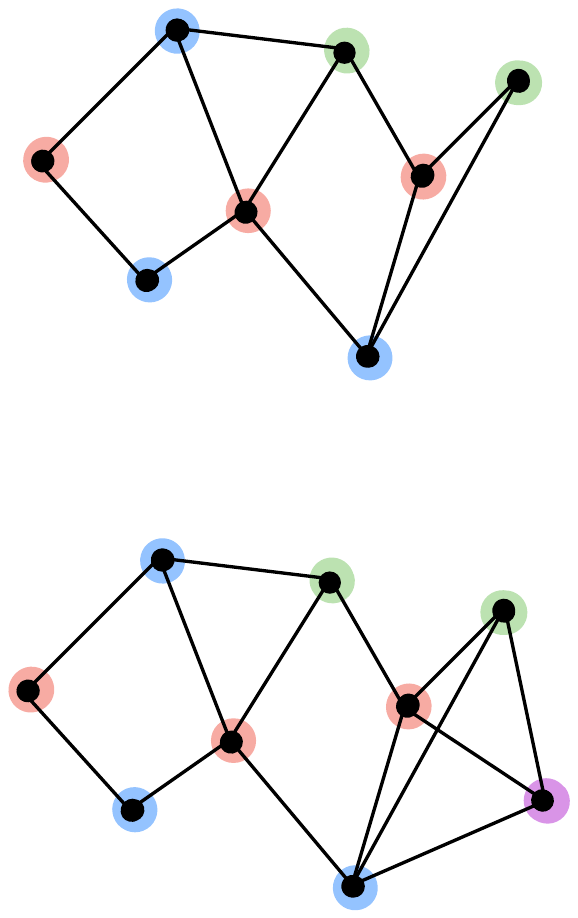}
\end{center}

\begin{example}\label{example:CGKn}
The chromatic number of $K_n$ is 
$C(K_n) =n$.
\end{example}

\begin{proof}
The graph $K_n$ is $n$-colorable because each vertex can be a different color.
The graph $K_n$ is not $(n-1)$-colorable since by the pigeonhole principle, if we have only $n-1$ colors then at least 2 (necessarily adjacent) vertices will have the same color.
\end{proof}

\begin{example}\label{example:CGNn}
The chromatic number of the empty graph with $n\ge 1$ vertices (but no edges) is $1$.
\end{example}

\begin{example} \label{Echromnumber1}
The chromatic number of the path graph $P_n$ with $n$ vertices is $C(P_n)=2$ if $n \geq 2$.
\end{example}

\begin{center}
   \includegraphics[width=.5\textwidth]{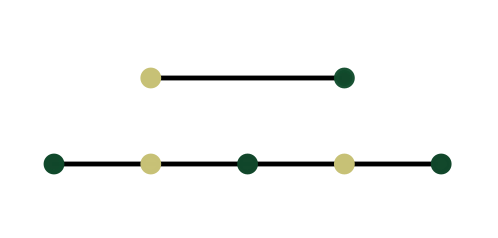}
\end{center}

\begin{example} \label{Echromnumber2}
The chromatic number of the cycle graph $C_n$ is $2$ when $n$ is even and $3$ when $n \geq 3$ is odd.
\end{example}

\begin{center}
     \includegraphics[width=.5\textwidth]{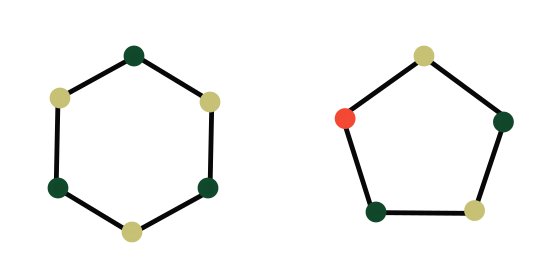}
\end{center}

\begin{remark} \label{Dedgecolor}
An \defn{edge-coloring} of $G$ is a way of coloring the edges of $G$ such that
all the edges ending at a vertex $v$ have different colors, for each vertex $v$ of $G$. 
The \defn{edge-coloring number} of $G$ is the smallest number $k$ such that $G$ has an edge-coloring with $k$ colors.
\end{remark}

In this book will consider vertex colorings more than edge colorings, and whenever we write \emph{chromatic number} we mean the minimum number of colors required for a vertex coloring (as defined above).
Nevertheless, we also ask a few questions about edge colorings at the end of this section.

\subsection*{Exercises}

\begin{enumerate}

\item Explain Examples~\ref{example:CGNn}, \ref{Echromnumber1}, and  \ref{Echromnumber2}.

\item Find the chromatic number of the following graph.

\begin{tikzpicture}
\draw[CSUGreen, line width=1mm] (0,0) circle [radius=2];
\draw[CSUGreen, line width=1mm] (90:4cm) -- (0,0) -- (270:2cm);
\draw[CSUGreen, line width=1mm] (90:4cm) -- (30:2cm) -- (330:4cm);
\draw[CSUGreen, line width=1mm] (90:4cm) -- (150:2cm) -- (210:4cm);
\draw[CSUGreen, line width=1mm] (210:4cm) -- (0,0) -- (30:2cm);
\draw[CSUGreen, line width=1mm] (210:4cm) -- (270:2cm) -- (330:4cm);
\draw[CSUGreen,line width=1mm] (330:4cm) -- (0,0) -- (150:2cm);

\fill[CSUGold, draw=CSUGreen,line width=1mm] (0,0) circle [radius=0.5];
\fill[CSUGold, draw=CSUGreen,line width=1mm] (90:4cm) circle [radius=0.5];
\fill[CSUGold, draw=CSUGreen,line width=1mm] (210:4cm) circle [radius=0.5];
\fill[CSUGold, draw=CSUGreen,line width=1mm] (330:4cm) circle [radius=0.5];
\fill[CSUGold, draw=CSUGreen,line width=1mm] (150:2cm) circle [radius=0.5];
\fill[CSUGold, draw=CSUGreen,line width=1mm] (270:2cm) circle [radius=0.5];
\fill[CSUGold, draw=CSUGreen,line width=1mm] (30:2cm) circle [radius=0.5];

\node[font=\Large] at (90:4cm) {$A$};
\node[font=\Large] at (0,0) {$B$};
\node[font=\Large] at (30:2cm) {$C$};
\node[font=\Large] at (150:2cm) {$D$};
\node[font=\Large] at (210:4cm) {$E$};
\node[font=\Large] at (330:4cm) {$F$};
\node[font=\Large] at (270:2cm) {$G$};

\end{tikzpicture}

\item 
Find the chromatic number of the 5 platonic solids.

\item Find the chromatic number of the Petersen graph.

\item Show that the edge number of a graph is at least as big as the maximum degree of the vertices of $G$.

\item Find the edge coloring number of the following graphs:
\begin{enumerate}
\item $P_n$;
\item $C_n$;
\item $K_4$;
\item $K_5$;
\item cube and octahedron. 
\end{enumerate}

\item Let $G$ and $H$ be two graphs that have no vertices (and hence no edges) in common.
Let $G\cup H$ be the graph that consists of all of the vertices and edges that live in either $G$ or $H$.
How is the chromatic number $C(G\cup H)$ of the union related to the two chromatic numbers $C(G)$ and $C(H)$ of the individual graphs?

\end{enumerate}

\section{What graphs are 2-colorable?}

It turns out that we have already seen the graphs that can be colored with two colors.
Indeed, they are the bipartite graphs!

\begin{proposition}
Let $G$ be a connected graph with at least 2 vertices.
Then $G$ is bipartite if and only if $C(G)=2$. 
\end{proposition}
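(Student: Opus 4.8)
The plan is to prove the biconditional by showing that a proper $2$-coloring of $G$ and a bipartition of $G$ are two descriptions of the same underlying structure: in each case the vertex set splits into two classes with no edge lying entirely inside either class. The key observation, which drives both implications, is that the two color classes of a $2$-coloring are exactly the parts $L$ and $R$ of a bipartition. I would prove the two implications separately.

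First I would handle the forward direction: assume $G$ is bipartite, so that $V$ is the disjoint union of $L$ and $R$ with every edge joining a vertex of $L$ to a vertex of $R$. Color every vertex of $L$ blue and every vertex of $R$ orange. Since no edge has both endpoints in $L$ or both in $R$, no edge joins two vertices of the same color, so this is a valid vertex-coloring using two colors; hence $C(G) \le 2$. To upgrade this to $C(G) = 2$, I would invoke the hypotheses that $G$ is connected and has at least two vertices to guarantee that $G$ contains at least one edge $uv$. Any valid coloring must assign $u$ and $v$ different colors, so one color cannot suffice, giving $C(G) \ge 2$. Combining the two inequalities yields $C(G) = 2$.

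Next I would prove the converse: assume $C(G) = 2$ and fix a valid vertex-coloring of $G$ using two colors, say blue and orange. Let $L$ be the set of blue vertices and $R$ the set of orange vertices, so that $V$ is the disjoint union of $L$ and $R$. Because the coloring is valid, adjacent vertices receive different colors, and therefore every edge has one blue endpoint and one orange endpoint, that is, every edge joins $L$ to $R$. This is precisely the definition of a bipartite graph, so $G$ is bipartite. (Since $C(G)=2$ forces at least one edge, both color classes are automatically nonempty.)

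The argument is short, and the one subtlety to get right is the role of the hypothesis that $G$ is connected with at least two vertices. This hypothesis is needed only in the forward direction, and only to rule out the degenerate case of a graph with no edges: such a graph is bipartite but has chromatic number $1$, so without the edge-producing hypothesis the equivalence would break. The hard part, such as it is, is simply phrasing the correspondence between colorings and bipartitions cleanly; beyond that there is no genuine obstacle, since each direction is a direct translation between the two languages.
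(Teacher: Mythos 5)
Your proof is correct and takes essentially the same approach as the paper: both identify the two color classes of a $2$-coloring with the parts $L$ and $R$ of a bipartition, and both use the hypothesis that $G$ is connected with at least two vertices to produce an edge forcing $C(G)\ge 2$. Your version simply spells out the two implications (and the nonemptiness of the color classes) in more detail than the paper does.
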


\begin{proof}
A graph is bipartite if and only if it is $2$-colorable.  The reason is that we can identify vertices on the left side with the color gold and vertices on the right side with the color green.
The condition for a 2-coloring that adjacent vertices have different colors is the same as the condition for a bipartite graph that all edges connect vertices on opposite sides.
At least two colors are needed if the graph is connected and has at least two vertices.
\end{proof}
\begin{center}
\includegraphics{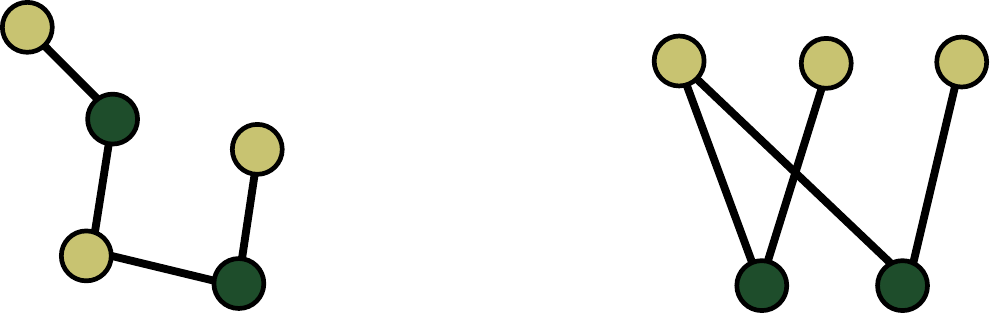}
\end{center}

The next theorem shows that 2-colorable graphs are those graphs that do not have any odd cycles.

\begin{theorem}
\label{thm:odd-cycles}
A graph $G$ has $C(G)\leq 2$ if and only if it contains no odd cycles.
\end{theorem}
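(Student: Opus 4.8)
The plan is to prove both implications, dispatching the forward direction quickly via the contrapositive and the known chromatic number of an odd cycle, and then doing the real work in the backward direction by constructing an explicit $2$-coloring from distances.

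First I would handle the forward direction: assuming $C(G)\leq 2$, I want to conclude that $G$ has no odd cycle. I argue by contraposition. Suppose $G$ contains a cycle $C_n$ with $n$ odd as a subgraph. Restricting any vertex-coloring of $G$ to the vertices of that cycle gives a valid vertex-coloring of $C_n$. By Example~\ref{Echromnumber2}, $C(C_n)=3$ for odd $n\geq 3$, so any coloring uses at least three colors on those vertices, forcing $C(G)\geq 3$. Hence $C(G)\leq 2$ rules out odd cycles.

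For the backward direction (the substantial one), I would assume $G$ has no odd cycle and build a $2$-coloring. First reduce to the connected case: if $G$ is disconnected, color each connected component independently, using the fact (from the exercise on $C(G\cup H)$ for vertex-disjoint unions) that the chromatic number of the whole graph is the maximum of the chromatic numbers of its components, so a $2$-coloring of each component yields $C(G)\leq 2$. For a connected graph, fix a root vertex $v_0$ and let $d(v_0,v)$ denote the length of a shortest walk from $v_0$ to $v$. Color $v$ gold when $d(v_0,v)$ is even and green when $d(v_0,v)$ is odd.

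The remaining step, which is where the main obstacle lies, is to verify that this parity coloring is valid. Suppose toward a contradiction that some edge $uw$ joins two equally colored vertices, so $d(v_0,u)$ and $d(v_0,w)$ have the same parity. Concatenating a shortest path from $v_0$ to $u$, the edge $uw$, and a shortest path from $w$ back to $v_0$ gives a closed walk of length $d(v_0,u)+1+d(v_0,w)$, which is odd. The crux is the lemma that \emph{every closed walk of odd length contains an odd cycle as a subgraph}; I would prove this by strong induction on the walk length. If the closed walk is already a simple cycle we are done; otherwise it repeats a vertex, which splits it into two shorter closed walks whose lengths sum to the original odd length, so at least one summand is odd, and the inductive hypothesis applies to that shorter walk. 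This exhibits an odd cycle in $G$, contradicting our assumption, so no monochromatic edge exists and the coloring is a legitimate $2$-coloring. The delicate points to get right are that each split strictly decreases length and that odd parity is preserved into one of the two pieces.
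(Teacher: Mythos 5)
Your proposal is correct, and in spirit it matches the paper: the forward direction is the same contrapositive observation, and your distance-parity coloring from a root $v_0$ is exactly the paper's coloring (the paper colors BFS layers alternately starting from a vertex $a$, which assigns colors by parity of distance). Where you genuinely diverge is in how the contradiction is extracted from a monochromatic edge $uw$. The paper takes shortest paths from $u$ and from $v$ to the root, lets $w$ be the \emph{first junction point} of the two paths, and asserts that the two segments together with the edge $uv$ form an odd cycle; this is brief but leans on two unstated facts, namely that the segments beyond the junction point are internally disjoint (so one really gets a simple cycle) and that the segments from the junction to the two endpoints have lengths of equal parity (which uses that subpaths of shortest paths are shortest). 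You instead close up the walk through $v_0$ and invoke the lemma that \emph{every closed walk of odd length contains an odd cycle}, proved by strong induction on the length by splitting at a nontrivial repeated vertex and noting that the two pieces have lengths summing to an odd number, so one piece is odd and strictly shorter. This buys you robustness: you never need to argue that any particular union of paths is a simple cycle, the induction handles all degeneracies at once, and the lemma is reusable elsewhere. You also handle the disconnected case explicitly by coloring components separately, a point the paper's proof silently skips (its layering procedure only reaches the component of $a$). The cost is that your argument is longer and requires setting up the induction carefully --- in particular the two delicate points you flag (strict length decrease, so the split cannot reproduce the whole walk, and preservation of odd parity into one piece) are exactly the places where the induction could otherwise fail, and both are handled correctly since the trivial repetition $v_0=v_k$ is excluded from the choice of splitting vertex.
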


\begin{proof}
The forward direction ($\Rightarrow$) follows since an odd cycle is not 2-colorable, and hence any graph containing an odd cycle is not 2-colorable.

For the reverse direction ($\Leftarrow$), suppose that $G$ contains no odd cycles. Pick a vertex ``$a$" and color it red. Color all neighbors of $a$ blue. Then, color all neighbors of the new set of colored vertices red, etc. Continue, alternating coloring the new neighbors red or blue, until all vertices are colored.  The first three steps of this process are shown here:
\begin{center}
\includegraphics{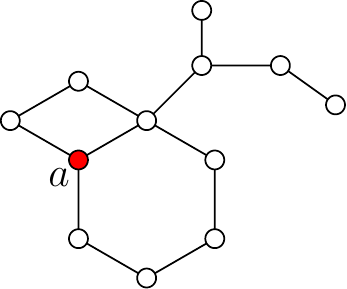}\hspace{1cm} \includegraphics{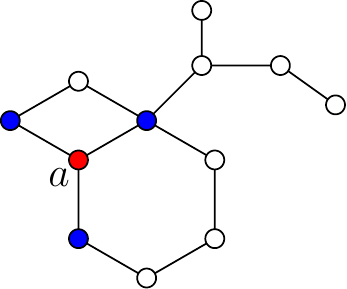}\hspace{1cm} \includegraphics{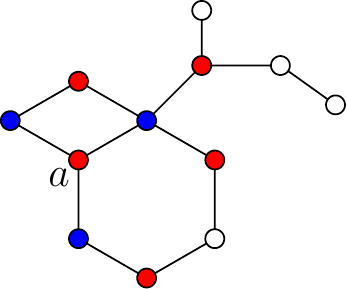}
\end{center}
Suppose for contradiction that two adjacent vertices $u$ and $v$ have the same color in this coloring.
\begin{center}
\includegraphics{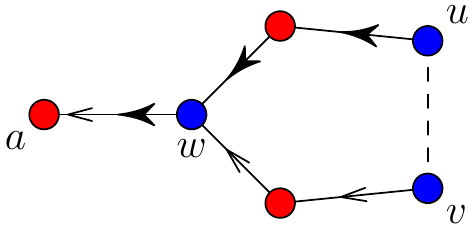}
\end{center}
Pick shortest paths from $u$ to $a$ and from $v$ to $a$; these paths alternate colors.  Let $w$ be the first junction point of these two paths.  The paths from $u$ to $w$ and then reversed from $w$ to $v$, along with edge $uv$, forms an odd cycle; this is a contradiction. Hence $G$ must be 2-colorable.
\end{proof}

\begin{remark}
The proof of Theorem~\ref{thm:odd-cycles} also gives an algorithm for deciding if a graph is 2-colorable (i.e., if it has no odd cycles).
\end{remark}

\subsection*{Exercises}

\begin{enumerate}

\item If $G$ is a connected bipartite graph with at least one edge, then show that $G$ has exactly two different 2-colorings.

\item Which platonic solids are 2-colorable?

\item Let $n$ be a positive integer.  Let $G$ be the graph whose vertices are the set of binary strings of length $n$, where two vertices are adjacent if and only if the strings differ in exactly one position.
For example, when $n=3$, then $G$ is the graph of the cube.
Is $G$ 2-colorable?  Explain why or why not.
\end{enumerate}

\section{Bounds on chromatic numbers}
Given a graph $G$ with $n$ vertices, what are the upper and lower bounds on the chromatic number $C(G)$?  For example, take a graph with three vertices.  If there were no edges in our graph (the empty graph on three vertices), we would have that $C(G)=1$.  If there was one edge ($P_2$ and a vertex) or two edges ($P_3$), we would have $C(G)=2$.  Finally, if we had three edges, our graph ($C_3=K_3$) would have $C(G)=3.$  Thus, we have that a graph with $n=3$ vertices has a lower bound of $C(G)=1$ and upper bound of $C(G)=3$.  We generalize this process in the following theorem.
\begin{center}
\includegraphics[]{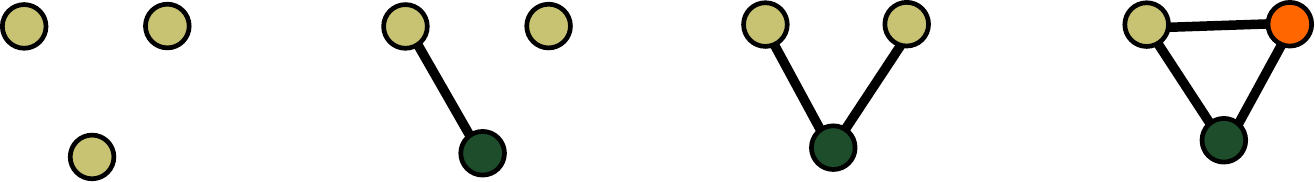}
\end{center}

\begin{theorem}
Let $G$ be a graph with $n$ vertices, where $n\ge 1$.
Then $1\leq C(G) \leq n$.
\end{theorem}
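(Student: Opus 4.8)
The statement to prove is that for any graph $G$ with $n \geq 1$ vertices, the chromatic number satisfies $1 \leq C(G) \leq n$. This is a pair of bounds, so the plan is to prove each inequality separately. Both bounds are quite elementary given the definitions already established in this section, so the proof should be short.

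For the lower bound $1 \leq C(G)$, I would argue that any graph with at least one vertex requires at least one color, since by definition a vertex-coloring must assign a color to every vertex, and a coloring uses at least one color as soon as there is something to color. More precisely, since $n \geq 1$, there is at least one vertex, so at least one color is needed; hence $C(G) \geq 1$. One should be slightly careful to note that the empty graph on $n \geq 1$ vertices (no edges) achieves $C(G) = 1$, as already observed in Example~\ref{example:CGNn}, confirming the lower bound is attained.

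For the upper bound $C(G) \leq n$, the plan is to exhibit an explicit $n$-coloring: simply assign each of the $n$ vertices its own distinct color. Then any two adjacent vertices automatically receive different colors, because in fact \emph{all} vertices receive different colors. This is a valid vertex-coloring using exactly $n$ colors, so $G$ is $n$-colorable, and therefore $C(G)$, being the smallest such $k$, satisfies $C(G) \leq n$. This mirrors the reasoning already used for $K_n$ in Example~\ref{example:CGKn}, where the complete graph forces the upper bound to be achieved.

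The main thing to get right is the logical structure rather than any computation: the lower bound follows from the existence of at least one vertex, and the upper bound follows from the ``each vertex its own color'' coloring being admissible. There is no real obstacle here; the only subtlety worth flagging is ensuring the definitions of $k$-colorable and chromatic number are applied correctly (that $C(G)$ is the \emph{minimum} over valid colorings, so exhibiting one valid $n$-coloring suffices for the upper bound, while the lower bound needs the observation that a coloring cannot use zero colors when vertices are present).
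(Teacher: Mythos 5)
Your proof is correct, but it takes a genuinely different route from the paper's. You prove both bounds directly and without cases: the lower bound holds because a coloring of a graph with at least one vertex must use at least one color, and the upper bound holds because the ``rainbow'' coloring assigning every vertex its own color is always a valid vertex-coloring, so $C(G)\le n$. The paper instead argues via the extreme cases: it cites Example~\ref{example:CGKn} and Example~\ref{example:CGNn} for $C(K_n)=n$ and for the empty graph having chromatic number $1$, and then shows that any graph $G_1$ that is \emph{not} complete has two non-adjacent vertices that may share a color (so $C(G_1)\le n-1<n$), while any graph $G_2$ that is \emph{not} empty has two adjacent vertices that must receive distinct colors (so $C(G_2)\ge 2>1$). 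Your argument is shorter and cleaner as a proof of the stated inequalities; the paper's argument does slightly more work but yields a sharper fact as a byproduct, namely that the upper bound $n$ is attained only by the complete graph and the lower bound $1$ only by the empty graph. Both approaches are valid, and your application of the definitions (exhibiting one valid $n$-coloring for the upper bound, and using minimality for the lower bound) is exactly right.
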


\begin{proof}
As shown in Example \ref{example:CGKn} and Example \ref{example:CGNn}, the chromatic number of the complete graph $K_n$ is $n$ and the chromatic number of the empty graph is 1.  Now, consider a graph $G_1$ that is not complete.  Then, there exists two vertices $x_1$ and $y_1$ that are not adjacent.  We can therefore assign $x_1$ and $y_1$ to be the same color and assign each of the remaining $n-2$ vertices to be its own color.  Then, we see that the chromatic number of this graph is $C(G_1)\leq n-1 < n$.  Similarly, take another graph $G_2$ that is not empty.  Then, there exists two vertices $x_2$ and $y_2$ that are adjacent.  We must assign $x_2$ and $y_2$ to be two different colors.  Therefore, $C(G_2)\geq 2 >1.$
\end{proof}

In the beginning of this section, we showed that the chromatic number of $K_3$ or $C_3$ was 3.  This means that any graph that contains a triangle $C_3=K_3$ is not $2$-colorable.  
This can be generalized in two ways: by looking at the cycle graph $C_n$ when $n \geq 3$ is odd; or by looking at the complete graph $K_n$ when $n \geq 3$. 

The below graph contains a copy of $K_5$, the complete graph on 5 vertices.
Since $K_5$ is not 4-colorable, this means that the entire graph is not 4-colorable.

\begin{center}
\includegraphics[width=2in]{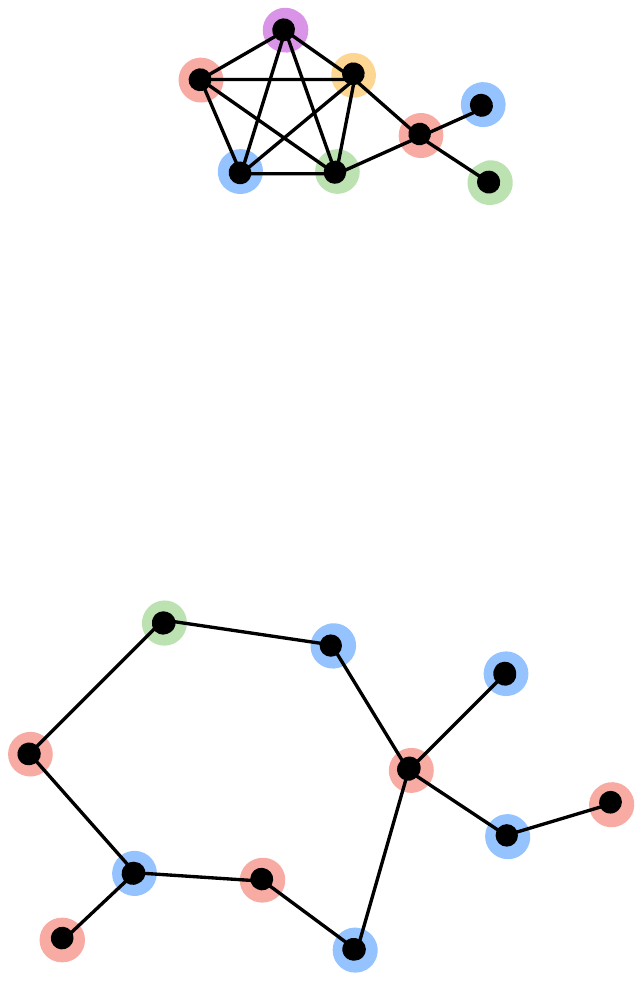}
\end{center}

\begin{remark}
Any graph containing a copy of $K_n$ is not $(n-1)$-colorable.
\end{remark}

Recall also that any cycle graph with an odd number of vertices is not 2-colorable.
It follows that the following below graph, which contains a 7-cycle, is not 2-colorable.

\begin{center}
\includegraphics[width=2in]{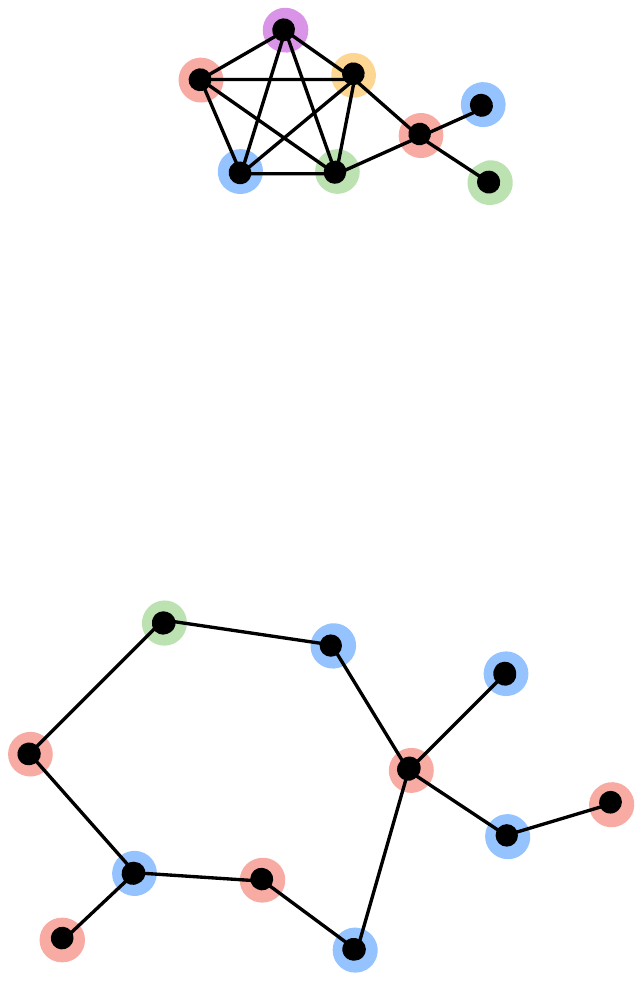}
\end{center}

\begin{remark}
Any graph containing a copy of $C_n$ for $n\geq 3$ odd is not $2$-colorable.
\end{remark}

More generally, whenever $H$ is a subgraph of $G$, then the chromatic number of $H$ provides a lower bound on the chromatic number of $G$

\begin{proposition}
If $G$ is a graph and $H$ is a subgraph of $G$, then $C(H)\le C(G)$.
\end{proposition}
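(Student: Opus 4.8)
The plan is to exploit the definition of chromatic number as a minimum: $C(G)$ is the smallest number of colors admitting a valid vertex-coloring of $G$, so there exists an optimal coloring of $G$ using exactly $C(G)$ colors. The key idea is that such a coloring, when \emph{restricted} to the vertices of $H$, is automatically a valid coloring of $H$. Since $H$ uses no more colors than were available to $G$, this produces a $C(G)$-coloring of $H$, and then the minimality of $C(H)$ forces $C(H) \le C(G)$.

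First I would fix an optimal vertex-coloring $c$ of $G$ using $C(G)$ colors, which exists by the definition of $C(G)$. Recall that $H$ being a subgraph of $G$ means (by the definition of subgraph in the chapter on graphs) that $V(H) \subseteq V(G)$ and $E(H) \subseteq E(G)$. I would then define the coloring $c'$ on $H$ to be the restriction of $c$ to $V(H)$; this makes sense precisely because every vertex of $H$ is a vertex of $G$, so $c$ assigns it a color.

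The crucial step is to verify that $c'$ is a genuine vertex-coloring of $H$, i.e.\ that adjacent vertices of $H$ receive different colors. Suppose $u$ and $v$ are adjacent in $H$, so $uv \in E(H)$. Because $E(H) \subseteq E(G)$, the edge $uv$ also lies in $E(G)$, meaning $u$ and $v$ are adjacent in $G$. Since $c$ is a valid coloring of $G$, we have $c(u) \ne c(v)$, and hence $c'(u) \ne c'(v)$. This confirms $c'$ is a valid coloring of $H$ using at most $C(G)$ colors.

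Finally, since $H$ admits a coloring with at most $C(G)$ colors, and $C(H)$ is defined as the \emph{smallest} number of colors needed to color $H$, we conclude $C(H) \le C(G)$. I do not expect any serious obstacle here: the only point requiring attention is making sure the restriction really is a legal coloring, which is exactly the edge-containment check $E(H)\subseteq E(G)$ above. One should also note the degenerate case where $H$ has no vertices or $G$ has no coloring to worry about, but since the statement assumes $H$ is a subgraph of a graph $G$ with a well-defined finite chromatic number, this causes no difficulty.
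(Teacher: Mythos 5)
Your proposal is correct and uses essentially the same argument as the paper: both rest on the observation that restricting a valid coloring of $G$ to the vertices of $H$ remains a valid coloring of $H$ because $E(H)\subseteq E(G)$. The only difference is framing --- the paper phrases this as a proof by contradiction (assuming $G$ is $k$-colorable for some $k<C(H)$), while you argue directly from an optimal coloring of $G$; your direct version is, if anything, slightly cleaner.
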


\begin{proof}
Let $k<C(H)$.
Suppose for a contradiction that $G$ were $k$-colorable.
If we simply removed all of the vertices in $G$ that are not in $H$ (and also all of the edges in $G$ that are not in $H$), then we would obtain a valid coloring of $H$ with at most $k$ colors.
This contradicts the fact that $k<C(H)$.
\end{proof}

\subsection*{Exercises}

\begin{enumerate}

\item Show that the chromatic number of the following graph is at least 3.
\begin{center}
    \includegraphics[width=.25\textwidth]{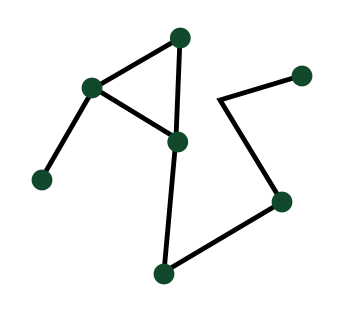}
\end{center}

\item Show that the chromatic number of the following graph is at least 3.
\begin{center}
    \includegraphics[width=.25\textwidth]{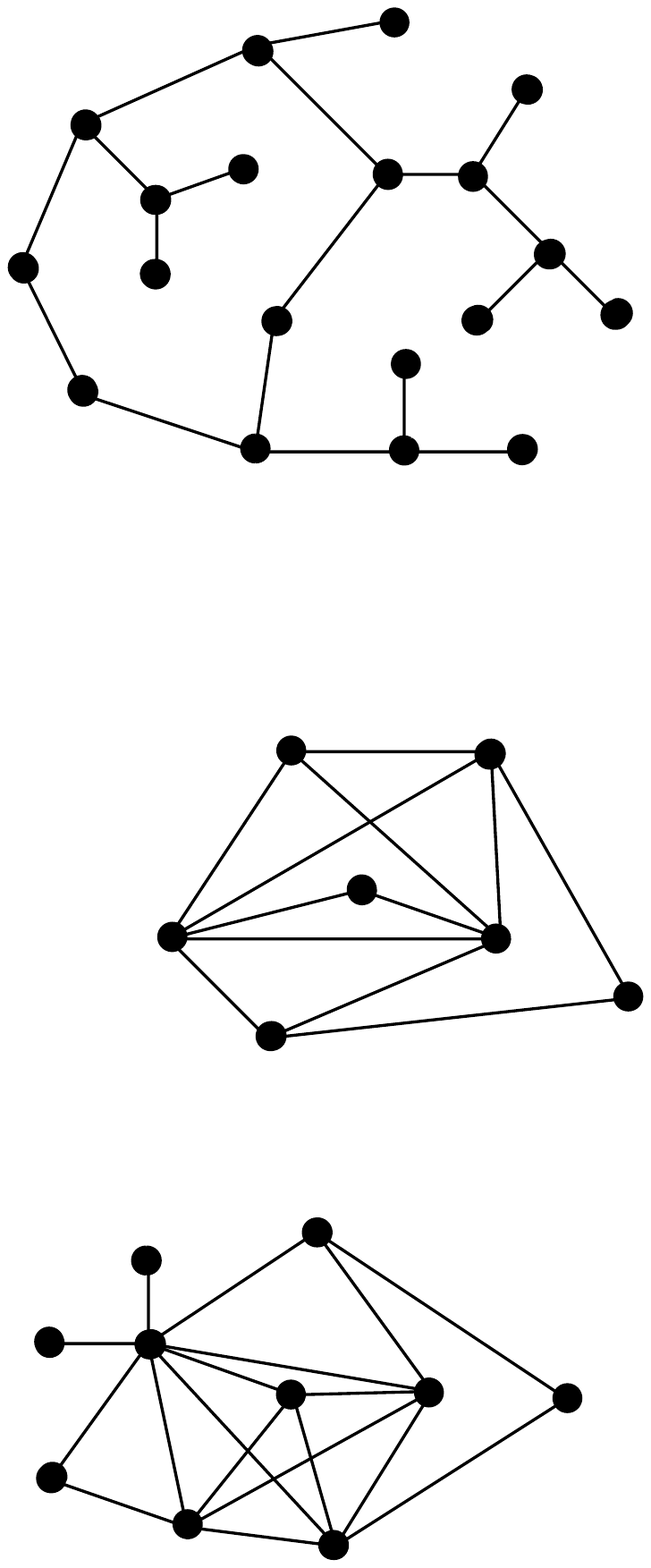}
\end{center}

\item Show that the chromatic number of the following graph is at least 4.
\begin{center}
    \includegraphics[width=.25\textwidth]{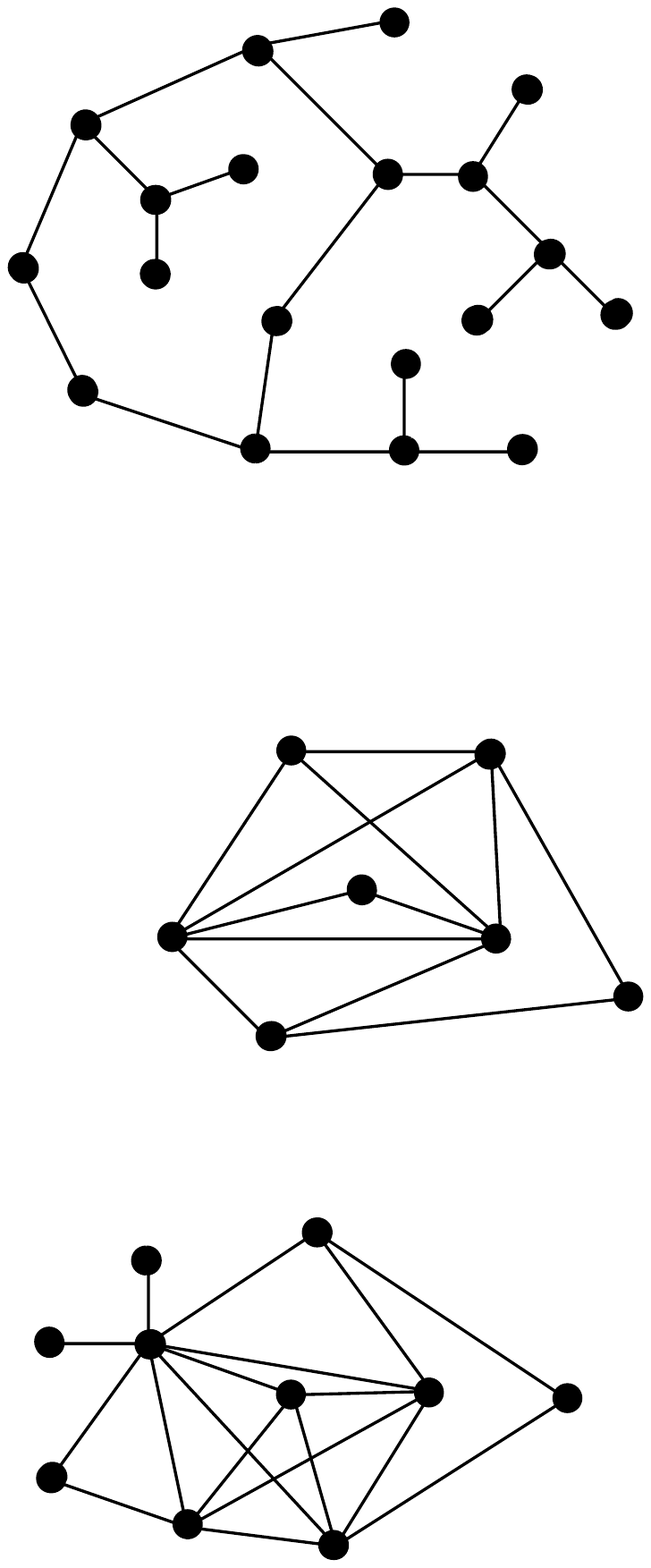}
\end{center}

\item Show that the chromatic number of the following graph is at least 5.

\begin{center}
    \includegraphics[width=.25\textwidth]{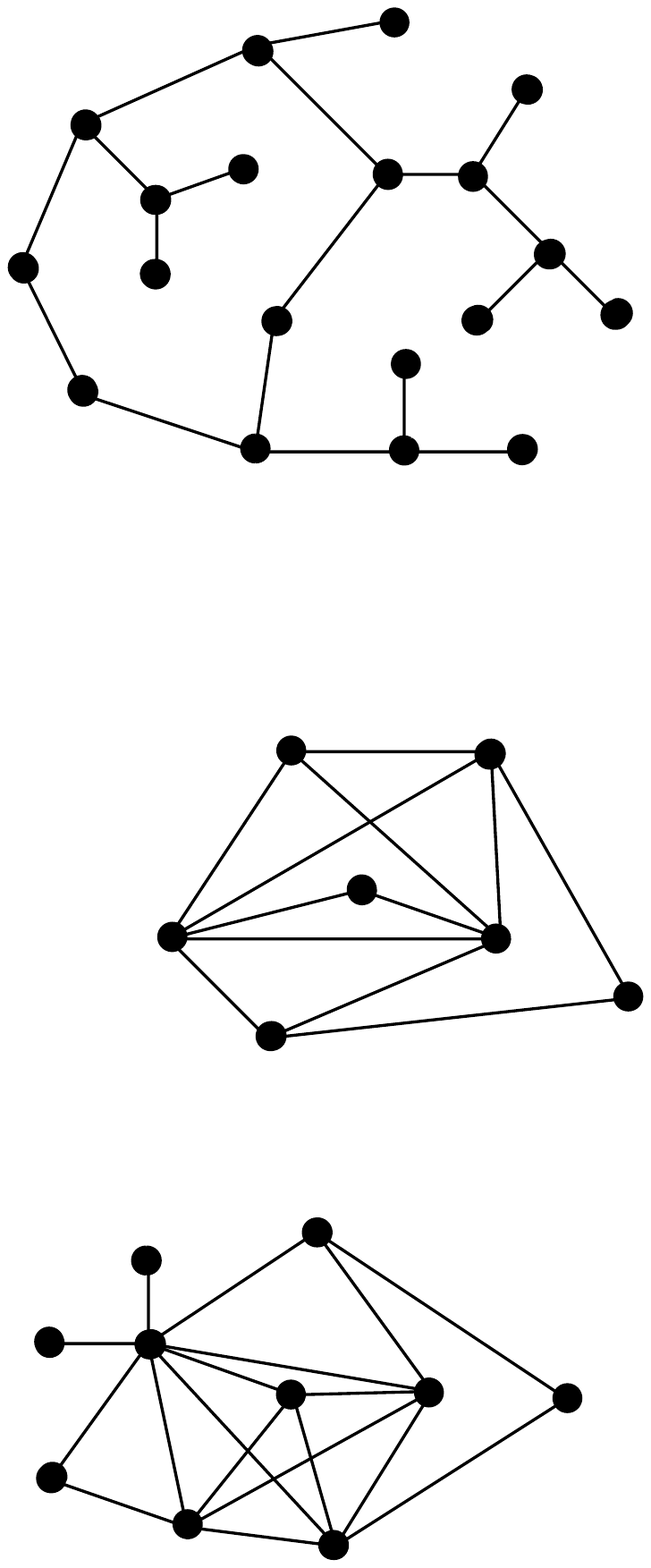}
\end{center}

\end{enumerate}

\section{Brooks' Theorem}

What happens if we try to 3-color a graph in the same way we 2-colored a graph?
Without loss of generality, fix three of the vertices in the left-most triangle to be Red, Blue, and Green.
How many ways can we try to continue this coloring?
\begin{center}
\includegraphics[width=3in]{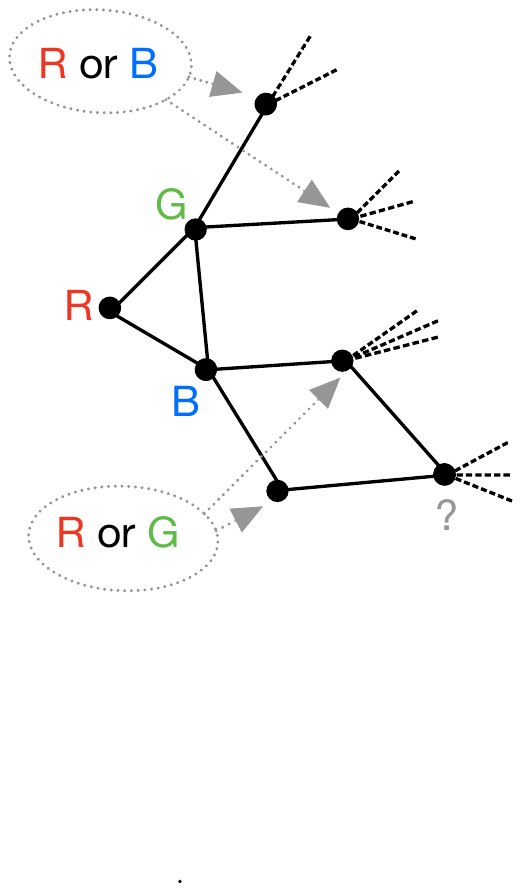}\\
R = Red, B = Blue, G = Green
\end{center}
The number of options to try explodes!

\begin{remark}
One example application of coloring graphs is to scheduling. Suppose for example that you are the math department chair, with a collection of math classes that need to be taught. The times for these classes are already  fixed. What is the minimum number of instructors you need to hire?

You can solve this problem by building a graph with a vertex for each math class, and by adding an edge between two vertices if the corresponding classes overlap in time. The minimum number of colors needed to color this graph is the same as the minimum possible number of instructors you need to hire.

See Problem \#4 in Section~\ref{sec:coloring-problems} for another example scheduling application.
\end{remark}

\begin{remark}
Deciding if a graph with $n$ vertices is 3-colorable in general takes running time $\approx 2^{n/2}$. This is expensive.
Nothing substantially better is known for $k$-colorings for any $k\ge 3$. 
\end{remark}

Here's a special case where $k$-colorings are possible:

\begin{theorem}[Brooks' Theorem]
\label{thm:Brooks}
If every vertex in a graph $G$ has degree at most $d$, then $G$ is $(d+1)$-colorable.
\end{theorem}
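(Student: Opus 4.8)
The plan is to prove Brooks' Theorem (in the weaker form stated here, which asserts $(d+1)$-colorability rather than the sharp $d$-colorability) by a greedy coloring argument. The key observation is that if every vertex has degree at most $d$, then when we color the vertices one at a time, each vertex we reach has at most $d$ already-colored neighbors, so among $d+1$ available colors there is always at least one color left over that avoids all of its neighbors.

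First I would set up the greedy procedure. Fix a palette of $d+1$ colors, say $\{1, 2, \ldots, d+1\}$, and list the vertices of $G$ in any order as $v_1, v_2, \ldots, v_n$. I would then color the vertices in this order: having already colored $v_1, \ldots, v_{i-1}$, assign to $v_i$ the smallest color in the palette that does not appear on any neighbor of $v_i$ that has already been colored.

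Second, I would argue that this procedure never gets stuck, which is the heart of the proof. When we arrive at vertex $v_i$, its already-colored neighbors are a subset of all its neighbors, so there are at most $\deg(v_i) \le d$ of them. These neighbors use up at most $d$ distinct colors from our palette. Since the palette contains $d+1$ colors and at most $d$ are forbidden, at least one color remains available for $v_i$. Hence every vertex successfully receives a color.

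Finally, I would check that the resulting coloring is a valid vertex-coloring, i.e. that adjacent vertices receive different colors. This is immediate from the rule: whenever two vertices $u$ and $w$ are adjacent, the one colored later (say $w$) explicitly avoided the color of the one colored earlier ($u$), so they differ. Therefore $G$ admits a $(d+1)$-coloring, proving $G$ is $(d+1)$-colorable. I do not expect any serious obstacle here, since the greedy bound is clean; the only subtlety worth a sentence is emphasizing that we count \emph{already-colored} neighbors (at most $d$) rather than all neighbors, which is what makes $d+1$ colors suffice. (The genuinely hard part—sharpening $d+1$ to $d$ except for complete graphs and odd cycles, which is the actual content of Brooks' Theorem—is not being asked for by this statement, so I would not attempt it.)
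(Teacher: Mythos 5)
Your greedy argument is correct and takes essentially the same approach as the paper: the paper phrases it as induction on the number of vertices (delete a vertex, color the rest by the inductive hypothesis, then re-insert the deleted vertex using a color not blocked by its at most $d$ neighbors), which is exactly your greedy step run in reverse. Both proofs rest on the identical observation that among $d+1$ colors there is always a free one for a vertex with at most $d$ colored neighbors.
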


\begin{proof}
Fix $d$. We proceed by induction on $n$, the number of vertices in $G$.

For the base case, if $n\le d+1$, then clearly the graph is $(d+1)$-colorable, since we can use a different color for each vertex.

Now, assume any such graph with $n$ vertices is $(d+1)$-colorable. (We must show the same is true for any such graph with $n+1$ vertices.)

Given a graph $G$ with $n$ vertices, delete one vertex $v$. By the inductive assumption we can color what remains. Since $v$ has at most $d$ neighbors, there is at least one free color out of the $(d+1)$-colors we can use to color $v$ compatibly. Hence we are done by induction.

Below is a picture in the case when $d=3$.
\begin{center}
$G$\hspace{30mm}$G\setminus v$\hspace{30mm}$G$\\
\includegraphics[width=4in]{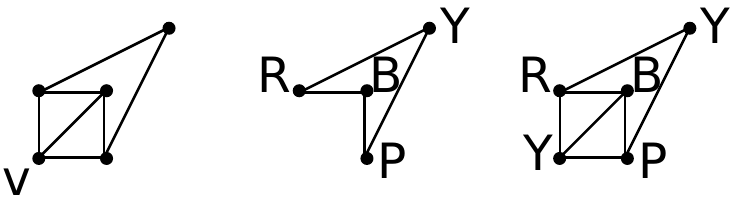}\\
R = Red, B = Blue, Y = Yellow, P = Purple
\end{center}
\end{proof}

\begin{aside}
Furthermore, a graph in which every vertex has degree at most $d$ is $d$-colorable unless
\begin{itemize}
\item $d=2$ and some connected component is an odd cycle, or
\item $d>2$ and some connected component is $K_{d+1}$.
\end{itemize}
The proof of this aside is very hard!
\end{aside}

\begin{exercise}
Show the following graph is not 3-colorable.
\begin{center}
\begin{center}
    \includegraphics[width=.25\textwidth]{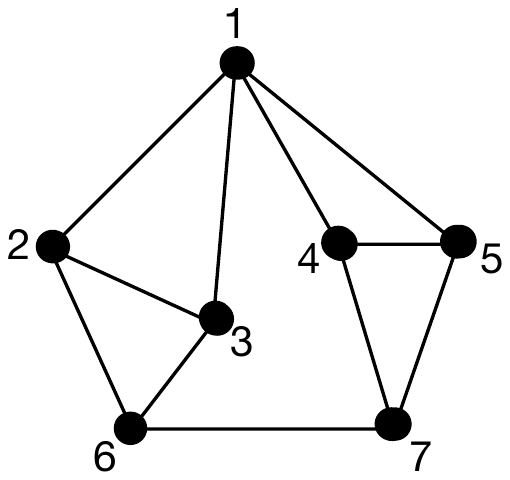}
\end{center}
\end{center}
\end{exercise}

\begin{remark}
Sadly there is no $K_4$ in this graph, because then we'd be done immediately.
\end{remark}

\begin{answer}
Suppose for a contradiction that this graph were 3-colorable. Without loss of generality let vertex 1 be Red.
Then vertices 2 and 3 are Blue and Green or Green and Blue, which implies 6 is Red.
Also vertices 4 and 5 are Blue and Green or Green and Blue, which implies 7 is Red.  This is a contradiction since 6 and 7 are adjacent! Hence this graph is not 3-colorable.
\end{answer}

\begin{exercise}
Show the following graph is not 3-colorable.
\begin{center}
\includegraphics{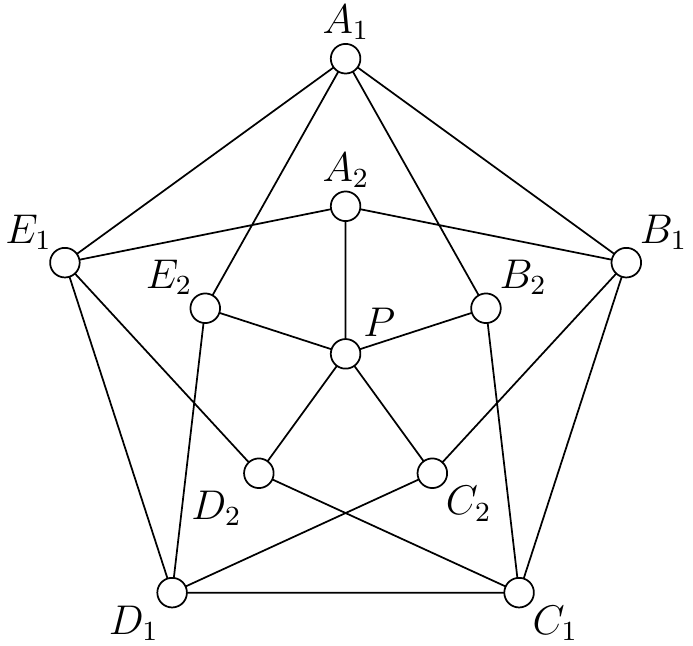}
\end{center}
\end{exercise}

In the following solution, we will call the inner vertex $P$ the \textit{central vertex}, and the other vertices are in five pairs of \textit{twins}: $A_1$ and $A_2$ are twins, $B_1$ and $B_2$ are twins, and so on.  In its pair, $A_1$ is called the \textit{outer twin} and $A_2$ is called the \textit{inner twin} (and similarly for all other twin pairs).

\begin{answer}
Suppose for a contradiction this graph were 3-colorable. Without loss of generality let the central vertex be Red. So all 5 inner vertices are Blue or Green.  A way of coloring the vertices that agrees with this choice is shown below (but naturally is not a $3$-coloring - we will just pretend that it is a valid coloring as an illustration).

\begin{center}
    \includegraphics[width=15cm]{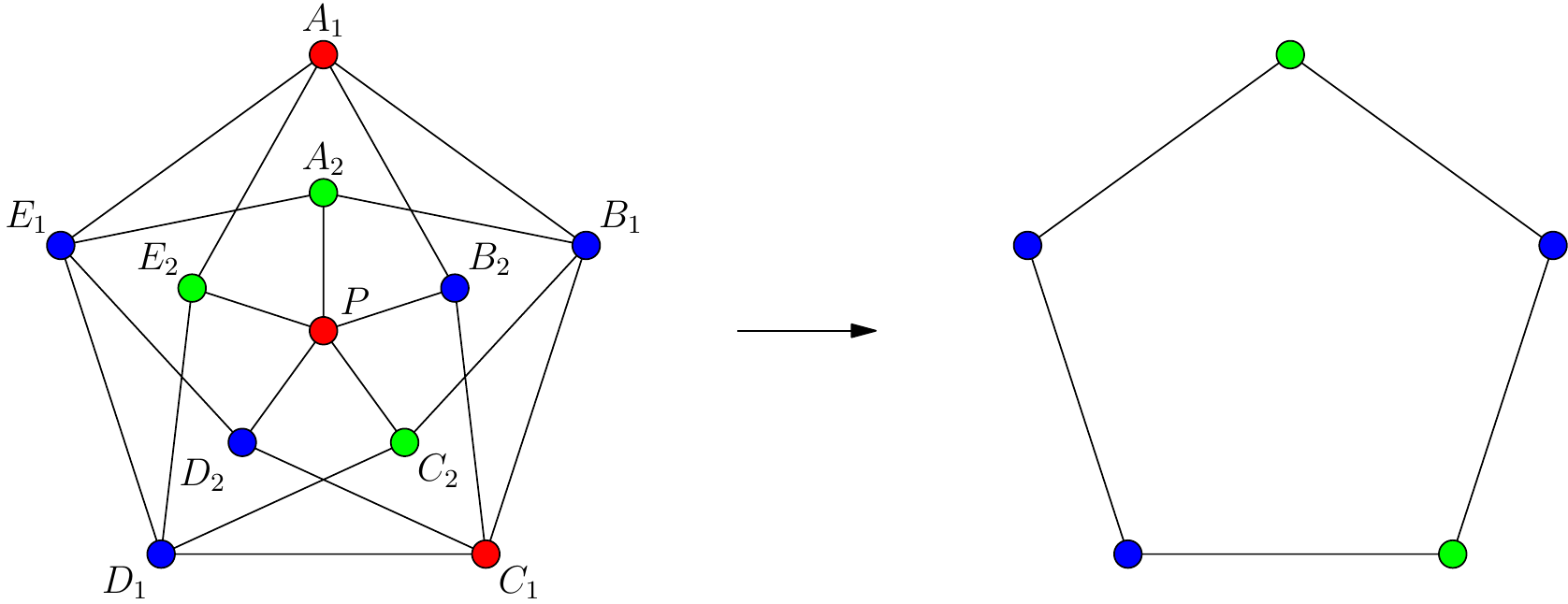}
\end{center}

We will show that, if this coloring is a valid $3$-coloring, then we can use it to  produce a valid 2-coloring of the outer 5-cycle (but \emph{not} of the entire graph) as follows: if any outer twin is colored Red, replace Red with the color of its inner twin (Blue or Green). Then in this coloring of the outer cycle with Blue and Green, if an outer twin $v$ is changed from Red to Blue (for example), then both of $v$'s adjacent outer twins must be Green. Indeed, neither of $v$'s adjacent outer twins could have been Blue since they are each connected to $v$'s inner twin which is Blue. Also, neither of $v$'s adjacent outer twins could have been changed from Red (to any other color) since $v$ was originally Red.

But this is a contradiction---no odd cycle can be 2-colored. Hence the original graph must not be 3-colorable.
\end{answer}

\subsection*{Exercises}

\begin{enumerate}
    \item 
    Brook's Theorem guarantees that $G$ is $k$-colorable for some $k$, but $k$ might be larger than the chromatic number $C(G)$.
    Compare $k$ and $C(G)$ for each of the 
    following graphs:
    \begin{enumerate}
    \item the path graph $P_n$;
    \item the cycle graph $C_n$;
    \item each of the Platonic solids; 
    \item the Petersen graph.
    \end{enumerate}
\end{enumerate}

\section{The chromatic polynomial}

In this section, we think about the number $k$-colorings of a labeled graph $G$ and we want to keep track of the outcome when $k$ is any positive integer.  We define the {\it chromatic polynomial} of $G$ as the polynomial whose value at the input $k$ is the number of $k$-colorings.  

Here is an application to a real-world scenario.
Suppose you have $k$ employees and $n$ jobs and some of the jobs conflict with each other, then how many ways are there to assign the employees to the jobs?  In this scenario, let $G$ be the labeled graph with $n$ vertices, where the vertices are labeled by the $n$ jobs, and an edge is drawn between the vertices for two jobs exactly when those jobs conflict.  Each employee is assigned one of the $k$ colors.  Making an assignment of the employees to the jobs
is the same as making a $k$-coloring of $G$.
If the number of employees is less than the chromatic number $C(G)$, then there is no way to create a proper vertex-coloring of a graph.  However, for any $k\geq C(G),$ we want to find a way to represent the number of different vertex-colorings.  In fact, we can do such a thing with a polynomial.
Now we want to study the number of ways to $k$-color a labeled graph.

\begin{theorem}
If $G$ is a labeled graph with $n$ vertices,
there is a polynomial
$P_G(x)$ 
having the property that $P_G(k)$ is the number of ways to $k$-color the vertices of $G$ for any positive integer $k$.
\end{theorem}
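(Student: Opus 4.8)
The plan is to prove this by strong induction on the number of edges $m$ of $G$, using the classical \emph{deletion--contraction} recurrence. For the base case $m=0$, the graph $G$ has $n$ vertices and no edges, so each vertex may be colored independently with any of the $k$ colors; by the multiplication principle there are exactly $k^n$ such colorings, and $P_G(x) = x^n$ is a polynomial. This is where the induction will start.

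For the inductive step, I would fix a graph $G$ with $m \geq 1$ edges and choose any edge $e = uv$. The argument introduces two smaller graphs: the deletion $G - e$, obtained by removing the edge $e$ while keeping both vertices, which has $m-1$ edges; and the contraction $G/e$ (as in the earlier definition of edge contraction), obtained by removing $e$ and merging $u$ and $v$ into a single vertex $w$, where any duplicate edges created are identified to a single edge. The contraction $G/e$ has $n-1$ vertices and at most $m-1$ edges. The heart of the proof is the counting identity
\[ P_{G-e}(k) = P_G(k) + P_{G/e}(k), \]
which I would establish by an addition-principle argument: every proper $k$-coloring of $G-e$ either assigns $u$ and $v$ different colors — and these are exactly the proper $k$-colorings of $G$ — or assigns $u$ and $v$ the same color — and these correspond bijectively to the proper $k$-colorings of $G/e$, by matching the common color of $u$ and $v$ with the color of the merged vertex $w$. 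Rearranging gives $P_G(k) = P_{G-e}(k) - P_{G/e}(k)$. Since both $G-e$ and $G/e$ have strictly fewer than $m$ edges, the inductive hypothesis supplies polynomials $P_{G-e}(x)$ and $P_{G/e}(x)$, and their difference is again a polynomial whose value at every positive integer $k$ equals the number of $k$-colorings of $G$.

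The main obstacle — the step requiring the most care — is handling the contraction cleanly within the book's framework of simple graphs. Because multiple edges are forbidden, I would need to state explicitly that contracting $e$ may temporarily create parallel edges (when $u$ and $v$ have a common neighbor) and that these are collapsed to single edges; this is harmless for coloring, since parallel edges impose the same constraint (that the two endpoints differ) as a single edge. This collapsing is precisely why the induction must be \emph{strong} induction on the number of edges rather than ordinary induction, as $G/e$ may have fewer than $m-1$ edges. A secondary point to verify carefully is that the correspondence between same-color colorings of $G-e$ and colorings of $G/e$ is a genuine bijection: this amounts to checking that the neighbors of the merged vertex $w$ are exactly the neighbors of $u$ together with the neighbors of $v$ (other than $u$ and $v$ themselves), so that a coloring is proper on one side exactly when its image is proper on the other.
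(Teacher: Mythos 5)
Your proposal is correct and follows essentially the same route as the book: the deletion--contraction identity $P_{G-e}(k)=P_G(k)+P_{G/e}(k)$, proved by splitting the colorings of $G-e$ according to whether $u$ and $v$ receive the same or different colors, is exactly the theorem the book proves, and the existence of the polynomial then follows by recursion down to the edgeless graph with $P_G(x)=x^n$. If anything, your write-up is more complete than the book's, since you make the strong induction on the number of edges and the handling of parallel edges under contraction explicit, whereas the book states the recurrence and leaves the inductive bookkeeping implicit in its concluding corollary.
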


What is amazing about this theorem is that the number of colors can
be as large as we want, including much larger than the number of coefficients of the polynomial.

The chromatic polynomial is not a generating function because the number of $k$-colorings is the value $P_G(k)$ rather than its $k$th coefficient.

\begin{example} \label{Echrompoly2}
If $G$ is the labeled complete graph on $n$ vertices, then $P_G(x) = x(x-1)\cdots(x-n+1)$.
\end{example}

\begin{example} \label{Echrompoly3}
If $G$ is the empty labeled graph on $n$ vertices, then $P_G(x)=x^n$.
\end{example}

\begin{example} \label{Echrompoly4}
If $G$ is the labeled path graph
$P_n$, then $P_G(x)=x(x-1)^{n-1}$.
\end{example}

\begin{theorem}
If $T$ is a labeled tree with $n$ vertices, then $P_T(x)=x(x-1)^{n-1}.$
\end{theorem}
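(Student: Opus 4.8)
The plan is to prove this by induction on the number of vertices $n$, mirroring the structure of the proof of Theorem~\ref{thm:treeNumEdges} (the $n-1$ edges result), since trees are built up by peeling off leaves or splitting at edges. First I would dispose of the base case $n=1$: a single vertex with no edges can be colored in $x$ ways, and the formula gives $x(x-1)^0 = x$, so it holds.

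For the inductive step, I would fix $n \geq 2$ and assume every labeled tree on $n-1$ vertices has chromatic polynomial $x(x-1)^{n-2}$. The key structural fact I would invoke is that every tree with at least two vertices has a leaf (a vertex of degree $1$); this follows because a tree with $n$ vertices has $n-1$ edges by Theorem~\ref{thm:treeNumEdges}, so the degree sum is $2(n-1) < 2n$, forcing some vertex to have degree $1$ (a vertex of degree $0$ would disconnect the graph). Let $T$ be an arbitrary labeled tree on $n$ vertices, let $v$ be a leaf, and let $T'$ be the tree obtained by deleting $v$ and its single incident edge. Then $T'$ is a connected, cycle-free graph on $n-1$ vertices, hence a tree, so by the inductive hypothesis it has $P_{T'}(k) = k(k-1)^{n-2}$ many $k$-colorings for every positive integer $k$.

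The heart of the argument is a counting step: I would show $P_T(k) = (k-1) \cdot P_{T'}(k)$ for every positive integer $k$. Given any valid $k$-coloring of $T'$, I can extend it to a $k$-coloring of $T$ by choosing a color for the leaf $v$; the only constraint is that $v$'s color differs from the color of its unique neighbor $w$ in $T'$, which has already been assigned some color. This leaves exactly $k-1$ permissible choices for $v$, independent of the coloring of $T'$. Conversely, every $k$-coloring of $T$ restricts to a $k$-coloring of $T'$, and this correspondence is a bijection between colorings of $T$ and pairs (coloring of $T'$, choice of $v$'s color). By the multiplication principle, $P_T(k) = (k-1) \cdot k(k-1)^{n-2} = k(k-1)^{n-1}$, which holds for all positive integers $k$. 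Since two polynomials agreeing at all positive integers are equal, the chromatic polynomial of $T$ is $x(x-1)^{n-1}$, completing the induction.

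The main obstacle I anticipate is not the algebra but making the leaf-deletion step airtight: I must confirm that deleting a leaf from a tree yields a tree (connectedness is preserved because a leaf's removal cannot disconnect the rest, and cycle-freeness is inherited by any subgraph), and that the count of $k-1$ valid extensions genuinely does not depend on which specific color the neighbor $w$ received — only on the fact that exactly one color is forbidden. I would also want to note explicitly that the result $P_G(x)$ exists as a polynomial (cited from the preceding theorem), so that the phrase ``two polynomials agreeing at all positive integers are equal'' is legitimate; this is what licenses concluding an identity of polynomials from an identity of integer values.
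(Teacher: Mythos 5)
Your proof is correct, but it is organized differently from the one in the text. The book's proof is a direct counting argument: fix a starting vertex, color it in $k$ ways, and then work outward along the branches, observing that each subsequent vertex is adjacent to exactly one previously colored vertex (its ``parent''), so it has $k-1$ admissible colors; the multiplication principle then gives $k(k-1)^{n-1}$ at once, with no induction. You instead induct on $n$: you extract a leaf $v$ (justified by the degree-sum argument from Theorem~\ref{thm:treeNumEdges}), delete it to get a smaller tree $T'$, apply the inductive hypothesis to $T'$, and multiply by the $k-1$ ways to extend a coloring of $T'$ to $v$. The two arguments rest on the same structural fact --- in a tree, each vertex beyond the first sees exactly one forbidden color --- but your decomposition (tree $=$ smaller tree $+$ leaf) buys rigor at the places where the book's proof is informal: the book's phrase ``we continue in this fashion'' silently assumes an ordering of the vertices in which every new vertex has exactly one colored neighbor, whereas your leaf-deletion step proves exactly what is needed at each stage, including the check that deleting a leaf leaves a tree. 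You also explicitly close the gap between ``the two sides agree for every positive integer $k$'' and ``the two sides are equal as polynomials,'' citing the existence theorem for $P_G(x)$; the book's proof never addresses this point. The cost is length: the book's argument is shorter and conveys the counting intuition more immediately.
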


\begin{proof}
There are $k$ choices for the color of the first labeled vertex.
We now consider the vertices adjacent to this first one.
Since these vertices are adjacent to the first labeled vertex, 
the color used for the first labeled vertex is not an option.
So there are $k-1$ choices for the color of any vertex adjacent to the first labeled vertex.
We continue in this fashion.
We can color the starting vertex with any of the $k$ colors.
Then, each branch can be colored with any of the $(k-1)$ colors remaining.
The branches off those vertices are not connected to the starting vertex anymore, so while one color is removed, the color of the starting vertex is added back.
Thus, there will be $k-1$ colors left at all times for the $n-1$ vertices remaining.
\end{proof}

\begin{theorem}
Let $G$ be a labeled graph and $\alpha$ be an edge of $G$.
Let $G_{\ominus\alpha}$ be the labeled graph obtained by deleting $\alpha$ and $G_{\otimes\alpha}$ 
be the labeled graph obtained by contracting the edge $\alpha$. 
Then \[p_G(x)=p_{G_{\ominus\alpha}}(x)-p_{G_{\otimes\alpha}}(x).\]
\end{theorem}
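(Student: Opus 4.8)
The plan is to prove this identity by \emph{counting in two ways}, comparing the $k$-colorings of the three graphs for an arbitrary positive integer $k$, and then invoking the fact that a polynomial identity holding at every positive integer must hold identically. Write the edge as $\alpha = uv$, so that $u$ and $v$ are the two endpoints that deletion separates and contraction merges.

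First I would count the $k$-colorings of the deleted graph $G_{\ominus\alpha}$ by splitting them into two disjoint classes according to whether the vertices $u$ and $v$ receive different colors or the same color. In $G_{\ominus\alpha}$ there is no edge between $u$ and $v$, so both classes can occur and every proper $k$-coloring lies in exactly one of them; by the addition principle, $p_{G_{\ominus\alpha}}(k)$ equals the sum of the two counts.

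The heart of the argument is identifying each class. A coloring of $G_{\ominus\alpha}$ in which $u$ and $v$ get \emph{different} colors is exactly a proper $k$-coloring of $G$: restoring the edge $\alpha$ imposes only the constraint that $u$ and $v$ differ, which already holds, and all other edges are shared between $G$ and $G_{\ominus\alpha}$. So this class has $p_G(k)$ elements. For the other class, I would set up a bijection with the proper $k$-colorings of the contracted graph $G_{\otimes\alpha}$: a coloring of $G_{\ominus\alpha}$ in which $u$ and $v$ share a color corresponds to the coloring of $G_{\otimes\alpha}$ that assigns this common color to the merged vertex $w$ and leaves all other colors unchanged. One checks this map is well-defined and invertible by observing that $w$ is adjacent in $G_{\otimes\alpha}$ to precisely the neighbors of $u$ together with the neighbors of $v$, so the propriety condition at $w$ translates exactly into the simultaneous propriety conditions at $u$ and at $v$. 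Hence this class has $p_{G_{\otimes\alpha}}(k)$ elements, giving $p_{G_{\ominus\alpha}}(k) = p_G(k) + p_{G_{\otimes\alpha}}(k)$, which rearranges to the claimed formula.

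The main obstacle to handle carefully is the bijection in the second class, specifically the possibility that $u$ and $v$ share a common neighbor, which would create a repeated edge at $w$ after contraction; since a repeated edge imposes the same coloring constraint as a single edge, we may safely treat $G_{\otimes\alpha}$ as a simple graph, though this point deserves an explicit remark. Finally, because the equality $p_G(k) = p_{G_{\ominus\alpha}}(k) - p_{G_{\otimes\alpha}}(k)$ holds for every positive integer $k$, and because all three quantities are values of polynomials in $k$, the two sides agree at infinitely many inputs and therefore coincide as polynomials, completing the proof.
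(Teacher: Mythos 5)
Your proposal is correct and follows essentially the same route as the paper's proof: partition the proper $k$-colorings of $G_{\ominus\alpha}$ according to whether the endpoints of $\alpha$ receive equal or distinct colors, identify these classes with the colorings of $G_{\otimes\alpha}$ and of $G$ respectively, and rearrange the resulting identity. Your additional care with the bijection, the possible parallel edges after contraction, and the passage from agreement at all positive integers to a genuine polynomial identity are refinements the paper leaves implicit, but the underlying argument is the same.
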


\begin{proof}
Let $v_1,v_2$ be the two vertices of $G$ that are connected by the edge $\alpha$.  If we consider the graph $G_{\ominus\alpha}$, we have two disjoint methods of creating a $k$-coloring.  The first method is a $k$-coloring where $v_1$ and $v_2$ are the same color (since they are no longer adjacent), and the number of these $k$-coverings is $p_{G_{\otimes \alpha}}(k)$. 
The second method is a $k$-coloring where $v_1$ and $v_2$ are different colors, and the number of these $k$-colorings is $p_G(k)$.  Thus, $p_{G_{\ominus \alpha}}(k)=p_G(k)+p_{G_{\otimes \alpha}}(k)$.  Thus  $p_G(k)=p_{G_{\ominus\alpha}}(k)-p_{G_{\otimes\alpha}}(k)$.
\end{proof}
 
\begin{example}
Let $G$ be the labeled graph $C_4$.
Then $G_{\ominus \alpha}$ is a labeled $P_4$ and $G_{\otimes \alpha}$ is a labeled $C_3$.
The chromatic polynomial for the labeled $P_4$
\[p_{P_4}(x) = x(x-1)^3 = x^4-3x^3+3x^2-x.\]
The chromatic polynomial for the labeled $C_3$
is 
\[p_{C_3}(x) = x(x-1)(x-2)= x^3-3x^2+2x.\]
So
\[p_{C_4}(x) = p_{P_4}(x) - p_{C_3}(x) = 
x^4 - 4x^3+6x^2-3x=x(x-1)(x^2-3x+3).\]
\end{example} 

\begin{example}
Let $G$ be the labeled cycle graph $C_5$. 
Then $p_G(x)=x(x-1)(x-2)(x^2-2x+2)$.
\end{example}

\begin{proof}
In this case, $G_{\ominus \alpha}$ is a labeled path $P_5$ and $G_{\otimes \alpha}$ is a labeled $C_4$. Thus,  \[p_G(x)=x(x-1)^4-(x^4 - 4x^3+6x^2-3x) = x(x-1)(x-2)(x^2-2x+2)\] as desired.
\end{proof}

Since $0$ and $1$ and $2$ are roots of $p_{C_5}$ but 
$3$ is not, we see that $\chi(C_5)=3$.

\begin{corollary}
If $G$ is a labeled graph with $n$ vertices, then there is a polynomial $P_G(x)$ such that $P_G(k)$ is the
number of $k$-colorings of $G$.
Also, $P_G(x)$ has degree $n$ and is monic (meaning that the coefficient of $x^n$ in $P_G(x)$ is $1$).
\end{corollary}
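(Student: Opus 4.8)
The plan is to establish existence, degree, and monicity simultaneously by strong induction on the number of edges $m$ of $G$, using the deletion--contraction identity $P_G(x) = P_{G_{\ominus\alpha}}(x) - P_{G_{\otimes\alpha}}(x)$ proved just above as the engine. The precise statement I would induct on is: for every labeled graph $G$ with $n$ vertices and $m$ edges, $P_G(x)$ is a monic polynomial of degree $n$. Inducting on edges rather than vertices is the crucial choice, since deletion--contraction lowers the edge count and that is what gives a well-founded recursion.

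For the base case $m=0$, the graph $G$ is the empty graph on $n$ vertices, and by Example~\ref{Echrompoly3} we have $P_G(x) = x^n$, which is monic of degree $n$. This case needs no use of the deletion--contraction identity, which is appropriate since that identity requires an edge $\alpha$ to be present.

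For the inductive step I would take $G$ with $m \geq 1$ edges and $n$ vertices, fix any edge $\alpha$, and apply the identity. The essential bookkeeping is to track how $n$ and $m$ change on the right-hand side: deleting $\alpha$ yields $G_{\ominus\alpha}$ with the same $n$ vertices but only $m-1$ edges, while contracting $\alpha$ merges its two endpoints, so $G_{\otimes\alpha}$ has $n-1$ vertices and strictly fewer than $m$ edges. Both graphs thus fall under the inductive hypothesis, giving that $P_{G_{\ominus\alpha}}(x)$ is monic of degree $n$ and $P_{G_{\otimes\alpha}}(x)$ is monic of degree $n-1$. Subtracting a polynomial of degree $n-1$ from a monic polynomial of degree $n$ leaves the leading term $x^n$ untouched, so $P_G(x)$ is monic of degree $n$, completing the induction.

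The main obstacle I anticipate is the contraction step: when the two endpoints of $\alpha$ share a common neighbor $w$, contracting $\alpha$ creates a parallel (double) edge to $w$, and I must argue that for coloring purposes this collapses to a single edge, since the two parallel edges impose exactly the same adjacency constraint. Making this reduction explicit is what keeps $G_{\otimes\alpha}$ a genuine simple graph and, more importantly, guarantees its edge count is strictly below $m$ so that the inductive hypothesis applies; I would state this collapse carefully rather than leave it implicit. Everything else is routine once the induction is anchored on the number of edges.
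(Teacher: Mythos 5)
Your proof is correct and is exactly the argument the paper intends: the corollary is stated immediately after the deletion--contraction theorem, and your strong induction on the number of edges, anchored at the empty graph where $P_G(x)=x^n$ (Example~\ref{Echrompoly3}), with the observation that subtracting the monic degree-$(n-1)$ polynomial of the contraction from the monic degree-$n$ polynomial of the deletion preserves the leading term $x^n$, is precisely the intended deduction. Your explicit handling of parallel edges created by contraction (collapsing them to a single edge, which both keeps the graph simple and guarantees the edge count strictly decreases) is a careful detail that the paper leaves implicit.
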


\subsection*{Exercises}

\begin{enumerate}
    \item Explain Example~\ref{Echrompoly2}.
    \item Explain Example~\ref{Echrompoly3}.
    \item Explain Example~\ref{Echrompoly4}.
    \item Find the chromatic polynomial of 
    the labeled cycle $C_6$.
    
\end{enumerate}

\section{Coloring regions with two colors}

Suppose we draw circles in the plane; these circles are allowed to intersect.
The circles divide the plane into regions, where the number of regions depends on the intersection pattern of the circles.
We would like to color these regions with as few colors as possible, so that adjacent regions have different colors.
We say that two regions are adjacent if they share a boundary arc of non-zero length (two regions which only intersect at vertices are not considered to be adjacent).
How many colors are needed to color these regions?

\begin{center}
\includegraphics[width=3in]{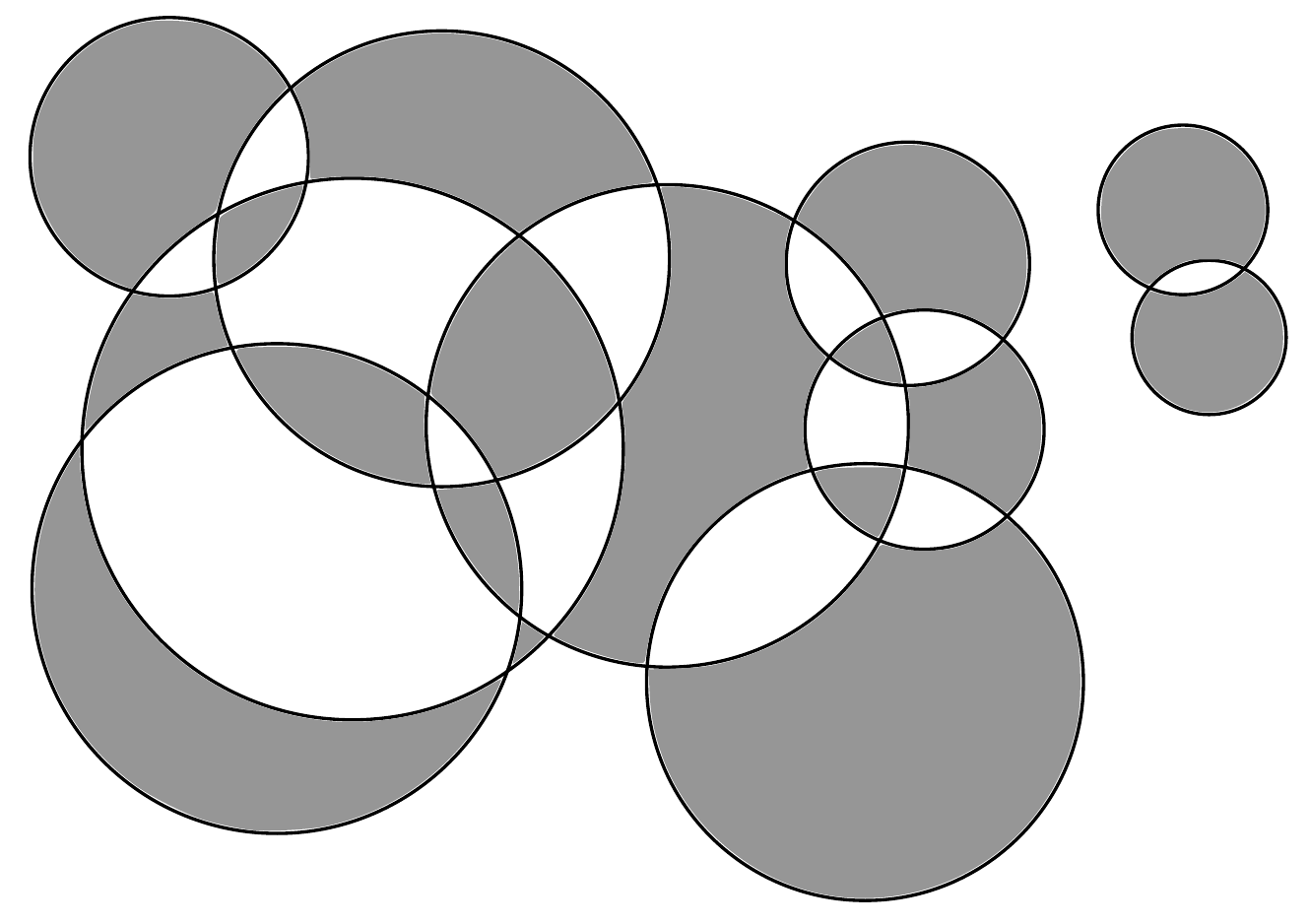}
\end{center}

\begin{theorem}
\label{thm:circles}
The regions formed by circles in the plane can be colored with two colors so that regions sharing a boundary arc are colored differently.
\end{theorem}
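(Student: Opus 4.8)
The plan is to assign to each region a natural number recording how many of the circles contain it, and then to color by the parity of that number. Concretely, for a region $R$, let $n(R)$ be the number of circles $C$ such that $R$ lies in the bounded interior of $C$. I would color $R$ with one color when $n(R)$ is even and with the other color when $n(R)$ is odd. This is plainly a well-defined assignment of colors to regions; the work lies in showing it is a proper $2$-coloring, i.e.\ that two regions sharing a boundary arc of nonzero length receive different colors.

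First I would establish the key geometric fact: a boundary arc of positive length separating two regions lies on exactly one of the circles. Indeed, two distinct circles meet in at most two points, so they cannot share an arc of positive length; hence any such arc belongs to a single circle $C_0$. The two regions $R$ and $R'$ on either side of this arc lie on opposite sides of $C_0$ (one inside, one outside), while for every other circle $C \neq C_0$ the arc does not cross $C$, so $R$ and $R'$ lie on the \emph{same} side of $C$. Therefore $n(R)$ and $n(R')$ differ by exactly $1$: they agree in the contribution of every circle except $C_0$, which is counted for precisely one of them. Since $n(R)$ and $n(R')$ then have opposite parities, $R$ and $R'$ receive different colors, as required.

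An alternative route, more in the spirit of the inductive arguments used earlier for counting regions of the plane, is induction on the number of circles $m$. For the base case $m=0$ (or $m=1$) the plane is split into at most two regions and is trivially $2$-colorable. For the inductive step, take a valid $2$-coloring for $m$ circles and add an $(m+1)$-st circle $C$; then flip the color of every region lying inside $C$ while leaving every region outside $C$ unchanged. One checks that adjacency is preserved. Two new regions sharing an arc lying on $C$ came from a single old region split by $C$, so they had equal color and now differ. Two new regions sharing a positive arc $\gamma$ \emph{not} on $C$ lie on a common side of $C$ (since $\gamma$ does not cross $C$), so they are either both flipped or both unflipped, and thus remain differently colored.

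I expect the main obstacle to be the geometric lemma in the first route, namely that a positive-length shared arc belongs to exactly one circle, together with the bookkeeping that $n(R)$ and $n(R')$ differ by exactly one rather than merely by an odd number. Care is also needed at points where several circles cross: the definition of adjacency explicitly excludes regions meeting only at isolated points, and restricting attention to arcs of nonzero length is exactly what makes the parity argument go through cleanly.
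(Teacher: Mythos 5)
Your proposal is correct, and your primary argument is genuinely different from the paper's. The paper proves Theorem~\ref{thm:circles} by induction on the number of circles: remove one circle $C$, $2$-color the remaining configuration by the inductive hypothesis, then flip the color of every region inside $C$ --- which is precisely the \emph{alternative} route you sketch in your third paragraph (and you actually verify adjacency preservation in the inductive step more explicitly than the paper does). Your main route instead colors each region $R$ directly by the parity of $n(R)$, the number of circles whose interior contains $R$, and proves properness via the geometric lemma that a positive-length shared arc lies on exactly one circle, so $n(R)$ and $n(R')$ differ by exactly $1$ across it. What each approach buys: your parity argument is non-inductive, produces an explicit closed-form coloring rather than one built recursively, and generalizes immediately to any finite family of closed curves in which two distinct curves meet in only finitely many points; the paper's induction is more elementary, requires no separate geometric lemma about arcs, and matches the style of the other inductive arguments in the chapter (indeed the paper reuses the same flip idea in Remark~\ref{rem:elementary}). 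One small point of care in your parity route: to see that $R$ and $R'$ lie on the same side of every circle $C \neq C_0$, you should pick a point $p$ in the interior of the shared arc avoiding the finitely many intersection points with other circles and take a small disk around $p$ disjoint from all other circles; the two halves of that disk lie in $R$ and $R'$ respectively, which pins down the containment counts. With that detail your first argument is complete.
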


\begin{proof}
Let $n$ be the number of circles drawn in the plane; we proceed by induction on $n$.
The two colors we will use are gray and white.

For the base case $n=1$, note that we can draw the inside of our single circle gray, and the outside white.
\begin{center}
\includegraphics[width=0.7in]{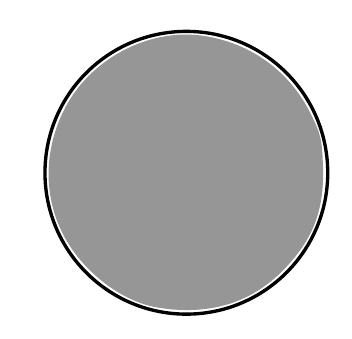}
\end{center}

For the inductive step, suppose the theorem is true for any $n-1$ circles.
Now consider $n$ circles drawn in the plane.
Select any circle $C$ and remove it.
\begin{figure}[h]
\begin{center}
\includegraphics[height=1.5in]{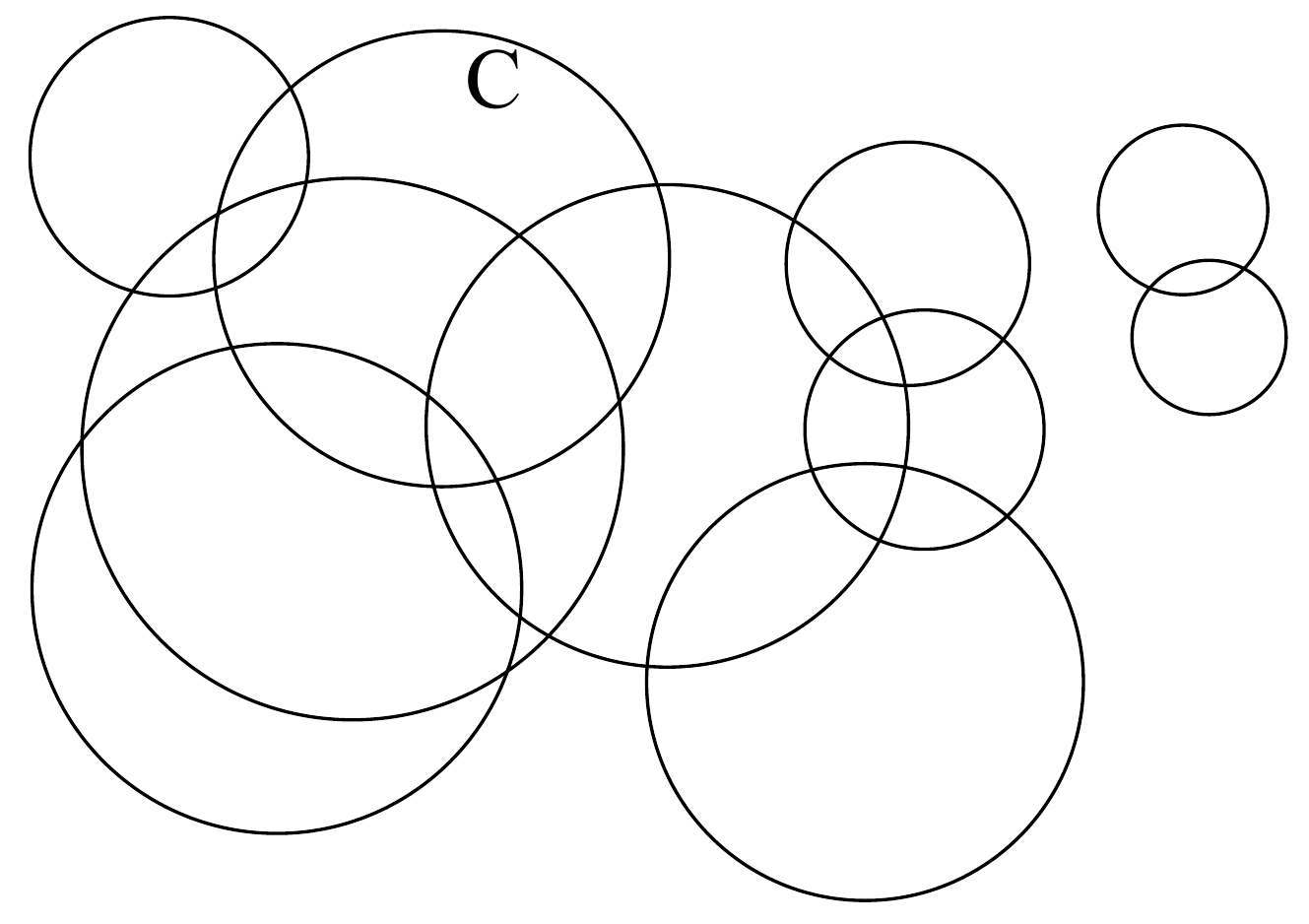}
\includegraphics[height=1.5in]{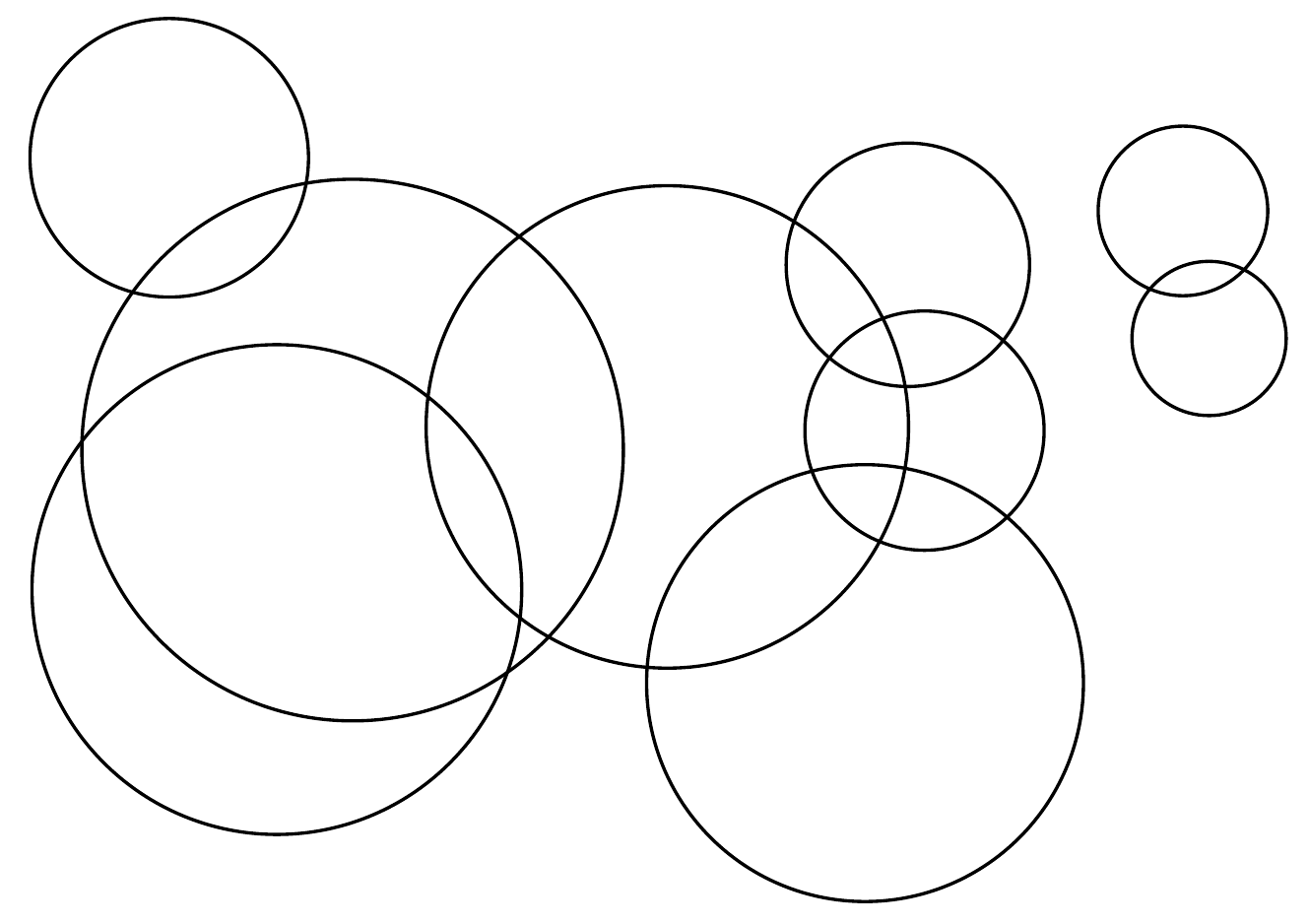}
\includegraphics[height=1.5in]{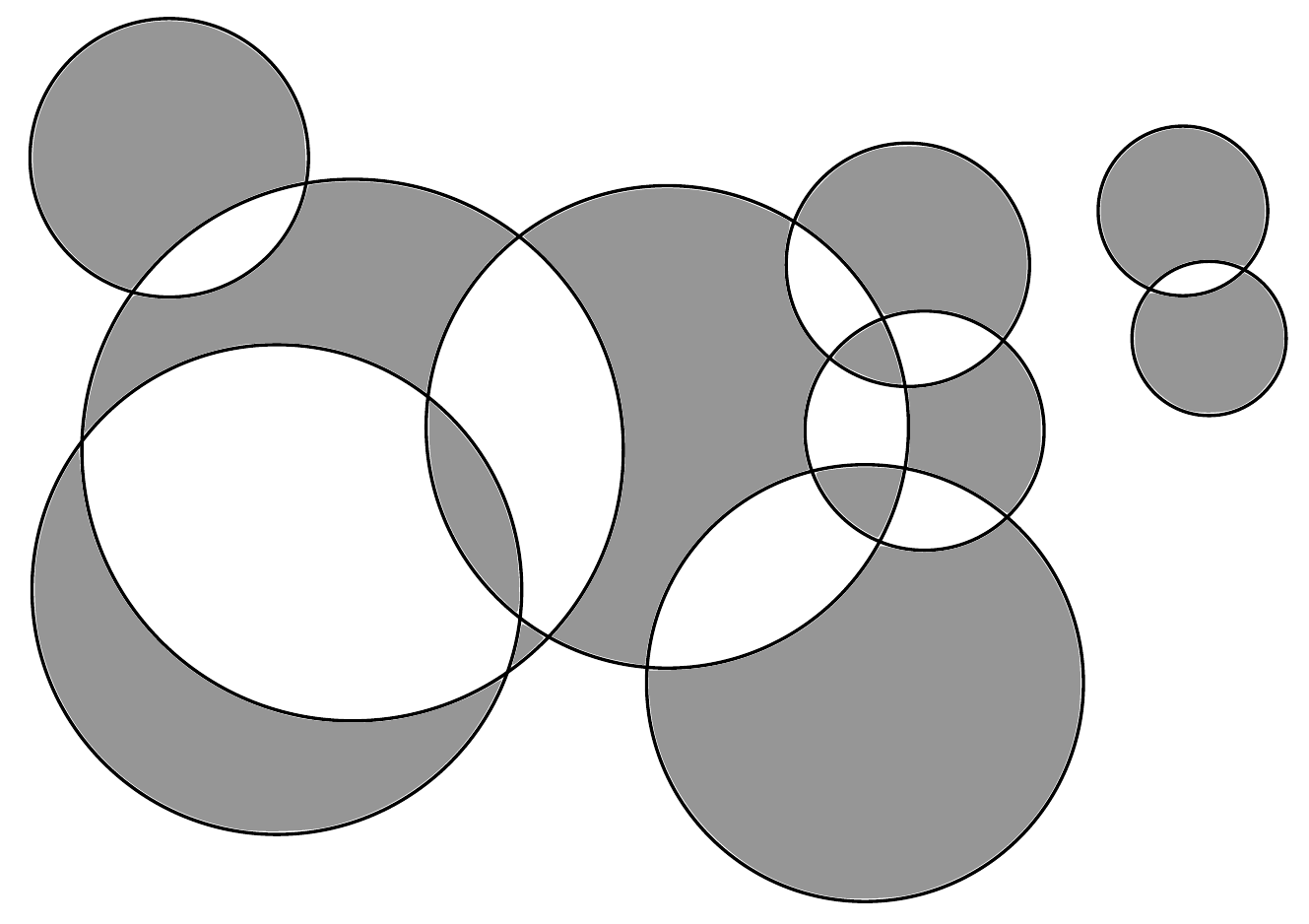}
\caption{(Left) $n$ circles.
(Middle) With circle $C$ removed, $n-1$ circles remain.
(Right) By the inductive hypothesis, we can color the regions defined by $n-1$ circles.}
\end{center}
\end{figure}
\FloatBarrier
By the inductive hypothesis, we can color the picture on the right with $n-1$ circles with two colors.
Now to color the picture with $n$ circles, simply reverse the color of each region inside circle $C$.
\begin{center}
\includegraphics[height=2in]{12-ColoringGraphs/TwoColors1.pdf}
\end{center}
Note this coloring works for arcs outside $C$, for arcs inside $C$, and for arcs which are part of $C$.
\end{proof}

\begin{remark}
\label{rem:elementary}
One can also color ``elementary school drawings" such as the following with two colors.
\begin{center}
\includegraphics[height=3in]{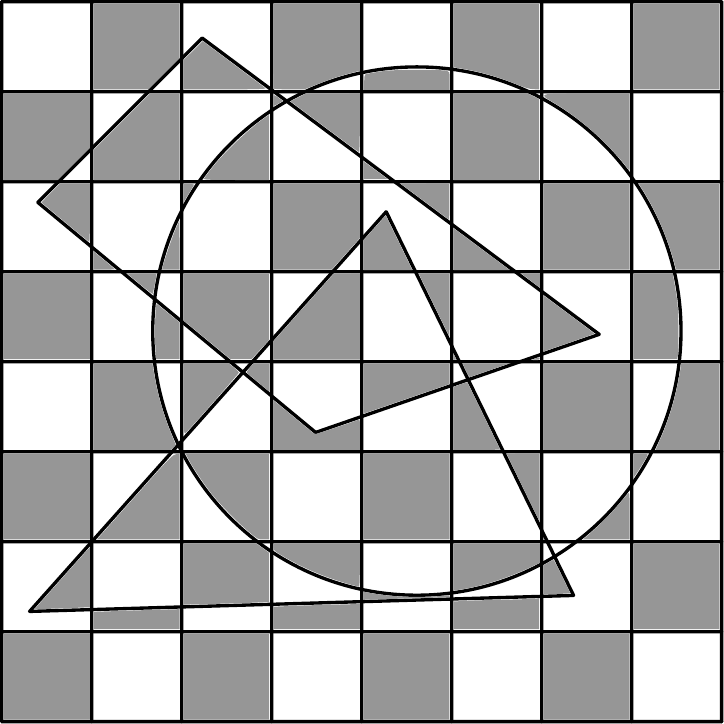}
\end{center}
Indeed, one can certainly color the underlying grid in a ``checkerboard" fashion. Inductively add objects such as the triangle or circle or quadrilateral above, repeating the inductive argument in the proof of Theorem~\ref{thm:circles}.
\end{remark}

\begin{remark} Consider the picture of $n$ circles in the plane.  The regions of the plane made by these circles can be colored with 2 colors.  Now, let's make a graph by putting one vertex in each region and drawing an edge between two vertices exactly when their regions are adjacent.  Then this graph is 2-colorable, or bipartite.

\begin{center}
\includegraphics[height=3in]{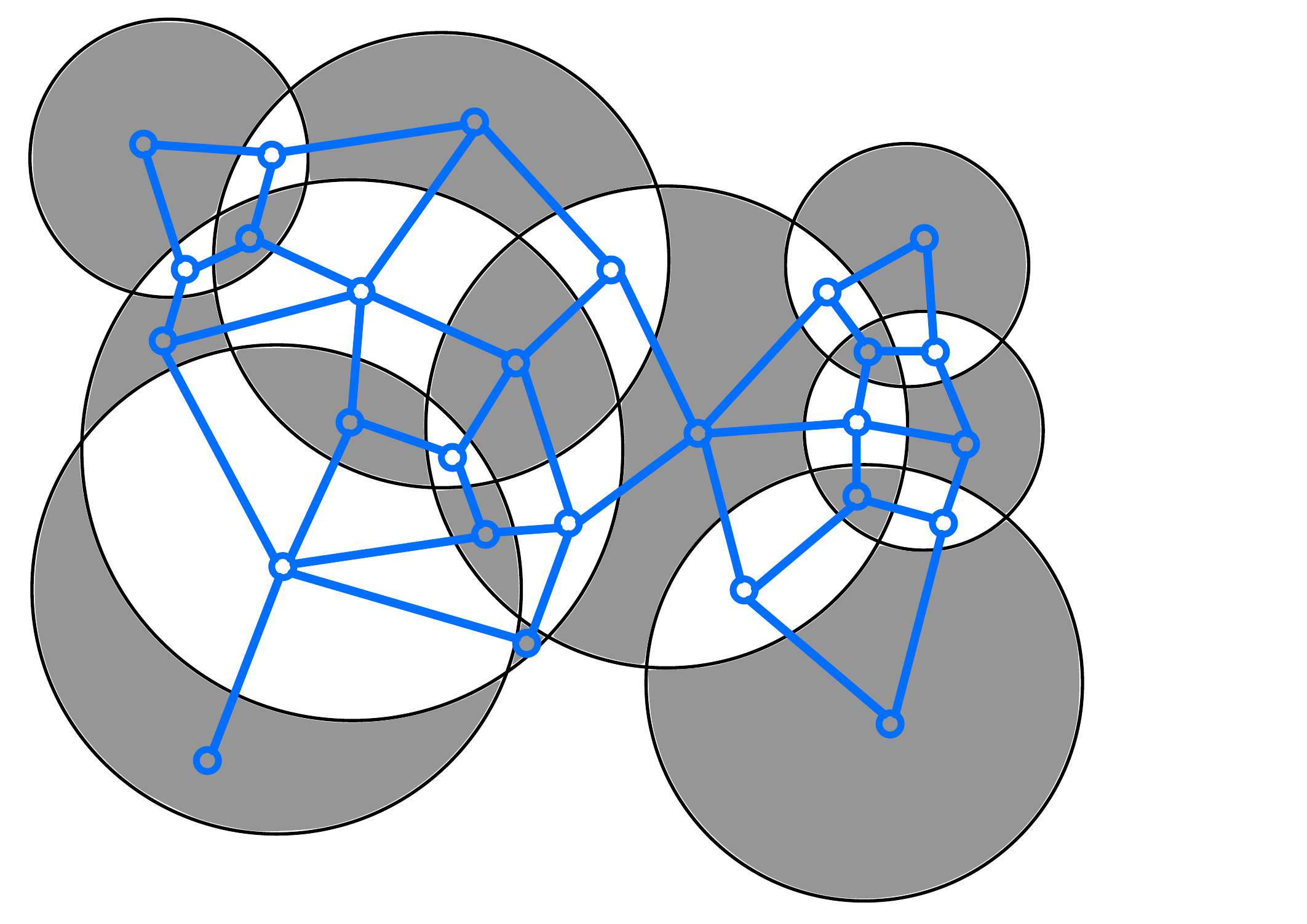}
\end{center}

By Theorem~\ref{thm:odd-cycles}, the graph drawn above has no odd cycles because it is 2-colorable.
We can also prove directly 
that this graph obtained from regions formed by circles in the plane contains no odd cycles.
The reason is that each edge of this graph crosses a single circle.
But when walking around a cycle in this graph, each circle $C$ contributes an even number of edges to the length of this cycle (alternating in, out, in, out, \ldots of $C$).
Hence every cycle in this graph has even length.
\end{remark}

\subsection*{Exercises}

\begin{enumerate}

\item Draw an ``elementary school drawing'' of your own design on top of a checkerboard, as in Remark~\ref{rem:elementary}.
Then 2-color the regions you have created.

\item Is it possible to color the below regions in the plane so that no two adjacent regions have the same color?
\begin{center}
\includegraphics[height=1.2in]{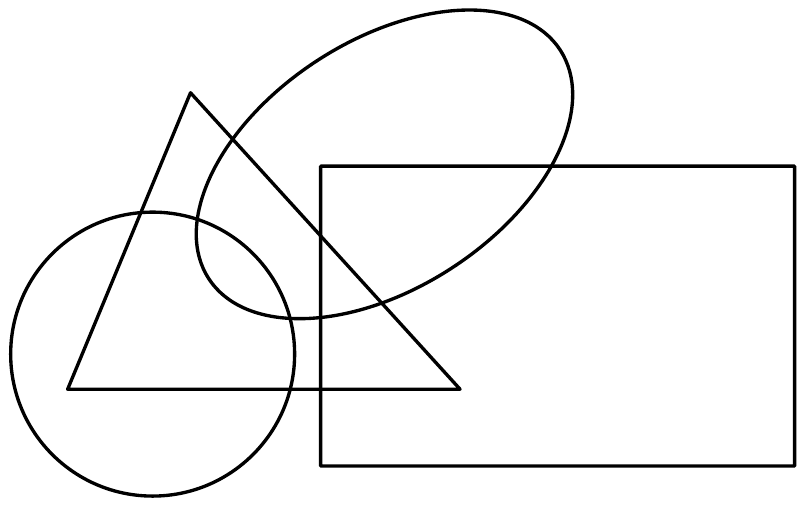}
\end{center}

\item Is it possible to color the below regions in the plane so that no two adjacent regions have the same color?
\begin{center}
\includegraphics[height=1.2in]{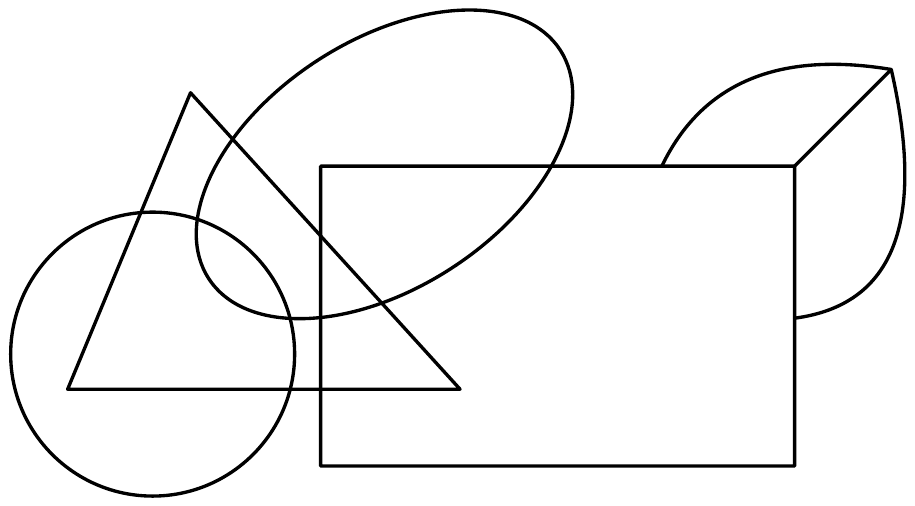}
\end{center}

\item If $G$ is a connected bipartite graph, then how many different ways are there to 2-color $G$?

\item If $G$ is a bipartite graph with $n$ different connected components, then how many different ways are there to 2-color $G$?
    
\end{enumerate}

\section{The four color theorem}

Imagine the life of an ambitious cartographer (a person who makes maps) in 1852.
To properly distinguish one region from another, it is crucial to utilize a different color for territories sharing a common border.
After making many maps for various royal affairs, you begin to question the coloring of your maps.
How many colors are needed to color planar maps in a satisfactory way?
The below map can be colored with four colors.

\begin{center}
\includegraphics[width=2in]{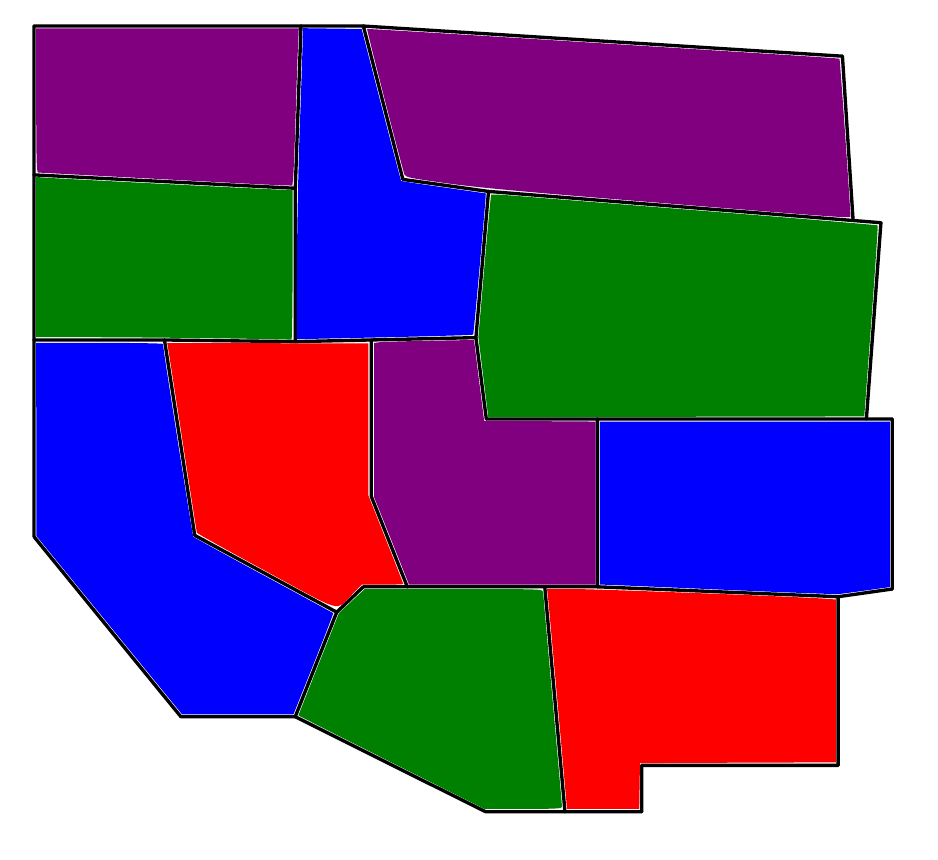}
\end{center}

Coloring maps can be related to coloring planar graphs.
Indeed, below we show how to turn this map into a planar graph, and how a valid coloring of this map translates into a vertex coloring of the corresponding planar graph.
To form the graph, add a vertex for each region, and add an edge between two vertices if the corresponding regions are adjacent.
A valid coloring of the map, with no adjacent regions of the same color, corresponds to a vertex coloring of the graph, meaning that no adjacent vertices have the same color.

\begin{figure}
\begin{center}
\includegraphics[width=4in]{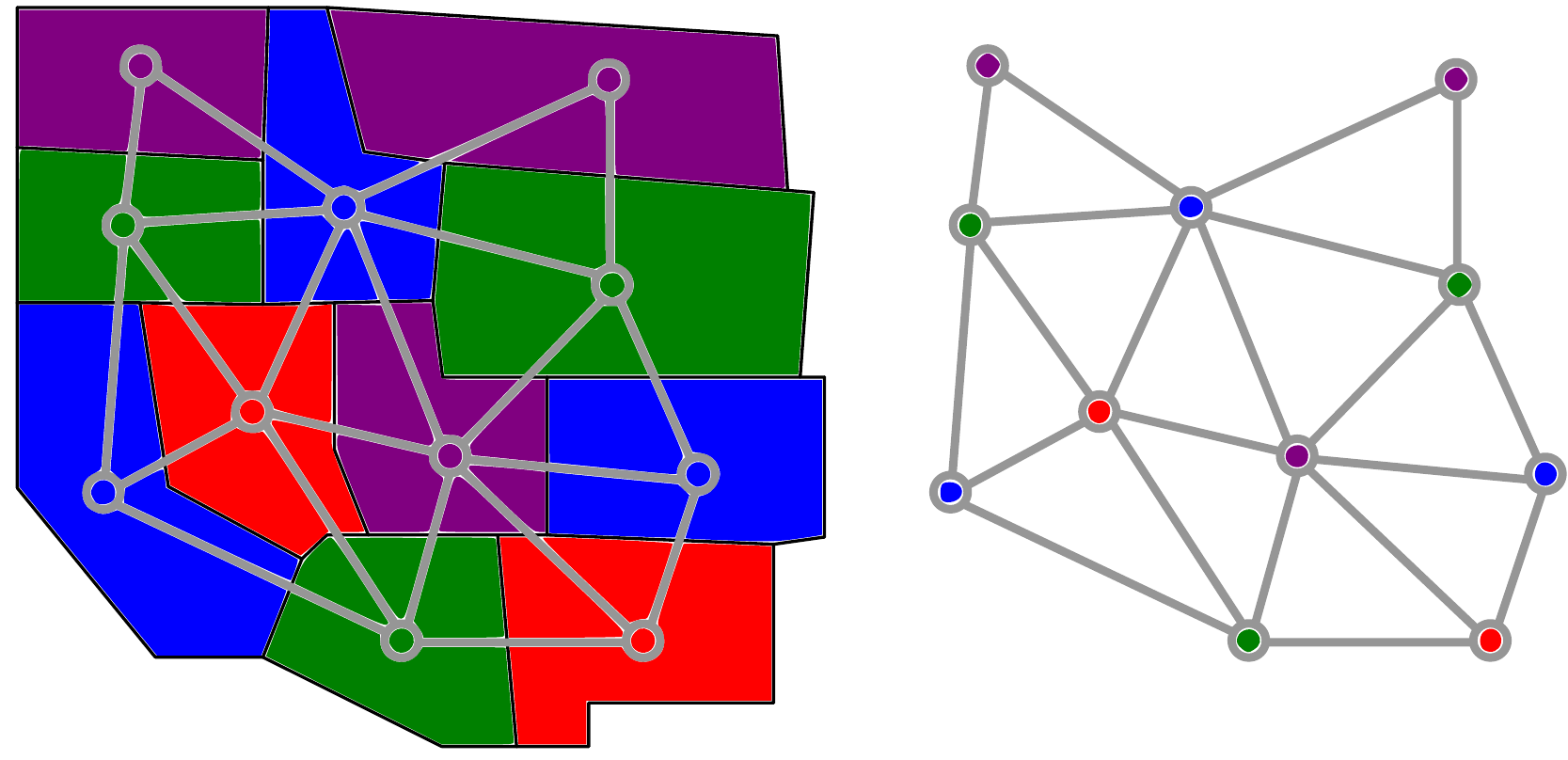}
\end{center}
\caption{On the left is a planar map, and on the right is its corresponding planar graph.}
\label{fig:planar-map-graph}
\end{figure}

Let us explain why this map requires at least 4 colors, in order for adjacent regions to have different colors.
Suppose for a contradiction 3 colors sufficed.
Without loss of generality, color the region `N' red, color the region `C' blue, and color the region `O' green.
Then necessarily the region `I' must be blue, as it is already adjacent to green and red regions, and we have only three colors at our disposal.
And similarly, the region `U' must now necessarily be green.
There is now no valid color for region `A', as it is already adjacent to regions labeled blue, red, and green.
Hence at least 4 colors are needed to color this map!

\begin{center}
\includegraphics[width=4in]{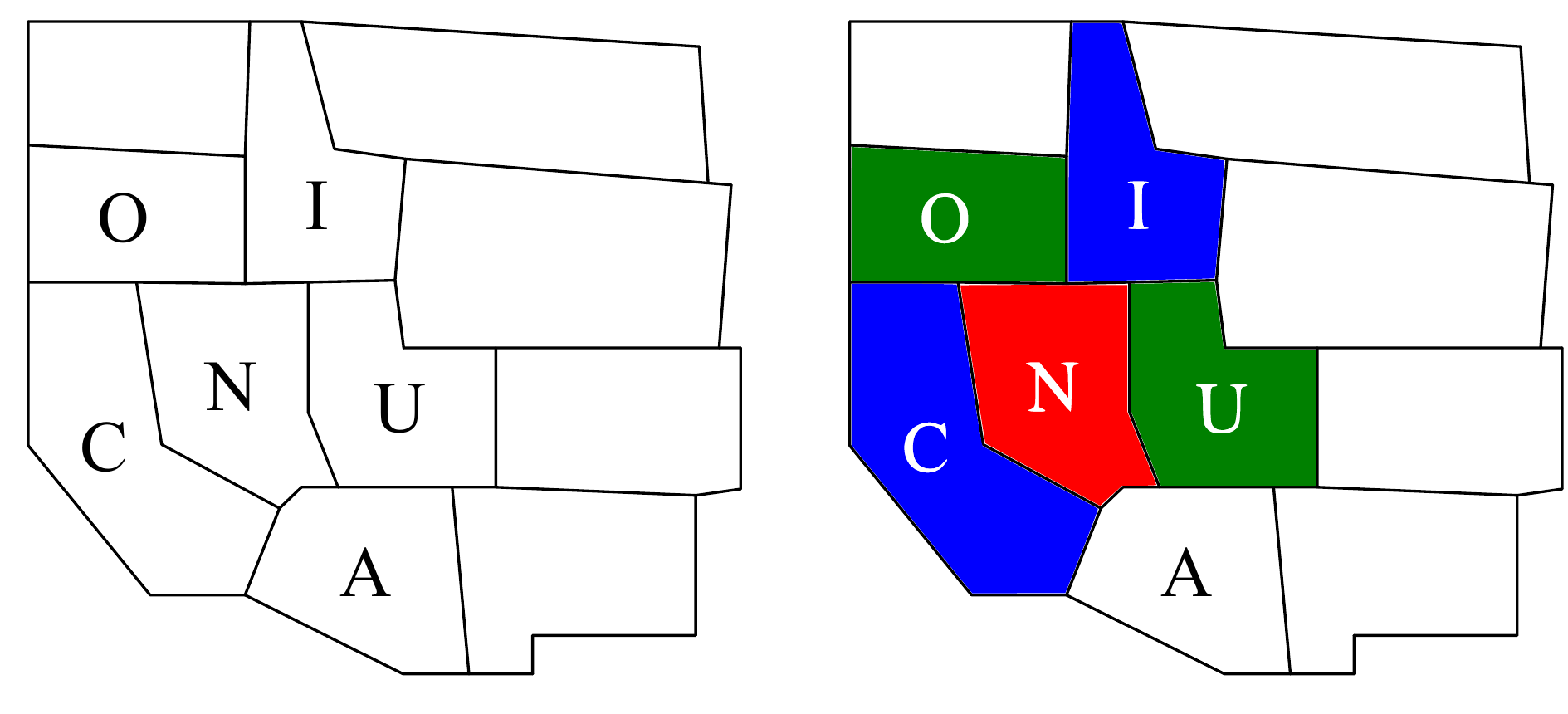}
\end{center}

Below is a smaller map that requires 4 colors so that adjacent regions are not of the same color.
Indeed, the planar graph corresponding to this map is $K_4$, the complete graph on four vertices, which requires at least 4 colors.
\begin{center}
\includegraphics[width=1.4in]{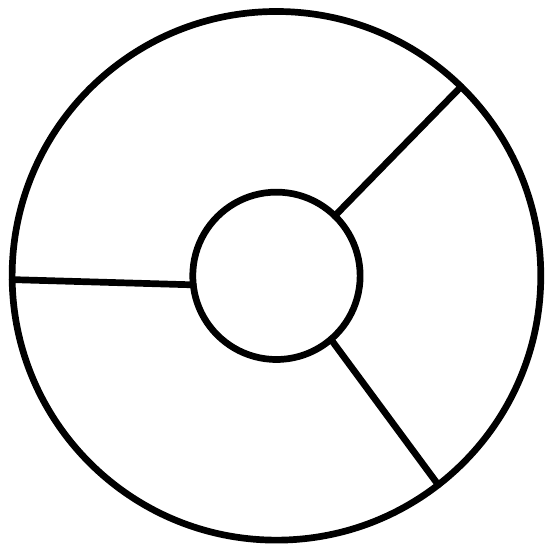}
\end{center}

After successfully coloring many large maps with only four colors, our cartographer begins to wonder if \emph{all} planar maps can be colored with four colors.
This is true and is now called the four-color theorem; it states that any map drawn on the plane or the surface of a sphere can be colored using four colors in such a way that areas with a common boundary are not the same color.

\begin{theorem}[Four color theorem]
Every planar map can be colored with only 4 colors, so that adjacent regions (sharing a boundary of positive length) have different colors.
\end{theorem}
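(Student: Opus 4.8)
The plan is to first convert the map-coloring problem into a graph-coloring problem, exactly as illustrated in Figure~\ref{fig:planar-map-graph}: given a planar map, place a vertex in each region and join two vertices by an edge when their regions share a boundary of positive length. The resulting graph is planar, and a valid $4$-coloring of the map corresponds precisely to a $4$-coloring of the vertices of this graph. So it suffices to prove that \emph{every planar graph has chromatic number at most $4$}. First I would reduce to the cleanest case: since adding edges can only increase the chromatic number, I may assume the planar graph is a \emph{triangulation}, in which every face (including the outer face) is bounded by exactly three edges.

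The engine of the argument is the degree bound coming from Euler's formula. Combining Theorem~\ref{Teuler} with the inequality $e \le 3v-6$ from Exercise~\ref{Exercise7}, the sum of the vertex degrees $2e$ is at most $6v-12$, so the average degree is strictly less than $6$. Hence every planar graph contains a vertex of degree at most $5$. The idea is then to argue by induction on the number of vertices in a \emph{minimal counterexample}: a planar graph that is not $4$-colorable but all of whose proper subgraphs are. Delete a vertex $v$ of small degree, $4$-color what remains by minimality, and try to extend the coloring to $v$.

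If $\deg(v)\le 3$, there is always a free color, so $v$ causes no trouble. If $\deg(v)=4$ or $\deg(v)=5$ and the neighbors do not already use all four colors, we are likewise done. The delicate cases are when the neighbors of $v$ use all four colors; here one attempts \emph{Kempe chain} recoloring, swapping the two colors along a maximal two-colored connected subgraph to free up a color for $v$. This technique cleanly proves the Five Color Theorem, and it was the basis of Kempe's famous $1879$ argument --- which, as noted around Example~\ref{ex:four-color}, turned out to contain a fatal gap precisely in the degree-$5$ case, where two interfering Kempe chains can obstruct the recoloring.

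The hard part --- indeed the part that has no known hand proof --- is repairing this gap. The modern strategy (Appel--Haken, later streamlined by Robertson, Sanders, Seymour, and Thomas) replaces the single bad vertex with an \emph{unavoidable set of reducible configurations}. One shows via a \emph{discharging} argument (assign each vertex a charge of $6-\deg(v)$, which sums to a positive total by the degree bound, then redistribute charge by fixed local rules) that every triangulation must contain one of finitely many specified local configurations. Separately, one must verify that each such configuration is \emph{reducible}: if it appeared in a minimal counterexample, a smaller $4$-colorable graph could be built and its coloring extended, yielding a contradiction. I expect this to be the main obstacle, and an insurmountable one by hand: the unavoidable set in the known proofs contains hundreds of configurations, and checking reducibility of each requires an exhaustive computer search over candidate colorings. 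The honest conclusion is that the proof can only be completed with machine assistance, exactly as the excerpt's discussion of Appel and Haken already foreshadows.
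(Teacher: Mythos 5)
Your outline is accurate in every claim it makes, but be aware that it is not a proof, and the paper does not contain one either: the text states the theorem, recounts its history (Guthrie's question, Kempe's flawed 1879 argument, the Appel--Haken computer-assisted proof of 1976), and then proves by hand only the weaker statement that every planar graph is $6$-colorable, via Lemma~\ref{lem:degreeAtMost5} together with Theorem~\ref{T:BrooksExtension}. Where you and the paper overlap, you agree exactly: your map-to-graph reduction is the paper's Figure~\ref{fig:planar-map-graph}, and your ``vertex of degree at most $5$'' step, obtained from Euler's formula (Theorem~\ref{Teuler}) and the bound $e\le 3v-6$ of Exercise~\ref{Exercise7}, is precisely the proof of Lemma~\ref{lem:degreeAtMost5}. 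Where you diverge, the routes buy different things. The paper deliberately retreats to what can be completed within the book: the induction that colors $G$ after deleting a low-degree vertex (your ``$\deg(v)\le 3$'' case, generalized in Theorem~\ref{T:BrooksExtension}) yields $6$ colors, and the paper notes without proof that $5$ colors can be reached similarly. You instead sketch the architecture of the genuine proof --- minimal counterexample, Kempe-chain recoloring (which does succeed for the degree-$4$ case and yields a full hand proof of the Five Color Theorem), the failure of interfering chains in the degree-$5$ case, and the discharging argument producing an unavoidable set whose members must each be checked for reducibility. That account is faithful, and your concession at the end is the mathematically honest one: the reducibility verification for the hundreds of configurations in the unavoidable set has never been carried out without machine assistance, so within this text your argument, if actually executed, establishes only the Five Color Theorem and not the statement as given. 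In short, neither you nor the paper proves the Four Color Theorem; the paper substitutes a provable weaker theorem, while you supply a correct roadmap of the real proof and stop, as you must, at the computational wall.
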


The history of the four color theorem is quite interesting.
The question ``Can every planar map be 4-colored'' was first raised by Francis Guthrie in England in 1852.
In 1879, Alfred Kempe published an incorrect proof, which for a decade was regarded as correct before the error was found!
Indeed, in 1886 the problem of showing that any planar map can be 4-colored was posed to the students of Clifton College, with the (extremely difficult) instructions that ``no solution may exceed 1 page of manuscript and 1 page of diagrams.''
In 1976, the first correct proof the four color theorem was given by Appel \& Haken.
This proof relies on computer simulations, including 1,000 hours of computer time to check various cases.
This proof technique has raised interesting questions at the intersection of philosophy, mathematics, and computer science (recall Example~\ref{ex:four-color} in the chapter on proof techniques), such as:
\begin{itemize}
    \item What is a proof?
    \item What does it mean for a proof to be verified or certified by a computer?
    \item What are the implications when a proof is too long for every step to be understood by a team of human readers?
\end{itemize}
Computer assisted proofs have since grown in number, scope, and importance.
The present-day proof of the four color theorem has simplified greatly since 1976, but it still involves assistance from computers.

We next show, via a relatively short proof not requiring computers, that every planar map is 6-colorable.
One can give similar proofs that every planar map is 5-colorable, though they are a bit longer.

Proving that every planar map is 6-colorable is the same thing as showing that every planar graph is 6-colorable, since every planar map can be turned into a planar graph via the process shown in Figure~\ref{fig:planar-map-graph}.
To show that every planar map is 6-colorable, we will need the following two lemmas.

\begin{lemma}
\label{lem:degreeAtMost5}
Every planar graph has a vertex of degree at most 5.
\end{lemma}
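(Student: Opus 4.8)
The plan is to prove that every planar graph must contain a vertex of degree at most $5$ by a proof by contradiction, using Euler's formula (Theorem~\ref{Teuler}) together with the edge-face inequality from Lemma~\ref{Ledgeface}. The key quantitative input is Exercise~\ref{Exercise7}, which asserts that a planar graph with $v \geq 3$ vertices has at most $3v-6$ edges; I would either invoke that result directly or re-derive it, since it follows in one line from Euler's formula and Lemma~\ref{Ledgeface}.

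First I would handle the small cases. If the graph has fewer than $3$ vertices, then every vertex has degree at most $1 \leq 5$, so the claim is immediate. Thus I may assume $v \geq 3$, which is exactly the range where the bound $e \leq 3v-6$ applies.

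For the main case, suppose for a contradiction that every vertex of a planar graph $G$ has degree at least $6$. By Theorem~\ref{thm:graphDegreeTwiceEdges}, the sum of all vertex degrees equals twice the number of edges, so
\[
2e = \sum_{\text{vertices } w} \deg(w) \geq 6v,
\]
which gives $e \geq 3v$. On the other hand, combining Euler's formula $v - e + f = 2$ with Lemma~\ref{Ledgeface} (which says $e \geq \tfrac{3f}{2}$, equivalently $f \leq \tfrac{2e}{3}$) yields $2 = v - e + f \leq v - e + \tfrac{2e}{3} = v - \tfrac{e}{3}$, so that $e \leq 3v - 6$. Together, $3v \leq e \leq 3v - 6$ forces $3v \leq 3v - 6$, i.e.\ $0 \leq -6$, a contradiction. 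Hence $G$ must have a vertex of degree at most $5$.

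I expect the main obstacle to be purely bookkeeping rather than conceptual: I must be careful that Lemma~\ref{Ledgeface} was stated and proved assuming no multiple edges and at least three edges bounding each face, so I should confirm the graph is connected (Euler's formula and Lemma~\ref{Ledgeface} were both proved for connected planar graphs) and has enough structure for the face-counting to apply. If $G$ is disconnected, I would apply the argument to a connected component, or note that a degree bound on a subgraph transfers to $G$ since vertex degrees can only be smaller in a subgraph — though the cleanest route is simply to observe that a vertex of minimum degree in $G$ has degree at most its degree in any component containing it. The inequality chain itself is routine, so the only real care needed is in justifying the hypotheses of the cited results.
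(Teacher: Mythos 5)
Your proposal is correct and follows essentially the same route as the paper's own proof: assume every vertex has degree at least $6$, use Theorem~\ref{thm:graphDegreeTwiceEdges} to get $e \geq 3v$, and contradict the planar edge bound $e \leq 3v-6$ (the paper simply cites Exercise~\ref{Exercise7} for that bound, while you re-derive it from Euler's formula and Lemma~\ref{Ledgeface}, which is exactly how that exercise is solved). Your added care about graphs with fewer than three vertices and about connectivity is a harmless refinement --- note that in this book a planar graph is by definition connected --- but it does not change the argument.
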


\begin{proof}
We assume that every vertex has degree at least 6 and will find a contradiction. Let $e$ be the number of edges and $v$ be the number of vertices. Then
\begin{align*}
2e&=\text{total sum of all vertex degrees}&&\text{by Theorem~\ref{thm:graphDegreeTwiceEdges}}\\
&\ge6v,
\end{align*}
since we assumed that every vertex has degree at least 6.
Dividing by two, this gives $e\ge 3v$.
This contradicts Exercise \ref{Exercise7} in Section \ref{sec:not-planar}, which says a planar graph can have at most $3v-6$ edges.
\end{proof}

Here is a generalization of Brook's Theorem (Theorem~\ref{thm:Brooks}).

\begin{theorem}
\label{T:BrooksExtension}
Let $d$ be a positive integer.
Suppose every subgraph of a graph $G$ contains a vertex whose degree is less than or equal to $d$. Then $G$ is $(d+1)$-colorable.
\end{theorem}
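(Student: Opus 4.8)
The plan is to imitate the inductive proof of Brooks' Theorem (Theorem~\ref{thm:Brooks}) almost verbatim, using induction on the number $n$ of vertices of $G$, but feeding the weaker hypothesis into exactly the two places where the old proof used the uniform degree bound ``every vertex has degree at most $d$.'' The two ingredients I need are: (i) a vertex of degree at most $d$ that I can safely delete, and (ii) the guarantee that what remains still satisfies the hypothesis so that induction applies.

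First I would dispatch the base case. Any graph with $n \le d+1$ vertices is trivially $(d+1)$-colorable, since we may simply assign each vertex its own color. For the inductive step, assume the theorem holds for every graph on fewer than $n$ vertices that satisfies the hypothesis, and let $G$ be a graph on $n$ vertices in which \emph{every} subgraph contains a vertex of degree at most $d$. Applying this hypothesis to $G$ itself (a graph is a subgraph of itself), I obtain a vertex $v$ with $\deg_G(v)\le d$. Delete it to form $G'=G\setminus v$.

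The crucial observation is that $G'$ again satisfies the hypothesis: every subgraph of $G'$ is also a subgraph of $G$, and hence already contains a vertex of degree at most $d$ by assumption on $G$. Since $G'$ has $n-1$ vertices, the inductive hypothesis gives a $(d+1)$-coloring of $G'$. I then restore $v$. Because $\deg_G(v)\le d$, the vertex $v$ has at most $d$ neighbors, which occupy at most $d$ of the $d+1$ colors, so at least one color remains free; I color $v$ with a free color to complete a $(d+1)$-coloring of $G$, finishing the induction.

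The ``hard part'' here is not any calculation but recognizing why the degeneracy-style hypothesis is exactly strong enough: unlike Brooks' Theorem, where a low-degree vertex was handed to us directly, here I must extract one by invoking the hypothesis on $G$, and then I must verify closure under taking the subgraph $G'$ so that the induction can recurse. Once these two bookkeeping steps are in place, the coloring argument for the single restored vertex $v$ is identical to the one used for Theorem~\ref{thm:Brooks}.
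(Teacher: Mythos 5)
Your proposal is correct and takes essentially the same approach as the paper: the paper's proof peels off low-degree vertices one at a time (forming a sequence $G=G_v, G_{v-1},\ldots,G_1$, each guaranteed a vertex of degree at most $d$ because it is a subgraph of $G$) and then colors back up greedily, which is exactly your induction unrolled. Your explicit check that $G'$ inherits the hypothesis (since subgraphs of $G'$ are subgraphs of $G$) is the same fact the paper uses implicitly when it invokes the assumption on each $G_i$.
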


\begin{proof}
Let $v$ be the number of vertices in the graph $G$.
We will remove vertices (and all incident edges) one-by-one to form a sequence of graphs $G=G_v$, $G_{v-1}$, $G_{v-2}$, \ldots, $G_3$, $G_2$, $G_1$, where each $G_i$ has exactly $i$ vertices.

By assumption, the graph $G=G_v$ has a vertex of degree at most $d$;
let's call this vertex $b_v$.  We remove the vertex $b_v$ and all its incident edges to form the subgraph $G_{v-1}$.
We remark that if $G_{v-1}$ is $(d+1)$-colorable, then so is $G_v$ since the number of colors ($d+1$) is greater than the number of vertices $b_v$ is adjacent to.

By assumption, the subgraph $G_{v-1}$ has a vertex of degree at most $d$; let's call this vertex $b_{v-1}$.  We remove $b_{v-1}$ and all its incident edges to form subgraph $G_{v-2}$.
Similarly, if $G_{v-2}$ is $(d+1)$-colorable, then so is $G_{v-1}$ since the number of colors is greater than the degree of the vertex $b_{v-1}$.

\begin{figure}
\centering
\includegraphics[width=\textwidth]{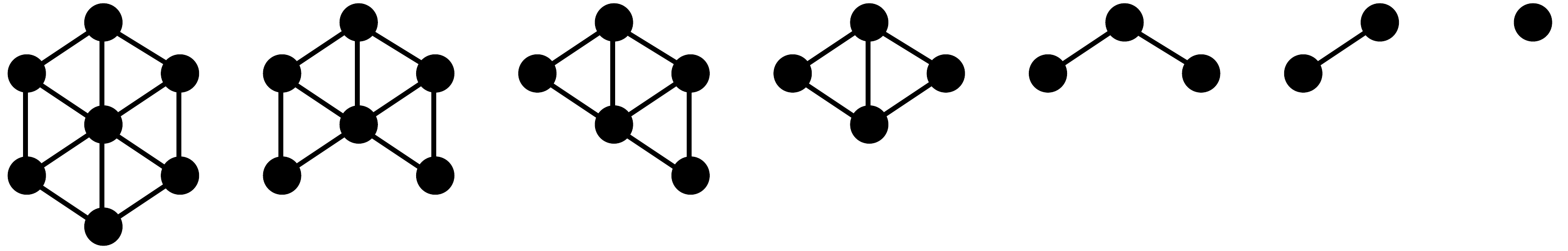}
\caption{Figure for the proof of Theorem~\ref{T:BrooksExtension}, showing how to collapse from the graph $G=G_7$ down to the graph $G_1$ which is a single vertex, at each point removing a vertex of degree at most $3$.}
\end{figure}

We continue in this way until we obtain a subgraph $G_1$ with a single vertex.
At each stage, we note that if $G_{i-1}$ is $(d+1)$-colorable, then so is $G_i$.

Note $G_1$ is clearly $(d+1)$-colorable because it is 1-colorable.
Hence it follows that $G_2$ is $(d+1)$-colorable, and therefore $G_3$ is $(d+1)$-colorable.
We continue in this way until we observe that $G_{v-2}$ is $(d+1)$-colorable, and therefore $G_{v-1}$ is $(d+1)$-colorable, and therefore $G_v$ is $(d+1)$-colorable.
\end{proof}

\begin{corollary}
Any planar graph $G$, and hence any planar map, is 6-colorable.
\end{corollary}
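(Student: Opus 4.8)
The plan is to apply the generalization of Brook's Theorem (Theorem~\ref{T:BrooksExtension}) with $d=5$, since its conclusion then states precisely that $G$ is $(5+1)=6$-colorable. To invoke that theorem, the single hypothesis I must establish is that \emph{every} subgraph of a planar graph $G$ contains a vertex whose degree is at most $5$.

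First I would observe that any subgraph $H$ of a planar graph $G$ can itself be drawn in the plane with no edge crossings: take the planar drawing of $G$ and simply erase the vertices and edges that do not belong to $H$. Granting that $H$ is drawn without crossings, Lemma~\ref{lem:degreeAtMost5} supplies a vertex of degree at most $5$.

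The main obstacle is a technicality hidden in the book's definitions: a planar graph is required to be \emph{connected}, whereas a subgraph $H$ need not be. To get around this I would pass to a connected component $H'$ of $H$. This component is a connected graph drawn in the plane without crossings, hence a planar graph in the sense of the definition, so Lemma~\ref{lem:degreeAtMost5} gives a vertex $v$ of degree at most $5$ within $H'$. Because $H$ has no edges joining distinct components, the degree of $v$ taken in all of $H$ equals its degree in $H'$, and so is at most $5$. Therefore every (nonempty) subgraph of $G$ has a vertex of degree at most $5$, which is exactly the hypothesis of Theorem~\ref{T:BrooksExtension} with $d=5$.

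Finally I would apply Theorem~\ref{T:BrooksExtension} to conclude that $G$ is $6$-colorable. For the assertion about planar maps, I would recall the construction around Figure~\ref{fig:planar-map-graph}: every planar map yields a planar graph whose vertices are the regions of the map, with an edge drawn whenever two regions share a boundary of positive length. A vertex $6$-coloring of this graph is the same data as a $6$-coloring of the map in which adjacent regions receive different colors, so the $6$-colorability of the graph immediately gives the $6$-colorability of the map.
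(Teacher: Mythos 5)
Your proposal is correct and follows essentially the same route as the paper's own proof: apply Lemma~\ref{lem:degreeAtMost5} to every subgraph and then invoke Theorem~\ref{T:BrooksExtension} with $d=5$. Your extra care with the connectedness technicality (passing to a connected component of a subgraph) and with the map-to-graph translation fills in details the paper leaves implicit, but it is the same argument.
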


\begin{proof}
Any subgraph of a planar graph $G$ is planar and hence has a vertex of degree at most $d=5$ by Lemma~\ref{lem:degreeAtMost5}.
So Theorem~\ref{T:BrooksExtension} implies that $G$ is colorable with $d+1=5+1=6$ colors.
\end{proof}

\subsection*{Exercises}\label{sec:coloring-problems}

\begin{enumerate}
\item Read the philosophical objections section on the Wikipedia page on computer-assisted proofs.
Explain your opinion on the validity of computer-assisted proofs. 

\item Show that the graph drawn below is not 3-colorable but is 4-colorable.

\begin{center}
\includegraphics[width=.35\textwidth]{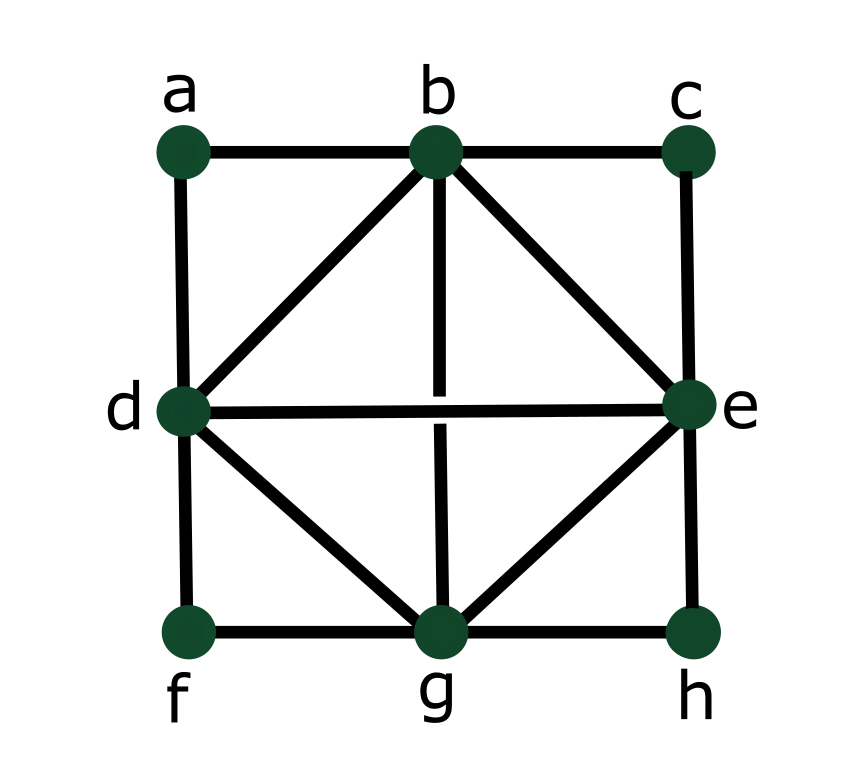}
\end{center}

\item Let $G$ be a graph such that $d+1$ vertices have an arbitrarily large degree and all of the other vertices have degree at most $d$.
Prove that $G$ is $(d+1)$-colorable.

\item
Show that a planar graph $G$ with 8 vertices and 13 edges cannot be 2-colored.

\emph{Hint: Show that in a planar map corresponding to $G$, one of the faces must be a triangle.}

\item
\begin{enumerate}
\item A set of solar experiments is to be made at observatories. Each experiment begins on a given day of the year and ends on a given day (each experiment is repeated for several years). An observatory can perform only one experiment at a time.\\
Experiment A runs Sept 2 to Jan 3,\\
experiment B from Oct 15 to March 10,\\
experiment C from Nov 20 to Feb 17,\\
experiment D from Jan 23 to May 30,\\
experiment E from April 4 to July 28,\\
experiment F from April 30 to July 28, and\\
experiment G from June 24 to Sept 30.\\
The problem we'll solve in (b) is: ``What is the minimimum number of observatories required to perform a given set of experiments annually?" Model this scheduling problem as a graph-coloring problem, and draw the corresponding graph.
\item Find a minimal coloring of the graph in (a) (i.e.\ the smallest $k$ such that this graph is $k$-colorable), and show that fewer colors will not suffice.
\end{enumerate}

\item Show that a planar graph $G$ with 10 vertices and 17 edges cannot be 2-colored.

\item True or False: A graph that is not $d$-colorable has a vertex of degree at least $d$.

\item True or False: A 2-colorable graph with $n$ connected components can be 2-colored in exactly $2^n$ different ways.

\end{enumerate}











\end{document}